\title{Graph Relations and Constrained Homomorphism Partial Orders}
  \author{\href{mailto:ylong@mis.mpg.de}{Yangjing Long}}
  \author{Yangjing Long}
\setlist[enumerate,2]{ref=\theenumi(\alph*)}
\renewcommand{\chaptermark}[1]{\markboth{\chaptername
 \ \thechapter.\ #1}{}}
\renewcommand{\sectionmark}[1]{\markright{\thesection\ #1}}
\def\th@plain{%
  \thm@notefont{}
  \itshape 
}
\def\th@definition{%
  \thm@notefont{}
  \normalfont 
}
\titleformat{\chapter}[display]
{}
{\titlerule[1pt]%
 \vspace{1pt}%
 \titlerule
 \vspace{1pc}%
 \fontsize{30}{10}\selectfont \color{gray1}\chaptertitlename \hspace{ 4pt}\thechapter}
{1pc}
{\color{black}\titlerule
 \vspace{5pc}%
  \fontsize{37}{10}\selectfont}
\titlespacing*{\chapter}{0pt}{0pt}{100pt}
\def\subsection{\@ifstar\unnumberedsubsection\numberedsubsection}
\def\numberedsubsection{\@ifnextchar[
  \numberedsubsectionwithtwoarguments\numberedsubsectionwithoneargument}
\def\unnumberedsubsection{\@ifnextchar[
  \unnumberedsubsectionwithtwoarguments\unnumberedsubsectionwithoneargument}
\def\numberedsubsectionwithoneargument#1{\numberedsubsectionwithtwoarguments[#1]{#1}}
\def\unnumberedsubsectionwithoneargument#1{\unnumberedsubsectionwithtwoarguments[#1]{#1}}
\def\numberedsubsectionwithtwoarguments[#1]#2{%
  \ifhmode\par\fi
  \removelastskip
  \vskip 3ex\goodbreak
  \refstepcounter{subsection}%
  \hbox to \hsize{%
    \colorbox{black!20}{%
      \hbox to 1cm{\hss\bfseries\Large\thesubsection.\ }%
      \vtop{%
        \advance \hsize by -1cm
        \advance \hsize by -2\fboxrule
        \advance \hsize by -2\fboxsep
        \parindent=0pt
        \leavevmode\raggedright\bfseries\Large
        #2
        }%
      }}\nobreak
  \vskip 2mm\nobreak
  \addcontentsline{toc}{subsection}{%
    \protect\numberline{\thesubsection}%
    #1}%
  \ignorespaces
  }
\def\unnumberedsubsectionwithtwoarguments[#1]#2{%
  \ifhmode\par\fi
  \removelastskip
  \vskip 3ex\goodbreak
  \refstepcounter{subsection}%
  \hbox to \hsize{%
    \colorbox{black!20}{%
      \vtop{%
        \advance \hsize by -2\fboxrule
        \advance \hsize by -2\fboxsep
        \parindent=0pt
        \leavevmode\raggedright\bfseries\Large
        #2
        }%
      }}\nobreak
  \vskip 4mm\nobreak
  \addcontentsline{toc}{subsection}{%
   \protect\numberline{\thesubsection}%
    #1}%
  \ignorespaces
  }
\titlespacing*{\subsection}{0pt}{0pt}{60pt}
\definecolor{gray}{rgb}{0.9,0.9,0.9}
\definecolor{gray1}{rgb}{0.7,0.7,0.7}
\definecolor{gray2}{rgb}{0.8,0.8,0.8}
\renewcommand\section{\@startsection{section}{1}{\z@}%
                                  {-3.5ex \@plus -1ex \@minus -.2ex}%
                                  {2.3ex \@plus.2ex}%
                                  {\normalfont\LARGE\bfseries}}
\let\origdoublepage\cleardoublepage
\newcommand{\clearemptydoublepage}{%
  \clearpage
  {\pagestyle{empty}\origdoublepage}%
}
\let\cleardoublepage\clearemptydoublepage
\def\Fraisse{Fra\"{\i}ss\' e}
\newcommand{\Homo}[0]{\ensuremath{\textsc{Hom}}}
\newcommand{\FulRel}[0]{\ensuremath{\textsc{Ful\mbox{-}Rel}}}
\newcommand{\sHom}[0]{\ensuremath{\textsc{Sur\mbox{-}Hom}}}
\newcommand{\RelHom}{\mathop{\to^{PR}}}
\newcommand{\CSP}[0]{\ensuremath{\textsc{H\mbox{-}CSP}}}
\newcommand{\COL}[0]{\ensuremath{\textsc{H\mbox{-}COL}}}
\newcommand{\Graphs}[0]{\ensuremath{\mathsf{Graphs}}}
\newcommand{\ConnGraph}[0]{\ensuremath{\mathsf{ConnGraph}}}
\newcommand{\DiGraphs}[0]{\ensuremath{\mathsf{DiGraphs}}}
\newcommand{\DiCycle}[0]{\ensuremath{\mathsf{DiCycle}}}
\newcommand{\DiCycles}[0]{\ensuremath{\mathsf{DiCycles}}}
\newcommand{\Cycle}[0]{\ensuremath{\mathsf{Cycle}}}
\newcommand{\Cycles}[0]{\ensuremath{\mathsf{Cycles}}}
\newcommand{\Path}[0]{\ensuremath{\mathsf{Path}}}
\newcommand{\Matrices}[0]{\ensuremath{\mathsf{DRM}}}
\newcommand{\LocInL}{\mathop{<^{LI}}}
\newcommand{\Hom}{\ensuremath{\to}}
\newcommand{\AnyHomo}{$*$-morphism}
\newcommand{\MonoHomo}{M-morphism}
\newcommand{\EmbedHomo}{E-morphism}
\newcommand{\FullHomo}{F-morphism}
\newcommand{\VSurHomo}{VS-morphism}
\newcommand{\SurHomo}{S-morphism}
\newcommand{\LocInHomo}{LI-morphism}
\newcommand{\LocSurHomo}{LS-morphism}
\newcommand{\LocBiHomo}{LB-morphism}
\newcommand{\FRelHomo}{R-morphism}
\newcommand{\RelHomo}{PR-morphism}
\newcommand{\SurHomeq}{\mathop{\Homeq^{S}}}
\newcommand{\UnorPath}[0]{\ensuremath{\mathsf{UnorPath}}}
\newcommand{\EmbedHom}{\mathop{\to^E}}
\newcommand{\AnyHom}{\ensuremath{\to^*}}
\newcommand{\SurHom}{\ensuremath{\to^S}}
\newcommand{\VSurHom}{\ensuremath{\to^{VS}}}
\newcommand{\FullHom}{\ensuremath{\to^F}}
\newcommand{\LocSurHom}{\ensuremath{\to^{LS}}}
\newcommand{\LocInHom}{\ensuremath{\to^{LI}}}
\newcommand{\LocBiHom}{\ensuremath{\to^{LB}}}
\newcommand{\Pfin}{\ensuremath{P_{\mathrm{fin}}}}
\newcommand{\subsetLeq}[1]{\mathop{\leq^{\mathrm{dom}}_#1}}
\newcommand{\PtoInclussion}{E}
\newcommand{\PrimesToCycles}{E}
\newcommand{\AnyLeq}{\ensuremath{\leq^*}}
\newcommand{\SurLeq}{\ensuremath{\leq^S}}
\newcommand{\LocSurLeq}{\ensuremath{\leq^{LS}}}
\newcommand{\LocInLeq}{\ensuremath{\leq^{LI}}}
\newcommand{\LocBiLeq}{\ensuremath{\leq^{LB}}}
\newcommand{\EmbedLeq}{\ensuremath{\leq^E}}
\newcommand{\EmbedGeq}{\ensuremath{\geq^E}}
\newcommand{\MonoLeq}{\ensuremath{\leq^M}}
\newcommand{\FullLeq}{\mathop{\leq^F}}
\newcommand{\FullGeq}{\mathop{\geq^F}}
\newcommand{\RelLeq}{\mathop{\leq^{PR}}}
\newcommand{\FRelLeq}{\mathop{\leq^{R}}}
\newcommand{\ESurLeq}{\mathop{\leq^{ES}}}
\newcommand{\VSurLeq}{\mathop{\leq^{VS}}}
\newcommand{\K}{\ensuremath{\mathcal{K}}}
\let\Homeq\sim
\newcommand{\AnyHomeq}{\ensuremath{\Homeq^{*}}}
\newcommand{\Matrixeq}{\ensuremath{\Homeq^{M}}}
\newcommand{\drm}{\mathrm{drm}}
\newcommand{\FullHomeq}{\mathop{\Homeq^{F}}}
\newcommand{\Releq}{\mathop{\Homeq^{PR}}}
\newcommand{\FReleq}{\mathop{\Homeq^{R}}}
\renewcommand{\year}{2014}
\newcommand{\mefirst}{Yangjing}
\newcommand{\melast}{Long}
\newcommand{\me}{Yangjing Long}
\newcommand{\mytitle}{Graph Relations and Constrained Homomorphism Partial Orders}
\newcommand{\mytitleDE}{Relationen von Graphen und Partialordungen durch spezielle Homomorphismen}
\newcommand{\muls}{\ast}
\newcommand{\mulw}{\star}
\newcommand{\mulm}{\circledast}
\newcommand{\overbar}{\overline}
\DeclareMathOperator{\domain}{dom}
\DeclareMathOperator{\image}{img}
\begin{document}
\theoremstyle{plain}
\newtheorem{thm}{\Large T\normalsize heorem}[section]
\newtheorem{pro}[thm]{\Large P\normalsize roposition}
\newtheorem{prop}[thm]{\Large P\normalsize roposition}
\newtheorem{lem}[thm]{\Large L\normalsize emma}
\newtheorem{lemma}[thm]{\Large L\normalsize emma}
\newtheorem{cor}[thm]{Corollary}
\newtheorem{corollary}[thm]{Corollary}
\newtheorem{con}[thm]{Conjecture}
\newtheorem{obser}[thm]{Observation}
\newtheorem{obs}[thm]{Observation}
\theoremstyle{definition}
\newtheorem{defi}{\Large D\normalsize efinition}[section]
\newtheorem{defn}{\Large D\normalsize efinition}[section]
\newtheorem{definition}{\Large D\normalsize efinition}[section]
\newtheorem{example}{Example}[section]
\theoremstyle{remark}
\newtheorem{rem}{Remark}[section]

\setlength{\abovecaptionskip}{20pt} 

\thispagestyle{empty}
{\Large
\begin{center}
\textbf{Graph Relations and \\Constrained Homomorphism Partial Orders}
\end{center}}
\vspace*{0.7cm}
\begin{center}
Der Fakult\"at f\"ur Mathematik und Informatik\\
der Universit\"at Leipzig\\
eingereichte
\end{center}
\vspace*{0.7cm}
\begin{center}
D I S S E R T A T I O N
\end{center}
\vspace*{0.7cm}
\begin{center}
zur Erlangung des akademischen Grades
\end{center}

\begin{center}
DOCTOR RERUM NATURALIUM\\
(Dr.rer.nat.)
\end{center}
\vspace*{0.5cm}
\begin{center}
im Fachgebiet 
\end{center}
\begin{center}
Mathematik
\end{center}
\begin{center}
vorgelegt 
\end{center}
\vspace*{0.7cm}
\begin{center}
von Bachelorin \me\\
geboren am 08.07.1985 in Hubei (China)
\end{center}
\vspace{1cm}

\begin{center}
\selectlanguage{german}
Leipzig, den \myformat\today

\end{center}



\setcounter{secnumdepth}{3}
\setcounter{tocdepth}{3}

\frontmatter 
\pagenumbering{roman}

\begin{dedication} 

\begin{CJK*}{UTF8}{gbsn}
献给我挚爱的母亲们！
\end{CJK*}

\end{dedication}




\begin{acknowledgements}      

My sincere and earnest thanks to my `Doktorv{\"a}ter' Prof.~Dr.~Peter~F. Stadler and Prof.~Dr.~J\"{u}rgen Jost. They introduced me to an interesting project, and have been continuously giving me valuable guidance and support.

Thanks to my co-authers Prof.~Dr.~Ji{\v{r}}\'{\i} Fiala and Prof.~Dr.~Ling Yang, with whom I have learnt a lot from discussions.

I also express my true thanks to Prof.~Dr.~Jaroslav Ne\v{s}et\v{r}il for the enlightening discussions that led me along the way to research.

I would like to express my gratitude to my teacher Prof.~Dr.~Yaokun Wu, many of my senior fellow apprentice, especially Dr.~Frank~Bauer and Dr.~Marc~Hellmuth, who have given me a lot of helpful ideas and advice. 

Special thanks to Dr.~Jan Hubi\v{c}ka and Dr.~Xianqing Li-jost, not only for their guidance in mathematics and research, but also for their endless care and love---they have made a better and happier me.

Many thanks to Dr. Danijela Horak helping with the artwork in this thesis, and also to Dr.~Andrew Goodall, Dr.~Johannes Rauh, Dr.~Chao Xiao, Dr.~Steve Chaplick and Mark Jacobs for devoting their time and energy to reading drafts of the thesis and making language corrections.

I am very grateful to IMPRS for providing a financial scholarship for my studies and to Research Academic Leipzig for supporting my attendance at conferences. I also wish to express my appreciation to the administrative staff at MIS MPG and Mrs. Petra Pregel for their supportiveness.

Finally, to my family and friends for their ongoing care, support and encouragement---you make my life colourful.

\end{acknowledgements}




\begin{abstracts}        


We consider constrained variants of graph homomorphisms such as embeddings, monomorphisms, full homomorphisms,
surjective homomorpshims, and locally constrained homomorphisms.
We also introduce a new variation on this theme which derives from
relations between graphs and is related to multihomomorphisms.
This gives a generalization of surjective homomorphisms and naturally leads to notions of R-retractions, R-cores, and R-cocores of graphs. Both \mbox{R-cores} and R-cocores of graphs are unique up to isomorphism and can be computed in polynomial time.

The theory of the graph homomorphism order is well developed, and from it we consider analogous notions defined for orders induced by constrained homomorphisms. 
We identify corresponding cores, prove or disprove universality, characterize gaps and dualities. We give a new and significantly easier proof of the universality of the homomorphism order by showing that even the class of oriented cycles is universal. We provide a systematic approach to simplify the proofs of several earlier results in this area.
We explore in greater detail locally injective homomorphisms on connected graphs, characterize gaps and show universality. We also prove that for every $d\geq 3$ the homomorphism order on the class of line graphs of graphs with maximum degree $d$ is universal.

\end{abstracts}



\tableofcontents
\listoffigures

\mainmatter 
\chapter{Introduction}
\ifpdf
    \graphicspath{{Introduction/IntroductionFigs/PNG/}{Introduction/IntroductionFigs/PDF/}{Introduction/IntroductionFigs/}}
\else
    \graphicspath{{Introduction/IntroductionFigs/EPS/}{Introduction/IntroductionFigs/}}
\fi




The subject of this thesis---graph relations and constrained homomorphism orders---belongs to the field of discrete mathematics, more specifically to graph theory. It has a close relationship to several other areas of mathematics stemming from algebra and mathematical structures via category theory and logic to computer science disciplines such as computational complexity.

The term `graph' was introduced by Sylvester in 1878~\cite{Sylvester1878}.
A {\em directed graph}\index{directed graph} $G$ is a pair $G=(V_G,E_G)$ such that $E_G$ is a subset of $V_G\times V_G$. We denote by $V_G$\index{$V_G$} the set of {\em vertices}\index{set of vertices} of $G$ and by $E_G$ the set of {\em edges}\index{set of edges} of $G$. The class of all finite directed graphs is denoted by $\DiGraphs$\index{$\DiGraphs$}.
An {\em undirected graph} (or simply a {\em graph}\index{graph}) $G$ is a directed graph such that $(u,v)\in E_G$ if and only if $(v,u)\in E_G$. We thus consider undirected graphs to be a special case of directed graphs. Unless explicitly stated otherwise we allow loops on vertices. The class of all finite undirected graphs is denoted by $\Graphs$\index{$\Graphs$}.

Graphs are frequently employed to model natural or artificial systems.
Let us begin the introduction with two examples taken from the real world.

In social science we can use social networks to express the relationships between individuals, groups and organizations. For example, a friendship network shows the friendship relation among individuals in a community. At the same time, travelling is an important activity in modern society as people travel frequently, for business or for pleasure. If we consider all the places that people in a certain community have ever visited, we can define a network on places in the following natural way. 
Draw an edge between place $A$ and place $B$ if and only if we can find two people who are friends, one who has visited $A$, and the other $B$.
In this way we build a new network on places generated by the friendship network and the information about people and the places they visited.

The research presented in this thesis was originally motivated by a generalization of concepts from bioinformatics. Consider protein-protein interaction networks (PPIs) in biology. Proteins contain domains that determine their function. Information on the domain content of proteins, which can be seen as a relation
$R\subseteq D\times P$ between domains and the proteins which contain them is readily available from biological databases. Candidates for interaction between domains can be derived from knowledge about the interacting proteins, i.e., a graph $G$ with vertex set $P$, and the relation $R$: If $(p,p')\in E_G$,
 $(d,p)\in R$, and $(d',p')\in R$, then the domains 
$(d,d')$ are putative interaction partners. 

In both examples, we use a network and a relation to create a new network. 
Conversely, given two networks, can we determine some meaningful relation between them? This is the idea behind the graph relations. 

More abstractly, let $G=(V_G,E_G)$ be a graph, $B$ a finite set, and
$R\subseteq V_G\times B$ a binary relation such that for every $b\in B$ there is $v \in V_G$ such that $(v,b)\in R$. Then
the graph $G\muls R$ has vertex set $B$ and edge set
\begin{equation*}
E_{G\muls R} = \left\{ (a,b)\in B\times B \mid \textrm{ there is }
      (u,v)\in E_G \textrm{ such that } (u,a),(v,b)\in R \right\}.
\end{equation*}
If there exists an $R$ satisfying the equation $G\muls R = H$, we say
there is a \emph{relation}\index{relation} from $G$ to $H$. The graphs $G$ and $H$
can be seen to be conjugates, i.e., $E_G=R^+\circ E_H\circ R$, where $R^+\subseteq B\times V_G$ is the transpose of the relation $R$ and $\circ$\index{$\circ$} denotes the \emph{composition}\index{composition} of binary relations, which is defined as follows: If $S\subseteq X\times Y$ and $T\subseteq Y\times Z$ are two binary relations, then $S\circ T=\{(x,z)\in X\times Z\mid  \textrm{ there is } y\in Y \textrm{ such that } (x,y)\in S \textrm{ and } (y,z)\in T\}$.

The concept of a relation between graphs turns out be closely related to the well-studied graph theory 
notion of a graph homomorphism. For given graphs $G=(V_G,E_G)$ and
$H=(V_H,E_H)$, a \emph{homomorphism}\index{homomorphism} $f:G\to H$ is a mapping $f:V_G\to
V_H$ such that $(u,v)\in E_G$ implies $(f(u),f(v))\in E_H$. If there is no homomorphism from $G$ to $H$, we write $G\nrightarrow H$\index{$G\nrightarrow H$}. 
Graph homomorphisms have been studied ever since the 1960s, originally as a generalization of graph colourings, with pioneering work by G.\ Sabidussi~\cite{Sabidussi1961}, Z.\ Hedrl\' in and A.\ Pultr~\cite{Hedrlin1964} subsequently seeing rapid development by P.\ Hell and J.\ Ne\v{s}et\v{r}il~\cite{Hell2004}. There is a lot of intensive research going on around this topic; for an extensive review of the subject, see~\cite{Hell2004}.

A homomorphism from a graph $G$ to a graph $H$ naturally defines a mapping $f^{1}:E_G\to E_H$ by setting
$f^{1}((u,v))=(f(u),f(v))$ for all $(u,v)\in E_G$. If both $f$ and $f^1$
are surjective, we call $f$ a \emph{surjective homomorphism}\index{surjective homomorphism}. Note that all
surjective homomorphisms $f:G\to H$ are (modulo representation) also
relations $G\muls R=H$, where $R=\{(u,f(u)) \mid u\in V_G\}$.

Obviously, relations that are not functional are not homomorphisms, but
correspond to multihomomorphisms, which in a sense are multifunctions. A \emph{multifunction}\index{multifunction} from a non-empty set $X$ to a non-empty set $Y$ is a function from $X$ to $P(Y)\setminus \emptyset$ where $P(Y)$ is the power set of $Y$~\cite{Nenthein2012}. Multifunctions between two topological spaces have been studied since 1960s~\cite{Whyburn1965}. Multifunctions were first studied in an algebraic sense in~\cite{Triphop2008}, in which they feature as multihomomorphisms from one group to another group. To the best of our knowledge, multihomomorphisms between graphs first appeared as building blocks of Hom-complexes, introduced by Lov{\'a}sz, and are related to recent exciting developments in topological combinatorics~\cite{Matousek2003}, in particular to deep results involved in a proof of the Lov{\'a}sz hypothesis~\cite{Babson2006}.

A relation with full domain thus can be regarded as a \textit{surjective multihomomorphism}, a multihomomorphism such that the pre-image of every vertex in $H$ is non-empty and for every edge $(x,y)$ in $H$ we can find an edge $(u,v)$ in $G$ satisfying $x\in f(u)$, $y\in f(v)$.

It appears that surjective multihomomorphisms between groups were first defined in~\cite{Triphop2008} and studied in~\cite{Nenthein2012}. However, surjective multihomomorphisms between graphs have not yet received attention.

\vspace{0.3cm}
In this work we show that even though the notion of relations has not been noticed by graph theorists it has its own charm. Owing to the omission of the constraint of being a mapping, relations substantially generalize the sort of situation that can hold for graph homomorphisms; a relation can ignore some part of the original graph and/or map one vertex to multiple vertices. Even relations with full domains are a generalization of surjective homomorphisms. Every relation can be decomposed in a standard way into a surjective homomorphism and an injective relation, whose transpose can be seen as a full homomorphism. It produces many interesting notions of graph classes, such as R-cores, cores, reduced graphs and cocores. We show that core and reduced graph coincide, develop the characterization of cocore and R-core, and provide effective algorithms to compute them. We also give the relations of containment among these notions and give examples which can distinguish them. We give a perhaps surprisingly simple characterization of those graphs $G$ for which all relations of $G$ to itself are automorphisms. For the computational complexity of testing for the existence of a relation, we describe a reduction of this problem to the corresponding problem for surjective homomorphisms.

%

\vspace{10pt}

Inspired by relations between graphs, we widen the scope of our study to investigate the properties of \emph{constrained homomorphisms}\index{constrained homomorphisms}, i.e., homomorphisms that satisfy further conditions such as surjectivity.
Partial orders induced by the existence of constrained homomorphisms and their properties form the second topic of this thesis. In particular we consider the following:

\begin{enumerate}
\item A homomorphism $f:G\to H$ is a {\em monomorphism}\index{monomorphism}\footnote{The name `monomorphism' originates from category theory.}, (also called \emph{injective homomorphism}\index{injective homomorphism}~\cite{Hell2004} or {\em \MonoHomo}) if it is injective.
\item A homomorphism $f:G\to H$ is a {\em full homomorphism}\index{full homomorphism}, or {\em \FullHomo}\index{\FullHomo}, if $(f(u),f(v))\in E_H$ implies $(u,v)\in E_G$.
\item A homomorphism $f:G\to H$ is an {\em embedding}\index{embedding}, or {\em \EmbedHomo}\index{\EmbedHomo} if it is both a monomorphism and a full homomorphism.
\end{enumerate}

For undirected graphs we also consider the following forms of locally constrained homomorphisms.
\begin{enumerate}
\item A homomorphism $f:G\to H$ is a {\em locally injective homomorphism}\index{locally injective homomorphism}, or {\em \LocInHomo}\index{\LocInHomo}, if for all $v\in V_G$, the restriction of the mapping $f$ to the domain $N_G(v)$ and range $N_H(f(v))$ is injective.
\item A homomorphism $f:G\to H$ is a {\em locally surjective homomorphism}\index{locally surjective homomorphism}, or {\em \LocSurHomo}\index{\LocSurHomo}, if for all $v\in V_G$, the restriction of the mapping $f$ to the domain $N_G(v)$ and range $N_H(f(v))$ is surjective.
\item A homomorphism $f:G\to H$ is a {\em locally bijective homomorphism}\index{locally bijective homomorphism}, or {\em \LocBiHomo}\index{\LocBiHomo}, if it is both a locally surjective and locally bijective homomorphism, that is, if the restrictions to the neighbourhoods are bijections.
\end{enumerate}

There are also other types of constrained homomorphism that have been studied. One example is that of a compaction~\cite{Kedem1984}.
A \emph{compaction}\index{compaction} of a graph $G$ to a graph $H$ is a homomorphism $f:G\to H$ such that for every vertex $x$ of $H$, there exists a vertex $v$ of $G$ with $f(v)=x$, and for every edge $(x,y)$ of $H$, $x\neq y$, there exists an edge $(u,v)$ of $G$ with $f(u)=x$ and $f(v)=y$. The computational complexity of compaction has been studied by Vikas~\cite{Vikas2002,Vikas2004}.

Constrained homomorphisms have been intensively studied in their structural and algorithmic aspects as well as in their applications. 
Each type of constrained homomorphism has its own history. For example, globally constrained homomorphisms have also been studied from the 1960s. The earliest literature we can find is what Hedetniemi published in 1966~\cite{Hedetniemi1966}. It is however surprising to learn from the survey paper~\cite{Fiala2008} that the notion of a locally bijective homomorphism was already well established in combinatorial topology as early as 1932~\cite{Reidemeister1932}.
The other types of locally constrained homomorphism received attention much later. Locally surjective homomorphisms were introduced by Everett and Borgatti in 1991~\cite{Everett1991} and locally injective homomorphisms by Ne\v set\v ril in 1971~\cite{Nesetril1971}.

There are many results that link graphs and orders, such as those concerning vertex-edge-face partial orders~\cite{Brightwell1997}, planar graphs, hypergraph colourings~\cite{Duffus1991}, and on-line algorithms~\cite{Kierstead1981}.
In a similar vein, the dimension of a partial order is analogous to the chromatic number of a graph~\cite{Kelly1982}.
See the survey paper~\cite{Trotter1996} by Trotter. We give another instance of partial orders on graphs.

For any fixed type of finite relational structure, homomorphisms induce an ordering of the set of all structures. In particular, given two (directed) graphs $G$ and $H$, we write $G\leq H$, or $G\to H$, if there is a homomorphism from $G$ to $H$. The relation $\leq$ is a quasi-order and so it induces an equivalence relation. 
For two (directed) graphs $G$ and $H$ such that $G\Hom H$ and $H\Hom G$, we write $G\Homeq H$ and say that {\em $G$ and $H$ are homomorphically equivalent}\index{homomorphically equivalent}. The homomorphism orders $(\Graphs,\leq)$ and $(\DiGraphs,\leq)$ are the partially ordered sets of all equivalence classes of finite undirected and directed graphs, respectively, ordered by the relation $\leq$. We shall abuse notation and follow the convenient practice of identifying an equivalence class in either of these homomorphism quasi-orders with one of its representatives, namely that (di)graph which is the core of all other (di)graphs equivalent to it. So by $(\Graphs,\leq)$, respectively $(\DiGraphs,\leq)$, we will actually be referring to the partially ordered sets induced by the quasi-ordered sets of all graphs (digraphs) on the set of graph (digraph) cores. (The notion of a graph core requires some preparation and will be defined in its proper place in Chapter~\ref{ch:homo}.)

These orders are of particular interest. For example, each of the two orders forms a distributive lattice, with the disjoint union of graphs being the supremum and the categorical product being the infimum.

The homomorphism order has been a fruitful research topic for several decades.
One of the well studied properties of the homomorphism order is density. Both $(\Graphs,\leq)$ and $(\DiGraphs,\leq)$ are dense, the former shown by Welzl~\cite{Welzl1982} and the latter by Ne\v set\v ril and Tardif~\cite{Nesetril2000}.

The other property which is extensively studied is universality. Any countable partial order is isomorphic to a suborder of $(\DiGraphs,\leq)$. This result first appeared in the context of categories in 1969~\cite{Pultr1980}. \emph{Antichains}\index{antichain} are sets of elements which are pairwise incomparable. A characterization of finite maximal antichains was given in~\cite{Nesetril1978}.

Maximal antichains are particularly relevant because of their relationship to the notion of a \emph{homomorphism duality}\index{homomorphism duality}, introduced by Ne\v set\v ril and Pultr~\cite{Nesetril1978}. An ordered pair $(F,D)$ of graphs, or directed graphs, is a \emph{duality pair}\index{duality pair} if $\{G\mid F\to G\}=\{G\mid G\nrightarrow D\}$. Moreover, Ne\v set\v ril and Tardif obtain a correspondence between duality pairs and gaps in the homomorphism order~\cite{Nesetril2000a}.

\newpage
In the second part of this work, we consider properties of constrained homomorphism orders analogous to those for the homomorphism order. We provide a systematic method for characterizing the gaps and dualities in constrained homomorphism orders by analysing properties of future-finite or past-finite orders, which has simplified the proofs in the full homomorphism order. We also give a method of building universal orders from future-finite or past-finite universal orders. Among other things, by realizing that oriented cycles are a new universal class in the homomorphism order, we prove universality of the homomorphism order on the class of line graphs and of locally injective homomorphism order on connected graphs.

\vspace{10pt}

This thesis consists of five chapters, including this introduction.

In Chapter~\ref{ch:graph} we describe notions that we require from graph theory and order theory.
In particular, we introduce basic graph-theoretic concepts and state their properties,
and define the main operations on graphs. We also give the fundamental properties of partial orders.
Chapter~\ref{ch:homo} gives a brief overview of graph homomorphisms and constrained homomorphisms. Moreover, we introduce the new notion of graph relation, and we compare it to the other well-studied types of constrained graph homomorphism. We provide the necessary background on the theory of computational complexity in Appendix A.

Further properties of relations are introduced in Chapter~\ref{ch:relation}.
We first show that every relation can be decomposed in a standard way into a surjective and an injective relation (Corollary~\ref{DeCo}). 
The equivalence classes of strong relational equivalence (Theorem~\ref{thm:equi}) and weak relational equivalence (Proposition~\ref{pro:iso}) are characterized. We then introduce the notion of R-cores, cores, reduced graphs and cocores. We show that core and reduced graph coincide, develop the characterization of cocore and R-core, and provide an effective algorithm to compute them. We also give a perhaps surprisingly simple characterization of those graphs $G$ for which all relations of $G$ to itself are automorphisms (Theorem~\ref{thm:auto1}). The computational complexity of testing for the existence of a relation between two graphs is briefly discussed. We describe the reduction of this problem to the corresponding problem for surjective homomorphisms. Finally, we briefly summarize the most important similarities and differences between weak and strong relational composition (Section~\ref{sect:weak}). Some of the results from this chapter have been published by J.~Hubi\v cka, J.~Jost, Y.~Long, P.~F.~Stadler, and L.~Yang in~\cite{Hubicka2012}.

In Chapter~\ref{Ch:order} we review existing results on the homomorphism order (in Section~\ref{sec:homoorder}) and introduce the notion of constrained homomorphism orders. Moreover, after some discussion of universality, 
a new and significantly easier proof of universality of graph homomorphisms is given (Theorem~\ref{lem:cycles}), which simplifies the proof given in~\cite{Hubicka2005}.
As a new result, we show that graph homomorphisms are universal even on the class of oriented cycles. The notions of future-finite-universal and past-finite-universal partial order are introduced (in Section~\ref{sec:universality}) for later use. We survey existing results on constrained homomorphism orders and provide several new results about cores, universality, gaps and dualities in the context of constrained homomorphisms. We derive properties of these partial orders and identify their similarities and differences. At the same time we give a simple condition for the existence of dualities in future-finite and past-finite partial orders (in Section~\ref{sec:pastfinite}). 
In the full homomorphism order (Section~\ref{sec:fullhomo}) we characterize the gaps and give a new and simple proof of the existence of left duals (Theorem~\ref{thm:fulldual}) (simplifying results independently obtained by Feder and Hell~\cite{Feder2008} and Ball, Ne\v set\v ril, Pultr~\cite{Ball2007,Ball2010}).
For locally injective homomorphisms we prove universality on connected graphs (Theorem~\ref{thm:lochomouniv})
and give a partial characterization of gaps (Theorem~\ref{thm:locingap}). We also prove the universality of all three kinds of locally constrained homomorphisms on graphs. We apply the characterization of PR-cores and FR-cores which are derived in Chapter~\ref{ch:relation} and carry over most results on the surjective homomorphism order. 
We prove that for every $d\geq 3$ the homomorphism order on the class of line graphs with maximum degree $d$ is universal (Theorem~\ref{thm:main}).
Some of the results from this chapter have been submitted~\cite{Fiala2014} or are in preparation~\cite{Fiala2012}.

\chapter[Graphs and Orders]{Graphs and Orders}
\label{ch:graph}
\ifpdf
    \graphicspath{{Chapter1/Chapter1Figs/PNG/}{Chapter1/Chapter1Figs/PDF/}{Chapter1/Chapter1Figs/}}
\else
    \graphicspath{{Chapter1/Chapter1Figs/EPS/}{Chapter1/Chapter1Figs/}}
\fi


\section{Graphs}
Graphs, as mathematical structures used to model pairwise relations between objects from a certain collection, are the main theme of this thesis. Graphs are among the most ubiquitous models of both natural and human-made structures. They can be used to model many types of relations and process dynamics in physical, biological, social and information systems. Many problems of practical interest can be represented by graphs.

\subsection*{Basic concepts}

We introduce several variants of the notion of `graph'. Our situation is the same as Hell and Ne\v{s}et\v{r}il described in~\cite{Hell2004}: ``One challenge we face is the intermingling of the various versions of graphs, directed graphs, and more general systems.
It is typical of the area to freely jump from graphs to directed graphs, allowing or disallowing loops,
as it dictated by the context.'' The main purpose of this part is to cover all the various versions of graphs used in this thesis.

\begin{defi}
A \emph{directed graph}\index{directed graph} (or \emph{digraph})\index{directed graph}\index{digraph} $G$ is a pair $G=(V_G,E_G)$ such that $E_G$ is a
subset of $V_G\times V_G$. Set $V_G$\index{$V_G$} is called the \emph{vertex set of $G$}\index{vertex set}, and each element $V_G$ is a \emph{vertex}\index{vertices, vertex} (or \emph{point}\index{point}, \emph{node}\index{node}) of graph $G$.
Set $E_G$\index{$E_G$} is called the \emph{edge set of $G$}\index{edge set}, and every element in $E_G$ is an \emph{edge}\index{edge} (or \emph{arc}\index{arc}) of the graph $G$.
\end{defi}

Directed graphs are visualized by representing vertices as points and edges as arrows. There are some examples of directed graph shown in Figure~\ref{fig:digraphs}.

\begin{figure}[!ht]
\centering
\subfloat[]{
\label{fig:digraph_a}
\begin{minipage}[c]{0.3\textwidth}
\centering
\includegraphics[width=1\textwidth]{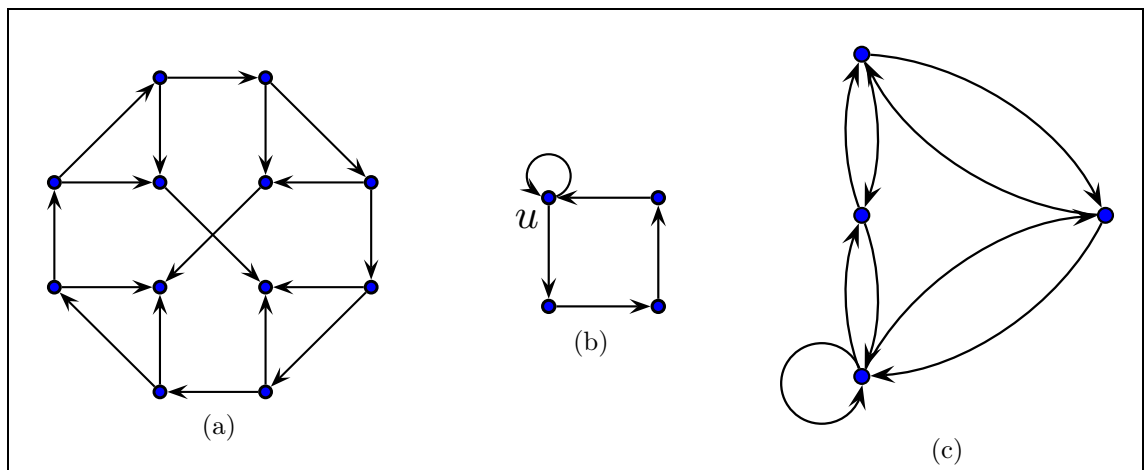}
\end{minipage}\quad 
}
\subfloat[]{
\label{fig:digraph_b}
\begin{minipage}[c]{0.3\textwidth}
\centering
\includegraphics[width=0.45\textwidth]{di2.eps}
\end{minipage}
}
\subfloat[]{
\label{fig:digraph_c}
\begin{minipage}[c]{0.3\textwidth}
\centering
 \includegraphics[width=1\textwidth]{ncarc.eps} 
\end{minipage}
}
\caption{Directed graphs.}
\label{fig:digraphs}
\end{figure}

\begin{defi}
An \emph{undirected graph}\index{undirected graph} (or simply a \emph{graph}\index{graph}), is a graph with symmetric edge set, i.e., a directed graph such that $(u,v)$ is an edge if and only if $(v,u)$ is an edge. 
\end{defi}

The graph shown in Figure~\ref{fig:digraph_c}, for instance, is an undirected graph. On the one hand, we can consider an undirected graph to be a special case of a directed graph.
On the other hand, each directed graph has a unique \emph{underlying graph}\index{underlying graph}, that is, the directed graph with the direction on edges omitted. Figure~\ref{fig:under} shows an example of the underlying graph of a directed graph.


Note that undirected graphs and directed graphs can be mutually constructed.
We have seen that we can create an undirected graph from a directed graph. We can also
orient the edges of undirected graph to be directed.
An \emph{orientation}\index{orientation} of an undirected graph is an assignment of a direction to each edge, making it into a directed graph. A directed graph is called an \emph{oriented graph}\index{oriented graph} if it is an orientation of an undirected graph. In other words, an oriented graph is a directed graph having no symmetric pair of directed edges.

\begin{figure}[!ht]
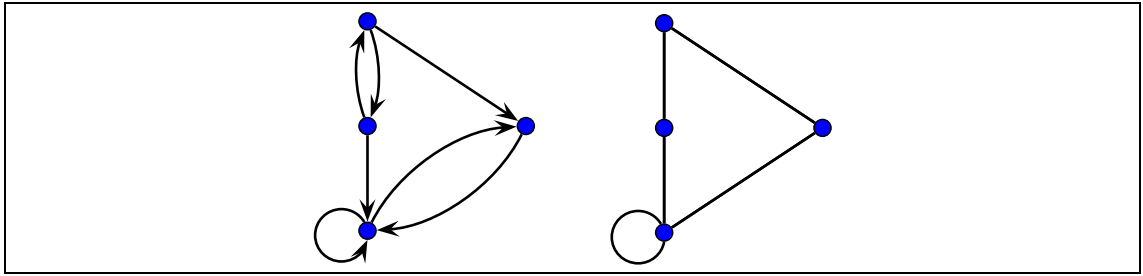

\centering
\includegraphics[width=0.2\textwidth]{ncarc2.eps} 
\qquad
\includegraphics[width=0.2\textwidth]{under.eps} 
\caption{A directed graph and its underlying graph.}
\label{fig:under}
\end{figure}

An edge that connects a vertex to itself is called a \emph{loop}\index{loop}.
For instance, the graph in Figure~\ref{fig:digraph_b} has a loop on vertex $u$. In graph theory, sometimes we only consider \emph{simple graphs}\index{simple graph}\footnote{In the literature it is often the case that the term graph refers to a simple graph.}, that is, undirected graphs without loops. But in this thesis, for the sake of convenience, we use the term \emph{graph}\index{graph} for an undirected graph with loops allowed.

Sometimes we consider multiple edges between a single pair of vertices.
A \emph{multigraph}\index{multigraph} $G$ is an ordered pair $G:=(V, E)$ with $V$ as the vertex set and $E$, a multiset of unordered pairs of vertices,
as the edge set. Figure~\ref{fig:mul} shows a simple graph, a graph, a directed graph,
an oriented graph, and a multigraph.

A directed graph is a \emph{weighted directed graph}\index{weighted directed graph} if a number (weight) is assigned to each edge. Such weights might represent, for example, costs, lengths or capacities, etc. depending on the problem at hand. Some authors call such a graph a \emph{network}\index{network}~\cite{Strang2006}.

\begin{figure}[!ht]
\centering
\subfloat[Simple graph]{
\begin{minipage}[c]{0.3\textwidth}
\centering
\includegraphics[width=0.5\textwidth]{K_32.eps}
\end{minipage}
} \quad
\subfloat[Graph]{
\begin{minipage}[c]{0.3\textwidth}
\centering
\includegraphics[width=0.5\textwidth]{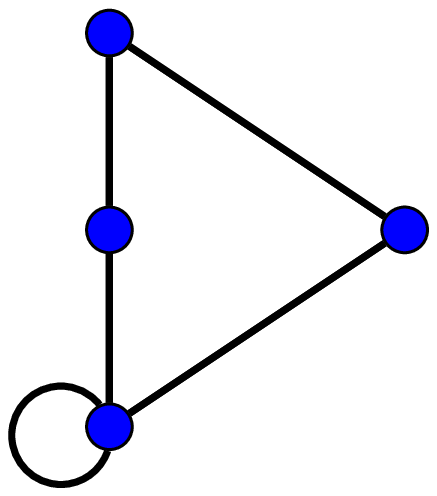}
\end{minipage}
} \quad
\subfloat[Digraph]{
\label{fig:mul_c}
\begin{minipage}[c]{0.3\textwidth}
\centering
\includegraphics[width=0.5\textwidth]{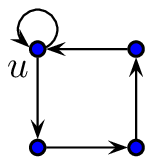}
\end{minipage}
}
\subfloat[Oriented graph]{
\begin{minipage}[c]{0.3\textwidth}
\centering
\includegraphics[width=0.6\textwidth]{orc8.eps}
\end{minipage}
}
\subfloat[Multigraph]{
\label{fig:mul_e}
\begin{minipage}[c]{0.3\textwidth}
\centering
\includegraphics[width=0.7\textwidth]{mul.eps}
\end{minipage}
}
\caption{Different classes of graphs.}
\label{fig:mul}
\end{figure}

The \emph{size of a graph $G$}\index{size of a graph}, denoted by $|G|$\index{$"| G"|$}, is the number of its vertices.
It is possible to consider an \emph{infinite graph}\index{infinite graph}, that is, a graph with infinitely many vertices. However,
in this thesis we consider only \emph{finite graphs}\index{finite graph}, that is, graphs of finite size. 
There are different classes of graphs. In this thesis, 
if not explicitly stated, by a graph we mean a finite simple graph possibly with a loop on some vertices. 



\subsubsection*{Adjacency and neighbourhoods}

Let $G=(V_G,E_G)$ be a graph, $x$ and $y$ be vertices in graph $G$. If $(x,y)\in E_G$, then we say vertex $x$ and vertex $y$ are \emph{adjacent}\index{adjacent}.
The \emph{degree}\index{degree} or \emph{valency}\index{valency} of a vertex $v$ in a graph $G$, denoted by $d(v)$\index{$d(v)$},
is the number of vertices which is adjacent to vertex $v$.
An \emph{isolated vertex}\index{isolated vertex} is a vertex with degree 0. Note that a simple loop is not an isolated
vertex in this case. For example, in Figure~\ref{fig:dis}, vertex $i$ is an isolated vertex,
vertex $o$ is an isolated loop, but not an isolated vertex.

\begin{figure}[!ht]
\centering
\includegraphics[width=0.28\textwidth]{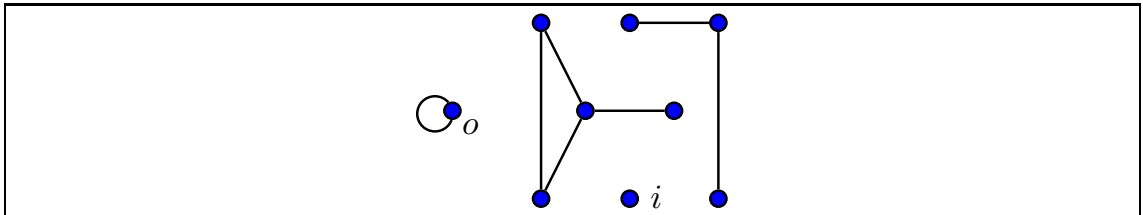}
 \caption{Disconnected graph with an isolated vertex and a loop.}
\label{fig:dis}
\end{figure}

Let $G$ be a graph, and $x$ be a vertex of graph $G$.
\begin{defi} 
The \emph{(open) neighbourhood}\index{open neighbourhood}\index{neighbourhood} of $x$ in $G$, denoted by $N_G(x)$\index{$N_G(x)$}, is the set of vertices adjacent to a vertex $x\in G$.
The \emph{closed neighbourhood}\index{closed neighbourhood} of $x$ in $G$, 
denoted by $N_G[x]$\index{$N_G[x]$}, is the open neighbourhood and $x$, i.e., $N_G[x]=N_G(x)\cup \{x\}$.
\end{defi}

For the sake of brevity we use $N(x)$\index{$N(x)$} and $N[x]$\index{$N[x]$} if it is clear which graph we refer to.
As an example, in Figure~\ref{fig:dis} $N(o)=o,N[o]=o, N(i)=\emptyset, N[i]=i$.

\subsubsection*{Subgraphs} \label{subsec:sub}
We say that $G'=(V',E')$ is a \emph{subgraph}\index{subgraph} of $G=(V,E)$, denoted by $G'\subseteq G$\index{$\subseteq$}, if $V'\subseteq V$ and $E'\subseteq E$. If $G'$ contains all edges of $G$ which join two vertices in $V'$, then 
$G'$ is an \emph{induced subgraph}\index{induced subgraph} of $G$.
Graph $G'$ is said to be the subgraph \emph{induced by $V'$} and denoted by $G[V']$\index{$G[V']$}. Induced subgraphs form a special class of subgraphs. Figure~\ref{fig:ind} shows the difference between subgraphs and induced subgraphs.

\begin{figure}[!ht]
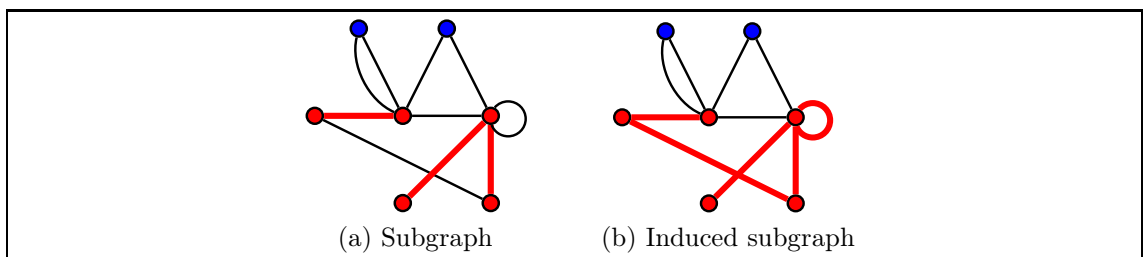

\centering
\subfloat[Subgraph]{%
  \label{fig:subg}
  \includegraphics[width=0.2\textwidth]{subg.eps}
}\qquad
\subfloat[Induced subgraph]{%
  \hspace*{4pt}%
  \includegraphics[width=0.2\textwidth]{indsubg.eps}
  \hspace*{4pt}%
}
\caption{Subgraph and induced subgraph.}
\label{fig:ind}
\end{figure}

\subsubsection*{Diameter and Connectivity}

A \emph{walk}\index{walk} in a graph $G$ is a sequence of vertices $v_0,v_1,\dots,v_k$ of $G$ such that $(v_{i-1},v_i)\in E_G$ for each $i=1,2,\dots,k$.
A \emph{path}\index{path} is a non-repeated walk, that is the vertices in a walk are distinct.
A \emph{cycle}\index{cycle} is a walk $v_0,v_1,\dots,v_k$ with $k\geq 2$ such that $v_0=v_k$ and the vertices $v_i$ for $0<i<k$ are distinct from each other and $x_0$.

\begin{figure}[!ht]
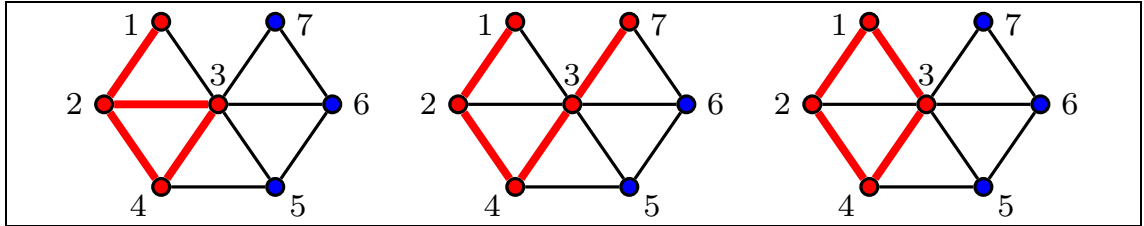

\centering
\subfloat{
\includegraphics[width=0.27\textwidth]{walk.eps}
}\quad
\subfloat{
\includegraphics[width=0.27\textwidth]{path.eps}
}\quad
\subfloat{
\includegraphics[width=0.27\textwidth]{cycle.eps}
}
\caption{An example of walk, path and cycle.}
\label{fig:walk}
\end{figure}

\begin{example}
In Figure~\ref{fig:walk}, sequence 1, 2, 4, 3, 2 is a walk (but not a path nor a cycle), sequence 1, 2, 4, 3, 7 is a
path, and sequence 1, 2, 4, 3, 1 is a cycle.
\end{example}

The \emph{distance}\index{distance} between two vertices $u,v$ in a graph, denoted by $d_G(u,v)$\index{$d_G(u,v)$}, is the number of edges in a shortest path connecting them; if there is no path connects vertices $u$ and $v$,
then the distance is infinite. The \emph{eccentricity}\index{eccentricity} $\epsilon$ of a vertex $v$ is the greatest distance between $v$ and any other vertex.
The \emph{radius}\index{radius} of a graph $G$, denoted by $rad(G)$\index{$rad(G)$}, is the minimum eccentricity
of any vertex. The \emph{diameter}\index{diameter} of a graph $G$, denoted by $diam(G)$\index{$diam(G)$}, is the maximum eccentricity of any vertex in the graph, that is, the greatest distance between any pair of vertices.

Graph connectivity is in the sense of topological space.
A graph is \emph{connected}\index{connected} if there is a path from any vertex to any other vertex in the graph.
A \emph{connected component}\index{connected component} of an undirected graph is a subgraph in which any two vertices are connected to each other by paths, and which is connected to no additional vertices in the subgraph.
For example, the graph shown in the Figure~\ref{fig:dis} has three connected components: an isolated vertex, a loop and a connected graph.

\subsection*{Graph constructions}

When we need a new graph, the most convenient way is to construct it from old graphs. There are many methods of construction, for instance, building subgraphs and induced subgraphs. In this subsection we introduce some common graph construction methods.

\subsubsection*{Complement graphs}
The \emph{complement}\index{complement} (or \emph{inverse}\index{inverse}) of a simple graph $G$, denoted by $\overbar{G}$\index{$\overbar{G}$}, is a simple graph on the same vertices such that two vertices of $\overbar{G}$ are adjacent if and only if they are not adjacent in $G$.

Figure~\ref{fig:peter} shows an example of Petersen graph (left) and its complement graph (right).
\begin{figure}[!ht]
\centering
 \includegraphics[width=0.6\textwidth]{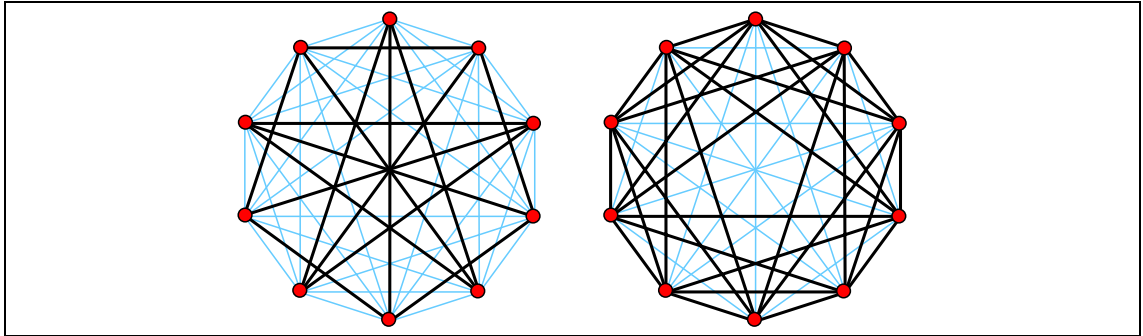}
\caption[Petersen graph]{Petersen graph and its complement\protect\footnotemark.}
\label{fig:peter}
\end{figure}

\footnotetext{This figure is from \textit{Complement graph. (2013, December 20). In Wikipedia, The Free Encyclopedia. Retrieved 09:44, February 20, 2014, from \url{http://en.wikipedia.org/w/index.php?title=Complement_graph&oldid=587001685}}.}

\subsubsection*{Vertex deletions}
Deleting or adding some vertices and edges are common methods to construct graphs. 

Let $G$ be a directed graph. If $W\subset V_G$, then we define {\em $G-W$}\index{$G-W$} to be $G[V_G\setminus W]$, i.e., the subgraph 
of $G$ obtained by deleting the vertices in $W$ and all edges adjacent with them.

\subsubsection*{Closed neighbourhood deletions} 

We define another method of vertex deletion. 
Let $N_G[x]$ be the closed neighbourhood\index{closed neighbourhood} of vertex $x$ in directed graph $G$. We define 
$\overbar{G_x}:=G-N[x]$\index{$\overbar{G_x}$}, i.e., the induced subgraph of $G$ that is
obtained by removing the closed neighbourhood of a vertex $x$.
Analogously, for a subset $S\subseteq V_G$ we define $\overbar S = G-\bigcup_{x\in S}N_G[x]$.
That is, the induced subgraph obtained by removing all vertices in $S$ and their
neighbours. Figure~\ref{fig:rem} shows an example of $\overbar G_u$.


\begin{figure}[!ht]
\centering
\begin{minipage}[c]{0.3\textwidth}
\centering
 \includegraphics[height=0.6\textwidth]{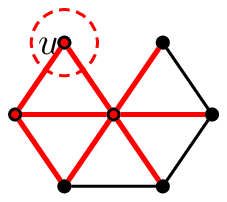} 
\end{minipage}
\hspace{0.5cm} 
 $\longrightarrow$
\begin{minipage}[c]{0.3\textwidth} 
 \centering
  \includegraphics[height=0.5\textwidth]{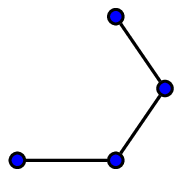}
  \end{minipage} 
\caption{Vertex deletion $\overbar G_u = P_4$.}
\label{fig:rem}
\end{figure}

Similarly we can also consider edge deleting and adding instead of vertex~\cite{Diestel2000}. However, since it is not needed in the sequel, we omit it here.

\subsubsection*{Duplications, Contractions}

\begin{defi}
The \emph{vertex duplication}\index{vertex duplication} of a graph $G$ by a vertex $x$ produces a new graph $G'=G\circ x$,
by adding a new vertex $x'$ and then fastening it to the neighbourhood of $x$ $(N_G(x')=N_G(x))$.
\end{defi}
Obviously, this operation can be iterated. 

\begin{defi}
The \emph{vertex multiplication}\index{vertex multiplication} of $G$ by the positive integer vector $h=(h_1,\dots,h_n)$ is
the graph $H=G\circ h$ whose vertex set consists of $h_i$ copies of $x_i$, and each copy of $x_i$
is adjacent to a copy of $x_j$ in $H$ if and only if $x_i$ is adjacent to $x_j$ in $G$.
\end{defi}

\begin{figure}[!ht]
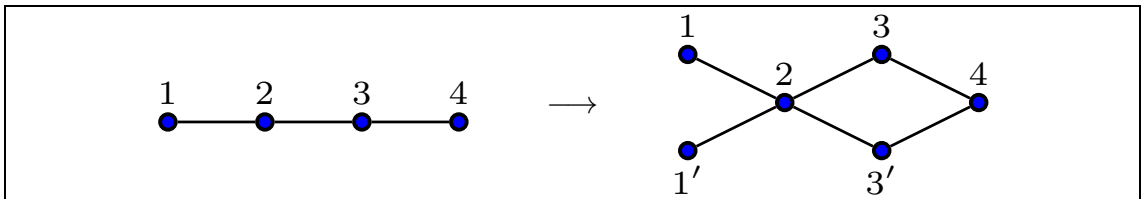

\centering
\begin{minipage}[c]{0.4\textwidth}
\centering
\includegraphics[width=0.7\textwidth]{dupl1.eps}
\end{minipage}
$\longrightarrow$
\begin{minipage}[c]{0.4\textwidth}
\centering
\includegraphics[width=0.7\textwidth]{dupl2.eps}
\end{minipage}
\caption{Vertex multiplication: $P_4\circ(2,1,2,1)$.}
\label{fig:dup}
\end{figure}

Duplication always enlarges graphs. Dually, we can also reduce a graph.

\begin{defi} 
The \emph{contraction}\index{contraction} of a pair of vertices $v_i$ and $v_j$ of a graph produces a graph in
which the two vertices $v_1$ and $v_2$ are replaced with a single vertex $v$ such 
that $v$ is adjacent to the union of the vertices to which $v_1$ and $v_2$ were originally adjacent. 
\end{defi}


\begin{figure}[!ht]
\centering
\begin{minipage}[c]{0.4\textwidth}
\centering
\includegraphics[width=0.7\textwidth]{dupl2.eps}
\end{minipage}
$\longrightarrow$
\begin{minipage}[c]{0.4\textwidth}
\centering
\includegraphics[width=0.6\textwidth]{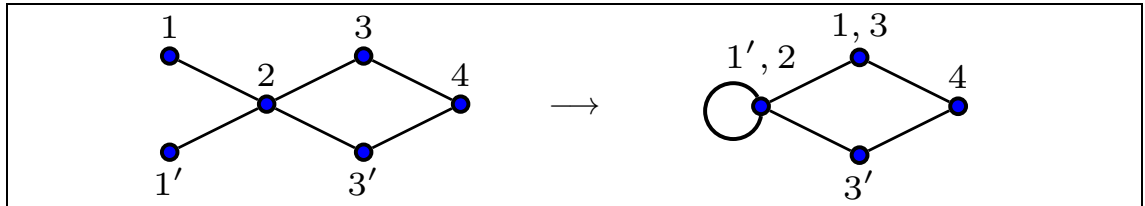}
\end{minipage}
\caption{Vertex contraction \{$1,3$\} and \{$1',2$\}.}
\label{fig:contr2}
\end{figure}

\begin{example}
We give two examples:
 \begin{enumerate}
  \item Figure~\ref{fig:dup} shows a vertex multiplication of $P_4$ by duplication vertex 1 and 3.
  \item In vertex contraction, it does not matter if $v_1$ and $v_2$ are connected by an edge or not.
If they are, we produces a loop, see Figure~\ref{fig:contr2}.
 \end{enumerate}
\end{example}

\begin{rem}
In this thesis, contraction is referred to as vertex contraction,
even though in some literature~\cite{Diestel2000} contraction is referred to as edge contraction or is ambiguous about the distinction between vertex
contraction and edge contraction. 
\end{rem}

\subsubsection*{Quotient graphs}

Forming a quotient graph is another way of constructing new graphs from old. A \emph{partition}
\index{partition} $\mathcal{P}$ of the vertices set $V_G$ is a division
into disjoint subsets: $\mathcal{P}=\{S_1,\dots, S_k\}$, i.e.\ $S_i\cap S_j=\emptyset$ for any $i,j$ and $\bigcup_{1\leq i\leq k}{S_i}=V_G$.
\emph{Quotient graph}\index{quotient graph} induced by the partition $\mathcal{P}$, written $G/\mathcal{P}$,
is the graph $\widetilde{G}=(\widetilde{V},\widetilde{E})$ with vertex set
$ \widetilde{V}= \{S_1,\dots,S_k\}$, in which
$ (S_i,S_j)\in \widetilde{E} $ if and only if
$ \exists v \in S_i, w\in S_j$ such that $(v,w) \in E.$

\begin{rem}
 In our definition we allow loops, i.e., for the case $i=j$, if there exists $(v,w)\in E$ and $(v,w)\in S_i$, then there is a loop on $S_i$ in the quotient graph.
\end{rem}

\begin{figure}[!ht]
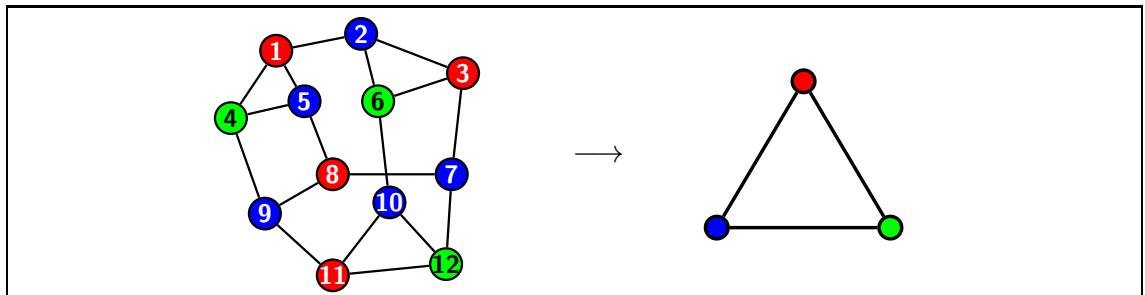

\centering
\begin{minipage}[c]{0.3\textwidth}
\centering
\includegraphics[width=0.8\textwidth]{Frucht.eps}
\end{minipage}
\hspace{0.5cm}
$\longrightarrow$
\begin{minipage}[c]{0.3\textwidth}
\centering
\includegraphics[width=0.6\textwidth]{K_3.eps}
\end{minipage}
\caption{Quotient graph of Frucht graph (left) is $K_3$.}
\label{fig:quo}
\end{figure}

If we take the partition $\mathcal{P}=\{S_1,S_2,S_3\}$ by different colours of Frucht graph, i.e.\ $S_1=\{1,3,8,11\}$ (red vertices), $S_2=\{2,5,7,9,10\}$ (blue vertices), $S_3=\{4,6,12\}$ (green vertices), then the quotient graph of Frucht graph is $K_3$ (Figure~\ref{fig:quo}).

\subsection*{Graph classes}

In graph theory, some classes of graphs are frequently considered, due to their simple but well-behaved structural properties. For instance, connected graphs are a nice class of graphs. In this subsection we introduce some of these concepts which we will need for later use.

\subsubsection*{Path graphs}
A path in a graph is formed by a sequence of distinct vertices $v_1,\dots,v_n$ such that $(v_i,v_{i+1})$ is an edge for all $1\leq i< n$. A \emph{path graph}\index{path graph} (or simply \emph{path}\index{path}) with $n$ vertices, denoted by $P_n$\index{$P_n$}, is a graph induced by a path. 
As in most graph theory literature, we do not make a distinction between a path and its path graph.

An \emph{oriented path $\overrightarrow{P_n}$}\index{oriented path} with $n$ vertices,
is a directed graph $G$ where $V_G=\{v_1,\ldots,v_n\}$ and for
every $1\leq i<n$ either $(v_i,v_{i+1})\in E_G$ or $(v_{i+1},v_i)\in E_G$ (but not both),
and there are no other edges. Thus an oriented path is any orientation of an undirected path. 
We denote by $\Path$\index{$\Path$} the class of all finite oriented paths.
A \emph{directed path}\index{directed path}\index{directed path} with $n$ vertices, denoted by $\overrightarrow{P_n}$\index{$\overrightarrow{P_n}$}, is a path which the edges are oriented in the same direction, i.e.\ $(v_i,v_{i+1})\in E_G$ or $(v_{i+1},v_i)\in E_G$ for all $1\leq i<n$.
A path\index{path} $P_n$ can be seen as an underlying graph of an oriented path $\overrightarrow{P_n}$ and a directed path.

Figure~\ref{fig:orp} shows an oriented path, a directed path and a path.

\begin{figure}[!ht]
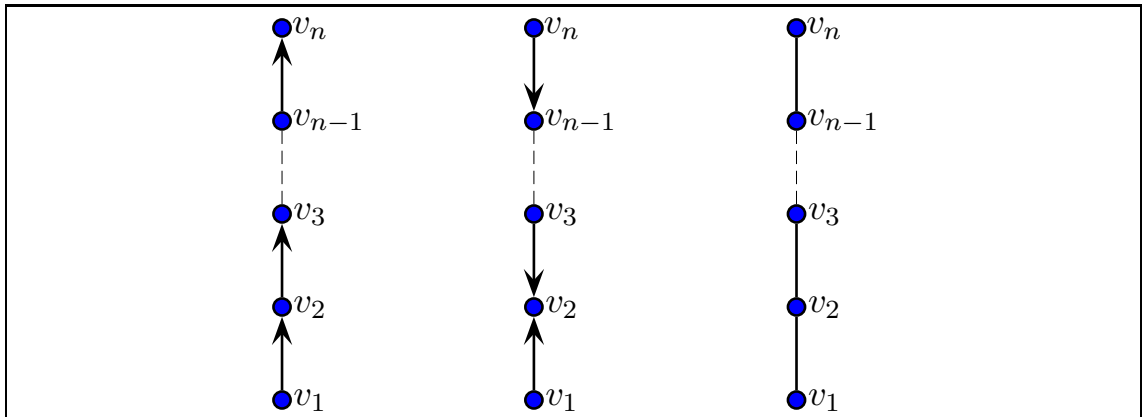

 \centering
 \includegraphics[width=0.08\textwidth]{orianted_path.eps}\hspace{20 mm}
  \includegraphics[width=0.08\textwidth]{dipath.eps} \hspace{20 mm}
  \includegraphics[width=0.08\textwidth]{pathg.eps}
 \caption{Directed path, oriented path and path.}
\label{fig:orp}
\end{figure}

\subsubsection*{Cycle graphs}
A \emph{(unoriented) cycle graph}\index{cycle graph} (or simply \emph{cycle}\index{cycle}\footnote{Like path, we also do not distinct cycle graph and cycle.}) with $n$ ($n\geq 3$) vertices $\{1,2,\dots,n\}$, denoted by $C_n$\index{$C_n$},
is a simple graph with edges $(i,i+1)$ for all $1 \leq i< n$ and $(1,n)$.
We denote by $\Cycle$\index{$\Cycle$} the class of graphs formed by all $C_k$, $k\geq 3$, and by $\Cycles$\index{$\Cycles$} the class of graphs formed by disjoint union of finitely many graphs in $\Cycle$.

\begin{figure}[!ht]
\centering
\subfloat[$C_3$]{
\begin{minipage}[c]{0.2\textwidth}
\centering
\includegraphics[width=0.7\textwidth]{K_32.eps}
\end{minipage}
} \quad
\subfloat[$C_6$]{
\begin{minipage}[c]{0.3\textwidth}
\centering
\includegraphics[width=0.8\textwidth]{C_6.eps}
\end{minipage}
} 
\caption{Cycle graphs.}
\label{fig:cyc}
\end{figure}

A \emph{directed cycle}\index{directed cycle} is formed by a cycle with edges oriented in one direction. 

\begin{defi}
\emph{Oriented cycle}\index{oriented cycle} with $n$ vertices, denote by $\overrightarrow{C}_n$\index{$\overrightarrow{C}_n$}, is an orientation of a cycle graph, with all the edges being oriented in the same direction. We denote by $\DiCycle$\index{$\DiCycle$} the class of directed graphs formed by all $\overrightarrow{C}_k$, $k\geq 3$, and $\DiCycles$\index{$\DiCycles$} the class of directed graphs formed by disjoint union of finitely many graphs in $\DiCycle$.
\end{defi}

\begin{figure}[!ht]
\centering
 \includegraphics[width=0.2\textwidth]{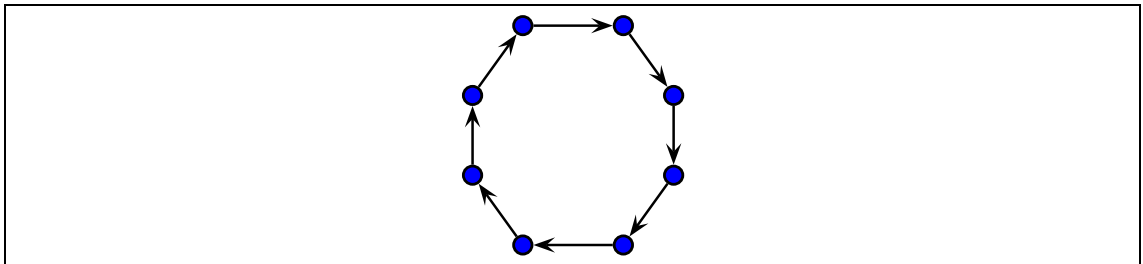}
\caption{Oriented cycle with 8 vertices.}
\end{figure}

\subsubsection*{Reflexive, transitive graphs}
A graph is \emph{reflexive}\index{reflexive} if for every vertex there is a loop.
A graph is \emph{transitive}\index{transitive} if for any pair of edges $(v_i,v_j)$ and $(v_j,v_k)$, there is an edge $(v_i,v_k)$.
A \emph{directed acyclic graph}\index{directed acyclic graph} (or shortly \emph{DAG}\index{DAG}) is a directed graph with no directed cycles.
Reflexive graphs, transitive graphs and directed acyclic graphs are representations of relational structure which we will introduce in the second part of this chapter.

\begin{figure}[!ht]
\centering
 \includegraphics[width=0.3\textwidth]{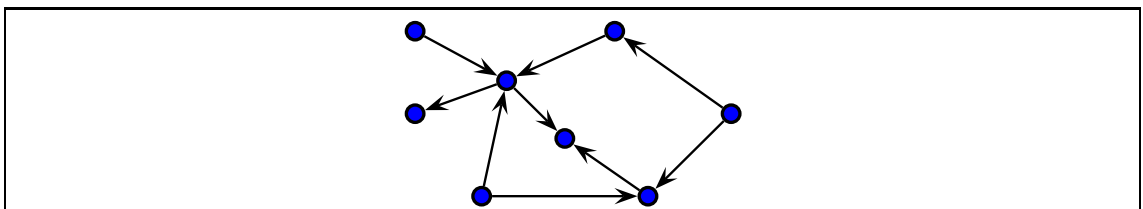}
\caption{A directed acyclic graph.}
\end{figure}

\subsubsection*{Complete graphs}
 A \emph{complete graph}\index{complete graph} with $n$ vertices, denoted by $K_n$\index{$K_n$},
is a simple graph in which every pair of distinct vertices is connected by a unique edge.
For convenience, we denote the vertices of $K_n$ by $1,2,\dots,n$\index{$1,2,\dots,n$}.

Figure~\ref{fig:complete} shows the complete graphs with vertex number 1 to 4.
 
\begin{figure}[!ht]
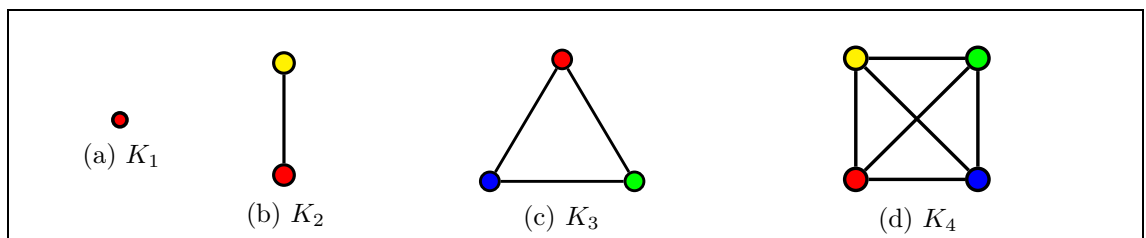

\centering
\subfloat[$K_1$]{
\begin{minipage}[c]{0.1\textwidth}
\centering
\includegraphics[width=0.2\textwidth]{K_1.eps}
\end{minipage}
} \quad
\subfloat[$K_2$]{
\begin{minipage}[c]{0.1\textwidth}
\centering
\includegraphics[width=0.25\textwidth]{K_2.eps}
\end{minipage}
} \quad
\subfloat[$K_3$]{
\begin{minipage}[c]{0.3\textwidth}
\centering
\includegraphics[width=0.5\textwidth]{K_3.eps}
\end{minipage}
}
\subfloat[$K_4$]{
\begin{minipage}[c]{0.3\textwidth}
\centering
\includegraphics[width=0.45\textwidth]{K_4.eps}
\end{minipage}
}
\caption{Complete graphs.}
\label{fig:complete}
\end{figure}

\subsubsection*{$M$-partite graphs}
A \emph{$m$-partite graph}\index{$m$-partite graph} is a simple graph whose vertices can be decomposed into $k$ disjoint
sets $U_1, \dots, U_m$, such that every edge connects a vertex in $U_i$ to one in
$U_j$ for some distinct $i,j\in \{1,2,\dots,m\}$. A \emph{complete $m$-partite graph}\index{complete $m$-partite graph}
is a $m$-partite graph where every vertex in $U_i$ is connected to every vertex in $U_j$.

See Figure~\ref{fig:cnp} for an example of complete 3-partite graph.

\begin{figure}[!ht]
 \centering
 \includegraphics[width=0.5\textwidth]{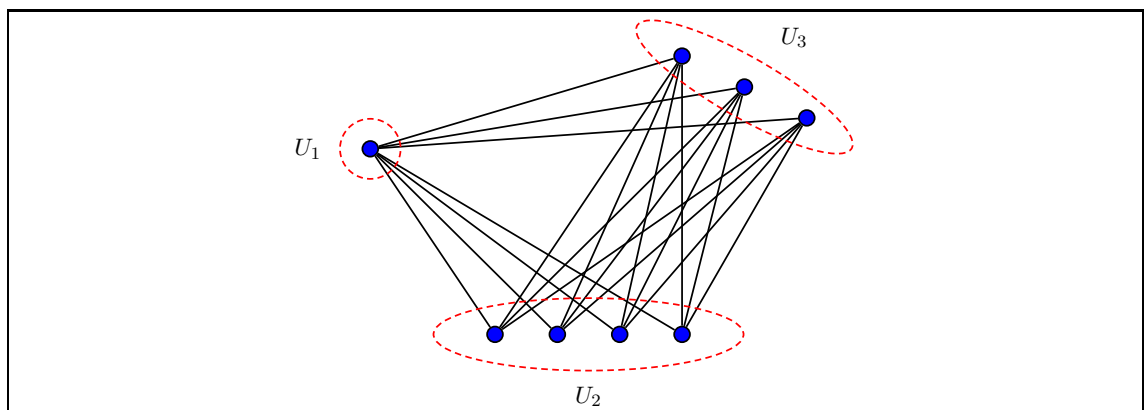}
 \caption{A complete 3-partite graph.}
\label{fig:cnp}
\end{figure}
 
\subsubsection*{Trees}

A \emph{tree}\index{tree} is a connected simple graph which has no cycles. 
A \emph{oriented tree}\index{oriented tree} is a directed tree having no symmetric pair of directed edges. Figure~\ref{tree} shows an oriented tree (left) and a tree (right).

\begin{figure}[!ht]
 \begin{minipage}[c]{0.3\textwidth}
 \centering
 \includegraphics[width=0.8\textwidth]{otree.eps}
 \end{minipage} \hspace{1cm}
\begin{minipage}[c]{0.5\textwidth}
\centering
 \includegraphics[width=1\textwidth]{tree2.eps}
 \end{minipage}
 \caption{Trees.}
\label{tree}
\end{figure}

\subsubsection*{Graph colourings}
A \emph{(vertex) colouring}\index{vertex colouring}\index{colouring} is a way of colouring
the vertices of a simple graph such that no two adjacent vertices share the same colour.
A colouring using at most $k$ colours is called a \emph{(proper) $k$-colouring}\index{proper $k$-colouring}\index{$k$-colouring}.
The \emph{chromatic number}\index{chromatic number} of a simple graph $G$, denoted by $\chi(G)$\index{$\chi(G)$}, is the smallest $k$ such that $G$ admits a $k$-colouring.

Figure~\ref{fig:complete} shows $n$-colourings of the graph $K_n$ for $n=1,\dots,4$.

\begin{example}
 It is non-trivial to determine the chromatic number of an arbitrary graph, but for some certain graphs, the chromatic numbers are known:
\begin{enumerate}
 \item $\chi(K_n)=n$.
 \item $\chi(P_n)=2$.
 \item $\chi(C_{2k+1})=3$, $\chi(C_{2k})=2$.
 \item The chromatic number of the Frucht graph (Figure~\ref{fig:quo} left) as well as the Petersen graph (Figure~\ref{fig:peter}) is 3.
 \end{enumerate}
\end{example}

\subsubsection*{Graph isomorphisms} \label{sub:iso}
An \emph{isomorphism}\index{isomorphism} $f$ of graphs $G$ and $H$ is a bijection between the vertex sets of $G$ and $H$ such that any two vertices $u$ and $v$ of $G$ are adjacent in $G$ if and only if $f(u)$ and $f(v)$ are adjacent in $H$.
In the case when $G$ and $H$ are the same graph, an isomorphism is called an automorphism. 

\begin{table}[t]
 \centering
 \bgroup
 \def\arraystretch{1.2}
\begin{tabular}{|c|c|c|} \hline
Graph $G$ & Graph $H$ & An isomorphism between $G$ and $H$\\ \hline
\multirow{8}{*}{\includegraphics[height=4cm]{Graph_iso_a.eps}} & \multirow{8}{*}{\includegraphics[height=4cm]{Graph_iso_b.eps}} & $f(a)$ = 1\\&& $f(b)$ = 6\\ &&$f(c)$ = 8\\ &&$f(d)$ = 3\\ &&$f(g)$ = 5\\ &&$f(h)$ = 2\\ &&$f(i)$ = 4\\ &&$f(j)$ = 7\\ \hline
\end{tabular}
\egroup
\caption{Graph isomorphism~\protect\footnotemark.}
\end{table}

\footnotetext{This example is from \textit{Graph isomorphism. (2014, February 18). In Wikipedia, The Free Encyclopedia. Retrieved 09:48, February 20, 2014, from \url{http://en.wikipedia.org/w/index.php?title=Graph_isomorphism&oldid=596007728}}.}

We mention that the graph isomorphism problem is one of the few standard problems in computational complexity theory belonging to NP, but not known to belong to either of its well-known (and, if P $\neq$ NP, disjoint) subsets: P and NP-complete.

\subsection*{Matrix representations of graphs}
\subsubsection*{Adjacency matrices}
There are several matrix representations of a graph. The most intuitive one is its adjacency matrix.
\emph{Adjacency matrix}\index{adjacency matrix} of a graph $G$, denoted by $A_G$\index{$A_G$}, is a $|G|\times |G|$ matrix. If there is an edge from a vertex $x$ to a vertex $y$ then the element $a_{x,y}$ is 1, otherwise it is 0. If graph $G$ is a multigraph or weighted graph, then the elements $a_{x,y}$ are taken to be the number of edges between the vertices or the weight of the edge ($x$, $y$), respectively.
The adjacency matrix is in many cases very useful. For example, in computing, it makes finding subgraphs and reversing a directed graph easy.

\begin{example}
 
The adjacency matrices of Fig.~\ref{fig:mul_c} and Fig.~\ref{fig:mul_e} are as below:

\begin{gather*}
\begin{pmatrix}
1 & 0 & 0 & 1\\
1 & 0 & 0 & 0\\
0 & 1 & 0 & 0\\
0 & 0 & 1 & 0
\end{pmatrix} \qquad 
\begin{pmatrix}
1 & 0 & 0 & 1\\
1 & 0 & 0 & 0\\
0 & 1 & 0 & 0\\
0 & 0 & 1 & 0
\end{pmatrix}
\end{gather*}
\end{example}

Suppose two (directed) graphs $G$ and $H$ whose adjacency matrices $A_G$ and $A_H$ are given. Then $G$ and $H$ are isomorphic if and only if there exists a permutation matrix $P$ such that
$P A_G P^{-1} = A_H$.

\subsubsection*{DDMs and DRMs} \label{pra:DeMat}
Let $G$ be a connected graph.
An \emph{equitable partition}\index{equitable partition} of $G$ is a partition $B_1,\dots,B_k$ of its vertex set such that any vertex in $B_i$ has the same number of neighbours in $B_j$ for any $i,j$. 
A \emph{degree decomposition matrix}\index{degree decomposition matrix} (also called \emph{degree matrix}\index{degree matrix}\footnote{It is different from the degree matrix which contains information about the degree of each vertex.} in~\cite{Fiala2008b} or \emph{degree partition matrix}\index{degree partition matrix}) of $G$ is a square matrix $M=\{m_{i,j}\}$ of order $k$ such that there is an equitable partition of $V_G$ into blocks $B_1,\dots,B_k$ satisfies $|N_G(u)\cap B_j|=m_{i,j}$ for all $u\in B_i$ and $1\leq i,j\leq k$.

Note that a graph $G$ can allow several degree matrices, with an adjacency matrix itself being the largest one.
An equitable partition is said to be \emph{finer}\index{finer} than another one when every class of its partition is a
subset of some class of the latter one, e.g., the partition into singletons is finer than any other equitable partition
of the same graph.
In this case a canonical ordering can be imposed on the blocks, so the corresponding degree matrix, called the 
\emph{degree refinement matrix}\index{degree refinement matrix}, or \emph{DRM}\index{DRM}, denoted by $drm(G)$\index{$drm(G)$}, is also defined uniquely~\cite{Fiala2005}.

There is an efficient algorithm for computing the degree refinement matrix of a given graph~\cite{Fiala2005}.

\begin{figure}[!ht]
\centering
\includegraphics[width=0.25\textwidth]{degree.eps}
\caption{An example.}
\label{fig:degree}
\end{figure}

\begin{example}
 The graph showed in Fig.~\ref{fig:degree} has degree decomposition matrices:
\begin{gather*}
\begin{pmatrix}
0 & 0 & 1 & 0 & 0 & 0 & 0\\
0 & 0 & 1 & 0 & 0 & 0 & 0\\
1 & 1 & 0 & 1 & 1 & 0 & 0\\
0 & 0 & 1 & 0 & 1 & 0 & 0\\
0 & 0 & 1 & 1 & 0 & 1 & 1\\
0 & 0 & 0 & 0 & 1 & 0 & 0\\
0 & 0 & 0 & 0 & 1 & 0 & 0
\end{pmatrix} \qquad
\begin{pmatrix} 
 1 & 0 & 0 & 0 & 0\\
 2 & 0 & 1 & 1 & 0\\
 0 & 1 & 0 & 1 & 0\\
 0 & 1 & 1 & 0 & 2\\
 0 & 0 & 0 & 1 & 0
\end{pmatrix} \quad \text{and} \quad
\begin{pmatrix}
 0 & 1 & 0\\
 2 & 0 & 1\\
 0 & 2 & 0
\end{pmatrix}
\end{gather*}
which are the adjacency matrix with partitions
$B_1=\{1,2\},B_2=\{3\},B_3=\{4\},B_4=\{5\},B_5=\{6,7\}$,
and $B_1=\{1,2,6,7\},B_2=\{3,5\},B_3=\{4\}$, respectively. The last one is the degree refinement matrix.
\end{example}

\subsubsection*{Matchings}
A \emph{matching}\index{matching} $M$ in a graph $G$ is a set of edges which do not share endpoints.
$M$ is a \emph{perfect matching}\index{perfect matching} $M$ if any vertex of $G$
can be found as an endpoint in $M$.

\begin{figure}[!ht]
 \centering
 \includegraphics[width=0.17\textwidth]{matching1.eps}
\qquad \qquad \qquad
 \includegraphics[width=0.17\textwidth]{matchin2.eps}
 \caption{Matching.}
\label{fig:matching}
\end{figure}

\begin{figure}[!ht]
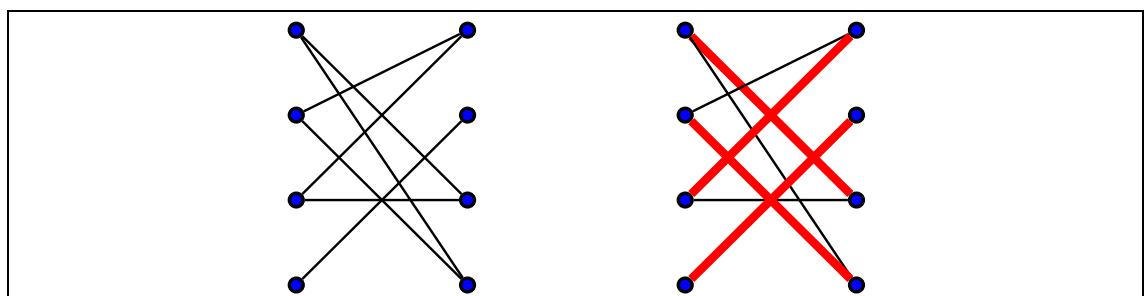

 \centering
 \includegraphics[width=0.17\textwidth]{matching4.eps}
\qquad \qquad \qquad
 \includegraphics[width=0.17\textwidth]{matching3.eps}
 \caption{Perfect matching.}
\label{fig:pmatching}
\end{figure}

Fig.~\ref{fig:matching} shows a matching, but it is not a perfect matching. Actually there is no perfect matching in the graph shown in Fig.~\ref{fig:matching},
while Fig.~\ref{fig:pmatching} shows a perfect matching.

Hall's marriage theorem, or simply Hall's theorem, gives a necessary and sufficient condition for being able to select a distinct element from each of a collection of finite sets. Here we state a simpler version.

\begin{thm}[\textbf{Hall's marriage theorem}]
\label{thm:hallo} 
Given a finite bipartite graph $G$, with bipartite sets $X$ and $Y$ of equal size, there is a perfect matching in $G$ if and only if for any subset $W$ of $X$, $|W|\leq |N_G(W)|$.
\end{thm}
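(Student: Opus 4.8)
The plan is to prove the two implications separately. For the forward direction, if $M$ is a perfect matching then each $x\in W$ is matched to a distinct vertex $m(x)\in N_G(W)$, and injectivity of $x\mapsto m(x)$ gives $|N_G(W)|\geq|W|$. The content is in the converse, which I would prove by induction on $n=|X|=|Y|$. The base case $n=1$ is immediate: applying the hypothesis to $W=X$ produces a neighbour of the unique vertex of $X$, and that single edge is a perfect matching.

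For the inductive step ($n\geq2$), assume $|W|\leq|N_G(W)|$ for all $W\subseteq X$ and split into two cases. \emph{Slack case:} every nonempty proper $W\subsetneq X$ satisfies $|N_G(W)|\geq|W|+1$. Then pick any $x\in X$ and any $y\in N_G(x)$ (which exists by the hypothesis applied to $\{x\}$) and delete both vertices to get a bipartite graph $G'$ on parts of size $n-1$. For $W\subseteq X\setminus\{x\}$ we have $|N_{G'}(W)|\geq|N_G(W)|-1\geq|W|$, so $G'$ satisfies Hall's condition; by induction it has a perfect matching $M'$, and $M'\cup\{xy\}$ is a perfect matching of $G$.

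\emph{Tight case:} some nonempty proper $W_0\subsetneq X$ has $|N_G(W_0)|=|W_0|$. The induced bipartite subgraph on $W_0$ and $N_G(W_0)$ satisfies Hall's condition and is smaller, so by induction it has a perfect matching $M_1$. It remains to match $X\setminus W_0$ into $Y\setminus N_G(W_0)$, and the key observation is that the leftover bipartite graph $G''$ on these parts again satisfies Hall's condition: for $W\subseteq X\setminus W_0$ its $G''$-neighbourhood is precisely $N_G(W\cup W_0)\setminus N_G(W_0)$, so
\[
|N_{G''}(W)|=|N_G(W\cup W_0)|-|N_G(W_0)|\geq|W\cup W_0|-|W_0|=|W|,
\]
using the hypothesis for $W\cup W_0$ together with $|N_G(W_0)|=|W_0|$. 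By induction $G''$ has a perfect matching $M_2$, and $M_1\cup M_2$ is a perfect matching of $G$.

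I expect the main obstacle to be getting the case analysis right and, in the tight case, justifying the identity $N_{G''}(W)=N_G(W\cup W_0)\setminus N_G(W_0)$ and hence the inherited Hall condition; once these are in place, the assembly of the sub-matchings in both cases is routine bookkeeping.
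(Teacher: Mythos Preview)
Your proof is correct and follows essentially the same classical induction-with-two-cases argument as the paper. If anything, your version is slightly tidier: the paper additionally picks $W_0$ of minimum cardinality in the tight case, but as your argument shows this is not needed, since Hall's condition on both induced subgraphs follows directly from the hypothesis and the equality $|N_G(W_0)|=|W_0|$.
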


\begin{proof}
If there is a perfect matching in a bipartite graph $G$, then for any subset $W$ of $X$, $|W|\leq |N_G(W)|$.
Conversely, if for any subset $W$ of $X$, $|W|\leq |N_G(W)|$, then any vertex $x$ in $X$ has at least one neighbor. Otherwise taking $W=x$ the inequality does not hold. Similarly any vertex $y$ in $Y$ has at least one neighbor, otherwise taking $W=X$ the inequation does not hold. We prove by induction on $|X|$. Suppose $|X|=n$. For $n=1$ it is trivial. Assume that the statement is true for all bipartite graphs satisfying $|X|=|Y|=k$. We want to prove that it is also true for $|X|=|Y|=k+1$.
Take vertex $v$ in $X$. We consider two casses.
Case 1: Suppose $\forall W\subseteq X\setminus \{v\}$, $|N_G(W)|\geq |W|+1$. In this case we can match $v$ with any of its neighbors and it is still a perfect matching.
Case 2: There exists a subset $W\subseteq X\setminus \{v\}$ such that $|N(W)|=|W|$. Pick a minimum cardinality set satisfying this property. By induction and the minimality of $W$, we know that $W$ can be matched to $N(W)$. Now consider $X\setminus W$, take a set $W'\subseteq X\setminus W$. Then $W'$ muss have neighbors outside $N(W)$ otherwise the inequality does not hold for $W\cup W'$. It has to have at least $|W'|$ neighbors outside $N(W)$. Becase of the arbitrariness of $W'$, the inequality holds for the graph induced by $X-W$ and $Y-N_G(W)$, by induction there is a perfect matching in the graph. So combining this matchings we have a perfect matching on $G$.
\end{proof}

\section{Ordered sets}
In the language of graph theory, a \emph{(partially) ordered set}\index{partially ordered set}\index{ordered set} is a transitive, reflexive directed acyclic graph. 
Orders are ubiquitous in everyday life. Order theory is related with many other mathematical branches, such as universal algebra, topology and category theory. It also has broad application in many other fields, such as the humanities and social sciences, computer science (e.g. programs, binary strings, information orderings)~\cite{Davey2002}.
Several introductory books on this topic are available, such as the one by Davey and Priestley~\cite{Davey2002}.
In this section we introduce the definitions and elementary facts about orders.

\newpage

\subsection*{Basic definitions}

\begin{definition}
Let $P$ be a set. An \emph{order}\index{order} (or \emph{partial order}\index{partial order})
is a binary relation $\leq$ on $P$ such that, for all $a,b,c\in P$, the following properties are satisfied:
\begin{enumerate}
 \item \emph{reflexivity}\index{reflexivity}: $a\leq a$,
 \item \emph{antisymmetry}\index{antisymmetry}: $a\leq b$ and $b\leq a$ imply $a=b$,
 \item \emph{transitivity}\index{transitivity}: $a\leq b$ and $b\leq c$ imply $a\leq c$.
\end{enumerate}
A set $P$ equipped with an order relation $\leq_P$ is called an \emph{ordered set}\index{ordered set} (or \emph{partial order set}\index{partial order set}, \emph{poset}\index{poset}~\footnote{The name of poset is an abbreviation for partially ordered set which was coined by Garrett Birkhoff in his influential book \textit{Lattice Theory}~\cite{Birkhoff1995}.}), denoted by $(P,\leq_P)$\index{$(P,\leq_P)$}, if the relation $\leq_P$ on $P$ is reflexive, antisymmetric and transitive.
A relation $\leq_P$ on a set $P$ which is reflexive and transitive but not necessarily antisymmetric is called a \emph{quasi-order}\index{quasi-order} or \emph{pre-order}\index{pre-order}. 
\end{definition}

\begin{example}
Standard examples of orders arising in mathematics include:
\begin{enumerate}
\item The set of real numbers ordered by the standard less-than-or-equal relation.
\item The set of natural numbers equipped with the relation of divisibility.
\item The set of subsets of a given set (its power set) ordered by inclusion.
\item The vertex set of a directed acyclic graph ordered by reachability.
\end{enumerate}
\end{example}

\subsubsection*{Visualization of an order}
An order can be visually represented by a \emph{Hasse diagram}\index{Hasse diagram}, which is a graph drawing where the vertices are the elements of the order and the order relation is indicated by both the edges and the relative positioning of the vertices: if two elements $x\leq y$, then there exists a path from $x$ to $y$ and $y$ sits up of $x$.

\begin{figure}[!ht]
 \centering
 \includegraphics[height=0.3\textwidth]{Lattice60.eps}
\qquad
 \includegraphics[height=0.3\textwidth]{hasse2.eps}
\caption{Hasse digrams.}
\label{fig:Hasse}
\end{figure}

\begin{example}
Figure~\ref{fig:Hasse} shows two Hasse diagrams:
\begin{enumerate}
\item The Hasse diagram of the divisor order (right): the set of all divisors of 60 ordered by divisibility,
\item the Hasse diagram of the inclusion order (left): the set of the power set of $\{1,2,3\}$ equipped with subset relation $\subseteq$.
Note that even though $\{3\} < \{1,2,3\}$ (since $\{3\} \subset \{1,2,3\}$), there is no edge directly between them because there are inbetween elements:
$\{2,3\}$ and $\{1,3\}$. However, there still exists a path from $\{3\}$ to $\{1,2,3\}$. 
\end{enumerate}
\end{example}

\subsection*{Functions between orders}

\begin{defi}
Let $(P,\leq_P)$ and $(Q,\leq_Q)$ be two orders, and $f$ is a mapping from $(P,\leq_P)$ to $(Q,\leq_Q)$:
\begin{enumerate}
\item $f$ is an \emph{order preserving}\index{order preserving} or \emph{monotone}\index{monotone}, if $a\leq_P b$ implies $f(a)\leq_Q f(b)$ in $Q$~\footnote{Like homomorphism in graphs}. 
\item $f$ is an \emph{order reflecting}\index{order reflecting}, if $f(a)\leq_Q f(b)$ implies $a\leq_P b$~\footnote{In the language of graph theory, it is a full homomorphism.}. 
\item $f$ is an \emph{order embedding}\index{order embedding} $E$ if it is both order preserving and order reflecting, i.e.\ $a\leq_P b$ if and only if $f(a)\leq_Q f(b)$ for every $a,b\in P$.
\item An \emph{order isomorphism}\index{order isomorphism} is a surjective order embedding. 
\item An \emph{order automorphism}\index{order automorphism} is an order isomorphism from an ordered set to itself.
\end{enumerate}
\end{defi}

\begin{example}
We again give some examples to illustrate these concepts:
\begin{enumerate}
 \item The mapping that maps a natural number to its successor is clearly an order preserving with respect to the natural order; any mapping from a discrete order is an order preserving, for example, consider any mapping from a set ordered by the identity order `='.
 \item Mapping each natural number to the corresponding real number gives an example for an order embedding. 
 \item There is an order isomorphism between two orders with the same Hasse diagrams.
 \item The mapping that maps a natural number to its successor is an order automorphism with respect to the natural order.
\end{enumerate}
\end{example}

\begin{defi}
Let $(P,\leq_P)$ and $(Q,\leq_Q)$ be two orders, if there is an order embedding $E$ from $(P,\leq_P)$ to $(Q,\leq_Q)$,
we say $(P,\leq_P)$ is a \emph{suborder}\index{suborder} of $(Q,\leq_Q)$.
\end{defi}


Note that the suborder or order preserving relationship does not direct correspond to the relationship of their Hasse diagram. This is simply because the Hasse digraphs miss all the transitive edges. 

The visualization of orders with Hasse diagrams has a straightforward generalization: instead of displaying lesser elements below greater ones, the direction of the order can also be depicted by giving directions to the edges of a graph. In this way, each order is seen to be equivalent to a directed acyclic graph (DAG), where the vertices are the elements of the order and there is a directed path from $a$ to $b$ if and only if $a\leq b$. For example, Figure~\ref{fig:DAC} is a directed acyclic graph representation of the inclusion order in the second example.

\begin{figure}[!ht]
 \centering
 \includegraphics[height=0.3\textwidth]{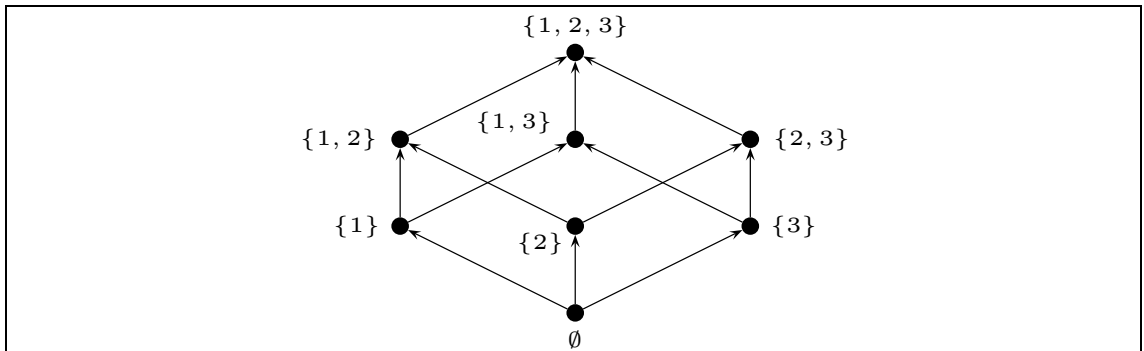}
\caption{Directed acyclic graph.}
\label{fig:DAC}
\end{figure}

\subsection*{Basic properties}
\subsubsection*{Chains and antichains}
\begin{defi}
An ordered set $(P,\leq_P)$ is a \emph{chain}\index{chain}, or \emph{totally ordered set}\index{totally ordered set}, if for all $x,y\in P$, either $x\leq_P y$ or $y\leq_P x$.
An ordered set $(P,\leq_P)$ is an \emph{antichain}\index{antichain} if $x\leq_P y$ in $(P,\leq_P)$ implies $x=_P y$.
\end{defi}

Clearly, any suborder of a chain (an antichain) is a chain (an antichain).
The Hasse diagraph of a chain is a path, and that of an antichain is a set of isolated vertices.

\begin{defi}
A \emph{increasing chain}\index{increasing chain} is a sequence of elements $(x_1,x_2,\dots)$ in the set $(P,\leq_P)$ such that $x_1\leq_P x_2\leq_P x_3\leq_P \dots$. Dually, a \emph{decreasing chain}\index{decreasing chain} is a sequence of elements $(x_1,x_2,\dots)$ in set $(P,\leq_P)$ such that $\dots \leq_P x_3\leq_P x_2\leq_P x_1$.
\end{defi}

As examples, the usual numerical order of positive integers is a (increasing) chain, and the inverse order ($\mathbb{N},\geq$) is a decreasing chain. The equality order of positive integers is an antichain.

\subsubsection*{Lattice}
Let $(P,\leq_P)$ be an ordered set and $(S,\leq_S)$ be a suborder of $(P,\leq_P)$. An element $x\in P$ is an \emph{upper bound}\index{upper bound} of $(S,\leq_S)$ if $s\leq_P x$ for all $s\in S$. If $x$ is the least element among all the upper bound, we call it \emph{least upper bound}\index{least upper bound} (or \emph{join}, \emph{supremum}).
An element $x\in P$ is a \emph{lower bound}\index{lower bound} of $(S,\leq_S)$ if $x\leq_P s$ for all $s\in S$. If $x$ is the greatest element among all the lower bound, we call it \emph{greatest lower bound}\index{greatest lower bound} (or \emph{meet}\index{meet}, \emph{infimum}\index{infimum}). Note that least upper bound and greatest lower bound are not necessary unique.

\begin{defi}
A \emph{lattice}\index{lattice} is an ordered set in which any two elements have a least upper bound and a greatest lower bound.
\end{defi}

\begin{example}
There are some examples of lattices:
\begin{enumerate}
\item The real numbers ordered by the standard less-than-or-equal relation form a lattice, under the operations of `min' and `max'.
\item For any set $A$, the collection of all subsets of $A$, ordered by inclusion, is a lattice.
\item Although the set of all divisors of 60 ordered by divisibility is a lattice, the subset $\{1,2,3\}$ so ordered is not a lattice because the pair 2,3 lacks a join, $\{2,3,6\}$ neither because 2,3 lacks a meet.
\end{enumerate}
\end{example}

As we already stated, graph-theoretically, an order is a \emph{transitive, acyclic, directed, reflexive} graph.
Note that an embedding is necessarily injective, since $f(a) = f(b)$ implies $a \leq_P b$ and $b \leq_P a$ (since an order is a reflexive graph). 

Interestingly, except for graph theoretical interpretation, orders can straightforwardly be viewed from many other perspectives. For example: model-theoretically, an order is a set equipped with a \emph{reflexive, antisymmetric, transitive} binary relation. An order embedding from $P$ to $Q$ is an isomorphism from $P$ to an elementary substructure of $Q$.
Category-theoretically, an order is a \emph{small, skeletal} category such that each homset has at most one element. An order embedding $P$ to $Q$ is a full and faithful functor from $P$ to $Q$ which is injective on objects, or equivalently an isomorphism from $P$ to a full subcategory of $Q$.


\subsubsection*{Density}

 
An order $(P,\leq_P)$ is \emph{dense}\index{dense}, if for any $x,y\in P$ with $x\leq_P y$, we can find another element $z\in P$ such that
$x\leq_P z\leq_P y$. Conversely, if for a pair $x,y$ with $x\leq_P y$ we cannot find another element $z$ in between, we say the pair $x,y$ is a \emph{gap}\index{gap}, denote it by \emph{$[x,y]$}\index{$[x,y]$}. 

For example, the real number ordered set $(\mathbb{R},\leq)$ is dense, however, the integer number ordered set $(\mathbb{Z},\leq)$ is not dense because there are gaps $[k,k+1]$ for any $k\in \mathbb{Z}$.

\subsubsection*{Local finiteness}
For a given order $(P,\leq_P)$ we call $\{y \mid y\leq_P x\}$ the {\em down-set}\index{down-set} of $x$ in $(P,\leq_P)$. When there is no confusion about particular order, we denote the down-sets by $\downarrow x$\index{$\downarrow x$}. Similarly we call the set $\{y \mid x\leq_P y\}$ the {\em up-set}\index{up-set} of $x$ in $(P,\leq_P)$ and denote it by $\uparrow x$\index{$\uparrow x$}.

We say that a countable order is {\em past-finite}\index{past-finite} if every down-set is finite. Similarly a countable order is {\em future-finite}\index{future-finite} if every up-set is finite. 
An order $(P,\leq_P)$ is said to be {\em locally-finite}\index{locally finite} if for every $a,b\in P$, $a\leq_P b$, the interval $[a,b]_P=\{c\mid a\leq_P c\leq_P b\}$ is finite. 
Both past-finite and future-finite orders are special case of {\em locally-finite}\index{locally finite}, that is, if for every $a,b\in P$, $a\leq_P b$, the interval $[a,b]_P=\{c\mid a\leq_P c\leq_P b\}$ is finite.

\subsubsection*{Duality}

We say that a pair $(a,b)$ is a {\em simple duality pair}\index{simple duality pair} in an order $(P,\leq_P)$, if $a,b\in P$, and any element $p\in P$ is either in the up-set of $a$, or in the down-set of $b$.

Furthermore, let $A$ and $B$ be two sets of $P$, we say that $(A,B)$ is a {\em generalized finite duality pair}\index{generalized finite duality pair} in $(P,\leq_P)$, if for any element $p\in P$ there either exists an $a\in A$ such that $a\leq_P p$ or there exists a $b\in B$ such that $p\leq b$.

In Chapter~\ref{Ch:order}, we will discuss more properties of orders, such as universality in Section~\ref{sec:universality}, and present some more interesting results regarding orders.



\chapter[Homomorphisms and Constrained Homomorphisms]{\fontsize{35}{10}\selectfont Homo\-morphisms and\\ Constrained Homomorphisms}
\label{ch:homo}
\ifpdf
    \graphicspath{{Chapter2/Chapter2Figs/PNG/}{Chapter2/Chapter2Figs/PDF/}{Chapter2/Chapter2Figs/}}
\else
    \graphicspath{{Chapter2/Chapter2Figs/EPS/}{Chapter2/Chapter2Figs/}}
\fi






This chapter concerns graph homomorphisms and several common variants of graph homomorphisms, and introduces a new variant of them that is key to our later work. The theory of graph homomorphisms, which underlies much of this thesis, developed from the well-known graph-theoretical problem of `graph colouring'. 
Meanwhile, many graph-theoretical problems can be generalized to notions of \emph{constrained homomorphisms}\index{constrained homomorphisms}.
This chapter is divided into four parts, in the first three of which we overview existing structural results. In Section~\ref{sec:homo} we first recall the basic properties of ordinary graph homomorphisms. In Section~\ref{sec:glo} we review several types of \emph{globally constrained homomorphisms}\index{globally constrained homomorphisms} such as monomorphism, embedding, surjective homomorphism, full homomorphism and compaction. In Section~\ref{sec:loc}
we discuss three kinds of \emph{locally constrained homomorphisms}\index{locally constrained homomorphisms}: locally injective, locally bijective and locally surjective. In the final section we introduce a new notion that we call `relation' which can be viewed as a generalization of surjective homomorphism. We develop the basic properties of `relation', as  preparation for the deeper investigation in Chapter~\ref{ch:relation}.

\section{Graph homomorphisms} \label{sec:homo}

Graph colouring is one of the most well-known problems in the field of graph theory. As an example, the four colour conjecture, which was postulated by Francis Guthrie while trying to colour a map of the counties of England, stating that four colours were sufficient to colour the map so that no regions sharing a common border receive the same colour. In terms of graph theory: all planar graphs can be coloured by 4 colours. As a generalization of graph colouring,  the study of graph homomorphisms dates from 1960s. It was pioneered by G.\ Sabidussi~\cite{Sabidussi1961}, and by Z.\ Hedrl\' in and A.\ Pultr~\cite{Hedrlin1964}, and rapidly developed by P.\ Hell and J.\ Ne\v{s}et\v{r}il~\cite{Hell2004}. In this section we give a brief summary of the main concepts and results in this area. For more details please refer to the recent monograph~\cite{Hell2004}.

\begin{defi}
Let $G=(V_G,E_G)$ and $H=(V_H,E_H)$ be two directed graphs,
a \emph{homomorphism}\index{homomorphism} of $G$ to $H$,
is a mapping $f:V_G\rightarrow V_H$ such that $(f(u),f(v))\in E_H$
whenever $(u,v)\in E_G$. 
\end{defi}

A homomorphism of $G$ to $H$ is also called an \emph{$H$-colouring of G}\index{$H$-colouring}.
If there exists a homomorphism $f:G\rightarrow H$ we write, $G\rightarrow H$\index{$\rightarrow$}, and $G\nrightarrow H$\index{$\nrightarrow$} means there is no such homomorphisms.

Let $f$ be a homomorphism of graph $G=(V_G,E_G)$ to graph $H=(V_H,E_H)$, and $S$ a subset of $V_G$, $M$ a subset of $V_H$,
we define $f(S):=\{v\in V_H\mid \exists x\in S \text{ s.t.\ } f(x)=v\}$ as the \emph{image of $S$}\index{image},
$f^{-1}(M):=\{x\in V_G\mid \exists u\in M \text{ s.t. } f(x)=u\}$ as the \emph{domain of $M$}\index{domain}.
Moreover, the \emph{image of homomorphism $f$}\index{image of a homomorphism}, denoted by $I(f)$\index{$I(f)$}, is defined as the image of $V_G$, and the \emph{domain of $f$}\index{domain of a homomorphism}, denoted by $D(f)$\index{$D(f)$}, is defined as the domain of $V_H$. 
Note that since $f$ is a mapping, the domain of $f$ is always $V_G$.


\begin{table}
\centering
\begin{tabular}{|c|c|c|}
    \hline
    Frucht graph & Graph $K_3$ & \begin{tabular}{c}An homomorphism from\\Frucht graph to $K_3$\end{tabular}
    \\[8pt]
    \hline
     \lower1.8cm\hbox{\includegraphics[width=4cm]{Frucht.eps}}
    &
     \lower1cm\hbox{\includegraphics[height=2.5cm]{K_3.eps}} 
    &
    \begin{tabular}{l}
     \\[-6pt]
     $  f(1) = \color{Red}  \text{Red}  $\\
     $  f(2) = \color{Blue} \text{Blue} $\\ 
     $  f(3) = \color{Red}  \text{Red}  $\\ 
     $  f(4) = \color{Green}\text{Green}$\\ 
     $  f(5) = \color{Blue} \text{Blue} $\\ 
     $  f(6) = \color{Green}\text{Green}$\\ 
     $  f(7) = \color{Blue} \text{Blue} $\\ 
     $  f(8) = \color{Red}  \text{Red}  $\\ 
     $  f(9) = \color{Blue} \text{Blue} $\\
     $ f(10) = \color{Blue} \text{Blue} $\\
     $ f(11) = \color{Red}  \text{Red}  $\\
     $ f(12) = \color{Green}\text{Green}$\\[6pt]  
     \end{tabular}
    \\
    \hline
\end{tabular}
\caption{Graph homomorphism.}
   \label{tab:FtoK3} 
   \end{table}


\begin{example}
In order to build an intuition about the existence of graph homomorphisms, we give some examples :
\begin{enumerate}
 \item If graph $G$ is a subgraph of graph $H$, then the subgraph structure induces a homomorphism from $G$ to $H$
(Figure~\ref{fig:subg}). However, there may have other homomorphisms of $G$ to $H$ which are not induced by subgraph structure. 
 \item There is a homomorphism of Frucht graph to complete graph $K_3$ as in Fig.~\ref{tab:FtoK3}. Because $K_3$ is a subgraph of Frucht graph, there is a homomorphism of $K_3$ to Frucht graph.
 \item $P_2\to K_3$, since $P_2$ is a subgraph of $H_3$. However $K_3\nrightarrow P_2$, that is because the size of $K_3$ is larger than $P_2$, then there are at least two vertices of $K_3$ which must be mapped to the same vertex of $P_2$, this would induce a loop. However $P_2$ is loop-free.
\end{enumerate}
\end{example}

It is easy to verify that the composition of homomorphisms is also a homomorphism. Moreover, composition of homomorphisms is associative.
Let $f:V_G\rightarrow V_H$ be a homomorphism of $G$ to $H$. If the mapping $f$ is injective, we call it an \emph{injective homomorphism}\index{injective homomorphism}. The homomorphism $f$ naturally induces an edge mapping of $E_G$ to $E_H$. If $f$ and the induced edge mapping are both surjective, we call it is a \emph{surjective homomorphism}\index{surjective homomorphism}.

\begin{pro}[\textbf{Decomposition law}~\cite{Hell2004}] 
Let $G$ and $H$ be any directed graphs. Every homomorphism $f:G\rightarrow H$ can be written as $f=i\circ s$ where $s$ is a surjective homomorphism and $i$ is an injective homomorphism.
\end{pro}

We leave the proof to the section on surjective homomorphisms that begins on page~\pageref{decomp}. 

\begin{pro} \label{pro:colouring}
A homomorphism $f:G\rightarrow K_k$ is precisely a $k$-colouring of the graph $G$.
\end{pro}

\begin{proof}
For $1\leq i\leq k$, we colour the vertices of graph $G$ in the set $f^{-1}(i)$ by colour $i$. Clearly for distinct $i$ and $j$, $f^{-1}(i)\cap f^{-1}(j)=\emptyset$. And for any $i$, $f^{-1}(i)$ is an independent set, otherwise there is a loop on vertex $i$ of $K_k$. Hence homomorphism $f$ is a proper $k$-colouring of graph $G$.
\end{proof}

Proposition~\ref{pro:colouring} explains the reason that a homomorphism from $G$ to $H$ is also called an $H$-colouring, as a graph homomorphism can be seen as a generalization of a graph colouring.

\subsection*{Homomorphisms preserve adjacency}
The fact that homomorphisms preserve vertex adjacency has interesting implications. The following propositions are derived from these adjacency preservations.

\begin{pro}
A mapping $f: V_{P_k}\rightarrow V_G$ is a homomorphism of $P_k$ to graph $G$ if and only if the sequence $f(1),f(2),\dots,f(k)$ is
a walk in graph $G$.
\end{pro}

Consequently, we obtain the following useful corollary.

\begin{cor}
If $f:G\rightarrow H$ is a homomorphism, then for any two vertices $u,v$ of graph $G$, the distance between $f(u)$ and $f(v)$ in graph $H$ is at most the distance between $u$ and $v$ in $G$.
\end{cor}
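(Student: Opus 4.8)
The statement to prove is the corollary: if $f: G \to H$ is a homomorphism, then for any two vertices $u, v$ of $G$, the distance $d_H(f(u), f(v)) \le d_G(u, v)$.

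\medskip

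\textbf{Proof plan.} The plan is to reduce the statement to the preceding proposition, which says that a mapping is a homomorphism from $P_k$ to $G$ exactly when the images of the path vertices form a walk in $G$. First I would dispose of the trivial case: if there is no path from $u$ to $v$ in $G$, then $d_G(u,v) = \infty$ by the convention fixed earlier in the chapter, and the inequality holds vacuously. So assume $d_G(u,v) = k < \infty$, and pick a shortest path $P$ in $G$ realizing this distance, with vertex sequence $u = w_0, w_1, \dots, w_k = v$, so that $(w_{i-1}, w_i) \in E_G$ for each $i$.

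\medskip

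Next I would observe that restricting $f$ to the vertices of $P$ and following the path gives the sequence $f(w_0), f(w_1), \dots, f(w_k)$ in $V_H$. Since $f$ is a homomorphism and each $(w_{i-1}, w_i)$ is an edge of $G$, each $(f(w_{i-1}), f(w_i))$ is an edge of $H$; equivalently, by the proposition on page preceding, this sequence is a walk in $H$ from $f(u)$ to $f(v)$ of length $k$. (One can phrase this as: $P$, viewed as a copy of $P_{k+1}$ mapped into $G$, composes with $f$ to give a homomorphism $P_{k+1} \to H$, hence a walk.) A walk of length $k$ between two vertices of $H$ contains a path between them of length at most $k$ — deleting closed subwalks between repeated vertices only shortens it — so $d_H(f(u), f(v)) \le k = d_G(u, v)$, which is what we wanted.

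\medskip

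The only genuinely substantive point, and the one I would state carefully rather than gloss, is the last step: that the existence of a walk of length $k$ between two vertices forces the existence of a path of length at most $k$. This is standard, but since the excerpt defines distance via shortest \emph{paths} rather than shortest walks, it deserves an explicit sentence; everything else is immediate from the adjacency-preservation property of homomorphisms and the walk characterization just established. No clever construction is needed — the corollary really is a one-line consequence of the proposition, and the main ``obstacle'' is purely a matter of being careful with the infinite-distance convention and the walk-versus-path distinction.
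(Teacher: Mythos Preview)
Your proof is correct and follows exactly the approach the paper intends: the corollary is stated immediately after the proposition characterizing homomorphisms $P_k \to G$ as walks, with the word ``Consequently'' and no written proof. You have simply spelled out the implicit argument --- a shortest $u$--$v$ path in $G$ is sent by $f$ to a walk of the same length in $H$, whence $d_H(f(u),f(v)) \le d_G(u,v)$ --- together with the routine infinite-distance case.
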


\begin{rem}
Even though the existence of homomorphisms is monotone with respect to distance, it is not monotone with respect to radius or diameter. For example, $P_2\to K_3$ and $P_2\to P_5$, but the diameter as well as the radius of $P_2$ is between $K_3$ and $P_5$. In contrast, we will see an analogous structure which is monotone with respect to diameter as well in Corollary~\ref{rad} on page~\pageref{rad}. 
\end{rem}

Similar to paths, we have the following result for homomorphisms from cycle graphs.

\begin{pro}
A mapping $f: V_{C_k}\rightarrow V_G$ is a homomorphism of $C_k$ to $G$ if and only if the sequence $f(0),f(1),\dots,f(k-1)$ is a closed walk in $G$.
\end{pro}

\begin{cor}
$C_{2k+1}\rightarrow C_{2l+1}$ if and only if $l\leq k$.
\end{cor}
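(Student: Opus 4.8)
The plan is to reduce both directions to the characterization of homomorphisms out of a cycle stated just above: a map $f\colon V_{C_m}\to V_G$ is a homomorphism exactly when $f(0),f(1),\dots,f(m-1)$ is a closed walk of length $m$ in $G$. So the whole statement becomes the question: for which odd $m=2k+1$ does $C_{2l+1}$ contain a closed walk of length $m$?

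For the implication $l\le k\Rightarrow C_{2k+1}\to C_{2l+1}$, I would simply build such a walk by hand. Traverse the cycle $C_{2l+1}$ once around (this uses $2l+1$ steps and returns to the starting vertex), and then use the remaining $2k+1-(2l+1)=2(k-l)$ steps to walk back and forth along a single fixed edge. Since $2(k-l)$ is a nonnegative even number, this oscillation ends where it started, so the whole sequence is a closed walk of length $2k+1$ in $C_{2l+1}$; by the preceding proposition it is the vertex list of a homomorphism $C_{2k+1}\to C_{2l+1}$.

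For the converse, suppose $f\colon C_{2k+1}\to C_{2l+1}$ is a homomorphism. Identify the vertices of $C_{2l+1}$ with $\mathbb{Z}/(2l+1)\mathbb{Z}$ so that the edges are exactly the pairs $\{i,i+1\}$. By the proposition, $f(0),f(1),\dots,f(2k)$ is a closed walk of length $2k+1$, and along each of its $2k+1$ steps the label changes by $+1$ or $-1$ modulo $2l+1$. Writing $a$ for the number of $+1$ steps and $b$ for the number of $-1$ steps, we have $a+b=2k+1$, and closedness of the walk forces $a-b\equiv 0\pmod{2l+1}$. Since $a+b$ is odd, $a-b$ is odd as well, hence $a-b\neq 0$, and therefore $|a-b|\ge 2l+1$. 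Combining with $|a-b|\le a+b=2k+1$ yields $2l+1\le 2k+1$, i.e.\ $l\le k$.

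There is no serious obstacle here; the only point that needs care is the bookkeeping in the converse — setting up the step counts $a,b$ correctly and noting that the parity of $a+b$ is genuinely forced, which is exactly where the \emph{oddness} of both cycles is used — together with keeping "length of the closed walk" matched consistently with the number of vertices of the source cycle throughout.
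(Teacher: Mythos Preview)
Your argument is correct in both directions. The paper actually states this corollary without proof, treating it as an immediate consequence of the preceding closed-walk characterization; your write-up supplies exactly the details one would expect, and the parity/modular bookkeeping in the converse is handled cleanly.
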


\subsection*{Homomorphism equivalence}
\label{sec:hom-equ}
In the last section, we saw that the existence of homomorphisms is monotone with respect to chromatic number, so that if two directed graphs $G$ and $H$ have different chromatic numbers, then $G\to H$ and $H\to G$ cannot hold at the same time. Two directed graphs $G$ and $H$ such that each is homomorphic to the other are called \emph{homomorphically equivalent}\index{homomorphically equivalent}, and denoted by $G\sim H$\index{$G\sim H$}. Homomorphic equivalence is indeed an equivalence relation. To see this, let $G$, $H$ and $K$ be directed graphs. Since every directed graph is homomorphically equivalent to itself (self-adjoint); $G\sim H$ implies $H\sim G$ (symmetric); If $G\sim H$ and $H\sim K$, then $G\sim K$ follows from the fact that homomorphisms are closed under composition (transitive). We immediately get that homomorphically equivalent directed graphs have the same chromatic number. In this section we focus on the question of how to characterize homomorphically equivalent directed graphs.

\begin{defi}
A \emph{retraction}\index{retraction} of a directed graph $G$ to its subgraph $H$ is a homomorphism
$r:G\rightarrow H$ such that $r(x)=x$ for all $x\in V_H$.
\end{defi}

If there is a retraction of $G$ to $H$, we call $H$ is a \emph{retracted graph}\index{retracted graph} of $G$.
Recall that the a subgraph has a homomorphism to its original graph, so a graph and (any of) its retracted graphs are homomorphism equivalent. Figure~\ref{fig:Retr} shows a series of retractions.

\begin{figure}[!ht]
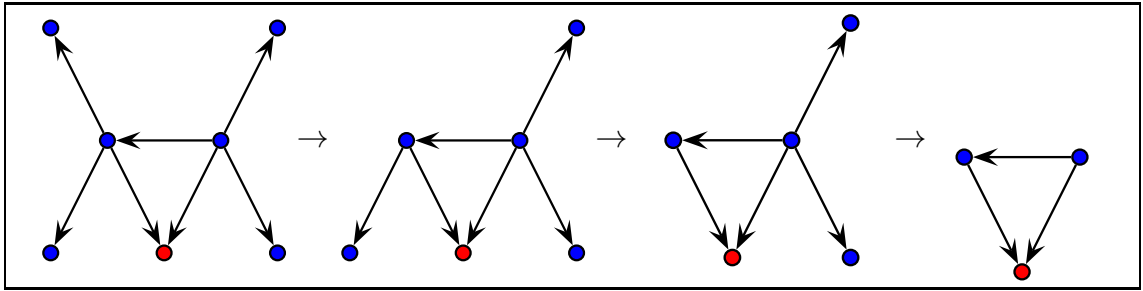

\centering
\begin{minipage}[c]{0.22\textwidth}
  \includegraphics[width=1\textwidth]{dicore1.eps}
\end{minipage}
$\rightarrow$
\begin{minipage}[c]{0.22\textwidth}
 \centering
 \includegraphics[width=1\textwidth]{dicore2.eps}
\end{minipage}
$\rightarrow$
\begin{minipage}[c]{0.22\textwidth}
 \centering
\includegraphics[width=0.8\textwidth]{dicore3.eps}
\end{minipage}
$\rightarrow$
\begin{minipage}[t][][b]{0.15\textwidth}
 \centering
\includegraphics[width=0.8\textwidth]{dicore4.eps}
\end{minipage}
\caption{A series of retractions.}
\label{fig:Retr}
\end{figure}

The most natural idea is to choose the smallest directed graph (in the sense of size) in an equivalence class as its representative. 
Firstly, given a graph $G$, we can try to find the smallest subgraph which $G$ has a retraction to. This leads to an important concept which was independently discovered and was given different names such as \emph{minimal graphs}\index{minimal graph} and \emph{unretractive graphs}\index{unretractive graph}~\cite{Fellner1982,Hell1992,Nowakowski1979}.

\begin{defi}
A \emph{core of (directed) graph $G$}\index{core of a graph} is the smallest graph (in the sense of size) which $G$ admits a retraction to.
A (directed) graph is a \emph{core}\index{core} if it does not retract to a proper subgraph.
\end{defi}

It is easy to see that every directed graph is homomorphically equivalent to a (up to isomorphism) unique core.
Thus we can denote the core of a given graph $G$ by $G_\text{core}$\index{$G_\text{core}$}.
For example, the core of first three directed graphs in Figure~\ref{fig:Retr} is the fourth directed graph.

Two graphs are homomorphically equivalent if and only of their cores are isomorphic, therefore we can use core to be the
representative for an equivalence class. However, the computational complexity of determining a core is NP-hard. As we know, the 3-colouring problem, i.e., testing whether a graph has a homomorphism to triangle $K_3$, is NP-complete. Given a graph $G$, consider $G'$ to be the disjoint union of $G$ and $K_3$. If we can determine the core of $G'$, then it is easy to decide whether $G$ is 3-colourable or not. If the core of $G'$ is $K_3$, then $G$ is 3-colourable, otherwise $G$ is not 3-colourable.

\subsection*{H-colouring problem}
In this section we introduce the main results about the existence of homomorphisms from an algorithmic perspective.

Let $H$ be a fixed directed graph. The \emph{homomorphism problem for $H$} asks whether or not an input
directed graph $G$ admits a homomorphism to $H$. Recall that a homomorphism of $G$ to $H$ is also called an
$H$-colouring of $G$. Thus the homomorphism problem for $H$ is also called the \emph{$H$-colouring problem}\index{$H$-colouring problem},
and denoted by $\COL$\index{$\COL$}.

The $H$-colouring problem can be analogously stated for any general relational system $H$, in this case the problem is also known as \emph{constraint satisfaction problem}\index{constraint satisfaction problem} (or \emph{CSP}\index{CSP}) with a template $H$, and denoted as $\CSP$\index{$\CSP$}~\cite{Hell2004}. Many studies have been carried out regarding the constraint satisfaction problem, and there is an introductory book by E.~Tsang~\cite{Tsang1993}.

We first consider the $H$-colouring problem for graphs (instead of digraphs). Clearly all graphs which have a loop have core $O$, the graph with one vertex and a loop.
The problem of $O$-colourablity is trivial.

Recall that the usual $k$-colouring problem for simple graphs is polynomial time solvable when $k\leq 2$ and is NP-complete for $k>2$.
Such a classification result implies that we have a \emph{dichotomy}\index{dichotomy} of possibilities ---
each $k$-colouring problem is polynomial time solvable or NP-complete. Analogously, $H$-clouring has also a dichotomy of possibilities.

\begin{thm}
 Let $H$ be a graph. If $H$ is bipartite or contains a loop, then the $H$-colouring problem has a polynomial time algorithm.
 Otherwise the $H$-colouring problem is NP-complete.
\end{thm}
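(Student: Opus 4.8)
The plan is to treat the two halves of the dichotomy separately. The polynomial side is routine. If $H$ has a loop at a vertex $v$, then the constant map $V_G\to\{v\}$ is a homomorphism for every $G$, so $\COL$ is the always-``yes'' problem. If $H$ is bipartite, then either $H$ has no edge, in which case $G\to H$ holds precisely when $G$ has no edge, or $H$ has an edge, in which case $K_2$ embeds into $H$ while the bipartition yields $H\to K_2$; hence $H\sim K_2$, and $G\to H$ holds iff $G$ is $2$-colourable, which is decidable in polynomial time by the usual component-wise bipartiteness test (a loop of $G$ being detected as a closed walk of odd length $1$). This disposes of all the polynomial cases.

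For the NP-complete side, membership in NP is immediate: since $H$ is fixed, a candidate homomorphism $f\colon V_G\to V_H$ has size polynomial in $|G|$, and edge-preservation is checked edge by edge. The substance is NP-hardness when $H$ is loopless and non-bipartite, and I would start with two normalizations. First, since $G\to H$ iff $G\to H_{\text{core}}$, and $H_{\text{core}}$ is again loopless (it is a subgraph of $H$) and non-bipartite (odd girth is invariant under homomorphic equivalence), it suffices to treat a loopless non-bipartite core $H$. Second, if such an $H$ is disconnected then no two of its components admit a homomorphism to each other --- otherwise $H$ would retract onto a proper subgraph --- so for connected inputs $\COL$ becomes the disjunction of the colouring problems of the components, at least one of which is again a connected loopless non-bipartite core; thus one may assume $H$ connected. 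Write $2k+1$ for the odd girth of $H$.

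The heart of the argument is to reduce a known NP-complete colouring problem --- concretely $3$-colouring, i.e.\ $K_3$-colouring, NP-complete by the classical result quoted above --- to $\COL$ for an arbitrary loopless non-bipartite core $H$. The tool is the \emph{indicator construction} of Hell and Ne\v set\v ril: given an auxiliary ``indicator'' graph $I$ with two distinguished vertices $i_1,i_2$, one forms the graph $H^{\ast}$ on vertex set $V_H$ in which $u$ and $v$ are adjacent exactly when there is a homomorphism $I\to H$ carrying $i_1$ to $u$ and $i_2$ to $v$; a companion ``sub-indicator'' construction restricts to a prescribed induced subgraph of $H$. Two facts are then needed: (i) for a fixed indicator $I$ the $H^{\ast}$-colouring problem is polynomially equivalent to $\COL$, because the gadgets substituted for the edges have size depending only on $H$ and $I$; and (ii) for every loopless non-bipartite core $H$ one can choose an indicator (and, if necessary, a sub-indicator) so that $H^{\ast}$ is homomorphically equivalent to $K_m$ for some $m\ge 3$, the value of $m$ being controlled by the odd girth $2k+1$. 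Granting (i) and (ii), the chain ``$K_3$-colouring $\le_p$ $H^{\ast}$-colouring $\le_p\COL$'' gives NP-hardness, and the theorem follows.

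Step (ii) is where I expect essentially all the difficulty to lie: it requires a structural case analysis on $H$ --- separating, for instance, the odd-girth-$3$ case (where $H$ contains a triangle) from the higher-odd-girth case, and in the latter using indicators to force the odd girth down --- together with a careful check that each modified graph behaves as claimed and that every step preserves polynomial-time equivalence with the original problem. By contrast, the base cases ($k$-colouring being NP-complete for $k\ge 3$), the reduction to cores, and the reduction to connected $H$ are comparatively routine; it is the indicator bookkeeping for a general loopless non-bipartite core that must be carried out with care.
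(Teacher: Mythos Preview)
The paper does not supply a proof of this theorem at all: it states the result, attributes the original proof to Hell and Ne\v{s}et\v{r}il, and notes Bulatov's later algebraic proof, but gives no argument of its own. There is therefore nothing in the paper to compare your proposal against.

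That said, your outline is an accurate high-level description of the original Hell--Ne\v{s}et\v{r}il strategy. The polynomial cases are handled correctly, and you rightly identify the indicator/sub-indicator construction as the engine of the hardness proof and step~(ii) as the place where the real work sits. Two remarks. First, your description of step~(ii) is slightly idealised: the actual proof does not produce a single indicator sending $H$ to (a graph equivalent to) some $K_m$; rather, it proceeds by an inductive case analysis in which indicators and sub-indicators are applied repeatedly to drive the target down to a smaller graph already known to be hard, with several delicate structural cases along the way. Second, your reduction to connected $H$ is not quite a reduction in the direction you need: observing that $\COL$ on connected inputs is the disjunction $\bigvee_i (G\to H_i)$ shows that $H$-colouring is \emph{at most as hard} as the component problems, not that it is at least as hard as one of them. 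Knowing that some $H_i$-colouring is NP-complete does not by itself yield a polynomial reduction from $H_i$-colouring to $H$-colouring, since an instance $G$ could map to some other component $H_j$ while failing to map to $H_i$. In the actual proof this issue is absorbed into the sub-indicator machinery rather than handled by a separate connectivity reduction.
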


This theorem was first proved in~\cite{Hell1990} by Hell and Ne\v{s}et\v{r}il.
There is a new proof by Bulatov~\cite{Bulatov} which follows the same idea but uses an algebraic approach.

The $H$-colouring problem for directed graphs has received much attention, and yet no graph-theoretic classification has been obtained, or conjectured~\cite{Hell2004}. Even the following dichotomy conjecture is still open.

\begin{con}
 Let $H$ be a directed graph, the $H$-colouring problem is polynomial time solvable or NP-complete.
\end{con}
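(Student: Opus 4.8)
The plan is to attack this not in graph-theoretic terms at all, but via the algebraic theory of constraint satisfaction. The first step is to invoke the reduction of Feder and Vardi: the family of $\COL$ problems over directed graphs is as expressive as the family of all finite-template constraint satisfaction problems, i.e.\ for every finite relational structure $H$ there is a digraph $H'$ with $\COL$ for $H'$ polynomial-time equivalent to $\CSP$ with template $H$ (one encodes $k$-ary tuples by small gadgets assembled from oriented paths). So it suffices to prove a dichotomy for $\CSP$ over an arbitrary finite template, and then transport it back along this reduction to obtain the statement for directed graphs.

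Second, I would pass to the algebraic formulation. To a finite template $H$ one associates the clone $\mathrm{Pol}(H)$ of its \emph{polymorphisms} — the operations $f:V_H^n\to V_H$ preserving every relation of $H$ — and the now-standard fact (Jeavons; Bulatov--Krokhin--Jeavons) is that the complexity of $\COL$ for $H$ depends only on the variety of algebras generated by $(V_H,\mathrm{Pol}(H))$. The hardness half is the \emph{easy} one: if $\mathrm{Pol}(H)$ contains no Taylor term — equivalently the associated algebra has a two-element quotient all of whose operations are projections — then one can primitive-positively interpret (with parameters) the relations of $1$-in-$3$-SAT or NAE-$3$-SAT inside $H$, so $\COL$ is NP-complete. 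The dichotomy statement to prove is therefore: if $\mathrm{Pol}(H)$ does contain a Taylor term, equivalently a weak near-unanimity operation, then $\COL$ is in P.

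This tractability direction is where essentially all the difficulty lies, and it is the reason the statement appears here as a conjecture rather than a theorem. The plan is to split the Taylor case into two regimes using tame congruence theory. If the algebra omits the unary and affine types, the problem has \emph{bounded width} and is solved by a local-consistency (Datalog) algorithm — and proving that the absence of these types forces bounded width is itself a major theorem (Barto--Kozik, via absorbing subuniverses). Otherwise one needs a global algorithm: the ``few subpowers'' algorithm when a Maltsev-like (edge) term is present, and for the remaining cases a delicate recursion on domain sizes built on absorption, centralisers, and the structure of subdirect products — Bulatov's or Zhuk's machinery. The main obstacle, by far, is this last piece: assembling a uniform polynomial-time algorithm in the general Taylor case without any extra idempotent term. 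Once both halves are in place, combining NP-completeness of the no-Taylor case with polynomial tractability of the Taylor case gives the dichotomy for $\CSP$, and the Feder--Vardi reduction yields it for directed graphs.
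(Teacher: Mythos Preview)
The paper offers no proof of this statement: it is recorded there as an open conjecture, with no argument attached. So there is nothing in the paper to compare your proposal against.

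Your outline is the correct high-level strategy --- reduce to general finite-template $\CSP$ via Feder--Vardi, pass to the polymorphism clone, get NP-completeness in the non-Taylor case by pp-interpretation, and seek a polynomial algorithm in the Taylor case --- and indeed this is the route along which the conjecture was eventually settled (Bulatov 2017, Zhuk 2017, after the thesis was written). But you have correctly identified, and not closed, the essential gap: the tractability direction in the general Taylor case. Bounded width handles the congruence-distributive-like regime and few-subpowers handles the Maltsev-like regime, but the remaining cases are not a matter of ``delicate recursion'' one can sketch; they require the full absorption/centraliser machinery of Bulatov or the strong-subalgebra/linear-system machinery of Zhuk, each of which is a long and technically new argument. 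As written, your proposal is an accurate roadmap rather than a proof: the step ``assemble a uniform polynomial-time algorithm in the general Taylor case'' is exactly the content of those 2017 papers, and without it the dichotomy does not follow.
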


There is a nice conjectured dichotomy classification for directed graphs with positive indegree and outdegree at each vertex.
The conjecture is still open though many special cases of the conjecture have been verified~\cite{Hahn2002}.

\begin{con}[Bang-Jensen, Hell~\cite{Bang1990}]
Suppose $H$ is a connected directed graph in which each vertex has positive indegree and outdegree.
If the core of $H$ is a directed cycle, then the $H$-colouring problem is polynomial.
Otherwise it is NP-complete.
\end{con}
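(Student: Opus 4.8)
The plan is to split the statement into its two halves, which are of quite different character: the ``polynomial'' half is a genuine short argument, whereas the ``NP\mbox{-}complete'' half is exactly what makes this a conjecture rather than a theorem.

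\textbf{The tractable half.} Suppose the core $C=H_{\text{core}}$ is a directed cycle $\overrightarrow{C}_k$. Since $H$ and $C$ are homomorphically equivalent, for every input digraph $G$ we have $G\to H$ iff $G\to\overrightarrow{C}_k$, so it suffices to solve $\overrightarrow{C}_k$-colouring in polynomial time. The key observation is that a homomorphism $G\to\overrightarrow{C}_k$ is precisely a labelling $\phi\colon V_G\to\mathbb{Z}_k$ with $\phi(v)\equiv\phi(u)+1\pmod k$ for every $(u,v)\in E_G$, i.e.\ a solution of a system of difference constraints over $\mathbb{Z}_k$. First I would reduce to $G$ connected by handling the connected components independently (an edgeless component maps trivially). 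Then fix $r\in V_G$, set $\phi(r)=0$, and propagate the forced value along a spanning tree by breadth-first search, adding $1\bmod k$ when an edge is traversed forwards and subtracting $1\bmod k$ when it is traversed backwards. This determines $\phi$ uniquely in linear time, and a homomorphism exists iff every non-tree edge agrees with $\phi$, equivalently iff the algebraic length of every cycle of $G$ is divisible by $k$; checking this is linear in $|G|$, and $k$ and $C$ depend only on the fixed $H$. I would also record (via the short retraction argument) that $C$ inherits connectedness and positive in/out degree from $H$, and that among such cores the directed cycles are exactly those whose every vertex has in-degree and out-degree $1$ --- this is what the other direction must exploit.

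\textbf{The intractable half and the obstacle.} The converse --- that $H$-colouring is NP\mbox{-}complete whenever $H$ is connected with positive in- and out-degree everywhere but $C=H_{\text{core}}$ is not a directed cycle --- is the open part of the Bang\mbox{-}Jensen--Hell conjecture, and this is where all the difficulty lies. Directed cycles owe their tractability to the affine polymorphism $x-y+z\pmod k$; the task is to show that no digraph in the class whose core has this non-cyclic shape enjoys any comparably good behaviour. Following Bulatov's algebraic approach to the Hell--Ne\v{s}et\v{r}il dichotomy, one route is to prove that such a core $C$ admits no Taylor (e.g.\ weak near-unanimity) polymorphism and then invoke the algebraic hardness criterion; a more classical route is a direct gadget reduction from graph $3$-colouring (or from a fixed hard digraph template), using oriented paths and cycles as ``wires'' whose admissible length residues are dictated by the cyclic structure of $C$. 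The main obstacle --- and the reason the statement remains a conjecture --- is that neither a uniform polymorphism analysis nor a uniform gadget construction is known for this whole class: digraphs with positive in/out degree everywhere but a non-cyclic core have a subtle combinatorial structure, and so far only special families (semicomplete digraphs, small fixed digraphs, several symmetric cases) have been settled.
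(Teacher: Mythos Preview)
Your proposal is appropriate in substance: the paper does not prove this statement at all. It is stated as a \emph{conjecture} (Bang-Jensen--Hell), and the surrounding text says explicitly that ``the conjecture is still open though many special cases of the conjecture have been verified.'' There is therefore no proof in the paper to compare against.

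You correctly identify the asymmetry between the two halves. Your argument for the tractable direction (reduction to $\overrightarrow{C}_k$-colouring via homomorphic equivalence with the core, then propagation of forced residues mod $k$ along a spanning tree) is standard and correct, though the paper does not even supply this much --- it simply records the conjecture without isolating the easy half. Your discussion of the intractable half accurately explains why the statement remains open and sketches the two natural lines of attack (algebraic via polymorphisms, combinatorial via gadgets), which is more than the paper offers.

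In short: there is no gap in your proposal, because you have correctly diagnosed that a full proof is not available; and there is no meaningful comparison to make, because the paper gives no argument of its own.
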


\section[Globally constrained homomorphisms]{\fontsize{20}{10}\selectfont Globally constrained homomorphisms}\label{sec:glo}

We consider several types of (globally) constrained homomorphisms.

\begin{enumerate}
\item A homomorphism $f:G\to H$ is a {\em monomorphism}\index{monomorphism}\footnote{The name `monomorphism' is from category theory.}, (also called \emph{injective homomorphism}\index{injective homomorphism}~\cite{Hell2004} or {\em \MonoHomo}\index{\MonoHomo}) if it is injective.
\item A homomorphism $f:G\to H$ is a {\em full homomorphism}\index{full homomorphism}, or {\em \FullHomo}\index{\FullHomo}, if $(f(u),f(v))\in E_H$ implies $(u,v)\in E_G$.
\item A homomorphism $f:G\to H$ is an {\em embedding}\index{embedding}, or {\em \EmbedHomo}\index{\EmbedHomo} if it is both a monomorphism and a full homomorphism.
\item A homomorphism $f:G\to H$ is a {\em vertex surjective homomorphism}\index{vertex surjective homomorphism}, or {\em \VSurHomo}\index{\VSurHomo}, if $f(V_G)=V_H$. A homomorphism $f:G\to H$ is a {\em surjective homomorphism}\index{surjective homomorphism}, or {\em \SurHomo}\index{\SurHomo}, if it is vertex surjective and moreover for every $(u,v)\in E_H$ there is $(u',v')\in E_G$ such that $f(u')=u$ and $f(v')=v$.
\end{enumerate}


\begin{figure}[!ht]
 \centering
 \includegraphics[width=0.55\textwidth]{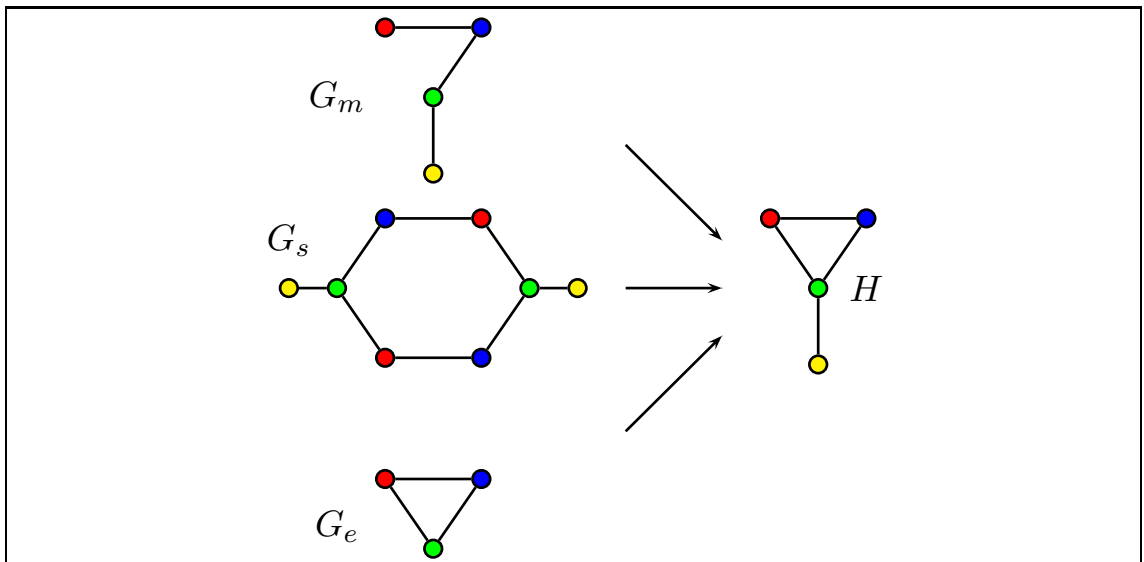}
 \caption{Monomorphism, surjective homomorphism, embedding.}
 \label{fig:glo}
\end{figure}

Figure~\ref{fig:glo} shows a monomorphism ($G_m$ to $H$), a surjective homomorphism ($G_s$ to $H$) and an embedding ($G_e$ to $H$).

Let $G$ and $H$ be directed graphs, there is a monomorphism from
$G$ to $H$ if and only if $G$ is a subgraph of $H$, there is an embedding from $G$ to $H$ if and only if $G$
is an induced subgraph of $H$. Examples can be found in Section~\ref{subsec:sub} on page~\pageref{subsec:sub}.


\subsection*{Surjective homomorphisms} \label{sec:mono}


\begin{example} 
Here are some examples of vertex or edge surjective homomorphisms.
\begin{enumerate}
\item The mapping from Frucht graph to $K_3$ shown in Figure~\ref{tab:FtoK3} is a surjective homomorphism;
\item The homomorphism from $P_3$ (Figure~\ref{fig:iplus})to $K_3$ by mapping $f(x_i)=v_i$ for $i=1,2,3$ is vertex surjective but not edge surjective (because the preimage of edge $(v_2,v_3)$ is empty);
\item The homomorphism from to $P_3$ to $G$ shown in Figure~\ref{fig:iplus} by mapping $f(x_1)=v_2$, $f(x_2)=v_3$ and $f(x_3)=v_2$ is edge surjective bot not vertex surjective (because the preimage of $v_1$ is empty).
\end{enumerate}
\end{example}

\begin{figure}[!ht]
 \centering
 \includegraphics[width=0.65\textwidth]{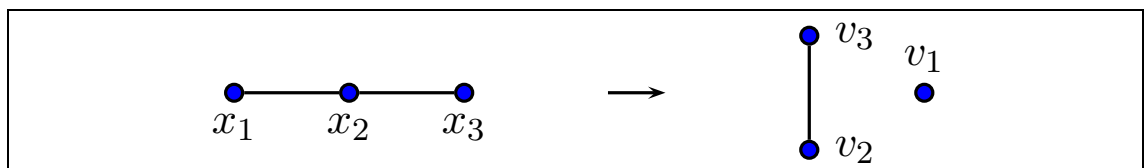}
 \caption{Example of an edge surjective but not vertex surjective homomorphism.}
 \label{fig:iplus}
\end{figure}

Note that a homomorphism that is vertex injective is also edge injective (but not conversely), and, as long as the resulting graph has no isolated vertices, a homomorphism that is edge surjective is also vertex surjective (but not conversely).
In other words, injective homomorphisms are the same as vertex injective homomorphisms, while surjective homomorphisms are, in the absence of isolated vertices, the same as edge surjective homomorphisms~\cite{Hell2004}.

\subsubsection*{Quotient graphs and homomorphic images}

Recall that given an arbitrary partition $P$ of $V_G$, with nonempty parts $V_i$, $i\in I$, we define the quotient graph \index{quotient graph} of $G$ induced by partition $P$ to be the graph $G/P$ with vertex set $\{V_i\mid i\in I\}$, in which there is a loop $(V_i,V_i)\in E_{G/P}$ if $V_i$ contains some $u,v$ with $(u,v)\in E_G$, and in which $(V_i,V_j)\in E_{G/P}$ if some $(u,v)\in E_G$ has $u\in V_i$, $v\in V_j$. Then the canonical mapping that assigns to each $u\in V_G$ the unique $i$ such that $u\in V_i$ is a homomorphism $f:G\rightarrow G/P$, it is actually surjective.

For example, if we take the partition $P=\{V_1,V_2,V_3\}$ by different colours of Frucht graph in Figure~\ref{fig:quo}, i.e.\ $V_1=\{1,3,8,11\}$ (red vertices), $V_2=\{2,5,7,9,10\}$ (blue vertices), $V_3=\{4,6,12\}$ (green vertices), then the quotient graph of Frucht graph induced by $P$ is $K_3$.

Similarly to the quotient graph, there is another concept `homomorphic image' which is also closely connected to surjective homomorphisms. Let $G$ and $H$ be directed graphs, and $f$ a homomorphism of directed graph $G$ to directed graph $H$.
We define the \emph{homomorphic image of $G$ under $f$}\index{homomorphic image}, denoted by  $f(G)$\index{$f(G)$},
as the directed graph with vertices $\{f(v)\mid v\in V_G\}$ and for which there is an edge $(f(v),f(w))$ if and only if $v,w\in E_G$.

Note that $f(G)$ is a subgraph of $H$, and that $f:G\rightarrow f(G)$ is a surjective homomorphism.
Conversely, if $f:G\rightarrow H$ is a surjective homomorphism, then $H=f(G)$.

There is a one-to-one correspondence in between quotient graphs and homomorphic images.

\begin{pro}
 Every quotient of $G$ is a homomorphic image of $G$, and conversely, every homomorphic image of $G$ is isomorphic to a
 quotient of $G$.
\end{pro}

\begin{proof}
 Suppose $G/P$ is a quotient graph of $G$ with partition $P=\{V_1,V_2,\dots,V_k\}$, then we assign the canonical mapping that assigns to each vertex $u \in V_G$ the unique $i\in \{1,2,\dots,k\}$ such that $u\in V_i$, $G/P=f(G)$.

For a homomorphic image $f(G)$, we assign an equivalence class by $u\sim v$ if $f(u)=f(v)$ for any $u,v\in V_G$. It is easily seen that
for the partition $P$ induced by $\sim$, $G/P$ is isomorphic to $f(G)$.
\end{proof}

Now we come to the proof of the decomposition law.

\begin{pro}[Hell, Ne\v set\v ril~\cite{Hell2004}] \label{decomp}
Let $G$ and $H$ be any directed graphs. Every homomorphism $f:G\rightarrow H$ can be written as $f=i\circ s$ where $s$ is a surjective homomorphism and $i$ is an injective homomorphism.
\end{pro}

\begin{proof}
 Recall that the image graph $f(G)$ is a graph with vertex set $f(V_G)$ and $u,v\in f(V_G)$ is an edge in $f(G)$ if and only if there exist $x,y\in V_G$ and $f(x)=u$, $f(y)=v$. $f(G)$ is a subgraph of $H$, thus there is a natural injective homomorphism $i$ from $f(G)$ to $H$. On the other hand, $f$ induces a quotient of $G$ with respect to partition $\theta_f$. Thus we can take $s$ to be canonical surjective homomorphism of $G$ onto $G/\theta_f = f(G)$, and $i$ the inclusion homomorphism of $f(G)$ in $H$.
\end{proof}

\subsection*{Full homomorphisms} \label{sec:full}
Consider the surjective homomorphism from the Frucht graph to $K_3$ shown in Table~\ref{tab:FtoK3}. There is no edge for example between vertices 1 and 7 in the Frucht graph, but there is an edge between their images Red and Blue in $K_3$. This is because surjective homomorphisms do not necessarily preserve non-adjacency. In this part we discuss full homomorphisms which preserve both edges and non-edges.

\begin{figure}[!ht]
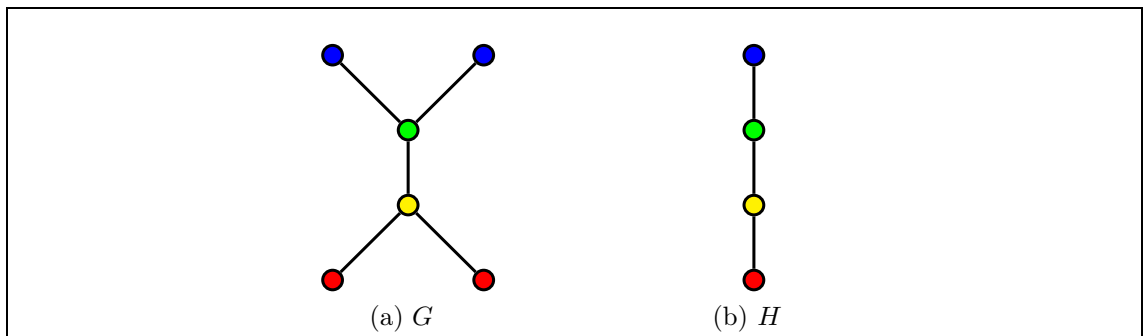

\centering
\subfloat[$G$]{
\begin{minipage}[c]{0.3\textwidth}
\centering
\includegraphics[height=0.75\textwidth]{ex3.1.2.eps}
\end{minipage}}
\subfloat[$H$]{
\begin{minipage}[c]{0.3\textwidth}
\centering
\includegraphics[height=0.75\textwidth]{ex3.1.3.eps}
\end{minipage}}
     \caption{A full homomorphism from $G$ to $H$.}
    \label{fig:fige4}
\end{figure}


\begin{example}
 There is a full homomorphism from $G$ to $H$ as in Figure~\ref{fig:fige4}, by mapping the vertices in graph $G$ to the vertex in the corresponding colour in graph $H$.
\end{example}

To our knowledge, the concept of full homomorphisms first appeared in 1966 in~\cite{Hedetniemi1966}. It has a close connection to other variants of graph homomorphisms, for example,
a full homomorphism is a surjective homomorphism in the sense of absence of isolated vertices in $H$. A homomorphism is an embedding if it is both a monomorphism and a full homomorphism.

\begin{obser}[\cite{Hell2004}]
There are some easy facts regarding to full homomorphisms:
\begin{enumerate}
 \item If $G$ is an induced subgraph of $H$ then the inclusion mapping $i:G\rightarrow H$ is a full injective homomorphism.
 \item Full injective homomorphisms are embeddings.
 \item There exists a full homomorphism $f:G\rightarrow H$ if and only if the vertices of $G$ can be partitioned into
independent sets $S_x$, $x\in V_H$, such that if $xy\notin E_H$ then no edge of $G$ joins the set $S_x$ to the set $S_u$,
and if $xy\in E_H$ then all vertices of $S_x$ are joined in $G$ to all vertices of $S_y$. 
\end{enumerate}
\end{obser}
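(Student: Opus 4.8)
The plan is to dispatch the three facts in turn; facts~1 and~2 are mere unwindings of the definitions, while fact~3 -- the partition characterization of full homomorphisms -- is the only one needing an argument, and even that is routine. \textbf{Fact 1.} I would take the inclusion map $i\colon G\to H$ given by $i(v)=v$. Since $E_G\subseteq E_H$ it is a homomorphism, and it is obviously injective; for fullness, if $(i(u),i(v))=(u,v)\in E_H$ with $u,v\in V_G$, then $G$ being an \emph{induced} subgraph of $H$ (so $E_G=\{(a,b)\in E_H\mid a,b\in V_G\}$) forces $(u,v)\in E_G$. Hence $i$ is a full injective homomorphism. \textbf{Fact 2.} There is nothing to do: by definition an embedding is a homomorphism that is simultaneously a monomorphism (an injective homomorphism) and a full homomorphism, so every full injective homomorphism is, verbatim, an embedding.

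\textbf{Fact 3, forward direction.} Given a full homomorphism $f\colon G\to H$, I would set $S_x:=f^{-1}(x)$ for each $x\in V_H$. Since $f$ is a function, every vertex of $G$ lies in exactly one $S_x$, so the family $(S_x)_{x\in V_H}$ partitions $V_G$ (with possibly empty parts). If $u,v\in S_x$ and $(u,v)\in E_G$, then $(f(u),f(v))=(x,x)\in E_H$, which is impossible in the loopless setting, so each $S_x$ is independent. If $xy\notin E_H$ but $(u,v)\in E_G$ for some $u\in S_x$, $v\in S_y$, then $(f(u),f(v))=(x,y)\in E_H$, a contradiction; hence no edge of $G$ joins $S_x$ to $S_y$. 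Finally, if $xy\in E_H$ and $u\in S_x$, $v\in S_y$ are arbitrary, then $(f(u),f(v))=(x,y)\in E_H$, so fullness of $f$ gives $(u,v)\in E_G$; thus all of $S_x$ is joined in $G$ to all of $S_y$.

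\textbf{Fact 3, reverse direction.} Conversely, given a partition $(S_x)_{x\in V_H}$ of $V_G$ into independent sets satisfying the two stated properties, I would define $f\colon V_G\to V_H$ by letting $f(v)$ be the unique $x$ with $v\in S_x$. To check that $f$ is a homomorphism, let $(u,v)\in E_G$ with $u\in S_x$, $v\in S_y$: we cannot have $x=y$ (as $S_x$ is independent) and we cannot have $xy\notin E_H$ (else no edge of $G$ would join $S_x$ to $S_y$), so $(f(u),f(v))=(x,y)\in E_H$. To check that $f$ is full, suppose $(f(u),f(v))\in E_H$; writing $u\in S_x$, $v\in S_y$ we get $xy\in E_H$, so by hypothesis all of $S_x$ is joined in $G$ to all of $S_y$, in particular $(u,v)\in E_G$.

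\textbf{On the obstacle.} There is no serious obstacle -- this is one of the ``easy facts'' -- but two small points warrant care. The first is the diagonal case $x=y$: if loops on $H$ are permitted, then the fibre over a looped vertex need not be independent, so the statement should be read for loopless $H$ (as in~\cite{Hell2004}) or with ``independent set'' relaxed at looped vertices; the argument above is otherwise unchanged. The second is pure bookkeeping: one must allow empty parts $S_x$ for $x\notin I(f)$ so that the family is genuinely indexed by all of $V_H$, and since empty parts satisfy every listed condition vacuously this costs nothing.
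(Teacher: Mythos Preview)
Your proof is correct and complete. Note that the paper does not actually prove this observation---it is merely stated as a list of ``easy facts'' imported from~\cite{Hell2004}, with no argument given. Your treatment is exactly the standard unwinding of the definitions, and your remark about the loop case is an appropriate caveat given that the thesis globally permits loops while the cited source works with simple graphs.
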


Recall that graph homomorphism is a generalization of colouring, while full homomorphism is closely connected to another colour-related concept, `the achromatic number'~\cite{Hell1976}.

\subsubsection*{Point-determining graphs}
Full homomorphisms are also closely related to the notion of a point-determining graph, which was introduced by D.~Sumner~\cite{Sumner1973} and others in the 1970s. 

\begin{definition}
{\em Point-determining graphs}\index{point-determining graph} (also known as {\em mating-type graphs}\index{mating-type graphs}, {\em mating graphs}\index{mating graphs}, {\em M-graph}\index{M-graph} or {\em thin graphs}\index{thin graphs}~\cite{Hammack2011}) are graphs in which no two vertices have the same open neighbourhood. 
\end{definition}

\begin{example}
 In Fig.~\ref{fig:fige4}, the left graph is not a point-determining graph while the right graph is a point-determining graph. Moreover, there is a full homomorphism from the left graph to the right graph. 
\end{example}

\begin{definition}
  The \emph{thinness relation}\index{thinness relation} $S$ of $G$ is the equivalence relation on
  $V_G$ defined by $(x,y)\in S$ if and only if $N_G(x)=N_G(y)$.
  We denote by $\mathcal{S}$ the corresponding partition of $V_G$, and write
  $R_S\subseteq V_G\times\mathcal{S}$ for the relation that associates each vertex with its
  $S$-equivalence class, i.e., $(x,\beta)\in R_S$ if and only if $x\in\beta$.

  The \emph{point-determining graph of $G$}\index{point-determining graph of $G$}, (also called \emph{thin graph of $G$}\index{thin graph of a graph}), denoted by $G_{\mathrm{pd}}$\index{$G_{\mathrm{pd}}$}, is a graph with vertex
  $\mathcal{S}$, two equivalence classes $\sigma$ and
  $\tau$ of $S$ are adjacent in $G_{\mathrm{pd}}$ if and only if $(x,y)$ is an edge of
  $G$ with $x\in\sigma$ and $y\in\tau$.
\end{definition}

Note that $R_S$ is a full homomorphism of $G$ to $G_{\mathrm{pd}}$
\cite{Feder2008}. Also $G_{\mathrm{pd}}$ is indeed a point-determining graph~\cite{Hammack2011}.
We observe that $G$ is obtained from $G_{\mathrm{pd}}$
by vertex duplication. Thinness and the quotients w.r.t.\ the thinness
relation play a role in particular in the context of product graphs, see
\cite{Imrich2000}. In this context it is well known that $G$ can
be reconstructed from $G_{\mathrm{pd}}$ and knowledge of the $S$-equivalence
classes.

If there is a full homomorphism from graph $G$ to graph $H$, then there is a point-determining graph $K$ such that both $G$ and $H$
have a full homomorphism to $K$.
The correspondence between full homomorphisms and point-determining graphs is explored in greater detail in~\cite{Feder2008}.
In the following we summarize the results about point-determining graphs contained in~\cite{Sumner1973}.

\begin{lemma}[Sumner~\cite{Sumner1973}] \label{lem:pd}
Let $G$ be point-determining. If $N_G(a)=N_G(b)-c$ with $a\neq b$ and $N_G(d)=N_G(e)-a$ with $d\neq e$, then $d=c$.
\end{lemma}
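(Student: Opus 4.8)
The plan is to first convert each neighbourhood equation into precise membership information using that $G$ is point-determining, and then to run a short adjacency-propagation argument that forces $d=c$.

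Step one: since $a\neq b$ and $G$ is point-determining we have $N_G(a)\neq N_G(b)$, so the equation $N_G(a)=N_G(b)\setminus\{c\}$ can hold only if $c\in N_G(b)$ and $c\notin N_G(a)$; equivalently $N_G(a)\subseteq N_G(b)$ and $N_G(b)=N_G(a)\cup\{c\}$. Symmetrically, from $d\neq e$ and $N_G(d)=N_G(e)\setminus\{a\}$ we obtain $a\in N_G(e)$, $a\notin N_G(d)$, and $N_G(e)=N_G(d)\cup\{a\}$.

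Step two: chase adjacencies. From $a\in N_G(e)$ and symmetry of the edge relation, $e\in N_G(a)$; since $N_G(a)\subseteq N_G(b)$ this gives $e\in N_G(b)$, hence $b\in N_G(e)$. Because $b\neq a$, it follows that $b\in N_G(e)\setminus\{a\}=N_G(d)$, so $d\in N_G(b)=N_G(a)\cup\{c\}$. If $d\neq c$, then $d\in N_G(a)$, so $a\in N_G(d)$, contradicting $a\notin N_G(d)$ from step one. Hence $d=c$.

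I do not anticipate a genuine obstacle here: the content is entirely in step one, namely making sure that point-determinedness together with the distinctness of the two endpoints in each hypothesis really does force the removed vertex to lie in the larger neighbourhood and outside the smaller one. Once that bookkeeping is in place, step two is just a two-line propagation of edges around $a,b,c,d,e$; in fact the argument never uses that $G$ is loopless, provided $N_G$ denotes the open neighbourhood.
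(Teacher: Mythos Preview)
Your proof is correct and follows essentially the same adjacency-chasing argument as the paper: both deduce $e\in N_G(a)\subseteq N_G(b)$, then $b\in N_G(d)$, then $d\in N_G(b)$, and finally use $d\neq c$ to force $a\in N_G(d)$, a contradiction. Your Step one makes explicit the role of point-determinedness (ensuring $c\in N_G(b)\setminus N_G(a)$ and $a\in N_G(e)\setminus N_G(d)$), which the paper leaves implicit, but otherwise the two proofs are the same.
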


\begin{proof}
Suppose that $d\neq c$. Since $N_G(d)=N_G(e)-a$ we have $e$ and $a$ are adjacent, because $N_G(a)=N_G(b)-c$, then $e\in N_G(a)\subset N_G(b)$. Hence $b\in N_G(e)-\{a\} = N_G(d)$. Since $d\neq c$, $d\in N_G(b)-\{c\}=N(a)$. So $a\in N_G(d)=N_G(e)-\{a\}$, a contradiction.
\end{proof}

\begin{pro} [Sumner~\cite{Sumner1973}]
If $G$ is nontrivial and point-determining, then there exists a vertex $x\in G$ such that $G-\{x\}$ is point-determining.
\end{pro}

\begin{proof}
Suppose no such point exists. Let $x\in G$ be the vertex with largest degree. Then since graph $G-\{x\}$ is not point-determining,
there exist vertices $p,q\in G-\{x\}$ with $p\neq q$ and $N_G(p)=N_G(q)-\{x\}$. Similarly there exist vertices $r,s\in G-\{p\}$ with $r\neq s$
such that $N_G(r)=N_G(q)-\{x\}$, by Lemma~\ref{lem:pd}, $r=x$, so $N_G(x)=N_G(s)-\{p\}$ and thus $d(x)=d(s)-1$, so that $d(s)>d(x)$, which is a contradiction.
\end{proof}

\section{Locally constrained homomorphisms}\label{sec:loc}

In this section we introduce three kinds of locally constrained homomorphisms: locally injective, surjective and bijective homomorphism. Locally constrained homomorphisms play a crucial rule in many application areas. A comprehensive survey~\cite{Fiala2008} is available summarizing the most important results about locally constrained homomorphisms in structural, algorithmic and applicational aspects.

\begin{definition}
For undirected graphs,
\begin{enumerate}
\item A homomorphism $f:G\to H$ is a {\em locally injective homomorphism}\index{locally injective homomorphism}, or {\em \LocInHomo}\index{\LocInHomo}, if for all $v\in V_G$, the restriction of the mapping $f$ to the domain $N_G(v)$ and range $N_H(f(v))$ is injective.
\item A homomorphism $f:G\to H$ is a {\em locally surjective homomorphism}\index{locally surjective homomorphism}, or {\em \LocSurHomo}\index{\LocSurHomo}, if for all $v\in V_G$, the restriction of the mapping $f$ to the domain $N_G(v)$ and range $N_H(f(v))$ is surjective.
\item A homomorphism $f:G\to H$ is a {\em locally bijective homomorphism}\index{locally bijective homomorphism}, or {\em \LocBiHomo}\index{\LocBiHomo}, if it is both a locally surjective and locally bijective homomorphism. That is if the restrictions to the neighbourhoods are the bijections.
\end{enumerate}
\end{definition}

There is an alternative and quite natural definition given in~\cite{Fiala2005}:

\begin{enumerate}
\item A homomorphism $f:G\to H$ is a {\em locally injective homomorphism}\index{locally injective homomorphism}, if for every edge $(u,v)$ of $H$, the subgraph
of $G$ induced by $f^{-1}(u)\cup f^{-1}(v)$ is a matching.
\item A homomorphism $f:G\to H$ is a {\em locally surjective homomorphism}\index{locally surjective homomorphism}, if for every edge $(u,v)$ of $H$, the subgraph
of $G$ induced by $f^{-1}(u)\cup f^{-1}(v)$ is a bipartite graph without isolated vertices.
\item A homomorphism $f:G\to H$ is a {\em locally bijective homomorphism}\index{locally bijective homomorphism}, if for every edge $(u,v)$ of $H$, the subgraph 
of $G$ induced by $f^{-1}(u)\cup f^{-1}(v)$ is a perfect matching.
\end{enumerate}

In accordance with the notation used for ordinary graph homomorphisms we write $G\LocInHom H$, $G\LocSurHom H$, $G\LocBiHom H$ for the existence of locally injective, surjective and bijective homomorphisms.

Compared to globally constrained homomorphisms, locally constrained homomorphisms have their own interesting properties and applications.
Figure~\ref{fig:loc} shows examples of locally injective ($G_i$ to $H$), locally bijective ($G_b$ to $H$), locally surjective homomorphism ($G_s$ to $H$).
It is easily seen that a locally injective (locally bijective) homomorphism is not necessarily an injective (bijective) homomorphism, but a locally surjective homomorphism between two connected graphs is a surjective homomorphism.

\begin{figure}[!ht]
\centering
\includegraphics[width=0.55\textwidth]{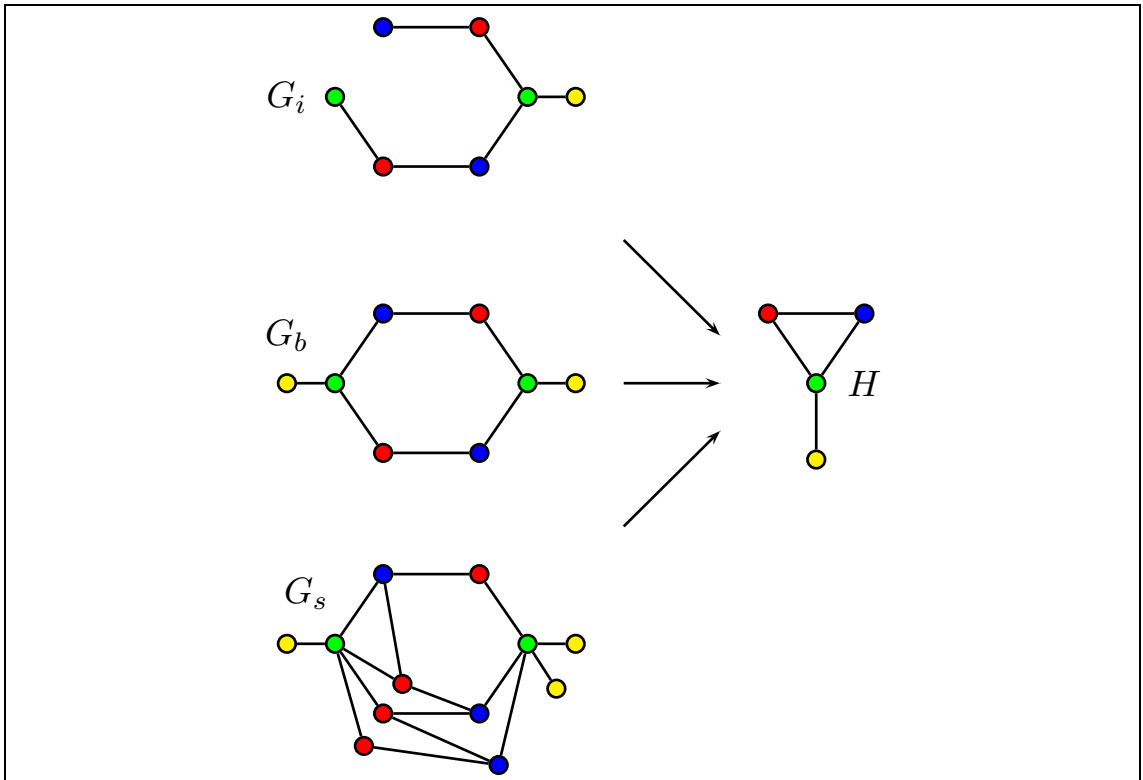} 
\caption{Locally injective, bijective, surjective homomorphisms~\cite{Fiala2008}.}
\label{fig:loc}
\end{figure}

Locally surjective homomorphisms were firstly introduced by Everett and Borgatti, who called them \emph{role colourings}\index{role colouring}~\cite{Everett1991}. They originated in the theory of social behaviour. The source graph $G$ represents relations between particular individuals of some group. The target graph $H$ is called the role graph. The task is to assign roles to individuals so that each person given a particular role has, among its neighbours, every role prescribed by the role graph at least once, while no other roles may appear in the neighbourhood~\cite{Fiala2008}, 
where individuals of the same social role relate to other individuals in the same way~\cite{Fiala2008}.
In this thesis, the other two kinds of locally constrained homomorphisms are of more interest to us, and so we accordingly introduce them in greater detail.

\subsubsection*{Locally injective homomorphisms} \label{sec:li}
The notion of locally injective homomorphism, sometimes called \emph{local monomorphism}\index{local monomorphism}~\cite{Nesetril1971}
or \emph{partial covering projection}\index{partial covering projection}~\cite{Fiala2001}, is a notion with an interesting history and wide application. It has a very close connection to the frequency assignment problem, which is a highly practical problem of interference-free frequency assignment for wireless networks~\cite{Fiala2008}.
Locally injective homomorphisms (also called local epimorphisms or partial covers) are also used in distance-constrained labelings of graphs~\cite{Fiala2001}
and as indicators of the existence of homomorphisms of derivative graphs (line graphs)~\cite{Nesetril1971}.

Observe that if there is a locally injective homomorphism from graph $G$ to graph $H$, then for any vertex $v\in V_G$,
$|N(v)|\leq |N(f(v))|$. 
Consequently, a locally injective homomorphism from a graph to itself preserves degrees, i.e.,
if $f$ is a locally injective homomorphism from a graph $G$ to itself, then for any vertex $v\in V_G$,
$d(v)=d(f(v))$. There is a classical result about locally injective homomorphisms from a graph to itself which was proved in 1971:

\begin{thm}[Ne\v{s}et\v{r}il~\cite{Nesetril1971}] \label{thm:locin-auto}
 Let $G$ be a connected finite graph. Then every locally injective homomorphism of $G$ into itself
 is an automorphism.
\end{thm}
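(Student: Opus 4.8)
The plan is to prove that $f$ is injective; since $V_G$ is finite this makes $f$ a bijection of $V_G$, and a bijective endomorphism of a finite graph is automatically an automorphism. (Indeed, a vertex‑bijection $f$ sends the finite set $E_G$ into $E_G$ injectively, hence onto $E_G$, so for every edge $(a,b)$ there is an edge $(u,v)$ with $f(u)=a,\ f(v)=b$, which says exactly that $f^{-1}$ is also a homomorphism.) So everything reduces to showing $f$ is injective.

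First I would record two easy facts. The composition of two locally injective homomorphisms is again locally injective, since on each neighbourhood $N_G(v)$ it is a composite of two injections; hence every iterate $f^k\colon G\to G$ is locally injective. And since $V_G$ is finite, the descending chain $f(V_G)\supseteq f^2(V_G)\supseteq\cdots$ stabilises, say at $W=f^N(V_G)$. Then $f(W)=W$, so $f$ restricts to a permutation of the finite set $W$, and therefore a suitable iterate $g:=f^k$ fixes $W$ pointwise while still having image exactly $W$. In other words, $g$ is a locally injective retraction of $G$ onto the induced subgraph $G[W]$.

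The main step — and the one I expect to be the real obstacle — is to deduce $W=V_G$ from connectedness. Fix $x\in W$. Since $g(x)=x$, local injectivity says $g$ maps $N_G(x)$ injectively into $N_G(g(x))=N_G(x)$; as $N_G(x)$ is finite, this restriction is a bijection of $N_G(x)$ onto itself. Hence every neighbour $y$ of $x$ is of the form $g(z)$ for some $z$, so $y\in g(V_G)=W$. Thus $W$ contains the neighbourhood of each of its vertices; as $G$ is connected and $W\neq\emptyset$, it follows that $W=V_G$. Consequently $g=f^k$ is a surjection of the finite set $V_G$ onto itself, hence a bijection, and $f^k$ injective forces $f$ injective.

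Combining this with the reduction in the first paragraph, $f$ is a bijective endomorphism of the finite graph $G$, hence an automorphism. It is worth noting where the hypotheses are used: connectedness enters precisely in the step $W=V_G$, and finiteness both in the stabilisation of the image chain and in the principle that an injective self‑map of a finite set is bijective. Both are essential, as witnessed by the shift endomorphism of the one‑way infinite path and by the locally injective endomorphism of $C_6\sqcup C_3$ that wraps $C_6$ twice around $C_3$ while fixing $C_3$.
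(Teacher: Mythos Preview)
Your proof is correct and complete. The iteration argument is clean: stabilising the image chain, passing to an iterate that fixes the eventual image $W$ pointwise, and then using local injectivity at a fixed point $x\in W$ to show $N_G(x)\subseteq W$ is exactly the right way to propagate surjectivity along the connected graph. The reduction ``bijective endomorphism of a finite graph $\Rightarrow$ automorphism'' via the edge count is also handled correctly, and your closing remarks about where finiteness and connectedness enter are accurate.

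The paper takes a genuinely different route: it argues by induction on $|V_G|$. Assuming $f$ is not surjective, it picks $v\notin f(V_G)$, looks at the connected components $G^1,\ldots,G^n$ of $G-v$, and observes that $f$ must send every component into a single component $G^j$ (the one containing $f(v)$). By the induction hypothesis $f|_{G^j}$ is an automorphism of $G^j$, and one then derives a contradiction with local injectivity at a vertex of $G^j$ adjacent to $v$: the image $f(v)$ is forced to coincide with the image of some vertex already in $N_{G^j}(w)$. Your approach avoids induction entirely and replaces the component analysis by a dynamical argument on iterates of $f$; this is arguably more transparent and makes the role of connectedness (closure of $W$ under neighbours) and finiteness (stabilisation and pigeonhole) explicit in a single step each. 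The paper's approach, on the other hand, is closer in spirit to how one would prove the analogous statement for covering maps in topology, and its inductive structure generalises more readily to related settings where one wants to peel off a vertex and reduce.
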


\begin{proof}
 We prove the statement by induction on $|G|$.
 It is easily seen that for $|G|=2$ the statement of the theorem holds. Let $|G|>2$, and suppose that the statement
 of the theorem holds for all connected graphs $H$ with $|H|<|G|$. We assume there is a locally
 injective homomorphism $f$ from $G$ to itself which is not an automorphism, then $f(V_G) \subset V_G$ and the homomorphic image $f(G)$ is a connected graph.
 Take a vertex $v\in V_G \setminus f(V_G)$ and consider the graph $G-v$. Let $G^1,\dots,G^n$ be the disjoint connected components of $G-v$ and, because $G$ is a connected graph, for each $G^i$, there exists a vertex $v^i$ which is connected to $v$. Suppose the image of $v$ is in $G^j$ where $1\leq j\leq n$, then from the properties of locally injective homomorphism, for all $i$, $f(v^i)\in G^j$. Because of the connectivity, $f(G^i)\subset G^j$ for any $i$. It is easily seen that the restriction of $f$ to $G^j$
 is also a locally injective homomorphism, from the induction hypothesis it is an automorphism.
 Consider any vertex $w$ in the set of $N_G(v)\cap G^j$, then $f(N_{G^j}(w))=N_{G^i}(w)$. But $v\in V_G(w)$ and $v\notin f(N_G(w))$, then $f$ is not a locally injective homomorphism of $G$ to itself, which is a contradiction.
\end{proof}

\subsubsection*{Locally bijective homomorphisms} \label{sec:lb}

A locally bijective homomorphism is also locally injective and locally surjective. Hence,
any result valid for locally injective or locally surjective homomorphisms is also valid for locally bijective
homomorphisms. Locally bijective homomorphisms (also known as \emph{local isomorphisms}\index{local isomorphisms} or \emph{full covering projections}\index{full covering projection}~\cite{Fiala2001}) have important
applications in many areas, for example distributed computing, recognizing graphs by networks of
processors or constructing highly transitive regular graphs~\cite{Fiala2005}. 

For locally bijective homomorphisms the preimage classes of all vertices have the same size
and for locally surjective homomorphisms all the preimage classes have size at least one.
If there is a locally bijective homomorphism from graph $G$ to graph $H$, then either $|V_G|>|V_H|$
or else $G$ and $H$ are isomorphic.

Just as for graph isomorphism, a locally bijective homomorphism maintains vertex degrees and degrees
of neighbours and degrees of neighbours of neighbours and so on.
The existence of such a mapping from $G$ to $H$ therefore implies equality of the so-called
\emph{degree refinement matrices}\index{degree refinement matrix} of $G$ and $H$.

\subsubsection*{Degree decomposition matrices}
\label{sec:DRM}
Consider a locally bijective homomorphism $f:G\LocBiHom H$. By definition, for every vertex $x$, $f$ induces an isomorphism from the neighbourhood of $x$, $N_G(x)$, to the neighbourhood of $f(x)$, $N_H(f(x))$. In particular $x$ and $f(x)$ have the same degree. (For every vertex $x'\in N_G(x)$ the neighbourhood of $x'$ must be isomorphic to the neighbourhood of $f(x')$ that itself lies in the neighbourhood of $f(x)$.) It follows that locally bijective homomorphisms preserve not only the degrees of vertices, but also the degrees of neighbourhood vertices and so on. This property is well captured by the notion of degree refinement matrices~\cite{Fiala2005}, which we have already defined in Section~\ref{pra:DeMat}.

Recall a partition of the vertex set of a graph $G$ into disjoint classes is an {\em equitable partition}\index{equitable partition} if the vertices in the same class have the same numbers of neighbours in all classes of the partition.

The relation `being finer' defines a lattice on the set of all equitable partitions of a fixed graph.
Every finite graph admits a unique minimal equitable partition~\cite{Everett1991}.

Any equitable partition is characterized by the associated {\em degree decomposition matrix}\index{degree decomposition matrix} whose rows and columns are indexed by the blocks of the partition, and the entry in the $i$-th row and $j$-th column describes how many neighbours a vertex from the $i$-th block has in the $j$-th block.

Every finite graph $G$ admits a unique minimal equitable partition. In this case a canonical ordering can be imposed on the blocks, so the corresponding degree matrix, called the {\em degree refinement matrix}, $\drm(G)$, is also defined uniquely.

We put $G\Matrixeq H$ if and only if $\drm(G)=\drm(H)$. The relation $\Matrixeq$ is an equivalence relation on the class of finite connected graphs, $\ConnGraph$.

It is easy to observe that given a pair of graphs $(G,H)$ with $\drm(G)\neq \drm(H)$ there is no locally bijective homomorphism $G\LocBiHom H$.
Consequently the relation $\LocBiHom$ is a sub-relation of $\Matrixeq$.
The connections between $\Matrixeq$ and all three variants of locally constrained homomorphisms are captured by the following so-called Cantor-Bernstein type theorems.

\begin{thm}[Fiala, Kratochv\'{\i}l~\cite{Fiala2001}]
\label{thm:degeq1}
 If two graphs $G$ and $H$ share the same degree refinement matrix, then every locally injective homomorphism from $G$ to $H$ is locally bijective.
\end{thm}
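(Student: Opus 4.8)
The plan is to reduce the statement to a local degree equality and then to obtain that equality by passing to the common universal cover. Since $f$ is already locally injective, for each $v\in V_G$ the restriction $f\colon N_G(v)\to N_H(f(v))$ is injective; an injection between finite sets of equal cardinality is a bijection, so it suffices to prove $\deg_G(v)=\deg_H(f(v))$ for every $v$. Local injectivity already gives $\deg_G(v)\le\deg_H(f(v))$, so the entire content of the theorem is the reverse inequality, and it cannot be obtained vertex by vertex: it is genuinely false when $\drm(G)\ne\drm(H)$ (for instance $K_2$ embeds locally injectively but not locally bijectively into $K_3$). So some global input is needed, and the natural one is $\drm(G)=\drm(H)$.

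First I would invoke that $G$ and $H$, being connected (as $\drm$ is only defined there) with equal degree refinement matrices, have isomorphic universal covers: there is an infinite tree $T$ together with locally bijective projections $p_G\colon T\to G$ and $p_H\colon T\to H$. (That the universal cover of a connected graph is determined by its degree refinement matrix is standard.) Composing, $f\circ p_G\colon T\to H$ is locally injective, and since $T$ is simply connected and $p_H$ is a covering, the usual lifting criterion produces $\tilde f\colon T\to T$ with $p_H\circ\tilde f=f\circ p_G$. This $\tilde f$ is again locally injective: a failure of local injectivity of $\tilde f$ at a vertex, transported through $p_H$, would contradict local injectivity of $f\circ p_G$.

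The next two facts about $\tilde f$ close the argument. (i) A locally injective homomorphism between trees is injective and reflects edges, hence is an embedding; the reason is that a locally injective homomorphism carries non-backtracking walks to non-backtracking walks injectively, and in a tree a non-backtracking walk of length $k$ joins vertices at distance exactly $k$, so $\tilde f(x)=\tilde f(y)$ with $x\ne y$ would turn the $x$--$y$ path into a non-trivial closed non-backtracking walk, and an edge $\tilde f(x)\tilde f(y)$ arising from a non-edge would turn a path of length $\ge 2$ into a non-backtracking walk ending at distance $1$ — both impossible. (ii) An embedding of $T$ into itself is an automorphism. Granting (ii), $\tilde f$ is an automorphism of $T$, hence locally bijective; then for $v\in V_G$, picking $\tilde v\in p_G^{-1}(v)$, the map $f|_{N_G(v)}$ equals $p_H\circ\tilde f\circ (p_G|_{N_T(\tilde v)})^{-1}$, a composite of bijections onto $N_H(f(v))$, so $\deg_G(v)=\deg_H(f(v))$ and $f$ is locally bijective.

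The hard part is (ii), the self-embedding rigidity of $T$, which is where the finiteness of $H$ must really be used: $T$ is a tree of bounded degree whose automorphism group acts cocompactly (finitely many vertex orbits, one per block of the degree refinement) and whose ball sizes grow at a rate controlled by $\drm(H)$. If $\tilde f$ were not surjective, then — using that $T$ is a tree and that a homomorphism does not increase distances, so $\tilde f$ is non-expanding — the image misses an entire infinite branch hanging off some vertex; iterating $\tilde f$ forces ever larger omissions, while $\tilde f^{n}(T)\cong T$ has the same controlled growth, yielding a volume contradiction. I expect this density/growth bookkeeping to be the only delicate point; by contrast, trying to imitate directly the inductive proof of Theorem~\ref{thm:locin-auto} on the pair $(G,H)$ seems awkward, because the natural reductions (deleting a vertex outside $f(V_G)$, or replacing $H$ by $f(G)$) do not preserve equality of degree refinement matrices.
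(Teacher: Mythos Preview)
The paper does not prove this theorem; it merely cites the result from Fiala and Kratochv\'{\i}l~\cite{Fiala2001}. So there is no ``paper's proof'' to compare against, and I will assess your proposal on its own merits.

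Your overall architecture is correct: pass to the common universal cover $T$ (via Leighton's theorem, stated in the paper), lift $f\circ p_G$ through the covering $p_H$ to get a locally injective $\tilde f\colon T\to T$, and observe that if $\tilde f$ is an automorphism then $f$ is locally bijective. Your argument for (i), that a locally injective map of trees is an isometric embedding, is clean and correct.

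The genuine gap is in (ii). Your ``volume contradiction'' sketch does not land as written: each $\tilde f^{\,n}(T)$ is indeed isomorphic to $T$, but comparing growth rates of two infinite subtrees of $T$ yields no contradiction without an anchor. What is missing is a fixed reference point. Here is how to close it. The tree $T$, being the universal cover of a finite graph, has only finitely many $\mathrm{Aut}(T)$-orbits of vertices (one per block of the degree refinement). Pick any $o\in T$; the sequence of orbits of $o,\tilde f(o),\tilde f^{\,2}(o),\dots$ is eventually periodic, say with period $p$ from index $N$ on. Set $o'=\tilde f^{\,N}(o)$; then $\tilde f^{\,p}(o')$ lies in the same orbit as $o'$, so some $\sigma\in\mathrm{Aut}(T)$ sends $\tilde f^{\,p}(o')$ back to $o'$. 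Now $\psi=\sigma\circ\tilde f^{\,p}$ is an embedding of $T$ into itself fixing $o'$. By (i), $\psi$ is an isometry onto its image, so $\psi(B_T(o',r))\subseteq B_T(o',r)$; since balls are finite and $\psi$ is injective, $\psi(B_T(o',r))=B_T(o',r)$ for every $r$, whence $\psi$ is surjective. Therefore $\tilde f^{\,p}$ is surjective, hence so is $\tilde f$, and (ii) follows.

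In short: your route is valid, but the finishing move for (ii) is not ``growth versus omissions'' in the loose sense you describe --- it is the finiteness of vertex types in $T$, which forces the orbit sequence of a basepoint under iteration of $\tilde f$ to cycle, after which a ball-counting argument around a fixed point does the job.
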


\begin{thm}[Kristiansen, Telle~\cite{Kristiansen2000}]
\label{thm:degeq2}
 If two graphs $G$ and $H$ share the same degree refinement  matrix, then every locally surjective homomorphism from $G$ to $H$ is locally bijective.
\end{thm}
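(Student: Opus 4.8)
The plan is to reduce the statement to the single assertion that $f$ preserves degrees, i.e.\ $d_G(v)=d_H(f(v))$ for every $v\in V_G$. Once this is known, local surjectivity forces the restriction $f\colon N_G(v)\to N_H(f(v))$ to be a surjection between two finite sets of equal cardinality, hence a bijection, so $f$ is locally bijective. Throughout I would assume $G$, and hence the homomorphic image $H=f(G)$, is connected (as $\drm$ is only defined for connected graphs); for connected $G$ a locally surjective $f$ is automatically surjective. Write $M=\drm(G)=\drm(H)=(m_{ij})$, let $B_1,\dots,B_k$ be the blocks of the canonically ordered equitable partition of $H$ realising $\drm(H)$, and set $F_i=f^{-1}(B_i)$, a partition of $V_G$ with all parts non-empty. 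The one elementary fact extracted from local surjectivity is: for $v\in F_i$ and any $l$,
\[|N_G(v)\cap F_l|\;\geq\;|N_H(f(v))\cap B_l|\;=\;m_{il},\]
since $f$ maps $N_G(v)$ onto $N_H(f(v))$ and the fibres over distinct vertices of $B_l$ are disjoint; moreover $|N_G(v)\cap F_l|>0$ implies $m_{il}>0$.

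Next I would run a spectral squeezing argument on the (in general \emph{not} equitable) partition $(F_i)$. Let $\bar M=(\bar m_{il})$ be its average quotient matrix, $\bar m_{il}=\tfrac1{|F_i|}\sum_{v\in F_i}|N_G(v)\cap F_l|$. The displayed inequality gives $\bar M\geq M$ entrywise; since $M$ is irreducible ($G$ connected) so is $\bar M$, hence by Perron--Frobenius monotonicity $\rho(M)\leq\rho(\bar M)$, with equality only if $\bar M=M$. On the other hand, $\bar M$ is diagonally similar to a symmetric matrix whose Rayleigh quotients are realised by vectors on $V_G$ that are (suitably normalised) constant on the parts $F_i$, so an interlacing-type estimate gives $\rho(\bar M)\leq\lambda_{\max}(A_G)$; and because $\drm(G)$ is the quotient matrix of an equitable partition of $G$, lifting its Perron eigenvector to a block-constant eigenvector of $A_G$ shows $\lambda_{\max}(A_G)=\rho(\drm(G))=\rho(M)$. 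Combining, $\rho(M)\leq\rho(\bar M)\leq\rho(M)$, so $\rho(\bar M)=\rho(M)$ and therefore $\bar M=M$.

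Finally, $\bar M=M$ says that for each $i,l$ the average of $|N_G(v)\cap F_l|$ over $v\in F_i$ equals $m_{il}$; as each summand is at least $m_{il}$, every summand equals $m_{il}$. Summing over $l$ gives $d_G(v)=\sum_l m_{il}=d_H(f(v))$ for all $v$, which is exactly what was needed, so $f$ is locally bijective. The step I expect to be the main obstacle is spotting how the hypothesis $\drm(G)=\drm(H)$ can be used at all: its sole role is to make the partition pulled back from $H$ comparable to $G$'s own degree-refinement matrix, which is precisely what pins $\rho(M)$ to $\lambda_{\max}(A_G)$ and makes the two bounds on $\rho(\bar M)$ close up. The disconnected case and the minor bookkeeping needed if loops are allowed are routine; this argument parallels the Fiala--Kratochv\'il theorem (Theorem~\ref{thm:degeq1}) for locally injective homomorphisms, but note that the naive ``reverse $f$'' reduction to that theorem fails, since a locally injective homomorphism $H\to G$ need not exist even when $f$ is locally bijective.
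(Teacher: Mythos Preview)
The paper does not prove this theorem; it is quoted as a result of Kristiansen and Telle and used as a black box (together with the companion Theorem~\ref{thm:degeq1}) in Section~\ref{sec:locally-constrained}. So there is no in-paper proof to compare against.

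Your argument is correct. The chain
\[
\rho(M)\;\leq\;\rho(\bar M)\;\leq\;\lambda_{\max}(A_G)\;=\;\rho(\drm(G))\;=\;\rho(M)
\]
is sound: the first inequality is Perron--Frobenius monotonicity (and you are right that $M$ irreducible forces $\bar M\geq M$ irreducible, which is exactly what is needed for the strict-inequality clause); the second is Haemers-type interlacing for the quotient matrix of an arbitrary partition, via the orthonormal columns $S D^{-1/2}$; and the last equality uses that the Perron eigenvector of an equitable quotient lifts to a \emph{positive} eigenvector of $A_G$, hence to the Perron eigenvalue. The final averaging step then pins every $|N_G(v)\cap F_l|$ to $m_{il}$, which is precisely degree preservation. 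One cosmetic remark: the side observation ``$|N_G(v)\cap F_l|>0$ implies $m_{il}>0$'' is not actually needed once you have $\bar M\geq M$ and $M$ irreducible, since that already forces $\bar M$ irreducible.

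This spectral route is almost certainly different from the original proof, which proceeds by a direct combinatorial induction along the iterative degree-refinement process (showing that at every refinement step the pulled-back partition of $G$ stays at least as fine as, and hence equal to, the corresponding partition of $G$). Your approach trades that induction for a single global squeeze on the Perron root; it is shorter and makes transparent exactly where the hypothesis $\drm(G)=\drm(H)$ enters, namely to pin $\lambda_{\max}(A_G)$ to $\rho(M)$. The combinatorial proof, on the other hand, yields slightly more: it shows that the pulled-back partition $(F_i)$ coincides with $G$'s own degree-refinement partition block by block, not merely that it is equitable with the same quotient matrix.
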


It is not necessarily the case that there is a locally bijective homomorphism between two graphs with the same degree refinement matrix, for example, $C_3,C_4$ and $C_5$ share the same degree refinement matrix $\{2\}$, but there are no locally bijective homomorphisms between them.

Because the degree refinement matrix is easy to compute, 
for many pair of graphs $(G,H)$ we can easily determine that a locally
bijective homomorphism from $G$ to $H$ does not exist.

\subsubsection*{Universal covers}
The notion of \emph{universal cover}\index{universal cover} is well established in the topology of
continuous spaces. 
 One requirement of a universal cover is that it be simply connected, which translates in terms of graph theory to the requirement of being acyclic and connected, that is a tree. In the discrete case, formally, the \emph{universal covering graph}\index{universal covering graph} (or \emph{universal cover}\index{universal cover}) of a graph $H$ is the only (possibly infinite) tree $T_H$ that admits a
locally bijective homomorphism to $H$.

If $G$ is a tree, then $G$ itself is the universal covering graph of $G$.
The universal covering graph of other connected graph is a countably infinite (but locally finite) tree.

The universal covering graph $T_G$ of a connected graph $G$ can be constructed as follows\footnote{This construction is from \textit{Covering graph. (2012, September 3). In Wikipedia, The Free Encyclopedia. Retrieved 09:51, February 20, 2014, from \url{http://en.wikipedia.org/w/index.php?title=Covering_graph&oldid=510586814}}.}. Choose an arbitrary vertex $r$ of $G$ as a starting point. Each vertex of $T_G$ is a non-backtracking walk that begins from $r$, that is, a sequence $w = (r, v_1, v_2, ..., v_n)$ of vertices of $G$ such that
$v_i$ and $v_{i+1}$ are adjacent in $G$ and $v_{i-1} \neq v_{i+1}$ for all $i$.
Then, two vertices of $T_G$ are adjacent if one is a simple extension of another: the vertex $(r, v_1, v_2, ..., v_n)$ is adjacent to the vertex $(r, v_1, v_2, ..., v_{n-1})$. Up to isomorphism, the same tree $T_G$ is constructed regardless of the choice of the starting point $r$. Thus the universal covering graph is unique up to isomorphism.

The covering map $f$ maps the vertex $(r)$ in $T_G$ to the vertex $r$ in $G$, and a vertex $(r, v_1, v_2, ..., v_n)$ in $T$ to the vertex $v_n$ in $G$.

Figure~\ref{fig:cover} gives an example of universal covering graph.

\begin{figure}[!ht]
\centering
\includegraphics{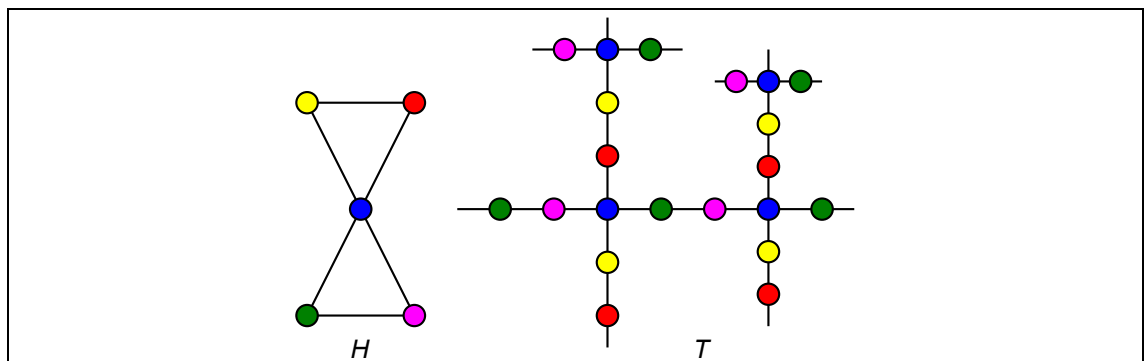} 
\caption{$H$ and its universal covering graph.}
\label{fig:cover}
\end{figure}

Suppose $f$ is a homomorphism from graph $G$ to graph $H$. 
If $f$ is locally injective, then the tree $T_G$ is a subtree of $T_H$;
if $f$ is locally surjective, then $T_H$ is a subtree of $T_G$;
if $f$ is locally bijective, then $T_G$ and $T_H$ are isomorphic.

The following theorem shows the connection between the degree refinement matrices and universal covers.

\begin{thm}[Leighton~\cite{Leighton1982}]
Two graphs $G$ and $H$ share the same degree refinement matrix if and only if their universal covers are
isomorphic.
\end{thm}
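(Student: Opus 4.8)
The plan is to prove the two implications separately; the direction ``$\drm(G)=\drm(H)\Rightarrow T_G\cong T_H$'' is the substantial one, so I would do it first. Fix the minimal equitable partition $B_1,\dots,B_k$ of $G$, so that $\drm(G)=M=(m_{ij})$ with $m_{ij}=|N_G(u)\cap B_j|$ for every $u\in B_i$. Build $T_G$ from a root $r\in B_i$ by the non-backtracking-walk construction recalled above, and label each vertex of $T_G$: the root $(r)$ gets label $(i,\bot)$, and a non-root vertex $w=(r,v_1,\dots,v_n)$ gets label $(a,b)$ where $v_n\in B_a$ and $v_{n-1}\in B_b$. Since the non-backtracking extensions of $w$ are exactly the edges from $v_n$ to a neighbour other than $v_{n-1}$, the definition of an equitable partition yields the \emph{deterministic} generation rule: a vertex with label $(i,\bot)$ has, for every $j$, exactly $m_{ij}$ children with label $(j,i)$; and a vertex with label $(a,b)$, $b\neq\bot$, has $m_{ab}-1$ children with label $(b,a)$ and, for every $c\neq b$, exactly $m_{ac}$ children with label $(c,a)$. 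Hence the labelled rooted tree $T_G$ is the \emph{unique} one generated by this rule from the root-label $(i,\bot)$, so the underlying unlabelled rooted tree $T_{G,r}$ depends only on the pair $(M,i)$; and since, as recalled in the text, $T_G$ is independent of the choice of root up to isomorphism, $T_G$ depends only on $M$. Now if $\drm(H)=M$ as well, the minimal equitable partition of $H$ has the same block index set, so we may choose a root $r'\in H$ lying in the block with index $i$; then $T_H\cong T_{H,r'}$ is generated by the very same rule from $(M,i)$, whence $T_G\cong T_H$.

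For the converse, suppose $T_G\cong T_H$. The universal covering maps $c_G:T_G\to G$ and $c_H:T_H\to H$ are locally bijective, and the pre-image under a locally bijective homomorphism of an equitable partition is again an equitable partition with the same degree decomposition matrix, an operation compatible with the ``is finer than'' order and hence with the canonical ordering of blocks. Applying this to the minimal equitable partitions of $G$ and of $H$ gives $\drm(T_G)=\drm(G)$ and $\drm(T_H)=\drm(H)$, where $\drm$ is extended to locally finite graphs in the obvious way. Since $T_G\cong T_H$ we have $\drm(T_G)=\drm(T_H)$, and therefore $\drm(G)=\drm(H)$.

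The one place that needs genuine care is turning ``the unique tree generated by the rule'' into a bona fide isomorphism: I would construct $\varphi:T_G\to T_H$ level by level from the two roots, maintaining a label-preserving bijection between the sets of vertices at distance at most $n$ from the respective roots. The inductive step goes through because, by the displayed child-label rule, two vertices carrying the same label have the same multiset of child-labels, so a label-respecting bijection on the vertices at distance exactly $n$ extends to one on the vertices at distance $n+1$; the union over all $n$ is the desired isomorphism. A minor secondary point is the bookkeeping for loops and multiple edges in the non-backtracking construction, together with the routine verification (used in the converse) that pulling equitable partitions back along $c_G$ and $c_H$ really does preserve the canonical block ordering used to define $\drm$.
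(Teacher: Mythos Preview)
The paper does not prove this theorem; it is quoted from Leighton~\cite{Leighton1982} without proof, so there is no paper argument to compare against. I will therefore just assess your proposal on its own merits.

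Your forward direction is correct and is the standard argument: the rule you display shows that the rooted labelled tree $T_{G,r}$ depends only on $(M,i)$, and the root-independence of the universal cover (stated in the text) lets you match a root of $H$ in the corresponding block.

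The converse has a genuine gap. You correctly note that the pullback $c_G^{-1}(\mathcal P)$ of an equitable partition $\mathcal P$ of $G$ is an equitable partition of $T_G$ with the same degree decomposition matrix. But from this you jump to $\drm(T_G)=\drm(G)$, which requires that the pullback of the \emph{minimal} (coarsest) equitable partition of $G$ be the minimal equitable partition of $T_G$. ``Compatibility with the finer-than order'' does not give this: a priori $T_G$ could admit a coarser equitable partition that is not a pullback from $G$. The fix is to invoke the iterative degree-refinement description of the minimal equitable partition. Starting from the trivial partition and repeatedly splitting blocks by neighbour-count profiles, local bijectivity of $c_G$ forces the partition of $T_G$ at every stage to equal the pullback of the partition of $G$ at that stage (the neighbour-count profile of $v\in T_G$ in the pulled-back blocks equals that of $c_G(v)$ in the blocks of $G$). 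Hence the process on $T_G$ stabilises exactly when it does on $G$, at the pullback of the minimal equitable partition; this \emph{is} then the minimal equitable partition of $T_G$, and the DRMs coincide. This argument also handles the worry that $T_G$ may be infinite: the refinement on $T_G$ stabilises in finitely many steps precisely because it is the pullback of the finite refinement on $G$. Your closing remark about preserving the canonical block ordering is a separate, genuinely routine, issue; minimality is the substantive missing step.
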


\section{Relations}
In the next chapter we will discuss the theory of relations in more detail.

\begin{definition}\label{def:muls}
  Let $G=(V_G,E_G)$ be a graph, $B$ a finite set, and $R\subseteq V\times
  B$ a binary relation, where for every element $b\in B$, we can find an
  element $v \in V_G$ such that $(v,b)\in R$. Then the graph $G\muls R$\index{$G\muls R$} has
  vertex set $B$ and edge set
\begin{equation*}
    E_{G\muls R} = \left\{ (a,b)\in B\times B \mid \textrm{ there is }
      (u,v)\in E_G \textrm{ such that } (u,a),(v,b)\in R \right\}.
\end{equation*}
\end{definition}

Graphs with loops are not always a natural model, however, so that
it may appear more appealing to consider the slightly modified definition.
\begin{definition}\label{def:mulw}
Let $G=(V_G,E_G)$ be a simple graph, $B$ a finite set, $R$ a binary relation,
where for every element $b\in B$, we can find an element $v \in V_G$ such that
$(v,b)\in R$. Then the (simple) graph $G\mulw R$\index{$G\mulw R$} has vertex set $B$ and edge
set
\begin{equation*}
  E_{G\mulw R} = \left\{ (a,b)\in B\times B \mid a\ne b \textrm{ and there is }
  (u,v)\in E_G \textrm{ and } (u,a),(v,b)\in R \right\}.
\end{equation*}
\end{definition}

We remark that these definitions remain meaningful for directed graphs,
weighted graphs (where the weight of an edge is the sum of the weights of its
pre-images) as well as relational structures. For simplicity, we restrict
ourselves to undirected graphs (with loops). Most of the results can be
directly generalized.

Graphs can be regarded as representations of symmetric binary
relations. Using the same symbol for the graph and the relation it
represents, we may re-interpret definition~\ref{def:muls} as a conjugation
of relations. $R^+$ is the \emph{transpose}\index{transpose} of $R$, i.e., $(u,x)\in R^+$
if and only if $(x,u)\in R$. The double composition $R^+\circ G\circ R$
contains the pair $(u,v)$ in $B\times B$ if and only if there are $x$ and
$y$ such that $(u,x)\in R^+$, $(y,v)\in R$, and $(x,y)\in E_G$. Thus
\begin{equation*}
  G\muls R = R^+\circ G\circ R.
\end{equation*}

Simple graphs, analogously, correspond to the irreflexive symmetric
relations. For any relation $R$, let $R^\iota$\index{$R^\iota$} denote its irreflexive
part, also known as the \emph{reflexive reduction}\index{reflexive reduction} of $R$. Since
Definition~\ref{def:mulw} explicitly excludes the diagonals, it can be
written in the form
\begin{equation*}
  G\mulw R = (R^+\circ G\circ R)^{\iota}.
\end{equation*}
We have $G\mulw R = (G\muls R)^{\iota}$, and hence $E_{G\mulw R}\subseteq
E_{G\muls R}$. The composition $G\muls R$ is of particular interest when
$G$ is also a simple graph, i.e., $G=G^{\iota}$.

If there is an $R$ such that $G\muls R=H$ holds, we say there is a \emph{relation}\index{relation} from $G$ to $H$.
If there is an $R$ such that $G\mulw R=H$ holds, we say there is a \emph{weak relation}\index{weak relation} from $G$ to $H$.

The main part of this contribution will be concerned with the solutions of
the equation $G\muls R=H$. The weak version, $G\mulw R=H$, will turn out
to have much less convenient properties, and will be discussed only briefly
in Section~\ref{sect:weak}.

\vspace{10pt}
Throughout this thesis we use the following standard notations and terms.

For relation $R\subseteq X\times Y$ we define by
$R(x)=\{p\in Y\mid (x,p)\in R\}$\index{$R(x)$} the \emph{image of $x$ under $R$}\index{image under relation} and
$R^{-1}(p)=\{x\in X\mid (x,p)\in R\}$\index{$R^{-1}(p)$} the
\emph{pre-image of $p$ under $R$}\index{pre-image under relation}.

The \emph{domain}\index{domain} of $R$ is defined by $\domain{R}=\{x\in
X\mid \exists p\in Y\text{ s.t. }(x,p)\in R\}$\index{$\domain{R}$}, and the \emph{image}\index{image} of
$R$ is defined by $\image{R}=\{p\in Y\mid \exists x\in X\text{
  s.t. }(x,p)\in R\}$\index{$\image{R}$}. We say that the \emph{domain of $R$ is full}\index{domain is full} if for
any $x\in X$ we have $R(x)\ne\emptyset$. Analogously, the \emph{image is
full}\index{image is full} if for any $p\in Y$ we have $R^{-1}(p)\ne\emptyset$.

If $R\subseteq X\times Y$ is a binary relation, then $R$ is
\emph{injective}\index{injective}, if for all $x$ and $z$ in $X$ and $y$ in
$Y$ it holds that if $(x,y)\in R$ and $(z,y)\in R$ then $x = z$.
$R$ is \emph{functional}\index{functional}, if for all $x$ in $X$, and $y$ and
$z$ in $Y$ it holds that if $(x,y)\in R$ and $(x,z)\in R$ then
$y = z$.

We denote by $I_G$\index{$I_G$}\index{$I_G$} the \emph{identity map on $G$}\index{identity map,identity}, i.e.,
$\{(x,x)\mid x\in V_G\}$.

\subsection*{Matrix multiplication}
The operation $\muls$ can also be formulated in terms of matrix
multiplication. To see this, consider the following variant of the
operation on weighted graphs.
\begin{definition}\label{mult_w}
If $G$ is a weighted graph, we use $w(x,y)$\index{$w(x,y)$} to denote the weight between
$x$ and $y$. Given a finite set $B$ and a binary relation $R\subseteq
V_G\times B$, $G\mulm R$ is defined as a weighted graph $H$ with vertex set
$B$, for any $u,v\in B$, $w(u,v)=\sum_{(x,u)\in R,(y,v)\in R} w(x,y)$.
\end{definition}
Ignoring the weights, the operations $\muls$ and $\mulm$ are equivalent.

Using the language of matrices, $G\mulm
 R=H$ can be interpreted as
matrix multiplication:
\begin{equation*}
  \textbf{W}_{G\mulm R} = \mathbf{R}^+ \mathbf{W}_G \mathbf{R}
\end{equation*}
where $\mathbf{R}$ is the matrix representation of the relation $R$,
i.e., $\mathbf{R}_{xu}=1$ if and only if $(x,u)\in R$, otherwise
$\mathbf{R}_{xu}=0$, $\mathbf{R}^+$\index{$\mathbf{R}^+$} denotes the transpose of
$\mathbf{R}$, and $\mathbf{W}_G$ is the matrix of edge weights of
$G$.

\subsection*{Homomorphisms and multihomomorphisms}

The notion of relations between graphs is in many ways similar (but not
equivalent) to the well-studied notion of graph homomorphism. The majority
of our results focus on similarities and differences between these two
concepts. 

Recall that a \emph{(strong) homomorphism}\index{strong homomorphism}\index{homomorphism} from a graph $G$ to a graph $H$ is a mapping
$f:V_G\to V_H$ such that for every edge $(x,y)$ of $G$, $(f(x), f(y))$ is
an edge of $H$. Note that strong homomorphisms require loops in $H$
whenever $(x,y)\in E_G$ and $f(x)=f(y)$. In contrast, we define $f$ is a \emph{weak
  homomorphism}\index{week homomorphism} if $(x,y)\in E_G$ implies that either $f(x)=f(y)$ or
$(f(x),f(y))\in E_H$. Every strong homomorphism from $G$ to $H$ induces
also a weak homomorphism, but not conversely. Weak homomorphisms play an important role in graph product, see~\cite{Imrich2000}.

A map $f:V_G\to V_H$ is, of course, a special case of a relation. This is
seen by setting $F=\{(x,f(x))\mid x\in V_G\}$. Hence, there is a strong
surjective homomorphism from $G$ to $H$ if and only if there is a
functional relation $F$ such that $G\muls F=H$. Another important connection to
the graph homomorphisms is the following simple lemma.
\begin{lemma} \label{reho}
  If $G\muls R = H$, and the domain of $R$ is full, then there is a
  homomorphism $f$ from $G$ to $H$ contained in $R$.
\end{lemma}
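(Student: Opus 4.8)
The plan is to build the homomorphism $f$ by a direct pointwise selection and then verify the homomorphism condition by simply unwinding the definition of $\muls$; I do not expect any genuine difficulty here, so the write-up will be short.

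First I would use the hypothesis that the domain of $R$ is full, which by definition means $R(x)\ne\emptyset$ for every $x\in V_G$. Since $V_G$ is finite, I may pick for each $x\in V_G$ a vertex $f(x)\in R(x)$. Because $R(x)\subseteq B=V_H$, this defines a map $f:V_G\to V_H$, and by construction $(x,f(x))\in R$ for every $x$; equivalently $\{(x,f(x))\mid x\in V_G\}\subseteq R$, so $f$ is ``contained in $R$'' in the sense required by the statement.

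Next I would check that $f$ is a homomorphism from $G$ to $H$. Let $(x,y)\in E_G$. Then $(x,f(x))\in R$, $(y,f(y))\in R$, and $(x,y)\in E_G$, so taking $(u,v)=(x,y)$ and $(a,b)=(f(x),f(y))$ in Definition~\ref{def:muls} shows immediately that $(f(x),f(y))\in E_{G\muls R}$. Since $G\muls R=H$ by hypothesis, this says $(f(x),f(y))\in E_H$, which is exactly the homomorphism condition. Hence $f\colon G\to H$ is a homomorphism and $f\subseteq R$, which is the assertion of the lemma.

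The only point deserving a comment — the ``main obstacle'', insofar as there is one — is that the selection $f(x)\in R(x)$ need not be canonical when $R(x)$ has more than one element; but any choice works, and finiteness of $V_G$ makes the selection harmless. It is also worth noting that this lemma is precisely what lets one recognise a homomorphic image of $G$ sitting inside $G\muls R$ whenever $R$ has full domain, anticipating the decomposition of a relation into a surjective homomorphism followed by an injective relation treated in the next chapter.
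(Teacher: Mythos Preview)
Your proof is correct and follows essentially the same approach as the paper: pick any functional relation $f\subseteq R$ (possible since the domain of $R$ is full) and observe that $G\muls f\subseteq G\muls R=H$, which is exactly the homomorphism condition. Your write-up is more explicit than the paper's one-line version, but the underlying argument is identical.
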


\begin{proof}
  If $G\muls R = H$, then take any functional relation $f\subseteq R$, we
  have $G\muls f \subseteq H$, where $f$ is a homomorphism from $G$ to $H$.
\end{proof}

Analogously, there is a \emph{weak surjective homomorphism}\index{weak surjective homomorphism} from $G$ to
$H$ if and only if there is a functional relation $F$ such that $G\mulw F=H$,
and there is a weak homomorphism from $G$ to $H$ if there is a functional
relation $F\subseteq V_G\times V_H$ such that $G\mulw F$ is a subgraph of
$H$. The existence of relations between graphs thus can be seen as a proper
generalization of weak or strong graph homomorphisms, respectively.

Relations between graphs can be regarded also as a variant of
multihomomorphisms. Multihomomorphisms are building blocks of
Hom-complexes, introduced by Lov{\'a}sz, and are related to recent exciting
developments in topological combinatorics~\cite{Matousek2003}, in particular
to deep results involved in proof of the Lov{\'a}sz hypothesis~\cite{Babson2006}.

A \emph{multihomomorphism}\index{multihomomorphism} $G\rightarrow H$ is a mapping $f:
V_G\rightarrow 2^{V_H} \setminus \{ \emptyset \}$ (i.e., associating a
nonempty subset of vertices of $H$ with every vertex of $G$) such that
whenever $\{ u_1,u_2\}$ is an edge of $G$, we have $(v_1,v_2)\in E_H$ for
every $v_1\in f(u_1)$ and every $v_2\in f(u_2)$.

Functions from vertices to sets can be seen as a representation of
relations. A relation with full domain thus can be regarded as
\emph{surjective multihomomorphism}\index{surjective multihomomorphism}, a multihomomorphism such that
pre-image of every vertex in $H$ is non-empty and for every edge $(u,v)$ in
$H$ we can find an edge $(x,y)$ in $G$ satisfying $u\in f(x)$, $v\in f(y)$.
Thus we call a relation with full domain also \emph{\FRelHomo}\index{\FRelHomo}.

\subsection*{Examples}

Similarly to graph homomorphisms, the equation $G\muls R=H$ (or $G\mulw R=H$
respectively) may have multiple solutions for some pairs of graphs $(G,H)$,
while there may be no solution at all for other pairs.

As an example, consider $K_2$ (two vertices $x,y$ connected by an edge) and
$C_3$ (a cycle of three vertices $u,v,w$). Denote $R_1=\{(u,x),(v,y)\}$,
$R_2=\{(v,x),(w,y)\}$, $R_3=\{(w,x),(u,y)\}$, then it is easily seen that
$C_3\muls R_i=K_2$ for each $1\leq i\leq 3$, i.e., the equation $C_3\muls
R=K_2$ has more than one solution.

On the other hand, there is no relation $R$ such that $K_2\muls R=C_3$.
Otherwise, each vertex of $C_3$ is related to at most one vertex of $K_2$,
since $C_3$ is loop free; hence there exists a vertex in $K_2$ which has no
relations to at least two vertices in $C_3$, w.l.o.g., one can assume
$(x,u), (x,v)\notin R$; then the definition of $\muls$ implies that there
is no edge between $u$ and $v$, which leads to a contradiction.

Because relations do not need to have full domain (unlike graph
homomorphisms), there is always a relation from a graph $G$ to its induced
subgraph $G[W]$.

Relations with full domain are not restricted to surjective
homomorphisms. As a simple example, consider paths $P_2$ with vertex set
$\{x,y\}$ and $P_3$ with vertex set $\{u,v,w\}$, respectively, and set
$R=\{(x,u),(x,w),(y,v)\}$. One can easily verify $P_2\muls R=P_3$ by direct
computation. Here, $R$ is not functional since $x$ has two images.

Note that, quite surprisingly, there is no relation $R$ satisfying
$P_3\muls R=P_4$. In fact $(P_2,P_3)$ is the only pair of paths such that
there is a relation from one to another and vice versa. This will be further
explained in Section~\ref{sub:dist}.



\chapter[R-Homomorphisms (Relations)]{\fontsize{32}{12}\selectfont R-Homomorphisms (Relations)}
\label{ch:relation}
\ifpdf
    \graphicspath{{Chapter3/Chapter3Figs/PNG/}{Chapter3/Chapter3Figs/PDF/}{Chapter3/Chapter3Figs/}}
\else
    \graphicspath{{Chapter3/Chapter3Figs/EPS/}{Chapter3/Chapter3Figs/}}
\fi



This chapter is organized as follows:

In Section~\ref{sect:basic} the basic properties of strong relations
between graphs are compiled. It is shown that relations compose and every
relation can be decomposed in a standard way into a surjective and an
injective relation (Corollary~\ref{DeCo}). We discuss some structural
properties of graphs preserved by relations.

Equivalence on the class of graphs induced by the existence of relations
between graphs is the topic of section~\ref{sect:retract}. We consider two
forms: strong relational equivalence, where relations are required to
be reversible, and weak relational equivalence. The equivalence classes of
strong relational equivalence are characterized in Theorem~\ref{thm:equi}.
To describe the equivalence classes of weak relational equivalence we
introduce the notion of an R-core of a graph and show that it is in many
ways similar to the more familiar construction of the graph core
(Theorem~\ref{thm:Rcore}). We explore in particular the differences between
core and R-core and provide an effective algorithm to compute the R-cores of
given graphs.

Section~\ref{sect:poset} is concerned with the partial order induced on
relations between two fixed graphs $G$ and $H$ by inclusion. Focusing on the special
case $G=H$, we describe the minimal elements of this partial order. In
Theorem~\ref{thm:auto1} we give a, perhaps surprisingly simple,
characterization of those graphs whose relations to
themselves are automorphisms.

R-retractions are defined in Section~\ref{sect:retraction} in analogy to
retractions. This naturally gives rise to a notion of R-reduced graphs that
we show coincides with the concept of graph cores. By reversing the
direction of relations, however, we obtain the concept of a cocore of a
graph, which does not have a non-trivial counterpart in the world of
ordinary graph homomorphisms, and we explore its properties. Finally, we give
a full list of the inclusion relations between these notions
and give examples to distinguish them.

The computational complexity of testing for the existence of a relation
between two graphs is briefly discussed in Section~\ref{sect:complexity}.
In Theorem~\ref{thm:complex} we describe the reduction of this problem to
the surjective homomorphism problem.

Finally, in Section~\ref{sect:weak} we briefly summarize the most important
similarities and differences between weak and strong relational
compositions.

\section{Basic properties}
\label{sect:basic}

\subsection*{Composition}

Recall that the composition of binary relations is associative, i.e.,
suppose $R\subseteq W\times X$, $S\subseteq X\times Y$, and $T\subseteq
Y\times Z$. Then $R\circ(S\circ T)=(R\circ S)\circ T$. Furthermore, 
transposition of relations satisfies $(R\circ S)^+ = S^+ \circ
R^+$. Interpreting a graph $G$ as a relation on its vertex set, we easily
derive the following identities.

\begin{lemma}[Composition law]\label{lem:compo}
  $(G\muls R)\muls S = G\muls(R\circ S)$.
\end{lemma}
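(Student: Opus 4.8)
The plan is to reduce the statement to pure relation algebra. Recall from the discussion preceding Definition~\ref{def:muls} that, once a graph is identified with the binary relation given by its edge set, the operation $\muls$ has the closed form $G\muls R = R^+\circ G\circ R$. So the first step is to rewrite the left-hand side by applying this identity twice: $(G\muls R)\muls S = S^+\circ(G\muls R)\circ S = S^+\circ(R^+\circ G\circ R)\circ S$.

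Next I would regroup the five-fold composite, using associativity of relational composition, as $(S^+\circ R^+)\circ G\circ(R\circ S)$, and then collapse the leftmost pair via the transposition identity $(R\circ S)^+ = S^+\circ R^+$ recalled just above the lemma, obtaining $(R\circ S)^+\circ G\circ(R\circ S)$. By the same closed form this is exactly $G\muls(R\circ S)$, the right-hand side. Since $\circ$ and $(\cdot)^+$ here are the ordinary relational operations, each of these steps is a one-line formal manipulation and no element chasing is required; alternatively one could verify equality of the two edge sets directly by unwinding Definition~\ref{def:muls} on both sides, but the relation-algebra route is the cleaner one.

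The only genuinely substantive point — and the one thing I would flag as needing care rather than being automatic — is checking that both composite operations are well-defined, i.e.\ that the relations involved meet the full-image requirement built into Definition~\ref{def:muls}. For $(G\muls R)\muls S$ this is precisely the standing hypothesis that $R$ has full image in $V_{G\muls R}$ and $S$ has full image in its codomain; for $G\muls(R\circ S)$ it reduces to showing $R\circ S$ has full image, which follows by chaining the two hypotheses: given any vertex $c$ of the target, full image of $S$ gives some $b$ with $(b,c)\in S$, and full image of $R$ gives some $v\in V_G$ with $(v,b)\in R$, hence $(v,c)\in R\circ S$. Everything else is just bookkeeping about the direction conventions for composition and transpose, which the excerpt has already pinned down, so I do not expect any real obstacle here.
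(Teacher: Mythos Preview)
Your proposal is correct and follows exactly the same relation-algebra route as the paper: expand $\muls$ via $G\muls R = R^+\circ G\circ R$, reassociate, and collapse using $(R\circ S)^+ = S^+\circ R^+$. Your extra paragraph verifying that $R\circ S$ inherits the full-image hypothesis is a nice bit of hygiene that the paper's proof leaves implicit.
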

\begin{proof}
\abovedisplayskip=-12pt
\belowdisplayskip=-12pt
\begin{equation*}
\begin{split}
(G\muls R)\muls S
 & = S^+\circ(R^+\circ G\circ R)\circ S =(S^+\circ R^+)\circ G\circ (R\circ S)\\
 & =(R\circ S)^+ \circ G\circ (R\circ S) = G\muls(R\circ S).
\end{split}
\end{equation*} 
\end{proof}

Now we show that every relation $R$ can be decomposed, in a standard way,
to a relation $R_D$ duplicating vertices and a relation $R_C$ contracting
vertices.

\begin{lemma} \label{dec}
  Let $R\subseteq X\times Y$ be a relation. Then there exists a subset $A$ of
  $X$, a set $B$, an
  injective relation with full domain $R_D\subseteq A\times B$ and a
  functional relation $R_C\subseteq B\times Y$, such that $R=I_A\circ
  R_D\circ R_C$, where $I_A$ is the identity on $X$ restricted to $A$.
\end{lemma}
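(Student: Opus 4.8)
The plan is to realise $R$ directly as a composition of an explicit ``duplication'' relation and an explicit ``contraction'' relation, taking the intermediate set $B$ to be the graph of $R$ itself, i.e.\ the set of those pairs that belong to $R$. First I would put $A=\domain R=\{x\in X\mid \exists p\in Y,\ (x,p)\in R\}$, which is a subset of $X$, and $B=R$, regarded now as a plain set of pairs. The reason for choosing $B=R$ is that it provides, for each $x\in A$, exactly one ``copy'' for every $p\in R(x)$; these copies can then be spread out injectively (the duplication step) and afterwards collapsed functionally onto $Y$ (the contraction step).

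Next I would define $R_D\subseteq A\times B$ by declaring $(x,(x',p))\in R_D$ iff $x=x'$, and $R_C\subseteq B\times Y$ by declaring $((x,p),q)\in R_C$ iff $p=q$, and then verify the stated properties. The relation $R_D$ has full domain: if $x\in A$ then $x\in\domain R$, so $(x,p)\in B$ for some $p$ and hence $(x,(x,p))\in R_D$. It is injective: if $(x,b)\in R_D$ and $(z,b)\in R_D$ with $b=(x',p)$, then $x=x'=z$. And $R_C$ is functional, since each $(x,p)\in B$ is $R_C$-related only to $p$.

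It then remains to check the identity $R=I_A\circ R_D\circ R_C$, where $I_A=\{(x,x)\mid x\in A\}$ is the identity on $X$ restricted to $A$. Because $R_D$ already has domain contained in $A$, one has $I_A\circ R_D=R_D$, so it suffices to show $R_D\circ R_C=R$; associativity of relational composition, recalled just above, permits inserting the parentheses freely. Unwinding the composition: a pair $(x,q)$ lies in $R_D\circ R_C$ iff there is $b\in B$ with $(x,b)\in R_D$ and $(b,q)\in R_C$, i.e.\ $b=(x,p)$ for some $p$ with $p=q$, i.e.\ $b=(x,q)$ and $(x,q)\in B=R$ --- which is precisely $(x,q)\in R$.

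I do not anticipate a real obstacle; the only thing demanding care is matching the exact definitions fixed earlier in the chapter (the senses of ``injective'', ``full domain'', ``functional'', and the reading of $I_A$ as the identity on $X$ cut down to $A$), and keeping the three-fold composition straight. If one would rather not use a set of pairs as the intermediate object, one may replace $B$ by any abstract set in bijection with $R$ and transport $R_D$ and $R_C$ along that bijection; nothing substantive changes, and the resulting $R_D$, $R_C$ are then visibly the duplication and contraction relations promised in the statement.
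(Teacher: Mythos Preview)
Your proof is correct and is essentially identical to the paper's: both take $A=\domain R$, $B=R$, and define $R_D$ and $R_C$ as the two coordinate projections from the graph of $R$, then verify injectivity, functionality, and the composition identity in the same way. The only cosmetic difference is that the paper writes elements of $B$ as $\alpha,\beta$ rather than as explicit pairs.
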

\begin{proof}
  Put $A=\domain{R}$.
  Then the relation $I_A$ removes
  vertices in $X\setminus\domain{R}$. Therefore, it remains
  to show, that any relation $R\subseteq X\times Y$ with full
  domain can be decomposed into an injective relation
  $R_D\subseteq X\times B$ and a
  functional relation $R_C\subseteq B\times
  Y$. To see this, set $B=R$ and declare $(x,\alpha)\in R_D$ if and only if
  $\alpha=(x,p)\in R$ for some $p\in Y$, and $(\beta,q)\in R_C$ if and only if
  $\beta=(y,q)\in R$ for some $y\in X$. By construction $R_D$ is injective
  and $R_C$ is functional. Furthermore, $(x_0,p_0)\in R_D\circ R_C$ if and
  only if there is an $\alpha\in R$ that is simultaneously of the form
  $(x_0,p)$ and $(x,p_0)$, i.e., $x=x_0$ and $p=p_0$. Hence $(x_0,p_0)\in
  R$.
\end{proof}

Note that this decomposition is not unique. For instance, we could
construct $B$ from multiple copies of $R$. More precisely, let $B=R\times
\{1,2,\dots,k\}$, then we would set $\big(x,(\alpha,i)\big)\in R_D$
($1\leq i\leq k$) if and only if $\alpha=(x,p)\in R$ for some $p\in Y$,
etc.

The set $B$ as constructed in the proof of Lemma~\ref{dec} has
minimal size. To see this, it suffices to show that, given $B$ there
is a mapping from $B$ onto $R$. Since $R_D$ is injective and $R_C$ is
functional we may set
$$\alpha\in B\mapsto (R_D^{-1}(\alpha),R_C(\alpha)).$$
Since $R=I_A\circ R_D\circ R_C$ we conclude that the mapping is
surjective, and hence $|B|\geq |R|$.

According to Lemma~\ref{lem:compo}, the decomposition of $R$ in
Lemma~\ref{dec} can be restated as follows:
\begin{cor} \label{DeCo}
Suppose $G\muls R=H$. Then there is a set $B$, an injective relation
$R_D\subseteq \domain R\times B$ with full domain, and a surjective relation
$R_C\subseteq B\times \image R$ such that
$G[\domain R]\muls R_D\muls R_C=H$.
\end{cor}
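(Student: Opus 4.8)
The plan is to read the statement off from Lemma~\ref{dec} and the composition law, the only extra ingredient being the observation that pre-composing $G$ with an identity relation restricted to a set just passes to an induced subgraph.

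First I would regard $R$ as a relation $R\subseteq V_G\times\image R$ (the target vertex set of $G\muls R=H$ is exactly $\image R$, since every vertex of $H$ has a preimage under $R$) and apply Lemma~\ref{dec} with $A=\domain R$. This produces a set $B$, an injective relation $R_D\subseteq A\times B$ with full domain, and a functional relation $R_C\subseteq B\times\image R$ with $R=I_A\circ R_D\circ R_C$, where $I_A$ is the identity on $V_G$ restricted to $A$. Next I would check directly from Definition~\ref{def:muls} that $G\muls I_A=G[\domain R]$: the graph $G\muls I_A$ has vertex set $A$ and an edge $(a,b)$ exactly when $(a,b)\in E_G$ with $a,b\in A$, which is the induced subgraph on $\domain R$; the domain hypothesis of Definition~\ref{def:muls} is automatic since $(b,b)\in I_A$ for every $b\in A$. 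Then, using associativity of relational composition and two applications of Lemma~\ref{lem:compo},
\[
G\muls R=G\muls\bigl(I_A\circ R_D\circ R_C\bigr)=\bigl((G\muls I_A)\muls R_D\bigr)\muls R_C=\bigl(G[\domain R]\muls R_D\bigr)\muls R_C,
\]
so that $G[\domain R]\muls R_D\muls R_C=H$.

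It then remains to verify the side conditions. The relation $R_D$ has full domain by Lemma~\ref{dec}, and from its construction ($B=R$ and $(x,\alpha)\in R_D$ iff $\alpha=(x,p)\in R$) its image is all of $B$, so the first $\muls$ above is well defined; similarly $\image R_C=\image R$ from the construction, so $R_C$, viewed as a relation into $\image R$, is vertex-surjective, and the identity $G[\domain R]\muls R_D\muls R_C=H$ forces every edge of $H$ to be covered by an edge of $G[\domain R]\muls R_D$ under $R_C$, hence $R_C$ is a surjective relation. I do not expect a genuine obstacle here: the content is entirely contained in Lemma~\ref{dec} and Lemma~\ref{lem:compo}, and the only care needed is in matching the fullness-of-domain and surjectivity bookkeeping of Definition~\ref{def:muls} against what Lemma~\ref{dec} delivers.
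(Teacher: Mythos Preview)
Your proposal is correct and follows essentially the same approach as the paper, which simply notes that the corollary is a restatement of Lemma~\ref{dec} via the composition law (Lemma~\ref{lem:compo}). You have spelled out the details that the paper leaves implicit---in particular the identification $G\muls I_A=G[\domain R]$ and the surjectivity bookkeeping for $R_C$---but the underlying argument is the same.
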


In diagram form, this is expressed as
  \begin{figure}[!ht]
  \centering
  \includegraphics[width=0.25\textwidth]{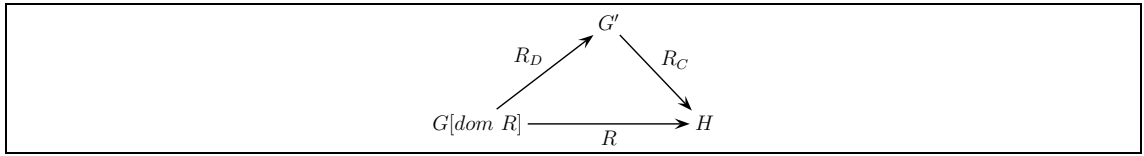} 
\caption{Commutative diagram.}
\label{fig:comum}
\end{figure}   

We remark that from the fact that relations compose it follows that
the existence of a relation implies a quasi-order on graphs that is related
to the homomorphism order. This order is studied more deeply in~\cite{Fiala2012}.

\subsection*{Structural properties preserved by relations}

In this subsection we investigate structural properties of $H$ that can be
derived from knowledge about certain properties of $G$ and the fact that
there is some relation $R$ such that $G\muls R=H$.

\subsubsection*{Connected components}

\begin{pro}\label{ccs}
  Let $G\muls R=H$ and denote by $H_1,\dots,H_k$ the (non-empty) connected components
  of $H$. Then there are relations $R_i\subseteq V_G\times V_{H_i}$ such
  that $G\muls R_i=H_i$ for each $1\leq i\leq k$ and $R=\bigcup_{i=1}^k
  R_i$. Furthermore, set $G_i=G[R^{-1}(V_{H_i})]$, then there are no edges
  between $G_i$ and $G_j$ for arbitrary $i\neq j$.
\end{pro}

\begin{proof}
  Define the restriction of $R$ to the connected components of $H$ as
  $R_i=\{(x,y)\in R\mid y\in V_{H_i}\}$. Clearly, $R$ is the disjoint union of
  $R_i$ and $G\muls R_i\subseteq H_i$. The definition of $\muls$
  implies $H=G\muls R= \left(\bigcup_i R_i\right)^+ \circ G\circ
  \left(\bigcup_j R_j\right) = \bigcup_i\bigcup_j R_i^+ \circ G\circ R_j$.
  Since for $i\neq j$, $R_i$ and $R_j$ relate vertices of $G$ to different connected
  components of $H$, we have $R_i^+ \circ G\circ R_j=\emptyset$. It follows
  that $H=\bigcup_i\bigcup_j R_i^+\circ G\circ R_j = \bigcup_i R_i^+\circ
  G\circ R_i = \bigcup_i G\muls R_i$. Hence $G\muls R_i=H_i$.

  Any edge between $G_i$ and $G_j$ would generate edges between $H_i$ and
  $H_j$, which causes a contradiction to our assumptions.
\end{proof}

Denote by $b_0(G)$ the number of connected components of $G$, then from
Proposition~\ref{ccs} we arrive at:
\begin{cor}\label{cor:concomp}
  Suppose both $G$ and $H$ do not have isolated vertices. If $G\muls R=H$
  and $R$ has full domain, then $b_0(G)\geq b_0(H)$.
\end{cor}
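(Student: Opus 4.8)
The plan is to extract everything from Proposition~\ref{ccs} and then add the one ingredient it does not state: that the subgraphs $G_i$ it produces are pairwise vertex-disjoint and cover $V_G$. Apply Proposition~\ref{ccs} to $G\muls R=H$ to get the connected components $H_1,\dots,H_k$ of $H$ (so $k=b_0(H)$), relations $R_i\subseteq V_G\times V_{H_i}$ with $R=\bigcup_{i=1}^k R_i$ and $G\muls R_i=H_i$, and the induced subgraphs $G_i=G[R^{-1}(V_{H_i})]$ with no edges between $G_i$ and $G_j$ for $i\ne j$. If I can show the $V_{G_i}$ partition $V_G$, then, since each $G_i$ is induced and no edge of $G$ runs between two of them, $G$ is exactly the disjoint union $G_1\sqcup\cdots\sqcup G_k$; hence $b_0(G)=\sum_{i=1}^k b_0(G_i)$, and once each $G_i$ is nonempty this gives $b_0(G)\ge k=b_0(H)$.

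Two of the three required facts are quick. Covering: since $R$ has full domain, every $x\in V_G$ satisfies $R(x)\ne\emptyset$, so $x$ is related to a vertex of some $H_i$, i.e. $x\in R^{-1}(V_{H_i})=V_{G_i}$. Nonemptiness of each $G_i$: $H_i$ is a connected component, hence contains some vertex $b\in B$, and by the standing hypothesis that the image of $R$ is full (Definition~\ref{def:muls}) there is $u$ with $(u,b)\in R$, whence $u\in V_{G_i}$; alternatively, one can invoke that $H$ has no isolated vertices to produce an edge of $H_i$ and lift it to $G$ via the definition of $\muls$.

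The hard part is pairwise disjointness of the $V_{G_i}$, and this is where both nontrivial hypotheses get used. Suppose $x\in V_{G_i}\cap V_{G_j}$ with $i\ne j$; then $(x,a)\in R$ and $(x,a')\in R$ for some $a\in V_{H_i}$ and $a'\in V_{H_j}$. Because $G$ has no isolated vertices, $x$ lies on some edge $(x,y)\in E_G$ (possibly $y=x$), and because $R$ has full domain we may pick $c\in R(y)$. The definition of $G\muls R$ then forces $(a,c)\in E_H$ and $(a',c)\in E_H$, so $c$ lies in the connected component of $H$ containing $a$ and also in the one containing $a'$; thus $c\in V_{H_i}\cap V_{H_j}$, contradicting that distinct components are vertex-disjoint. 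With disjointness in hand, I would finish by noting that "no edges between $G_i$ and $G_j$" together with each $G_i$ being induced means $E_G=\bigsqcup_i E_{G_i}$, so no connected component of $G$ can meet two of the parts, giving $b_0(G)=\sum_i b_0(G_i)\ge k=b_0(H)$. I expect the disjointness step to be the only place a genuine argument is needed; the rest is bookkeeping on top of Proposition~\ref{ccs}.
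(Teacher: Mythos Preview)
Your argument is correct and follows essentially the same strategy as the paper: both invoke Proposition~\ref{ccs} to obtain the subgraphs $G_i=G[R^{-1}(V_{H_i})]$ and then argue that each connected component of $G$ sits inside exactly one $G_i$. The only difference is in emphasis: the paper argues directly that a connected component $C$ of $G$ meeting two distinct $G_i$'s would force an edge between them (contradicting Proposition~\ref{ccs}), whereas you prove the slightly stronger intermediate fact that the $V_{G_i}$ are pairwise \emph{vertex}-disjoint, by lifting an edge at a shared vertex $x$ up to $H$ and finding a vertex $c$ in two distinct components of $H$. Your disjointness argument is a bit more explicit about where the two hypotheses (no isolated vertices in $G$, full domain of $R$) are used, but the underlying idea is the same.
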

\begin{proof}
  Our notation is the same as in Proposition~\ref{ccs}. We claim for arbitrary
  connected component $C$ of graph $G$, there exists a unique $i$, such that
  $C$ is a connected component of $G_i$. Otherwise one can find two
  vertices $x,y\in C$, $x$ and $y$ adjacent,
  such that $x\in V_{G_i}$ and $y\in V_{G_j}$, since $G$ has no isolated
  vertices, which contradicts
  $E(G_i,G_j)=\emptyset$. Thus it follows $b_0(G)\geq b_0(H)$.
\end{proof}

From Corollary~\ref{cor:concomp}, we know that $H$ is connected whenever $G$
is connected. Conversely, the connectedness of $G$, however, cannot be deduced from the
connectedness of $H$. For example, consider $G=P_1\cup P_1$ with vertex set
$\{x_1,x_2,x_3,x_4\}$ and edges $\{x_1,x_2\}$ and $\{x_3,x_4\}$, and
$H=P_2$ with vertex set $\{v_1,v_2,v_3\}$. Set
$R=\{(x_1,v_1),(x_2,v_2),(x_3,v_2),(x_4,v_3)\}$. One can easily verify that
$G\muls R=H$. On the other hand, $H$ is connected but $G$ has 2 connected
components. The point here is, of course, that $R$ is not injective.

\subsubsection*{Colorings}
Graph homomorphisms of simple graphs can be seen as generalizations of
colourings. Thus, if $R$ is a
functional relation describing a vertex colouring, then $G\muls R\subseteq
K_k$. Conversely, if $G\muls R\subseteq K_k$, where $R$ has full domain,
then from Lemma~\ref{reho}, there exists a homomorphism from $G$ to
$K_k$, which is a colouring of $G$.
\begin{lemma} \label{colour} If $G$ is a simple graph and $R$ has full
  domain, then $\chi(G)\leq \chi(G\muls R)$.
\end{lemma}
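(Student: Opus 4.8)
The plan is to reduce the statement to two facts already available: that $k$-colourings of a simple graph are exactly homomorphisms to $K_k$ (Proposition~\ref{pro:colouring}), and that a relation with full domain contains an honest homomorphism (Lemma~\ref{reho}). Write $H = G\muls R$. If $H$ carries a loop then it has no proper colouring at all (so $\chi(H)$ is infinite, under the usual convention), and the inequality is vacuous; hence I would assume $H$ is loopless and set $k = \chi(H)$. By Proposition~\ref{pro:colouring}, a proper $k$-colouring of $H$ is the same as a homomorphism $c\colon H\to K_k$, and such a $c$ exists by the choice of $k$.

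The second step is to transport this colouring back to $G$. Since $R$ has full domain and $G\muls R = H$, Lemma~\ref{reho} furnishes a homomorphism $f\colon G\to H$ (in fact with $f\subseteq R$). Composition of homomorphisms is a homomorphism, so $c\circ f\colon G\to K_k$ is a homomorphism, and applying Proposition~\ref{pro:colouring} in the reverse direction, $c\circ f$ is a proper $k$-colouring of $G$. Therefore $\chi(G)\le k = \chi(G\muls R)$, as claimed.

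I do not anticipate a real obstacle here: once Lemma~\ref{reho} and Proposition~\ref{pro:colouring} are in hand, the argument is essentially a one-line composition. The only point requiring a word of care is the degenerate case where $G\muls R$ has a loop, which I would dispose of at the outset as above, and the (trivial) observation that $G$ being simple makes $\chi(G)$ finite so that the inequality is meaningful on the left-hand side.
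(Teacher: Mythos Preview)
Your proof is correct and follows essentially the same route as the paper: both invoke Lemma~\ref{reho} to obtain a homomorphism $G\to H=G\muls R$, from which $\chi(G)\le\chi(H)$ follows. The paper simply states this last implication as a known fact, whereas you spell it out via Proposition~\ref{pro:colouring} and composition with a colouring $c\colon H\to K_k$; the extra case analysis for loops in $H$ is a nice touch but not strictly needed here.
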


\begin{proof}
  Suppose $G\muls R=H$ and the domain of $R$ is full, from Lemma~\ref{reho}
  we know $G\rightarrow H$, so $\chi(G)\leq \chi(G\muls R)$.
\end{proof}

\subsubsection*{Distances} \label{sub:dist}
\begin{obser} \label{path} If $P_k\muls R=G$, $G$ is a simple graph and the
  domain of $R$ is full, $P_k$ with the vertex set ${0,1,\dots,k}$, then
  there is a walk $[v_0,v_1,\dots,v_k]$ in $G$, where $(i,v_i)\in R$ for
  $0\leq i\leq k$.
\end{obser}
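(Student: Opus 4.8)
The plan is to build the walk directly from the definition of $\muls$, with fullness of the domain doing all the work. Since the domain of $R$ is full, for every vertex $i$ of $P_k$ (so $0\le i\le k$) we may pick some $v_i\in V_G$ with $(i,v_i)\in R$. I claim $[v_0,v_1,\dots,v_k]$ is the desired walk. Fix $i$ with $0\le i<k$. Then $(i,i+1)\in E_{P_k}$, while $(i,v_i)\in R$ and $(i+1,v_{i+1})\in R$, so by the defining formula for $E_{P_k\muls R}$ the pair $(v_i,v_{i+1})$ lies in $E_{P_k\muls R}=E_G$. Thus consecutive terms of $[v_0,\dots,v_k]$ are adjacent in $G$, i.e.\ the sequence is a walk, and by the choice of the $v_i$ we have $(i,v_i)\in R$ for every $i$, which is exactly the assertion.

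Alternatively the same argument can be packaged through Lemma~\ref{reho}: fullness of $R$ gives a homomorphism $f\colon P_k\to G$ with $f\subseteq R$, and since a map from a path is a homomorphism precisely when its sequence of images is a walk in the target (the proposition on homomorphisms out of $P_k$), the choice $v_i:=f(i)$ works. This merely hides the selection of the $v_i$ inside the construction of $f$.

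I do not expect any real obstacle here. The only subtleties worth flagging are that fullness of the domain is genuinely needed --- if $R(i)=\emptyset$ for some $i$ there is no candidate for $v_i$, and indeed no such walk can exist --- and that one should claim a \emph{walk} rather than a path: because $G$ is simple, each edge of $P_k$ forces $v_i\ne v_{i+1}$, but nothing prevents $v_i=v_j$ for $|i-j|\ge 2$, so in general $[v_0,\dots,v_k]$ need not be a path.
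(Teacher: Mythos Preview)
Your argument is correct and is exactly the intended one; the paper in fact states this as an observation without proof, and your direct application of the definition of $\muls$ together with fullness of the domain (or, equivalently, the route through Lemma~\ref{reho}) is precisely what is meant.
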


\begin{obser} \label{cycle} If $C_k\muls R=G$, $G$ is a simple graph and
  the domain of $R$ is full, then there is a closed walk
  $[v_0,v_1,\dots,v_{k-1}]$ in $G$, where $(i,v_i)\in R$ for $0\leq i\leq
  k-1$.
\end{obser}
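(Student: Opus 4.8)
The plan is to mirror the argument one would give for Observation~\ref{path}: first extract an ordinary graph homomorphism sitting inside $R$ using Lemma~\ref{reho}, and then invoke the characterization of homomorphisms out of $C_k$ as closed walks. Note that in the equation $C_k\muls R=G$ the cycle $C_k$ plays the role of the ``source'' graph and $G$ the role of the ``target'', so Lemma~\ref{reho} must be applied in that direction; this is the one place where it is easy to slip.

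Concretely, first I would observe that since $C_k\muls R=G$ and the domain of $R$ is full, Lemma~\ref{reho} yields a functional relation $f\subseteq R$ which is a homomorphism $f:C_k\to G$. Setting $v_i:=f(i)$ for $0\leq i\leq k-1$, the inclusion $f\subseteq R$ gives $(i,v_i)\in R$ for every such $i$, which is the membership part of the claim. Second, I would apply the proposition characterizing homomorphisms from cycle graphs, namely that a map $f:V_{C_k}\to V_G$ is a homomorphism of $C_k$ to $G$ if and only if $f(0),f(1),\dots,f(k-1)$ is a closed walk in $G$. This immediately tells us that $[v_0,v_1,\dots,v_{k-1}]$ is a closed walk in $G$, completing the proof.

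I do not anticipate a real obstacle: the whole content is bookkeeping with the vertex indexing of $C_k$ (vertex set $\{0,1,\dots,k-1\}$, edges $(i,i+1)$ taken cyclically) and using Lemma~\ref{reho} in the correct direction. A self-contained alternative that avoids Lemma~\ref{reho} altogether is to choose, for each $i$, some $v_i\in R(i)$ (possible because the domain of $R$ is full) and then verify directly from the definition of $\muls$ that, for the edge $(i,i+1)$ of $C_k$, the pair $(v_i,v_{i+1})$ is an edge of $G$; iterating around the cycle produces the desired closed walk. The homomorphism route is cleaner since it reuses machinery already established, so that is the version I would present.
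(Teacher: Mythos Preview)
The paper states this result as an observation without proof, so there is no argument in the text to compare against. Your approach via Lemma~\ref{reho} and the proposition characterizing homomorphisms out of $C_k$ as closed walks is correct and is precisely the natural route given the machinery already established in the paper; the direct alternative you sketch (pick any $v_i\in R(i)$ and use the definition of $\muls$ edge by edge around the cycle) is equally valid and is essentially what the proof of Lemma~\ref{reho} unpacks to in this special case.
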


Recall that $d_G(x,y)$ denotes the \emph{(canonical) distance}\index{canonical distance}\index{distance} on graph $G$, i.e.,
$d_G(x,y)$ is the minimal length of a path in graph $G$ that connects
vertices $x$ and $y$; if there is no path that connects vertices $x$ and $y$,
then the distance is infinite.

\begin{lemma}
  Suppose there exists a relation $R$ with full domain s.t.\ $G\muls R=H$,
  $x,y\in V_G$, $u,v\in V_H$ and $(x,u)\in R, (y,v)\in R$. If $x\neq y$,
  then $d_H(u,v)\leq d_G(x,y)$; If $x=y$ and $x$ is not an isolated vertex,
  then $d_H(u,v)\leq 2$.
\end{lemma}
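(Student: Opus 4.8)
The plan is to treat the two cases separately, in each case lifting a short walk in $G$ to a short walk in $H$ along $R$, using the full-domain hypothesis precisely to supply a preimage for each intermediate vertex of the lifted walk. Throughout I will use the definition of $\muls$ (Definition~\ref{def:muls}): $(a,b)\in E_{G\muls R}$ exactly when there is an edge $(p,q)\in E_G$ with $(p,a),(q,b)\in R$.

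For the case $x\neq y$: if $d_G(x,y)=\infty$ there is nothing to prove, so assume $d_G(x,y)=k<\infty$ and fix a path $x=w_0,w_1,\dots,w_k=y$ in $G$. I would set $z_0=u$ and $z_k=v$, and for each $0<i<k$ choose some $z_i\in R(w_i)$, which is nonempty since $R$ has full domain. Then for every $1\leq i\leq k$ we have $(w_{i-1},w_i)\in E_G$, $(w_{i-1},z_{i-1})\in R$ and $(w_i,z_i)\in R$, so $(z_{i-1},z_i)\in E_{G\muls R}=E_H$. Hence $u=z_0,z_1,\dots,z_k=v$ is a walk of length $k$ in $H$, and since the distance between two vertices is at most the length of any walk joining them, $d_H(u,v)\leq k=d_G(x,y)$.

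For the case $x=y$ with $x$ not isolated: pick a neighbour $w$ of $x$ in $G$, so $(x,w)\in E_G$, and (again by full domain) pick $z\in R(w)\neq\emptyset$. Applying the definition of $\muls$ to the edge $(x,w)$ together with $(x,u)\in R$ and $(w,z)\in R$ yields $(u,z)\in E_H$; applying it to $(x,w)$ together with $(x,v)\in R$ and $(w,z)\in R$ yields $(v,z)\in E_H$. Thus $u,z,v$ is a walk of length $2$ in $H$, whence $d_H(u,v)\leq 2$.

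I do not expect a genuine obstacle here; the only points needing care are invoking the full-domain hypothesis at exactly the intermediate vertices of the lifted walk (the one place where preimages are not otherwise guaranteed), and observing that a walk of length $\ell$ between two vertices certifies distance at most $\ell$, which handles the degenerate subcases ($u=v$, or $z$ coinciding with $u$ or $v$) uniformly.
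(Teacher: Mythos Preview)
Your proposal is correct and follows essentially the same argument as the paper's proof: lift a shortest path (respectively, a single edge at $x$) through $R$ using the full-domain hypothesis to obtain a walk in $H$ of the same length (respectively, length $2$). If anything, your write-up is slightly more careful than the paper's, since you explicitly dispose of the $d_G(x,y)=\infty$ case and note that a walk bounds the distance even when vertices coincide.
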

\begin{proof}
  If $x=y$ and $x$ is not isolated, pick a vertex $z$ of graph $G$ which is
  adjacent to vertex $x$, and find a vertex $w\in H$ satisfying $(z,w)\in
  R$. Then $(w,u)\in E_H$ and similarly $(w,v)\in E_H$. So $d_H(u,v)\leq
  2$.

  If $x\neq y$, choose the shortest path $P=x,x_1,x_2,\dots, x_k,y$
  between $x$ and $y$, and find corresponding vertices
  $u_1,u_2,\dots,u_k\in H$ such that$(x_i,u_i)\in R$ for any $1\leq i\leq
  k-1$, it is easily seen that $(u,u_1)\in E_H$, $(u_i,u_{i+1})\in E_H$ and
  $(u_k,v)\in E_H$, then $d(u,v)\leq d(x,y)$.
\end{proof}

Hence we immediately get the following corollary about radius.

\begin{cor} \label{rad} Suppose $G\muls R=H$, $G$ and $H$ are connected
graphs, and $R$ has full domain, then $rad(H)\leq \max\{rad(G),2\}$.
\end{cor}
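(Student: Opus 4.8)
The plan is to produce a single vertex of $H$ whose eccentricity is at most $\max\{rad(G),2\}$; since $rad(H)$ is by definition the minimum of the eccentricities of the vertices of $H$, this immediately yields the bound.

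First I would choose a vertex $x_0\in V_G$ whose eccentricity in $G$ equals $rad(G)$ (such a vertex exists since $G$ is finite and connected). Because $R$ has full domain, there is a vertex $u_0\in V_H$ with $(x_0,u_0)\in R$; I claim the eccentricity of $u_0$ in $H$ is at most $\max\{rad(G),2\}$.

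To check the claim, fix an arbitrary vertex $v\in V_H$. Since $H=G\muls R$, the vertex set of $H$ is the set $B$ of Definition~\ref{def:muls}, and that definition guarantees some $y\in V_G$ with $(y,v)\in R$. Apply the preceding lemma to $x_0,y\in V_G$ and $u_0,v\in V_H$. If $x_0\neq y$, then $d_H(u_0,v)\le d_G(x_0,y)\le rad(G)$, the last inequality because $d_G(x_0,y)$ is at most the eccentricity of $x_0$ in $G$. If $x_0=y$, then as long as $G$ has more than one vertex it is connected and so $x_0$ is not isolated, whence the lemma gives $d_H(u_0,v)\le 2$. In either case $d_H(u_0,v)\le\max\{rad(G),2\}$; since $v$ was arbitrary, the eccentricity of $u_0$ in $H$ is at most $\max\{rad(G),2\}$, and therefore $rad(H)\le\max\{rad(G),2\}$.

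Given the preceding lemma, there is no genuine obstacle: the argument is a direct case split. The only points needing a moment's care are the degenerate case $G=K_1$ (a single vertex with no loop), where $E_G=\emptyset$ forces $H=K_1$ and hence $rad(H)=0$, and the small remark that every vertex of $H$ really does have an $R$-preimage in $V_G$, which is built into the definition of the operation $\muls$.
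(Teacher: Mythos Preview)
Your argument is correct and is exactly the natural way to deduce the corollary from the preceding distance lemma; the paper itself gives no explicit proof, merely stating that the corollary is immediate from that lemma. Your case split on whether the chosen center $x_0$ coincides with a preimage of $v$, together with the handling of the degenerate one-vertex case, is precisely what ``immediate'' unpacks to here.
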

An analogous result holds for the diameters. In particular, if $G$
is not a complete graph, then $diam(G)\geq diam(G\muls R)$. As a result,
$k<l$ and $k\neq 1$ implies there is no relation from $P_k$ to $P_l$.
Also, as we state in the example section, $(P_1,P_2)$ is the only pair of paths
such that there is a relation from one to another and vice versa.

\label{subs:complete}

\subsubsection*{Complete graphs} 
Note that in this subsection we do not require that the domain of $R$ is
full.

\begin{pro}\label{prop:complete}

  Let $H$ be a simple graph. Then there exists a relation $R$ such that
  $K_k\muls R=H$ if and only if $H$ is a complete $m$-partite graph, where
  $m\leq k$.

\end{pro}

\begin{proof}
 We denote the vertices of $K_k$ by $1,\dots,k$.

 If $H$ is a complete $m$-partite graph, let $R=\{(i,u)\mid i=1,\dots,m,u\in U_i\}\}$, then
 for each $1\leq i\leq m$, all the vertices
  in $U_i$ are independent in $H$, and $u$ is adjacent to $v$ whenever
  $u\in U_i$ and $v\in U_j$ for distinct $i,j$. Hence it is easily
  seen that $K_k\muls R=H$.

   Conversely, if $K_k\muls R=H$, assume $\domain R=\{1,\dots,m\}$. We claim that $R$ is
  injective, otherwise $H$ would have loops. Thus $V_H$ is the disjoint union
  of $R(1),\dots,R(m)$. For any
  two distinct vertices $u,v$ in $R(i)$, $u$ and $v$ are independent in $H$,
  then $H$ is a $m$-partite graph.
  For distinct $i$ and $j$ every vertex in $R(i)$ are adjacent with every
  vertex in $R(j)$ whenever $R(i)\neq \emptyset$, which means $H$ is a complete $m$-partite graph.
\end{proof}


\subsubsection*{Subgraphs} \label{sect:subgraph}
Relations between graphs intuitively imply relations between local
subgraphs. In this section we make this concept more precise. Recall $N_G[x]$ was denoted to be
the \emph{closed neighbourhood}\index{closed neighbourhood} of $x$ in $G$ ($N_G[x]=N_G(x)\cup \{x\}$) and
$\overbar{N_G[x]}:= V_G\setminus N_G[x]$\index{$\overbar{N_G[x]}$} be the set of vertices that are
not adjacent (or identical) to $x$ in $G$. Furthermore,
$\overbar{G_x}:=G-N_G[x]$\index{$\overbar{G_x}$}, which is the induced subgraph of $G$
obtained by removing the closed neighbourhood of a vertex $x$.
Analogously, for a subset $S\subseteq V_G$, $\overbar S$ is the induced subgraph obtained by removing all vertices in $S$ and their neighbours.

Then we have the following result about relations between local subgraphs.

\begin{pro} Suppose $G\muls R=H$ and $(x,u)\in R$. Suppose there is no isolated vertex in $G$. If there is no loop in
  $x$, then $\overbar{G_x}\muls \widetilde{R} = \overbar{G_u}$ where
  $\widetilde{R}\subseteq \overbar{N_G[x]}\times\overbar{N_H[u]}$ is the
  restriction of $R$.
\end{pro}

\begin{proof}
  We claim that the pre-image of any vertex in $\overbar H_u$ is not
    in $N_G(x)$. Otherwise, there is a vertex $v\in \overbar H_u$ and a
    vertex $y\in N_G(x)$ such that $(y,v)\in R$; from $(x,u)\in R$ we deduce
    that $v$ and $u$ are adjacent, which contradicts to $v\in \overbar
    H_u$. Next we claim that there is no vertex in $\overbar H_u$ whose
    pre-image is $x$. Suppose $v\in \overbar H_u$ and $R^{-1}(v)=\{x\}$.
    Since $v$ is not isolated in $\overbar H_u$, there exists a
    vertex $v'$ which is adjacent with $v$ in $\overbar H_u$. Thus
    $R^{-1}(v')\cap N(x)\neq \emptyset$, a contradiction. Hence
    for any edge $(v,v')$ in $\overbar H_u$, we can find
    an edge $(y,y')$ in $\overbar G_x$ such that $(y,v),(y',v')\in
    R$. Thus $\overbar G_x\muls \widetilde{R}=\overbar{G_y}$.
\end{proof}






\begin{example}
Let us see an example. Let $G$ and $H$ be graphs as Figure~\ref{fige5}.
For a relation from $G$ to $H$, we consider the pre-image of
vertex $u$, because the domain of $R$ is full, and the symmetry of $P_4$, at least
either $(1,u)\in R$ or $(2,u)\in R$ holds. If $(1, u)\in R$, from the
above theorem, we know that there is an induced relation from $P_2$ to
$P_4$, but it is impossible. If $(2,u)\in R$ holds, also from above
theorem, there is an induced relation from $P_1$ to $P_4$, which is impossible.

\begin{figure}[!ht]
\centering
\subfloat[$G$]{
\begin{minipage}[c]{0.4\textwidth}
\centering
\includegraphics[width=0.5\textwidth]{dupl1.eps}
\end{minipage}
}
\subfloat[$H$]{
\begin{minipage}[c]{0.4\textwidth}
\centering
\includegraphics[width=0.5\textwidth]{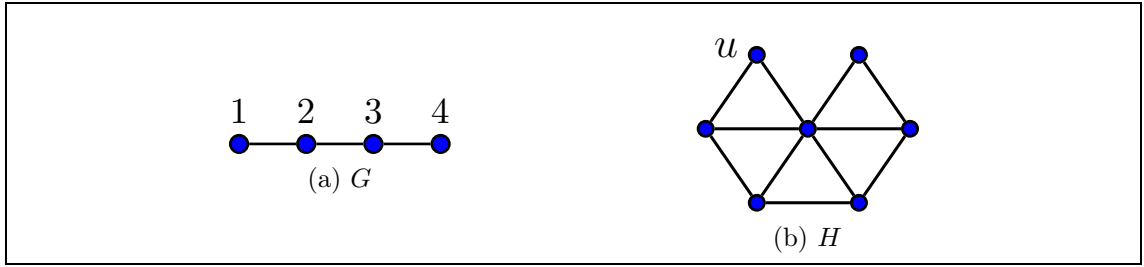}
\end{minipage}
}
 \caption{There is no relation from $G$ to $H$.}
    \label{fige5}
\end{figure}
\end{example}

Note that from Corollary~\ref{rad}, the radius of $H$ is smaller than the radius of $G$, so there is no relation $R$ such that
$H\muls R=G$. Therefore there is neither a relation from $G$ to $H$, nor one from $H$ to $G$. Note that the two solutions can be different.

\begin{pro} \label{pro:subgraph}
  Suppose $G\muls R=H$ and $S$ and $D$ are subsets of $V_G$ and $V_H$,
  respectively, such that
  \begin{enumerate}
   \item $G[S] \muls R|_{(S\times D)}=H[D]$;
   \item $R|_{(S\times D)}$ has full domain on $S$;
   \item there is no isolated vertex in $\overbar D$.
  \end{enumerate}

  Then $\overbar S\muls\widetilde{R}=\overbar D$,
  where $\widetilde{R} = R|_{(\overbar S\times \overbar D)}$ is the
  corresponding restriction of $R$.
\end{pro}

\begin{proof}
  Obviously, $\overbar S\muls\widetilde{R}$ is an induced subgraph of $\overbar
  D$. We need to show the reverse inclusion. Given $u\in V_{\overbar{D}}$
  and $x\in R^{-1}(u)$, we first show that there are two possibilities:
  \begin{enumerate}
  \item $x$ is a vertex of $\overline{S}$.
  \item $x$ is an isolated vertex of $S$.
  \end{enumerate}
  Assume, to the contrary, $x$ is either a non-isolated vertex of $S$ or $x$ is in the
  neighbourhood of some vertex of $S$. In both cases there is $y\in S$
  connected by an edge to $x$. Consequently there is a vertex $v\in D$, such
  that $v\in R(y)$, and $v$ is connected by an edge to $u$. It follows
  $u\notin V_{\overbar{D}}$, a contradiction.

  Now consider an arbitrary edge $(u,v)\in E_{\overbar{D}}$. We have
  $(x,y)\in E_G$ such that $u\in R(x)$ and $v\in R(y)$.
  If any of $x$ and $y$, say $x$, is an isolated vertex in $S$, then $y$ is not in $S$.
  Thus $y$ is in the neighbourhood of $S$, which contradicts the two possibilities.
  So both $x$ and $y$ are not isolated in $S$, from the above deduction $x,y$ are in the case 1, i.e.,
  they are vertices of $\overline{S}$.
  Consequently $\overbar S\muls\widetilde{R}$ has precisely the same edges
  as $\overbar D$. Because $\overbar{D}$ has no isolated vertices and
  thus every vertex is an endpoint of some edge, we know that the vertex set
  of $\overbar S\muls\widetilde{R}$ is same as the vertex set of $\overbar D$. Hence
  $\overbar S\muls\widetilde{R}=\overbar D$.
\end{proof}
This result is of particular practical use in the special case where $S$
and $D$ consist of a single vertex. When looking for a relation $R$ such
that $G\muls R=H$ one can remove a vertex including its neighbourhood from
$G$ as well as the prospective image including the neighbourhood from $H$
and then solve the problem on the subgraphs.

\section{Relational equivalence}
\label{sect:retract}
Recall that in Section~\ref{sec:hom-equ} we define graphs $G$ and $H$ are \emph{homomorphically equivalent}\index{homomorphically equivalent} (or
\emph{hom-equivalent}\index{hom-equivalent}) if there exists homomorphisms $G\to H$ and $H\to
G$. It is well known that every equivalence class of the homomorphism order
contains a minimal representative that is unique up to isomorphism: the
\emph{graph core}\index{graph core}~\cite{Hell2004}.

We define similar equivalences implied by the existence of
(special) relations between graphs. In this section, we require all
relations to have full domain unless explicitly stated otherwise. With
this condition we will show that these equivalences produce a rich structure
closely related to but distinct from the structure of homomorphism
equivalences.

This may come as a surprise: the equivalence implied by the existence of
surjective homomorphisms is not interesting. Consider two graphs $G$ and
$H$ and suppose there are surjective homomorphisms $f:G\to H$ and $g:H\to
G$. Since every vertex in $V_G$ has at most one image under $f$, we have
$|V_G|\geq |V_H|$. Analogously $|V_H|\geq |V_G|$, and hence
$|V_G|=|V_H|$. Thus $f$ and $g$ are both bijective, and $G$ is isomorphic
to $H$.

\subsection*{Reversible relations} \label{revers}

\begin{definition}
  A relation $R$ is \emph{reversible with respect to a graph $G$}\index{reversible} if
  $(G\muls R)\muls R^+=G$.
\end{definition}

Recall that in Proposition~\ref{dec} we proved that $R$ can be decomposed.
Suppose $R=R_D\circ R_C$, where $R_D$ and $R_C$
are constructed as in the proof of Proposition~\ref{dec}. $R_C$ is a functional relation and $R_D$ is an injective
relation with full domain.
Then we can get the following result with respect to reversible relation.

\begin{pro} \label{reve}   
$R$ is reversible with respect to $G$ if and only if for every $\alpha$ and $\beta$
  satisfying $R_C(\alpha)=R_C(\beta)$ we have
  $N_{G\muls R_D}(\alpha)= N_{G\muls R_D}(\beta)$.
\end{pro}
\begin{proof}
  We set $G_1=G\muls R_D$, If $R_C(\alpha)=R_C(\beta)$ implies
  $N_{G_1}(\alpha)= N_{G_1}(\beta)$, then $H\muls R_C^+=G_1$.
  Therefore,
  \begin{align*}
     G\muls R\muls R^+ & = H\muls R^+ = H\muls (R_D\muls R_C)^+\\
                       & = H\muls R_C^+\muls R_D^+ = G_1\muls R_D^+ = G.
   \end{align*}

  The first equation is by assumption, the second is by Proposition~\ref{dec}, the third one
  is by the property of transpose, the fourth is from Lemma~\ref{lem:compo},
  the last equation is because $R_D$ is injective.
  Thus by definition, $R$ is reversible.

  Conversely, since $R$ is
  reversible, i.e., $H\muls R^+=G$, setting $G_2 = H\muls R_C^+$ gives
  $G_2\muls R_D^+=G$. Hence $G_1\muls R_C\muls R_C^+=G_2$ and $G_2\muls
  R_D^+\muls R_D=G_1$. From $I_{G_1}\subseteq R_C\muls R_C^+$ we conclude
  $G_1\subseteq G_2$, and similarly $I_{G_2}\subseteq R_D^+\muls R_D$ yields
  $G_1\supseteq G_2$. Hence $G_1=G_2$. $R_C^+$ is injective, hence
  $\alpha,\beta\in V_{G_2}=V_{G_1}$ has the same open neighbourhood whenever
  the pre-image of $\alpha$ and $\beta$ under $R_C^+$ coincide,
  i.e. $R_C(\alpha)=R_C(\beta)$.
\end{proof}

$R_D$ is an injective relation, hence one can easily get
$N_{G\muls R_D}(\alpha)=R_D(N_G(x))$ provided that $(x,\alpha)\in R_D$. On the
other hand, if we define $R$ to be the image of $R_D$ as in the proof of
Proposition~\ref{dec}, then $R_C(\alpha)=R_C(\beta)$ implies there are two
distinct vertices $x,y\in V_G$, s.t. $(x,u),(y,u)\in R$, where
$u=R_C(\alpha)=R_C(\beta)$, and verse visa. Using Proposition~\ref{reve}
we thus obtain
\begin{pro}
  A relation $R$ is reversible with respect to $G$ if and only if for every
  two vertices $x$ and $y$ such that $R(x)\cap R(y) \neq \emptyset$ we have
  $N_G(x)=N_G(y)$.
\end{pro}

\subsection*{Strong relational equivalence}

\begin{definition}
  Two graphs $G$ and $H$ are \emph{strongly relationally equivalent}\index{strong relationally equivalent}, denoted by $G
  \backsim_s H$\index{$\backsim_s$}, if there is a relation $R$ such that $G\muls R=H$ and
  $H\muls R^+=G$.
\end{definition}

\begin{lemma}
  Relational equivalence is an equivalence relation on graphs.
\end{lemma}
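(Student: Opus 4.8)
The plan is to check reflexivity, symmetry and transitivity directly from the definition, using only the composition law (Lemma~\ref{lem:compo}), the elementary identity $(R\circ S)^+ = S^+\circ R^+$, and the fact that $G\muls I_G = G$ for the identity relation $I_G$ on $V_G$ (since $I_G^+=I_G$ and $I_G^+\circ G\circ I_G = G$).

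For reflexivity I would simply take $R = I_G$: then $G\muls I_G = G$ and, as $I_G^+=I_G$, also $G\muls I_G^+ = G$, so $G\backsim_s G$. For symmetry, suppose $R$ witnesses $G\backsim_s H$, i.e.\ $G\muls R = H$ and $H\muls R^+ = G$. Then $R^+$ witnesses $H\backsim_s G$, because $H\muls R^+ = G$ already, and $G\muls(R^+)^+ = G\muls R = H$.

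The only step with any content is transitivity. Given $R$ with $G\muls R = H$, $H\muls R^+ = G$ and $S$ with $H\muls S = K$, $K\muls S^+ = H$, I would take the composite $T = R\circ S$. By Lemma~\ref{lem:compo}, $G\muls T = (G\muls R)\muls S = H\muls S = K$. Using $(R\circ S)^+ = S^+\circ R^+$ and Lemma~\ref{lem:compo} again, $K\muls T^+ = K\muls(S^+\circ R^+) = (K\muls S^+)\muls R^+ = H\muls R^+ = G$. Hence $G\backsim_s K$.

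The closest thing to an obstacle is the standing convention of this section that all relations have full domain (and, since the equations $H\muls R^+ = G$ must be well formed, full image as well). So before concluding I would record that this property is inherited in each of the three cases: $I_G$ is total both ways; if $R$ has full domain and full image then so does $R^+$; and if $R$ and $S$ both have full domain and full image then $R\circ S$ does too (chaining through $R$ then $S$ gives full domain, and through $S$ then $R$ gives full image). Each is a one-line verification, after which the argument is pure bookkeeping with no genuine difficulty.
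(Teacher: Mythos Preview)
Your proof is correct and follows essentially the same route as the paper: reflexivity via $I_G$, symmetry via $R^+$, and transitivity via $R\circ S$ together with the composition law and $(R\circ S)^+ = S^+\circ R^+$. Your extra care in verifying that full domain and full image are preserved under the identity, transpose, and composition is a welcome addition that the paper leaves implicit.
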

\begin{proof}
  By definition $\backsim_s$ is symmetric. Because $G\muls I_G=G$, relation
  $\backsim_s$ is reflexive. Suppose $G\backsim_s G'$ and $G'\backsim_s
  G''$. Thus there are relations $R$, $S$, such that
  $G'=G\muls R$, $G''=G'\muls S$, $G=G'\muls R^+$, and $G'=G''\muls S^+$. By
  the composition law (Lemma~\ref{lem:compo}) it follows that
  $G''=G\muls(R\circ S)$ and $G=G''\muls(S^+\circ R^+)=G''\muls(R\circ S)^+$, i.e, $G\backsim_s
  G''$. Hence $\backsim_s$ is transitive.
\end{proof}

Recall that the \emph{thinness relation}\index{thinness relation} $S$ of $G$ is the equivalence relation on
  $V_G$ defined by $(x,y)\in S$ if and only if $N_G(x)=N_G(y)$.
  We denote by $\mathcal{S}$ the corresponding partition of $V_G$, and write
  $R_S\subseteq V_G\times\mathcal{S}$ that associates each vertex with its
  $S$-equivalence class, i.e., $(x,\beta)\in R_S$ if and only if $x\in\beta$.
In this context it is well known that $G$ can
be reconstructed from $G_{pd}$ (the point-determining graph of $G$) and knowledge of the $S$-equivalence
classes. In fact, we have
\begin{equation}
\label{eq:thin-1}
  G_{pd}\muls R^+_S=G.
\end{equation}

\begin{thm} \label{thm:equi}
  $G$ and $H$ are in the same equivalence class w.r.t.\ $\backsim_s$ if and
  only if their point-determining graphs are isomorphic.
\end{thm}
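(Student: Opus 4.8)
The plan is to establish the two directions of the biconditional using the reconstruction identity \eqref{eq:thin-1}, namely $G_{pd}\muls R_S^+ = G$, together with the fact that $R_S$ (and its transpose) are reversible relations, so that strong relational equivalence is witnessed by the passage to and from the point-determining quotient.

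For the easy direction, suppose $G_{pd}$ and $H_{pd}$ are isomorphic, say via an isomorphism $\varphi$. First I would observe that $G\backsim_s G_{pd}$: indeed $R_S^+$ satisfies $G_{pd}\muls R_S^+ = G$ by \eqref{eq:thin-1}, and $R_S$ satisfies $G\muls R_S = G_{pd}$ since $R_S$ realizes exactly the thinness quotient; moreover $R_S$ is reversible with respect to $G$ because for any two vertices $x,y$ with $R_S(x)\cap R_S(y)\neq\emptyset$ we have $x,y$ in the same thinness class, hence $N_G(x)=N_G(y)$ (here I invoke the characterization of reversibility in the Proposition just before the ``Strong relational equivalence'' subsection). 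So $(G\muls R_S)\muls R_S^+ = G$ and $(G_{pd}\muls R_S^+)\muls R_S = G_{pd}$, giving $G\backsim_s G_{pd}$. Symmetrically $H\backsim_s H_{pd}$. An isomorphism is trivially a reversible relation, so $G_{pd}\backsim_s H_{pd}$ via $\varphi$. Since $\backsim_s$ is an equivalence relation (proved in the preceding lemma), transitivity chains these together to give $G\backsim_s H$.

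For the harder direction, suppose $G\backsim_s H$, witnessed by a relation $R$ with $G\muls R = H$ and $H\muls R^+ = G$; I want to conclude $G_{pd}\isom H_{pd}$. The key point is that $R$, being reversible with respect to $G$, collapses only vertices with identical open neighbourhoods, and dually $R^+$ is reversible with respect to $H$. Using the decomposition of $R$ into $R_D$ (injective, full domain) and $R_C$ (functional) from Lemma~\ref{dec}, and the characterization of reversibility in Proposition~\ref{reve}, I would argue that the duplication part $R_D$ and the identification part $R_C$ each act only ``within thinness classes'': $R_D$ splits a vertex into copies with the same neighbourhood in $G\muls R_D$, and $R_C$ merges vertices only when they already have the same neighbourhood. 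Hence passing from $G$ to $H$ via $R$ does not change the point-determining quotient — formally, $(G\muls R)_{pd}$ is canonically isomorphic to $G_{pd}$. Applying this with $H = G\muls R$ gives $H_{pd}\isom G_{pd}$.

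The main obstacle is the second direction: making precise the claim that ``$R$ reversible with respect to $G$ implies $(G\muls R)_{pd}\isom G_{pd}$.'' One must be careful that $R$ need not have full domain a priori in the statement — but in this subsection all relations are assumed to have full domain, so that is fine — and one must handle the interplay of duplications and contractions cleanly. I would handle it by treating the two halves of the decomposition separately: first show $(G\muls R_D)_{pd}\isom G_{pd}$ (a vertex duplication does not change the thin graph, since the new copy lands in the same thinness class), then show $(G_1\muls R_C)_{pd}\isom (G_1)_{pd}$ where $G_1 = G\muls R_D$ (using that reversibility forces $R_C$ to identify only vertices $\alpha,\beta$ with $N_{G_1}(\alpha)=N_{G_1}(\beta)$, by Proposition~\ref{reve}, so again the thinness classes, and hence the thin graph, are preserved). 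Composing these two isomorphisms yields $(G\muls R)_{pd}\isom G_{pd}$, and symmetrically using $R^+$ one gets the reverse, completing the proof.
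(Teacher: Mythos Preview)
Your proposal is correct. The easy direction matches the paper's (implicit) argument exactly: $G\backsim_s G_{pd}$ via $R_S$, isomorphism gives $G_{pd}\backsim_s H_{pd}$, and transitivity concludes.

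For the hard direction you take a genuinely different route from the paper. The paper first reduces to point-determining graphs: from $G\backsim_s H$ and $G\backsim_s G_{pd}$, $H\backsim_s H_{pd}$ it gets $G_{pd}\backsim_s H_{pd}$ by transitivity, and then argues that a reversible relation $R$ between two \emph{thin} graphs must be a bijection. Indeed, if $R(x)\cap R(y)\neq\emptyset$ for distinct $x,y\in V_{G_{pd}}$, reversibility forces $N_{G_{pd}}(x)=N_{G_{pd}}(y)$, contradicting thinness; so $R$ is injective, and symmetrically $R^+$ is injective, hence $R$ is an isomorphism. This is a two-line argument once the reduction is made.

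Your approach instead shows directly that $(G\muls R)_{pd}\isom G_{pd}$ for any reversible $R$, by pushing the decomposition $R=R_D\circ R_C$ through and checking that neither factor alters the thin quotient (Proposition~\ref{reve} handling $R_C$). This works and is arguably more informative about \emph{why} reversible relations are exactly the thinness-preserving ones, but it is heavier: you have to verify preservation of the thin graph under both duplication and the constrained contraction, whereas the paper sidesteps this by moving to thin graphs first, where reversibility degenerates to bijectivity.
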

\begin{proof}
  Assume $G\backsim_s H$. From Equation(\ref{eq:thin-1}) we know that
  $G\backsim_s G_{pd}$, $H\backsim_s H_{pd}$, so $G_{pd}\backsim_s
  H_{pd}$.
  Now we claim that $G_{pd}$ and $H_{pd}$ are isomorphic.
  Suppose $G_{pd}\muls R=H_{pd}$, then the pre-image of $R$ is unique.
  Otherwise, there exist distinct vertices $x,y\in V_{G_{pd}}$ such that
  $R(x)=R(y)$, then $N_{G_{pd}}(x)=N_{G_{pd}}(y)$, contradicting
  thinness. Likewise, the pre-image of $R^{+}$ is unique, i.e., the image
  of $R$ is unique. Hence $R$ is one-to-one.
\end{proof}

\begin{figure}[!ht]
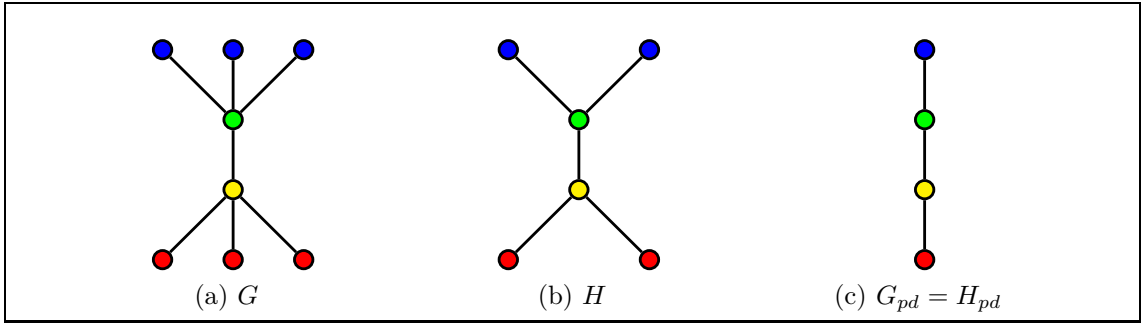

\centering
\subfloat[$G$]{
\begin{minipage}[c]{0.3\textwidth}
\centering
 \includegraphics[height=0.7\textwidth]{ex3.1.1.eps}
\end{minipage}}
\subfloat[$H$]{
\begin{minipage}[c]{0.3\textwidth}
\centering
 \includegraphics[height=0.7\textwidth]{ex3.1.2.eps}
\end{minipage}}
\subfloat[$G_{pd}=H_{pd}$]{
\begin{minipage}[c]{0.3\textwidth}
\centering
 \includegraphics[height=0.7\textwidth]{ex3.1.3.eps}
\end{minipage}}
\caption{Non-isomorphic graphs $G$ and $H$ with isomorphic point-determining graphs.}
\label{fig:thin}
\end{figure}


The example in Fig.~\ref{fig:thin} shows that point-determining graphs can be isomorphic
while $G$ and $H$ themselves are not isomorphic. Relational equivalence
thus is coarser than graph isomorphism (surjective homomorphic equivalence)
but stronger than homomorphic equivalence.

\subsection*{Weak relational equivalence} \label{sec:weakeq}

\begin{definition}
  Two graphs $G$ and $H$ are \emph{(weak) relationally equivalent}\index{weak relationally equivalent}\index{relationally equivalent}, denoted by $G
  \backsim_w H$\index{$\backsim_w$}, if there are relations $R$ and $S$ such that $G\muls R=H$
  and $H\muls S=G$.
\end{definition}

Note that $\backsim_w$ is the same as $\Releq$ in Chapter~\ref{Ch:order}.

\begin{lemma}
  Weak relational equivalence is an equivalence relation on graphs.
\end{lemma}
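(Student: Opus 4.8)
The plan is to verify the three defining properties of an equivalence relation---reflexivity, symmetry, and transitivity---mimicking the argument already given for $\backsim_s$, but with the simplification that here the two witnessing relations $R$ and $S$ need not be transposes of one another.

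First I would handle reflexivity and symmetry, which are immediate. For reflexivity, note that $G\muls I_G=G$ (the identity relation witnesses this), so taking $R=S=I_G$ shows $G\backsim_w G$. Symmetry is built into the definition: if $R$ and $S$ witness $G\backsim_w H$, meaning $G\muls R=H$ and $H\muls S=G$, then the very same pair (with the roles of $R$ and $S$ swapped) witnesses $H\backsim_w G$.

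The only substantive point is transitivity, and the key tool is the composition law (Lemma~\ref{lem:compo}), which states $(G\muls R)\muls S = G\muls(R\circ S)$. Suppose $G\backsim_w G'$ and $G'\backsim_w G''$. Then there are relations $R_1,S_1$ with $G\muls R_1=G'$ and $G'\muls S_1=G$, and relations $R_2,S_2$ with $G'\muls R_2=G''$ and $G''\muls S_2=G'$. Applying the composition law,
\[
  G''=G'\muls R_2=(G\muls R_1)\muls R_2=G\muls(R_1\circ R_2),
\]
and symmetrically
\[
  G=G'\muls S_1=(G''\muls S_2)\muls S_1=G''\muls(S_2\circ S_1).
\]
Thus $R_1\circ R_2$ and $S_2\circ S_1$ witness $G\backsim_w G''$, establishing transitivity.

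I do not anticipate a genuine obstacle here: unlike the strong case, no analysis of reversibility or of transposes is needed, since the definition of $\backsim_w$ decouples the forward and backward relations. The one detail worth a remark is that composition of relations preserves the standing hypothesis in this section that all relations have full domain (if $R_1$ and $R_2$ each have full domain, so does $R_1\circ R_2$), so the composed relations remain admissible witnesses.
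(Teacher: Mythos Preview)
Your proof is correct and follows essentially the same approach as the paper: verify reflexivity via $G\muls I_G=G$, note symmetry is immediate from the definition, and obtain transitivity by composing the witnessing relations using the composition law (Lemma~\ref{lem:compo}). Your additional remark that composition preserves the full-domain hypothesis is a nice touch that the paper leaves implicit.
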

\begin{proof}
  By definition $\backsim_w$ is symmetric. Because $G\muls I_G=G$, relation
  $\backsim_w$ is reflexive. Suppose $G\backsim_w G'$ and $G'\backsim_w
  G''$. Thus there are relations $R$, $S$, $R'$, and $S'$, such that
  $G'=G\muls R$, $G''=G'\muls R'$, $G=G'\muls S$, and $G'=G''\muls S'$. By
  the composition law (Lemma~\ref{lem:compo}) it follows that
  $G''=G\muls(R\circ R')$ and $G=G''\muls(S'\circ S)$, i.e, $G\backsim_w
  G''$. Hence $\backsim_w$ is transitive.
\end{proof}

Strong relational equivalence implies weak relational equivalence. To see
this, simply observe that the definition of the weak form is obtained from
the strong one by setting $S=R^+$.

\begin{figure}[!ht]
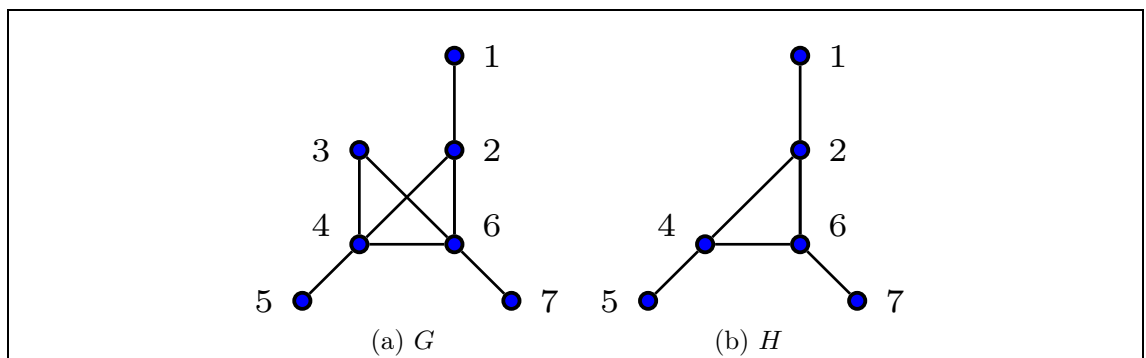

\centering
\subfloat[$G$]{\label{figa:weak}
\begin{minipage}[c]{0.3\textwidth}
\centering
\includegraphics[width=0.9\textwidth]{weak_nonstrong.eps}
\end{minipage}}
\subfloat[$H$]{
\begin{minipage}[c]{0.3\textwidth}
\centering
\includegraphics[width=0.9\textwidth]{weak_nonstrong2.eps}
\end{minipage}}
\caption{$G$ and $H$ are weakly relationally equivalent but have
  non-isomorphic  graphs.}
\label{fig:weak}
\end{figure}


The converse is not true, as shown by the graphs $G$ and $H$ in
Fig.~\ref{fig:weak}: It is easy to see that their point-determining graphs are different
and thus $G$ and $H$ are not strongly relationally equivalent. However,
they are relationally equivalent. To get relation from $G$ to $H$ contract
vertices 2 and 3 and keep other vertices on place, i.e.,
$$R=\{(1,1),(2,2),(3,2),(4,4),(5,5),(6,6),(7,7)\}.$$ To get relation
from $H$ to $G$, duplicate 5 and 7 and contract them together to 3,
$$S=\{(1,1),(2,2),(4,4),(5,5),(6,6),(7,7),(5,3),(7,3)\}.$$
Consequently, weak relational equivalence is coarser than strong relational
equivalence.

\subsection*{R-cores}
\label{subsec:R-core}

A graph is an \emph{R-core}\index{R-core}, if it is the minimal graph (in the number of
vertices) in its equivalence class of $\backsim_w$.

This notion is analogous to the definition of graph cores. In this section
we show properties of R-cores that are similar to the properties of graph
cores. To this end we first need to develop a simple characterization of
R-cores.

Again we start from a decomposition of relations. Consider a relation
$R$ such that $G\muls R = H$. We seek for pair of relations $R_1$ and
$R_2$ such that $R=R_1\circ R_2$. In contrast to Lemma~\ref{dec}, however,
we now look for a decomposition so that the graph $G'=G\muls R_1$ is
smaller (in the number of vertices) than $G$.
  \begin{figure}[!ht]
  \centering
  \includegraphics[width=0.25\textwidth]{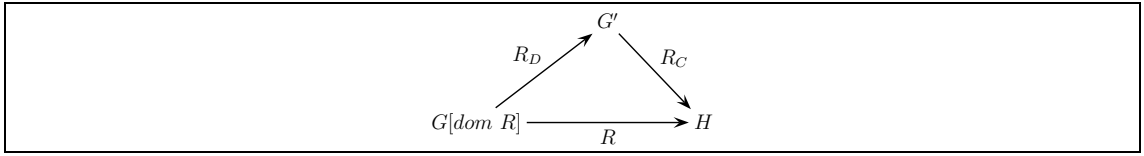} 
\caption{Commutative diagram.}
\label{fig:comum2}
\end{figure}   

The existence of such a decomposition follows from a translation of
the well-known Hall Marriage Theorem~\cite{Schrijver2003} to the language of
relations. We say that the relation $R\subseteq A\times B$ satisfies the
\emph{Hall condition}, if for every $S\subseteq A$ we have $|S|\leq |R(S)|$.

We have already proved Hall's marriage theorem in~\ref{thm:hallo}. Here we restate it in terms of relations.

\begin{thm}[Hall's marriage theorem] \label{thm:hall}
  If $G\muls R = H$ and $R$ satisfies the Hall condition, then $R$
  contains a monomorphism $f:G\rightarrow H$.
\end{thm}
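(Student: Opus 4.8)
The plan is to use the Hall condition to extract a system of distinct representatives for the family of image sets $(R(x))_{x\in V_G}$, and then to observe that any such system of representatives is automatically a monomorphism from $G$ to $H$.

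First I would note that the Hall condition, applied to singletons $S=\{x\}$, already forces $|R(x)|\ge 1$, so $R$ has full domain. Consider the bipartite graph $B$ with parts $V_G$ and $V_H$ and edge set $R$; its neighbourhood function is exactly $N_B(S)=R(S)$, so the Hall condition reads $|S|\le|N_B(S)|$ for every $S\subseteq V_G$. Since Theorem~\ref{thm:hallo} is stated only for parts of equal size, I would first pad the construction: add $\max(0,|V_H|-|V_G|)$ new vertices to the $V_G$-side, each joined to all of $V_H$. The enlarged bipartite graph still satisfies Hall's condition (for a set meeting a new vertex the neighbourhood is all of $V_H$; for a set of old vertices it is unchanged), and now the two sides have equal size, so Theorem~\ref{thm:hallo} yields a perfect matching. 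Restricting this matching to the original part $V_G$ gives an injective function $f\colon V_G\to V_H$ with $(x,f(x))\in R$ for every $x\in V_G$; in particular $f\subseteq R$ as relations.

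It then remains to check that $f$ is a homomorphism. Let $(u,v)\in E_G$. Since $(u,f(u))\in R$ and $(v,f(v))\in R$, Definition~\ref{def:muls} gives $(f(u),f(v))\in E_{G\muls R}=E_H$. Hence $f$ is a homomorphism $G\to H$, and being injective it is a monomorphism; as $f\subseteq R$, the relation $R$ contains the monomorphism $f\colon G\to H$.

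The argument is short; the only point needing slight care is the passage from the equal-cardinality, perfect-matching formulation of Hall's theorem recorded in Theorem~\ref{thm:hallo} to the system-of-distinct-representatives version actually used here, which is exactly what the padding trick above handles. (Alternatively one could reprove the representatives version directly by the same induction on $|V_G|$ used for Theorem~\ref{thm:hallo}.)
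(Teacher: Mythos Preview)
Your proof is correct and matches the paper's approach: the paper does not give an independent argument here but simply remarks that the theorem is a restatement of the classical Hall marriage theorem (Theorem~\ref{thm:hallo}) in the language of relations. Your write-up supplies exactly the details of that translation---extracting a system of distinct representatives from the bipartite graph on $V_G\cup V_H$ with edge set $R$, and then noting via Definition~\ref{def:muls} that any injective selection inside $R$ is automatically a homomorphism---including the padding step needed because Theorem~\ref{thm:hallo} is stated only for parts of equal size.
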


\begin{lemma} \label{lem:nohall} If $G\muls R = H$ and relation $R$ does
  not satisfy the Hall condition, then there are relations $R_1$ and $R_2$
  such that $R=R_1\circ R_2$, and the number of vertices of graph
  $G'=G\muls R_1$ is strictly smaller than the number of vertices of $G$.
\end{lemma}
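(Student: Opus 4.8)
The hypothesis is that $R$ fails the Hall condition, so there is a nonempty set $S\subseteq V_G$ with $|R(S)| < |S|$. The plan is to use such a ``deficient'' set to build the factorization $R = R_1\circ R_2$ through an intermediate vertex set that is strictly smaller than $V_G$. The natural choice is to take $S$ to be a set witnessing the deficiency, and then to \emph{collapse} $S$ down to a set of size at most $|R(S)|$ while leaving $V_G\setminus S$ untouched; this collapsing is exactly what $R_1$ should do, and $R_2$ will be the relation that carries the collapsed picture the rest of the way to $H$.

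Concretely, first I would fix a nonempty $S$ with $|R(S)| < |S|$ and set $T = R(S)$. Choose any surjection $\varphi\colon S \to T'$ where $T'$ is a fresh set of size $|T| < |S|$ (for instance $T' = T$ and $\varphi$ any surjection onto $T$); on the vertices outside $S$ let $R_1$ be the identity. That is, let $B = (V_G\setminus S)\,\dot\cup\, T'$ and define $R_1 \subseteq V_G\times B$ by $R_1 = I_{V_G\setminus S} \,\cup\, \{(x,\varphi(x))\mid x\in S\}$. Then $|B| = |V_G\setminus S| + |T'| < |V_G\setminus S| + |S| = |V_G|$, so $G' := G\muls R_1$ indeed has strictly fewer vertices than $G$. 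Next define $R_2\subseteq B\times V_H$ by declaring, for $x\in V_G\setminus S$, that $(x,u)\in R_2$ iff $(x,u)\in R$, and for $t\in T'$ that $(t,u)\in R_2$ iff there is some $x\in S$ with $\varphi(x)=t$ and $(x,u)\in R$. A direct check using the definition of composition of relations shows $R_1\circ R_2 = R$: a pair $(x,u)$ with $x\notin S$ composes through the identity, and a pair $(x,u)$ with $x\in S$ composes through the unique intermediate vertex $\varphi(x)\in T'$, and conversely every composed pair arises this way because $\varphi$ is surjective and $R_1$ restricted to $S$ has image exactly $T'$.

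The one genuine subtlety — and the step I expect to cost the most care — is verifying that $R_1$ (hence $B$) still gives a \emph{legitimate} relation in the sense of Definition~\ref{def:muls}, i.e.\ that $R_1$ has full domain on $G$ (immediate, since $I_{V_G\setminus S}$ and $\varphi$ are everywhere defined) and, more importantly, that $R_1$ has full \emph{image} onto $B$ so that $G\muls R_1$ is defined on all of $B$; this holds because on $V_G\setminus S$ the identity is surjective and $\varphi\colon S\to T'$ was chosen surjective. With the factorization $R = R_1\circ R_2$ in hand and $|V_{G'}| < |V_G|$ established, the lemma follows, and by the Composition Law (Lemma~\ref{lem:compo}) we automatically get $G'\muls R_2 = (G\muls R_1)\muls R_2 = G\muls(R_1\circ R_2) = G\muls R = H$, which is the form in which the decomposition will be used for R-cores.
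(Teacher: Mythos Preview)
Your factorization does not in general satisfy $R_1\circ R_2=R$; it only gives $R\subseteq R_1\circ R_2$. The problem is that you made $R_1$ \emph{functional} on $S$ via an arbitrary surjection $\varphi$. For $x\in S$, the only intermediate point is $\varphi(x)$, and by your definition $(\varphi(x),u)\in R_2$ holds whenever \emph{some} $x'$ in the fibre $\varphi^{-1}(\varphi(x))$ has $(x',u)\in R$. Hence $(R_1\circ R_2)(x)=\bigcup_{x'\in\varphi^{-1}(\varphi(x))}R(x')$, which strictly contains $R(x)$ as soon as two vertices in the same fibre have different $R$-images. For a concrete failure take $S=\{a,b,c\}$ with $R(a)=\{p\}$, $R(b)=\{q\}$, $R(c)=\{p,q\}$, so $|R(S)|=2<3$; any surjection $\varphi\colon S\to T'$ with $|T'|=2$ must merge two of $a,b,c$ into one fibre, and then $R_1\circ R_2$ picks up a spurious pair (e.g.\ if $\varphi(a)=\varphi(b)$ you get $(a,q)\in R_1\circ R_2\setminus R$). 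Since the statement of the lemma asks for the exact equality $R=R_1\circ R_2$, this is a genuine gap, and it cannot be repaired by choosing $\varphi$ more cleverly while keeping $R_1$ functional on $S$.

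The paper avoids this by \emph{not} collapsing $S$ functionally: it takes the intermediate set to be $(V_G\setminus S)\cup R(S)$ (assuming $V_G\cap V_H=\emptyset$), sets $R_1(x)=R(x)$ for $x\in S$ and $R_1(x)=x$ otherwise, and lets $R_2$ be the identity on $R(S)$ and equal to $R$ on $V_G\setminus S$. Then for $x\in S$ one composes through all of $R(x)\subseteq R(S)$, and since $R_2$ is the identity there one recovers exactly $R(x)$. Your outline becomes correct if you replace the single value $\varphi(x)$ by the full set $R(x)$ and let $R_2$ act as the identity on $T=R(S)$.
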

\begin{proof}
  Without loss of generality assume that $V_G\cap V_H=\emptyset$. If $R$
  does not satisfy the Hall condition, then there exists a vertex set $S\subset
  V_G$ such that $|S|>|R(S)|$. Now we define relations $R_1$ and $R_2$ as
  follows:
  \begin{equation*}
    R_1(x)=
    \begin{cases} R(x) \text{   for $x\in S$,}\\
      x \text{   otherwise,}
    \end{cases}
\qquad\qquad\qquad
    R_2(x)=
    \begin{cases} x \text{   for $x\in R(S)$,}\\
      R(x) \text{   otherwise.}
    \end{cases}
  \end{equation*}
Obviously $R_1\circ R_2 = R$ and $|V_{G'}|=|V_G|-(|S|-|R(S)|)<|V_G|$.
\end{proof}

This immediately gives a necessary, but in general not sufficient,
condition for a graph to be an R-core.

\begin{cor}\label{cor:nohall2}
  If $G$ is an R-core, then every relation $R$ such that $G\muls R=G$
  satisfies the Hall condition and thus contains a monomorphism.
\end{cor}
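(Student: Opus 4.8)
The plan is to obtain this as a direct consequence of Lemma~\ref{lem:nohall}, the composition law (Lemma~\ref{lem:compo}), and the defining minimality property of an R-core, with the final clause being exactly the content of Theorem~\ref{thm:hall}.

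First I would argue by contradiction. Assume $G$ is an R-core and $G\muls R=G$, but suppose $R$ fails the Hall condition. Then Lemma~\ref{lem:nohall} produces relations $R_1$ and $R_2$ with $R=R_1\circ R_2$ such that the graph $G':=G\muls R_1$ has strictly fewer vertices than $G$. Applying the composition law gives
\[
  G'\muls R_2 = (G\muls R_1)\muls R_2 = G\muls(R_1\circ R_2) = G\muls R = G,
\]
so we have simultaneously $G\muls R_1=G'$ and $G'\muls R_2=G$; that is, $G\backsim_w G'$. Since $|V_{G'}|<|V_G|$, this contradicts the fact that $G$ is minimal (in the number of vertices) in its $\backsim_w$-class. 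Hence $R$ must satisfy the Hall condition, and then Theorem~\ref{thm:hall} guarantees that $R$ contains a monomorphism $f:G\to G$.

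The one point deserving a sentence of care is that, in the R-core subsection, the relation $\backsim_w$ is defined via relations of full domain, so I should check that the $R_1,R_2$ constructed in the proof of Lemma~\ref{lem:nohall} inherit full domain from $R$. This is immediate from the explicit formulas there: $R_1(x)$ equals either $R(x)$ or $\{x\}$, and $R_2(x)$ equals either $\{x\}$ or $R(x)$, all of which are nonempty because $R$ has full domain. Beyond this bookkeeping there is no real obstacle: the corollary is essentially a repackaging of Lemma~\ref{lem:nohall} together with Lemma~\ref{lem:compo}, and the ``contains a monomorphism'' conclusion is nothing more than an invocation of Theorem~\ref{thm:hall} in the special case $H=G$.
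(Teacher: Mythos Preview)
Your proof is correct and follows essentially the same route as the paper: contradiction via Lemma~\ref{lem:nohall} to produce a strictly smaller $G'\backsim_w G$, then Theorem~\ref{thm:hall} for the monomorphism. Your extra paragraph verifying that $R_1,R_2$ inherit full domain is a nice piece of bookkeeping that the paper's own proof leaves implicit.
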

\begin{proof}
  Assume that there is a relation $R$ that does not satisfy the Hall
  condition. Then there is a graph $G'$, $|V_{G'}|<|V_G|$, and relations
  $R_1$ and $R_2$ such that $G\muls R_1=G'$ and $G'\muls
  R_2=G$. Consequently $G'$ is a smaller representative of the equivalence
  class of $\backsim_w$, a contradiction with $G$ being R-core.
\end{proof}

To see that the condition of Corollary~\ref{cor:nohall2} is not sufficient,
consider a graph $G$ consisting of two independent vertices. It is not an R-core,
because it can retract to an isolated vertex. However, the relation is required
to be full, any relation from $G$ to itself satisfies the Hall condition.

Next we show that R-cores are, up to isomorphism, unique representatives of
the equivalence classes of $\backsim_w$.

  \begin{figure}[!ht]
  \centering
  \includegraphics[width=0.35\textwidth]{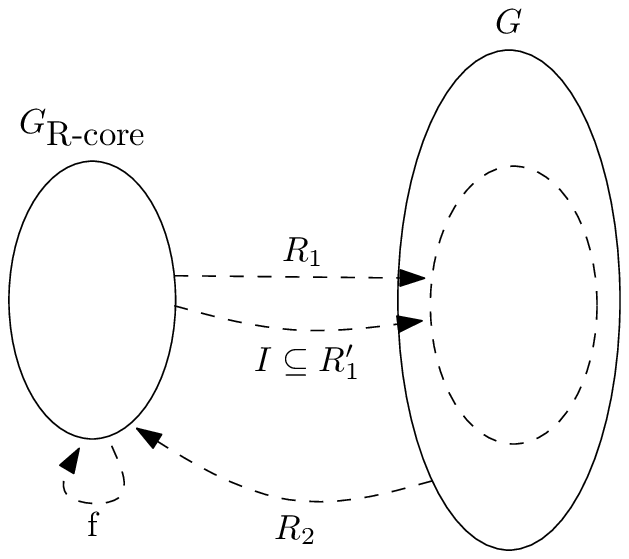} 
\caption{Construction of an embedding from $G_\text{R-core}$ to $G$.}
\label{fig:rcore}
\end{figure}   

\begin{pro}\label{pro:iso}
  If both $G$ and $H$ are R-cores in the same equivalence class of
  $\backsim_w$, then $G$ and $H$ are isomorphic.
\end{pro}

\begin{proof}
  Because both $G$ and $H$ are R-cores, we know that $|V_G|=|V_H|$.

  Consider relations $R_1$ and $R_2$ such that $G\muls R_1=H$ and $H\muls
  R_2=G$. Applying Lemma~\ref{lem:nohall} we know that $R_1$ satisfies the
  Hall condition. Otherwise there would be a graph $G'$ with $|V_{G'}|<
  |V_G|$ so that $G'$ is relationally equivalent to both $G$ and $H$
  which contradicts the fact that $G$ and $H$ are R-cores. Similarly, we can
  show that $R_2$ also satisfies the Hall condition.

  From Theorem~\ref{thm:hall} we know that there is a monomorphism $f$ from
  $G$ to $H$, and monomorphism $g$ from $H$ to $G$. It follows that the number
  of edges of $G$ is not larger than the number of edges of $H$ and
  vice versa. Because $G$ and $H$ have the same number of edges and
  same number of vertices, $G$ and $H$ must be isomorphisms.
\end{proof}

It thus makes sense to define a construction analogous to the core of a graph.
\begin{definition}
  $H$ is an \emph{R-core of a graph $G$}\index{R-core} if $H$ is an R-core and $H\backsim_w
  G$.
\end{definition}

All R-cores of graph $G$ are isomorphic as an immediate consequence of
Proposition~\ref{pro:iso}. We denote the (up to isomorphism) unique R-core
of graph $G$ by $G_{\text{R-core}}$\index{$G_{\text{R-core}}$}..

\begin{lemma}
  $G_{\text{R-core}}$ is isomorphic to a (not necessarily induced) subgraph
  of $G$.
\end{lemma}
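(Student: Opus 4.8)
The plan is to produce an explicit monomorphism from $G_{\text{R-core}}$ into $G$ and then invoke the basic fact recorded earlier that a monomorphism $A\to B$ exists precisely when $A$ is isomorphic to a subgraph of $B$. Since $G_{\text{R-core}}\backsim_w G$, I fix relations $R_1$ and $R_2$ with $G\muls R_1 = G_{\text{R-core}}$ and $G_{\text{R-core}}\muls R_2 = G$; recall that throughout this section all relations are taken to have full domain. The key claim is that $R_2$ satisfies the Hall condition.

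To establish the claim I would argue by contradiction, exactly along the lines of the proof of Proposition~\ref{pro:iso}. If $R_2$ fails the Hall condition, then by Lemma~\ref{lem:nohall} we may write $R_2 = S_1\circ S_2$ so that $G' := G_{\text{R-core}}\muls S_1$ has strictly fewer vertices than $G_{\text{R-core}}$. By the composition law (Lemma~\ref{lem:compo}), $G'\muls S_2 = G_{\text{R-core}}\muls (S_1\circ S_2) = G_{\text{R-core}}\muls R_2 = G$, while $G\muls (R_1\circ S_1) = (G\muls R_1)\muls S_1 = G_{\text{R-core}}\muls S_1 = G'$. Hence $G'\backsim_w G$, and since $G\backsim_w G_{\text{R-core}}$ we conclude $G'\backsim_w G_{\text{R-core}}$ with $|V_{G'}| < |V_{G_{\text{R-core}}}|$, contradicting the minimality of the R-core. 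This proves the claim.

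Finally, applying Theorem~\ref{thm:hall} to $G_{\text{R-core}}\muls R_2 = G$, the fact that $R_2$ has full domain and satisfies the Hall condition yields a monomorphism $f\colon G_{\text{R-core}}\to G$ contained in $R_2$. Therefore $G_{\text{R-core}}$ is isomorphic to a subgraph of $G$, not necessarily induced since $f$ need only preserve, not reflect, adjacency. I do not expect a serious obstacle here: the argument is essentially the one-sided version of Proposition~\ref{pro:iso}, and the only step requiring care is the bookkeeping with the composition law to check that the hypothetical smaller graph $G'$ remains weakly relationally equivalent to $G_{\text{R-core}}$.
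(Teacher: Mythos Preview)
Your proof is correct and follows essentially the same approach as the paper. The paper's proof is more terse---it simply takes a relation $R$ with $G_{\text{R-core}}\muls R=G$ and invokes ``the same argument as in Corollary~\ref{cor:nohall2}'' to obtain a monomorphism contained in $R$---whereas you have unpacked that argument explicitly, including the use of the other direction of the weak equivalence (your $R_1$) to close the loop and derive the contradiction; but the logical content is identical.
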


\begin{proof}
  Take any relation $R$ such that $G_{\text{R-core}}\muls R=G$. By the same
  argument as in Corollary~\ref{cor:nohall2}, there is a monomorphism
  $f:G_{\text{R-core}}\rightarrow G$ contained in $R$. Consider the image
  of $f$ on $G$.
\end{proof}

\begin{thm}
  \label{thm:Rcore}
  $G_{\text{R-core}}$ is isomorphic to an induced subgraph of $G$.
\end{thm}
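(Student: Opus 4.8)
The plan is to upgrade the monomorphism produced by the preceding lemma into an embedding. Write $K:=G_{\text{R-core}}$. By that lemma there is a relation $R$ with $K\muls R=G$ containing a monomorphism $f\colon K\to G$; fix also a relation $S$ with full domain and $G\muls S=K$, which exists because $K\backsim_w G$. Let $K'=f(K)$ be the isomorphic copy of $K$ realized inside $G$ as a (generally non-induced) subgraph, and let $L:=G[V_{K'}]$ be the induced subgraph of $G$ on the same vertex set. Then $|V_L|=|V_K|$, $E_L\supseteq E_{K'}$, and $f$ is an embedding precisely when $E_L=E_{K'}$. Assume, for contradiction, that $L$ has strictly more edges than $K'$; unwinding the definition of $K'$ and using injectivity of $f$, this means there are distinct $a,b\in V_K$ with $(a,b)\notin E_K$ but $(f(a),f(b))\in E_G$.

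The strategy is to show that $L\backsim_w K$. Granting this, the contradiction is immediate: since $|V_L|=|V_K|$ and $K$ is an R-core, $L$ is also of minimum vertex count in its $\backsim_w$-class, hence an R-core, and Proposition~\ref{pro:iso} forces $L\cong K$; but $|E_L|>|E_{K'}|=|E_K|$, which is impossible.

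One half of $L\backsim_w K$ is easy. Restrict $R$ to $R_1:=R\cap(V_K\times V_L)$. Because $f\subseteq R$ and $f$ maps $V_K$ into $V_{K'}=V_L$, the relation $R_1$ still has full domain. And because $L$ is the \emph{induced} subgraph on $V_L$, an ordered pair from $V_L\times V_L$ lies in $E_L$ iff it lies in $E_G=E_{K\muls R}$; together with the fact that every vertex of $V_L$ lies in the image of $R$ (which equals $V_G$), this yields $K\muls R_1=L$. So there is a full-domain relation $K\to L$ realizing $L$.

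The other half — producing a full-domain relation $L\to K$ with $L\muls(\cdot)=K$ — is the crux and the step I expect to be the main obstacle. The natural candidate is $S_1:=S\cap(V_L\times V_K)$, which has full domain on $V_L$ and automatically satisfies $L\muls S_1\subseteq G\muls S=K$; one must prove the reverse inclusion, i.e.\ that cutting $S$ down to the copy $V_{K'}$ loses neither vertices nor edges of $K$. Here I would invoke Corollary~\ref{cor:nohall2}: by the composition law (Lemma~\ref{lem:compo}), $R\circ S$ satisfies $K\muls(R\circ S)=K$, so it obeys the Hall condition and, by Theorem~\ref{thm:hall}, contains an automorphism $\tau$ of $K$. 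Replacing $S$ by $S\circ\tau^{-1}$ (which still has $G\muls(S\circ\tau^{-1})=K$) we may assume $I_K\subseteq R\circ S$, so every $x\in V_K$ has a witness $y\in R(x)$ with $(y,x)\in S$. The delicate point is to choose these witnesses \emph{inside} $V_{K'}$ — equivalently, to arrange $f^{-1}\subseteq S$ (the pairs $(f(x),x)$): then $I_K\subseteq f\circ S$, hence $K=K\muls I_K\subseteq K\muls(f\circ S)=K'\muls S_1\subseteq L\muls S_1\subseteq K$, so all coincide and $L\muls S_1=K$. In fact, once such an $S$ with $G\muls S=K$ and $f^{-1}\subseteq S$ is in hand, the theorem follows directly, bypassing the equivalence argument: the offending edge $(f(a),f(b))\in E_G$ together with $(f(a),a),(f(b),b)\in S$ would put $(a,b)$ into $E_{G\muls S}=E_K$, contradicting $(a,b)\notin E_K$. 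Thus everything hinges on establishing this normal form for $S$; to do so I would combine the Hall condition on $R\circ S$ with the injectivity of $f$ (so that $f\circ S$ inherits the Hall-type inequality $|S(f(W))|\ge|W|$) and then apply Theorem~\ref{thm:hall} to $f\circ S$. Making that last inheritance precise — controlling exactly how $S$ behaves on the copy $V_{K'}$ — is the technical heart of the argument.
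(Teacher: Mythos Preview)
Your overall architecture is right and tracks the paper closely: normalize so that $I_K\subseteq R\circ S$, then produce a map that is simultaneously contained in $R$ (hence a homomorphism $K\to G$) and whose inverse is contained in $S$ (hence edge-reflecting). The gap is in the last step, where you insist on keeping the \emph{pre-chosen} monomorphism $f$ and try to force $f^{-1}\subseteq S$. Your proposed justification---that the Hall condition on $R\circ S$ together with injectivity of $f$ yields $|S(f(W))|\ge|W|$---does not work: from $f(W)\subseteq R(W)$ you only get $S(f(W))\subseteq S(R(W))$, which is the wrong containment for a lower bound. So Theorem~\ref{thm:hall} cannot be applied to $f\circ S$ on these grounds, and there is no reason the witnesses for $I_K\subseteq R\circ S$ should land on the particular image $f(V_K)$.

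The paper's fix is to \emph{abandon} $f$ at this point. After arranging $I_K\subseteq R'\circ S$ (the paper does this by iterating, taking $R'=R^{n-1}\circ R_1$ with $f^n=\mathrm{id}$; your $S\mapsto S\circ\tau^{-1}$ trick is an equivalent device), for each $x\in V_K$ pick a witness $I(x)\in V_G$ with $(x,I(x))\in R'$ and $(I(x),x)\in S$. Then $I\subseteq R'$ gives that $I$ is a homomorphism, and $I^{-1}\subseteq S$ gives that any edge $(I(a),I(b))\in E_G$ forces $(a,b)\in E_{G\muls S}=E_K$, so $I$ is full. Injectivity of $I$ does \emph{not} come from Hall; it comes from the fact that an R-core is point-determining (via Theorem~\ref{thm:equi}): if $I(x)=I(y)$, then combining $I\subseteq R'$ with $I^{-1}\subseteq S$ shows $N_K(x)=N_K(y)$, hence $x=y$. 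With $I$ an injective full homomorphism onto $G[I(V_K)]$, you are done. So your proof becomes correct once you replace ``make $f^{-1}\subseteq S$'' by ``define $I$ from the witnesses and prove injectivity via point-determining''.
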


\begin{proof}
  Fix $R_1$ and $R_2$ such that $G_{\text{R-core}}\muls R_1=G$ and
  $G\muls R_2=G_{\text{R-core}}$.

  $R=R_1\circ R_2$ is a relation such that $G_{\text{R-core}}\muls
  R=G_{\text{R-core}}$. By Corollary~\ref{cor:nohall2}, $R$ contains a
  monomorphism $f:G_{\text{R-core}}\rightarrow G_{\text{R-core}}$. Because
  such a monomorphism is a permutation, there exists an $n$ such that $f^n$ is
  identity.

  Put $R_1'=R^{n-1}\circ R_1$. Because $R^{n}$ contains the identity and
  $R^{n}=R_1'\circ R_2$, it follows that for every
  $x\in V_{G_{\text{R-core}}}$, there is a vertex
  $I(x)\in V_G$ such that $I(x)\in R'_1(x)$ and $x\in R_2(I(x))$.
  In other words, $(x,I(x))\in R'_1(x)$ and $(I(x),x)\in R_2$.
  It is easily seen that $I$ is a mapping from vertex set of $G_{\text{R-core}}$
  to vertex set of $G$. We take the induced subgraph of $G$ induced by the image of $I$,
  denoted by $G'$, and claim that $I$ is an isomorphism of $G_{\text{R-core}}$ to $G'$.

  We first show that for two vertices $x\neq y$, we have $I(x)\neq I(y)$ and
  thus $I$ is vertex injective (thus both vertex and edge injective). Assume, to the contrary, that there are two vertices $x\neq y$ such that $I(x)=I(y)$.
  Consider an arbitrary vertex $z$ in the neighbourhood of $x$. It follows
  that $I(z)$ must be in the neighbourhood of $I(x)$ and consequently
  $z$ is in the neighbourhood of $y$. Thus the neighbourhoods of $x$ and $y$
  are the same. By Theorem~\ref{thm:equi}, however, we know that the
  R-core is a point-determining graph (because weak relational equivalence is coarser
  than strong relational equivalence), a contradiction.

  Then we claim that $I$ is a homomorphism of $G_{\text{R-core}}$ to $G'$, i.e.,
  for every edge $(x,y)\in E_{G_{\text{R-core}}}$ we also have edge
  $(I(x),I(y))\in E_G$. This is because $I\subset R'_1$, then
  $G_{\text{R-core}}=G_{\text{R-core}}\muls I\subset G_{\text{R-core}}\muls R'_1=G$.

  By the construction, $I$ is vertex surjective.
  Finally we prove that $I$ is also a edge surjective homomorphism by claiming
  $I^{-1}$ is a homomorphism of $G'$ to $G_{\text{R-core}}$.
  We see $I^{-1}\subset R_2$ by observing $x \in R_2(I(x))$ implies $(I(x),x)\in R_2$ for all $x\in$
  vertex set of $G_{\text{R-core}}$. Because $I$ as well as $I^{-1}$ is one-to-one,
  $G'=G'\muls I^{-1}\subset G'\muls R_2=G_{\text{R-core}}$.
  $(I(x),I(y))\in G'$ corresponds to an edge $(x,y)\in
  G_{\text{R-core}}$.

  Hence we have proved $I$ is a bijective (both edge and vertex) homomorphism, thus $I$ is an isomorphism.
\end{proof}

We close the section with an algorithm for computing the R-core of a graph. In
contrast to graph cores, where the computation is known to be NP-complete,
there is a simple polynomial algorithm for R-cores.

Observe that the R-core of a graph containing isolated vertices is isomorphic
to the disjoint union of the R-core of the same graph with the isolated
vertices removed and a single isolated vertex. The R-core of a graph without
isolated vertices can be computed by Algorithm~\ref{alg:Rcore}.

\begin{algorithm}[htb]
\caption{The R-core of a graph}
\label{alg:Rcore}
\begin{algorithmic}[1]
  \REQUIRE ~~\newline
  Graph $G$ with loops allowed and without isolated vertices,
  vertex set denoted by $V$, neighbourhoods $N_G(i)$, $i\in V$.
  \FOR {$i\in V$}
  \STATE $W(i)= \emptyset$
  \STATE found = FALSE
  \FOR {$j\in V\setminus \{i\}$}
  \IF {$N(j)\subseteq N(i)$}
  \STATE  $W(i):= W(i)\cup N(j)$
  \ENDIF
  \IF {$N(i)\subseteq N(j)$}
  \STATE found = TRUE
  \ENDIF
  \ENDFOR
  \IF {$W(i)=N(i)$ \AND found}
  \STATE delete $i$ from $V$
  \STATE $N(i)= \emptyset$
  \ENDIF
  \ENDFOR
\RETURN  The R-core $G[V]$ of $G$.

\end{algorithmic}
\end{algorithm}

The algorithm removes all vertices $v\in G$ such that (1) the neighbourhood
of $v$ is union of the neighbourhood of some other vertices $v_1,v_2,\dots, v_n$
and (2) there is vertex $u$ such that $N_G(v)\subseteq N_G(u)$.

It is easy to see that the resulting graph $H$ is relationally equivalent
to $G$. Condition (1) ensures the existence of a relation $R_1$ such that
$H\muls R_1=G$, while condition (2) ensures the existence of a relation
$R_2$ such that $G\muls R_2=H$.

We need to show that $H$ is isomorphic to $G_{\text{R-core}}$. By Theorem
\ref{thm:Rcore} we can assume that $G_{\text{R-core}}$ is an induced
subgraph of $H$ that is constructed as an induced subgraph of $G$.

We also know that there are relations $R_1$ and $R_2$ such that
$G_{\text{R-core}}\muls R_1=H$ and $G\muls R_2=G_{\text{R-core}}$. By the
same argument as in the proof of Theorem~\ref{thm:Rcore} we can assume both
$R_1$ and $R_2$ to contain an (restriction of) identity.

Now assume that there is a vertex $v\in V_H\setminus
V_{G_{\text{R-core}}}$. We can put $u=R_2(v)$ and because $R_2$ contains an
identity we have $N_G(v)\subseteq N_G(u)$. We can also put
$\{v_1,v_2\ldots v_n\}$ to be the set of all vertices such that $v\in
R_1(v_i)$. It follows that the neighbourhood of $v$ is the union of
the neighbourhoods of $v_1,v_2,\ldots, v_n$ and by the construction of $H$ we have $v\notin
V_H$, a contradiction.

\newpage
\section{The partial order Rel\texorpdfstring{$(G,H)$}{Rel(G,H)}}
\label{sect:poset}

\subsection*{Basic properties}

For fixed graphs $G$ and $H$ we consider the partial order $\mathrm{Rel}(G,H)$.
The vertices of this partial order are all relations $R$ such
that $G\muls R=H$. We put $R_1\leq R_2$ if and only if $R_1\subseteq R_2$.

This definition is motivated by Hom-complexes, see~\cite{Matousek2003}. In
this section we show the basic properties of this partial order and
concentrate on minimal elements in the special case of $\mathrm{Rel}(G,G)$.

\begin{pro}
  Suppose $G\muls R'=H$, $G\muls R''=H$ and $R'\subseteq R''$, then any
  relation $R$ with $R'\subseteq R\subseteq R''$ also satisfies $G\muls
  R=H$.
\end{pro}
\begin{proof}
  From $R'\subseteq R\subseteq R''$ we conclude $G\muls R'\subseteq G\muls
  R\subseteq G\muls R''$. Hence $G\muls R'=G\muls R''$ implies $G\muls
  R=H$.
\end{proof}

Hence it is possible to describe the partial order $\mathrm{Rel}(G,H)$ by
listing minimal and maximal solutions $R$ of $G\muls R=H$ w.r.t.\ set
inclusion.

For example, if $G$ is $P_3$ with vertices $v_0,v_1,v_2,v_3$ and $H$ is
$P_1$ with vertices $x_0,x_1$, it is easily seen that
$R''=\{(v_0,x_0),(v_2,x_0),(v_1,x_1),(v_3,x_1)\}$ is a maximal solution of
$G\muls R=H$ and $R'=\{(v_0,x_0),(v_1,x_1)\}$ is a minimal solution,
because $R'\subset R''$, then all the relations $R$ with $R'\subseteq
R\subseteq R''$ satisfy $G\muls R=H$. We note that minimal and maximal
solutions need not be unique.

\subsection*{Solutions of \texorpdfstring{$G\muls R = G$}{G* R = G}}

For simplicity, we say that a relation $R$ is an \emph{automorphism}\index{automorphism} of $G$
if it is of the form $R=\{(x,f(x))\mid x\in V_G\}$ and $f:V_G\to V_G$
is an automorphism of $G$.

We will see that conditions related to thinness again play a major role in
this context. Recall that $G$ is point-determining if no two vertices have the same
neighbourhood, in other words, $N_G(x)=N_G(y)$ implies $x=y$. Here we need an even
stronger condition:
\begin{definition}
  A graph $G$ satisfies \emph{condition N}\index{condition N} if $N_G(x)\subseteq N_G(y)$
  implies $x= y$.
\end{definition}
In particular, graphs satisfying condition N are point-determining.

\begin{pro}
  For a given graph $G$, the set $\mathrm{Rel}(G,G)$ of all relations
  satisfying $G\muls R = G$ forms a monoid.
\end{pro}
\begin{proof}
  Firstly, $R,S\in \mathrm{Rel}(G,G)$ implies $G\muls R=G$ and
  $G\muls S=G$ and thus $G\muls (R\circ S)=G$, so that $R\circ S\in
  \mathrm{Rel}(G,G)$. Furthermore, for any $R,S,T\in \mathrm{Rel}(G,G)$ we have $(R\circ S)\circ T=R\circ(S\circ T)$.
Finally, the identity relation $I_G$ is a left and right
  identity for relational composition: $I_G\circ R=R\circ I_G=R$.
\end{proof}

A relation $R\subset V_G\times V_G$ can be interpreted as a directed graph
$\vec R$ with vertex set $V_G$ and a directed edge $u\rightarrow v$ if and
only if $(u,v)\in R$. Note that $\vec R$ may have loops. We say that $v\in
V_G$ is \emph{recurrent}\index{recurrent} if and only if there exists a walk (of length at
least 1) from $v$ to itself. Let $S_G$ be the set of all the recurrent
vertices. Furthermore, we define an equivalence relation $\xi$ on $S_G$ by
setting $(u,v)\in\xi$ if there are both a walk in $\vec R$ from $u$ to $v$
and from $v$ to $u$. The equivalence classes w.r.t.\ $\xi$ are denoted by
$\vec R/\xi=\{D_1,D_2,\dots,D_m\}$. We furthermore define a binary
relation $\geq$ over $\vec R/\xi$ as follows: if there is a walk from a
vertex $u$ in $D_i$ to a vertex $v$ in $D_j$, then we say $u\geq v$. It is
easily seen that $\geq$ is reflexive, antisymmetric, and transitive, hence
$(\vec R/\xi,\geq)$ is a partially ordered set. W.l.o.g.\ we can assume
$\{D_1, D_2,\dots,D_s\}$ are the maximal elements w.r.t.\ $\geq$. Now let
$G_r=G[D_1\cup\dots\cup D_s]$ be the subgraph of $G$ induced by these
maximal elements.

\begin{lemma}\label{lem:aut1}
For arbitrary $x\in V_G$, there exist an $l\in \mathbb{N}$ and a
recurrent vertex $v$ such that $(v,x)\in R^l$, where $R^l$ is $l$-times
composition of relation $R$.
\end{lemma}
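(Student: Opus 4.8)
The plan is to exploit the observation that the defining equation $G\muls R = G$ already forces $R$ to have full image. Indeed, by Definition~\ref{def:muls}, for $G\muls R$ to be a graph on vertex set $V_G$ we need every vertex $b\in V_G$ to possess a preimage, i.e.\ some $v$ with $(v,b)\in R$. Phrased in terms of the auxiliary digraph $\vec R$, this says that every vertex has in-degree at least one, so from any vertex one can always walk backwards along arrows of $\vec R$ without ever getting stuck.

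Concretely, I would fix $x\in V_G$, set $y_0=x$, and use full image of $R$ repeatedly to choose $y_1,y_2,y_3,\dots$ with $(y_{k+1},y_k)\in R$ for every $k\ge 0$; this produces an infinite walk $\cdots\to y_2\to y_1\to y_0=x$ in $\vec R$. Since $V_G$ is finite, the pigeonhole principle gives indices $0\le i<j$ with $y_i=y_j$ (in fact one may take $j\le|V_G|$). The segment $y_j\to y_{j-1}\to\cdots\to y_i$ is then a closed walk of length $j-i\ge 1$ based at $y_i=y_j$, so $y_i$ is recurrent. Moreover the segment $y_i\to y_{i-1}\to\cdots\to y_1\to y_0=x$ is a walk in $\vec R$ of length $i$, which by the definition of composition of binary relations means exactly that $(y_i,x)\in R^i$. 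If $i\ge 1$ we are done with $v=y_i$ and $l=i$; if $i=0$ then $x=y_0=y_j$ is itself recurrent (via the same closed walk, now of length $j$) and $(x,x)\in R^j$ with $j\ge1$, so we take $v=x$ and $l=j$. In either case $l\ge 1$, so $l\in\mathbb{N}$ regardless of convention.

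I do not expect any real obstacle here; the single point requiring care is the bookkeeping of directions, since a $\vec R$-edge runs $u\to v$ precisely when $(u,v)\in R$, so the chain of successive preimages of $x$ becomes a \emph{forward} walk of $\vec R$ ending at $x$, and concatenating $R$ along such a walk is what $R^l$ records. The only genuinely substantive ingredient is recognizing that $G\muls R=G$ forces the image of $R$ to be full; once that is noted, the statement reduces to a short pigeonhole argument on the finite vertex set.
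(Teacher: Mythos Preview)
Your proposal is correct and follows essentially the same approach as the paper: build a backward sequence $x=y_0,y_1,\ldots$ using that $R$ has full image, apply pigeonhole to find a repeat, and read off a recurrent vertex together with the required $l$. You are somewhat more careful than the paper in explicitly justifying why preimages always exist and in handling the edge case $i=0$, but the underlying argument is identical.
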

\begin{proof}
Set $x_0=x$ and choose $x_i\in R^{-1}(x_{i-1})$ for all $i\ge1$. Since
$|V_G|<\infty$, there are indices  $j,k\in \mathbb{N}$, $j<k$,
$x_j=x_k$. Then $x_j$ is a recurrent vertex. The lemma follows by setting
$l=j$ and $v=x_i$.
\end{proof}

\begin{lemma}\label{lem:aut2}
For every $v\in V_{G_r}$, $R^{-1}(v)\subseteq V_{G_r}$.
\end{lemma}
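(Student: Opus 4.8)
The plan is to trace a closed walk in $\vec{R}$ through an arbitrary element of $R^{-1}(v)$, and then use the maximality (w.r.t.\ $\geq$) of the strongly connected component containing $v$ to force that element into the same component. Concretely, I would fix $v\in V_{G_r}$ and a vertex $u$ with $(u,v)\in R$, i.e.\ an edge $u\to v$ in $\vec{R}$; the goal is to show $u\in V_{G_r}=D_1\cup\dots\cup D_s$.

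First I would locate $v$: it lies in some maximal $\xi$-class $D_i$ with $1\leq i\leq s$. Then I would apply Lemma~\ref{lem:aut1} to $u$, obtaining a recurrent vertex $w$ and an $l\in\mathbb{N}$ with $(w,u)\in R^l$, i.e.\ a walk of length $l$ from $w$ to $u$ in $\vec{R}$ (with the convention $w=u$ when $l=0$). Concatenating this walk with the edge $u\to v$ yields a walk from $w$ to $v$ of length $l+1\geq 1$. Writing $D_j$ for the $\xi$-class of the recurrent vertex $w$, the existence of that walk gives $D_j\geq D_i$ in the poset $(\vec{R}/\xi,\geq)$, and since $D_i$ is maximal this forces $D_j=D_i$, so $w\in D_i$. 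As $v$ and $w$ now lie in the same $\xi$-class $D_i$, there is also a walk from $v$ to $w$ in $\vec{R}$.

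Finally I would glue the three pieces together — the edge $u\to v$, a walk $v\rightsquigarrow w$, and the walk $w\rightsquigarrow u$ of length $l$ — into a closed walk through $u$ of length $\geq 1$; hence $u$ is recurrent and $u\in S_G$. Moreover $u$ reaches $w$ (through $v$) and $w$ reaches $u$, so $(u,w)\in\xi$ and therefore $u\in D_i\subseteq V_{G_r}$, which is exactly the claim. Note the hypothesis $G\muls R=G$ is used only implicitly, via Lemma~\ref{lem:aut1}, whose proof relies on $R$ having full image — which is built into the definition of $\muls$. I do not expect a real obstacle here; the only thing to watch is the orientation of $\geq$, so that ``$D_i$ maximal'' is invoked correctly as ``$D_i$ is not reachable from any other class'', which is precisely what pins $w$ (and then $u$) into $D_i$.
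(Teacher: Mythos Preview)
Your proposal is correct and follows essentially the same route as the paper's proof: apply Lemma~\ref{lem:aut1} to the preimage vertex to obtain a recurrent $w$ reaching it, use maximality of $[v]$ to force $[w]=[v]$, and then splice the three walks $u\to v$, $v\rightsquigarrow w$, $w\rightsquigarrow u$ into a closed walk certifying both that $u$ is recurrent and that $[u]=[v]$. The only cosmetic difference is that the paper phrases the recurrence step as a proof by contradiction, whereas you do it directly; the underlying argument is identical.
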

\begin{proof}
  We prove this by two steps:

  First, we claim that for any $x\in R^{-1}(v)$, $x$ is recurrent.
  Suppose $x\in R^{-1}(v)$ is not recurrent. Lemma~\ref{lem:aut1} implies
  that there is an $l\in \mathbb{N}$ and a recurrent vertex $w$ such that
  $(w,x)\in R^l$. Hence the definitions of $E$ and $\geq$ imply $[w]\geq
  [v]$, where $[v]$\index{$[v]$} denotes the equivalence class (w.r.t.\ $E$) containing
  the vertex $v$. Since $[v]$ is maximal w.r.t.\ $\geq$, we have
  $[v]=[w]$. Consequently, there exists an index $k\in \mathbb{N}$ such
  that $(v,w)\in R^k$. On the other hand, we have $(x,x)=(x,v)\circ
  (v,w)\circ (w,x)\in R^{k+l+1}$. Thus, $x$ is recurrent, a contradiction.
  Therefore, every vertex $x\in R^{-1}(v)$ is recurrent.

  Second, we prove $x$ is a maximal element in the partial order.
  Hence $[x]\geq
  [v]$ together with the maximality of $[v]$ gives $[x]=[v]$, and thus
  $x\in V_{G_r}$.
\end{proof}

\begin{lemma}\label{lem:aut3}
  For every $x\in V_G$, there is $l\in \mathbb{N}$ such that, for arbitrary
  $i\geq l$, there exists $u\in V_{G_r}$ satisfying $(u,x)\in R^i$.
\end{lemma}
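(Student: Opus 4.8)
The plan is to combine the two preceding lemmas with a backward-pumping argument, using that the definition of $\muls$ forces $R^{-1}(w)\neq\emptyset$ for every vertex $w$: in the equation $G\muls R=G$ the target set $B$ equals $V_G$, and the construction of $G\muls R$ requires every element of $B$ to have a pre-image under $R$. Iterating this, $R^{-m}(w)\neq\emptyset$ for all $m\ge 0$ and all $w\in V_G$.

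First I would reduce to producing a single exponent: the claim is that there are $l_0\in\mathbb N$ and $u_0\in V_{G_r}$ with $(u_0,x)\in R^{l_0}$. By Lemma~\ref{lem:aut1} there are $l_1$ and a recurrent vertex $v$ with $(v,x)\in R^{l_1}$, so $v$ lies in some class $D_j$ of $\vec R/\xi$. If $D_j$ is maximal (hence $D_j\subseteq V_{G_r}$) I set $u_0=v$ and $l_0=l_1$. Otherwise, since $(\vec R/\xi,\ge)$ is a finite poset, I pick a maximal class $D_i$ with $D_i\ge D_j$; unwinding the definition of $\ge$ together with the mutual reachability of the vertices inside the class $D_j$ yields a vertex $p\in D_i\subseteq V_{G_r}$ and a walk in $\vec R$ of some length $k\ge 1$ from $p$ to $v$, i.e.\ $(p,v)\in R^{k}$. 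Composing with $(v,x)\in R^{l_1}$ gives $(p,x)\in R^{k+l_1}$, so I take $u_0=p$ and $l_0=k+l_1$.

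The pumping step is where Lemma~\ref{lem:aut2} is used. Fix any $i\ge l_0$ and put $m=i-l_0$. By the first paragraph $R^{-m}(u_0)\neq\emptyset$, so I pick $u\in R^{-m}(u_0)$, giving $(u,u_0)\in R^{m}$. Iterating Lemma~\ref{lem:aut2} shows $R^{-m}(w)\subseteq V_{G_r}$ for every $w\in V_{G_r}$; applying this with $w=u_0$ gives $u\in V_{G_r}$. Finally $(u,u_0)\in R^{m}$ composed with $(u_0,x)\in R^{l_0}$ yields $(u,x)\in R^{i}$, so $l=l_0$ works. (If one wants $l\ge 1$, replace $u_0$ by any element of $R^{-1}(u_0)$, which lies in $V_{G_r}$ by Lemma~\ref{lem:aut2}, and raise $l_0$ by one.)

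The part needing the most care is the middle step: one must legitimately climb to a maximal element of the finite poset $(\vec R/\xi,\ge)$, and --- more importantly --- correctly translate the abstract relation $D_i\ge D_j$ back into a concrete walk in $\vec R$ that starts in $V_{G_r}$ and ends at the prescribed vertex $v$, by concatenating the walk witnessing $\ge$ with a walk realizing mutual reachability inside $D_j$. The remainder is routine: composition of relations, the fullness of $R^{-1}$, and the downward closure of $V_{G_r}$ provided by Lemma~\ref{lem:aut2}.
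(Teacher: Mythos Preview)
Your proof is correct and follows essentially the same route as the paper's: reduce via Lemma~\ref{lem:aut1} to reaching a recurrent vertex, climb the finite poset $(\vec R/\xi,\ge)$ to land in $V_{G_r}$, and then pump backwards using the fullness of $R^{-1}$ together with Lemma~\ref{lem:aut2}. The paper's proof compresses all of this into two sentences (the pumping is hidden in ``from Lemma~\ref{lem:aut1} and Lemma~\ref{lem:aut2} it is sufficient\dots'', and the poset climb into ``follows easily from the finiteness of $V_G$''), whereas you have spelled out each step explicitly; the content is the same.
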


\begin{proof}
  From Lemma~\ref{lem:aut1} and Lemma~\ref{lem:aut2} we conclude that it is
  sufficient to show that for an arbitrary recurrent vertex $v$ there is a
  $k\in \mathbb{N}$ and $w\in V_{G_r}$ such that $(w,v)\in R^k$. The lemma
  now follows easily from the finiteness of $V_G$.
\end{proof}

Now we introduce a lemma which connects relation $R$ and the neighbourhood of vertex in a graph.
The lemma is simple but very useful.
\begin{lemma}\label{lem:aut4}
 Suppose $G\muls R = H$. For any $u,v\in V_H$, if $R^{-1}(u)\subseteq R^{-1}(v)$, then $N_H(u)\subseteq N_H(v)$.
\end{lemma}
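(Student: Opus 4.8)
The plan is simply to unwind the definition of the relational product $G\muls R$ and chase an element. Recall that, since $G\muls R=H$, the edge set of $H$ is exactly
\begin{equation*}
E_H=\{(a,b)\in V_H\times V_H\mid \text{there is }(x,y)\in E_G\text{ with }(x,a)\in R\text{ and }(y,b)\in R\}.
\end{equation*}
So to prove $N_H(u)\subseteq N_H(v)$ it suffices to take an arbitrary $w\in N_H(u)$, i.e.\ a vertex with $(u,w)\in E_H$, and exhibit a witnessing edge of $G$ showing that $(v,w)\in E_H$ as well.

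First I would use the displayed description of $E_H$ to pick an edge $(x,y)\in E_G$ with $(x,u)\in R$ and $(y,w)\in R$. By definition of the pre-image, $x\in R^{-1}(u)$, so the hypothesis $R^{-1}(u)\subseteq R^{-1}(v)$ gives $x\in R^{-1}(v)$, that is, $(x,v)\in R$. Now the \emph{same} edge $(x,y)\in E_G$, together with $(x,v)\in R$ and $(y,w)\in R$, witnesses $(v,w)\in E_H$ by the definition of $G\muls R$. Hence $w\in N_H(v)$, and since $w\in N_H(u)$ was arbitrary, $N_H(u)\subseteq N_H(v)$.

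There is essentially no obstacle here: the statement is a one-line consequence of the definition of $\muls$. The only point to be mildly careful about is that our graphs are undirected, so $(u,w)\in E_H$ is the same as $(w,u)\in E_H$, and correspondingly the symmetric edge $(y,x)\in E_G$ may be invoked if convenient; this does not change the argument. The lemma is then available for the subsequent analysis of $\mathrm{Rel}(G,G)$, where it is used to turn inclusions between $R$-preimages into inclusions between neighbourhoods, and thence (via point-determinacy or condition~N) into equalities of vertices.
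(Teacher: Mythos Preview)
Your proof is correct and is essentially the same argument as the paper's: both pick a witnessing edge $(x,y)\in E_G$ for $(u,w)\in E_H$, use $R^{-1}(u)\subseteq R^{-1}(v)$ to get the relevant vertex into $R^{-1}(v)$, and reapply the definition of $\muls$. The only cosmetic difference is that the paper phrases it as a proof by contradiction while you give the direct version.
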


\begin{proof} 
 Suppose there exist $u,v\in V_H$, $R_{-1}(u)\subseteq R_{-1}(v)$ but $N_H(u)\nsubseteq N_H(v)$,
 then there exist a vertex $w$ of $H$, $w\in N_H(u)$ but $w\notin N_H(v)$. Because $(u,w)$ is an edge
 in $H$ and $G\muls R = H$, we can find $x,y\in V_G$ such that $x\in R_{-1}(w), y\in R_{-1}(u)$ and $(x,y)$ is an edge in $G$.
 $R_{-1}(u)\subseteq R_{-1}(v)$ implies $y\in R_{-1}(v)$. We already have $(x,w)\in R$ and $(x,y)$ is an edge in $G$,
 by the definition of $\muls$ we have $(v,w)$ is an edge of $H$, which contradicts to $w\notin N_H(v)$.
\end{proof}


From these lemmata we can deduce:
\begin{thm} \label{thm:auto1}
All solutions of $G\muls R=G$ are
automorphisms if and only if $G$ has property N.
\end{thm}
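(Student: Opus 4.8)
The plan is to prove the two implications separately, handling the ``only if'' direction by contraposition.

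First, suppose condition~N fails, so there are $x\neq y$ with $N_G(x)\subseteq N_G(y)$; I would exhibit a non-automorphic solution by setting $R=I_G\cup\{(x,y)\}$. This $R$ has full domain and full image but is not functional, hence is not an automorphism of $G$, so it only remains to verify $G\muls R=G$. Here $R^{-1}(w)=\{w\}$ for $w\neq y$ while $R^{-1}(y)=\{x,y\}$, and I would run a short case analysis on an arbitrary pair $(a,b)$ according to whether $a$ or $b$ equals $y$. When neither equals $y$, $(a,b)\in E_{G\muls R}\iff(a,b)\in E_G$; when exactly one does, say $a=y$, then $(y,b)\in E_{G\muls R}$ iff $(x,b)\in E_G$ or $(y,b)\in E_G$, which by $N_G(x)\subseteq N_G(y)$ reduces to $(y,b)\in E_G$ (the remaining subcase follows because $G\muls R$ inherits symmetry from $G$); and $(y,y)\in E_{G\muls R}$ would require a loop at $x$, a loop at $y$, or the edge $xy$ in $G$, each of which already forces a loop at $y$ in $G$, since $x\in N_G(x)\subseteq N_G(y)$ (respectively $y\in N_G(x)\subseteq N_G(y)$) puts $y$ into $N_G(y)$. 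So $G\muls R=G$, completing this direction.

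For the converse, assume condition~N and let $R$ satisfy $G\muls R=G$. By the Composition Law (Lemma~\ref{lem:compo}) we have $G\muls R^{n}=G$ for all $n\geq1$, so each $R^{n}$ has full image, i.e.\ $(R^{n})^{-1}(v)\neq\emptyset$ for every $v$. The key observation is that Lemma~\ref{lem:aut4}, applied with $H=G$ and $R$ replaced by $R^{n}$, gives $(R^{n})^{-1}(u)\subseteq(R^{n})^{-1}(v)\Rightarrow N_G(u)\subseteq N_G(v)$, and condition~N then forces $u=v$; hence for each $n$ the sets $\{(R^{n})^{-1}(v):v\in V_G\}$ form an antichain of nonempty subsets of $V_G$ under inclusion. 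Since the binary relations on the finite set $V_G$ form a finite monoid, some power $E:=R^{n}$ with $n\geq1$ is idempotent, $E\circ E=E$. Then for $w\in E^{-1}(v)$ and any $t\in E^{-1}(w)$ we have $(t,v)\in E\circ E=E$, so $E^{-1}(w)\subseteq E^{-1}(v)$, and the antichain property forces $w=v$; thus $E^{-1}(v)=\{v\}$ for every $v$, which means $R^{n}=E=I_G$.

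Finally I would pass from $R^{n}=I_G$ back to $R$. If $n=1$ then $R=I_G$ is an automorphism; otherwise put $S=R^{n-1}$, so $R\circ S=S\circ R=R^{n}=I_G$, whence $\domain R=\domain S=V_G$. Fixing $v$ and choosing $u$ with $(v,u)\in R$ and $(u,v)\in S$ (from $(v,v)\in R\circ S$), any $b$ with $(v,b)\in R$ satisfies $(u,b)\in S\circ R=I_G$, so $b=u$; hence $R(v)=\{u\}$, $R$ is a total function $f$ with $f^{n}=\mathrm{id}_{V_G}$, and therefore $f$ is a bijection. Then $G\muls R=G$ reads $(a,b)\in E_G\iff(f^{-1}(a),f^{-1}(b))\in E_G$, i.e.\ $f\in\mathrm{Aut}(G)$, so $R=\{(v,f(v)):v\in V_G\}$ is an automorphism of $G$. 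I expect the main obstacle to be squeezing out $R^{n}=I_G$: that is exactly the point at which the recurrent/maximal-class machinery of Lemmas~\ref{lem:aut1}--\ref{lem:aut3} is meant to be deployed, and the combination of idempotence of a power of $R$ with the ``antichain of preimages'' consequence of condition~N is what makes it go through cleanly; a more computational alternative would be to use those lemmas to show $G_r=G$ and that $R$ restricts to a permutation there, but I would prefer the route via idempotent powers.
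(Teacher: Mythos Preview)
Your proof is correct, and the ``only if'' direction is essentially identical to the paper's. For the converse, however, you take a genuinely different and cleaner route. The paper works through the recurrent-vertex apparatus of Lemmas~\ref{lem:aut1}--\ref{lem:aut3}: it builds the subgraph $G_r$ of maximal recurrent classes, takes $s$ to be the lcm of all cycle lengths so that $I_{G_r}\subseteq Q:=R^s\cap(V_{G_r}\times V_{G_r})$, iterates $Q$ until it stabilises, and then uses Lemma~\ref{lem:aut4} plus condition~N to argue by contradiction that the stable power equals $I_{G_r}$; a second application of Lemma~\ref{lem:aut4} via Lemma~\ref{lem:aut3} then shows $V_{G_r}=V_G$. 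You bypass all of this by observing that some power $E=R^n$ is idempotent in the finite monoid of relations on $V_G$, and that idempotence together with the antichain-of-preimages consequence of Lemma~\ref{lem:aut4} and condition~N immediately forces $E^{-1}(v)=\{v\}$ for every $v$. This is a more direct path to $R^n=I_G$; what the paper's approach buys in exchange is a bit more insight into the orbit structure of $R$ (the role of recurrent vertices and maximal classes), but for the bare statement of the theorem your argument is both shorter and more transparent.
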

\begin{proof}
  Suppose there are distinct vertices $x,y\in V_G$ such that $N_G(x)\subseteq
  N_G(y)$. Then $R=I_G\cup(x,y)$, which is not functional, satisfies $G\muls
  R=G$. Thus $G\muls R=G$ is also solved by relations that are not
  automorphisms of $G$. This proves the `only if' part.

  Conversely, suppose $G$ has property N. \textbf{Claim:} There is a
  $k\in\mathbb{N}$ such that $R^k\cap (V_{G_r}\times V_{G_r})=I_{G_r}$.

  For each $v_i\in V_{G_r}$ there is a walk of length $s_i\ge1$ from $v_i$
  to itself. Hence $(v_i,v_i)\in R^{s_i}$. Let $s$ be the least common
  multiple of all the $s_i$. Then $(v_i,v_i)\in R^s$ for all $v_i\in V_{G_r}$.
  Define $Q:=R^s\cap(V_{G_r}\times V_{G_r})$. Thus $I_{G_r}\subseteq Q$ and
  moreover $Q^j\subseteq Q^{j+1}$ for all $j\in\mathbb{N}$. Since $V_{G_r}$
  is finite, there is an $n\in\mathbb{N}$ such that $Q^{n+1}=Q^{n}$, and
  hence $Q^{2n}=Q^n$. Let us write $R^{-i}(v):=\{u\in V_G\mid (u,v)\in
  R^i\}$. For $v\in V_{G_r}$ we have $R^{-i}(v)\subset V_{G_r}$ (from Lemma~\ref{lem:aut2}) and hence $Q^{-n}(v)=R^{-sn}(v)$ for all $v\in V_{G_r}$.
  If $Q^n\neq I_{G_r}$, then there are two distinct vertices $u,v\in
  V_{G_r}$, such that $(u,v)\in Q^n$. From Lemma~\ref{lem:aut4} $N_G(u)\nsubseteq N_G(v)$ and $G=G\muls
  R^{sn}$ allow us to conclude that $R^{-sn}(u)\nsubseteq R^{-sn}(v)$ and
  $R^{-sn}(v)\nsubseteq R^{-sn}(u)$. Hence, there is a vertex $w$, such
  that $(w,u)\in Q^n$ and $(w,v)\notin Q^n$. From $(u,v)\in Q^n$ and
  $(w,u)\in Q^n$ we conclude $(w,v)\in Q^n\circ Q^n=Q^{2n}$, contradicting
  to $Q^{2n}=Q^n$. Therefore $Q^n=I_{G_r}$. Setting $k=sn$ now implies the
  claim.

  Finally, we show $V_{G_r}=V_G$. For any $v\in V_G\setminus V_{G_r}$,
  Lemma~\ref{lem:aut3} implies the existence of $w\in V_{G_r}$ and $m\in
  \mathbb{N}$ such that $(w,v)\in R^{mk}$. However, we have claimed
  $R^{-k}(w)=\{w\}$, hence $R^{-mk}(w)=\{w\}$. From Lemma~\ref{lem:aut4}, this, however, implies
  $N_G(w)\subseteq N_G(v)$ and thus contradicts property N. Therefore,
  $V_G=V_{G_r}$ and moreover $R^k=I_G$. This $R$ is an automorphism.
\end{proof}

\section{R-Retractions}
\label{sect:retraction}

Recall that an important special case of ordinary graph homomorphisms are
homomorphisms to subgraphs, and in particular so-called retractions. Let
$H$ be a subgraph of $G$, a \emph{retraction}\index{retraction} of $G$ to $H$ is a
homomorphism $r: V_G\rightarrow V_H$ such that $r(x)=x$ for all $x\in
V_H$.

We introduced the graph cores in Section~\ref{sect:retract} as minimal
representatives of the homomorphism equivalence classes. The classical and
equivalent definition is the following: A \emph{(graph) core}\index{core} is a graph
that does not retract to a proper subgraph. Every graph $G$ has a unique
core $H$ (up to isomorphism), hence one can speak of $H$ as \emph{the core
  of}\index{the core} $G$, see~\cite{Hell2004}.

Here, we introduce a similar concept based on relations between graphs.
Again to obtain a structure related to graph homomorphisms, in this section
we require all relations to have full domain unless explicitly stated
otherwise.

\begin{definition}
  Let $H$ be a subgraph of $G$. An \emph{R-retraction}\index{R-retraction} of $G$ to $H$ is a
  relation $R$ such that $G\muls R=H$ and $(x,x)\in R$ for all $x\in
  V_H$. If there is an R-retraction of $G$ to $H$ we say that $H$ is a
  \emph{retract}\index{retract} of $G$.
\end{definition}

\begin{lemma}\label{lem:ret}
  If $H$ is an R-retract of $G$ and $K$ is an R-retract of $H$, then $K$ is
  an R-retract of $G$.
\end{lemma}

\begin{proof}
  Suppose $T$ is an R-retraction of $H$ to $K$ and $S$ is an
  R-retraction of $G$ to $H$. Then $(G\muls S)\muls T=G\muls (S\circ
  T)=K$. Furthermore $(x,x)\in T$ for all $x\in V_K\subseteq V_H$, and
  $(u,u)\in S$ for all $u\in V_H$, hence $(x,x)\in S\circ T$ for all $x\in
  V_k$. Therefore $S\circ T$ is an R-retraction from $G$ to $K$.
\end{proof}

Hence, the following definition is meaningful.
\begin{definition}
  A graph is \emph{R-reduced}\index{R-reduced} if there is no R-retraction to a proper
  subgraph.
\end{definition}
Thus, we can also speak about `the R-reduced graph of a graph $G$' as the
smallest subgraph on which it can be retracted. We will see below that the
R-reduced graph of a graph is always unique up to isomorphism.

We remark that R-reduced graphs differ from R-cores introduced
in Section~\ref{sect:retract}, thus we chose an alternative name used
also in homomorphism setting (cores are also called reduced graphs).

\begin{lemma}
  Let $G$ be a graph with loops and $o$ a vertex of $G$ with a loop on
  it. Then the R-reduced graph of $G$ is the subgraph induced by $\{o\}$.
\end{lemma}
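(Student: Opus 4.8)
The plan is to verify directly that $G[\{o\}]$ --- which is exactly the one-vertex graph $O$ carrying a loop --- is simultaneously a target of an R-retraction from $G$ and itself R-reduced, so that by the ``smallest subgraph'' characterization (together with transitivity of R-retractions, Lemma~\ref{lem:ret}) it is the R-reduced graph of $G$.

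First I would exhibit the R-retraction. Set $R=\{(x,o)\mid x\in V_G\}$, the relation collapsing every vertex of $G$ onto $o$; its domain is full by construction, as required by the standing convention of this section. The vertex set of $G\muls R$ is $\{o\}$, and $(o,o)\in E_{G\muls R}$ precisely because $o$ carries a loop: the edge $(o,o)\in E_G$ together with $(o,o)\in R$ witnesses membership via Definition~\ref{def:muls}. Hence $G\muls R=O=G[\{o\}]$, and since $(o,o)\in R$ the retraction condition $(x,x)\in R$ for all $x\in V_{G[\{o\}]}=\{o\}$ holds. Thus $G[\{o\}]$ is an R-retract of $G$.

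Second I would check that $O$ is R-reduced, i.e.\ admits no R-retraction to a proper subgraph. Its only proper subgraph is the one with empty vertex set, and any relation $R$ with $G\muls R$ having that vertex set must satisfy $R\subseteq V_O\times\emptyset=\emptyset$, so $R=\emptyset$; but then $R(o)=\emptyset$, so the domain of $R$ is not full, contradicting the blanket full-domain assumption. So $O$ is R-reduced, and combining the two steps gives that the R-reduced graph of $G$ is $G[\{o\}]$. The only subtle point --- and hence the only place to be careful --- is this degenerate empty-graph case, which is ruled out exactly by the full-domain convention and so must be invoked explicitly; everything else is a one-line computation with $\muls$.
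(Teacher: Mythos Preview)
Your proof is correct and matches the paper's first paragraph almost verbatim: the same collapsing relation $R=\{(x,o)\mid x\in V_G\}$, the same verification that $G\muls R=O$, and the same observation that $O$ has no proper retract. The paper appends a short converse paragraph showing that \emph{any} R-reduced graph $H$ of $G$ must itself retract onto a single loop (invoking Lemma~\ref{lem:ret}, which you name in your plan but never actually use), thereby proving uniqueness directly rather than relying on the size-$1$ minimality; either route suffices.
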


\begin{proof}
  Let $O$ be the graph induced by $\{o\}$, and $R=\{(x,o)\mid x\in V_G\}$, then
  it is easily seen $R$ is an R-retraction of $G$ to $O$. Moreover, since
  $O$ has only one vertex, there is no R-retraction to its
  subgraphs. So $O$ is an R-reduced graph of $G$.

  Conversely, let $H$ be an R-reduced graph of $G$ and denote by $R$ the
  R-retraction from $G$ to $H$. Then a loop of $G$ must generate a loop of
  $H$ via $R$, denote it by $O$\index{$O$}. Similarly to above, we see $O$ is an
  R-retract of $H$, hence it is also an R-retract of $G$ (by Lemma~\ref{lem:ret}).
  Therefore the definition of R-reduced graph implies
  $H=O$.
\end{proof}

In the remainder of this section, therefore, we will only consider graphs
without loops.

\begin{lemma} \label{lem:core}
  If $G$ is R-reduced, then $G$ has property N.
\end{lemma}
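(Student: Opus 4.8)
The plan is to prove the contrapositive: if $G$ fails property N, then $G$ admits an R-retraction onto a proper subgraph, contradicting R-reducedness. So suppose there are distinct vertices $x,y\in V_G$ with $N_G(x)\subseteq N_G(y)$. Since we are in the loopless setting, $y\notin N_G(y)$, hence $y\notin N_G(x)$; that is, $x$ and $y$ are non-adjacent (and in particular $x\notin N_G(y)$). This is exactly the situation exploited in the proof of Theorem~\ref{thm:auto1}, and the witness relation there can be adapted.

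Let $H=G-x$, the induced subgraph of $G$ on $V_G\setminus\{x\}$, and define $R=\{(v,v)\mid v\in V_H\}\cup\{(x,y)\}$. Then $R$ has full domain, its image is exactly $V_H$, and $(v,v)\in R$ for every $v\in V_H$, so $R$ is a candidate R-retraction of $G$ onto $H$; it remains to verify $G\muls R=H$. The only vertex with a non-singleton pre-image under $R$ is $y$, with $R^{-1}(y)=\{x,y\}$, while $R^{-1}(a)=\{a\}$ for every other $a\in V_H$.

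Now check edges. For $a,b\in V_H$ with $y\notin\{a,b\}$ one has $(a,b)\in E_{G\muls R}$ iff $(a,b)\in E_G$, which agrees with $E_H$ as $H$ is induced. For $a=y$ and $b\in V_H\setminus\{y\}$: $(y,b)\in E_{G\muls R}$ iff $(x,b)\in E_G$ or $(y,b)\in E_G$; but $(x,b)\in E_G$ forces $b\in N_G(x)\subseteq N_G(y)$, hence $(y,b)\in E_G$, so the condition reduces to $(y,b)\in E_G$, again matching $E_H$ (symmetrically for $a\ne y$, $b=y$). Finally $(y,y)\in E_{G\muls R}$ would require an edge of $G$ inside $\{x,y\}$: looplessness excludes $(x,x)$ and $(y,y)$, and $(x,y)\in E_G$ is impossible since $x,y$ are non-adjacent, so no loop on $y$ is created. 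Hence $G\muls R=H$. Since $x\ne y$, $H$ is a proper subgraph of $G$, so $G$ R-retracts onto $H$, contradicting that $G$ is R-reduced.

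The argument is short; the only point requiring care is the bookkeeping of the pre-images $R^{-1}(\cdot)$ together with the use of looplessness to rule out a spurious loop appearing on $y$ in $G\muls R$. No genuine obstacle is expected.
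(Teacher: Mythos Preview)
Your proof is correct and follows essentially the same approach as the paper: both prove the contrapositive by taking distinct $x,y$ with $N_G(x)\subseteq N_G(y)$, deleting $x$ to form $H=G-x$, and exhibiting the R-retraction $R=\{(v,v)\mid v\in V_H\}\cup\{(x,y)\}$. Your version is in fact more careful than the paper's in explicitly verifying that no spurious loop arises on $y$ (using looplessness and the consequence that $x,y$ are non-adjacent), a point the paper glosses over.
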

\begin{proof}
  Suppose there are two distinct vertices $x,y\in V_G$ with $N_G(x)\subseteq
  N_G(y)$ and consider the induced graph $G/x:=G[V_G\setminus\{x\}]$ obtained
  from $G$ by deleting the vertex $x$ and all edges incident with $x$. The
  relation $R=\left\{(z,z)\mid z\in V_G\setminus\{x\}\right\}\cup\{(x,y)\}$
  satisfies $G\muls R=G/x$: the first part is the identity on $G/x$ and
  already generates all necessary edges in $G/x$. The second part
  transforms edges of the form $(x,z)\in E_G$ to edges $(y,z)$. Since $R$
  has full domain and contains the identity relation restricted to
  $G/x$, it is an R-retraction of graph $G$, and hence $G$ is not R-reduced.
\end{proof}

\begin{pro} \label{pro:core}
  A graph $G$ is R-reduced if and only if it has no relation to a proper
  subgraph.
\end{pro}

\begin{proof}
  The `if' part is trivial. Now we suppose that $H$ is a proper induced
  subgraph of graph $G$ with the minimal number of vertices such that there
  is a relation $R$ satisfying $G\muls R=H$. Then $H$ does not have a
  relation to a proper subgraph of itself. We claim that $H$ has property
  N; otherwise, one can find a vertex $u\in V_H$ and construct a retraction
  from $H$ to $H/{u}$ as in Lemma~\ref{lem:core}, which causes a
  contradiction. Denote $\widetilde{R}=R\cap (V_H\times V_H)$, then
  $K=H\muls\widetilde{R}$ is a subgraph of $H$. From our assumptions on $H$ we
  obtain $K=H$. By virtue of Theorem~\ref{thm:auto1}, $\widetilde{R}$ is
  induced by an automorphism of $H$. Hence $R\circ \widetilde{R}^{+}$ is again
  a relation of $G$ to $H$ that contains the identity on $H$, i.e., it is
  an R-retraction.
\end{proof}

Since graph cores are induced subgraphs and retractions are surjective they
also imply relations. Proposition~\ref{pro:core} is also a consequence of
this fact. We refer to~\cite{Hell2004} for a formal proof.

\begin{figure}[!ht]
\centering
   \begin{tabular}{cc}
    \includegraphics[width=0.5\textwidth]{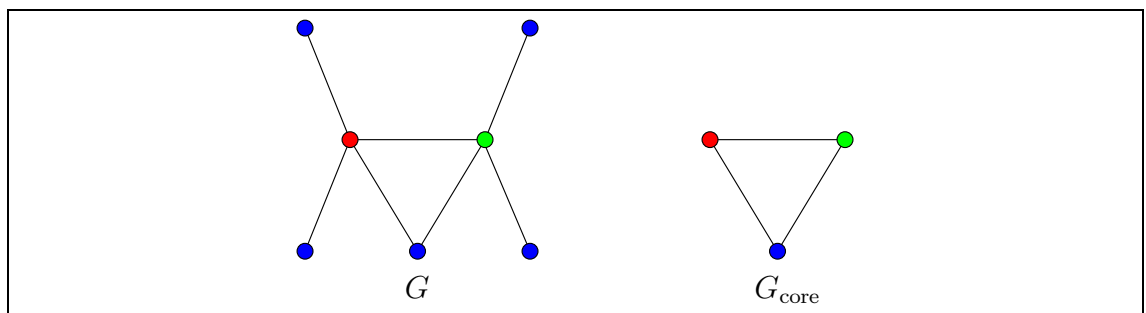} \\
    \hspace{0.85cm}   $G$ \hspace{4cm} $G_\text{core}$
    \end{tabular}
\caption{A graph $G$ and its core.}
\label{fig:core}
\end{figure}

We call $R$ an \emph{minimal R-retraction}\index{minimal R-retraction} if there is no
R-retraction $R'$ such that $R\supset R'\supset I_H$.

\begin{lemma} \label{fun}
  Let $H$ be an R-retract of $G$. Then any minimal
  R-retraction of $G$ to $H$ is functional.
\end{lemma}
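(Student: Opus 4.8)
The plan is to derive a contradiction from $R$ being a minimal R-retraction that is not functional, by exhibiting a genuinely smaller R-retraction sitting inside $R$. So let $R$ be a minimal R-retraction of $G$ to $H$ and suppose, for contradiction, that $R$ is not functional. Since $R$ is an R-retraction it has full domain and $I_H\subseteq R$. I would first extract a functional sub-relation in the spirit of Lemma~\ref{reho}: for each $v\in V_G$ choose a vertex $f(v)\in R(v)$, and — the one choice that matters — take $f(v)=v$ whenever $v\in V_H$, which is legitimate precisely because $(v,v)\in R$ for all $v\in V_H$. This yields a function $f\colon V_G\to V_H$ with $f\subseteq R$ and $f(v)=v$ for all $v\in V_H$.

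Next I would verify that $f$ is itself an R-retraction of $G$ to $H$. The inclusion $G\muls f\subseteq G\muls R=H$ is immediate from monotonicity of $\muls$ in its second argument. For the reverse inclusion I would use that $H$ is a subgraph of $G$: every edge $(a,b)\in E_H$ lies in $E_G$ and has $a,b\in V_H$, so $(a,a),(b,b)\in f$, and the edge $(a,b)\in E_G$ itself witnesses $(a,b)\in G\muls f$. Hence $G\muls f=H$, and together with $f|_{V_H}=\mathrm{id}$ this makes $f$ an R-retraction of $G$ to $H$. Finally, since $R$ is not functional there is a vertex $x$ with $|R(x)|\ge 2$, whence $f\subsetneq R$; thus $f$ is an R-retraction properly contained in $R$, contradicting the minimality of $R$. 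Therefore $R$ is functional.

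There is no deep obstacle here, but I would be careful about two points. First, the step $G\muls f=H$ genuinely uses $H\subseteq G$: an arbitrary homomorphism $f\subseteq R$ obtained from Lemma~\ref{reho} only gives $G\muls f\subseteq H$, and it is the choice $f|_{V_H}=\mathrm{id}$ (possible since $I_H\subseteq R$) that restores the missing edges of $H$ as their own witnesses. Second, one should read "minimal R-retraction" as minimal with respect to inclusion among all R-retractions of $G$ to $H$; the argument above shows exactly that $R$ admits no R-retraction as a proper subset, which is the content of the lemma. (Non-vacuity is not an issue: $G$ is finite and $H$ is assumed to be an R-retract, so minimal R-retractions exist.)
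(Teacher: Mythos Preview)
Your proof is correct and rests on the same core observation as the paper's: any relation $R'$ with $I_H\subseteq R'\subseteq R$ automatically satisfies $G\muls R'=H$, because $H=G\muls I_H\subseteq G\muls R'\subseteq G\muls R=H$. The only difference is cosmetic: the paper removes a single pair $(u,x)$ with $x\neq u$ to obtain $R'=R\setminus\{(u,x)\}$, whereas you go all the way down to a functional $f\subseteq R$ with $f|_{V_H}=\mathrm{id}$; both yield a proper sub-R-retraction and the same contradiction.
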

\begin{proof}
  Suppose $R$ is a minimal R-retraction of $G$ to $H$. If $R$ is not
  functional, then there exist distinct $x,y\in V_H$ such that
  $(u,x),(u,y)\in R$. Hence we could always pick a vertex from $\{x,y\}$
  which is different of $u$, w.l.o.g.\ suppose it is $x$. Then $R\setminus(u,x)$ is
  an R-retraction, which contradicts minimality. To see this, set
  $R'=R\setminus(u,x)$, then $R\supset R'\supset I_H$ and moreover $H=G\muls I_H
  \subseteq G\muls R'\subseteq G\muls R=H$, and thus $G\muls R'=H$.
\end{proof}

\begin{pro}
  A graph is R-reduced if and only if it is a graph core.
\end{pro}
\begin{proof}
  If $H$ is R-reduced from $G$ there is an R-retraction from $G$ to $H$
  which can be chosen minimal and hence by Lemma~\ref{fun} is functional
  and hence is a homomorphism retraction. Conversely, a homomorphism
  retraction is also an R-retraction. Hence the R-reduced graphs coincide
  with the graph cores.
\end{proof}

\begin{pro}
  Suppose $H$ is the core of graph $G$. If $H\muls R=K$ then there is a
  relation $R'$ such that $G\muls R'=K$. If $K\muls S=G$, then there is a
  relation $S'$ such that $K\muls S'=H$.
\end{pro}
\begin{proof}
  Since $H$ is the core of graph $G$, there is a relation $R_1$ such that
  $G\muls R_1=H$. If $H\muls R=K$ we have $G\muls R_1\muls R=K$ and
  $R'=R_1\circ R$ satisfies $G\muls R'=K$. If $K\muls S=G$ we have
  $K\muls S\muls R_1=H$ and $S'=S\circ R_1$ satisfies $K\circ S'=G$.
\end{proof}

\subsection*{Cocores}

In the classical setting of maps between graphs, one can only consider
retractions from a graph to its subgraphs, since graph homomorphisms of an
induced subgraph to the original graph are just the identity map. In the
setting of relations between graphs, however, it appears natural to
consider relations with identity restriction between a graph and an induced
subgraph. This gives rise to the notions of R-coretraction and R-cocore in
analogy with R-retractions and R-reduced graphs.

\begin{definition}
  Let $H$ be a subgraph of graph $G$. An \emph{R-coretraction}\index{R-coretraction} of $H$ to $G$ is
  a relation $R$ such that $H\muls R=G$ and $(x,x)\in R$ for all $x\in
  V_H$. We say that $H$ is an \emph{R-coretract}\index{R-coretract} of $G$.
\end{definition}

\begin{lemma}
  If $H$ is an R-coretract of a graph $G$ and $K$ is an R-coretract of $H$, then
  $K$ is an R-coretract of $G$.
\end{lemma}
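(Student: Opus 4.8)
The plan is to mirror the proof of Lemma~\ref{lem:ret}, exchanging the roles of domain and image throughout. First I would unpack the hypotheses: let $S$ be an R-coretraction of $H$ to $G$, so that $H\muls S=G$ and $(x,x)\in S$ for every $x\in V_H$; and let $T$ be an R-coretraction of $K$ to $H$, so that $K\muls T=H$ and $(x,x)\in T$ for every $x\in V_K$. Since $K$ is a subgraph of $H$ and $H$ is a subgraph of $G$, transitivity of the subgraph relation gives that $K$ is a subgraph of $G$, so the statement ``$K$ is an R-coretract of $G$'' is meaningful.

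The candidate R-coretraction is $T\circ S$. By the composition law (Lemma~\ref{lem:compo}), $K\muls(T\circ S)=(K\muls T)\muls S=H\muls S=G$, which is the first required condition. For the identity condition, fix $x\in V_K$. Then $(x,x)\in T$ because $x\in V_K$; moreover $V_K\subseteq V_H$, so $x\in V_H$ and hence $(x,x)\in S$. From $(x,x)\in T$ and $(x,x)\in S$ we obtain $(x,x)\in T\circ S$. Thus $T\circ S$ is a relation with $K\muls(T\circ S)=G$ and $(x,x)\in T\circ S$ for all $x\in V_K$, i.e., an R-coretraction of $K$ to $G$, which proves that $K$ is an R-coretract of $G$.

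There is essentially no obstacle here: the argument is a direct dualization of Lemma~\ref{lem:ret}, and the only things to watch are the order of composition (forced by applying the composition law in the form $(K\muls T)\muls S=K\muls(T\circ S)$) and the inclusion chain $V_K\subseteq V_H\subseteq V_G$ needed to place the diagonal of $V_K$ inside both $T$ and $S$. One could additionally remark that $T\circ S$ automatically has full domain, in keeping with the standing convention of this section, but this is not required for the stated conclusion.
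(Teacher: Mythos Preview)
Your proof is correct and essentially identical to the paper's own argument: both compose the two R-coretractions $T$ and $S$ via the composition law to get $K\muls(T\circ S)=G$, and both verify the identity condition using the inclusion $V_K\subseteq V_H$. The only additions you make---explicitly noting that $K$ is a subgraph of $G$ and remarking on full domain---are harmless elaborations.
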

\begin{proof}
  Suppose $T$ is an R-coretraction of $K$ to $H$ and $S$ is an
  R-coretraction of $H$ to $G$. Then $(K\muls T)\muls S = K\muls(T\circ
  S)=G$. Furthermore $(x,x)\in T$ for all $x\in V_K\subseteq V_H$, and
  $(v,v)\in S$ for all $v\in V_H$, hence $(x,x)\in T\circ S$ for all $x\in
  V_K$. Therefore $T\circ S$ is an R-coretraction from $K$ to $G$.
\end{proof}

Hence, the following definition is meaningful.
\begin{definition}
  An R-coretract $H$ of a graph $G$ is an \emph{R-cocore of $G$}\index{R-cocore} if $H$
  does not have a proper subgraph that is an R-coretract of $H$ (and hence
  of $G$).
\end{definition}

\begin{figure}[!ht]
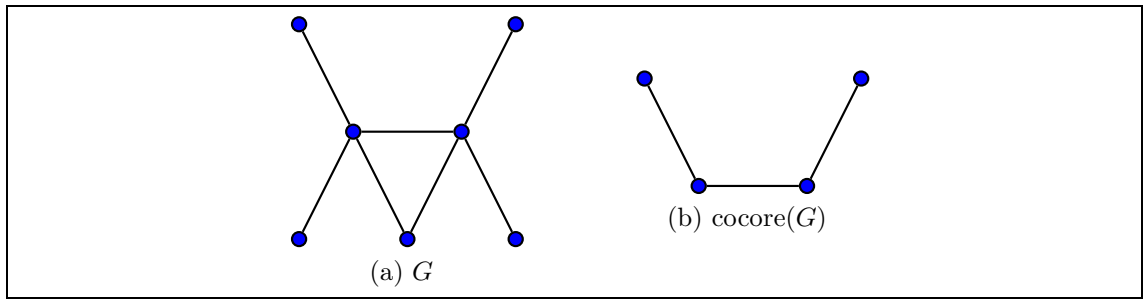

\centering
\subfloat[$G$]{
\begin{minipage}[c]{0.3\textwidth}
\centering
\includegraphics[width=0.7\textwidth]{core1.eps}
\end{minipage}}
\subfloat[$\mathrm{cocore}(G)$ ]{
\begin{minipage}[c]{0.3\textwidth}
\centering
\includegraphics[width=0.7\textwidth]{cocore1.eps}
\end{minipage}}
 \caption{A graph and its cocore.}
  \label{fig:cocore}
\end{figure}

Clearly, the reference to $G$ is irrelevant: A graph $G$ is an
\emph{R-cocore}\index{R-cocore} if there is no proper subgraph of $G$ that is an
R-coretract of $G$. Similarly, we call $R$ a \emph{minimal
  R-coretraction}\index{minimal R-coretraction} of $H$ to $G$ if there exists no R-coretraction $R'$,
such that $R'\subset R$.

\begin{lemma}\label{lem:cocore}
  Let $H$ be an R-coretract of graph $G$, and let $R$ be a minimal
  R-coretraction of $H$ to $G$. Then the restriction of $R$ to $H$ equals to
  $I_H$.
\end{lemma}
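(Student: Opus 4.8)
The plan is to argue by contradiction, using the minimality of $R$ to kill any off-diagonal pair of vertices of $H$ that $R$ relates. Write $\widetilde R = R\cap(V_H\times V_H)$ for the restriction of $R$ to $H$; since $(x,x)\in R$ for all $x\in V_H$ we have $I_H\subseteq\widetilde R$, so suppose for contradiction that this inclusion is strict and pick $(x,y)\in R$ with $x,y\in V_H$ and $x\neq y$. I would set $R'=R\setminus\{(x,y)\}$ and show that $R'$ is still an R-coretraction of $H$ to $G$; since $I_H\subseteq R'\subsetneq R$, this contradicts minimality.

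The monotonicity $R'\subseteq R$ gives $H\muls R'\subseteq H\muls R=G$ immediately, so the work is to prove $H\muls R'\supseteq G$. First I would record two facts. (i) We are in the loop-free setting of this section, so no edge of $H$ has the form $(u,u)$; hence for any edge $(u,v)\in E_H$ used to witness an edge of $G$ via incidences $(u,a),(v,b)\in R$, at most one of $(u,a),(v,b)$ can equal $(x,y)$ — having both would force $u=v=x$, a loop in $H$. (ii) I claim $N_H(x)\subseteq N_H(y)$: if $v\in N_H(x)$ then $(x,v)\in E_H$, and since $(x,y)\in R$ and $(v,v)\in R$ (because $v\in V_H$), the definition of $\muls$ yields $(y,v)\in E_{H\muls R}=E_G$; as $H$ is an induced subgraph of $G$ and $y,v\in V_H$, this gives $(y,v)\in E_H$, i.e.\ $v\in N_H(y)$.

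Now take any edge $(a,b)\in E_G$ together with a witness $(u,v)\in E_H$ with $(u,a),(v,b)\in R$. If neither incidence equals $(x,y)$, the same witness works over $R'$. Otherwise, by (i) exactly one does; say $(u,a)=(x,y)$, so $u=x$, $a=y$, $v\in N_H(x)$, and $(v,b)\in R'$. By (ii) $(y,v)\in E_H$, and $(y,y)\in R'$ since $x\neq y$; hence $(y,v)\in E_H$ together with $(y,y),(v,b)\in R'$ witnesses $(a,b)=(y,b)\in E_{H\muls R'}$. The case $(v,b)=(x,y)$ is symmetric. Thus $H\muls R'=G$, and since $(x,y)$ is off the diagonal we still have $(z,z)\in R'$ for every $z\in V_H$, so $R'$ is an R-coretraction of $H$ to $G$ strictly smaller than $R$ — contradicting minimality. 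Therefore $\widetilde R=I_H$.

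The step that needs the most care is the upgrade in (ii) from $N_H(x)\subseteq N_G(y)$ to $N_H(x)\subseteq N_H(y)$, which uses that $H$ sits inside $G$ as an \emph{induced} subgraph — the same implicit convention already employed in the proof of Lemma~\ref{fun} and in Theorem~\ref{thm:Rcore}. I would make sure at the outset that R-coretracts are being understood as induced subgraphs; if one only assumes $H$ is a subgraph, this step (and hence the argument) can break down, so this is the point to pin down before writing the clean version.
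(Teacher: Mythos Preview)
Your argument is correct and follows essentially the same strategy as the paper's: assume an off-diagonal pair exists in $R\cap(V_H\times V_H)$, delete it, and use a neighbourhood inclusion $N_H(x)\subseteq N_H(y)$ to re-witness every edge of $G$, contradicting minimality. The one tactical difference is that the paper removes \emph{all} off-diagonal pairs in $V_H\times V_H$ at once (defining $R_1=R\setminus\{(x,y)\in R:x,y\in V_H,\ x\neq y\}$) and then does a case split on whether an edge of $G$ meets $V_H$, whereas you remove a single pair; your version makes the replacement-witness step a bit more transparent, while the paper's gets the whole contradiction in one shot. Your flag about needing $H$ to be an \emph{induced} subgraph of $G$ is exactly right and matches an implicit assumption in the paper's proof as well: the paper's key line ``$H\muls\big(R\cap(V_H\times V_H)\big)=H$'' likewise only holds when $H=G[V_H]$, so you should state that hypothesis explicitly when you write it up.
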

\begin{proof}
  Suppose $R\cap (V_H\times V_H)\neq I_H$ and define $R_1=R\setminus
  \{(x,y)\in R:x,y\in V_H,x\neq y\}$. Then $H\muls R_1\subseteq H\muls
  R=G$. We claim that $H\muls R_1=H\muls R$ and thus $R_1$ is an
  R-coretraction of $H$ to $R$, contradicting the minimality of $R$.

  To prove this claim, it is sufficient to show that any edge $e\in E_G$ is
  contained in $H\muls R_1$. If $e$ is not incident with any vertex in
  $V_H$ or $e\in E_H$, the conclusion is trivial. So we only need to
  consider $e=(z,u)$ with $z\in E_H$ and $u\in V_G\setminus V_H$. Since
  $G=H\muls R$, one can find $x_1,x_2\in V_H$ such that $(x_1,z),(x_2,u)\in
  R$ and $(x_1,x_2)\in E_H$. Because $H\subseteq H\muls\big(I_H\cup
  (x_1,z)\big)\subset H\muls\big(R\cap (V_H \times V_H)\big)=H$, we get
  $N_H(x_1)\subseteq N_H(z)$. It follows that $(z,x_2)\in E_H$ and
  hence $e=(z,u)\in G\muls R_1$.
\end{proof}

Like R-reduced graphs, R-cocores also satisfy a stringent condition on their
neighbourhood structure.

\begin{definition} \label{nei}
  A graph $G$ satisfies property N* if, for every vertex $x\in V_G$,
  there is no subset $U_x\subseteq V_G\setminus\{x\}$ such that
  \begin{equation*}
    N_G(x)=\bigcup_{y\in U_x} N_G(y)
  \end{equation*}
\end{definition}
In other words, no neighbourhood can be represented as the union of
neighbourhoods of other vertices of graph $G$.

\begin{pro} \label{cocore} $G$ is an R-cocore if and only if $G$ has
  property N*.
\end{pro}
\begin{proof}
  Consider a vertex set $U_x$ as in Definition~\ref{nei} and suppose
  that there is a vertex $x\in V_G$ such that $N_G(x)=\bigcup_{y\in U_x}
  N_G(y)$. Then the relation $R:=I\setminus(x,x)\cup\{(y,x):y\in U_x\}$ is an
  R-coretraction from $G/x$ to $G$. Thus $G$ is not an R-cocore.

  Conversely, suppose that $G$ is not an R-cocore, let $H$ be a coretract
  of $G$, and denote by $R$ a minimal R-coretraction of $H$ to $G$. Then,
  by Lemma~\ref{lem:cocore}, $R\cap (V_H\times V_H)=I_H$. Consider a
  vertex $v\in V_G\setminus V_H$ and set
  $R^{-1}(v)=\{x_1,\dots,x_i\}$. Then $N(v)=\bigcup_i N(x_i)$, contradicting
  property N*.
\end{proof}

\begin{lemma} \label{thin-cocore1}
 An R-cocore of graph $G$ is isomorphic to an R-cocore of $G_{\text{pd}}$.
\end{lemma}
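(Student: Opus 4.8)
The plan is to realize any R-cocore $C$ of $G$, up to isomorphism, as an induced subgraph of $G_{\text{pd}}$ that is itself an R-cocore of $G_{\text{pd}}$. The first step I would carry out is to show that an R-cocore is always an \emph{induced} subgraph of its ambient graph, since everything afterwards rests on this. If $C$ is an R-coretract of $G$, choose a minimal R-coretraction $R$ of $C$ to $G$; by Lemma~\ref{lem:cocore} we have $R\cap(V_C\times V_C)=I_C$. Then for $a,b\in V_C$ with $(a,b)\in E_G$, the equation $C\muls R=G$ yields an edge $(u,v)\in E_C$ with $(u,a),(v,b)\in R$; as $u,v\in V_C$ and $R$ restricts to the identity on $V_C$, we get $u=a$ and $v=b$, hence $(a,b)\in E_C$. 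Thus $E_C=E_G\cap(V_C\times V_C)$, i.e.\ $C=G[V_C]$. Together with Proposition~\ref{cocore} (an R-cocore has property N*, which forces point-determinacy: $N_G(x)=N_G(y)$ already writes $N_G(x)$ as the one-term union $\bigcup_{z\in\{y\}}N_G(z)$), this shows $C$ is an induced, point-determining subgraph of $G$.

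Next I would transport $C$ into $G_{\text{pd}}$. Recall that $R_S\subseteq V_G\times\mathcal S$ sends each vertex to its thinness class $[x]\in\mathcal S$, that $G_{\text{pd}}\muls R_S^{+}=G$ (Equation~\ref{eq:thin-1}), and that directly from the definitions $G\muls R_S=G_{\text{pd}}$. Since $C$ is an R-coretract of $G$, fix $T$ with $C\muls T=G$ and $I_C\subseteq T$; composing, $C\muls(T\circ R_S)=G\muls R_S=G_{\text{pd}}$, and $(x,[x])\in T\circ R_S$ for every $x\in V_C$. Set $\psi\colon V_C\to V_{G_{\text{pd}}}=\mathcal S$, $\psi(x)=[x]$. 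Because $C$ is induced and point-determining, $\psi$ is injective (two vertices of $C$ in one thinness class of $G$ would have equal $G$-neighbourhoods, hence equal $C$-neighbourhoods). Moreover, since all vertices of a thinness class share a neighbourhood, $([x],[y])\in E_{G_{\text{pd}}}$ iff $(x,y)\in E_G$, and by inducedness iff $(x,y)\in E_C$; so $\psi$ is an embedding and $\bar C:=G_{\text{pd}}[\psi(V_C)]\cong C$.

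Finally I would verify that $\bar C$ is an R-cocore of $G_{\text{pd}}$. Relabelling $T\circ R_S$ along the isomorphism $\psi$ gives a relation $\bar T\subseteq V_{\bar C}\times V_{G_{\text{pd}}}$ with $\bar C\muls\bar T=G_{\text{pd}}$ and $(\psi(x),\psi(x))=(\psi(x),[x])\in\bar T$, so $\bar T$ is an R-coretraction of $\bar C$ to $G_{\text{pd}}$. Since $\bar C\cong C$ and $C$ is an R-cocore, $\bar C$ has no proper subgraph that is an R-coretract of $\bar C$ (an isomorphism-invariant property of the graph). Hence $\bar C$ is an R-cocore of $G_{\text{pd}}$ and $C\cong\bar C$, which is the assertion.

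The step I expect to demand the most care is the first one — that an R-cocore is an induced subgraph — because injectivity of $\psi$, its edge-reflecting property, and the transport of the coretraction all break down if some thinness class of $G$ meets $C$ in more than one vertex. A secondary point to watch is the standing convention in this section that all relations have full domain: one should check that $T$, $R_S$, and $T\circ R_S$ all have full domain (they do, since $I_C\subseteq T$ and $R_S$ is a total function), so that the composition law is applied legitimately.
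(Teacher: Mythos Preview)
Your argument is correct and follows essentially the same route as the paper: compose the coretraction $C\muls T=G$ with the thinness quotient $G\muls R_S=G_{\text{pd}}$ to obtain a coretraction from (an isomorphic copy of) $C$ onto $G_{\text{pd}}$, then check minimality. You are considerably more careful than the paper at the step you flagged as delicate --- establishing that $C$ is an induced, point-determining subgraph so that $\psi(x)=[x]$ is an embedding --- whereas the paper simply asserts ``it is easily seen that $R\circ R'$ is a coretraction'' without explaining why $H$ may be regarded as a subgraph of $G_{\text{pd}}$. For minimality the paper argues by size (a smaller coretract $K$ of $G_{\text{pd}}$ would, via $G_{\text{pd}}\muls R_S^+=G$, also be a coretract of $G$), while you invoke the intrinsic property N* preserved under isomorphism; both work.
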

\begin{proof}
 Suppose graph $H$ is an R-cocore of graph $G$, then there exists a coretraction $R$
 such that $H\muls R = G$. By the definition of $G_{\text{pd}}$, we know
 there exists an R-retraction $R'$ such that $G\muls R'=G_{\text{pd}}$.
 Then $H\muls (R\circ R') = G$. It is easily seen that $R\circ R'$ is a coretraction.
 Also $H$ is the smallest subgraph of $G_{\text{pd}}$ which has a coretraction
 to $G_{\text{pd}}$, otherwise, suppose there are smaller graph $K$ such that $K$
 has a coretraction to $G_{\text{pd}}$, then $K$ also has a coretraction to $G$,
 which contradicts to the assumption that $H$ is an R-cocore.
 Therefore, $H$ is an R-cocore of $G_{\text{pd}}$.

 Conversely, if $H$ is an R-cocore of graph $G_{\text{pd}}$, then it is also
 an R-cocore of $G$.
\end{proof}

We call a subset $B$ of $V_G$ a \emph{basis}\index{basis} of graph $G$, if for any $x\in V_G$, there
 exists a subset $U_x$ of $B$ such that $N_G(x)=\bigcup_{y\in U_x} N_G(y)$. Of course
 a basis of a graph always exists, because $V_G$ is a basis of $G$. We call a basis of graph $G$
 with minimal cardinality a \emph{minimal basis}\index{minimal basis}. Because we consider only finite graphs,
 a minimal basis always exists. Now we prove that minimal basis of a point-determining graph $G$ is unique.

\begin{lemma} \label{basis}
 Minimal basis of a point-determining graph $G$ is unique.
\end{lemma}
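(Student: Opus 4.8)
The plan is to single out a canonical candidate for the minimal basis and show it is contained in \emph{every} basis. Call a vertex $x\in V_G$ \emph{irreducible} if there is no subset $U\subseteq V_G\setminus\{x\}$ with $N_G(x)=\bigcup_{y\in U}N_G(y)$, and let $B^*$ denote the set of all irreducible vertices of $G$. The lemma will follow from two facts: (i) every basis of $G$ contains $B^*$, and (ii) $B^*$ is itself a basis.

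For (i), let $B$ be any basis and $x\in B^*$. By the definition of a basis there is some $U_x\subseteq B$ with $N_G(x)=\bigcup_{y\in U_x}N_G(y)$. Since $x$ is irreducible, $U_x$ cannot be contained in $V_G\setminus\{x\}$, so $x\in U_x\subseteq B$; hence $B^*\subseteq B$. For (ii), I would prove by strong induction on $|N_G(x)|$ that for every vertex $x$ there is $U_x\subseteq B^*$ with $N_G(x)=\bigcup_{y\in U_x}N_G(y)$. If $x$ is irreducible, take $U_x=\{x\}$. Otherwise $N_G(x)=\bigcup_{y\in U}N_G(y)$ for some $U\subseteq V_G\setminus\{x\}$, and here the point-determining hypothesis enters: each $N_G(y)$ with $y\in U$ is contained in $N_G(x)$, and if $N_G(y)=N_G(x)$ then $y=x$, contradicting $y\neq x$, so $N_G(y)\subsetneq N_G(x)$ and $|N_G(y)|<|N_G(x)|$ for every $y\in U$. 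Applying the induction hypothesis to each such $y$ yields $U_y\subseteq B^*$ with $N_G(y)=\bigcup_{z\in U_y}N_G(z)$, whence $N_G(x)=\bigcup_{z\in\bigcup_{y\in U}U_y}N_G(z)$ with $\bigcup_{y\in U}U_y\subseteq B^*$. (The base case $N_G(x)=\emptyset$ is handled by the empty union, so an isolated vertex, if it exists, need not lie in $B^*$; since $G$ is point-determining there is at most one such vertex in any case.)

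Combining (i) and (ii): $B^*$ is a basis contained in every basis, so it has minimum cardinality among all bases, and any basis of minimum cardinality contains $B^*$ and has the same size, hence equals $B^*$. Thus the minimal basis is unique, and in fact it is the unique \emph{smallest} basis.

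The only delicate point is (ii), namely ensuring the induction is well-founded; this is precisely where the point-determining assumption is indispensable, since without it two distinct vertices with the same neighbourhood would each fail to be irreducible, and $B^*$ would then not be a basis at all.
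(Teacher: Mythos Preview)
Your proof is correct and takes a somewhat different route from the paper's. The paper argues by contradiction: assuming two distinct minimal bases $A$ and $B$, it picks $a\in A\setminus B$, writes $N_G(a)$ as a union of neighbourhoods $N_G(b)$ with $b\in B$, uses point-determinacy to force $|N_G(b)|<|N_G(a)|$, and then re-expresses each such $N_G(b)$ via $A$; since $a$ cannot occur in these expressions (its neighbourhood is too large), one concludes that $A\setminus\{a\}$ is already a basis, contradicting minimality of $A$.

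Your approach is instead constructive: you isolate the canonical candidate $B^*$ of irreducible vertices, show it lies in every basis, and prove by induction on $|N_G(x)|$ that it is itself a basis. Both arguments hinge on the same key fact---that in a point-determining graph a nontrivial union representation of $N_G(x)$ must involve only strictly smaller neighbourhoods---but yours yields an explicit description of the minimal basis as the set of irreducible vertices, which is a bit more than the paper's bare uniqueness statement. The paper's version, on the other hand, is marginally shorter and avoids having to handle the empty-neighbourhood base case separately.
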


\begin{proof}
  Assume there are two distinct minimal basis $A,B$. Neither
  contains the other by their minimality. Because $A,B$ are distinct,
  there exists an element $a\in A$ and $a\notin B$.

  Since $B$ is a basis of graph $G$, by the definition, there is a set $U_a\subset B$ such that
  $N_G(a)=\bigcup_{b\in U_a} N_G(b)$, thus $|N_G(a)|\geq |N_G(b)|$
  for all $b\in U_a$. Since $G$ is a point-determining graph, any two vertices of $G$ do not
  have the same neighbourhood, thus $|N_G(a)|>|N_G(b)|$
  for all $b\in U_a$. Because $A$ is also a basis of graph $G$, then for any $b\in U_a$,
  $N_G(b)$ can be represented as the union of the neighbourhood of some elements of $A\setminus\{a\}$,
  $a$ is not contained since $|N_G(a)|>|N_G(b)|$. Thus $N_G(a)=\bigcup_{b\in U_a} N_G(b)$ can
  be represented as the union of the neighbourhood of some elements of $A\setminus\{a\}$, which contradicts
  the minimality of $A$.
\end{proof}

\begin{lemma} \label{thin-cocore2}
R-cocore of a point-determining graph is unique.
\end{lemma}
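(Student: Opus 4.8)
The plan is to identify the R-cocore of a point-determining graph $G$ explicitly: it should be the induced subgraph $G[B]$, where $B$ is the minimal basis of $G$, which is unique by Lemma~\ref{basis}. Since $G$ is finite there is at least one R-cocore of $G$ (any R-coretract of $G$ with the fewest vertices is one, via the composition lemma for R-coretractions), so it suffices to prove that every R-cocore of $G$ coincides with $G[B]$.

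First I would record that every R-coretract $H$ of $G$ is an induced subgraph whose vertex set is a basis of $G$. Given an R-coretract $H$, choose a minimal R-coretraction $R$ of $H$ to $G$; by Lemma~\ref{lem:cocore} its restriction to $V_H$ equals $I_H$, and since $R\subseteq V_H\times V_G$ this forces $R^{-1}(y)=\{y\}$ for every $y\in V_H$. Unwinding the definition of $\muls$ in $H\muls R=G$ then yields, for each $v\in V_G$,
\[
 N_G(v)=\bigcup_{y\in R^{-1}(v)}N_G(y),\qquad \emptyset\ne R^{-1}(v)\subseteq V_H,
\]
so $V_H$ is a basis of $G$; the same computation restricted to $v,w\in V_H$ gives $E_G\cap(V_H\times V_H)=E_H$, i.e.\ $H=G[V_H]$.

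Now let $H=G[W]$ be an R-cocore of $G$. By Proposition~\ref{cocore}, $H$ has property N*. Suppose $W\ne B$. Minimality of $B$ gives $|B|\le|W|$, and $|W|=|B|$ would make $W$ a minimal basis, hence $W=B$ by Lemma~\ref{basis}; so $|W|>|B|$ and $W\setminus B\ne\emptyset$. Pick $w_0\in W\setminus B$ with $|N_G(w_0)|$ as large as possible. Writing $N_G(w_0)=\bigcup_{b\in V_0}N_G(b)$ with $V_0\subseteq B$ (possible since $B$ is a basis), each $b\in V_0$ satisfies $b\ne w_0$ (as $b\in B$, $w_0\notin B$) and $N_G(b)\subseteq N_G(w_0)$, hence $N_G(b)\subsetneq N_G(w_0)$ by point-determinacy. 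Expanding each $N_G(b)=\bigcup_{w\in U_b}N_G(w)$ with $U_b\subseteq W$ (possible since $W$ is a basis), every $w\in U_b$ has $N_G(w)\subseteq N_G(b)\subsetneq N_G(w_0)$, so $w_0\notin U_b$. Thus with $\mathcal U=\bigcup_{b\in V_0}U_b\subseteq W\setminus\{w_0\}$ one gets $N_G(w_0)=\bigcup_{w\in\mathcal U}N_G(w)$, and intersecting with $W$ gives
\[
 N_{G[W]}(w_0)=\bigcup_{w\in\mathcal U}N_{G[W]}(w),\qquad \mathcal U\subseteq W\setminus\{w_0\},
\]
contradicting property N* of $H=G[W]$. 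Hence $W=B$ and $H=G[B]$, proving uniqueness. (The degenerate case in which $G$ has an isolated vertex is handled exactly as for R-cores and affects only the treatment of that single vertex.)

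The crux is the third paragraph, specifically the two-step re-expansion of $N_G(w_0)$ first through $B$ and then through $W$: one must be certain that $w_0$ never reappears among the vertices used, and it is precisely here that point-determinacy combined with the maximal-degree choice of $w_0$ is essential (it forces every intermediate neighbourhood to be \emph{strictly} contained in $N_G(w_0)$). Everything else --- passing to induced subgraphs supported on bases, the cardinality comparison $|W|\ge|B|$, and the appeal to Lemma~\ref{basis} --- is routine bookkeeping. Finally, combined with Lemma~\ref{thin-cocore1} this gives uniqueness of the R-cocore for an arbitrary graph.
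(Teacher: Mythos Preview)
Your proof is correct and reaches the same conclusion as the paper---every R-cocore of a point-determining graph $G$ is $G[B]$ for the unique minimal basis $B$---but the second half follows a genuinely different route. The paper argues via the size-minimality definition of R-cocore: having shown that $V_H$ is a basis, it observes that any strictly smaller basis $K$ would yield an R-coretraction $G[K]\muls R=G$ (set $R(x)=\{y:x\in U_y\}$), contradicting that $H$ is smallest; hence $V_H$ is a minimal basis and Lemma~\ref{basis} finishes. You instead invoke Proposition~\ref{cocore} (property~N*) and derive $W=B$ directly by the two-step re-expansion through $B$ and back through $W$. Your route avoids having to verify that the relation built from a basis really is an R-coretraction, which the paper only sketches; conversely, the paper's route does not need property~N*. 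Your first paragraph, establishing $H=G[V_H]$ explicitly via Lemma~\ref{lem:cocore}, is a detail the paper glosses over and is worth keeping.

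One small point: the maximal-degree choice of $w_0$ is never used. Your argument that $b\ne w_0$ relies only on $b\in B$, $w_0\notin B$, and the subsequent strict inclusions come from point-determinacy alone; any $w_0\in W\setminus B$ works. You may simply drop that hypothesis (and the sentence in your ``crux'' paragraph crediting it).
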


\begin{proof}

 From Lemma~\ref{basis}, we know that the basis of a point-determining graph is unique. We claim that the vertex set of
 any R-cocore of graph $G$ is the minimal basis of $G$. First,
 if $H$ is an R-cocore, then $V_H$ is a basis of graph $G$. Because by the definition of R-cocore,
 there is a coretraction $R$ such that $H\muls R = G$. For any vertex $x\in V_G$, we assign
 $U_x=R^{-1}\subset V_H$, it is easily seen that $N_G(x)=\bigcup_{y\in U_x} N_G(y)$.

 Then we prove $V_H$ is the minimal basis. Suppose there exists a subset $K$ of $V_G$ is a basis,
 and the cardinality of $K$ is smaller than $V_G$. By the definition of basis, for any $x\in V_G$
 we fix the set $U_x$, then there is an R-coretraction from $H$ to $G$, induced by $R(x)=\{y\in V_G\mid x\in U_y\}$.
 That is because for any vertex $x\in K$, we have $U_x=\{x\}$. It contradicts to the assumption that
 $H$ is a cocore.

 Therefore the R-cocore of a point-determining graph is unique.
\end{proof}

Because $G_{\text{pd}}$ is a point-determining graph, it follows from Lemma~\ref{thin-cocore2} that $G_{\text{pd}}$
has unique R-cocore for any graph $G$. And from lemma~\ref{thin-cocore1} we immediately get the
following proposition.

\begin{pro}
  R-cocore of $G$ is unique up to isomorphism.
\end{pro}

These results allow us to construct an algorithm that computes the cocore
of given graph $G$ in polynomial time. First observe that the cocore of a
graph $G$ that contains isolated vertices is the disjoint union of the cocore
of the graph $G'$ obtained from $G$ by removing isolated vertices and the
graph consisting of a single isolated vertex. It is thus sufficient to
compute cocores for graphs without isolated vertices in Algorithm
\ref{alg1}.

\begin{algorithm}[htb]
\caption{The cocore of a graph}
\label{alg1}
\begin{algorithmic}[1]
  \REQUIRE ~~\\
  Graph $G$ with loops and without isolated vertices specified by its
  vertex set $V$ and the neighbourhoods $N_G(i)$, $i\in V$.
  \FOR{$i\in V$}
    \STATE $W(i)= \emptyset$
    \FOR {$j\in V\setminus \{i\}$}
      \IF {$N(j)\subseteq N(i)$}
        \STATE  $W(i):= W(i)\cup N(j)$
      \ENDIF
     \ENDFOR
    \IF {$W(i)=N(i)$}
      \STATE delete $i$ from $V$
      \STATE $N(i)= \emptyset$
    \ENDIF
  \ENDFOR
\RETURN $G[V]$, the cocore of $G$.
\end{algorithmic}
\end{algorithm}

\begin{pro}
  Suppose $H$ is a cocore of $G$. If $K\muls R=H$, then there is a relation
  $R'$ such that $K\muls R'=G$. If $G\muls S=K$, then there is a relation
  $S'$ such that $H\muls S'=K$.
\end{pro}
\begin{proof}
  Since $H$ is a cocore of $G$, there exists an R-coretraction $R_1$ such
  that ${H\muls R_1=G}$. If $K\muls R=H$, then letting $R'=R\circ R_1$ implies
  $K\muls R'=G$. If $G\muls S=K$, we have $H\muls R_1\muls S=K$. Let
  $S'=R_1\circ S$, then $H\muls S'=K$.
\end{proof}

We can find a graph which is not an R-core, and meanwhile it does not satisfy the
Hall condition. To see that, consider the graph $G$ in the example of Fig.~\ref{fig:weak}.
Obviously $R=\{(1,1),(2,2),(3,2),(4,4),(6,6),(5,3),(7,3),(5,5),(7,7)\}$
satisfies $G\muls R=G$. For the subset $S=\{2,3\}$ of $V(G)$, $R(S)=\{2\}$,
so $|R(S)|<|S|$, it does not satisfy the Hall condition. But not every graph which
are not R-cores do not satisfy the Hall condition. That means the condition of
Corollary~\ref{cor:nohall2} is not sufficient. To see this, we
consider a graph consisting of two independent isolated vertices.

Note that relations from an R-core or R-cocore to itself also satisfy the Hall condition.

\begin{pro}
If $G$ is an R-core, and $R$ is a relation satisfying $G\muls R = G$,
then any monomorphism $f\subset R$ satisfies $G\muls f = G$.
\end{pro}

\begin{proof}
From corollary~\ref{cor:nohall2} we know that $R$ which is a relation from
an R-core to itself satisfies the Hall condition
thus it contains a monomorphism. Suppose  $G\muls f \neq G$, then $G\muls f \subset G$.
By iteration we have $G\muls f^m \subset G$ for any $m$.
On the other hand, $f$ is a permutation, then there exists a $n$ such that $f^n$ is identity, thus
$G\muls f^n = G$. A contradiction.
\end{proof}




Without the constraint of $R$ with full domain, we have:
\begin{pro}
All solutions of $G\muls R =G$ satisfy the Hall condition if and only if
$G$ is an R-cocore.
\end{pro}

\begin{proof}
If $G$ is an R-cocore, then all the solutions of $G\muls R = G$ are with full domain.
Note that $G$ is an R-cocore thus it is also an R-core,
from Corollary~\ref{cor:nohall2} we know that all solutions $R$ satisfy the
Hall condition.

Conversely, if all solutions satisfy the Hall condition, then all the solutions have full domains,
Hence $G$ is an R-cocore, otherwise we can find an induced subgraph $H$ of $G$ and a relation $R$
satisfying $H\muls R =G$. This relation $R$ is also the solution of $G\muls R = G$ in general (without constraint of
full domain) case.
\end{proof}


Actually, it is very easy to get the following result which characterize the cases when all the solutions satisfy the Hall condition:
\begin{pro}
 If $G\muls R = G$, and all solutions satisfy the Hall condition, then either of the two cases happens:
\begin{enumerate}
 \item $G$ is an $R$-core;
 \item There is a solution contains a relation with non-full domain.
\end{enumerate}
\end{pro}

\subsection*{Inclusion relation}

Figure~\ref{fig:inclu} shows the inclusion relation of cores, R-cores, R-cocores, graphs with property N and point-determining graphs: the set of cores is a proper subset of the set of graphs with property N, which is a proper subset of R-cocores. The latter is a proper subset of R-cocores, the set of R-cores is a proper subset of the set of point-determining graphs.

  {\floatstyle{plain}\restylefloat{figure}%
   \begin{figure}[ht!]
   \centering
  \includegraphics[width=0.6\textwidth]{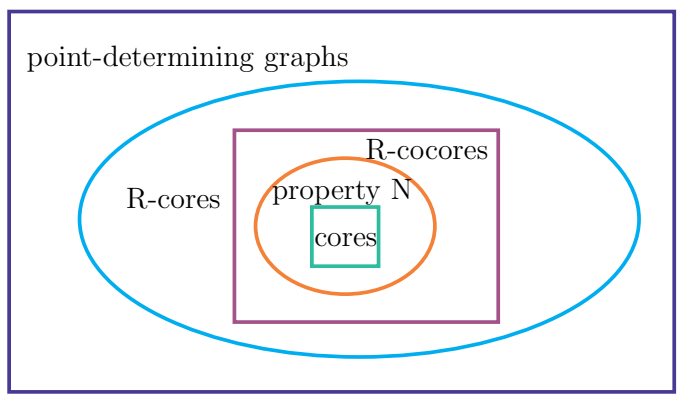} 
\caption{Inclusion digram}
\label{fig:inclu}
   }{\end{figure}}

\begin{example}
We also give a list of examples to illustrate the proper inclusion relations, some of the examples have been already shown or explained before:
\begin{enumerate}
\item $K_3$ is a core, then of course it has property N and is an R-core, R-cocore and point-determining graph.
\item $C_6$ has property N but is not a core. Its core is $P_2$.
\item $P_4$ is an R-cocore but not with property N.
\item The graph $H$ in Figure~\ref{fig:glo} is an R-core but not an R-cocore, its R-cocore is $P_4$.
\item The graph in Figure~\ref{figa:weak} is a point-determining graph but not an R-core.
\end{enumerate}
\end{example}
%
%
%
%
%
%
%

\section{Computational complexity}
\label{sect:complexity}

In this section we briefly consider the complexity of computational
problems related to graph homomorphisms.
For the readers who are not familiar with computational complexity,
please refer to Appendix A. The \emph{homomorphism problem}\index{homomorphism problem}
$\Homo(H)$\index{$\Homo(H)$} takes as input some finite $G$ and asks whether there is a
homomorphism from $G$ to $H$. The computational complexity of the
homomorphism problem is fully characterized. It is known that $\Homo(H)$ is
NP-complete if and only if $H$ has no loops and contains odd cycles. All
the other cases are polynomial, see~\cite{Hell2004}.

The analogous problem for relations between graphs can be phrased as
follows: The \emph{(full) relation problem}\index{full relation problem}\index{relation problem} $\FulRel(H)$\index{$\FulRel(H)$} takes as input
some finite $G$ and asks whether there is a relation with full domain from
$G$ and asks whether there is a relation from $G$ to $H$. We show that
this problem can be easily converted to a related problem on surjective
homomorphisms. The \emph{surjective homomorphism problem}\index{surjective homomorphism problem} $\sHom(H)$\index{$\sHom(H)$}
takes as input some finite $G$ and asks whether there is a surjective
homomorphism from $G$ to $H$.

Let $\leq_\text{P}^\text{Tur}$ indicate polynomial time Turing reduction.

\begin{thm} \label{thm:complex}
For finite $H$ our relation problem sits in the following relationship.
\begin{equation}
  \Homo(H)\leq_\text{P}^\text{Tur} \FulRel(H)
  \leq_\text{P}^\text{Tur} \sHom(H)\,.
\end{equation}
\end{thm}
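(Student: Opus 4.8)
The plan is to establish the two Turing reductions separately, in each case converting an instance of the source problem into one or more instances of the target problem so that a decision oracle for the target lets us decide the source in polynomial time.

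For the first reduction $\Homo(H)\leq_\text{P}^\text{Tur}\FulRel(H)$, the key observation is Lemma~\ref{reho}: if $G\muls R=H$ and $R$ has full domain, then $R$ contains a homomorphism $f:G\to H$. Conversely, any homomorphism $f:G\to H$ is a functional relation with full domain, and one can check directly that $G\muls f=f(G)\subseteq H$, but $f(G)$ need not equal $H$. The fix is standard: given the input $G$ for $\Homo(H)$, I would form the disjoint union $G' = G \sqcup H$. There is a homomorphism $G\to H$ if and only if there is a homomorphism $G'\to H$ (the component $H$ maps by the identity), and a homomorphism $G'\to H$ whose image is all of $H$ is exactly a relation with full domain $G'\muls R=H$ after noting that the identity part already covers every vertex and every edge of $H$. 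So I would query the $\FulRel(H)$ oracle on $G'$: if it answers yes, Lemma~\ref{reho} gives a homomorphism $G'\to H$ hence $G\to H$; if no, then since a genuine homomorphism $G\to H$ would yield a full-domain relation $G'\muls R=H$, there is none. This is a single oracle call plus linear-time construction, so it is a polynomial-time (indeed many-one) reduction.

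For the second reduction $\FulRel(H)\leq_\text{P}^\text{Tur}\sHom(H)$, I would exploit the decomposition result Corollary~\ref{DeCo} together with the duplication/contraction picture from Chapter~\ref{ch:relation}. If $G\muls R=H$ with $R$ having full domain, then $R=R_D\circ R_C$ where $R_D\subseteq V_G\times B$ is injective with full domain and $R_C\subseteq B\times V_H$ is functional and surjective; setting $G^* = G\muls R_D$ we get a graph obtained from $G$ by duplicating vertices, with $G^*\muls R_C=H$, i.e.\ $R_C$ is (the relational form of) a surjective homomorphism $G^*\to H$. The catch is that $B$, and hence $G^*$, can be large and is not determined a priori. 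But only the \emph{structure} of $G^*$ up to the neighbourhood-duplication equivalence matters for the existence of a surjective homomorphism onto $H$, and crucially $|V_H|$ bounds how much duplication is useful: if a relation with full domain $G\muls R=H$ exists, one can take $G^*$ to be $G$ with each vertex replaced by at most $|V_H|$ copies (one copy targeted at each possible image vertex), since collapsing several copies with the same $R_C$-image changes nothing. So I would enumerate — polynomially many in $|V_G|\cdot|V_H|$, though actually it suffices to build one canonical blow-up $\widehat G$ where vertex $x$ is replaced by $|V_H|$ copies all with neighbourhood $N_G(x)$-blown-up accordingly — and query $\sHom(H)$ on $\widehat G$. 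The claim to verify is: there is a full-domain relation $G\muls R=H$ iff there is a surjective homomorphism $\widehat G\to H$. The forward direction uses the decomposition and the bound on useful duplication; the backward direction uses that a surjective homomorphism is itself a full-domain functional relation, composed with the (full-domain, contracting) relation $\widehat G$-back-to-$G$ which exists because $\widehat G$ retracts onto a copy of $G$.

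The main obstacle will be making the second reduction's blow-up argument precise: specifically, pinning down that $|V_H|$ copies per vertex suffice, i.e.\ that the set $B$ in Corollary~\ref{DeCo} may be assumed to have size at most $|V_G|\cdot|V_H|$ without destroying solvability. This requires arguing that if two elements $\beta,\beta'\in B$ satisfy $R_C(\beta)=R_C(\beta')$ then we may identify them (their neighbourhoods in $G\muls R_D$ then agree on what matters for producing the edges of $H$), an argument in the spirit of Proposition~\ref{reve} and the thinness discussion. Everything else — the disjoint-union trick, the appeal to Lemma~\ref{reho}, the observation that surjective homomorphisms are full-domain functional relations, and the polynomial bookkeeping — is routine. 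I would therefore structure the writeup as: (1) state and prove the reduction $\Homo(H)\to\FulRel(H)$ via $G\sqcup H$; (2) prove the size bound on $B$; (3) construct $\widehat G$ and prove the equivalence with $\sHom(H)$; (4) conclude the chain of Turing reductions.
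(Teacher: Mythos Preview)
Your proposal is correct and follows essentially the same route as the paper: the disjoint union $G\sqcup H$ for the first reduction (invoking Lemma~\ref{reho}), and the uniform $|V_H|$-fold vertex blow-up $\widehat G$ for the second (invoking Corollary~\ref{DeCo}). One small slip: in the backward direction of the second reduction you want to precompose the surjective homomorphism $\widehat G\to H$ with the duplication relation $R_D:G\to\widehat G$ (so that $G\muls(R_D\circ f)=\widehat G\muls f=H$), not with a contraction $\widehat G\to G$ as you wrote; but this is just a wording issue, not a real gap.
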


\begin{proof}
  First we show that $\Homo(H)$ is polynomially reducible to $\FulRel(H)$.
  If there is a homomorphism from $G$ to $H$, then there is also a surjective
  homomorphism from $G+H$ to $H$. On the other hand, suppose $G$ has no
  homomorphism to $H$, from Lemma~\ref{reho} we conclude that $G+H$ has no
  relation to $H$ since $G$ has no relation to $H$.

  Now we show that the relation problem $\FulRel(H)$ is polynomially reducible to $\sHom(H)$.
  From Corollary~\ref{DeCo} we know $G\muls R = H$ if and only if there is
  a graph $G'=G\muls R_D$ which has a full homomorphism to $G$ and has a
  surjective homomorphism to $H$.

  We construct $G''$, by duplicating all the vertices of $G$ precisely
  $|V_H|$ times. We claim that $G'$ exists if and only if
  $G''$ has a full homomorphism to $G$. We can put
  $G'=G''$ because the surjective homomorphism can easily undo the
  redundant duplications. This gives the polynomial reduction from $\FulRel(H)$ to
  $\sHom(H)$.
\end{proof}

To our knowledge, $\sHom(H)$ is not fully classified. A recent survey of
the closely related complexity problem concerning the existence of vertex
surjective homomorphisms~\cite{Bodirsky2012} provides some arguments why the
characterization of complexity is likely to be hard. We observe that the
existence of a homomorphism from $G$ to $H$ is equivalent to the existence
of a surjective homomorphism from $G+H$ to $H$. Thus $\sHom(H)$ is clearly
hard for all graphs for which $\Homo(H)$ is hard, i.e., for all loop-less
graphs with odd cycles.

Testing the existence of a homomorphism from a fixed $G$ to $H$ is
polynomial (there is only a polynomial number $|V_H|^{|V_G|}$ of possible
functions from $G$ to $H$). Similarly the existence of a relation from a fixed
$G$ to $H$ is also polynomial. In fact, an effective algorithm exists.
For fixed $G$ there are finitely many point-determining graphs $T$ which $G$ has relation
to. The algorithm thus first constructs the point-determining graph of $H$ and then, using
a decision tree recognizes all isomorphic copies of all point-determining graphs $G$ has 
relation to.

\section{Weak relational composition}
\label{sect:weak}

In this section we will briefly discuss the `loop-free' version, i.e.,
equations of the form $G\mulw R=H$.

Most importantly, there is no simple composition law analogous to
Lemma~\ref{lem:compo}. The expression
\begin{equation*}
(G\mulw R)\mulw S = (S^+\circ (R^+\circ G\circ R)^{\iota}\circ S)^\iota
\end{equation*}
does not reduce to relational composition in general. For example, let
$G=K_3$ with vertex set $V=\{x,y,z\}$ and consider the relations
$R=\{(x,1),(z,1),(y,2)\}\subseteq \{x,y,z\}\times\{1,2\}$ and
$S=\{(1,x')(1,z')(2,y')\}\subseteq \{1,2\}\times \{x',y',z'\}$. One can easily
verify
\begin{equation*}
(G\mulw R)\mulw S = P_2 \ne G\mulw(R\circ S)= K_3
\end{equation*}
The most important consequence of the lack of a composition law is that
R-retractions cannot be meaningfully defined for the weak
composition. Similarly, the results related to R-equivalence heavily rely
on the composition law.

Nevertheless, many of the above results, in particular basic properties derived
in Section~\ref{sect:basic}, remain valid for the weak composition
operation. As the proofs are in many cases analogous, we focus here mostly
on those results where strong and weak composition differ, or where we need
different proofs. In particular, Lemma~\ref{dec} also holds for the weak
composition. Thus, we still have a result similar to Corollary~\ref{DeCo},
but the proof is slightly different.

\begin{cor}
Suppose $G\mulw R=H$. Then there is a set $C$, an injective relation
$R_D\subseteq \domain R\times C$, and a surjective relation
$R_C\subseteq C\times \image R$ such that
$G[\domain R]\mulw R_D\mulw R_C=H[\image R]$.
\end{cor}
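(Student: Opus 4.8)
The plan is to follow the proof of Corollary~\ref{DeCo} as closely as possible and to circumvent the one place where it breaks down---namely, that there is no composition law for $\mulw$---by exploiting the standing hypothesis that $G$ is a simple, hence loop-free, graph. First I would apply Lemma~\ref{dec}, which as remarked holds verbatim for the weak operation, to $R$: this produces $A=\domain R$, a set $C$, an injective relation $R_D\subseteq A\times C$ with full domain, and a functional relation $R_C\subseteq C\times\image R$ with $R=I_A\circ R_D\circ R_C$. Since every $q\in\image R$ is the second coordinate of some element of $C=R$, the functional relation $R_C$ is in fact surjective onto $\image R$; and $G[A]\muls R_D$ and $\bigl(G[A]\muls R_D\bigr)\muls R_C$ are well defined because the images of $R_D$ and of $R_C$ are full on $C$ and on $\image R$ respectively.

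The \emph{crucial point} is that $G[A]\mulw R_D$ coincides with $G[A]\muls R_D$ and is loop-free. Indeed, $G[A]$, being an induced subgraph of the loop-free graph $G$, has no loops, and a loop on some $\alpha\in C$ in $G[A]\muls R_D$ would require an edge $(u,v)\in E_{G[A]}$ with $(u,\alpha),(v,\alpha)\in R_D$, whence $u=v$ by injectivity of $R_D$ and so $(u,u)\in E_G$---impossible. Therefore the reflexive reduction built into the first weak composition is vacuous, and
\[
  G[A]\mulw R_D\mulw R_C
  =\bigl(G[A]\muls R_D\bigr)\mulw R_C
  =\Bigl(\bigl(G[A]\muls R_D\bigr)\muls R_C\Bigr)^{\iota}.
\]

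From here only strong compositions occur, so Lemma~\ref{lem:compo} applies freely: $\bigl(G[A]\muls R_D\bigr)\muls R_C=G[A]\muls(R_D\circ R_C)$, and since $G\muls I_A=I_A^{+}\circ G\circ I_A=G[A]$, a second use of the composition law gives $G[A]\muls(R_D\circ R_C)=G\muls(I_A\circ R_D\circ R_C)=G\muls R$. Taking reflexive reductions yields $G[A]\mulw R_D\mulw R_C=(G\muls R)^{\iota}=G\mulw R=H$; and because the image of $R$ is full we have $V_H=\image R$, so $H=H[\image R]$, which is the claimed identity.

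The only genuine obstacle is the failure of associativity for $\mulw$ noted in Section~\ref{sect:weak}, which rules out copying the strong-composition proof of Corollary~\ref{DeCo} outright. The device that makes everything go through is the observation above: loop-freeness of $G$ (which is automatic, since $\mulw$ is only defined with a simple source graph) together with injectivity of the duplication factor $R_D$ forces the first weak composition to agree with the strong one, after which the strong composition law handles the rest. A minor point to verify along the way is that each intermediate $\muls$/$\mulw$ is legitimate, i.e.\ that $R_D$ and $R_C$ have full image on $C$ and on $\image R$; both are immediate from the explicit construction in the proof of Lemma~\ref{dec}.
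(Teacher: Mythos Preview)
Your proof is correct and follows essentially the same approach as the paper: both apply Lemma~\ref{dec} to factor $R$ and then exploit the key observation that $G[\domain R]\mulw R_D=G[\domain R]\muls R_D$ (since $R_D$ is injective and $G$ is loop-free), which lets the strong composition law handle the remaining step. The paper unfolds the computation at the level of relational composition in a single chain of equalities, while you invoke Lemma~\ref{lem:compo} directly, but the substance is identical.
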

\begin{proof}
From Proposition~\ref{dec} we know $R=I'\circ R_D\circ R_C\circ I''$.
And we know $G[\domain R]\mulw R_D=G[\domain R]\muls R_D$.
From the properties of $\mulw$, we have
\begin{align*}
G[\domain R]\mulw R &=(R^+\circ G[\domain R]\circ R)^l\\
&=((R_D\circ R_C)^+\circ G[\domain R]\circ R_D\circ R_C)^l\\
&=(R_C^+\circ R_D^+\circ G[\domain R]\circ R_D\circ R_C)^l\\
&=(R_C^+\circ (R_D^+\circ G[\domain R]\circ R_D)\circ R_C)^l\\
&=(R_C^+\circ G[\domain R]\muls R_D\circ R_C)^l\\
&=(R_C^+\circ G[\domain R]\mulw R_D\circ R_C)^l\\
&=G[\domain R]\mulw R_D\mulw R_C\\
&=H[\image R].\qedhere
\end{align*}
\end{proof}

Assume $G\mulw R=H$ and let $H_1,\dots,H_k$ be the connected components of
$H$. From the definition of $\mulw$ and $\muls$, if we denote
$\widetilde{H}=G\muls R$, then $\widetilde{H}$ could be decomposed into the union
of connected components $\widetilde{H}_i$($1\leq i\leq k$), such that
$(\widetilde{H}_i)^\iota=H_i$. Hence the conclusion of the proposition
\ref{ccs} also holds true for weak relations.

Lemma~\ref{colour} does not hold for weak relations. For example, there is a
weak relation of $K_5$ to $K_3$, but $\chi(K_5)=5>\chi(K_3)=3$.

Lemma~\ref{path} and Lemma~\ref{cycle} do not hold for weak relations. For
example, if $G$ is a graph consisting of a single isolated vertex isolated,
then $P_3\mulw R=G$ and $C_3\mulw R=G$, but there are no walk in $G$.

With respect to complete graphs, weak relational composition also behaves
differently from strong composition. If $K_k\mulw R=H$ then $R(i)$ can
contain more than one vertex in $V_H$. Comparing to Proposition
\ref{prop:complete}, we also obtain a different result:

\begin{thm}
  There is a relation $R$ such that $K_k\mulw R = H$ if and only if every
  connected component of $\overline{H}$ is a complete graph, and the number
  of connected components of $\overline{H}$ containing at least 2 vertices
  is at most $k$.
\end{thm}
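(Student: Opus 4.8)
The plan is to work directly with the weak composition and track how $K_k\mulw R$ depends only on the pre-image structure of $R$. Recall that $K_k\mulw R=(K_k\muls R)^{\iota}$, so loops are simply discarded. Since $K_k$ has no loops, for $u\ne v$ one has $(u,v)\in E_{K_k\muls R}$ iff there are \emph{distinct} vertices $i,j$ of $K_k$ with $i\in R^{-1}(u)$ and $j\in R^{-1}(v)$; this fails precisely when $R^{-1}(u)=R^{-1}(v)=\{c\}$ for one common $c\in\{1,\dots,k\}$. Moreover $u$ acquires a loop in $K_k\muls R$ iff $|R^{-1}(u)|\ge 2$, and such loops vanish under $\mulw$. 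Thus the relevant invariant of $R$ is the induced partition of $V_H$ into the classes $S_c=\{u : R^{-1}(u)=\{c\}\}$ (for $c\in\{1,\dots,k\}$) and $T=\{u : |R^{-1}(u)|\ge 2\}$.

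For the forward direction, assume $K_k\mulw R=H$. Every vertex of $H$ lies in exactly one of the sets $S_c$, $T$, because the image of $R$ is full; and by the edge criterion above, for $u\ne v$ one has $(u,v)\in E_{\overline{H}}$ iff $u$ and $v$ lie in the same class $S_c$. Consequently $\overline{H}$ has no edge between different classes, each $S_c$ spans a complete graph, and each vertex of $T$ is isolated in $\overline{H}$; hence the connected components of $\overline{H}$ are exactly the nonempty $S_c$'s together with the singletons formed by the vertices of $T$, and all of them are complete. The components of size at least $2$ are among the $S_c$, so there are at most $k$ of them.

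For the converse I would construct $R$ explicitly. List the components of $\overline{H}$ as $C_1,\dots,C_q$ (all complete by hypothesis), with $C_1,\dots,C_p$ those having at least two vertices, where $p\le k$. Put $(\ell,u)\in R$ for all $u\in C_\ell$ and $1\le\ell\le p$, and for each vertex $w$ lying in a singleton component $C_{p+1},\dots,C_q$ put $(1,w),(2,w)\in R$. A routine case analysis---two vertices inside one $C_\ell$ with $\ell\le p$; two vertices in distinct $C_\ell,C_{\ell'}$ with $\ell,\ell'\le p$; a vertex of some $C_\ell$ ($\ell\le p$) together with a singleton-component vertex; two singleton-component vertices---shows via the edge criterion that $(K_k\muls R)^{\iota}$ has precisely the non-edges of $\overline{H}$ as its edges, i.e. $K_k\mulw R=H$; the only loops created sit on the $T$-vertices and are removed by $\mulw$, and the image of $R$ is full since every vertex of $H$ lies in some $C_\ell$.

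The step that needs care is the realization of the singleton components of $\overline{H}$, equivalently of the vertices of $H$ adjacent to all others: such a vertex must receive a pre-image of size at least $2$, which consumes two distinct vertices of $K_k$. This is exactly why the construction (and the statement) wants $k\ge 2$; the degenerate case $k=1$, where $K_1\mulw R$ is always edgeless, has to be noted separately. Apart from that, the proof is just the two verifications above, together with the elementary remark that ``$\overline{H}$ is a disjoint union of complete graphs'' is the same as ``$H$ is a complete multipartite graph'', which also exhibits this result as a refinement of Proposition~\ref{prop:complete}: for $\muls$ one needs \emph{all} parts to number at most $k$, whereas here only the parts of size $\ge 2$ are counted.
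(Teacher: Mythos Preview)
Your proof is correct and follows essentially the same route as the paper's. Both analyze the pre-image structure to see that distinct $u,v$ are non-adjacent in $K_k\mulw R$ precisely when $R^{-1}(u)=R^{-1}(v)$ is a singleton, and both build the converse relation by assigning one vertex of $K_k$ to each large component and a non-singleton pre-image to every universal vertex of $H$. The only cosmetic difference is that the paper uses the full set $\{1,\dots,k\}$ as the pre-image of each singleton-component vertex while you use $\{1,2\}$; either choice works. Your explicit remark about the degenerate case $k=1$ is a valid observation that the paper passes over.
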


\begin{proof}
  If every connected component of $\overline{H}$ is a complete graph,
  we denote the vertex sets of the connected components containing at least
  $2$ vertices by $H_1,\dots, H_m$, $m\leq k$ and the vertices of $K_k$ by
  $1,\dots,k$. Let $R={\{(i,u)\mid i=1,\dots,k,u\in V_{H_i}\}}\cup
  \{(j,v):1\leq j\leq k,v\in V_H\setminus \bigcup_{i=1}^m V_{H_i}\}$. One
  easily checks that $K_k\mulw R=H$.

  Conversely, let $R$ be a relation satisfying $K_k\mulw R=H$. Consider
  the set $U_i=\{u\in V_H\mid R^{-1}(u)=\{i\}\}$. Then $u$ and $v$ are not
  adjacent for arbitrary $u,v\in U_i$, while $u$ is adjacent to $w$ for
  every $w\in V_H\setminus U_i$. Hence
  $\overbar{H}(U_i)$ is a connected component of $\overbar{H}$, which is
  also a complete graph. Given $w\in V_H\setminus \bigcup_{i=1}^m U_i$,
  $R^{-1}(w)$ must have at least 2 vertices in $K_k$, hence $w$ is adjacent
  to every vertex in $H$ except itself; in other words, $w$ is an
  isolated vertex in $\overbar{H}$. Therefore the number of connected
  components of $\overbar{H}$ containing at least 2 vertices is no more
  than $k$.
\end{proof}

The results in subsection~\ref{revers} also remain true for weak relations.




\chapter[Constrained Homomorphisms Orders]{\fontsize{28}{10}\selectfont Constrained Homomorphisms Orders} 
\label{Ch:order}
\ifpdf
    \graphicspath{{Chapter4/Chapter4Figs/PNG/}{Chapter4/Chapter4Figs/PDF/}{Chapter4/Chapter4Figs/}}
\else
    \graphicspath{{Chapter4/Chapter4Figs/EPS/}{Chapter4/Chapter4Figs/}}
\fi

While homomorphism orders have been a fruitful research topic for several decades,
relatively little effort has been put into the study of orders induced by constrained homomorphisms. The aim of this chapter is to establish some of the properties of these orders and their similarities and differences. We survey existing results and provide several new results about cores, universality, gaps and dualities for constrained homomorphisms. 

Due to the nature of our topic, many of the results turn out to be simple observations. With a systematic analysis of related orders we can often establish the properties of more complex structures by using the properties of easier structures. We give several non-trivial results.

We spend the first two sections developing new methods that we shall use throughout the chapter.
In Section~\ref{sec:universality} we introduce notions of past-finite-universal and future-finite-universal orders, and provide a practical way to prove past-finite universality or future-finite universality of orders and then from these to build universality orders.
In Section~\ref{sec:pastfinite} we give a standard approach for the characterization of density, gaps and duality of past-finite or future-finite orders.

In Section~\ref{sec:homoorder} we recall some results about the homomorphism order. We also give a new and significantly easier proof of the universality of graph homomorphisms (simplifying the one given by the Hubi\v cka and Ne\v set\v ril~\cite{Hubicka2005}). For the first time we show that the homomorphism order is universal even on the class of oriented cycles. We use this result to resolve the universality of locally constrained homomorphisms.
In later sections we summarize the properties of constrained homomorphism orders.
In Section~\ref{sec:fullhomo} we characterize gaps in the full homomorphism order and give a new and simple proof of the existence of left duals (simplifying the results independently obtained by Feder and Hell~\cite{Feder2008} and Ball, Ne\v set\v ril, Pultr~\cite{Ball2007,Ball2010}).
In Section~\ref{sec:locally-constrained} we prove the universality of locally injective homomorphisms on connected graphs
and give a partial characterization of gaps. We also prove the universality of all three kinds of locally constrained homomorphisms on graphs. 

In Section~\ref{sec:line} we prove that for every $d\geq 3$ the homomorphism order on the class of line graphs of graphs with maximum degree $d$ is universal.

In Section~\ref{sec:relations}, using notions derived from Chapter~\ref{ch:relation}, we characterize cores for relation orders. We also describe their gaps and dualities by extending the corresponding results for surjective homomorphism orders.

\subsubsection*{Notation}
The most generic way to think of structures induced by constrained homomorphisms is in terms of category theory and relational structures~\cite{Pultr1980}.
Because we are however interested in specific examples of orders induced by specific types of constrained homomorphisms, we believe that it
makes the thesis more readable to use the standard language of graph theory and graph homomorphisms with just the following extension.

We use the abbreviations we assigned to each constrained homomorphism (such as \FullHomo{}\index{\FullHomo{}} for the full homomorphisms) to denote the variant of the order of interest. We write $G \AnyHom H$\index{$\AnyHom$} and $G \AnyLeq H$\index{$\AnyLeq$} to denote the existence of $\AnyHomo$ from $G$ to $H$. For example, $G\FullHom H$\index{$\FullHom$} denote the existence of a full homomorphism.

We use the analogous notation for other concepts introduced above. For example, just as the core is the smallest representative of its equivalence class of $\Homeq$, the $*$-core\index{$*$-core} is the smallest representative of its equivalence class of $\AnyHomeq$\index{$\AnyHomeq$} (that is, the equivalence induced on $\DiGraphs$ by $\AnyHom$).

\section{Universal orders}
\label{sec:universality}

It is a well-known fact that every finite order $(P,\leq_P)$ can be represented by the inclusion order on finite sets. More precisely, for every finite order $(P,\leq_P)$ there exists an embedding $\PtoInclussion$ assigning to every element $x\in P$ a finite set $\PtoInclussion(x)$ such that
 $$x\leq_P y\hbox{ if and only if } \PtoInclussion(x)\subseteq \PtoInclussion(y) \hbox{ for every $x,y\in P$}.$$
The easiest way to obtain this embedding is to put $\PtoInclussion(x)=\downarrow x.$

The choice of $(P,\leq_P)$ is arbitrary and thus we always assume that $P$ is a subset of a fixed countable set, for example $\mathbb N$ (the set of natural numbers). Consequently the order formed by all finite sets of natural numbers ordered by the inclusion contains every finite order as a suborder. 

Countable orders containing every finite order as a suborder are {\em finite-universal}\index{finite-universal}. We denote by $\Pfin(A)$\index{$\Pfin(A)$} the set of all finite subsets of $A$.
We can now state the following folklore fact.
\begin{prop} [The existence of future-finite orders]
\label{pro:finiteuniv}
For any countably infinite set $A$, $(\Pfin(A),\subseteq)$ is a finite-universal order.
\end{prop}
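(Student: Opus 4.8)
The plan is to make precise the down-set construction already hinted at in the paragraph preceding the statement, and to check that it is an order embedding. First I would reduce to the case in which the underlying set of $P$ is literally a subset of $A$: since $P$ is finite, hence countable, and $A$ is countably infinite, there is an injection $\iota\colon P\to A$, so it is enough to embed $(P,\leq_P)$ into $(\Pfin(A),\subseteq)$ after relabelling along $\iota$. Thus I assume without loss of generality that $P\subseteq A$.

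Next, define $E\colon P\to\Pfin(A)$ by $E(x)=\,\downarrow x=\{\,y\in P\mid y\leq_P x\,\}$. Each $E(x)$ is a finite subset of $A$ because $P$ itself is finite, so $E$ indeed takes values in $\Pfin(A)$ and not merely in the full power set. It then remains to verify that $E$ is an order embedding, i.e.\ that $x\leq_P y$ if and only if $E(x)\subseteq E(y)$. For the forward implication: if $x\leq_P y$ and $z\in E(x)$, then $z\leq_P x$ and $x\leq_P y$, so transitivity gives $z\leq_P y$, i.e.\ $z\in E(y)$; hence $E(x)\subseteq E(y)$. For the converse: reflexivity gives $x\in E(x)$, so $E(x)\subseteq E(y)$ forces $x\in E(y)$, which is exactly $x\leq_P y$. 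In particular $E$ is injective, since $E(x)=E(y)$ yields $x\leq_P y$ and $y\leq_P x$, and antisymmetry gives $x=y$. This exhibits $(P,\leq_P)$ as a suborder of $(\Pfin(A),\subseteq)$, and as $(P,\leq_P)$ was an arbitrary finite order, $(\Pfin(A),\subseteq)$ is finite-universal.

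There is essentially no genuine obstacle in this argument; it is a folklore fact. The only points that deserve a moment's attention are invoking the three order axioms in the correct places (transitivity for the forward inclusion, reflexivity for the reverse inclusion and for injectivity together with antisymmetry), and the bookkeeping observation that finiteness of $P$ is what guarantees the image of $E$ lies in $\Pfin(A)$. I would also remark, in passing, that the same map witnesses the stronger statement that any order whose down-sets are all finite embeds into $(\Pfin(A),\subseteq)$ for $A$ countable, which anticipates the notion of a past-finite order introduced earlier.
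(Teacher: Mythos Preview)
Your proof is correct and matches the paper's approach exactly: the paper also assumes without loss of generality that $P\subseteq A$ and uses the down-set map $E(x)=\downarrow x$, leaving the verification that $E$ is an embedding as routine. Your closing remark about past-finite orders is precisely the content of the lemma that immediately follows in the paper.
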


Finite-universality orders have a rich structure. It immediately follows that they are of infinite dimension and contain finite chains, antichains and decreasing chains of arbitrary length. While finite-universal orders can be seen as `rich' specimens, they are rather easy to construct. There are only countably many finite orders (up to isomorphism). Thus one can consider an order formed from the disjoint union of all finite orders. However this is no longer true when countable universality is considered.
A countable order is {\em universal}\index{universal} if it contains every countable order as a suborder.
The existence of universal orders seems to be a counter-intuitive fact. Countable orders have a rich structure and there are uncountably many of them, yet all of them can be `packed' into a single countable structure.

The easiest way to show the existence of a universal order is by the \Fraisse{} Theorem~\cite{Fraisse1986} which leads to an implicit construction of such order. We will give a constructive proof of its existence. 
Having an explicit representation makes the embedding into (constrained) homomorphism orders easy. 

\vspace{0.4cm}
We build a universal order in two layers.
\vspace{-0.4cm}

\paragraph{1.} First we turn our attention to a `lesser' notion of universality.
Recall that in Chapter~\ref{ch:graph} we defined that a countable order is {\em past-finite}\index{past-finite} if every down-set is finite. Similarly a countable order is {\em future-finite}\index{future-finite} if every up-set is finite.
Again, we say a countable order is {\em past-finite-universal}\index{past-finite-universal}\footnote{
`Past-finite' or `future-finite' is a standard name used in the context of study of causal sets (that is an alternative name for locally finite partial orders). We use these names due to the apparent lack of a standard name of these properties in the order theoretical context. Note that past finiteness is different from well-quasi-ordering.} (or {\em future-finite-universal}\index{future-finite-universal}) if it contains every past-finite (or future-finite) order as a suborder. We now strengthen Proposition~\ref{pro:finiteuniv}.

\begin{lem}
\label{lem:pastfiniteuniv}
For any countably infinite set $A$, $(\Pfin(A),\subseteq)$ is a past-finite-universal order.
\end{lem}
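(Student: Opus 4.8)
The plan is to generalize the standard embedding $x\mapsto\mathop{\downarrow}x$ used to prove Proposition~\ref{pro:finiteuniv}, observing that past-finiteness is exactly the hypothesis needed to keep this map valued in \emph{finite} sets. Concretely, given a past-finite order $(P,\leq_P)$, I would define $E\colon P\to\Pfin(P)$ by $E(x)=\mathop{\downarrow}x=\{y\in P\mid y\leq_P x\}$. By the definition of past-finiteness every down-set is finite, so $E(x)\in\Pfin(P)$ and the map is well defined; note also that a past-finite order is countable by definition, so $P$ itself is a legitimate countable ground set.

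Next I would check that $E$ is an order embedding into $(\Pfin(P),\subseteq)$. If $x\leq_P y$ then transitivity gives $\mathop{\downarrow}x\subseteq\mathop{\downarrow}y$. Conversely, if $\mathop{\downarrow}x\subseteq\mathop{\downarrow}y$, then since $x\in\mathop{\downarrow}x$ by reflexivity we obtain $x\in\mathop{\downarrow}y$, i.e.\ $x\leq_P y$. Hence $x\leq_P y$ if and only if $E(x)\subseteq E(y)$, which is precisely the embedding condition (injectivity of $E$ being automatic, as an order embedding is always injective).

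Finally I would transport this embedding into $(\Pfin(A),\subseteq)$. Since $P$ is countable and $A$ is countably infinite, fix an injection $\iota\colon P\hookrightarrow A$. The induced map $\Pfin(P)\to\Pfin(A)$, $S\mapsto\iota(S)$, sends finite sets to finite sets and, because $\iota$ is injective, satisfies $S\subseteq T$ if and only if $\iota(S)\subseteq\iota(T)$; thus it is an order embedding with respect to inclusion. Composing it with $E$ yields an order embedding of $(P,\leq_P)$ into $(\Pfin(A),\subseteq)$, so every past-finite order is a suborder of $(\Pfin(A),\subseteq)$, which is the assertion of the lemma.

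I do not expect a genuine obstacle here: the statement is a folklore strengthening of Proposition~\ref{pro:finiteuniv}. The only point requiring care is that one must embed via the \emph{principal} down-sets $\mathop{\downarrow}x$, and that these are finite \emph{precisely because} of the past-finiteness assumption; the identical argument would fail for an arbitrary countable order, since then $\mathop{\downarrow}x$ may be infinite. This is exactly why a second layer will be needed on top of this lemma to reach full countable universality.
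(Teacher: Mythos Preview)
Your proof is correct and follows exactly the same approach as the paper: define $E(x)=\mathop{\downarrow}x$, use past-finiteness to ensure these down-sets are finite, and identify $P$ with a subset of $A$. The paper simply writes ``without loss of generality $P\subseteq A$'' and ``it is easy to verify that $E$ is an embedding,'' whereas you have spelled out both of these steps explicitly.
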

\begin{proof}
Fix a past-finite ordered set $(P,\leq_P)$. Without loss of generality we assume that $P\subseteq A$. Consider the same mapping as in the proof of Proposition~\ref{pro:finiteuniv}, that is to assign to every $x\in P$ a set $E(x) = \downarrow x$. It is easy to verify that $E$ is an embedding from $E:(P,\leq_P)\to(\Pfin(A),\subseteq)$.
\end{proof}
Because every past-finite order turns to be a future-finite order when the direction of inequalities is reversed, we immediately obtain:
\begin{corollary}
\label{cor:futurefiniteuniv}
Let $A$ be any countably infinite set.
Then $(\Pfin(A),\supseteq)$ is a future-finite-universal order.
\end{corollary}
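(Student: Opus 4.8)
The statement to prove is Corollary~\ref{cor:futurefiniteuniv}: for any countably infinite set $A$, $(\Pfin(A),\supseteq)$ is a future-finite-universal order. This follows from Lemma~\ref{lem:pastfiniteuniv} by a duality argument.

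The plan is to exploit the obvious order-reversing symmetry between past-finite and future-finite orders. First I would observe that if $(P,\leq_P)$ is any order, then its \emph{dual} (or \emph{opposite}) order $(P,\leq_P^{\mathrm{op}})$, defined by $x\leq_P^{\mathrm{op}} y$ if and only if $y\leq_P x$, is again an order, and the down-set of $x$ in $(P,\leq_P^{\mathrm{op}})$ equals the up-set of $x$ in $(P,\leq_P)$. Hence $(P,\leq_P)$ is future-finite precisely when $(P,\leq_P^{\mathrm{op}})$ is past-finite. Next I would note the analogous symmetry on the target side: the map $\mathrm{id}:\Pfin(A)\to\Pfin(A)$ is an order isomorphism from $(\Pfin(A),\supseteq)$ to the dual of $(\Pfin(A),\subseteq)$, simply because $S\supseteq T$ iff $T\subseteq S$.

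Then the argument assembles as follows. Let $(P,\leq_P)$ be an arbitrary future-finite order; we want a suborder embedding into $(\Pfin(A),\supseteq)$. Pass to the dual $(P,\leq_P^{\mathrm{op}})$, which is past-finite. By Lemma~\ref{lem:pastfiniteuniv} there is an order embedding $E:(P,\leq_P^{\mathrm{op}})\to(\Pfin(A),\subseteq)$, i.e.\ $x\leq_P^{\mathrm{op}} y$ iff $E(x)\subseteq E(y)$. Rewriting both sides in the original order, this says $y\leq_P x$ iff $E(y)\supseteq E(x)$; relabelling, $x\leq_P y$ iff $E(x)\supseteq E(y)$, so the very same set assignment $E$ is an order embedding of $(P,\leq_P)$ into $(\Pfin(A),\supseteq)$. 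Since $(P,\leq_P)$ was an arbitrary future-finite order, $(\Pfin(A),\supseteq)$ is future-finite-universal.

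There is essentially no obstacle here — the only thing to be careful about is bookkeeping with the direction of the inequalities, making sure the "if and only if" is genuinely preserved rather than accidentally reversed once more, and noting explicitly (as the excerpt already does in the sentence preceding the corollary) that "every past-finite order turns to be a future-finite order when the direction of inequalities is reversed." One could even inline the whole thing as a one-line remark, but spelling out the dualization of both $(P,\leq_P)$ and the inclusion order makes the correspondence transparent. No new machinery beyond Lemma~\ref{lem:pastfiniteuniv} is needed.
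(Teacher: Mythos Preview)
Your proposal is correct and follows exactly the approach the paper intends: the corollary is stated immediately after the sentence ``Because every past-finite order turns to be a future-finite order when the direction of inequalities is reversed, we immediately obtain,'' and no further proof is given. Your write-up simply spells out this duality argument in full, which is entirely appropriate.
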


%
%
%
%
\paragraph{2.}
For a given order $(P,\leq_P)$ we construct its {\em subset order}\index{subset order}, $(\Pfin(P),\subsetLeq{P})$\index{$(\Pfin(P),\subsetLeq{P})$}, by putting $$A\subsetLeq{P} B\iff \hbox{ for every }a\in A \hbox{ there exists }b\in B \hbox{ such that }a\leq_P b$$
(in this case we say $A$ {\em is dominated by}\index{dominated by} $B$ in $(P,\leq_P)$).

It is easy to see that for every order $(P,\leq_P)$ the relation $\subsetLeq{P}$ (on finite subsets of $P$) is transitive and reflexive and thus forming a quasi-order.
We again choose a unique representative for every equivalence class to get an order. $A\in \Pfin(P)$ is a representative if for every $a,b\in A$, $a\neq b$ the elements $a$ and $b$ are incomparable in $(P,\leq_P)$ (that is, $A$ is an antichain in $(P,\leq_P)$).

As a consequence of Lemma~\ref{lem:pastfiniteuniv} we easily obtain:
\begin{corollary}
\label{cor:subsetfinite}
Let $(P,\leq_P)$ be a countably infinite discrete order (that is the relation $\leq_P$ is empty). Then $(\Pfin(P),\subsetLeq{P})$ is a past-finite-universal order.
\end{corollary}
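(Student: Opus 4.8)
The plan is to observe that the discreteness hypothesis makes the domination quasi-order $\subsetLeq{P}$ collapse onto ordinary set inclusion, after which Corollary~\ref{cor:subsetfinite} becomes an immediate instance of Lemma~\ref{lem:pastfiniteuniv}. First I would record that when $(P,\leq_P)$ is discrete we have $a\leq_P b$ if and only if $a=b$ for all $a,b\in P$ (this is just the definition of an antichain used earlier in the excerpt). Consequently, for finite sets $A,B\subseteq P$ the defining condition ``for every $a\in A$ there exists $b\in B$ with $a\leq_P b$'' holds precisely when $A\subseteq B$; that is, $A\subsetLeq{P}B$ if and only if $A\subseteq B$.

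Next I would check that no passage to equivalence classes is needed in the discrete case: a set $A\in\Pfin(P)$ is declared a representative of its $\subsetLeq{P}$-class exactly when $A$ is an antichain of $(P,\leq_P)$, and since $(P,\leq_P)$ is itself an antichain, every finite subset of $P$ already qualifies. Hence $\subsetLeq{P}$ is already antisymmetric, and $(\Pfin(P),\subsetLeq{P})$ is literally the same ordered set as $(\Pfin(P),\subseteq)$.

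Finally, since $P$ is countably infinite, Lemma~\ref{lem:pastfiniteuniv} applied with the countably infinite set taken to be $P$ itself says that $(\Pfin(P),\subseteq)$ is past-finite-universal; by the identification just made, so is $(\Pfin(P),\subsetLeq{P})$, which is exactly the claim. There is no genuine obstacle in this argument — the only point deserving (minimal) care is the verification that the representative convention does not change the order when $\leq_P$ is discrete, which is why I would make the antichain observation explicit rather than leave it implicit.
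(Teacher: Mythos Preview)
Your proposal is correct and follows exactly the paper's approach: observe that discreteness of $(P,\leq_P)$ reduces $\subsetLeq{P}$ to ordinary inclusion $\subseteq$, and then invoke Lemma~\ref{lem:pastfiniteuniv}. Your additional remark that every finite subset is already an antichain (so no quotienting is needed) is a nice clarification the paper leaves implicit.
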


If $(P,\leq_P)$ is a countably infinite discrete order, then for any $A,B\subseteq P$, $A\subsetLeq{P} B$ if and only if for any $a\in A$, there exists a $b\in B$ such that $a=b$, this is true if and only if $A\subseteq B$. For this reason,
the order $(\Pfin(A),\subseteq)$ is in fact a special case of subset orders. 

More surprisingly, the subset orders can be layered to create a universal order.

\begin{thm}
\label{thm:univ}
For every future-finite-universal order $(F,\leq_F)$, the order $(\Pfin(F),\subsetLeq{F})$ is universal.
\end{thm}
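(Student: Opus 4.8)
The plan is to reduce the theorem to a single ``genericity'' construction. First observe that if $(F_0,\le_{F_0})$ is any future-finite order, then by future-finite-universality of $(F,\le_F)$ there is an order-embedding $g\colon(F_0,\le_{F_0})\to(F,\le_F)$. Applying $g$ elementwise to finite subsets gives a map $\Pfin(F_0)\to\Pfin(F)$ which, since $g$ is injective and both preserves and reflects $\le$, satisfies $A\subsetLeq{F_0}B$ iff $g(A)\subsetLeq{F}g(B)$ for all finite $A,B\subseteq F_0$; hence it descends to an order-embedding of $(\Pfin(F_0),\subsetLeq{F_0})$ into $(\Pfin(F),\subsetLeq{F})$ (injectivity on equivalence classes following from antisymmetry of $\le_{F_0}$). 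Thus it suffices to prove: \emph{for every countable order $(P,\le_P)$ there is a future-finite order $(F_0,\le_{F_0})$ together with an order-embedding $\psi\colon(P,\le_P)\to(\Pfin(F_0),\subsetLeq{F_0})$.}

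For $F_0$ I would take the complete, infinitely branching tree of height $\omega$: the set $\omega^{<\omega}$ of finite sequences of natural numbers, ordered by $u\le_{F_0}v$ iff $v$ is an initial segment of $u$. Every up-set $\uparrow u$ is the finite set of initial segments of $u$, so $F_0$ is future-finite, and the finite antichains of $F_0$ are exactly the finite prefix-free sets of sequences; for two such sets one has $A\subsetLeq{F_0}B$ precisely when every $u\in A$ has an initial segment in $B$. Equivalently, identifying a finite antichain $A$ with its cylinder $\mathrm{Cyl}(A)=\{w:w$ extends some $u\in A\}$, the relation $\subsetLeq{F_0}$ is inclusion of finitely generated cylinders. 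So the goal becomes: embed $(P,\le_P)$ into the inclusion order on such cylinders. Note this is where the extra power over the merely past-finite-universal orders $(\Pfin(A),\subseteq)$ comes in --- a single sequence already has an infinite cylinder, so a \emph{finite} antichain can dominate infinitely many others, which is what is needed to accommodate the possibly infinite down-sets of $P$.

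The embedding $\psi$ is built by recursion along an enumeration $P=\{p_1,p_2,\dots\}$. Having chosen finite antichains $\psi(p_1),\dots,\psi(p_{n-1})$ with $\psi(p_i)\subsetLeq{F_0}\psi(p_j)\iff p_i\le_P p_j$ for all $i,j<n$, one must choose a finite antichain $\psi(p_n)$ that dominates $\psi(p_m)$ exactly for those $m<n$ with $p_m\le_P p_n$, is dominated by $\psi(p_m)$ exactly for those with $p_n\le_P p_m$, and is incomparable to the remaining ones --- while also leaving room for all elements still to be inserted. The infinite branching of $\omega^{<\omega}$ supplies ``fresh directions'' into which a new antichain can be pushed to force incomparabilities and to keep cylinders from accidentally nesting; the infinite depth supplies room to descend strictly below an already-placed antichain, or to squeeze a new one strictly between two placed ones (as is needed, for example, when $P$ is dense). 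I would isolate the step as an amalgamation lemma: any finite configuration of antichains in $\omega^{<\omega}$ realising a finite poset $Q$ extends, inside $\omega^{<\omega}$, to one realising any one-point extension of $Q$, and this can be arranged ``generically'' (all new sequences taken long enough, using sufficiently many as-yet-unused branch labels) so that the recursion never gets stuck.

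The main obstacle is precisely this amalgamation step: handling the three kinds of constraint (must-dominate, must-be-dominated, must-be-incomparable) simultaneously and, crucially, maintaining enough freedom for \emph{every} later stage --- i.e.\ showing that $\omega^{<\omega}$ with the cylinder order is ``existentially closed enough''. Everything else is routine: the reduction in the first paragraph, the future-finiteness of $F_0$, and the observation that a successful recursion yields an order-embedding $\psi$, which composed with $g$ (from the reduction, with $F_0=\omega^{<\omega}$) gives the desired embedding $(P,\le_P)\hookrightarrow(\Pfin(F),\subsetLeq{F})$.
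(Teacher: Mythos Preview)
Your reduction is correct and useful: if $g\colon F_0\hookrightarrow F$ is an order-embedding of a future-finite $F_0$ into $F$, then $A\mapsto g(A)$ induces an order-embedding $(\Pfin(F_0),\subsetLeq{F_0})\hookrightarrow(\Pfin(F),\subsetLeq{F})$, so it suffices to exhibit one concrete future-finite $F_0$ with $(\Pfin(F_0),\subsetLeq{F_0})$ universal. Your choice $F_0=\omega^{<\omega}$ is future-finite (though, being a tree, it is \emph{not} future-finite-universal, so you really do need a direct argument).

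The gap is that you do not prove the amalgamation step, and the heuristic you offer points in the wrong direction. ``Fresh directions to force incomparabilities'' means giving incomparable elements disjoint cylinders; but then no nonempty antichain is dominated by both, so a later common lower bound cannot be placed. Concretely: put $\psi(p_1)=\{(0)\}$ and an incomparable $\psi(p_2)=\{(1)\}$; if some $p_3\le_P p_1,p_2$ arrives, $\mathrm{Cyl}(\psi(p_3))$ would have to lie in $\mathrm{Cyl}(\{(0)\})\cap\mathrm{Cyl}(\{(1)\})=\emptyset$. So the one-point extension property \emph{fails} for the cylinder order as you have set it up; a workable recursion must proactively build overlap into incomparable antichains and carry a much stronger inductive hypothesis recording which intersections must remain nonempty for every finite set of placed elements that has a common lower bound in $P$. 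That is exactly the content of the theorem, and you have not supplied it.

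The paper avoids amalgamation entirely with an explicit formula. Enumerate $P\subseteq\mathbb N$ and split $\le_P$ into a past-finite ``forward'' part $x\le_f y\iff x\le_P y$ and $x\le y$, and a future-finite ``backward'' part $x\le_b y\iff x\le_P y$ and $x\ge y$. Embed the future-finite $(P,\le_b)$ into $F$ via some $E$ (this is where future-finite-universality of $F$ is used), and set $U(x)=\{E(y):y\le_f x\}$, a finite subset of $F$. One then checks in a few lines that $U(x)\subsetLeq{F}U(y)\iff x\le_P y$: the key point is that $x\le_P y$ always factors as $x\le_b w\le_f y$ for some $w$, and domination in $\subsetLeq{F}$ detects exactly such a witness. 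This is short, explicit, and needs no recursion or extension lemma --- a genuinely different and much cleaner route than the Fra\"{\i}ss\'e-style construction you sketch.
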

\begin{proof}
Take an arbitary order $(P,\leq_P)$. Without loss of generality we can assume that $P\subseteq \mathbb{N}$. This
way we enforce the linear order $\leq$\footnote{Note that here $\leq$ does not mean the existence of homomorphisms.} on the elements of $P$. Note that the linear order $\leq$ is completely unrelated to the order $\leq_P$.
We can think of $\leq$ as a specification of the time of creating the elements of $P$.

We define two new orders on the elements of $P$: $\leq_f$\index{$\leq_f$} (\emph{forwarding order}\index{forwarding order}) and $\leq_b$\index{$\leq_b$} (backwarding order)\index{backwarding order}:
\begin{enumerate}
\item We put $x\leq_f y$ if and only if $x\leq_P y$ and $x \leq y$.
\item We put $x\leq_b y$ if and only if $x\leq_P y$ and $x \geq y$.
\end{enumerate}

Thus we decompose the order $(P,\leq_P)$ to $(P,\leq_f)$ and $(P,\leq_b)$.
For every element $x\in P$ both of the sets $\{y\mid y\leq_f x\}$ and $\{y\mid x\leq_b y\}$ are finite.
It follows that $(P,\leq_f)$ is past-finite and $(P,\leq_b)$ is future-finite.

Since $(F,\leq_F)$ is future-finite-universal, there is an embedding $E: (P,\mathop{\leq_b}) \to (F,\leq_F)$. For every $x\in P$, we put $$U(x)=\{E(y)\mid y\leq_f x\}.$$
We show that $U:(P,\leq_P) \to (\Pfin(F),\subsetLeq{F})$ is an embedding.

First we show that $U(x)\subsetLeq{F} U(y)$ implies $x\leq_P y$.
From the definition of $\subsetLeq{F}$ we know that there is an element
$w\in P$, $E(w)\in U(y)$, such that $E(x)\leq_F E(w)$. By the definition of $U$, $E(w)\in U(y)$ if and only if $w\leq_f y$. By the definition of $E$, $E(x)\leq_F E(w)$ if and only if $x\leq_b w$.
It follows that $x\leq_b w\leq_f y$ and thus also $x\leq_P w\leq_P y$ and consequently $x\leq_P y$.

To show that $x\leq_P y$ implies $U(x)\subsetLeq{F} U(y)$ we consider two cases.
\begin{enumerate}
\item When $x\leq y$. Then $U(x)\subseteq U(y)$ and thus $U(x)\subsetLeq{F} U(y)$.
\item When $x>y$. Then we take any $w\in P$, $E(w)\in U(x)$. From the construction of $U(x)$ we know that $w\leq_f x\leq_P y$.
If $w\leq y$, then $E(w)\in U(y)$. In the other case we have $w\leq_b y$ and thus $E(w)\leq_F E(y)$. It follows that $U(x)$ is dominated by $U(y)$.\qedhere
\end{enumerate}
\end{proof}

It is easily seen that the embeddings constructed in proofs of Theorem~\ref{thm:univ} have the property that the embedding of vertex $x$ depends only on vertices $\{y\mid y<x\}$. Such embeddings are known as {\em on-line embeddings}\index{on-line embeddings} because they can be constructed inductively without a priori knowledge of the whole order. See for example~\cite{Hubicka2011}. All embeddings shown in this chapter are in fact on-line.

\begin{example}
\label{example:poset}
Consider the order $(P,\leq_P)$ as specified by Figure~\ref{fig:poset}. 
Denote by $(P,\leq_f)$ the order consisting of forwarding inequalities in ${(P,\leq_P)}$ and by ${(P,\leq_b)}$ the order of backwarding inequalities.
\begin{figure}
\centerline{\includegraphics{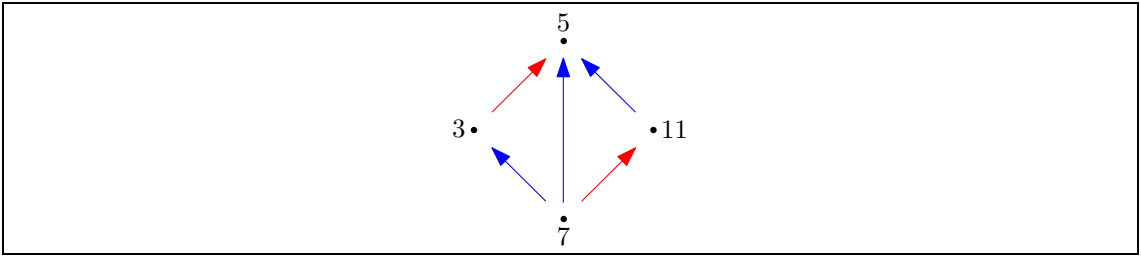}}
\caption{The order $(P,\leq_P)$. Blue lines represent backwarding edges}
\label{fig:poset}
\end{figure}
By Corollary~\ref{cor:futurefiniteuniv} we can obtain the following on-line embedding $f$ from
$(P,\leq_b)$ to $(\Pfin(\mathbb P),\supseteq)$, for any element $x\in P$, $f(x)=\{y\mid x\leq_b y\}$:
$$f(3)=\{3\},f(5)=\{5\},f(7)=\{3,5,7\},f(11)=\{5,11\}.$$
By Theorem~\ref{thm:univ} we get the following on-line embedding $U$ from
$(P,\leq_P)$ to $(\Pfin(\Pfin(\mathbb P)),\subsetLeq{\supseteq})$, for any element $x\in P$,
$U(x)=\{f(y)\mid y\leq_f f(x)\}$:
$$U(3)=\{\{3\}\},U(5)=\{\{5\},\{3\}\},$$ $$U(7)=\{\{3,5,7\}\},U(11)=\{\{3,5,7\},\{5,11\}\}.$$
\end{example}

We remark that the universality of the order $(\Pfin(F),\subsetLeq{F})$ and the proof of Theorem~\ref{thm:univ} are shown in several works on universal orders. The precise formulations differ, but the key ideas remain. To the best of our knowledge Hedrl\'in first introduced equivalent structure as an example of the universal order in~\cite{Hedrlin1969}. In~\cite{Nesetril2000} the structure was re-discovered to show the universality of the homomorphism order of set-systems (for the first time, its universality was shown by on-line embedding). In~\cite{Hubicka2004} it is used to show the universality of the homomorphism order of oriented paths (Theorem~\ref{thm:paths}). Finally in~\cite{Hubicka2011} a more streamlined variant of the same structure appears as a tool to show the universality of multiple naturally defined orders.

Here we use a more layered approach with the notions of future-finite-universal and past-finite-universal sets. The existence of past-finite-universal structures has been shown in a less constructive way in~\cite{Droste2005}. We apply these notions to several universality results on the constrained homomorphism order.


\section[Properties of past-finite and future-finite orders]{\fontsize{20}{10}\selectfont Past-finite and future-finite orders}
\label{sec:pastfinite}
Many constrained homomorphism orders we consider have finite down-sets or up-sets. We thus generally characterize the orders into three categories: universal, past-finite universal or future-finite universal.

In this section we make some basic observations about past-finite and future-finite orders. Both types of orders are special cases of locally-finite order~\cite{Britz2001,Droste2005}. An order $(P,\leq_P)$ is said to be {\em locally-finite}\index{locally finite} if for every $a,b\in P$, $a\leq_P b$, the interval $[a,b]_P=\{c\mid a\leq_P c\leq_P b\}$ is finite. This immediately gives the following result.

\begin{prop}[Density of locally-finite orders]
\label{prop:futurefinitegraps}
If $(P,\leq_P)$ is a past-finite, future-finite or locally-finite order, then $(P,\leq_P)$ is not dense.

For every $u\in P$ that is not maximal in $(P,\leq_P)$, there is an element $v\in P$ such that $(u,v)$ is a gap in $(P,\leq_P)$.
For every $u'\in P$ that is not minimal in $(P,\leq_P)$, there is a $v'\in P$ such that $(v',u')$ is a gap in $(P,\leq_P)$.
\end{prop}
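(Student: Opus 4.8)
The plan is to reduce all three hypotheses to a single one. The key observation is that both past-finiteness and future-finiteness imply local finiteness: if every down-set $\downarrow b$ is finite, then so is every interval $[a,b]_P\subseteq\downarrow b$; dually, if every up-set $\uparrow a$ is finite, then $[a,b]_P\subseteq\uparrow a$ is finite. Hence it suffices to prove both gap statements under the assumption that $(P,\leq_P)$ is merely locally-finite, and the non-density claim will then follow immediately: the existence of any gap contradicts density, and as soon as $(P,\leq_P)$ has a non-maximal (equivalently, a non-minimal) element it has a strictly comparable pair and hence, by the gap statements, a gap.

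For the first gap statement, I would take $u\in P$ non-maximal and fix some $b$ with $u\leq_P b$ and $b\neq u$. Local finiteness makes $[u,b]_P$ finite, so $S=[u,b]_P\setminus\{u\}$ is finite and nonempty (it contains $b$). Every finite nonempty ordered set has a minimal element; pick $v$ minimal in $S$ with respect to $\leq_P$. Then $u\leq_P v$ with $u\neq v$, so $u<_P v$. To see that $(u,v)$ is a gap, suppose $w$ satisfies $u\leq_P w\leq_P v$ with $w\neq u$ and $w\neq v$; transitivity gives $w\leq_P v\leq_P b$, so $w\in[u,b]_P$, and $w\neq u$ forces $w\in S$; but then $w\leq_P v$ and $w\neq v$ contradict the minimality of $v$ in $S$. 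The second statement is the order-reversed version of the same argument: for $u'$ non-minimal fix $a<_P u'$, set $S'=[a,u']_P\setminus\{u'\}$, take $v'$ maximal in $S'$, and verify that $(v',u')$ is a gap by the symmetric reasoning.

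As for difficulty, there is no deep obstacle here; the statement is essentially a structural triviality. The one point that needs a line of care is the step ``$w\leq_P v\leq_P b$ implies $w\in[u,b]_P$'', where transitivity is used to guarantee that every candidate lying strictly between $u$ and $v$ already lies inside the finite interval we control — otherwise one could not conclude that $S$ captures all such candidates. I would also state explicitly, for completeness, the purely formal fact that a finite nonempty ordered set has both a minimal and a maximal element, since the whole argument rests on it.
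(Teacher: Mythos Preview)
Your argument is correct and is exactly the natural way to fill in the details. The paper itself states this proposition without proof, treating it as an immediate consequence of the definitions; your write-up supplies precisely the routine verification one would expect, and the reduction of past- and future-finiteness to local finiteness via $[a,b]_P\subseteq\downarrow b$ (resp.\ $\subseteq\uparrow a$) is the right first move.

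One minor remark: the first sentence of the proposition (``not dense'') is, strictly speaking, only guaranteed once $(P,\leq_P)$ has at least one strictly comparable pair; an antichain is vacuously dense yet past-finite. You implicitly handle this by deriving non-density from the gap statements only when a non-maximal element exists, which is the honest reading of the claim.
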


The concept of duality was studied for some past-finite and future-finite partial orders, see \cite{Hell2013,Ball2010,Feder2008,Xie2006}. We simplify our presentation by the following simple observation that holds for all future-finite and past-finite orders.

The \emph{generalized finite duality pair}\index{generalized finite duality pair} $(\mathcal{F},\mathcal{D})$\index{$(\mathcal{F},\mathcal{D})$} in $(P,\leq_P)$ can be seen as an equation:
$$\bigcup_{f\in \mathcal{F}}\uparrow f=P\setminus \bigcup_{d\in\mathcal{D}}\downarrow d.$$

This is depicted in Figure~\ref{fig:duality}. 

\begin{figure}[ht]
\centerline{\includegraphics{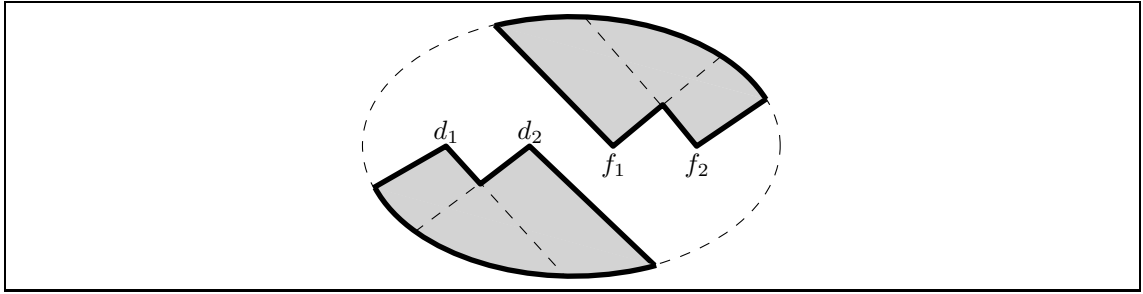}}
\caption{Generalized duality pair $(\{f_1,f_2\},\{d_1,d_2\})$.}
\label{fig:duality}
\end{figure}

If we assume that $(P,\leq_P)$ is future-finite, it follows that left hand side of the equation is always finite, while the right hand side may be infinite. Obviously $(\mathcal{F},\mathcal{D})$ can be a duality pair only if both sides of the equation describe a finite set and thus only for relatively degenerated choices of set $\mathcal{D}$ one can find a set $\mathcal{F}$ such as $(\mathcal{F},\mathcal{D})$ is a generalized finite duality pair.

In the case of future-finite orders we will thus ask for the characterization of sets $\mathcal{F}$ with $\mathcal{D}$ forming a generalized finite duality pair $(\mathcal{F},\mathcal{D})$. We show that the existence of finite duality pairs is closely related to the existence of gaps.
\begin{prop}[Dualities in a future-finite order]
\label{prop:futurefiniteduals}
Let $(P,\leq_P)$ be a future-finite order and
$\mathcal{F}$ a finite subset of $P$.
There is a set $\mathcal{D}$ such that $(\mathcal{F},\mathcal{D})$
is a generalized finite duality pair in $(P,\leq_P)$ if and only if
\begin{enumerate}
\item for every element $a$ in $\bigcup_{f\in \mathcal{F}}\uparrow f$ there are only finitely many elements $b$ in $P$ such that $(b,a)$ is a gap in $(P,\leq_P)$,
\item $(P,\leq_P)$ has only finitely many maximal elements.
\end{enumerate}
\end{prop}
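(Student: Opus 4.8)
The plan is to prove both directions by exploiting the definition of a generalized finite duality pair as the set equation
$$\bigcup_{f\in \mathcal{F}}\uparrow f=P\setminus \bigcup_{d\in\mathcal{D}}\downarrow d,$$
together with the density failure from Proposition~\ref{prop:futurefinitegraps} and the fact that, in a future-finite order, $\bigcup_{f\in\mathcal{F}}\uparrow f$ is always a finite set.

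First I would prove the forward direction. Suppose $(\mathcal{F},\mathcal{D})$ is a generalized finite duality pair. Write $A=\bigcup_{f\in\mathcal{F}}\uparrow f$; this is finite. The complement $P\setminus A=\bigcup_{d\in\mathcal D}\downarrow d$ is a down-set. For condition (2): any maximal element $m$ of $P$ lies either in $A$ or in $P\setminus A$; if $m\in P\setminus A$ then $m\leq_P d$ for some $d\in\mathcal D$, and maximality forces $m=d$, so the maximal elements outside $A$ form a subset of the finite set $\mathcal D$, while those inside $A$ form a subset of the finite set $A$; hence $P$ has only finitely many maximal elements. For condition (1): fix $a\in A$ and suppose $(b,a)$ is a gap. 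Then $b<_P a$ with nothing strictly between. Since $a\in A$ and $A$ is an up-set (being a union of up-sets), if $b\in A$ as well then $b\in A$, which is finite, so there are only finitely many such $b$. If $b\notin A$, then $b\in P\setminus A$, so $b\leq_P d$ for some $d\in\mathcal D$; but $b<_P a$ and there is nothing between $b$ and $a$, and $b\leq_P d$; one checks $d$ cannot exceed $a$ (else $b<_P d$ would need $d=a\in A$, contradiction, unless $b<_P d\leq_P$ incomparable-with-$a$, which still does not produce an element between $b$ and $a$); the cleanest argument is that $b$ must be a maximal element of $P\setminus A$, since any $b'\in P\setminus A$ with $b<_P b'$ combined with the gap would give $b'\not\leq_P a$ hence $a\in\uparrow b'$... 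I would instead argue directly: the elements $b$ with $(b,a)$ a gap and $b\notin A$ are exactly the maximal elements of the down-set $(\downarrow a\setminus\{a\})\cap (P\setminus A)$, and since $P\setminus A$ is covered by finitely many $\downarrow d$, each such maximal $b$ is $\leq_P$ one of finitely many $d$; but $b$ being a gap-predecessor of $a$ pins $b$ down among the finitely many covers of $a$ together with $\mathcal D$. Either way the count is finite.

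Next the converse. Assume conditions (1) and (2). Set $A=\bigcup_{f\in\mathcal{F}}\uparrow f$, finite. I would take
$$\mathcal{D}=\bigl\{\,b\in P\setminus A \;:\; b \text{ is maximal in } P\setminus A \,\bigr\}.$$
I claim this is finite and that $(\mathcal F,\mathcal D)$ is a generalized finite duality pair. Finiteness of $\mathcal D$: a maximal element $b$ of $P\setminus A$ is either maximal in all of $P$ — finitely many by (2) — or else there is $a>_P b$ with $a\in A$; by local finiteness of a future-finite order we may take $a$ minimal with $a>_P b$, giving a gap $(b,a)$ with $a\in A$, and by (1) each of the finitely many $a\in A$ admits only finitely many such $b$; so $\mathcal D$ is finite. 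The duality equation: by construction $\bigcup_{d\in\mathcal D}\downarrow d\subseteq P\setminus A$, so $A\subseteq P\setminus\bigcup_{d\in\mathcal D}\downarrow d$. Conversely if $p\notin A$, then $p\in P\setminus A$; since $P\setminus A$ is future-finite (up-sets in it are finite, being subsets of up-sets in $P$), $p$ lies below some maximal element of $P\setminus A$, i.e.\ $p\leq_P d$ for some $d\in\mathcal D$, so $p\in\bigcup_{d\in\mathcal D}\downarrow d$. Hence the two sides agree and $(\mathcal F,\mathcal D)$ is a generalized finite duality pair.

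The main obstacle I anticipate is the bookkeeping in the forward direction: making fully rigorous the claim that, for fixed $a\in A$, the gap-predecessors $b\notin A$ are finite in number really needs the interplay between the gap condition, the covering of $P\setminus A$ by $\{\downarrow d:d\in\mathcal D\}$, and future-finiteness, and one must be careful that ``$b$ is maximal in $P\setminus A$'' is actually forced — which uses that $a$ covers $b$ and $a\in A$ while any putative $b'\in P\setminus A$ strictly above $b$ would, by future-finiteness of $P\setminus A$, lie below some $d$, and the gap would then have to be resolved. I would organize this as a short lemma: in a future-finite order, if $A$ is a finite up-set then the set of covers-from-below of elements of $A$ that lie outside $A$ coincides with the set of maximal elements of $P\setminus A$, and this set is finite iff conditions (1)--(2) hold; both directions of the proposition then follow immediately.
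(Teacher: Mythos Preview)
Your proof of the converse direction (conditions (1) and (2) imply the existence of a finite $\mathcal{D}$) is correct, and in fact cleaner than the paper's: you take $\mathcal{D}$ to be the set of maximal elements of $P\setminus A$, whereas the paper takes the (generally larger) set $\{b\in P\setminus A:\exists\,a\in A,\ (b,a)\text{ a gap}\}\cup M$. Both work, and your argument that each maximal element of $P\setminus A$ is either maximal in $P$ or a gap-predecessor of some $a\in A$ is exactly right.

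The forward direction, however, has a genuine gap --- and not one you can close. Your proposed lemma that ``covers-from-below of elements of $A$ lying outside $A$ coincide with the maximal elements of $P\setminus A$'' is false: take $P=\{a,b,c\}$ with $b<a$, $b<c$, and $a,c$ incomparable, $\mathcal{F}=\{a\}$; then $(b,a)$ is a gap with $b\notin A=\{a\}$, yet $b<c\in P\setminus A$, so $b$ is not maximal there. More seriously, the forward implication of the proposition itself fails. Consider $P=\{a,d\}\cup\{b_i:i\in\mathbb{N}\}$ with $b_i<a$ and $b_i<d$ for all $i$ and no other comparabilities. This is future-finite; with $\mathcal{F}=\{a\}$ one has $A=\{a\}$, and $(\{a\},\{d\})$ is a generalized finite duality pair since $\downarrow d=P\setminus\{a\}$. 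Yet $a$ has infinitely many gap-predecessors $b_i$, so condition~(1) fails.

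The paper's own proof is equally terse on the forward direction: it asserts that its constructed $\mathcal{D}$ is ``the smallest'' duality set, which already the three-element example above refutes. In the rest of the paper only the sufficient direction is ever invoked (via Corollary~\ref{cor:dualities}), so the defect does not propagate; but the ``only if'' as stated is not salvageable, and your difficulties there reflect a real problem with the statement rather than with your method.
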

The situation is schematically depicted in Figure~\ref{fig:finiteduality}.

\begin{figure}
\centerline{\includegraphics{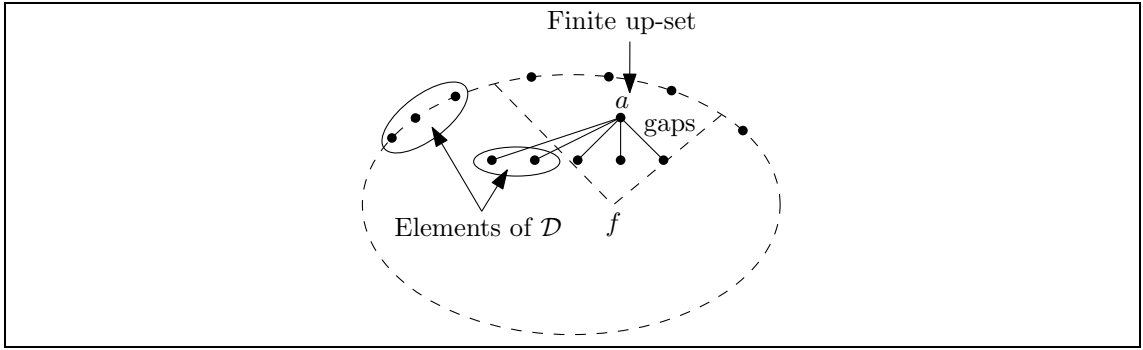}}
\caption{Dualities in a future-finite order.}
\label{fig:finiteduality}
\end{figure}
\begin{proof}

Put $$A=\bigcup_{f\in \mathcal{F}}\uparrow f,$$ 
$$M=\{m\mid \hbox {$m$ is a maximal element of $(P,\leq_P)$ and }m\notin A\},\hbox{ and}$$
$$\mathcal{D}=\{b\mid b\in P\setminus A \hbox { and there exists $a\in A$ such that $(b,a)$ is a gap in $(P,\leq_P)$} \}\cup M.$$

It follows from Proposition~\ref{prop:futurefinitegraps} and the future-finiteness of $(P,\leq_P)$ that $\mathcal{D}$ is the smallest set such that $(\mathcal{F},\mathcal{D})$ is a generalized finite duality pair.

The conditions 1 and 2 in the statement of this proposition are precisely the ones assuring that $\mathcal{D}$ is finite,
thus $(\mathcal{F},\mathcal{D})$ is a generalized finite duality pair.
\end{proof}

We say that there are {\em all right generalized finite dualities}\index{all right generalized finite dualities} in $(P,\leq_P)$ if
for every finite set $\mathcal{F}\subseteq P$ there is a finite $\mathcal{D}\subseteq
P$ such that $(\mathcal{F},\mathcal{D})$ is a generalized finite duality pair.

\begin{corollary}
\label{cor:dualities} 
If $(P,\leq_P)$ is a future-finite order such that for every $a\in P$ there is only finitely many $b\in P$ such that $(b,a)$ is a gap in $(P,\leq_P)$,
then there are all right generalized finite dualities in $(P,\leq_P)$.
\end{corollary}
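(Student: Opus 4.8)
The plan is to read off Corollary~\ref{cor:dualities} directly from Proposition~\ref{prop:futurefiniteduals}, with essentially no work beyond bookkeeping. Recall that ``there are all right generalized finite dualities in $(P,\leq_P)$'' means that \emph{every} finite $\mathcal{F}\subseteq P$ admits a finite partner set $\mathcal{D}$ making $(\mathcal{F},\mathcal{D})$ a generalized finite duality pair. So I would fix an arbitrary finite $\mathcal{F}\subseteq P$ and verify the two hypotheses of Proposition~\ref{prop:futurefiniteduals}; that proposition then returns the desired finite $\mathcal{D}$ — concretely, the set constructed in its proof, namely the gaps sitting just below $\bigcup_{f\in\mathcal{F}}\uparrow f$ together with the maximal elements of $(P,\leq_P)$ lying outside $\bigcup_{f\in\mathcal{F}}\uparrow f$.

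The first hypothesis of Proposition~\ref{prop:futurefiniteduals} — that for every $a\in\bigcup_{f\in\mathcal{F}}\uparrow f$ there are only finitely many $b$ with $(b,a)$ a gap in $(P,\leq_P)$ — is nothing but the standing assumption of the corollary, which asserts exactly this for \emph{all} $a\in P$, hence in particular for the (at most $|\mathcal{F}|$-fold union of) up-sets $\uparrow f$, $f\in\mathcal{F}$. So this half is free, and it is moreover uniform in $\mathcal{F}$: the same hypothesis covers every finite $\mathcal{F}$ simultaneously.

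The one point I would flag — and the only thing that could be called an obstacle — is the second hypothesis of Proposition~\ref{prop:futurefiniteduals}, that $(P,\leq_P)$ has only finitely many maximal elements. This does not follow from future-finiteness plus the gap condition alone (an infinite antichain satisfies both of those vacuously, yet has infinitely many maximal elements and admits no finite $\mathcal{D}$ even for a singleton $\mathcal{F}$), so it has to be supplied by the context in which the corollary is applied, where the order under consideration has a largest element and therefore a single maximal element. Granting this, Proposition~\ref{prop:futurefiniteduals} completes the argument for the chosen $\mathcal{F}$, and since $\mathcal{F}$ was an arbitrary finite subset of $P$ we conclude that there are all right generalized finite dualities in $(P,\leq_P)$. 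In effect, the content of the corollary is just the observation that the gap-finiteness hypothesis \emph{is} condition~(1) of Proposition~\ref{prop:futurefiniteduals}, stated once and for all finite $\mathcal{F}$.
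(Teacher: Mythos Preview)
Your approach is exactly the paper's (implicit) one: the corollary is stated without proof, as an immediate consequence of Proposition~\ref{prop:futurefiniteduals}. You correctly verify that condition~(1) of that proposition is precisely the gap-finiteness hypothesis of the corollary, applied uniformly to all $a\in P$.

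Your flag on condition~(2) is well taken and is a genuine observation about the paper, not a defect in your argument. The corollary as stated does omit the hypothesis that $(P,\leq_P)$ has only finitely many maximal elements, and your infinite-antichain counterexample shows this is not a cosmetic omission: the statement is literally false without it. The paper's own applications of the corollary (Propositions~\ref{prop:embeddual}, \ref{prop:surdual}, \ref{prop:freldual} and Theorem~\ref{thm:fulldual}) all supply this missing ingredient explicitly in their proofs, checking by hand that the relevant order has one or two maximal (resp.\ minimal, in the past-finite versions) elements. So the corollary should really carry the additional hypothesis ``and $(P,\leq_P)$ has only finitely many maximal elements''; with that amendment your derivation from Proposition~\ref{prop:futurefiniteduals} is complete and identical to the intended one.
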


In an order $(P,\leq_P)$, every element $a\in P$ with only one $b\in P$ such that $(a,b)$ is a gap in $(P,\leq_P)$ is called {\em left realization} or {\em left realization of $b$}\index{left realization}.
Symmetrically {\em right realization}\index{right realization} is the element $a\in P$ with only one $b\in P$ such that $(b,a)$ is a gap.

The following observation (that was a special case about full homomorphisms used by Feder and Hell in~\cite{Feder2008}) follows from the construction of $\mathcal{D}$ shown above.
\begin{prop}
\label{prop:realization}
Let $(P,\leq_P)$ be a future finite order and $\mathcal{F}=\{f\}$ a set consisting of single element of $P$. For inclusion minimal $\mathcal{D}$ such that $(\mathcal{F},\mathcal{D})$ is a generalized finite duality pair, the only elements $r\in \mathcal{D}$ such that $(r,f)$ is a gap in $(P,\leq_P)$ are left realizations of $f$.
\end{prop}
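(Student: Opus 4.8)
The plan is a short argument by contradiction that uses only the inclusion-minimality of $\mathcal{D}$ (the explicit description of the minimal $\mathcal{D}$ from the proof of Proposition~\ref{prop:futurefiniteduals}, in which $A=\uparrow f$, is convenient but not strictly needed). Fix $r\in\mathcal{D}$ such that $(r,f)$ is a gap in $(P,\leq_P)$, and suppose toward a contradiction that $r$ is \emph{not} a left realization of $f$; since $(r,f)$ itself is a gap, this means there is some $b\neq f$ for which $(r,b)$ is also a gap.

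First I would extract from minimality the element that ``needs'' $r$. As $\mathcal{D}$ is inclusion-minimal, $(\mathcal{F},\mathcal{D}\setminus\{r\})$ is not a generalized finite duality pair, so there is $p\in P$ with $f\not\leq_P p$ and $p\not\leq_P d$ for all $d\in\mathcal{D}\setminus\{r\}$. But $(\mathcal{F},\mathcal{D})$ \emph{is} a duality pair and $f\not\leq_P p$, so $p\leq_P d$ for some $d\in\mathcal{D}$, and the choice of $p$ forces $d=r$; hence $p\leq_P r$.

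Next I would analyse the second gap-successor $b$. From the gap $(r,f)$ we get $r<_P f$ (with the usual convention that a gap $(x,y)$ means $x<_P y$ with nothing strictly between). I claim $b\notin\uparrow f$: otherwise $f\leq_P b$, and combined with $r<_P f$ and $f\neq b$ this gives $r<_P f<_P b$, so $f$ sits strictly between $r$ and $b$, contradicting that $(r,b)$ is a gap. Applying the covering part of the duality $(\mathcal{F},\mathcal{D})$ to $b$, from $f\not\leq_P b$ we get some $d\in\mathcal{D}$ with $b\leq_P d$; and since $(r,b)$ is a gap we have $r<_P b$, hence $b\not\leq_P r$, so $d\neq r$. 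Then $p\leq_P r<_P b\leq_P d$ yields $p\leq_P d$ with $d\in\mathcal{D}\setminus\{r\}$, contradicting the choice of $p$. Therefore no such $b$ exists, and $r$ is a left realization of $f$.

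I expect the only real subtlety to be the first step---noticing that the minimality of $\mathcal{D}$ produces a witness $p$, and that it is the covering property of the \emph{full} set $\mathcal{D}$ (not of $\mathcal{D}\setminus\{r\}$) that pins down $p\leq_P r$; after that the proof is a brief chase through the order relations. It is also worth recording that the argument uses nothing beyond the strictness of gaps---in particular it does not need the disjointness of $\uparrow f$ from $\bigcup_{d\in\mathcal{D}}\downarrow d$ (which does hold for the minimal $\mathcal{D}$). This is the order-theoretic core of the special argument used by Feder and Hell~\cite{Feder2008} for full homomorphisms.
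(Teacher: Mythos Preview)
Your argument is correct. The contradiction via a minimality witness $p$ is clean: you extract $p\leq_P r$ from the failure of $(\{f\},\mathcal{D}\setminus\{r\})$, observe that a second gap $(r,b)$ forces $b\notin\uparrow f$ (otherwise $f$ would sit strictly between $r$ and $b$), and then route $p$ through $b$ to some $d\in\mathcal{D}\setminus\{r\}$. Each step is sound.

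The paper does not give a detailed proof here; it simply says the statement ``follows from the construction of $\mathcal{D}$ shown above,'' referring to the explicit description $\mathcal{D}=\{b\in P\setminus A:\exists\,a\in A,\ (b,a)\text{ is a gap}\}\cup M$ from Proposition~\ref{prop:futurefiniteduals}. Your route is different and in fact more robust: you argue directly from inclusion-minimality without needing that particular formula for $\mathcal{D}$ (and you note this yourself). This is an advantage, because the paper's claim that its constructed $\mathcal{D}$ is already \emph{the} smallest duality set is not obviously true in general---elements $r$ with a gap $(r,a)$, $a\in A$, may still be redundant if they lie below another element of $\mathcal{D}$. Your argument sidesteps that issue entirely: it applies to \emph{any} inclusion-minimal $\mathcal{D}$, regardless of how it was obtained. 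Nothing beyond the strictness of gaps and the covering property of a duality pair is used, so future-finiteness enters only through the tacit assumption that a minimal $\mathcal{D}$ exists.
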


From the symmetry, in the context of past-finite orders the question about the existence of duality pairs $(\mathcal{F},\mathcal{D})$ for a given $\mathcal{D}$ is more interesting than the converse question. As we will show, some of the orders we consider have duality for every choice of $\mathcal{F}$, while others do not.

We also remark that one of the main motivations for graph dualities --- the connection to computational complexity--- does not hold in the case of past-finite and future-finite orders; the set of objects described by dualities is finite and thus it is always easy to write algorithms recognizing them for a fixed generalized finite duality pair. Yet those types of question are interesting, see, for example~\cite{Ball2010, Ball2007} where the existence of such dualities is related to the concept of Ramsey lists, see also~\cite{Feder2008,Xie2006}.

\section{Homomorphism orders} \label{sec:homoorder}

For two graphs $G$ and $H$, we put a relation $G\to H$\index{$G\to H$} if there is a graph homomorphism of $G$ to $H$.
The relation $\to$ is reflexive (identity is a homomorphism) and transitive (composition of two homomorphisms is still a homomorphism). Thus the existence of homomorphisms induces a quasi-order on the class of all finite directed graphs. We denote this quasi-order by $(\DiGraphs,\leq)$\index{$(\DiGraphs,\leq)$}. When speaking of orders, we will thus use $G\leq H$ in the same sense as $G\Hom H$. This allows us to write shortly $G<H$\index{$G<H$}, $G>H$\index{$G>H$} and $G\geq H$\index{$G\geq H$} with the obvious meanings.
In this section we first outline the main properties of quasi-orders induced by the existence of homomorphisms on $\Graphs$ and $\DiGraphs$; for a detailed discussion, see~\cite[Chap.~3]{Hell2004}. Also we revisit universality on restricted graph classes and give a new proof of universality of the homomorphism order on the class of oriented cycles.

There are two standard ways to transform a quasi-order to an order---by identifying equivalent objects, or by choosing a particular representative for each equivalence class. We do the latter; cores fit the purpose perfectly. 
Recall that in Chapter~\ref{ch:homo} we defined that
a (finite) directed graph is a {\em graph core}\index{graph core} when it is the smallest graph (in the number of vertices) in its equivalence class of $\Homeq$ (also see~\cite{Hell2004}).
It can be shown that every equivalence class of $\Homeq$ contains, up to isomorphism, a unique core. The {\em core of graph $G$}\index{core} is an up to isomorphism unique graph $H$ such that $H\Homeq G$ and $H$ is a core. Further the core of graph $G$ is always an induced subgraph of $G$, see~\cite{Hell2004}. 
Consequently it makes sense to speak of cores in both $\DiGraphs$, $\Graphs$ and in any other class of graphs which is  hereditary (i.e. closed under taking induced subgraphs). We shall note that the graph core is usually defined in terms of retractions. We use an alternative, but equivalent, definition for the sake of carrying the term more easily over into the context of constrained homomorphism orders. The relation $\to$ induces an order on the class of all isomorphism types of cores. We take the liberty of denoting this order in the same way as the quasi order, that is by $(\DiGraphs,\leq)$\index{$(\DiGraphs,\leq)$}, and implicitly assume that the class $\DiGraphs$ is restricted to cores. We call this order the {\em homomorphism order of directed graphs}\index{homomorphism order of directed graphs}. We will make similar assumptions on all other constrained homomorphism orders discussed thoroughout the chapter.

Both homomorphism orders $(\DiGraphs,\leq)$ and $(\Graphs,\leq)$ have been extensively studied.


\subsection*{Universality}

It is a relatively easy exercise to find an infinite antichain, a descending chain and an increasing chain in $(\Graphs, \leq)$. This suggests that the homomorphisms induce a rich structure on the class of graphs. It is however a non-trivial result that every countable order can be found as a suborder of $(\DiGraphs, \leq)$. The initial result has been proved in the even stronger setting of category theory, see~\cite{Pultr1980}. Subsequently it has been shown that many restricted classes of graphs and similar structures (such as homomorphisms of set systems~\cite{Nesetril2000}, oriented trees~\cite{Hubicka2005}, oriented paths~\cite{Hubicka2004}, orders and lattices~\cite{Lehtonen2008}) admit this universality property. 

The following result was the culmination of a series of papers in the context of categories.

\begin{thm}[\cite{Pultr1980}]
Any countable order is isomorphic to a suborder of $(\DiGraphs,\leq)$.
\end{thm}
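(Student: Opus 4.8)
The plan is to build the embedding in two layers, exactly mirroring the architecture of Section~\ref{sec:universality}: first reduce an arbitrary countable order to the universal subset-of-subsets order $(\Pfin(F),\subsetLeq{F})$ via Theorem~\ref{thm:univ}, and then realise that order inside $(\DiGraphs,\leq)$ by encoding each finite set of natural numbers as a suitable rigid directed graph. So it suffices to exhibit a countable class $\mathcal{C}$ of directed graphs together with a family $\{G_n\}_{n\in\mathbb{N}}$ of ``gadget'' digraphs such that: (i) each $G_n$ is a core; (ii) $G_m\to G_n$ if and only if $m=n$ (the gadgets form an infinite antichain of cores); (iii) for every finite $S\subseteq\mathbb{N}$ one can form a digraph $\widehat{S}$ --- for instance by joining a fresh ``selector'' vertex or a common base path to exactly the gadgets $\{G_n : n\in S\}$ --- so that $\widehat{S}\to\widehat{T}$ if and only if $S\subseteq T$. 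Given such a construction, the map $S\mapsto\widehat{S}$ is an order embedding of $(\Pfin(\mathbb{N}),\subseteq)$ into $(\DiGraphs,\leq)$, and since $(\Pfin(\mathbb{N}),\subseteq)$ is (by Theorem~\ref{thm:univ}, taking $F$ to be a future-finite-universal order) itself universal, composing the two embeddings proves the theorem.

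First I would fix the gadgets. A standard choice is to take oriented paths (or small oriented trees) with distinct ``algebraic lengths'' or, following the forthcoming Theorem~\ref{thm:paths}/Theorem~\ref{lem:cycles} of this chapter, oriented cycles of pairwise incomparable type; the key properties needed are rigidity (each gadget is a core and admits no homomorphism to any other gadget) and the absence of short homomorphisms between a gadget and the connecting structure. I would then describe $\widehat{S}$ precisely: take a long directed path $P$ (long enough that no gadget maps into it and it maps into no gadget), and for each $n\in S$ attach a copy of $G_n$ at a designated vertex of $P$ via an identification or a single connecting arc chosen so that a homomorphism $\widehat{S}\to\widehat{T}$ is forced to send $P$ to $P$ (using the rigidity of $P$) and hence must send each attached $G_n$ into some attached $G_m$, which by (ii) forces $m=n$; conversely if $S\subseteq T$ the obvious inclusion is a homomorphism. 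One must also check $\widehat{S}$ is a core, which follows once the components are rigid and non-homomorphic to each other and to $P$.

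The main obstacle, and the place where care is genuinely required, is step (iii): ensuring that every homomorphism $\widehat{S}\to\widehat{T}$ is ``coherent'', i.e. that it cannot cheat by folding a gadget partly onto $P$, partly onto another gadget, or by exploiting the connecting arcs to create homomorphisms that do not reflect the inclusion $S\subseteq T$. This is exactly the kind of rigidity bookkeeping that the papers cited in the excerpt (\cite{Pultr1980,Hubicka2005,Hubicka2004}) handle by choosing the base structure and the attachment points with enough ``slack'' (long monotone segments, incompatible local degree/orientation patterns at the gluing vertices) that any homomorphism is pinned down vertex by vertex. I would therefore spend the bulk of the write-up verifying the two implications of (iii) under a concrete such choice, and then simply invoke Theorem~\ref{thm:univ} (with $F$ a future-finite-universal set, which exists by Corollary~\ref{cor:futurefiniteuniv}) to conclude that the resulting suborder of $(\DiGraphs,\leq)$ is universal, which is the assertion of the theorem. (In the actual thesis this is presumably deferred to or subsumed by the later, sharper result that even oriented cycles are universal, Theorem~\ref{lem:cycles}.)
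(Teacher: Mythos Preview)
There is a genuine gap in your plan, and it sits exactly at the point where you invoke Theorem~\ref{thm:univ}. Your conditions (i)--(iii) produce an embedding of $(\Pfin(\mathbb{N}),\subseteq)$ into $(\DiGraphs,\leq)$, and you then claim that $(\Pfin(\mathbb{N}),\subseteq)$ is universal ``by Theorem~\ref{thm:univ}, taking $F$ to be a future-finite-universal order''. But $(\Pfin(\mathbb{N}),\subseteq)$ is \emph{not} universal: it is past-finite (every down-set of a finite set $S$ has $2^{|S|}$ elements), so it contains no infinite descending chain and hence cannot embed, say, $(\mathbb{Z},\leq)$. Lemma~\ref{lem:pastfiniteuniv} only gives past-finite-universality. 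Theorem~\ref{thm:univ} does \emph{not} say that $(\Pfin(\mathbb{N}),\subseteq)$ is universal; it says that $(\Pfin(F),\subsetLeq{F})$ is universal when $(F,\leq_F)$ is future-finite-universal and $\subsetLeq{F}$ is the \emph{domination} order, not set inclusion. Your condition (ii) --- that the gadgets $G_n$ form an antichain --- is precisely what collapses $\subsetLeq{F}$ to $\subseteq$ and kills the argument.

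The fix, and the route the thesis actually takes (via Lemma~\ref{lem:cycles} and Theorem~\ref{thm:cycles}), is to drop (ii) and instead choose gadgets that already form a future-finite-universal order under $\leq$: directed cycles $\overrightarrow{C}_{p(A)}$ with $p(A)=\prod_{p\in A}p$ do this, since $\overrightarrow{C}_k\to\overrightarrow{C}_l$ iff $l\mid k$. Then the disjoint union of such cycles over a finite set $S\in\Pfin(F)$ realises exactly the domination order $\subsetLeq{F}$ (a homomorphism between disjoint unions sends each component to some component, giving ``for every $a\in A$ there is $b\in B$ with $a\leq_F b$''), and Theorem~\ref{thm:univ} applies directly. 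No connecting path or rigidity bookkeeping is needed at all --- the components are allowed to map non-injectively to one another, and that freedom is what encodes the second layer. Your instinct in the final parenthetical was right: the cited theorem is not proved in the thesis but is subsumed by the oriented-cycles result; your proposed gadget-plus-base-path construction, however, would only recover past-finite-universality.
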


This is a non-trivial result and it can be strengthened to say that $(\Graphs,\leq)$ is universal.

\subsection*{Density and duality}

Recall in Chapter~\ref{ch:graph} we introduced the definition of density, gaps and duality of an order. In this section we consider these properties for two orders: $(\Graphs, \leq)$ and $(\DiGraphs, \leq)$.

\subsubsection*{Density and gaps}

As a special case, the order of rational numbers, $(\mathbb{Q},\leq)$, can be embedded into the universal order $(\Graphs, \leq)$. It easily follows that there are 
graphs $G < H$ such that the set of graphs $\{H'|G<H'<H\}$ contains infinitely many mutually non-isomorphic graph cores. Consider, for example, the images of 0 and 1 in $(\Graphs, \leq)$. On the other hand, there are no graphs strictly in between $K_1$ (that is the graph formed by a single isolated vertex) and $K_2$ (that is, two vertices connected by an edge). This is written as the following theorem:

\begin{thm} [Density~\cite{Nesetril1999}]
 Given two simple graphs $G_1,G_2$ with $G_1 < G_2$, $G_1 \leq O$ and $G_1 \leq K_1$, there is a graph $G$ satisfying $G_1 < G < G_2$.
\end{thm}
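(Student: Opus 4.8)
The plan is to reproduce the classical density argument of Welzl and Ne\v set\v ril~\cite{Nesetril1999}, organised around one observation: the side conditions in the statement are exactly those that exclude the single gap $(K_1,K_2)$ of $(\Graphs,\leq)$, so without loss of generality $G_2$ is a core with $\chi(G_2)\geq 3$; equivalently $G_2\nrightarrow K_2$, and hence $G_2$ admits a homomorphism from some odd cycle $C_{2k+1}$. That odd cycle is the structural feature that drives the construction. I would fix a homomorphism $h\colon G_1\to G_2$, which exists because $G_1<G_2$, and keep in mind the input hypothesis $G_2\nrightarrow G_1$.

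First I would dispose of the degenerate possibilities permitted by the side conditions (where $G_1$ or $G_2$ is one of the minimal graphs $K_1$, $K_2$), which are either vacuous or follow directly, reducing everything to the situation above. Then I would build the intermediate graph $G$ by an indicator/fractional-power operation: choose a small gadget $J$ carrying a controlled amount of odd structure and equipped with two distinguished vertices, and replace every edge of $G_2$ by a copy of $J$ with its endpoints identified along the distinguished vertices (equivalently one may form a categorical product of $G_2$ with a long odd cycle, or amalgamate an exponential graph $G_2^{G_1}$ with $G_2$). By construction the two easy directions hold: $G\to G_2$ by folding every copy of $J$ back onto the edge of $G_2$ it replaced, and $G_1\to G$ because $h$ lifts through the structural map $G\to G_2$; since the composition of homomorphisms is again a homomorphism, this already yields $G_1\leq G\leq G_2$.

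The real work is the two non-homomorphism statements $G\nrightarrow G_1$ and $G_2\nrightarrow G$. I would control both through a single monotone invariant of the construction --- most naturally odd girth, exploiting that $C_{2m+1}\to C_{2n+1}$ if and only if $n\leq m$ --- or through a ``level/pushing'' parameter attached to the gadget $J$: the gadget is calibrated so that the invariant of $G$ lands \emph{strictly} between the invariant of $G_1$ and that of $G_2$, with one of the two strict inequalities fed by $G_2\nrightarrow G_1$ and the other forced by the way $J$ stretches $G_2$. I expect this simultaneous calibration to be the main obstacle: it is the only place where the excluded gap genuinely intervenes (if $G_2\sim K_2$ there is no odd cycle in $G_2$, the gadget has nothing to act on, and the construction correctly collapses, reflecting the fact that $(K_1,K_2)$ really is a gap), and producing one gadget $J$ that pushes $G$ strictly above $G_1$ while still preventing $G_2$ from reaching $G$ is the delicate point of the whole proof.
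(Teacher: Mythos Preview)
The paper does not give its own proof of this theorem: it is quoted as a result from~\cite{Nesetril1999}, and immediately after the statement the text only remarks that ``There are three proofs of this result: a probabilistic proof~\cite{Nesetril1999}, a constructive proof via products due to M.\ Perles and J.\ Ne\v{s}et\v{r}il~\cite{Nesetril1999a}, and the third one relies on a result about duality which we will introduce later in this section.'' So there is nothing line-by-line to compare your proposal against; one can only place it relative to those three approaches.

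Your outline is in the spirit of the constructive route, but as written it has a real gap. The invariant you single out---odd girth---cannot in general separate $G_1$, $G$, and $G_2$. If $G_1=K_3$ and $G_2=K_4$, both have odd girth~$3$, so there is no room to ``calibrate $G$ strictly between'' on that scale. Worse, the concrete construction you name (replace each edge of $G_2$ by an odd path, i.e.\ an odd subdivision) can push $G$ \emph{below} $G_1$: with $G_1=C_5$ and $G_2=K_3$, subdividing each edge of $K_3$ by a path of length~$3$ produces $C_9$, and $C_9\to C_5$, so $G\to G_1$ and the construction fails precisely at the step you flag as delicate. Your fallback ``level/pushing parameter attached to $J$'' is not specified, and this is exactly where the known proofs do the real work (either via a high-girth/high-chromatic auxiliary graph in the product construction, via the exponential graph $G_2^{G_1}$ taken seriously rather than as a side remark, or via the duality machinery the paper alludes to).

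In short: you have correctly located the only hard step, but the mechanism you propose for it does not work, and the paper itself offers no proof to borrow from---only pointers to~\cite{Nesetril1999,Nesetril1999a} where the missing ingredient is supplied.
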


There are three proofs of this result: a probabilistic proof~\cite{Nesetril1999}, a constructive proof via products due to M.\ Perles and J.\ Ne\v{s}et\v{r}il~\cite{Nesetril1999a}, and the third one relies on a result about duality which we will introduce later in this section.

All gaps in both $(\Graphs,\leq)$ and $(\DiGraphs,\leq)$ have been characterized~\cite{Welzl1982, Nesetril2000}.
It follows that $(K_1, K_2)$ is the only gap in $(\Graphs,\leq)$.

By contrast, the order $(\DiGraphs,\leq)$ has many gaps. In this case, the existence of $H'$ can be shown only for graphs $H$ containing an oriented cycle. 
We also mention that density in other classes of graphs has also been studied, for example, C.\ Tardif~\cite{Tardif1999} proved the class of vertex-transitive graphs is dense.

\subsubsection*{(Generalized) duality}

The structure of gaps in $(\DiGraphs,\leq)$ is related to another concept of graph dualities. 
We say that a pair of directed graphs $(F,D)$ is a {\em simple duality pair}\index{simple duality pair} in class $\K$ if
$$G\nrightarrow D \hbox{ if and only if } F\to G \hbox { for all } G\in \K.$$ An example of a such duality pair is given in Figure~\ref{fig:dualpair}.

\begin{figure}[!ht]
\centering
\includegraphics[width=0.7\textwidth]{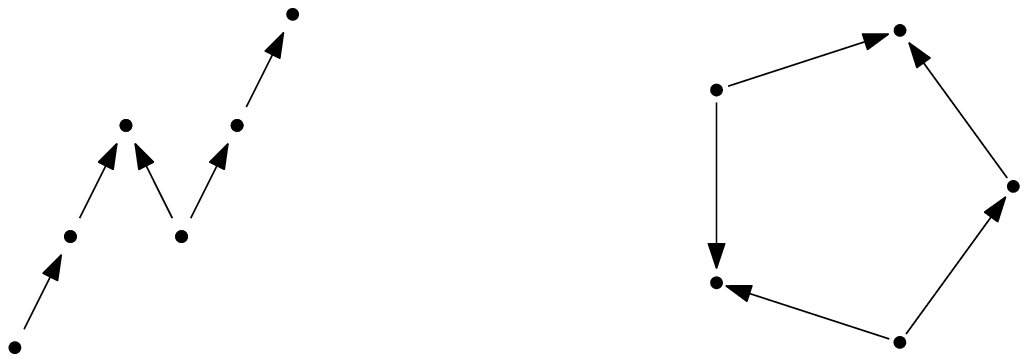}
\caption{Duality pair.}
\label{fig:dualpair}
\end{figure}

For homomorphism dualities the class $\K$ is usually implicitly $\Graphs$ or $\DiGraphs$.
In both cases dualities are in one-to-one correspondence with gaps~\cite{Nesetril2000}. 
It is easy to see that $(K_2,K_1)$ is a simple duality pair and in fact it is the only duality pair in $(\Graphs,\leq)$. In $(\DiGraphs,\leq)$ there exists a duality pair $(F,D)$ if and only if $F$ is an oriented tree (that is, when core of $F$ contains no oriented cycle).

Graph duality is an important concept of graph theory. It provides better understanding of other structural properties, such as maximal antichains~\cite{Nesetril2003}, computational complexity properties of homomorphism testing, etc. Several variants of graph duality have been studied.

Considering classes $\K\subset \DiGraphs$ leads to the notion of restricted dualities~\cite{Nesetril2008}.
For two finite sets of directed graphs $\mathcal{F}$ and $\mathcal{D}$ we say that $(\mathcal{F},\mathcal{D})$ \index{$(\mathcal{F},\mathcal{D})$} is a {\em generalized finite duality pair} in $\K$ if for any directed graph $G\in \K$ there exists $F\in \mathcal{F}$ such that $F\to G$ if and only if $G\to D$ for no $D\in \mathcal{D}$~\cite{Foniok2008}.
Recently the infinite-finite dualities have been considered and partially characterized. In a {\em generalized infinite-finite duality pair}\index{generalized infinite-finite duality pair} the set $\mathcal{F}$ can be infinite. See~\cite{Erdos2012} for details.

\subsubsection*{Maximal antichains}

From the definition, a simple duality pair is a maximal antichain of size two. Since every other digraph admits either a homomorphism to $H$ or a homomorphism from $F$. It is not known whether there are other maximal antichains of size $t\geq 2$ in $(\DiGraphs,\leq)$. The following propostion characterises all maximal antichains of size one in 
${(\DiGraphs,\leq)}$.

\begin{pro}[Antichain~\cite{Hell2004}]
The order ${(\DiGraphs,\leq)}$ has exactly three maximal antichains of size one---$\{\overrightarrow{P_1},\overrightarrow{P_2},\overrightarrow{P_3}\}$.
\end{pro}

However, in contrast to $(\DiGraphs,\leq)$, $(\Graphs,\leq)$ has no maximal antichains of any finite size $t>1$.

\subsection*{Restricted homomorphism universality}

As a warm-up, we first recall the most common restricted graph classes which are already defined in Chapter~\ref{ch:graph}. We denote by $\Path$\index{$\Path$} the class of all finite oriented paths, by $\DiCycle$\index{$\DiCycle$} the class of directed graphs formed all $\overrightarrow{C}_k$, $k\geq 3$, and $\DiCycles$\index{$\DiCycles$} the class of directed graphs formed by disjoint union of finitely many graphs in $\DiCycle$. Similarly, denote by $\Cycle$\index{$\Cycle$} the class of graphs formed by all $C_k$, $k\geq 3$,
denote by $\Cycles$\index{$\Cycles$} the class of graphs formed by disjoint union of finitely many graphs in $\Cycle$.
Finally denote by $\mathbb{P}$\index{$\mathbb{P}$} the class of all odd prime numbers.

It is a classical result that the order $(\Graphs,\leq)$ is a universal order. In fact graph homomorphisms are universal also in the categorical sense~\cite{Pultr1980}. More recently it has been shown that the universality of the homomorphism order is a relatively persistent property and the homomorphism order remains universal even for very restricted classes of (directed) graphs. In~\cite{Hubicka2004} the order $(\Path,\leq)$ is shown to be universal. This result can be used to show the universality of the homomorphism order in many naturally defined classes of (directed) graphs, such as 3-colourable graphs, planar graphs, series-parallel graphs, etc. Dichotomy results on the classes of graphs specified by chromatic and achromatic number are provided in~\cite{Nesetril2007}.

The arguments in the proofs of those results build on techniques used in the proof of the following theorem (first stated in~\cite{Hubicka2004}, an alternative and more streamlined proof appeared in~\cite{Hubicka2011}).                                                                                                                                                                                                                                                                                                                                                                                                                                                                                                                                                                                                                                                                                                                                                                                                                                                                                                                                                                                                                                                                

\begin{thm}[Hubi\v{c}ka, Ne\v{s}et\v{r}il~\cite{Hubicka2004,Hubicka2011}]
\label{thm:paths}
The order $(\Path,\leq)$ is universal.
\end{thm}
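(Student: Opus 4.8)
The plan is to derive the universality of $(\Path,\leq)$ from the abstract criterion of Theorem~\ref{thm:univ} by constructing an explicit embedding of a concrete universal order into the homomorphism order of oriented paths. First I would fix the future-finite-universal order $(F,\leq_F)=(\Pfin(\mathbb{P}),\supseteq)$, which is future-finite-universal by Corollary~\ref{cor:futurefiniteuniv} (here $\mathbb{P}$, the odd primes, serves only as a convenient countably infinite index set). By Theorem~\ref{thm:univ} the order $\big(\Pfin(\Pfin(\mathbb{P})),\subsetLeq{\supseteq}\big)$ is then universal, so it suffices to find an embedding of this order into $(\Path,\leq)$: to assign to every finite set $\mathcal{S}$ of finite sets of odd primes an oriented path $P_{\mathcal{S}}$ with $P_{\mathcal{S}}\to P_{\mathcal{T}}$ if and only if $\mathcal{S}\subsetLeq{\supseteq}\mathcal{T}$.

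Second I would build the paths in two layers mirroring the two layers of the order. For each odd prime $p$ I would design a small ``gadget'' oriented path $Q_p$ --- a zig-zag that climbs to a height determined by $p$ and returns --- chosen so that the only homomorphisms among the $Q_p$'s and their concatenations are the obvious ones. For a finite set $S=\{p_1,\dots,p_k\}\subseteq\mathbb{P}$ I would let $P_S$ be the series concatenation of $Q_{p_1},\dots,Q_{p_k}$, joined by fixed short connector subpaths and capped at both ends by a long directed ``spine''; the design is arranged so that $P_S\to P_T$ precisely when every prime of $T$ is available inside $P_S$, i.e.\ precisely when $S\supseteq T$. Finally, for $\mathcal{S}=\{S_1,\dots,S_m\}$ I would let $P_{\mathcal{S}}$ be a second-level concatenation of $P_{S_1},\dots,P_{S_m}$ separated by a distinctive separator subpath that no $Q_p$ or connector can realize; the separators will force any homomorphism $P_{\mathcal{S}}\to P_{\mathcal{T}}$ to send each block $P_{S_i}$ into a single block $P_{T_j}$, and the inner analysis then yields $S_i\supseteq T_j$, which is exactly the domination relation.

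Third, the verification splits into an easy and a hard direction. For the easy direction, given $\mathcal{S}\subsetLeq{\supseteq}\mathcal{T}$, I would write the homomorphism down block by block: each $P_{S_i}$ maps into a dominating $P_{T_j}$ with $S_i\supseteq T_j$ by folding the gadgets of $S_i\setminus T_j$ onto the spine and mapping the remaining gadgets identically, while separators map to separators. For the converse I would prove a short chain of rigidity lemmas: (i) using an algebraic-length / walk-height invariant of oriented paths, no homomorphism can compress a separator, so blocks go to blocks in order; (ii) within a block the spine maps to the spine, connectors to connectors, and each $Q_p$ either maps onto a copy of $Q_p$ in the target block or collapses onto the spine, the latter only when $p$ is absent from the target set; (iii) assembling these facts gives $S_i\supseteq T_j$ for the block $j$ selected by the homomorphism. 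Reflexivity, transitivity and monotonicity of $\to$ then make $\mathcal{S}\mapsto P_{\mathcal{S}}$ an order embedding, and since $\big(\Pfin(\Pfin(\mathbb{P})),\subsetLeq{\supseteq}\big)$ is universal, so is $(\Path,\leq)$.

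The hard part will be the rigidity analysis in the converse direction: oriented paths admit many foldings, so the gadgets, connectors and separators must be chosen --- in their shapes and, crucially, their relative lengths --- so that the only available foldings are those corresponding to genuine containments of prime sets. Concretely, the real technical work is to isolate an invariant of oriented paths (a refinement of the walk height together with the pattern of local maxima and minima) that is monotone under homomorphisms and fine enough to pin down the block structure, and then to tune the construction's parameters so that this invariant tracks the order $\subsetLeq{\supseteq}$ exactly. Once the building blocks are ``incompressible'' in this sense, the two-layer bookkeeping is routine and the theorem follows; this also reproves that $(\DiGraphs,\leq)$ is universal.
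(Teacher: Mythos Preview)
The paper does not actually prove this theorem; it is stated with attribution to \cite{Hubicka2004,Hubicka2011} and used only as background motivating the new, simpler universality result for oriented cycles (Lemma~\ref{lem:cycles} and Theorem~\ref{thm:cycles}). There is thus no in-paper proof to compare your proposal against.

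That said, your plan is precisely the approach the paper ascribes to the cited references: it explicitly remarks that the subset-order framework of Theorem~\ref{thm:univ} ``is used to show the universality of the homomorphism order of oriented paths (Theorem~\ref{thm:paths})'' in \cite{Hubicka2004}. Your two-layer scheme --- encode a future-finite-universal order by gadget subpaths, then take dominated-subset combinations by concatenation with separators --- matches the known argument, and you correctly locate the real difficulty in the rigidity analysis of the converse direction. The paper itself notes that the proof for paths is ``a lot more complex than universality of oriented cycles'' and depends on a `flipping' operation peculiar to path homomorphisms; this is exactly the folding phenomenon you flag as the technical crux. Your outline is sound as a proof sketch; the substantive work, namely the precise design of the gadgets $Q_p$, connectors and separators together with the height/level invariant that blocks unwanted foldings, is what \cite{Hubicka2004,Hubicka2011} actually carry out, and cannot be waved through.
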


These techniques however can not be easily applied in the context of locally constrained homomorphisms. In particular,  locally constrained homomorphisms do not allow any sort of `flipping' operation necessary for non-trivial path homomorphisms.

We prove the universality of $(\DiCycles,\leq)$ with a new argument replacing `flipping' by `rolling'.
As a pleasant surprise, this result is significantly easier to prove (compared to the ones appearing in~\cite{Nesetril2006,Lehtonen2008,Lehtonen2010,Kwuida2011,Samal2013}) and has similar applications as Theorem~\ref{thm:paths}

\begin{lem}
\label{lem:cycles}
The order $(\DiCycle,\leq)$ is future-finite-universal.
\end{lem}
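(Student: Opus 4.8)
The plan is to deduce future-finite-universality of $(\DiCycle,\leq)$ from the already-established existence of future-finite-universal orders, via one explicit order embedding. Since every directed cycle $\overrightarrow{C}_k$ is its own core and $\overrightarrow{C}_a\Homeq\overrightarrow{C}_b$ forces $a=b$, the order $(\DiCycle,\leq)$ is isomorphic to the set of integers $\geq 3$ ordered so that $a$ is below $b$ exactly when $b$ divides $a$ (because $\overrightarrow{C}_a\to\overrightarrow{C}_b$ iff $b\mid a$). Taking $\mathbb{P}$ to be the odd primes, Corollary~\ref{cor:futurefiniteuniv} tells us that $(\Pfin(\mathbb{P}),\supseteq)$ is future-finite-universal, so it suffices to embed $(\Pfin(\mathbb{P}),\supseteq)$ into $(\DiCycle,\leq)$.

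The embedding I would use is $E(S)=\overrightarrow{C}_{n(S)}$ with $n(S)=3\prod_{p\in S}p$. First I would record the arithmetic: $n(S)\geq 3$ always (so $E(S)$ is a directed cycle and lies in $\DiCycle$), the map $S\mapsto n(S)$ is injective, and $n(T)\mid n(S)$ if and only if $T\subseteq S$ (read off the $3$-adic and $p$-adic valuations). Then comes the one step carrying any content: a homomorphism $\overrightarrow{C}_a\to\overrightarrow{C}_b$ exists precisely when $b\mid a$. The ``if'' direction is the \emph{rolling} map $x\mapsto x\bmod b$, which winds $\overrightarrow{C}_a$ around $\overrightarrow{C}_b$ exactly $a/b$ times; the ``only if'' direction follows from the directed analogue of the closed-walk characterisation of homomorphisms out of a cycle recalled earlier in the chapter — a full traversal of $\overrightarrow{C}_a$ maps to a closed directed walk of length $a$ in $\overrightarrow{C}_b$, and such a walk forces $b\mid a$. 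This ``rolling'' is exactly what takes the place of the ``flipping'' used in the oriented-path theorem.

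Assembling the pieces: if $S\supseteq T$ then $n(T)\mid n(S)$, so $E(S)=\overrightarrow{C}_{n(S)}\to\overrightarrow{C}_{n(T)}=E(T)$; and conversely if $E(S)\to E(T)$ then $n(T)\mid n(S)$, whence $T\subseteq S$, i.e.\ $S\supseteq T$. Hence $E$ is an order embedding of $(\Pfin(\mathbb{P}),\supseteq)$ into $(\DiCycle,\leq)$, and since the former contains every future-finite order as a suborder, so does the latter.

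I do not expect a serious obstacle here: the construction is essentially forced and the verification is short. The only point needing care is the characterisation of homomorphisms between directed cycles (equivalently, of closed directed walks in $\overrightarrow{C}_b$), which is precisely where the ``rolling'' picture does its work; everything else is unique factorisation. I would also note, for later use, that this embedding can be made on-line by assigning a fresh prime to each element of $\mathbb{P}$ as it is first encountered, so that, fed through the subset-order layering of Theorem~\ref{thm:univ} together with the isomorphism $(\DiCycles,\leq)\cong(\Pfin(\DiCycle),\subsetLeq{\DiCycle})$, it yields an on-line universal embedding into the homomorphism order of disjoint unions of directed cycles — the genuinely surprising feature being that directed cycles already suffice, with no gadget constructions at all.
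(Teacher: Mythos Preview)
Your proposal is correct and follows essentially the same route as the paper: invoke Corollary~\ref{cor:futurefiniteuniv} to get that $(\Pfin(\mathbb{P}),\supseteq)$ is future-finite-universal, then embed it into $(\DiCycle,\leq)$ via $S\mapsto\overrightarrow{C}_{\,\prod_{p\in S}p}$ using the divisibility characterisation of homomorphisms between directed cycles. Your extra factor of~$3$ is a small improvement over the paper's version, which strictly speaking sends $\emptyset$ to $\overrightarrow{C}_1\notin\DiCycle$; otherwise the arguments are identical.
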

\begin{proof}
By Corollary~\ref{cor:futurefiniteuniv} the order $(\Pfin(\mathbb{P}),\supseteq)$ is future-finite-universal.

For the finite set $A\subset \mathbb{P}$ we put $$p(A)=\prod_{p\in A} p.$$
It immediately follows that $$\PrimesToCycles(A)=\overrightarrow{C}_{p(A)}$$
is an embedding from $(\Pfin(\mathbb{P}),\supseteq)$ to $(\DiCycle,\leq)$;
there is a homomorphism $\overrightarrow{C}_k\to \overrightarrow{C}_l$ if and only if $l$ divides $k$.
\end{proof}

\begin{thm}
\label{thm:cycles}
The order $(\DiCycles,\leq)$ is universal.
\end{thm}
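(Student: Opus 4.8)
The plan is to deduce this immediately from Lemma~\ref{lem:cycles} together with the layering construction of Theorem~\ref{thm:univ}. Since Lemma~\ref{lem:cycles} asserts that $(\DiCycle,\leq)$ is future-finite-universal, applying Theorem~\ref{thm:univ} with $(F,\leq_F)=(\DiCycle,\leq)$ gives at once that the subset order $(\Pfin(\DiCycle),\subsetLeq{\leq})$ is universal. So it suffices to produce an order embedding of $(\Pfin(\DiCycle),\subsetLeq{\leq})$ into $(\DiCycles,\leq)$: then every countable order embeds first into $(\Pfin(\DiCycle),\subsetLeq{\leq})$ and then, by composition, into $(\DiCycles,\leq)$.

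The embedding I would use is the disjoint union map $\Phi$ that sends a representative (hence antichain) set $\{A_1,\dots,A_m\}$ of directed cycles to the graph $A_1\cup\dots\cup A_m\in\DiCycles$. The heart of the argument is the equivalence
\[
\{A_1,\dots,A_m\}\subsetLeq{\leq}\{B_1,\dots,B_n\}\quad\Longleftrightarrow\quad A_1\cup\dots\cup A_m\to B_1\cup\dots\cup B_n .
\]
For the implication from left to right, for each $i$ I would pick an index $j$ with $A_i\to B_j$ (available by the definition of $\subsetLeq{\leq}$) and take the union of the corresponding homomorphisms. For the converse I would use that each $A_i$ is connected and that the homomorphic image of a connected graph is connected, so any homomorphism out of $A_1\cup\dots\cup A_m$ must map each $A_i$ into a single connected component $B_{j(i)}$ of the target, witnessing $A_i\to B_{j(i)}$ and hence $\{A_i\}\subsetLeq{\leq}\{B_j\}$. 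Recalling the elementary fact from the proof of Lemma~\ref{lem:cycles} that $\overrightarrow{C}_k\to\overrightarrow{C}_l$ exactly when $l\mid k$ makes this fully concrete.

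With this equivalence in hand, $\Phi$ both preserves and reflects $\leq$, and an order embedding between partial orders is automatically injective by antisymmetry, so $\Phi$ is an embedding of ordered sets; composing it with the embedding of an arbitrary countable order into $(\Pfin(\DiCycle),\subsetLeq{\leq})$ supplied by Theorem~\ref{thm:univ} finishes the proof. The argument is short, and the only point I expect to need a little care is the standard bookkeeping about cores: $(\DiCycles,\leq)$ is implicitly the order on cores, so one should check that when $\{A_1,\dots,A_m\}$ is an antichain in $(\DiCycle,\leq)$ the graph $A_1\cup\dots\cup A_m$ is already a core. This holds because the core of $\overrightarrow{C}_k$ is $\overrightarrow{C}_k$ itself (a directed cycle admits no homomorphism to an acyclic graph, and every proper induced subgraph of $A_1\cup\dots\cup A_m$ either contains an acyclic component or is missing a whole component), and removing a whole component $A_i$ would require $A_i\to A_j$ for some $j\neq i$, contradicting the antichain property. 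Thus $\Phi$ lands in the chosen representatives and induces the desired embedding of the core orders.
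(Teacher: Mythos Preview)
Your proposal is correct and follows essentially the same route as the paper: invoke Lemma~\ref{lem:cycles} for future-finite-universality of $(\DiCycle,\leq)$, apply Theorem~\ref{thm:univ}, and identify the resulting subset order with $(\DiCycles,\leq)$ via disjoint union. The paper phrases the last step as ``$(\DiCycles,\leq)$ is actually isomorphic to the subset order $(\Pfin(\Pfin(\mathbb{P})),\subsetLeq{\supseteq})$'' without further justification, whereas you spell out the connected-components argument and the core bookkeeping that underlie this identification; your version is therefore more explicit but not conceptually different.
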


\begin{proof}
By Lemma~\ref{lem:cycles}, the order $(\DiCycle,\leq)$ is isomorphic to $(\Pfin(\mathbb{P}),\mathop{\supseteq})$ that is future-finite-universal. It is easy to see that the order $(\DiCycles,\leq)$ is actually isomorphic to the subset order $(\Pfin(\Pfin(\mathbb{P})),\subsetLeq{\supseteq}$)\footnote{By $\subsetLeq{\supseteq}$\index{$\subsetLeq{\supseteq}$} we mean $\subsetLeq{P}$ where $\leq_P$ is $\supseteq$.}. The latter is known to be universal from Theorem~\ref{thm:univ}.
\end{proof}
\begin{example}
Consider the embedding of order given in Example~\ref{example:poset}. We can further construct an embedding $\PrimesToCycles$ from
$(\Pfin(\Pfin(\mathbb P)),\subsetLeq{\supseteq})$ to $(\DiCycles,\leq)$ as follows:
$$\PrimesToCycles(3)=\overrightarrow{C}_3,\PrimesToCycles(5)=\overrightarrow{C}_5\oplus \overrightarrow{C}_3,\PrimesToCycles(7)=\overrightarrow{C}_{105},\PrimesToCycles(11)=\overrightarrow{C}_{105}\oplus \overrightarrow{C}_{55}.$$
Here $\oplus$\index{$\oplus$} denote the disjoint union of two graphs.
\end{example}

\section[Embedding and Monomorphism orders]{\fontsize{19}{10}\selectfont Embedding and monomorphism orders}
\label{sec:embedding}
Now we are ready to start exploring the properties of individual orders that are of interest to us. As a warm-up we start with the very restricted orders induced by monomorphisms and embeddings. 

First we show that these orders are past-finite for a simple reason: For two
directed graphs $G$ and $H$, there is an embedding (or monomorphism) from $G$ to $H$ if
and only if the number of vertices of $G$ is not greater than the number of
vertices of $H$. We immediately get the following proposition.
\begin{prop}[Past-finiteness]
\label{prop:embed-finite}
The orders $(\DiGraphs,\EmbedLeq)$ and \\${(\DiGraphs,\MonoLeq)}$ are past-finite.
\end{prop}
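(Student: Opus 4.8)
The plan is to use the single structural fact recorded just before the statement: a monomorphism or an embedding $f\colon G\to H$ is, in particular, an injective map of $V_G$ into $V_H$, so its mere existence forces $|V_G|\le|V_H|$. Only this (trivial) direction is needed; no finer information about which graphs embed into which is required.

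First I would pin down what ``the order'' — and hence ``past-finite'' — means here, exactly as is done for the homomorphism order via cores. In both $(\DiGraphs,\MonoLeq)$ and $(\DiGraphs,\EmbedLeq)$ the quasi-order is in fact antisymmetric up to isomorphism: if $G\MonoLeq H$ and $H\MonoLeq G$ then $|V_G|\le|V_H|\le|V_G|$ and $|E_G|\le|E_H|\le|E_G|$, so $G$ is a subgraph of $H$ with the same numbers of vertices and edges, whence $G\isom H$; the argument for $\EmbedLeq$ is identical, since an embedding is in particular a monomorphism. Thus each equivalence class collapses to a single isomorphism type, and it suffices to show that the down-set $\downarrow H$ of every $H\in\DiGraphs$ (viewed as a set of isomorphism types) is finite.

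Next I would bound the down-set by a cardinality argument. Fix $H$ and suppose $G\MonoLeq H$ (respectively $G\EmbedLeq H$). By the injectivity remark above, $|V_G|\le|V_H|$, so $\downarrow H$ is contained in the collection of isomorphism types of directed graphs on at most $|V_H|$ vertices. For each fixed $k$ there are at most $2^{k^2}$ directed graphs on the vertex set $\{1,\dots,k\}$ (one binary choice per entry of the adjacency matrix, loops allowed), hence at most $\sum_{k=0}^{|V_H|}2^{k^2}$ isomorphism types with at most $|V_H|$ vertices, a finite number. Therefore $\downarrow H$ is finite for every $H$, which is exactly past-finiteness of $(\DiGraphs,\MonoLeq)$ and of $(\DiGraphs,\EmbedLeq)$.

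There is essentially no obstacle in this proof; the only point deserving any care is the bookkeeping step of settling that $\MonoLeq$ and $\EmbedLeq$ induce genuine orders on isomorphism types (so that ``down-set'' is well defined), after which the conclusion is immediate from the vertex-count bound. For the embedding order one may either repeat the one-line injectivity argument or simply observe that every embedding is a monomorphism and inherit the bound; both routes are trivial, and I would present the monomorphism case in full and remark that the embedding case follows verbatim.
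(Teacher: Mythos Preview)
Your proof is correct and follows essentially the same approach as the paper: the paper's entire argument is the one sentence preceding the proposition, noting that an embedding or monomorphism forces $|V_G|\le|V_H|$, from which past-finiteness is immediate. You have simply made explicit the bookkeeping (that the equivalence classes collapse to isomorphism types, and the explicit count of graphs on a bounded vertex set) that the paper leaves implicit.
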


We can show past-finite-universality of $(\DiGraphs,\EmbedLeq)$ and $(\DiGraphs,\MonoLeq)$ by Lemma~\ref{lem:pastfiniteuniv}.
There are infinitely many mutually incomparable connected graphs (isomorphism types). 
For example, every element of $S\in \Pfin(\mathbb{N})$ can be represented by the disjoint union of cycles $C_i$, $i\in S$.
\begin{prop}[Universality]
\label{prop:embed-univ}
The orders $(\Cycles,\EmbedLeq)$, ${(\DiCycles,\EmbedLeq)}$, ${(\Cycles,\MonoLeq)}$ and ${(\DiCycles,\MonoLeq)}$ are past-finite-universal.
\end{prop}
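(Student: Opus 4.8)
The plan is to reduce all four orders to the past-finite-universal order $(\Pfin(A),\subseteq)$ supplied by Lemma~\ref{lem:pastfiniteuniv} (take $A=\{3,4,5,\dots\}$), using the "disjoint union of cycles" construction as an explicit order embedding. So the real content is one elementary combinatorial lemma about when there is a monomorphism or an embedding between two disjoint unions of cycles.

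First I would isolate that lemma. \emph{Claim:} for $G,H\in\Cycles$ one has $G\MonoLeq H$ if and only if the multiset of cycle lengths occurring in $G$ is a sub-multiset of the one occurring in $H$; the same equivalence holds for $\EmbedLeq$, and both statements hold verbatim in $\DiCycles$ with each $C_k$ replaced by $\overrightarrow{C}_k$. The easy direction: if the lengths of $G$ form a sub-multiset of those of $H$, then the corresponding sub-collection of connected components of $H$ is an \emph{induced} copy of $G$ (a union of components is always an induced subgraph), so $G\EmbedLeq H$, hence $G\MonoLeq H$. For the converse it suffices to treat monomorphisms, since every embedding is a monomorphism. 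Given a monomorphism $f\colon G\to H$ and a component $C_i$ of $G$, the image $f(C_i)$ is connected, hence lies in a single component $C_j$ of $H$; following the cyclic vertex sequence of $C_i$ under the injective map $f$ exhibits $f(C_i)$ as a cycle of length $i$ that is a subgraph of $C_j$, and since deleting any single edge of $C_j$ turns it into an acyclic path, the only cyclic subgraph of $C_j$ is $C_j$ itself, so $i=j$. Injectivity of $f$ then forces distinct components of $G$ to map into distinct components of $H$, which gives exactly the sub-multiset relation. The directed case is identical, with "arc" in place of "edge".

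Next I would define $\Phi\colon\Pfin(A)\to\Cycles$ by $\Phi(S)=\bigoplus_{i\in S}C_i$ and $\overrightarrow{\Phi}\colon\Pfin(A)\to\DiCycles$ by $\overrightarrow{\Phi}(S)=\bigoplus_{i\in S}\overrightarrow{C}_i$. In $\Phi(S)$ each cycle length occurs at most once, and the lengths that occur are precisely the elements of $S$; so by the Claim
\[
\Phi(S)\MonoLeq\Phi(T)\iff\Phi(S)\EmbedLeq\Phi(T)\iff S\subseteq T ,
\]
and likewise for $\overrightarrow{\Phi}$. Thus $\Phi$ is an order embedding of $(\Pfin(A),\subseteq)$ into both $(\Cycles,\MonoLeq)$ and $(\Cycles,\EmbedLeq)$, and $\overrightarrow{\Phi}$ is an order embedding of $(\Pfin(A),\subseteq)$ into both $(\DiCycles,\MonoLeq)$ and $(\DiCycles,\EmbedLeq)$.

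Finally I would conclude: by Lemma~\ref{lem:pastfiniteuniv} every past-finite order embeds as a suborder into $(\Pfin(A),\subseteq)$, and composing such an embedding with $\Phi$ (resp.\ $\overrightarrow{\Phi}$) embeds it into each of the four orders; hence all four are past-finite-universal. For completeness one may also note each is itself past-finite, being a suborder of $(\DiGraphs,\EmbedLeq)$ or $(\DiGraphs,\MonoLeq)$, which are past-finite by Proposition~\ref{prop:embed-finite}. The whole argument is short; the only step requiring any care is the Claim, and even there the crux—that a cycle occurring as a subgraph of $C_j$ must equal $C_j$—is routine, so I anticipate no genuine obstacle.
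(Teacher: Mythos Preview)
Your proposal is correct and follows essentially the same approach as the paper: represent each finite set $S$ by the disjoint union $\bigoplus_{i\in S}C_i$ (resp.\ $\overrightarrow{C}_i$) and invoke Lemma~\ref{lem:pastfiniteuniv}. The paper states this in a single sentence without justifying the key combinatorial fact, whereas you supply a complete proof of the Claim that $G\MonoLeq H$ (equivalently $G\EmbedLeq H$) between unions of cycles holds exactly when the cycle-length multiset of $G$ is contained in that of $H$; this extra care is fine and the argument is sound.
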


Finally we show that dualities in the context of monomorphisms and embeddings are very simple.
\begin{prop}[Dualities]
\label{prop:embeddual}
For any finite set of directed graphs $\mathcal{D}$, there is a finite set of directed graphs
$\mathcal{F}_E$ such that $(\mathcal{F}_E,\mathcal{D})$ is a generalized finite E-duality pair
and a finite set of directed graphs $\mathcal{F}_M$ such that $(\mathcal{F}_M,\mathcal{D})$ is a generalized finite M-duality pair.
\end{prop}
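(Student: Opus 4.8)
The plan is to produce, for a given finite family $\mathcal{D}$, the families $\mathcal{F}_E$ and $\mathcal{F}_M$ \emph{explicitly} as sets of minimal obstructions, and to show these sets are finite by a cheap vertex-counting argument. Recall from Proposition~\ref{prop:embed-finite} and the discussion before it that (working, as always, with isomorphism types, which are exactly the E-cores and M-cores) $G\EmbedLeq H$ holds precisely when $G$ is isomorphic to an induced subgraph of $H$, and $G\MonoLeq H$ holds precisely when $G$ is isomorphic to a subgraph of $H$; both orders are past-finite.

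First I would fix a finite set $\mathcal{D}$ of directed graphs (the case $\mathcal{D}=\emptyset$ being trivial, as every directed graph lies above one of the finitely many minimal directed graphs), and set $n=\max_{D\in\mathcal{D}}|V_D|$. Put
$$X=\DiGraphs\setminus\bigcup_{D\in\mathcal{D}}\downarrow D,$$
the class of directed graphs that E-embed into no member of $\mathcal{D}$. Since being isomorphic to an induced subgraph of a fixed $D$ is preserved under passing to induced subgraphs, $\bigcup_{D\in\mathcal{D}}\downarrow D$ is a down-set and hence $X$ is an up-set of $(\DiGraphs,\EmbedLeq)$.

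The key step is to bound the minimal elements of $X$. I claim every minimal element of $X$ has at most $n+1$ vertices. Indeed, if $G\in X$ has $|V_G|\geq n+2$, then for any vertex $v$ the graph $G-v$ has at least $n+1>n$ vertices, so it is too large to be isomorphic to an induced subgraph of any $D\in\mathcal{D}$; thus $G-v\in X$, while $G-v\EmbedLeq G$ with $|V_{G-v}|<|V_G|$, so $G$ is not minimal in $X$. Since there are only finitely many isomorphism types of directed graph on at most $n+1$ vertices, $X$ has only finitely many minimal elements; let $\mathcal{F}_E$ be this finite set. By past-finiteness of $(\DiGraphs,\EmbedLeq)$, every $G\in X$ lies above some minimal element of $X$ (the finite nonempty poset $\downarrow G\cap X$ has a minimal element, which is then minimal in $X$), so $\bigcup_{F\in\mathcal{F}_E}\uparrow F=X$. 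Moreover $\uparrow F\cap\downarrow D=\emptyset$ for all $F\in\mathcal{F}_E$, $D\in\mathcal{D}$, because $F\in X$. Hence $\bigcup_{F\in\mathcal{F}_E}\uparrow F=\DiGraphs\setminus\bigcup_{D\in\mathcal{D}}\downarrow D$, which is exactly the statement that $(\mathcal{F}_E,\mathcal{D})$ is a generalized finite E-duality pair.

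Finally, the identical argument with ``induced subgraph'' replaced by ``subgraph'' throughout yields $\mathcal{F}_M$: the class of directed graphs that M-embed into no $D\in\mathcal{D}$ is again an up-set, and for $G$ in it with $|V_G|\geq n+2$ the graph $G-v$ still lies in it (again for the trivial reason that it has too many vertices to be a subgraph of any $D$), so the minimal elements again live on at most $n+1$ vertices. Alternatively, both halves follow from the dual of Corollary~\ref{cor:dualities}: the orders obtained by reversing all inequalities are future-finite, and one only needs to observe that in $(\DiGraphs,\EmbedLeq)$ (resp. $(\DiGraphs,\MonoLeq)$) every $G$ has only finitely many $H$ with $(G,H)$ a gap, since such an $H$ has at most $|V_G|+1$ vertices. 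I do not expect a genuine obstacle here; the only point needing care is the finiteness of the set of minimal obstructions, and that is precisely what the vertex-count bound delivers.
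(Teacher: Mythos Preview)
Your proposal is correct and matches the paper's own treatment almost exactly: the paper gives two proofs, one via (the past-finite dual of) Corollary~\ref{cor:dualities} using that an E- or M-gap changes at most one vertex or edge, and an alternative explicit construction taking $\mathcal{F}_E$ to be the minimal graphs on at most $n+1$ vertices not embedding into any $D\in\mathcal{D}$, and you have given both. Your justification that minimal obstructions have at most $n+1$ vertices is in fact slightly more carefully spelled out than the paper's.
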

\begin{proof}
We apply Corollary~\ref{cor:dualities} (flipped for past-finite orders). Observe that for an E-gap or an M-gap $(G,H)$, graphs
$G$ and $H$ differ by at most one vertex or edge. Thus for every $G$ there is only finitely many $H$ such that $(G,H)$ is an E-gap or an M-gap.
Both embedding and monomorphism orders have only one minimal element (the empty graph).
\end{proof}
We however show a direct proof which gives a better understanding of the structure of $\mathcal{D}$ and we will extend it later.
\begin{proof}[Alternative proof]
We prove the statement only for embeddings. Monomorphisms follow in a completely analogous way.

Fix a set $\mathcal{D}$ and denote by $n$ the maximal number of vertices of graph in $\mathcal{D}$.

We construct a set $\mathcal{F}'_E$ as a set of all graphs $F$ such that $|V_F|\leq n+1$ and $F\EmbedHom D$ for no $D\in \mathcal {D}$. Construct $\mathcal{F}_E$ as
the set of all mutually non-isomorphic minimal elements of
$(\mathcal{F}'_E,\EmbedLeq)$.


It is easy to see that $(\mathcal{F}_E,\mathcal{D})$ is a generalized finite E-duality pair.
(Obviously $\mathcal{F}_E$ is finite.)
Consider a graph $G$ such that there is no $D\in \mathcal{D}$ satisfying $G\EmbedHom D$. There are two
cases.
\begin{enumerate}
\item If $|V_G|\leq n+1$, then $G\in \mathcal{F}'_E$ and thus there is $F\in \mathcal{F}_M$,
$F\EmbedHom G$.
  \item If $|V_G|>n+1$, we consider any subgraph of $G$ on $n+1$ vertices. \qedhere
\end{enumerate}
\end{proof}
Now consider $\mathcal{D}=\{D\}$ where $D$ has $n$ vertices and let $(\mathcal{F},\mathcal{D})$ be an
E-duality pair as constructed above. By Proposition~\ref{prop:realization} only graph $F\in \mathcal{F}$
with $n+1$ vertices can be the right E-realization of $D$. We show that except for the degenerate case there
is at most one such graph $F$.

A graph $G$ is {\em vertex transitive}\index{vertex-transitive} if for any distinct vertices $v,w\in V_H$, there exist a graph automorphism $f$ such that $f(v)=w$.
\begin{prop}[Realizations]
Graph $G$ is a right E-realization if and only if it is vertex transitive. 
\end{prop}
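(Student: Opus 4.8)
The plan is to first reduce the order-theoretic notion ``$G$ is a right $E$-realization'' to an explicit condition on the vertex-deleted subgraphs of $G$, and then to prove the two implications separately, the backward one being routine and the forward one carrying essentially all the weight.

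First I would describe the $E$-gaps lying below a fixed graph $G$. Since an $E$-morphism is precisely an isomorphism onto an induced subgraph, $G_1\EmbedHom G_2$ holds exactly when $G_1$ is isomorphic to an induced subgraph of $G_2$; in particular two mutual $E$-morphisms force $|V_{G_1}|=|V_{G_2}|$ and hence $G_1\isom G_2$, so $(\DiGraphs,\EmbedLeq)$ is genuinely an order on isomorphism types. I would then prove that $(D,G)$ is an $E$-gap if and only if $D\isom G-v$ for some $v\in V_G$: if $D\EmbedHom G$ with $|V_D|\le|V_G|-2$, then deleting from $G$ a vertex outside a fixed induced copy of $D$ yields a graph strictly between $D$ and $G$, so an $E$-gap forces $|V_D|=|V_G|-1$, i.e.\ $D\isom G-v$; conversely nothing can lie strictly between $G-v$ and $G$, since an intermediate graph has either $|V_G|-1$ vertices (and is then isomorphic to $G-v$) or $|V_G|$ vertices (and is then isomorphic to $G$). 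This is the same ``differ by one vertex'' phenomenon already used in the alternative proof of Proposition~\ref{prop:embeddual}. Consequently the set of $D$ with $(D,G)$ an $E$-gap is, up to isomorphism, exactly $\{\,G-v : v\in V_G\,\}$, and therefore $G$ is a right $E$-realization precisely when $G$ is nonempty and all the graphs $G-v$, $v\in V_G$, are mutually isomorphic.

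The backward implication is then immediate: if $G$ is vertex transitive, then for any $v,w\in V_G$ an automorphism $f$ of $G$ with $f(v)=w$ restricts to an isomorphism $G-v\to G-w$, so all $G-v$ coincide up to isomorphism and, by the reduction above, $G$ is a right $E$-realization. (The empty graph is vacuously vertex transitive but has no gap below it, so the statement is to be read for nonempty $G$; single-vertex graphs cause no trouble, the unique gap below $K_1$ being $(\emptyset,K_1)$.)

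The forward implication is where I expect essentially all the difficulty to lie. By the reduction I may assume all the $G-v$ are isomorphic to one fixed graph $D$, and I must exhibit, for every pair $v,w\in V_G$, an automorphism of $G$ sending $v$ to $w$. The naive attempt is to take any isomorphism $\phi\colon G-v\to G-w$ and extend it by $\phi(v):=w$; the obstacle is that, although $\phi$ transports the edges of $G$ not incident with $v$ faithfully, there is no reason for $\phi$ to carry the edge-pattern at $v$ onto the edge-pattern at $w$, so this extension need not be an automorphism at all. The remedy I would pursue is to choose $\phi$ coherently rather than arbitrarily: consider the family of all induced copies of $D$ inside $G$ on $|V_G|-1$ vertices together with their identifications with $D$ (one copy for each deleted vertex, each realized by as many isomorphisms as $D$ has automorphisms), and then run a counting or averaging argument over this family, crucially exploiting that the isomorphism hypothesis holds simultaneously for \emph{every} deleted vertex and not merely for one pair. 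I would emphasize that this uniformity is essential: for a single pair, the existence of an isomorphism $G-v\to G-w$ is strictly weaker than the existence of an automorphism moving $v$ to $w$ (this is exactly the phenomenon of pseudosimilar vertices), so the argument cannot be purely local, and converting the global hypothesis into the required automorphism is the crux of the proof.
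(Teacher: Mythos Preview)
Your reduction of ``right $E$-realization'' to ``all vertex-deleted subgraphs $G-v$ are isomorphic'' is correct and matches the paper, and the backward implication is fine. The gap is in the forward implication: you correctly identify the obstacle (an arbitrary isomorphism $\phi\colon G-v\to G-w$ need not extend to an automorphism), but your proposed ``counting or averaging over all induced copies of $D$'' is not a concrete argument, and it is not the route the paper takes.

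The key idea you are missing is \emph{regularity}. If all the $G-v$ are isomorphic then they all have the same number of edges, hence every vertex of $G$ has the same in-degree $d$ and out-degree (this is where the global hypothesis is cashed in). Once $G$ is regular, the obstacle you flagged disappears entirely: in $G-v$ the vertices of out-degree $d-1$ are exactly the in-neighbours of $v$, and the vertices of in-degree $d-1$ are exactly the out-neighbours of $v$ (and likewise in $G-w$). Any isomorphism $\phi\colon G-v\to G-w$ preserves degrees, so it is forced to send $N^\leftarrow_G(v)$ onto $N^\leftarrow_G(w)$ and $N^\rightarrow_G(v)$ onto $N^\rightarrow_G(w)$; consequently the extension $\phi(v):=w$ \emph{is} an automorphism of $G$. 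In other words, in a regular graph there are no pseudosimilar vertices, so the phenomenon you were worried about simply cannot occur here. No averaging is needed --- a single well-chosen (indeed, arbitrary) isomorphism already extends.
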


\begin{proof}
It is easy to observe that vertex transitive graphs are right E-realizations. We show the opposite implication.

Consider a graph $G$ that is a right E-realization. This means that removing
any vertex from $g$ leads to the same graph.
 It follows that in-degrees and
out-degrees of vertices are the same: if not, removing one vertex can not lead
to the same graph as removing different vertices with different degrees. Fix
vertices $v$ and $v'$ and denote by $H$ and $H'$ graphs created from $G$ by
removing $v$ and $v'$ respectively. By definition $H$ and $H'$ are
isomorphic. Denote by $d$ the in-degree of vertices in $G$ and (the out-degree
must be the same). It follows that both $H$ and $H'$ have $d$ vertices with
in-degree $d-1$ and the removed vertex is connected to all of them. The
isomorphism of $H$ to $H'$ must map vertices of in-degree $d-1$ to vertices of
in-degree $d-1$. The same holds for out-degrees. Therefore the isomorphism can be
extended to an automorphism of $G$ mapping $v$ to $v'$. The choice of $v$ and $v'$
was arbitrary and thus it follows that $G$ is vertex transitive.

Consider graph $H$. Because it has a (right) E-realization, we know that there is $d$ such that
$H$ has $d$ vertices of in-degree $d-1$ and the rest of the vertices are of in-degree $d$
and the same holds for out-degrees.
This also specifies the vertex extending $H$ to its realization. The only degenerate
case is when $d=0$ and thus there is no vertex of degree $d-1$. In the case
of single vertex graph the realizations are all graphs on 2 vertices.
\end{proof}

%
%

\section{Full homomorphism orders}
\label{sec:fullhomo}

As a first non-trivial case we consider full homomorphisms (mappings that are both
edge and non-edge preserving). Knowledge about the full homomorphism order is somewhat developed: 
dualities in the full homomorphism order have been studied by Ball, Ne\v{s}et\v{r}il and Pultr in
\cite{Ball2007,Ball2010}. The problem of the existence of a full homomorphism from $G$ to $H$ is also known as
{\em full $H$-colouring}\index{full $H$-colouring} problem and in this language it has been independently studied by Feder and Hell in~\cite{Feder2008} and, more recently, by Hell and Hern{\'a}ndez-Cruz~\cite{Hell2013}. In both cases the motivation was
the correspondence of full homomorphisms to a certain matrix partition problem, see~\cite{Feder2008}.

We closely relate the full homomorphism order to the embedding order.
First we show that cores in full homomorphisms correspond to point-determining graphs which were studied in the 1970s by Sumner~\cite{Sumner1973} (c.f. Feder and Hell~\cite{Feder2008})
and consequently we show that the cores are ordered by embeddings.

%
%

In directed graph $G$, the {\em out-neighbourhood}\index{out-neighbourhood} of a vertex $v\in G_V$, denoted by $N^\rightarrow_G(v)$\index{$N^\rightarrow_G(v)$}, is the set of all vertices $v'$ of $G$ such that there is an edge from $v$ to $v'$ in $G$. Similarly {\em in-neighbourhood}\index{in-neighbourhood} of $v$, denoted by $N^\leftarrow_G(v)$, is the set of all vertices $v'$ of $G$ such that there is an edge from $v'$ to $v$ in $G$. We say that two vertices $v$ and $v'$ have the same neighbourhoods if both the in- and out-neighbourhoods match. Recall that {\em point-determining graphs} (also known as {\em mating-type graphs}, {\em mating graphs}, {\em M-graphs} or {\em thin graphs}) are graphs in which no two vertices have the same neighbourhoods. If we start with any graph $G$, and identify vertices with the same neighbourhoods, we obtain a point-determining graph we denote by $G_{\mathrm{pd}}$\index{$G_{\mathrm{pd}}$}.

It is easy to observe $G_{\mathrm{pd}}$ is always an induced subgraph of $G$ and moreover that for every directed graph $G$, $G_{\mathrm{pd}}\FullLeq G\FullLeq G_{\mathrm{pd}}$ and thus $G\FullHomeq G_{\mathrm{pd}}$. This motivates the following proposition:
\begin{prop}[Cores]
\label{prop:F-core}
Graph $G$ is an F-core if and only if it is point-determining.
\end{prop}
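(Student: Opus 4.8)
The plan is to prove both implications using the two facts recorded just before the statement: that $G_{\mathrm{pd}}$ is an induced subgraph of $G$, and that $G\FullHomeq G_{\mathrm{pd}}$ for every directed graph $G$. Recall that, by definition, $G$ is an F-core precisely when it has the fewest vertices among the members of its $\FullHomeq$-equivalence class.

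For the "only if" direction, suppose $G$ is an F-core. Since $G\FullHomeq G_{\mathrm{pd}}$ and $G_{\mathrm{pd}}$ is an induced subgraph of $G$, we have $|V_{G_{\mathrm{pd}}}|\le|V_G|$; minimality of $G$ in its equivalence class then forces $|V_{G_{\mathrm{pd}}}|=|V_G|$. But $G_{\mathrm{pd}}$ is obtained from $G$ by identifying vertices that have the same in- and out-neighbourhoods, so equality of vertex counts means that no such identification was possible, i.e.\ $G$ is point-determining.

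For the "if" direction the key auxiliary step is: every full homomorphism $h\colon G\to G$ from a point-determining graph to itself is injective, and hence (being a bijective full homomorphism on a finite vertex set) an automorphism. This holds because a full homomorphism $h$ satisfies $(u,v)\in E_G\iff(h(u),h(v))\in E_G$, so if $h(u)=h(v)$ then, running this equivalence over all $w$ both as $(u,w)$ and as $(w,u)$, the vertices $u$ and $v$ have the same out-neighbourhoods and the same in-neighbourhoods, contradicting point-determinacy. Granting this, let $H\FullHomeq G$, witnessed by full homomorphisms $f\colon G\to H$ and $g\colon H\to G$. Then $g\circ f\colon G\to G$ is a full homomorphism (composition of full homomorphisms is full), hence injective by the auxiliary step, which forces $f$ to be injective and therefore $|V_G|\le|V_H|$. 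Thus $G$ attains the minimum number of vertices in its equivalence class, i.e.\ $G$ is an F-core.

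The routine points are that composition of full homomorphisms is again full and that a bijective full homomorphism between finite graphs is an isomorphism; neither presents any difficulty. The only place where a little care is needed is applying the biconditional $(u,v)\in E_G\iff(h(u),h(v))\in E_G$ for directed graphs to both the out- and the in-neighbourhood, so that the contradiction with point-determinacy (which is phrased in terms of matching both neighbourhoods) really follows. I expect no genuine obstacle here; the proposition is essentially a transcription of Sumner's point-determining graphs into the language of F-cores, and as a by-product the argument also shows that F-equivalent point-determining graphs must be isomorphic, so the F-core of a graph is unique up to isomorphism and equals $G_{\mathrm{pd}}$.
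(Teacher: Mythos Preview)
Your proof is correct and follows essentially the same approach as the paper: both directions hinge on the observation that a full homomorphism identifying two vertices forces those vertices to have identical in- and out-neighbourhoods, contradicting point-determinacy. The only cosmetic difference is that you package the ``if'' direction via an auxiliary lemma about self-maps and then compose $g\circ f$, whereas the paper argues directly that one of $f$ or $g$ must fail to be injective; the underlying idea is identical.
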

\begin{proof}
Recall that $G$ is an F-core if it is minimal (in number of vertices) in its
equivalence class of $\FullHomeq$. If $G$ is a F-core, $G_{\mathrm{pd}}$ can
not be smaller than $G$ and thus $G=G_{\mathrm{pd}}$.

It remains to show that every point-determining graph is an F-core.
Consider two point-determining graphs $G\FullHomeq H$ that are not isomorphic. There are 
full homomorphisms $f:G\FullHom H$ and $g:H\FullHom G$. Because injective full
homomorphisms are embeddings, it follows that either $f$ or $g$ is not
injective. Without loss of generality, assume that $f$ is not injective. Consider $u,v\in V_G$, $u\neq v$, such
that $f(u)=f(v)$. Because full homomorphisms preserve both edges and non-edges
it is easy to see that $N^\leftarrow_G(u)=N^\leftarrow_G(v)$ and $N^\rightarrow_G(u)=N^\rightarrow_G(v)$, a contradiction with the assumption that $G$
is point-determining.
\end{proof}
Now we apply the observations of Section~\ref{sec:embedding} to get the following proposition:

\begin{prop}
\label{prop:thincmp}
For F-cores $G$ and $H$ we have $G\FullLeq H$ if and only if $G\EmbedLeq H$.
\end{prop}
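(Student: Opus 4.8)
The plan is to prove the two implications separately; the forward implication is immediate and the reverse one reduces to the point-determining characterization of F-cores established in Proposition~\ref{prop:F-core}.

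First I would dispatch the direction $G\EmbedLeq H\Rightarrow G\FullLeq H$. By definition an embedding is a monomorphism that is also a full homomorphism, so any embedding $G\EmbedHom H$ is in particular a full homomorphism $G\FullHom H$. Note that this half does not use that $G$ and $H$ are F-cores.

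For the converse, suppose $G\FullLeq H$, witnessed by a full homomorphism $f\colon G\FullHom H$. Since $G$ and $H$ are F-cores, Proposition~\ref{prop:F-core} tells us they are point-determining. The key step is to show $f$ is injective. Assume not, and pick $u\neq v$ in $V_G$ with $f(u)=f(v)$. Because a full homomorphism preserves both edges and non-edges, for every $w\in V_G$ we have $(u,w)\in E_G$ iff $(f(u),f(w))=(f(v),f(w))\in E_H$ iff $(v,w)\in E_G$, and symmetrically for in-edges; hence $N^\rightarrow_G(u)=N^\rightarrow_G(v)$ and $N^\leftarrow_G(u)=N^\leftarrow_G(v)$, contradicting the fact that $G$ is point-determining. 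This is exactly the neighbourhood computation already carried out inside the proof of Proposition~\ref{prop:F-core}, so I would simply cite it rather than repeat it. Consequently $f$ is an injective full homomorphism, and since full injective homomorphisms are embeddings we conclude $G\EmbedLeq H$.

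The only genuine subtlety — and the spot where a sloppy argument could slip — is the bookkeeping of loops and of the cases $w\in\{u,v\}$ in the neighbourhood equalities above (for instance, an edge $(u,v)\in E_G$ maps to a loop at $f(u)$, which by fullness forces loops at both $u$ and $v$). All of these situations are handled uniformly precisely because fullness preserves non-edges as well as edges, which is why the earlier proof already records the identities $N^\rightarrow_G(u)=N^\rightarrow_G(v)$ and $N^\leftarrow_G(u)=N^\leftarrow_G(v)$; leaning on that observation makes the present proposition a two-line consequence of Proposition~\ref{prop:F-core} and the fact that injective full homomorphisms are embeddings.
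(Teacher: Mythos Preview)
Your proof is correct and follows essentially the same approach as the paper: both directions are handled identically, with the reverse implication reduced to the injectivity of any full homomorphism between point-determining graphs via the neighbourhood argument from Proposition~\ref{prop:F-core}, and then the observation that injective full homomorphisms are embeddings. The paper's proof is terser (it simply writes ``by the same argument as in the proof of Proposition~\ref{prop:F-core} we obtain that $f$ is injective''), while you spell out the neighbourhood computation and the loop bookkeeping, but there is no substantive difference.
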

\begin{proof}
Every embedding is also a full homomorphism. In the opposite direction
consider a full homomorphism $f:G_{\mathrm{pd}}\to H_{\mathrm{pd}}$. By the same
argument as in the proof of Proposition~\ref{prop:F-core} we obtain that $f$ is injective.
\end{proof}

\begin{corollary}[Past-finiteness]
\label{cor:fullhomsize}
For every $G\FullLeq H$ we also have $|V_{G_{\mathrm{pd}}}|\leq |V_{H_{\mathrm{pd}}}|$.
Thus the order $(\DiGraphs,\FullLeq)$ is past-finite.
\end{corollary}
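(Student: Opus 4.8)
The plan is to derive Corollary~\ref{cor:fullhomsize} directly from Proposition~\ref{prop:thincmp} together with Proposition~\ref{prop:F-core} and the size estimate for embeddings already recorded in Section~\ref{sec:embedding}. First I would recall that $G\FullHomeq G_{\mathrm{pd}}$ and $H\FullHomeq H_{\mathrm{pd}}$, so that $G\FullLeq H$ is equivalent to $G_{\mathrm{pd}}\FullLeq H_{\mathrm{pd}}$. By Proposition~\ref{prop:F-core} the graphs $G_{\mathrm{pd}}$ and $H_{\mathrm{pd}}$ are F-cores (they are point-determining), so Proposition~\ref{prop:thincmp} applies and gives $G_{\mathrm{pd}}\EmbedLeq H_{\mathrm{pd}}$.

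Next I would invoke the characterization used at the start of Section~\ref{sec:embedding}: there is an embedding from one directed graph to another if and only if the first has at most as many vertices as the second. Hence $G_{\mathrm{pd}}\EmbedLeq H_{\mathrm{pd}}$ yields $|V_{G_{\mathrm{pd}}}|\leq |V_{H_{\mathrm{pd}}}|$, which is the first assertion.

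For the second assertion --- that $(\DiGraphs,\FullLeq)$ is past-finite --- I would argue that for a fixed F-core $H$ (equivalently, fixed $H_{\mathrm{pd}}$) there are, up to isomorphism, only finitely many directed graphs $G$ with $G\FullLeq H$. Indeed, since $\FullLeq$ factors through point-determining graphs, the down-set of $H$ in the order $(\DiGraphs,\FullLeq)$ (where $\DiGraphs$ is restricted to F-cores) is in bijection with the set of point-determining graphs $G_{\mathrm{pd}}$ with $|V_{G_{\mathrm{pd}}}|\leq |V_{H_{\mathrm{pd}}}|$; there are only finitely many isomorphism types of graphs on a bounded number of vertices, so the down-set is finite. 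As past-finiteness only concerns the order on cores (our standing convention), this completes the proof.

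I do not expect a genuine obstacle here, since everything reduces to facts already proved; the only point requiring a little care is bookkeeping the standing convention that the order $(\DiGraphs,\FullLeq)$ is taken on F-cores, so that ``past-finite'' is the statement about finiteness of down-sets of F-cores rather than of arbitrary representatives. Making that explicit is the one step worth stating carefully.
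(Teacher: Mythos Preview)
Your proof is correct and is exactly the natural elaboration of why the paper states this as an immediate corollary of Proposition~\ref{prop:thincmp} (the paper gives no separate proof). One small caution: the ``if and only if'' characterization you quote from the beginning of Section~\ref{sec:embedding} is misstated there---only the forward implication (an embedding forces $|V_G|\le |V_H|$) is true---but that is the only direction you use, so your argument stands.
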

\begin{corollary}[Universality]
The order $(\DiCycles,\FullLeq)$ is past-finite-universal.
\end{corollary}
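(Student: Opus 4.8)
The plan is to reduce the statement to the past-finite-universality of the embedding order $(\DiCycles,\EmbedLeq)$ established in Proposition~\ref{prop:embed-univ}, by showing that on the class $\DiCycles$ the orders $\FullLeq$ and $\EmbedLeq$ literally coincide. The two ingredients that make this possible are the characterization of F-cores as point-determining graphs (Proposition~\ref{prop:F-core}) and the fact that, restricted to F-cores, $\FullLeq$ agrees with $\EmbedLeq$ (Proposition~\ref{prop:thincmp}).

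First I would check that every graph in $\DiCycles$ is an F-core, i.e.\ point-determining. A directed cycle $\overrightarrow{C}_k$ with $k\geq 3$ is point-determining: every vertex has exactly one out-neighbour and exactly one in-neighbour, and as $k\geq 3$ these singletons are pairwise distinct, so no two vertices share both neighbourhoods. A disjoint union of such cycles remains point-determining, since two vertices in different components have nonempty neighbourhoods living in disjoint vertex sets and hence cannot have equal neighbourhoods, while within a single component we have already argued it. By Proposition~\ref{prop:F-core}, every member of $\DiCycles$ is therefore an F-core.

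Next, applying Proposition~\ref{prop:thincmp} to any two members $G,H$ of $\DiCycles$ (both F-cores), we get $G\FullLeq H$ if and only if $G\EmbedLeq H$; thus $(\DiCycles,\FullLeq)$ and $(\DiCycles,\EmbedLeq)$ are the very same ordered set. I would also remark that this identification is consistent with the standing convention that these orders are taken on isomorphism types of cores: if two members of $\DiCycles$ are $\FullHomeq$-equivalent, then each embeds into the other, they have the same number of vertices, and a size-preserving embedding (which preserves and reflects edges) is an isomorphism — so no equivalence classes get collapsed when passing to $\DiCycles$ viewed inside $(\DiGraphs,\FullLeq)$.

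Finally, Proposition~\ref{prop:embed-univ} says $(\DiCycles,\EmbedLeq)$ is past-finite-universal, so by the identification above $(\DiCycles,\FullLeq)$ is past-finite-universal as well. There is no genuine obstacle in this argument; the only step deserving a moment's care is verifying that disjoint unions of directed cycles — not merely single cycles — are point-determining, which is needed so that Proposition~\ref{prop:thincmp} applies uniformly on all of $\DiCycles$.
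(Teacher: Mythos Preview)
Your proposal is correct and follows essentially the same approach as the paper: observe that members of $\DiCycles$ are point-determining (hence F-cores), so by Proposition~\ref{prop:thincmp} the orders $\FullLeq$ and $\EmbedLeq$ coincide on $\DiCycles$, and then invoke Proposition~\ref{prop:embed-univ}. The paper's proof is a one-line version of exactly this, and your additional care about disjoint unions and the core convention is welcome but not needed beyond what the paper sketches.
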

\begin{proof}
Observe that oriented cycles are point-determining graphs and apply Proposition~\ref{prop:embed-univ}.
\end{proof}

Sumner~\cite{Sumner1973} has defined a {\em nucleus}\index{nucleus} of a point-determining graph $G$
as a set of vertices $G^0\subseteq G$ such that for every $v\in G^0$ the graph created from $G$ by
removing $v$ is also point-determining.
All subgraphs that can be induced on a nucleus of connected point determining graphs have been
characterized. It was shown that every (non-trivial) unoriented connected point determining graph has
nucleus of size at least 2. This is a non-trivial result with an alternative proof appearing in~\cite{Feder2008}.

Because all these notions were considered on undirected graphs only, we now prove some of these results for directed graphs.
We recently became aware that the same work was also done independently in~\cite{Hell2013}:

\begin{prop}
\label{prop:Fcoregap}
Every F-core $G$ with at least 2 vertices contains a $F$-core $H$ with $|V_G|-1$ vertices as an induced subgraph.
\end{prop}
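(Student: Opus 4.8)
The plan is to prove that every F-core $G$ (equivalently, by Proposition~\ref{prop:F-core}, every point-determining directed graph) with $|V_G|\geq 2$ has a vertex $v$ whose removal leaves a point-determining graph; the induced subgraph $H=G-v$ then has $|V_G|-1$ vertices and is an F-core by Proposition~\ref{prop:F-core}. In the language of Sumner~\cite{Sumner1973}, this says that the nucleus of $G$ is non-empty. First I would set up the argument by contradiction: assume that for every vertex $x\in V_G$ the graph $G-x$ fails to be point-determining, i.e.\ $G-x$ has two distinct vertices with equal in- and out-neighbourhoods in $G-x$. Since $G$ itself is point-determining, any such coincidence in $G-x$ must be "witnessed" by the removed vertex $x$: there are distinct $a,b\in V_G\setminus\{x\}$ with $N^\rightarrow_G(a)\triangle N^\rightarrow_G(b)\subseteq\{x\}$ and $N^\leftarrow_G(a)\triangle N^\leftarrow_G(b)\subseteq\{x\}$ (symmetric differences), and at least one of these symmetric differences is exactly $\{x\}$ (otherwise $a,b$ would already violate point-determinacy of $G$).

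The key step is then a degree/extremality argument in the spirit of Sumner's Lemma~\ref{lem:pd} and the proposition following it in the excerpt. I would pick $x$ to be a vertex of maximum total degree (or handle in- and out-degrees separately, adapting Sumner's undirected argument to the directed setting). Applying the failure hypothesis to $G-x$ produces a "near-twin" pair $a,b$ separated precisely by $x$; applying it again to $G-a$ (or whichever endpoint is appropriate) produces a second near-twin pair, and Lemma~\ref{lem:pd}-style reasoning forces one of these vertices to coincide with $x$. Tracking the neighbourhood containments then yields a vertex whose degree strictly exceeds that of $x$, contradicting the choice of $x$ as a maximum-degree vertex. This is essentially the directed analogue of the undirected proof already reproduced in the excerpt (the proposition right after Lemma~\ref{lem:pd}), so the skeleton is known; the work is in bookkeeping the four neighbourhood sets $N^\rightarrow_G(\cdot)$ and $N^\leftarrow_G(\cdot)$ instead of a single $N_G(\cdot)$.

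The main obstacle I anticipate is exactly this directed bookkeeping: in the undirected case a single "same neighbourhood" relation governs everything, but for digraphs a pair of vertices can be a twin "on the out-side" while being separated "on the in-side", so the witnessing vertex $x$ may play different roles for the in- and out-relations, and the case analysis branches. I would manage this by first proving a directed version of Sumner's lemma (Lemma~\ref{lem:pd}) — stating it for, say, the out-neighbourhood relation while recording what the in-neighbourhoods do — and then symmetrically for in-neighbourhoods, and finally combining. One should also dispose of small or degenerate cases ($|V_G|=2$, vertices with loops, isolated-in-one-direction vertices) separately, and note that connectedness is not actually needed here since a point-determining graph that is a disjoint union of point-determining pieces still has a removable vertex in any non-trivial component. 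If the fully self-contained directed argument turns out to be lengthy, the honest alternative is to cite Feder--Hell~\cite{Feder2008} or Hell--Hern\'andez-Cruz~\cite{Hell2013}, where (as the excerpt notes) the directed statement was obtained independently, and present the proof above as a sketch.
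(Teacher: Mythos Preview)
Your approach is correct in principle --- adapting Sumner's maximum-degree argument to directed graphs does work, and as you note this is essentially what Hell and Hern\'andez-Cruz~\cite{Hell2013} do --- but the paper takes a genuinely different and cleaner route that sidesteps all the directed bookkeeping you anticipate.

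The paper's argument runs as follows. Assume for contradiction that for every vertex $v$ there is a pair $\{v_1,v_2\}$ whose (in- and out-)neighbourhoods coincide in $G-v$. Form an auxiliary undirected graph $G'$ on $V_G$ whose edges are exactly these chosen pairs, one edge per vertex $v$; so $G'$ has $|V_G|$ edges. Now observe that along any path $p_1,\ldots,p_n$ in $G'$, the successive witness vertices $v_1,\ldots,v_{n-1}$ are all distinct and the neighbourhoods of $p_1$ and $p_n$ in $G$ differ in all of them. Hence $G'$ contains no cycle, so it is a forest with at most $|V_G|-1$ edges --- a contradiction.

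The key advantage of the paper's proof is that ``having the same neighbourhood'' is treated as a single black-box predicate, so the argument is literally identical for undirected graphs, digraphs, and indeed arbitrary relational structures: no case split on in- versus out-neighbourhoods, no degree-extremality step, and no need to formulate a directed analogue of Lemma~\ref{lem:pd}. Your approach ties in more closely with Sumner's nucleus theory (and hence with the sharper two-realisation results of Feder--Hell), but for the bare statement of Proposition~\ref{prop:Fcoregap} it is heavier than necessary, and the paper explicitly advertises its counting argument as simpler than the nucleus-based one.
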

While the Proposition is proved as Theorem 1 of \cite{Hell2013}, we give a simple self-contained proof.
\begin{proof}
If there is a vertex $v$ of $G$ such that the graph $H$ created from $G$ by removing $v$ is point-determining, by Proposition \ref{cor:fullhomsize} there is no $F$-core strictly between $G$ and $H$ and we are done.

Assume to the contrary that for every vertex $v\in V_F$ there are two vertices
$\{v_1,v_2\}$ such that neighbourhoods of $v_1$ and $v_2$ become the same after
removing $v$. Denote by $G'$ the graph induced on vertices of $G$ by those
pairs. Thus $G'$ is a graph with $|V_G|$ edges.

Take any path $p_1,p_2,\ldots, p_n$ in $G'$. Denote by $v_1,\ldots, v_{n-1}$ the
vertices such that neighbourhoods of $p_i$ and $p_{i+1}$ differ only by $v_i$.
By construction of $G'$ all vertices $v_i$ are unique.
It follows that $p_1$ and $p_n$ differs by all vertices $v_1$,\ldots,
$v_{n-1}$. Consequently $G'$ is a tree and thus it has at most $|V_G|-1$
edges. A contradiction.
\end{proof}

Very similar ideas give the characterization of gaps in the full homomorphism order.
\begin{thm}[Gaps]
\label{thm:fullgap}
If $G$ and $H$ are F-cores and $(G,H)$ is an F-gap, then $G$ is created from $H$
by removing one vertex. Consequently the gaps in full homomorphism orders coincide
with the gaps in embedding order.
\end{thm}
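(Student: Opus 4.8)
The plan is to reduce the statement to a purely combinatorial claim about point-determining graphs and then settle that claim by an argument in the spirit of the proof of Proposition~\ref{prop:Fcoregap}. First I would set up the picture: by Proposition~\ref{prop:F-core} the F-cores $G$ and $H$ are point-determining, and by Proposition~\ref{prop:thincmp} $G\FullLeq H$ is the same as $G\EmbedLeq H$, so $G$ is (isomorphic to) an induced subgraph of $H$. Since $(G,H)$ is a gap we have $G\not\FullHomeq H$, hence $|V_G|<|V_H|$ by Corollary~\ref{cor:fullhomsize}. It thus remains only to exclude $|V_H|\ge |V_G|+2$.

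So suppose $|V_H|\ge |V_G|+2$, fix an embedding of $G$ as an induced subgraph of $H$, and aim at a contradiction by producing an F-core $K$ with $G\FullLneq K\FullLneq H$. By Proposition~\ref{prop:thincmp} together with Corollary~\ref{cor:fullhomsize} it is enough to find a point-determining induced subgraph $M$ of $H$ with $V_G\subseteq V_M\subsetneq V_H$: then $G$ embeds into $M$, $|V_G|<|V_M|<|V_H|$, and $M$ is an F-core strictly between $G$ and $H$. The natural candidates are the point-determining quotients $\bigl(H[V_G\cup S]\bigr)_{\mathrm{pd}}$ of the subgraphs induced on $V_G$ together with a set $S$ of extra vertices; the gap hypothesis forces each such quotient to be isomorphic either to $G$ (the extension ``collapses'') or to $H$ (which only happens when $V_G\cup S=V_H$).

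The crux, therefore, is: if $G$ and $H$ are point-determining, $G$ is an induced subgraph of $H$, and $|V_H|\ge |V_G|+2$, then $H$ has a point-determining induced subgraph $M$ with $V_G\subseteq V_M\subsetneq V_H$. I would prove this by the forest/nucleus argument of Proposition~\ref{prop:Fcoregap}, made local to $V_H\setminus V_G$. If $H[V_G\cup\{w\}]$ is already point-determining for some $w\in V_H\setminus V_G$ we are done; otherwise, for every such $w$ there is $g_w\in V_G$ whose in- and out-neighbourhoods agree with those of $w$ inside $V_G$, and since $G$ is point-determining the symmetric differences of $N^\rightarrow_H(w),N^\rightarrow_H(g_w)$ and of $N^\leftarrow_H(w),N^\leftarrow_H(g_w)$ lie entirely in $V_H\setminus V_G$. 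Feeding these collapsing pairs into an auxiliary graph on $V_H$ exactly as in Proposition~\ref{prop:Fcoregap} yields a forest, which bounds how many vertices of $V_H\setminus V_G$ can be ``bad''; combining this bound with an induction on $|V_H|-|V_G|$ (when no single extra vertex works, pass to an extension $H[V_G\cup S]$ with $|S|\ge 2$ and take its point-determining quotient) produces the required $M$.

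The step I expect to be the obstacle is precisely this vertex selection: Proposition~\ref{prop:Fcoregap} only guarantees that \emph{some} vertex can be deleted while preserving point-determinacy, whereas here the deleted vertex must avoid a prescribed copy of $G$, and small examples show a one-vertex reduction genuinely need not suffice (all $H[V_G\cup\{w\}]$ can collapse to $G$ while $H$ still carries a point-determining induced subgraph two vertices smaller, containing $G$). Making the bookkeeping of the forest argument cooperate with the induction on $|V_H|-|V_G|$ is the real content; everything else is routine use of Propositions~\ref{prop:F-core}, \ref{prop:thincmp} and Corollary~\ref{cor:fullhomsize}. The closing ``Consequently'' is then immediate: among F-cores $\FullLeq$ and $\EmbedLeq$ coincide by Proposition~\ref{prop:thincmp}, and an embedding gap between induced subgraphs forces a one-vertex difference because in the embedding order equal vertex counts force isomorphism (as discussed in Section~\ref{sec:embedding}); hence the F-gaps and the E-gaps among F-cores are literally the same pairs.
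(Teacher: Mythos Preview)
Your setup is correct and matches the paper: reduce to point-determining graphs via Propositions~\ref{prop:F-core} and~\ref{prop:thincmp}, embed $G$ in $H$, and assume $|V_H|\ge |V_G|+2$ for a contradiction. You also correctly locate the crux. But the method you propose for the crux --- working \emph{up} from $G$, adding sets $S\subseteq A=V_H\setminus V_G$ and taking point-determining quotients, with an induction on $|V_H|-|V_G|$ --- is precisely the step you yourself flag as unfinished, and it is not clear it terminates: you have not shown that some proper $S$ yields a quotient strictly larger than $G$.

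The paper avoids this difficulty by going \emph{down} from $H$ instead, and the key trick is one your formulation rules out. You write that ``the deleted vertex must avoid a prescribed copy of $G$''; the paper shows this is false. Assume no vertex of $A$ can be removed from $H$ while keeping it point-determining. For each $a\in A$ pick the witnessing pair $\{a_1,a_2\}$ and form the auxiliary graph $H'$ on $V_H$ with these pairs as edges. Since $G$ is point-determining, every edge of $H'$ meets $A$. Applying the forest analysis of Proposition~\ref{prop:Fcoregap} to $H'$ restricted to $N_{H'}[A]$ gives a vertex $v\in N_{H'}[A]$ that determines no pair inside $N_{H'}[A]$; necessarily $v\in V_G$, and $v$ has an $H'$-neighbour $v'\in A$, so the neighbourhoods of $v$ and $v'$ in $H$ differ only by a vertex of $A$. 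Now delete $v$ (not $v'$!) from $H$. The resulting $H''$ still contains an isomorphic copy of $G$, induced on $(V_G\setminus\{v\})\cup\{v'\}$, because $v'$ agrees with $v$ on $V_G$. And $H''$ is point-determining: the choice of $v$ handles pairs meeting $A$, and the new copy of $G$ handles pairs inside it. Since $|V_{H''}|=|V_H|-1>|V_G|$, this $H''$ is an F-core strictly between $G$ and $H$.

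So the missing idea is: you may delete a vertex of the fixed copy of $G$, provided a vertex of $A$ can substitute for it. Once you allow this, a single vertex deletion from $H$ always suffices and no induction is needed.
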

\begin{proof}
Assume to the contrary that there are F-cores $G$ and $H$ such that $(G,H)$ is an F-gap,
but the size of $G$ differs from $H$ by more than one vertex.

By Proposition \ref{prop:thincmp} we know that $G$ is a subgraph of $H$. We can
assume that $G$ is induced on $H$ by $V_G$. Denote by $A$ the set of vertices
$V_H\setminus V_G$. By our assumption no vertex of $A$ can be removed from $H$
so that the resulting graph stays point-determining. 

For every vertex $v\in A$ denote by $\{v_1,v_2\}$ a pair of vertices such
that their neighbourhoods differ only by $v$. Denote by $H'$ the graph induced on
$V_H$ by those pairs.

Because $G$ is point-determining, we know that every edge of $H'$ contains at
least one vertex of $A$. Denote by $N_{H'}[A]$ the closed neighbourhood of $A$
in $H'$.

We say that vertex $v$ {\em determines}\index{determines} pair $\{v_1,v_2\}$ if the neighbourhoods of $v_1$ and
$v_2$ differ only by $v$.
On the induced subgraph of $H'$ on vertices of $N_{H'}[A]$ we can apply the same analysis as in Proposition
\ref{prop:Fcoregap}. It follows that there is at least one vertex $v$ of $N_{H'}[A]$ which
does not determine any pair of vertices of $N_{H'}[A]$. We know that $v\notin A$ and
thus there is an edge $(v,v')$ in $H'$, $v'\in A$.

Denote by $H''$ the graph created from $H$ by removing $v'$. First observe that
$G$ is an induced subgraph of $H''$; it is induced on vertices $(V_G\setminus\{v\})\cup \{v'\}$.
(neighbourhoods of $v$ and $v'$ differ only by a vertex in $A$.)

Finally observe that $H''$ is point-determining. By the choice of $v$ no pair of neighbourhoods 
vertices $u$, $u'$ such that $u\in A$ became the same. Also no pair of neighbourhoods in
$(V_G\setminus\{v\})\cup \{v'\}$ is equivalent because $G$ is point-determining. This is a contradiction
with the assumption that $(G,H)$ is an F-gap.
\end{proof}

Now we are ready to give a very simple proof of the existence of generalized dualities.
\begin{thm}
\label{thm:fulldual}
For every finite set of directed graphs $\mathcal{D}$ there is a finite set of graphs
$\mathcal{F}$ such that $(\mathcal{F},\mathcal{D})$ is a generalized finite F-duality pair.
\end{thm}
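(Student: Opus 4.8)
The plan is to mimic the alternative proof of Proposition~\ref{prop:embeddual}, replacing embeddings by full homomorphisms and working throughout with F-cores. Three facts are the engine of the argument: by Proposition~\ref{prop:F-core} the F-cores are exactly the point-determining graphs; by Proposition~\ref{prop:thincmp} the relation $\FullLeq$ restricted to F-cores coincides with $\EmbedLeq$; and by Corollary~\ref{cor:fullhomsize} the order $(\DiGraphs,\FullLeq)$ is past-finite, indeed $G\FullLeq H$ forces $|V_{G_{\mathrm{pd}}}|\le|V_{H_{\mathrm{pd}}}|$.

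Given a finite set $\mathcal D$, I would put $n=\max_{D\in\mathcal D}|V_D|$ and let $\mathcal F'$ be the set of all F-cores $F$ with $|V_F|\le n+1$ that admit no full homomorphism to any $D\in\mathcal D$. Since there are only finitely many directed graphs on at most $n+1$ vertices, $\mathcal F'$ is finite; let $\mathcal F$ be the (finite) set of isomorphism types of minimal elements of $(\mathcal F',\FullLeq)$. It then remains to verify that $(\mathcal F,\mathcal D)$ is a generalized finite F-duality pair. One direction is immediate: if $F\FullHom G$ for some $F\in\mathcal F$ and $G\FullHom D$ for some $D\in\mathcal D$, then composition gives $F\FullHom D$, contradicting $F\in\mathcal F'$.

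For the converse, suppose $G$ admits no full homomorphism to any $D\in\mathcal D$, and pass to $G_{\mathrm{pd}}$, which is an F-core with $G\FullHomeq G_{\mathrm{pd}}$, so $G_{\mathrm{pd}}$ also admits no such full homomorphism. If $|V_{G_{\mathrm{pd}}}|\le n+1$ then $G_{\mathrm{pd}}\in\mathcal F'$, so some $F\in\mathcal F$ satisfies $F\FullLeq G_{\mathrm{pd}}\FullHom G$ and we are done. Otherwise I would repeatedly apply Proposition~\ref{prop:Fcoregap}: as long as the current F-core has more than $n+1$ (in particular at least $2$) vertices, it contains an F-core with one vertex fewer as an induced subgraph, and the stopping condition at $n+1$ vertices guarantees the proposition is only invoked on graphs with at least two vertices. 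Iterating produces an F-core $H$ that is an induced subgraph of $G_{\mathrm{pd}}$ with exactly $n+1$ vertices, whence $H\FullLeq G_{\mathrm{pd}}\FullHom G$. The crucial point — and the step I expect to be the only real obstacle — is to check that $H$ still admits no full homomorphism to any $D\in\mathcal D$: here past-finiteness does the work, since $H\FullHom D$ would force $n+1=|V_H|=|V_{H_{\mathrm{pd}}}|\le|V_{D_{\mathrm{pd}}}|\le|V_D|\le n$, a contradiction. Thus $H\in\mathcal F'$, so some $F\in\mathcal F$ has $F\FullLeq H\FullHom G$, completing the proof.

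I would close with a remark that this is precisely the past-finite-flipped instance of Corollary~\ref{cor:dualities}: by Theorem~\ref{thm:fullgap} every F-core has only finitely many F-gap successors, each obtained by adjoining a single vertex, and $(\DiGraphs,\FullLeq)$ has only finitely many minimal elements, so all left generalized finite dualities exist; the explicit construction above merely makes the dual set $\mathcal F$ visible.
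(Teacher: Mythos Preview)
Your proposal is correct. In fact you give two arguments: the closing remark is exactly the paper's main proof (apply the past-finite flip of Corollary~\ref{cor:dualities}, using Theorem~\ref{thm:fullgap} to bound the number of F-gap successors and noting the unique minimum), while the body of your argument is a direct construction that differs from the paper's alternative proof.

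The paper's alternative proof starts from an arbitrary $(n+1)$-vertex subset $A\subseteq V_G$ and \emph{grows} it, adding one witness vertex for each pair in $G[A]$ with equal neighbourhoods, until the induced subgraph becomes point-determining; this yields F-cores of size up to $n+1+\binom{n+1}{2}$. You instead start from $G_{\mathrm{pd}}$ and \emph{shrink} it via repeated application of Proposition~\ref{prop:Fcoregap}, landing on an F-core of exactly $n+1$ vertices. Your route gives the sharper bound $n+1$ on the sizes appearing in $\mathcal F$ (matching what one would extract from the gap characterization), at the cost of invoking Proposition~\ref{prop:Fcoregap}, which the paper's alternative proof deliberately avoids so as to be self-contained without any structural analysis of point-determining graphs. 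Both approaches rely on the same punchline---that any F-core on $n+1$ vertices cannot full-map into a graph on at most $n$ vertices---which you state cleanly via Corollary~\ref{cor:fullhomsize}.
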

\begin{proof}
We again apply Corollary~\ref{cor:dualities} (flipped for past-finite orders). 
By Theorem~\ref{thm:fullgap}, for every F-gap $(G,H)$, graphs $G$ and $H$ differ by at most one vertex. Thus for every $G$ there are only finitely many $H$ such that $(G,H)$ is an F-gap. The empty graph is the unique minimum of the full homomorphism order.
\end{proof}

Essentially the same duality result (for undirected graphs)
was proved by Ball, Ne\v set\v ril and Pultr~\cite{Ball2010} (stated in terms
of generalized finite dualities) and by Feder and Hell~\cite{Feder2008} (stated
as a bound on the obstruction to full $H$- colourability). The second was
extended to directed graphs by Hell and Herm\'anez-Cruz~\cite{Hell2013}
independently of our proof (\cite{Ball2010} speaks of relational structures so their approach covers the case of directed graphs too). A simple argument for the bound also appears in~\cite{Xie2006}. It is interesting to observe that all proofs are different.
Both \cite{Ball2010} and \cite{Xie2006} use a Ramsey type argument. Our argument
is based on embeddings of point-determining graphs and is analogous to~\cite{Feder2008}, but we use a simpler argument in Proposition~\ref{prop:Fcoregap}
than the one needed to characterize the nucleus of a point determining graph. 
In~\cite{Feder2008,Hell2013} the results are however further strengthened.
For $\mathcal{D}$ consisting of single connected graph, one can show that
$\mathcal{F}$ contains at most two graphs of size $n+1$ (in addition to smaller graphs).

Thus one might say that proving dualities for full homomorphisms is a popular
task. Before becoming aware about prior research, the authors also enjoyed
work on an independent proof. To give a further contributution we present it here.
 We believe it has its own merit --- it does not need to use Ramsey type
arguments, analyse gaps or characterize the nucleus. It does however lead to a
weaker description of the set $\mathcal{F}$ within the duality pair.
\begin{proof}[Alternative proof]
%
%
We adapt the alternative proof of Proposition~\ref{prop:embeddual}.

Without loss of generality, we assume that $\mathcal{D}$ is a set of F-cores.
Fix a set $\mathcal{D}$, denote by $n$ the maximum size of a graph in $\mathcal{D}$ and construct a set $\mathcal{F}'$ as union of the set of all F-cores $F$ such that $|V_F|\leq n+1+k\leq n+1+\binom{n+1}{2}$ and there is $F\FullHom D$ for no $D\in \mathcal{D}$. Finally construct $\mathcal{F}$ as the set of all mutually non-isomorphic minimal elements of $(\mathcal{F}',\FullLeq)$.

Obviously $\mathcal{F}$ is finite because it contains graphs of bounded size.
To see that $(\mathcal{F},\mathcal{D})$ is a generalized finite F-duality pair consider the
less trivial case where for a given $G$ there is no $D\in \mathcal{D}$ such that $G\to D$.

If $|V_G|\leq n$ we have $G\in \mathcal{F'}$ and thus there is $F\in \mathcal{F}$, $F\FullLeq G$.

Consider $|V_G|>n$. Denote by $A$ an arbitrary subset of $V_G$ such that $|A|=n+1$.
Denote by $G[A]$\index{$G[A]$} the subgraph of $G$ induced by $A$.
We construct a point-determining subgraph of $V_G$ containing $A$ in the following way.
Put $$A_0=A.$$ Enumerate by $p_1,p_2,\ldots, p_k$ all pairs $(u,v)$ of vertices of $G[A]$ such that
$u\neq v$ and $N_{G[A]}(u)=N_{G[A]}(v)$.

Now for $i=1,\dots, k$ put $p_i=(u,v)$ and 
\begin{enumerate}
\item $A_i=A_{i-1}$ if $N_{G[A_{i-1}]}(u)\neq N_{G[A_{i-1}]}(v)$;
\item $A_i=A_{i-1}\cup \{v'\}$ otherwise. $v'$ is a vertex of $G$ which is in the neighbourhood of $u$ and not in the neighbourhood of $v$ (or vice versa). Such vertex exists because $G$ is point-determining.
\end{enumerate}
It is easy to see that $|A_k|\leq n+1+k\leq n+1+\binom{n+1}{2}$ and moreover $G[A_k]$ is point-determining and thus
its isomorphic copy is contained in $\mathcal{F'}$ leading to existence of $F\in \mathcal{F}$ such
that $F\FullLeq G[A_k]\FullLeq G$.
\end{proof}

%

By the results above it may seem that the structure of the full homomorphism order
is basically the same as the structure of the embedding order. We close the section by
pointing out one surprising difference. 

The main result of \cite{Feder2008} and \cite{Hell2013} shows that
in homomorphism order for a given graph $H$ there are at most two
right $F$-realizations of $H$. There are examples of graphs for all three cases: 
no realizations, one realization or two realizations. As we have shown in
Proposition~\ref{prop:realization}, a similar situation does not hold in the
embedding order, where (up to a degenerate case) there is always one realization.

\section{Surjective homomorphism orders}

Vertex and edge surjective homomorphisms are very natural kinds of special
homomorphisms. While they are generally less studied and understood than
homomorphisms, there is a lively area of research with respect to the computational complexity problem of surjective  colouring,
see a recent survey~\cite{Bodirsky2012}.

From our point of view, surjective homomorphisms are similarly easy as
embeddings and monomorphisms. We consider three cases---vertex surjective
homomorphisms, edge surjective homomorphisms, and surjective homomorphisms (that are both edge and vertex surjective).
\begin{prop}[Cores]
\label{prop:shomo-core}
Every finite graph is an S-core and VS-core.

ES-core of graph $G$ is created from $G$ by removing all isolated vertices.
\end{prop}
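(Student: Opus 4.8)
The plan is to dispatch the vertex-surjective and surjective cases by a direct size argument, and then to treat the edge-surjective case by relating $G$ to the graph $G'$ obtained from $G$ by deleting its isolated vertices.

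First, for VS- and S-morphisms: if there is a vertex-surjective homomorphism $f\colon G\to H$ between finite graphs, then the induced vertex map is onto and hence $|V_G|\geq |V_H|$. Consequently, if $G\VSurHom H$ and $H\VSurHom G$ then $|V_G|=|V_H|$, so both witnessing homomorphisms are bijective on vertices. A vertex-bijective homomorphism sends distinct edges to distinct edges (using the fact recalled in the excerpt that a vertex-injective homomorphism is edge-injective), so it cannot decrease the number of edges; applying this to both directions gives $|E_G|=|E_H|$, whence each of the two homomorphisms is also bijective on edges and its inverse is again a homomorphism. Therefore $G$ and $H$ are isomorphic, the VS-equivalence class of $G$ is exactly its isomorphism class, and $G$ is (trivially, and uniquely up to isomorphism) its smallest representative, i.e.\ a VS-core. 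Since every S-morphism is in particular a VS-morphism, the same argument shows every finite graph is an S-core.

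For the ES-case, let $G'$ be the graph obtained from $G$ by deleting all its isolated vertices; note $G'$ has no isolated vertices, since deleting an isolated vertex affects no edge. I would first check that $G$ and $G'$ are ES-equivalent: the inclusion $G'\hookrightarrow G$ is a homomorphism, and it is edge-surjective because every edge of $G$ joins two non-isolated vertices, all of which lie in $G'$; conversely, assuming $G$ has at least one edge, fix a vertex $v_0$ of $G'$ and map every isolated vertex of $G$ to $v_0$ and every other vertex to itself --- this is a homomorphism $G\to G'$ that is edge-surjective (its identity part already covers all of $E_{G'}$). Hence $G$ and $G'$ lie in the same ES-equivalence class. (If $G$ has no edges the clean statement fails literally --- the ES-core is then a single isolated vertex --- so I would flag this degenerate case separately.)

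It remains to show $G'$ is an ES-core, which is the substantive part. Suppose $H$ is ES-equivalent to $G'$, witnessed by edge-surjective homomorphisms $f\colon H\to G'$ and $g\colon G'\to H$. Then $h=f\circ g\colon G'\to G'$ is edge-surjective (a composition of edge-surjective homomorphisms is edge-surjective), and since $G'$ has no isolated vertices, the fact recalled in the excerpt (edge-surjective into a graph without isolated vertices implies vertex-surjective) forces $h$ to be vertex-surjective; a surjective self-map of the finite set $V_{G'}$ is a bijection, and a vertex-bijective edge-surjective homomorphism is an automorphism by the edge-counting already used. In particular $h$ is injective, so $g$ is vertex-injective and $|V_{G'}|\leq |V_H|$; thus $G'$ is minimal in its ES-class. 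If moreover $|V_{G'}|=|V_H|$, then $g$ is a vertex-bijection, which again (because $G'$ has no isolated vertices, so every vertex of $G'$ has a neighbour or a loop) forces $H$ to have no isolated vertices; running the symmetric argument with $g\circ f\colon H\to H$ shows it is an automorphism of $H$, hence $f$ is vertex-bijective, and then the edge count makes $f$ an isomorphism $H\cong G'$. So $G'$ is the unique (up to isomorphism) smallest representative of its ES-class, i.e.\ the ES-core of $G$. The main obstacle is this last paragraph: one must carefully route everything through the observation that an edge-surjective homomorphism into a graph without isolated vertices is automatically vertex-surjective (the subtlety flagged in the excerpt), and keep track of the edgeless degenerate case.
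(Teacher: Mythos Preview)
Your proof is correct and takes essentially the same approach as the paper: the VS/S case is handled by the identical vertex-count-then-edge-count argument, and the ES case by passing to the graph $G'$ with isolated vertices removed and using that on such graphs edge-surjective coincides with surjective. Your treatment of the ES-core is in fact more thorough than the paper's rather terse two sentences---you spell out the composition argument showing $G'$ is minimal and unique in its class---and you correctly flag the edgeless degenerate case (ES-core a single vertex), which the paper's statement glosses over.
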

\begin{proof}
Consider graph $G$ and its S-core or VS-core $H$. By definition, there are
surjective homomorphisms $f:G\VSurHom H$ and $g:H\VSurHom G$. By
surjectivity of $f$ we have $|V_G|\geq |V_H|$. From the surjectivity of $g$ we have
$|V_G|\leq |V_H|$ and thus $|V_G|=|V_H|$. It follows that both $f$ and $g$ are
monomorphisms. Consequently $|E_G|\geq |E_H|\geq |E_G|$ and thus $f$ and $g$
are also isomorphisms.

On graphs without isolated vertices edge surjective homomorphisms are
surjective homomorphisms. From the fact that removing isolated vertices does not affect
any edges we know that if $G'$ is created from $G$ by removing (some) isolated
vertices, then $G\SurHomeq G'$.
\end{proof}

It follows that $(\DiGraphs,\SurLeq)$ is actually a suborder of
$(\DiGraphs,\SurLeq)$ that is induced on $(\DiGraphs,\ESurLeq)$ by the class of
all ES-cores (that is graphs without isolated vertices). 
All the basic properties of locally surjective homomorphism order are almost immediate:

\begin{prop}[Future-finiteness]
\label{prop:shomo-univ}
The orders ${(\DiGraphs,\SurLeq)}$, \\${(\DiGraphs,\ESurLeq)}$ and ${(\DiGraphs,\VSurLeq)}$ are future-finite.
\end{prop}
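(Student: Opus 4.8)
The statement to prove is Proposition~\ref{prop:shomo-univ}: the orders $(\DiGraphs,\SurLeq)$, $(\DiGraphs,\ESurLeq)$ and $(\DiGraphs,\VSurLeq)$ are future-finite, i.e., every up-set is finite.

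\textbf{Plan.} The plan is to show that if $G\VSurHom H$ (vertex surjective), then $|V_H|\leq |V_G|$, and similarly for the edge surjective and surjective cases. Since a vertex surjective homomorphism $f\colon G\to H$ has $f(V_G)=V_H$, the mapping $f$ restricted to $V_G$ hits every vertex of $H$, so $|V_H|=|f(V_G)|\leq |V_G|$. For edge surjective homomorphisms, first note (as recorded in the excerpt, following \cite{Hell2004}) that in the absence of isolated vertices an edge surjective homomorphism is vertex surjective; more directly, if $f\colon G\to H$ is edge surjective then for every $(u,v)\in E_H$ there is $(u',v')\in E_G$ with $f(u')=u$, $f(v')=v$, so the induced edge map $f^1\colon E_G\to E_H$ is surjective, giving $|E_H|\leq |E_G|$. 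Combined with Proposition~\ref{prop:shomo-core}, which tells us that an ES-core has no isolated vertices, this bounds both $|V_H|$ and $|E_H|$ for any $H$ above $G$ in the ES-order.

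\textbf{Key steps, in order.} First I would fix a graph $G$ and consider its up-set in each of the three orders. For $(\DiGraphs,\VSurLeq)$: any $H$ with $G\VSurHom H$ satisfies $|V_H|\leq|V_G|$; there are only finitely many isomorphism types of (directed) graphs on at most $|V_G|$ vertices, hence the up-set is finite. For $(\DiGraphs,\ESurLeq)$: here we should be slightly careful because isolated vertices are not controlled by edge surjectivity alone, but Proposition~\ref{prop:shomo-core} says every ES-core has all isolated vertices removed, so when we restrict to ES-cores (as is the standing convention for these orders, per the discussion preceding the statement) any $H$ above $G$ has $|E_H|\leq|E_G|$ and no isolated vertices, hence $|V_H|\leq 2|E_H|\leq 2|E_G|$; again only finitely many such graphs. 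For $(\DiGraphs,\SurLeq)$: a surjective homomorphism is both vertex and edge surjective, so both bounds $|V_H|\leq|V_G|$ and $|E_H|\leq|E_G|$ hold simultaneously, and finiteness of the up-set follows as before. In each case I would conclude by invoking that there are only finitely many isomorphism types of directed graphs of bounded size.

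\textbf{Main obstacle.} This is essentially a counting argument and there is no deep difficulty; the only subtlety worth spelling out is the treatment of isolated vertices in the edge surjective case, which is why one works with ES-cores (Proposition~\ref{prop:shomo-core}) rather than arbitrary representatives --- otherwise one could append arbitrarily many isolated vertices to $H$ and still have $G\ESurHom H$, destroying future-finiteness. Once the order is restricted to cores in the appropriate sense, the bound on the number of vertices (directly for VS and S, via the edge bound and absence of isolated vertices for ES) makes the up-sets finite.
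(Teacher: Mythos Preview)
Your proposal is correct and follows essentially the same approach as the paper: the key observation in both is that vertex surjectivity forces $|V_H|\le |V_G|$, so the up-set of $G$ contains only finitely many isomorphism types. The paper's proof is slightly more compressed for the remaining two orders: rather than bounding edges separately for $\SurLeq$ and $\ESurLeq$ as you do, it simply notes that $(\DiGraphs,\ESurLeq)$ (on ES-cores) is a suborder of $(\DiGraphs,\SurLeq)$, which is in turn a suborder of $(\DiGraphs,\VSurLeq)$, so future-finiteness is inherited; your direct argument via $|E_H|\le |E_G|$ and $|V_H|\le 2|E_H|$ for ES-cores reaches the same conclusion with a little more work but is equally valid.
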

\begin{proof}
To see that $(\DiGraphs,\VSurLeq)$ is future-finite, fix graph $G$ and consider its up-set.
By vertex surjectivity it is easily seen that the size of every graph $H$
in the up-set of $G$ is no greater than the number of vertices of graph $G$.

$(\DiGraphs,\ESurLeq)$ is a suborder of $(\DiGraphs,\SurLeq)$ that is a non-induced suborder of $(\DiGraphs,\VSurLeq)$.
\end{proof}
\begin{prop}[Universality]
The orders $(\DiCycle,\SurLeq)$, $(\DiCycle,\ESurLeq)$ and $(\DiCycle,\VSurLeq)$ are future-finite-universal.
\end{prop}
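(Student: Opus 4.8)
The plan is to show that, when restricted to the class $\DiCycle$, all three surjective homomorphism relations coincide with the ordinary homomorphism relation $\leq$, and then to quote Lemma~\ref{lem:cycles} directly.

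First I would record the basic combinatorial fact about directed cycles. If $f\colon\overrightarrow{C}_k\to\overrightarrow{C}_l$ is any homomorphism, then the image of the cyclic sequence $v_0,v_1,\dots,v_{k-1},v_0$ is a closed walk of length $k$ in $\overrightarrow{C}_l$, every step of which advances by exactly one vertex along $\overrightarrow{C}_l$; hence $l\mid k$ and the walk runs through every vertex and every arc of $\overrightarrow{C}_l$. Consequently every homomorphism between directed cycles is automatically both vertex surjective and edge surjective, so on $\DiCycle$ we have the equivalences
\[
\overrightarrow{C}_k\to\overrightarrow{C}_l
\iff \overrightarrow{C}_k\VSurHom\overrightarrow{C}_l
\iff \overrightarrow{C}_k\ESurHom\overrightarrow{C}_l
\iff \overrightarrow{C}_k\SurHom\overrightarrow{C}_l
\iff l\mid k .
\]

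Next I would check that $\DiCycle$ is a class of cores for each of the three orders. By Proposition~\ref{prop:shomo-core} every finite graph is an S-core and a VS-core, and the ES-core of a graph is obtained from it by deleting isolated vertices, of which $\overrightarrow{C}_k$ has none. Hence $(\DiCycle,\SurLeq)$, $(\DiCycle,\ESurLeq)$ and $(\DiCycle,\VSurLeq)$ all have the same underlying set of isomorphism types as $(\DiCycle,\leq)$, and by the displayed equivalences they carry exactly the same order relation; that is, the three ordered sets are literally equal to $(\DiCycle,\leq)$.

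Finally, Lemma~\ref{lem:cycles} asserts that $(\DiCycle,\leq)$ is future-finite-universal, i.e.\ every future-finite order embeds into it; since future-finite-universality is a property of the ordered set alone and the three orders above are equal to it, they are future-finite-universal as well. I do not expect any real obstacle here: the only steps needing a (one-line) argument are the automatic surjectivity of homomorphisms between directed cycles and the identification of the relevant cores with the members of $\DiCycle$, after which the statement is an immediate consequence of Lemma~\ref{lem:cycles}.
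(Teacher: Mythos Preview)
Your proposal is correct and follows essentially the same approach as the paper: observe that any homomorphism between directed cycles is automatically (vertex and edge) surjective, so all three orders on $\DiCycle$ coincide with $(\DiCycle,\leq)$, and then invoke Lemma~\ref{lem:cycles}. The paper's proof is a two-line version of exactly this argument; your additional verification that the members of $\DiCycle$ are cores for each of the three orders is a reasonable point of care that the paper leaves implicit.
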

\begin{proof}
Observe that any homomorphism between two oriented cycles is also an surjective homomorphism.
The universality follows from Lemma~\ref{lem:cycles}.
\end{proof}

\begin{prop}[Gaps]
\label{prop:surgaps}
$(G,H)$ is a VS-gap if and only if $|G|=|H|$ and $G$ is created from $H$ by removing an edge, or
$|G|=|H|+1$, $G\VSurHom H$ and there is no way to add an edge to $G$ preserving this property.

$(G,H)$ is a S-gap if and only if $|G|=|H|+1$, $|E_G|=|E_H|$, $G\SurHom H$.

ES-gaps correspond to S-gaps for pairs of graphs with no isolated vertices.
\end{prop}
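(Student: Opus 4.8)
The plan is to exploit two structural facts already at hand: by Proposition~\ref{prop:shomo-core} every finite graph is an S-core and a VS-core, while the ES-cores are exactly the graphs without isolated vertices, so a gap $(G,H)$ in any of the three orders is just a pair with $G\VSurHom H$ (resp. $G\SurHom H$, $G\ESurHom H$), $G\not\cong H$, and no $K$ with $G<K<H$; and the orders are future-finite (Proposition~\ref{prop:shomo-univ}), which keeps the down-to-earth counting arguments below finite. Two elementary observations will do most of the work. First, if $A\VSurHom B$ with $|V_A|=|V_B|$ then the homomorphism is a vertex bijection, so $A$ is isomorphic to a spanning subgraph of $B$, and if it is moreover edge surjective then $A\cong B$. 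Second, if a homomorphism $\psi\colon G\to H$ fails to cover an edge $(a,b)\in E_H$, then there must be $x\in\psi^{-1}(a)$ and $y\in\psi^{-1}(b)$ with $(x,y)\notin E_G$, and adding the edge $(x,y)$ to $G$ leaves $\psi$ a vertex-surjective homomorphism onto $H$ (handling, in a degenerate sub-case, a loop when $a=b$). I would treat VS-gaps first, then S-gaps, then reduce ES to S.

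For VS-gaps, the ``only if'' direction splits on $|V_G|$ versus $|V_H|$. If they are equal, $G$ is a proper spanning subgraph of $H$ up to isomorphism; were $|E_H|\ge |E_G|+2$, inserting one edge of $H\setminus G$ into that copy of $G$ gives a graph strictly between, so $H$ arises from $G$ by adding exactly one edge. If $|V_G|>|V_H|$, then identifying two vertices in a common fibre of a fixed vertex-surjective homomorphism produces an intermediate quotient unless $|V_G|=|V_H|+1$; and if an edge could be added to $G$ keeping a vertex-surjective homomorphism onto $H$, that enlarged graph is again strictly between, so no such edge exists. For the ``if'' direction one reverses these: any hypothetical intermediate $K$ has $|V_H|\le|V_K|\le|V_G|$; if $|V_K|=|V_H|$ then $K$ is a proper spanning subgraph of $H$, and applying the second observation to the composite of $G\VSurHom K$ with a spanning embedding $K\hookrightarrow H$ and an edge of $H$ missing from $K$ exhibits an edge addable to $G$, a contradiction; if $|V_K|=|V_G|$ then $K$ is $G$ with at least one extra edge, again contradicting non-addability.

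For S-gaps I would first record the characterization: $G\SurHom H$ if and only if $H\cong G/{\sim}$ for some equivalence $\sim$ on $V_G$, where $G/{\sim}$ is the quotient graph (edges are images of edges of $G$, loops allowed); the nontrivial direction factors an S-homomorphism through the quotient by its fibres and uses that the resulting vertex bijection is edge surjective, hence an isomorphism. In a gap, $\sim$ cannot be trivial (that forces $G\cong H$), and it cannot merge three vertices or two disjoint pairs, since then the quotient by an equivalence with exactly one more block is still strictly between $G$ and $H$; hence $\sim$ merges exactly one pair, so $|V_G|=|V_H|+1$. Conversely, whenever $|V_G|=|V_H|+1$ and $G\SurHom H$, any $K$ with $G<K<H$ satisfies $|V_H|\le|V_K|\le|V_H|+1$, and the first observation forces $K\cong H$ or $K\cong G$; so $(G,H)$ is automatically a gap, and in particular the three conditions in the statement are sufficient. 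I expect the main obstacle here to be the bookkeeping tying the stated edge condition $|E_G|=|E_H|$ to the quotient: it holds precisely when the merged pair $\{u,v\}$ has no common neighbour and no coincident loops, so that contracting them creates no edge collision, and making this precise requires separating several degenerate loop configurations and the status of the edge $(u,v)$ itself.

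Finally, ES-gaps reduce to S-gaps. Every ES-core is a graph without isolated vertices, and for a target without isolated vertices an edge-surjective homomorphism is automatically vertex surjective (Section~\ref{sec:glo}); hence on this class the relations $\ESurHom$ and $\SurHom$, and therefore the induced orders and their gaps, coincide. Thus the ES-gaps are exactly the S-gaps between graphs with no isolated vertices.
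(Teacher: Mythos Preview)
Your strategy matches the paper's almost exactly: bound $|V_G|\le |V_H|+1$ by partially collapsing fibres of a chosen surjection, then in the equal-size case use that the map is a vertex bijection (a monomorphism for VS, an isomorphism for S/ES), and handle ES by passing to graphs without isolated vertices. The paper's proof is in fact just a three-sentence sketch along these lines; your write-up is considerably more detailed and your two ``elementary observations'' are precisely the mechanisms the paper is implicitly invoking.

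One point worth noting: your hesitation about the edge condition $|E_G|=|E_H|$ in the S-case is well placed. Your argument already shows that $|V_G|=|V_H|+1$ together with $G\SurHom H$ is \emph{sufficient} for an S-gap (any intermediate $K$ has $|V_K|\in\{|V_H|,|V_H|+1\}$, and an S-map between graphs of equal size is an isomorphism), so the edge condition cannot be an additional constraint on the sufficiency side. On the necessity side it fails as stated: $(P_3,K_2)$ is an S-gap with $|E_{P_3}|\ne|E_{K_2}|$, since folding the two endpoints identifies the two edges. The paper's own proof does not address the edge condition at all, so you are not missing an argument here; rather, the condition as written in the statement is not quite right, and your quotient analysis (edge collision exactly when the merged pair shares a neighbour or a loop) is the correct refinement.
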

\begin{proof}
It is easy to see that in all cases $|G|$ is at most $|H|+1$.
Otherwise a graph inbetween can be constructed by partly concatenating vertices
of $G$ as given by the surjective homomorphism $G\SurHom H$. The extra condition
given prevents the existence of a graph inbetween $G$ and $H$ in this case.

For a vertex surjective mapping, there are gaps $(G,H)$ with $|G|=|H|$: the
mapping must be a monomorphism. If the graphs $G$ and $H$ differ by precisely one
edge, they represent a gap in the monomorphism order.
In the edge surjective case however the
mapping must be an embedding and there are no gaps in the embedding order such that
$|G|=|H|$.
\end{proof}

\begin{prop}[Dualities]
\label{prop:surdual}
For every finite set of directed graphs $\mathcal{F}$ there are finite sets of directed graphs
$\mathcal{D}_S$, $\mathcal{D}_{VS}$ and $\mathcal{D}_{ES}$, such that $(\mathcal{F},\mathcal{D}_S)$ is a generalized finite S-duality pair,
$(\mathcal{F},\mathcal{D}_{VS})$ is a generalized finite VS-duality pair, and
$(\mathcal{F},\mathcal{D}_{ES})$ is a generalized finite ES-duality pair.
\end{prop}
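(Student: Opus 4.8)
The plan is to apply Corollary~\ref{cor:dualities} to each of the three orders $(\DiGraphs,\VSurLeq)$, $(\DiGraphs,\SurLeq)$ and $(\DiGraphs,\ESurLeq)$. All three are future-finite by Proposition~\ref{prop:shomo-univ}, so no flipping is needed (in contrast to the full and embedding cases), and it remains only to verify the hypotheses packaged in Corollary~\ref{cor:dualities} together with Proposition~\ref{prop:futurefiniteduals}: that every graph $H$ in the order lies above only finitely many graphs $G$ with $(G,H)$ a gap, and that each of these orders has only finitely many maximal elements.

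For the first point I would simply read off the gap descriptions from Proposition~\ref{prop:surgaps}. A VS-gap $(G,H)$ forces either $|V_G|=|V_H|$ with $G$ obtained from $H$ by deleting a single edge, or $|V_G|=|V_H|+1$; an S-gap forces $|V_G|=|V_H|+1$ and $|E_G|=|E_H|$; and ES-gaps are exactly S-gaps between graphs without isolated vertices. In every case, once $H$ is fixed, $G$ ranges over a finite set: at most the $|E_H|$ single-edge deletions of $H$ in the first VS-subcase, and otherwise over the finitely many (di)graphs on $|V_H|+1$ vertices (with the prescribed edge count in the S- and ES-cases). For the second point I would exhibit the maximal elements explicitly: a graph with at least one edge admits a vertex- and edge-surjective homomorphism onto the one-vertex graph with a loop $O$, while a graph all of whose vertices are isolated admits a vertex surjective homomorphism onto $K_1$ (and, in the ES-setting where isolated vertices have been removed, this collapses further). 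Hence the set of maximal elements of each order is contained in $\{O,K_1\}$ (plus the empty graph, if it is admitted by convention), which is finite. With both conditions in place, Corollary~\ref{cor:dualities} produces the required finite sets $\mathcal{D}_S$, $\mathcal{D}_{VS}$ and $\mathcal{D}_{ES}$.

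There is no genuine obstacle here; the argument is pure bookkeeping on top of Proposition~\ref{prop:surgaps} and Corollary~\ref{cor:dualities}. The only place that calls for a little care is the ES-case, which must be carried out on the class of ES-cores (graphs without isolated vertices, by Proposition~\ref{prop:shomo-core}), since isolated vertices are the sole obstruction to edge surjectivity behaving like vertex surjectivity; alternatively, one could give a direct, more hands-on proof in the spirit of the second proof of Proposition~\ref{prop:embeddual}, constructing $\mathcal{D}$ as the minimal elements of the set of all graphs of bounded size admitting no surjective homomorphism from any $F\in\mathcal{F}$, but this yields a less transparent description of $\mathcal{D}$.
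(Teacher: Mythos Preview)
Your proposal is correct and follows essentially the same route as the paper: apply Corollary~\ref{cor:dualities} using the future-finiteness from Proposition~\ref{prop:shomo-univ}, the gap description from Proposition~\ref{prop:surgaps} to bound the number of gaps below each graph, and an explicit enumeration of the (finitely many) maximal elements. Your treatment is in fact slightly more careful than the paper's terse version---you correctly note the $|V_G|=|V_H|$ subcase for VS-gaps and handle the ES-case via ES-cores---but the underlying argument is identical.
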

\begin{proof}
Again the existence of dualities follows from Corollary~\ref{cor:dualities} and from the characterization of gaps (Proposition~\ref{prop:surgaps}).
Observe that a single vertex with a loop on it and the empty graph are the only maximal elements of the vertex surjective homomorphism
order. In edge surjective homomorphisms there are three maximal elements; single vertex, vertex with a loop and the empty graph.
All gaps are the pairs of graphs that differ by 1 in number of vertices.
\end{proof}

Note that loops on vertices have to be allowed to make finite dualities possible: 
for graphs without loops the set $\mathcal{D}$ would need to contain cliques of
arbitrary sizes, because all cliques are maximal elements in surjective homomorphism
order on (directed) graphs without loops. Thus there are no dualities in the case of
loopless graphs.

\section[Locally constrained homomorphism orders]{\fontsize{17}{8}\selectfont Locally constrained homomorphism orders}
\label{sec:locally-constrained}
In this section we consider three forms of locally constrained homomorphisms: locally bijective, locally injective and locally surjective mappings. All three are defined by a particular behavior on the neighbourhoods of vertices and thus share many of common properties. There are also a number of other non-trivial relations between the three kinds of homomorphisms. See \cite{Fiala2008} for a recent survey of this area. 
The orders of locally constrained homomorphisms have been studied in \cite{Fiala2005}. We review and extend these results.

To our knowledge, locally constrained homomorphisms have not been considered for
directed graphs. Moreover for locally constrained homomorphisms the
orders induced on connected graphs differ significantly from the
orders induced on $\Graphs$. For example, it is easy to observe that a locally
surjective homomorphism between two connected graphs is always surjective. This
is not true for non-connected graphs. In this section we will restrict our
attention to properties of locally constrained homomorphism orders on connected
undirected graphs. We however still consider loops. We denote by $\ConnGraph$\index{$\ConnGraph$}
the class of all finite undirected connected graphs. The properties of these
orders on $\Graphs$ can be derived easily from our results by considering
mappings between individual connected components. Directed graphs would
require more generalizations, because some of the underlying concepts we use
are formulated for undirected graphs only.

The following results show the main correspondence between all three types of locally constrained homomorphisms.

\begin{thm}[Fiala, Maxov\'a \cite{Fiala2006}]
If a graph $G$ admits a locally injective homomorphism $f$ to a finite and connected graph $H$ as well as a locally surjective homomorphism $g$ to $H$, then all locally constrained homomorphisms between $G$ and $H$ are locally bijective.
\end{thm}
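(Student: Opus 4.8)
The plan is to reduce the theorem to the single claim that $\drm(G)=\drm(H)$; once that is established, the two Cantor--Bernstein type theorems quoted above finish everything. We may assume $G$ is connected: since $H$ is connected, the restriction of a locally surjective homomorphism to any component of $G$ is still onto $H$, and a locally injective homomorphism restricts to each component, so it suffices to treat the components of $G$ one by one. Granting $\drm(G)=\drm(H)$, Theorem~\ref{thm:degeq1} makes every locally injective homomorphism $G\to H$ locally bijective, and --- used with the roles of $G$ and $H$ exchanged --- every locally injective homomorphism $H\to G$ as well; Theorem~\ref{thm:degeq2} does the same for locally surjective homomorphisms in both directions; and a locally bijective homomorphism is trivially locally bijective. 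Hence every locally constrained homomorphism between $G$ and $H$ is locally bijective.

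So everything rests on proving $\drm(G)=\drm(H)$, which I would do with universal covers. By the facts recorded above linking locally constrained homomorphisms to universal covers, the locally injective homomorphism $f$ forces $T_G$ to be isomorphic to a subtree of $T_H$, while the locally surjective homomorphism $g$ forces $T_H$ to be isomorphic to a subtree of $T_G$. Thus $T_G$ and $T_H$ embed into one another as subtrees. It then remains to upgrade this mutual embeddability to an isomorphism $T_G\cong T_H$; once we have it, Leighton's theorem (two graphs have the same degree refinement matrix exactly when their universal covers are isomorphic) gives $\drm(G)=\drm(H)$.

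The main obstacle is this last step --- a Cantor--Bernstein argument for the infinite trees $T_G$ and $T_H$ --- since for arbitrary infinite trees, mutual embeddability as connected subtrees does not force isomorphism. What makes it work is that $T_G$ and $T_H$ are universal covers of finite connected graphs, hence of bounded degree and with only finitely many ``local types'' of vertices, one per class of the degree refinement. Concretely, I would compose the two subtree embeddings to obtain an embedding of $T_H$ onto a subtree of itself, and then show that for the universal cover of a finite connected graph every such self-embedding must be onto. Picking a vertex $x$ whose image lies in the same degree-refinement class as $x$ --- which can be arranged by passing to a suitable iterate of the self-embedding, there being only finitely many classes --- the balls $B_n(x)$ in the image subtree and in the whole tree have equal cardinality for every $n$, because both growth functions are governed by the common degree refinement; this forces the image to contain $B_n(x)$ for all $n$, hence to be all of $T_H$. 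Making this ball-growth comparison precise, bookkeeping the degree refinement at every level, is where the real work lies. A more hands-on route --- bounding $|E_G|$ above by the edges local injectivity of $f$ permits between its fibres and below by the edges local surjectivity of $g$ forces between the fibres of $g$ --- only controls first neighbourhoods, whereas equality of degree refinement matrices is an iterated condition, so I expect the universal-cover argument, or an equivalent explicit handling of degree refinements of all orders, to be essentially unavoidable.
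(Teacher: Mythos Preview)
The paper does not supply its own proof of this theorem; it is quoted as a result of Fiala and Maxov\'a and used without argument. So there is nothing in the paper to compare your proposal against.

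That said, your strategy is sound. Reducing to $\drm(G)=\drm(H)$ and then invoking Theorems~\ref{thm:degeq1} and~\ref{thm:degeq2} (in both directions, since the hypothesis is symmetric) is exactly the right shape, and your reduction to connected $G$ is fine. The universal-cover route to $\drm(G)=\drm(H)$ also works, and the Cantor--Bernstein step you flag as the obstacle can be made precise along the lines you sketch: compose the two subtree embeddings to a self-embedding $\psi$ of $T_H$; iterate so that some vertex $x$ and $\psi(x)$ lie over the same block of $H$'s degree refinement; then use that in a universal cover the rooted tree at a vertex depends only on that block (this is the easy direction of Leighton's theorem, essentially by construction of $T_H$). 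Hence $|B_n(x)|=|B_n(\psi(x))|$ for every $n$, and since a subtree embedding is isometric this forces $\psi(B_n(x))=B_n(\psi(x))$, so $\psi$ is onto. Surjectivity of the composite then makes the embedding $T_G\hookrightarrow T_H$ onto as well, giving $T_G\cong T_H$, and the other direction of Leighton's theorem yields $\drm(G)=\drm(H)$. You have located the real work accurately, and the argument you outline does go through without hidden obstacles.
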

Or in the language of homomorphism orders we get:
\begin{thm}[Fiala, Paulusma, Telle \cite{Fiala2005}]
The order ${(\ConnGraph,\LocBiLeq)}$ is the intersection of orders $(\ConnGraph,\LocSurLeq)$ and $(\ConnGraph,\LocInLeq)$.
\end{thm}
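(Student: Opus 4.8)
The plan is to prove the relational identity $\LocBiHom \,=\, \LocSurHom \cap \LocInHom$ on $\ConnGraph$, and then to deduce the statement about the induced partial orders by the routine quasi-order/core translation. In other words, I want to show that for connected graphs $G$ and $H$ there is a locally bijective homomorphism from $G$ to $H$ if and only if there is both a locally surjective and a locally injective one.

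First I would dispatch the easy inclusion $\LocBiHom \subseteq \LocSurHom \cap \LocInHom$: if $f\colon G\to H$ is locally bijective then its restriction to every neighbourhood is a bijection, hence in particular both injective and surjective, so the single map $f$ already witnesses $G\LocInHom H$ and $G\LocSurHom H$. No hypothesis on connectedness is needed here. The content is the reverse inclusion, and for this I would simply invoke the Fiala--Maxov\'a theorem quoted immediately above. Suppose $G,H\in\ConnGraph$ with $G\LocSurHom H$ and $G\LocInHom H$, witnessed respectively by a locally surjective homomorphism $g\colon G\to H$ and a locally injective homomorphism $f\colon G\to H$. Since $H$ is finite and connected, the hypotheses of that theorem are met, so \emph{every} locally constrained homomorphism from $G$ to $H$ is locally bijective; in particular $f$ itself is, so $G\LocBiHom H$. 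This establishes the relational identity.

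It then remains to descend from the quasi-orders to the orders $(\ConnGraph,\LocBiLeq)$, $(\ConnGraph,\LocSurLeq)$, $(\ConnGraph,\LocInLeq)$, which are formally defined by picking one representative (the corresponding core) in each equivalence class. From the relational identity the three induced equivalence relations coincide on $\ConnGraph$: if $G$ and $H$ are mutually locally surjectively homomorphic and also mutually locally injectively homomorphic, then by the previous step they are mutually locally bijectively homomorphic, and conversely. Hence the LB-core, the LS-core and the LI-core of a fixed connected graph are the same graph up to isomorphism, and we may use it as the common representative; with this choice the three orders are carried by the same ground set, and the relational identity reads precisely as: $\LocBiLeq$ is the intersection of $\LocSurLeq$ and $\LocInLeq$.

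I do not expect a genuine obstacle, since the only analytically substantial ingredient --- that coexistence of a locally injective and a locally surjective homomorphism into a finite connected graph forces local bijectivity --- is exactly the cited Fiala--Maxov\'a theorem and is assumed available. The single point requiring a little care is to spell out that ``intersection of orders'' must be understood over a common set of representatives, which the coincidence of the three equivalence relations guarantees; everything else is definitional unwinding.
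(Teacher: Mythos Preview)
The paper does not give an independent proof of this theorem; it is cited from \cite{Fiala2005} and presented explicitly as a restatement of the preceding Fiala--Maxov\'a theorem (the text reads ``Or in the language of homomorphism orders we get:''). Your proposal does exactly this translation, so it is in line with the paper's treatment.

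One small imprecision: from the relational identity $\LocBiHom = \LocSurHom \cap \LocInHom$ you only get $\sim_{LB} = \sim_{LS}\cap\sim_{LI}$, not that all three equivalences coincide, so your sentence ``the three induced equivalence relations coincide'' is not justified by what precedes it. This is harmless here because the paper establishes shortly afterwards (Propositions~\ref{prop:locsurbi-core} and Corollary~\ref{pro:locin-core}) that every finite connected graph is already its own LB-, LS- and LI-core, so all three equivalences reduce to isomorphism and the ground sets trivially agree; you could simply invoke that instead.
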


\subsection*{Orders of degree refinement matrices}
Recall that in Section~\ref{sec:DRM} we defined degree refinement matrices, which are closely connected to locally constrained homomorphisms.

We put $G\Matrixeq H$\index{$G\Matrixeq H$} if and only if $\drm(G)=\drm(H)$. The relation $\Matrixeq$ is an equivalence relation on the class of finite connected graphs, $\ConnGraph$.

From Theorem~\ref{thm:degeq1} and Theorem~\ref{thm:degeq2}
it follows that in a single equivalence class of $\Matrixeq$ all three locally constrained homomorphisms coincide.
We thus first look for the common properties of these orders within a single equivalence class.
Observe that 2-regular undirected graphs are undirected cycles and thus $\Cycle$ is the class of graphs with degree refinement matrix $(2)$. Consequently by Proposition~\ref{thm:cycles} there is a rich, future-finite-universal order within a single equivalence classes of $\Matrixeq$.

We define \emph{acyclic graphs}\index{acyclic graph} are trees with optional loops attached its vertices.
We can summarize the properties of the orders within a single equivalence class of $\Matrixeq$ as follows:
\begin{thm}
\label{thm:singleclass}
Let $\K$ be an equivalence class of $\Matrixeq$. Then
\begin{enumerate}
 \item If $\K$ consists only of acyclic graphs, then $\K$ is trivial (i.e. consisting of only one acyclic graph up to isomorphism).
 \item If $\K$ contains graphs with a cycle, then the orders $(\K,\LocBiLeq)$, $(\K,\LocSurLeq)$ and $(\K,\LocInLeq)$ are all equivalent.
They have the following properties:
\begin{enumerate}
 \item \label{item:a} all three orders are future-finite,
 \item \label{item:aa} all three orders are future-finite-universal,
 \item \label{item:b} for every pair of graphs $G,H\in \K$ there exists a graph $C\in \K$ such that $C\LocBiLeq G$ and $C\LocBiLeq H$, and
 \item \label{item:c} there are no minimal elements.
\end{enumerate}
\end{enumerate}
\end{thm}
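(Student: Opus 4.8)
\textbf{Proof plan for Theorem~\ref{thm:singleclass}.}

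The plan is to dispatch each item by combining earlier structural facts about universal covers (Leighton's theorem and the Cantor--Bernstein type Theorems~\ref{thm:degeq1},~\ref{thm:degeq2}) with the cycle-universality results of Section~\ref{sec:homoorder}. First, for part (1): if $\K$ consists only of acyclic graphs, then each member is a tree with optional loops. Since a connected graph equals its own universal cover exactly when it is a tree, and by Leighton's theorem two graphs share a degree refinement matrix iff their universal covers are isomorphic, all acyclic graphs in $\K$ have isomorphic universal covers; but an acyclic graph is its own universal cover, so they are all isomorphic, giving triviality. (One must be a little careful about loops: a loop at a vertex forces that vertex's ``tree of walks'' to branch in a fixed periodic way, so the argument still goes through once one checks that the DRM determines the loop pattern on the tree.)

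For part (2), the equivalence of the three orders on $\K$ is immediate from Theorems~\ref{thm:degeq1} and~\ref{thm:degeq2}: within one $\Matrixeq$-class every locally injective or locally surjective homomorphism is locally bijective, so the three relations $\LocBiLeq$, $\LocSurLeq$, $\LocInLeq$ coincide as binary relations on $\K$. Then:
\begin{itemize}
\item[(a)] Future-finiteness. For a locally bijective $f:G\LocBiHom H$ the preimage of every vertex of $H$ has the same size $|V_G|/|V_H|$, hence $|V_H|\leq |V_G|$; so the up-set of $G$ contains only graphs on at most $|V_G|$ vertices, of which there are finitely many up to isomorphism.
\item[(aa)] Future-finite-universality. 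The class $\Cycle$ of undirected cycles is exactly the $\Matrixeq$-class with $\drm=(2)$, and by Lemma~\ref{lem:cycles} (its undirected analogue: $C_k\to C_l$, equivalently $C_k\LocBiHom C_l$, iff $l\mid k$) the order $(\Cycle,\LocBiLeq)$ is future-finite-universal. For a general non-acyclic class $\K$ one reduces to this case: pick any graph $C_0\in\K$ containing a cycle, let $d=\drm(C_0)$, and build, for each cycle $C_k$, a graph $G_k\in\K$ by an appropriate covering-type construction over $C_k$ that preserves the DRM $d$; the divisibility structure of cycles is inherited, yielding an embedding of $(\Cycle,\LocBiLeq)$ into $(\K,\LocBiLeq)$. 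This construction is the main obstacle (see below).
\item[(b)] Common lower bound. Given $G,H\in\K$, by Leighton their universal covers coincide, $T_G\isom T_H=:T$. A standard fibre-product / pullback construction yields a connected graph $C$ with $C\LocBiHom G$ and $C\LocBiHom H$ and $T_C\isom T$, hence $C\in\K$ again by Leighton.
\item[(c)] No minimal elements. Given any $G\in\K$ (which contains a cycle), its universal cover $T_G$ is an infinite tree; truncating or, better, taking a larger finite ``quotient'' with the same DRM produces a strictly larger graph $G'$ (more vertices) with $G'\LocBiHom G$, so $G$ is not minimal. Concretely one can take a suitable cyclic cover of $G$ of prime index coprime to everything relevant.
\end{itemize}

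The hard part will be item (aa) in full generality — showing future-finite-\emph{universality} of an arbitrary non-acyclic equivalence class, not merely of $\Cycle$ itself. The clean way is to exhibit, for a fixed $C_0\in\K$ with a cycle, a family $\{G_S : S\in\Pfin(\mathbb{P})\}\subseteq\K$ together with locally bijective homomorphisms realizing the reverse-inclusion order, mimicking the proof of Lemma~\ref{lem:cycles}. The natural candidate is to form $G_S$ as a covering of $C_0$ whose ``cycle part'' has length $\prod_{p\in S}p$ times the girth-related period while the local structure (and hence the DRM) is kept equal to $\drm(C_0)$; verifying that such coverings exist and that $G_S\LocBiHom G_{S'}\iff S'\subseteq S$ is where the real work lies, and one should lean on Leighton's theorem to keep every $G_S$ inside $\K$. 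Everything else is bookkeeping with universal covers and the Cantor--Bernstein type theorems already available in the excerpt.
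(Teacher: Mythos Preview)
Your plan matches the paper's proof almost item by item: part~(1) via ``acyclic graphs are determined by their DRM'' (you phrase this through universal covers and Leighton, the paper just cites \cite{Fiala2005b}; same content), the equivalence of the three orders via Theorems~\ref{thm:degeq1}--\ref{thm:degeq2}, (a) via surjectivity of locally bijective maps, (b) via Leighton's common-cover theorem, (c) via doubling a cycle, and (aa) via a cycle-multiplication construction inside $\K$. Your identification of (aa) as the nontrivial step is accurate --- the paper is equally terse there, simply asserting one can ``multiply the length of the cycle $k$ times and duplicate its neighbourhood accordingly so $\drm(G_k)=\drm(G)$''.

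One caution on (b): the phrase ``standard fibre-product / pullback construction'' is misleading. A fibre product would need a common \emph{finite} base for $G$ and $H$, which you do not have a priori; the common base you do have, the universal cover, is infinite, and pulling back along it does not yield a finite graph. The existence of a common finite cover for graphs with the same DRM is precisely Leighton's theorem, and its proof is genuinely nontrivial (the paper cites \cite{Leighton1982} and the Bass--Serre reworkings \cite{Bass1990,Neumann2011}). So keep the citation to Leighton and drop the fibre-product gloss.
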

Note that the minimal elements within equivalence classes of $\Matrixeq$ may be established when infinite graphs are considered.
For every equivalence class $\K$ containing a graph with a cycle, there exists a graph $C_{\K}$ (known as the universal cover) such that for every $G\in \K$, $C_{\K}\LocBiLeq G$,
see \cite{Leighton1982}.
\begin{proof}
The first part follows from the fact that acyclic graphs are uniquely described by their degree refinement matrices. 
See \cite{Fiala2005b} for a formal proof.

The equivalence of orders $(\K,\LocBiLeq)$, $(\K,\LocSurLeq)$ and $(\K,\LocInLeq)$ follows from Theorems \ref{thm:degeq1} and Theorem \ref{thm:degeq2}.

\ref{item:a} follows from Proposition~\ref{prop:shomo-univ}.

\ref{item:aa} follows from the fact that the same construction as used in the proof of Proposition~\ref{prop:embed-univ}
can be applied to any equivalence class of $\Matrixeq$ that contains at least one graph $G$ with a cycle.
Construct for every $k$ a graph $G_k$ created from $G$ by multiplying the length of the cycle $k$ times and duplicating its
neighbourhood accordingly so $\drm(G_k)=\drm(G)$.

A non-trivial method of constructing graphs $C$ for \ref{item:b} is shown in
\cite{Leighton1982}, and alternate proofs in terms of Bass-Serre theory appear in
\cite{Bass1990} and \cite{Neumann2011}. 

\ref{item:c} follows from the fact that for every graph $G$ containing a cycle $C$, one can construct a graph $G'\LocBiLeq G$
that contains a cycle on $2|C|$ vertices by sufficiently expanding $G$. \qedhere
\end{proof}

\subsection*{Locally surjective and locally bijective orders}
For connected graphs $G$ and $H$, it is easy to observe that both locally bijective and locally surjective homomorphisms are also surjective homomorphisms (but
not vice versa; a surjective homomorphism may not be locally surjective). 
By Proposition \ref{prop:shomo-core} we have:
\begin{prop}[Cores]
\label{prop:locsurbi-core}
Every finite connected graph is LS-core and LB-core.
\end{prop}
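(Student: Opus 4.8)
The plan is to derive this immediately from Proposition~\ref{prop:shomo-core} together with the elementary fact --- already noted above --- that a locally surjective homomorphism between two \emph{connected} graphs is necessarily surjective (in both the vertex and the edge sense). Recall that a connected graph $G$ is an LS-core (respectively LB-core) if it has the fewest vertices in its equivalence class of the quasi-order $\LocSurHom$ (respectively $\LocBiHom$) restricted to $\ConnGraph$. So it suffices to show that each such equivalence class is a single isomorphism type.

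First I would handle the LS case. Suppose $G,H\in\ConnGraph$ satisfy $G\LocSurHom H$ and $H\LocSurHom G$. Since both $G$ and $H$ are connected, each of these locally surjective homomorphisms is in particular a surjective homomorphism, so $G\SurHomeq H$. By Proposition~\ref{prop:shomo-core} every finite graph is an S-core, i.e.\ the $\SurHom$-equivalence class of $G$ consists of $G$ alone up to isomorphism; hence $G\cong H$. Therefore the $\LocSurHom$-equivalence class of any connected graph $G$ contains only $G$ up to isomorphism, and $G$ is trivially its minimum, i.e.\ an LS-core.

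The LB case is identical: a locally bijective homomorphism is in particular locally surjective, so between connected graphs it is again a surjective homomorphism; thus $G\LocBiHom H$ and $H\LocBiHom G$ force $G\SurHomeq H$, and the same appeal to Proposition~\ref{prop:shomo-core} gives $G\cong H$. Hence every connected graph is an LB-core as well.

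I do not expect any real obstacle here; the only point requiring a moment's care is the implication ``locally surjective between connected graphs $\Rightarrow$ surjective'', which should be stated explicitly (it fails for disconnected graphs, which is exactly why we restrict to $\ConnGraph$), together with the remark that a surjective homomorphism is automatically vertex surjective, so that the S-core part of Proposition~\ref{prop:shomo-core} applies verbatim. Everything else is just bookkeeping with the definition of a core.
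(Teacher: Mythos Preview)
Your proposal is correct and follows exactly the paper's approach: the paper notes just before the proposition that locally surjective and locally bijective homomorphisms between connected graphs are surjective homomorphisms, and then invokes Proposition~\ref{prop:shomo-core} to conclude. Your write-up simply makes explicit the straightforward deduction the paper leaves implicit.
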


By Proposition \ref{prop:shomo-univ} we have:

\begin{prop}[Future-finiteness]
\label{pro:lochomo-fini}
The orders $(\ConnGraph,\LocBiLeq)$ and $(\ConnGraph,\LocSurLeq)$ are future-finite.
\end{prop}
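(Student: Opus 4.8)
The plan is to reduce both claims to the already-established future-finiteness of the vertex surjective homomorphism order (Proposition~\ref{prop:shomo-univ}). The key observation, recalled just before the statement, is that for connected graphs a locally surjective homomorphism $f:G\LocSurHom H$ is automatically vertex surjective: given any $v\in V_H$, connectedness of $H$ together with local surjectivity lets one reach every vertex of $H$ from any vertex in the image of $f$, so $f(V_G)=V_H$. Since a locally bijective homomorphism is in particular locally surjective, the same applies to $\LocBiHom$.

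First I would fix a connected graph $G$ and examine its up-set in $(\ConnGraph,\LocSurLeq)$, namely $\{H\in\ConnGraph \mid G\LocSurHom H\}$. By the observation above every such $H$ admits a vertex surjective map from $G$, hence $|V_H|\le |V_G|$. There are only finitely many connected graphs (up to isomorphism, loops allowed) on at most $|V_G|$ vertices, and by Proposition~\ref{prop:locsurbi-core} each of them is its own LS-core, so no further identification occurs when passing to the order; thus the up-set of $G$ is finite. This is exactly future-finiteness of $(\ConnGraph,\LocSurLeq)$, and the argument is the same finiteness-of-isomorphism-types count used for $(\DiGraphs,\VSurLeq)$ in Proposition~\ref{prop:shomo-univ}.

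The locally bijective case then follows immediately: $(\ConnGraph,\LocBiLeq)$ is a suborder of $(\ConnGraph,\LocSurLeq)$ (every locally bijective homomorphism is locally surjective), so each up-set in the former is contained in the corresponding up-set in the latter and is therefore finite as well. Alternatively, one can argue directly, since a locally bijective homomorphism between connected graphs likewise forces $|V_H|\le |V_G|$.

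I do not expect any serious obstacle here; the one point to state carefully is the passage ``locally surjective between connected graphs $\Rightarrow$ vertex surjective'', since this is where connectedness is essential and is precisely the step that fails for disconnected graphs. Everything else is routine counting.
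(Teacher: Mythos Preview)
Your proposal is correct and matches the paper's approach exactly: the paper notes just before the statement that locally bijective and locally surjective homomorphisms between connected graphs are surjective, and then simply invokes Proposition~\ref{prop:shomo-univ}. You have merely unpacked that one-line citation into the explicit vertex-count argument, which is precisely what Proposition~\ref{prop:shomo-univ} does.
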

We denote by $\UnorPath$\index{$\UnorPath$} the class of all unoriented paths.
By an analogous argument as in Lemma~\ref{lem:cycles} give:
\begin{prop}
\label{pro:lochomo-univ}
The order $(\Cycle,\LocBiLeq)$ $(={(\Cycle,\LocSurLeq)}={(\Cycle,\LocInLeq))}$ is future-finite-universal.
The order ${(\UnorPath,\LocSurLeq)}$ is future-finite-universal.
\end{prop}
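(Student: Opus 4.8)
The plan is to imitate the proof of Lemma~\ref{lem:cycles} almost verbatim, with unoriented cycles in place of directed cycles and locally constrained homomorphisms in place of ordinary ones, and likewise for unoriented paths with locally surjective homomorphisms. The point is that on each of these two restricted classes the relevant order is governed by divisibility, so the prime-labelling $A\mapsto\prod_{q\in A}q$ of Lemma~\ref{lem:cycles} carries over with no essential change.

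First I would record two elementary classification facts. \emph{(a) Cycles.} For $k,l\geq 3$ there is a locally bijective homomorphism $C_k\LocBiHom C_l$ if and only if $l\mid k$. If $l\mid k$ then $i\mapsto i\bmod l$ does the job; conversely, under a locally bijective (hence locally injective) $f$ the closed walk $f(0),f(1),\dots,f(k-1),f(0)$ in $C_l$ never backtracks, so it winds around $C_l$ always in the same direction, and after $k$ steps its net displacement $\pm k$ must be $\equiv 0\pmod l$. Moreover $\Cycle$ is exactly the class of graphs with degree refinement matrix $(2)$, so by Theorem~\ref{thm:degeq1} and Theorem~\ref{thm:degeq2} (equivalently, by Theorem~\ref{thm:singleclass}) the relations $\LocBiHom$, $\LocSurHom$ and $\LocInHom$ agree on $\Cycle$, and it suffices to treat $\LocBiLeq$. \emph{(b) Paths.} Writing $m_P$ for the number of edges of a path $P$, there is a locally surjective homomorphism $P\LocSurHom Q$ if and only if $m_Q\mid m_P$. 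For the forward direction I would argue: a degree-one vertex cannot surjectively cover a degree-two neighbourhood, so under a locally surjective $f\colon P\to Q$ the endpoints of $P$ map to endpoints of $Q$; at an interior vertex of $P$ whose image is interior in $Q$ the walk $f(v_0),f(v_1),\dots$ passes straight through, and it bounces exactly at the two endpoints of $Q$. Hence this walk is a back-and-forth traversal of $Q$, so its length $m_P$ is an integer multiple of $m_Q$. The converse is witnessed by that traversal, repeated $m_P/m_Q$ times.

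Granting (a) and (b), the proof is then the transcription of Lemma~\ref{lem:cycles}. Let $\mathbb P$ denote the odd primes. For an arbitrary future-finite order $(P,\leq_P)$ we may assume $P\subseteq\mathbb P$, and then $x\mapsto\uparrow x$ (the up-set of $x$) embeds $(P,\leq_P)$ into $(\Pfin(\mathbb P),\supseteq)$ by Corollary~\ref{cor:futurefiniteuniv}; since $x\in\uparrow x$, each such set is finite and nonempty. For a finite nonempty $A\subseteq\mathbb P$ put $p(A)=\prod_{q\in A}q\geq 3$. Then, using (a), $A\mapsto C_{p(A)}$ is an order embedding of $(\Pfin(\mathbb P)\setminus\{\emptyset\},\supseteq)$ into $(\Cycle,\LocBiLeq)$, because $A\supseteq B\iff p(B)\mid p(A)\iff C_{p(A)}\LocBiHom C_{p(B)}$; composing, $x\mapsto C_{p(\uparrow x)}$ embeds $(P,\leq_P)$ into $(\Cycle,\LocBiLeq)$. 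Replacing $C_{p(A)}$ by the path with $p(A)$ edges and invoking (b) in the same way embeds $(P,\leq_P)$ into $(\UnorPath,\LocSurLeq)$. As $(P,\leq_P)$ was arbitrary, both orders are future-finite-universal.

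The only genuinely non-routine ingredient is the forward direction of~(b): the structural analysis showing that a locally surjective homomorphism between paths is forced to be a back-and-forth traversal, whence the divisibility. The cycle case is immediate from the covering-space facts already developed in the excerpt, and the final assembly is just a rerun of the proof of Lemma~\ref{lem:cycles}.
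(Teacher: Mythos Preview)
Your proof is correct and follows essentially the same approach as the paper: the paper's proof simply asserts that for all three locally constrained variants $C_k\to C_l$ holds iff $l\mid k$, that the same divisibility criterion governs $\LocSurHom$ on paths counted by edges, and then defers to the argument of Lemma~\ref{lem:cycles}. You have spelled out the divisibility facts (a) and (b) in more detail than the paper does, including the care about nonempty $A$ to keep $p(A)\geq 3$, but the overall strategy is identical.
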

\begin{proof}
It is easy to observe that for all three types of locally constrained homomorphisms there is a homomorphism from $C_k\to C_l$ if and only if $l$ divides $k$.
The same holds for unoriented paths with $k$ and $l$ edges ordered by the locally surjective homomorphisms.
The rest of the statement follows the same way as Lemma~\ref{lem:cycles}.
\end{proof}
Here we make an exception in our focus on connected graphs and state explicitly the following universality result.
It can be proved analogously as Theorem~\ref{thm:cycles} by applying Proposition~\ref{pro:lochomo-univ}
and Theorem~\ref{thm:univ}.
\begin{corollary}[Universality]
The order $(\Cycles,\LocBiLeq)$ $(={(\Cycles,\LocSurLeq)}={(\Cycles,\LocInLeq))}$ is universal.
\end{corollary}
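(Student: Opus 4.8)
The plan is to follow, almost verbatim, the proof of Theorem~\ref{thm:cycles}, with undirected cycles in the role of oriented cycles and locally bijective homomorphisms (equivalently, by Proposition~\ref{pro:lochomo-univ}, locally surjective or locally injective ones) in the role of ordinary homomorphisms. The one ingredient not already isolated in the excerpt is that, on a disjoint union of cycles, each of the three kinds of locally constrained homomorphism decomposes componentwise. Once this is available, $(\Cycles,\LocBiLeq)$ is seen to be an instance of the subset order from Section~\ref{sec:universality} built over a future-finite-universal order, and Theorem~\ref{thm:univ} finishes the argument.

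Concretely, I would first prove the following lemma. For $G=C_{k_1}\cup\cdots\cup C_{k_m}$ and $H=C_{l_1}\cup\cdots\cup C_{l_n}$ in $\Cycles$, there is a locally bijective homomorphism $G\LocBiHom H$ if and only if for every $i$ there is $j$ with $C_{k_i}\LocBiHom C_{l_j}$; and the same statement holds with $\LocBiHom$ replaced throughout by $\LocSurHom$ or by $\LocInHom$. For the ``if'' direction, the disjoint union of homomorphisms $g_i\colon C_{k_i}\to C_{l_{j(i)}}$ is a homomorphism $f\colon G\to H$ whose restriction to the neighbourhood of any vertex $v\in V_{C_{k_i}}$ coincides with that of $g_i$, since $N_G(v)=N_{C_{k_i}}(v)$ and $N_H(f(v))=N_{C_{l_{j(i)}}}(f(v))$; hence $f$ inherits local bijectivity (resp.\ surjectivity, injectivity). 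For the ``only if'' direction, given such an $f$, the image $f(C_{k_i})$ is connected and therefore contained in a single component $C_{l_{j(i)}}$ of $H$; because the components partition $V_H$, the restriction $f|_{C_{k_i}}\colon C_{k_i}\to C_{l_{j(i)}}$ has exactly the neighbourhoods written above and is again locally bijective (resp.\ surjective, injective).

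Combining the lemma with the observation of Proposition~\ref{pro:lochomo-univ} that $C_k\LocBiHom C_l$, $C_k\LocSurHom C_l$ and $C_k\LocInHom C_l$ are each equivalent to ``$l$ divides $k$'', it follows that $\LocBiLeq$, $\LocSurLeq$ and $\LocInLeq$ agree on $\Cycles$, and that the map sending a finite set $A$ of cycles to $\bigcup_{C\in A}C$ is an order isomorphism from the subset order $(\Pfin(F),\subsetLeq{F})$ — where $F$ denotes the class $\Cycle$ equipped with $\LocBiLeq$, so that $C_k\leq_F C_l$ iff $l\mid k$ — onto $(\Cycles,\LocBiLeq)$: the relation $A\subsetLeq{F}B$ is precisely the componentwise divisibility condition of the lemma, the map is onto since every object of $\Cycles$ is by definition such a union, and antichain representatives in $\Pfin(F)$ correspond to LB-cores in $\Cycles$. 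Now $(\Cycle,\LocBiLeq)$ is future-finite-universal by Proposition~\ref{pro:lochomo-univ}, so Theorem~\ref{thm:univ} applied with this $F$ shows that $(\Pfin(F),\subsetLeq{F})$ is universal; hence so is $(\Cycles,\LocBiLeq)=(\Cycles,\LocSurLeq)=(\Cycles,\LocInLeq)$, which is the claim.

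The only step that needs genuine care is the ``only if'' part of the decomposition lemma, i.e.\ the fact that a connected cycle cannot be spread across several components of a disconnected target; everything else is a direct transcription of the proof of Theorem~\ref{thm:cycles}, and indeed the excerpt already signals that the corollary ``can be proved analogously as Theorem~\ref{thm:cycles}''.
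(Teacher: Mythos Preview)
Your proposal is correct and is exactly the approach the paper intends: it explicitly says the corollary ``can be proved analogously as Theorem~\ref{thm:cycles} by applying Proposition~\ref{pro:lochomo-univ} and Theorem~\ref{thm:univ}''. You have simply made explicit the componentwise decomposition lemma that the paper leaves implicit (just as it does in the proof of Theorem~\ref{thm:cycles}), and then identified $(\Cycles,\LocBiLeq)$ with the subset order over the future-finite-universal $(\Cycle,\LocBiLeq)$ before invoking Theorem~\ref{thm:univ}.
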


We know of no reasonable characterization of gaps in the locally bijective order. We however
can show that:

\begin{lem}[Gaps]
\label{lem:locbigaps}
If $G$ is graph with a cycle, then there are infinitely many graphs $H$, $\drm(G)=\drm(H)$ such
that $(H,G)$ is a LB-gap, and this pair $(H,G)$ is also a LI-gap and a LS-gap.
\end{lem}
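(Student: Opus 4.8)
The plan is to exhibit the gap pairs explicitly, by ``unrolling'' a cycle of $G$, and to use covering-sheet counting to rule out anything strictly in between. Fix a cycle $C$ in $G$ and let $\K$ be the $\Matrixeq$-class of $G$; since $G\in\K$ contains a cycle, part~(2) of Theorem~\ref{thm:singleclass} applies. For each prime $p$ I would apply the construction from the proof of part~\ref{item:aa} of Theorem~\ref{thm:singleclass} (equivalently, take the $p$-fold cyclic cover of $G$ along $C$, which is connected because the associated surjection onto $\mathbb{Z}/p$ is onto) to obtain a connected graph $G_p\in\K$ together with a locally bijective homomorphism $G_p\LocBiHom G$; being a covering of the connected graph $G$ with $p$ sheets along $C$, it has $p$ sheets everywhere, so $|V_{G_p}|=p\,|V_G|$ and $\drm(G_p)=\drm(G)$. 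Consequently the graphs $G_p$ are pairwise non-isomorphic, $G_p\LocBiHom G$, and there is no locally bijective homomorphism $G\LocBiHom G_p$ (such a homomorphism would be a covering of $G_p$, forcing $|V_{G_p}|\le|V_G|$); hence $G_p<G$ strictly in $(\ConnGraph,\LocBiLeq)$, and likewise in $(\ConnGraph,\LocInLeq)$ and $(\ConnGraph,\LocSurLeq)$, since a locally bijective homomorphism is both locally injective and locally surjective.

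Next I would show that each pair $(G_p,G)$ is an LB-gap. Suppose $K\in\ConnGraph$ satisfies $G_p\LocBiHom K\LocBiHom G$. Both homomorphisms are coverings of connected graphs, hence have well-defined numbers of sheets $a$ and $b$ with $|V_{G_p}|=a\,|V_K|$ and $|V_K|=b\,|V_G|$, so $ab=p$. As $p$ is prime, $a=1$ or $b=1$, and a one-sheeted covering is an isomorphism, so $K\cong G_p$ or $K\cong G$. Since every finite connected graph is an LB-core (Proposition~\ref{prop:locsurbi-core}), there is no collapsing under $\LocBiHom$-equivalence to worry about, so $(G_p,G)$ is a gap in $(\ConnGraph,\LocBiLeq)$.

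The locally injective and locally surjective cases I would reduce to this one. Suppose $G_p\LocInHom K\LocInHom G$ with $K\in\ConnGraph$, the maps being $f\colon G_p\to K$ and $g\colon K\to G$. The composite $g\circ f\colon G_p\to G$ is locally injective between two graphs with equal degree refinement matrix, hence locally bijective by Theorem~\ref{thm:degeq1}. I would then record the elementary factoring fact: if a composite of locally injective homomorphisms is locally bijective, each factor is locally bijective --- indeed for $v\in V_{G_p}$ the restriction of $f$ to $N_{G_p}(v)$ is injective, that of $g$ to $N_K(f(v))$ is injective, and their composite is a bijection, forcing $|N_{G_p}(v)|=|N_K(f(v))|$ and hence $f$ bijective on $N_{G_p}(v)$. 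Thus $f$ is locally bijective, $\drm(K)=\drm(G_p)=\drm(G)$, and $g$ is locally bijective as well (Theorem~\ref{thm:degeq1} again); we are back in the LB situation, so $K\cong G_p$ or $K\cong G$, and since every finite connected graph is an LI-core (by Theorem~\ref{thm:locin-auto}), $(G_p,G)$ is an LI-gap. The locally surjective case is the exact dual: use Theorem~\ref{thm:degeq2} in place of Theorem~\ref{thm:degeq1}, and the dual factoring fact (a locally surjective factor of a locally bijective composite is locally bijective, since injectivity of the composite on a neighbourhood forces injectivity, hence bijectivity, of the factor). Letting $p$ range over all primes gives infinitely many pairwise non-isomorphic graphs $H=G_p$ with $\drm(H)=\drm(G)$ for which $(H,G)$ is simultaneously an LB-gap, an LI-gap and an LS-gap.

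I expect the main obstacle to be the bookkeeping around coverings of connected graphs --- in particular that a locally bijective homomorphism onto a connected graph has a constant fibre size equal to the ratio of the orders, and that a single fibre means isomorphism --- together with pinning down the exact vertex count $|V_{G_p}|=p\,|V_G|$ of the unrolling construction; the prime-order trick is what then forces the gaps, while the factoring lemmas for locally injective and locally surjective compositions are routine once stated.
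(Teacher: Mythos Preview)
Your proposal is correct and follows essentially the same strategy as the paper: build $p$-fold covers of $G$ along a fixed cycle, use the multiplicativity of fibre sizes together with primality of $p$ to force the LB-gap, and then invoke the Cantor--Bernstein type results (Theorems~\ref{thm:degeq1} and~\ref{thm:degeq2}) to upgrade each LB-gap to an LI-gap and an LS-gap. Your write-up is in fact tighter than the paper's own proof, which is rather sketchy on the sheet-counting step and phrases the LI/LS reduction as a contradiction (``the composite would fail to be locally bijective'') where you give the cleaner direct factoring argument.
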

\begin{proof}

We first consider locally bijective homomorphisms.

As an example, consider $C_3$ (cycle on 3 vertices). Then for every prime $p$
the pair $(C_{3p},C_p)$ is a gap. The general case is similar. In a locally
bijective homomorphism $f:H\LocBiHom G$, the pre-images of all vertices of $G$ have
the same size (and in locally surjective homomorphisms they are all non-empty).
It remains thus to construct for every $p$ a graph on $p|H|$ vertices with a
locally bijective homomorphism to $H$. This can be done by considering any
cycle in $G$ and expanding its length $p$ times and sufficiently duplicating
all vertices connected to it.

Now observe that those LB-gaps are also LS-gaps. Assume to the contrary we have a LB-gap $(H,G)$
which is not a LS-gap. It means that there is graph $G'$ such that $H\LocSurHom G'$ and
$G'\LocSurHom G$, and at least one of those locally surjective homomorphisms is not locally
bijective. It follows that even their composition is also not locally bijective, a
contradiction with Theorem~\ref{thm:degeq2}.

LI-gaps follow in a completely analogous way.
\end{proof}
\begin{lem}[Gaps of locally surjective homomorphism order]
\label{lem:locsurgaps}
If $G$ is an acyclic graph, then there are infinitely many graphs $H$ such that $(H,G)$ is a LS-gap.
\end{lem}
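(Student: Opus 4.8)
The plan is to produce, for infinitely many values of a parameter $k$, a connected graph $H_k$ with $H_k\LocSurHom G$, of unbounded order, such that $(H_k,G)$ is a gap in the locally surjective homomorphism order. Recall that an acyclic graph is a finite tree with (possibly) some loops; as loops behave much like short cycles, I concentrate on the main case, a genuine loop‑free tree $G$ with at least two vertices (the loop case can be absorbed in the manner of Lemma~\ref{lem:locbigaps}). The basic input is that a locally surjective homomorphism between connected graphs is surjective (Section~\ref{sec:locally-constrained}); hence for any $H'$ with $H_k\LocSurHom H'\LocSurHom G$ one automatically has $|V_G|\le |V_{H'}|\le |V_{H_k}|$, so each gap has bounded "width" in the number of vertices, and it suffices to exhibit locally surjective covers $H_k$ of $G$ of arbitrarily large order that are moreover minimal above $G$ in the order.

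For the construction I would "stretch" a pendant part of $G$. The guiding example is $G=P_3$: the odd path $P_{2p+1}$ folds onto $P_3$ by wrapping the long path back and forth over $a,b,c$, and $(P_{2p+1},P_3)$ is a gap whenever $p$ is prime. Indeed any intermediate $H'$ inherits maximum degree $2$ from $P_{2p+1}$ (a vertex of degree $\ge 3$ in $H'$ would need a preimage vertex of degree $\ge 3$ in $P_{2p+1}$), hence $H'$ is a path or a cycle; it cannot contain a cycle, since it is the image of the path $P_{2p+1}$ and maps onto the bipartite, triangle‑free $P_3$; and among paths $P_n\LocSurHom P_m$ holds iff the number of edges of $P_m$ divides that of $P_n$, so primality of $p$ leaves only $P_3$ and $P_{2p+1}$. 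For an arbitrary tree $G$ one picks a leaf $v$ at the end of a longest path, with neighbour $u$, and either stretches the pendant path at $u$ (when $\deg_G(u)=2$) or, when $\deg_G(u)\ge 3$, chains $k$ copies of an appropriate rooted subtree of $G$ between $u$ and copies of $v$, in each case arranging the fold so that local surjectivity is restored at every copy of $u$ (its two "new" neighbours together supplying the missing element of $N_G(u)$). This yields a family $H_k$ with $H_k\LocSurHom G$ and $|V_{H_k}|$ unbounded in $k$.

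The remaining, and hardest, step is to check that for $k=p$ prime the pair $(H_p,G)$ is a gap, i.e.\ that any $H'$ with $H_p\LocSurHom H'\LocSurHom G$ is isomorphic to $G$ or to $H_p$. The argument I envisage proceeds in three parts: first, $H'$ inherits strong local constraints from $H_p$ (bounded degrees, the preimage pattern along the stretched part), which pins down its structure on that part to be either "path‑like" or a full copy of $H_p$'s stretched part; second, since $H'$ maps onto the tree $G$, its universal cover contains $G$ and all of its cycles have even length, which together with the bounds above already forces $H'=G$ in the acyclic case; third, in the remaining case a divisibility argument modelled on $C_n\LocSurHom C_m\Leftrightarrow m\mid n$, applied to the period of the stretched block, combined with $p$ prime, excludes every proper intermediate. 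I expect the classification underlying the last point — describing which $H'$ can simultaneously receive a locally surjective homomorphism from $H_p$ and send one to $G$ — to be the main obstacle, and to rest on the rigidity encoded by Theorem~\ref{thm:degeq2} and Leighton's theorem (equal degree‑refinement matrices, isomorphic universal covers): any fold of $H_p$ onto an intermediate must identify its repeated blocks periodically, and the primality of $p$ forbids any nontrivial period.
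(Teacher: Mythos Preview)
Your path example and the underlying divisibility idea are exactly right and match the paper. Where you diverge is in the general tree case: you pick a single leaf $v$, look at its neighbour $u$, and then branch into a case analysis (stretch a pendant path if $\deg_G(u)=2$, otherwise chain copies of ``an appropriate rooted subtree''). The paper does something much simpler: it picks \emph{two} leaves of $G$, calls them initial and terminal, and forms $H_p$ by concatenating $p$ copies of the whole tree $G$ along those leaves (terminal of copy $i$ identified with initial of copy $i+1$). The fold $H_p\LocSurHom G$ is then just the obvious back-and-forth, and the gap argument reduces to the same divisibility reasoning you already gave for paths, with no case split.

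There is a genuine weakness in your plan beyond mere length. First, ``an appropriate rooted subtree'' is never specified, and it is not clear that any choice makes the resulting map locally surjective at the copies of $u$ when $\deg_G(u)\ge 3$; you note yourself that the two new neighbours must jointly supply the missing element of $N_G(u)$, but you do not exhibit a subtree for which this works. Second, your proposed verification of the gap leans on Theorem~\ref{thm:degeq2} and Leighton's theorem, but those results concern graphs with the \emph{same} degree refinement matrix, whereas your $H_p$ and $G$ typically have different ones; it is not evident how that machinery forces periodicity of an intermediate $H'$. The paper's chain-of-copies construction sidesteps both issues: local surjectivity is automatic because every non-junction vertex sits inside an exact copy of $G$, and any intermediate $H'$ must itself be a chain of $d$ copies with $d\mid p$, so primality finishes it directly without invoking universal covers.
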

\begin{proof}
Let us first consider the simple case of path $P_{n+1}$ with $n$ edges. 
It is not difficult to see that every path $P_{pn+1}$ where $p$ is a prime number
forms a gap: the only graphs in between $P_n$ and $P_{pn+1}$ and locally surjective
homomorphism of two paths must map the initial vertex to the initial or terminal vertex 
and the terminal vertex to the initial or terminal vertex. Also it can not flip
in the middle of the path.

The same argument can be used for acyclic graphs in general. Ignoring loops,
those are trees and thus they have at least two leaf vertices of degree
1. Denote two leaves of the tree as initial and terminal vertex and connect $p$
copies of the acyclic graph into a sequence to form a gap.
\end{proof}

\begin{prop}[Dualities]
\label{prop:locbidual}
There are no generalized finite duality pairs in $(\ConnGraph,\LocBiLeq)$
and in $(\ConnGraph,\LocSurLeq)$.
\end{prop}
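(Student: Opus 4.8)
The plan is to reduce everything to the characterization of dualities in future‑finite orders, Proposition~\ref{prop:futurefiniteduals}, combined with the gap information already assembled in Lemmas~\ref{lem:locbigaps} and~\ref{lem:locsurgaps}. By Proposition~\ref{pro:lochomo-fini} both $(\ConnGraph,\LocBiLeq)$ and $(\ConnGraph,\LocSurLeq)$ are future‑finite, so Proposition~\ref{prop:futurefiniteduals} applies: a finite set $\mathcal F$ can be completed to a generalized finite duality pair $(\mathcal F,\mathcal D)$ only if (i) every $a\in\bigcup_{f\in\mathcal F}\uparrow f$ has only finitely many $b$ with $(b,a)$ a gap, and (ii) the order has only finitely many maximal elements. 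It therefore suffices to show that, in each order, no finite $\mathcal F$ can satisfy both (i) and (ii).

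For the locally bijective order I would argue as follows. Locally bijective homomorphisms preserve the degree refinement matrix (this is the content of Theorem~\ref{thm:degeq1} together with Leighton's theorem), and an acyclic connected graph is determined up to isomorphism by its universal cover, which is the graph itself; hence for acyclic $G$ one has $G\LocBiHom H$ iff $H\cong G$, so every acyclic connected graph is an isolated point of $(\ConnGraph,\LocBiLeq)$. In particular all the paths $P_n$ with $n\geq 3$ are isolated, hence maximal, so $(\ConnGraph,\LocBiLeq)$ has infinitely many maximal elements and (ii) fails for every $\mathcal F$. (Alternatively, if $\uparrow\mathcal F$ contains a graph with a cycle one invokes Lemma~\ref{lem:locbigaps} directly to get infinitely many LB‑gaps below it, violating (i); either way no finite $\mathcal F$ admits a partner $\mathcal D$.)

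For the locally surjective order the route via (ii) is unavailable, and this is where the main obstacle lies: locally surjective homomorphisms do \emph{not} preserve the degree refinement matrix (for instance $P_4\LocSurHom K_2$ while $\drm(P_4)\ne\drm(K_2)$), so the clean "infinitely many incomparable maximal elements" argument of the previous paragraph cannot be reproduced, and one must fall back on the explicit gap constructions. I would use condition (i): given any nonempty finite $\mathcal F$, pick $f\in\mathcal F\subseteq\bigcup_{f'\in\mathcal F}\uparrow f'$; if $f$ contains a cycle then Lemma~\ref{lem:locbigaps} exhibits infinitely many $H$ with $(H,f)$ an LS‑gap, and if $f$ is acyclic then Lemma~\ref{lem:locsurgaps} does the same. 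Thus $f$ has infinitely many gaps below it, (i) fails, and $\mathcal F$ has no finite partner. The only remaining case is the degenerate $\mathcal F=\emptyset$, which forces $\bigcup_{d\in\mathcal D}\downarrow d=\ConnGraph$; one should dispose of it with a short remark — restricting to loopless targets this is impossible because $K_N\not\LocSurHom d$ whenever $|V_d|<N$, so no finite family of down‑sets covers the whole order, and the loop‑target situation is degenerate in the obvious way. Putting the two cases together yields the statement for $(\ConnGraph,\LocSurLeq)$.

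The step I expect to be delicate is precisely the locally surjective case, and within it the careful use of Lemmas~\ref{lem:locbigaps} and~\ref{lem:locsurgaps} so that \emph{every} connected graph — acyclic or not — is seen to have infinitely many LS‑gaps immediately below it; once that is in hand, Proposition~\ref{prop:futurefiniteduals} finishes the job mechanically, exactly as Corollary~\ref{cor:dualities} is used (in flipped form) for the positive duality results elsewhere in the chapter.
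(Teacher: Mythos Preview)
Your proposal is correct and follows essentially the same route as the paper: apply Proposition~\ref{prop:futurefiniteduals}, kill the locally bijective case via condition~(ii) (infinitely many maximal elements, exhibited through distinct degree refinement matrices---you use the acyclic graphs, the paper uses the DRM classes directly), and kill the locally surjective case via condition~(i) by invoking Lemmas~\ref{lem:locbigaps} and~\ref{lem:locsurgaps} to get infinitely many LS-gaps below every connected graph. Your write-up is in fact more careful than the paper's (you make explicit which condition fails where, and you flag the degenerate $\mathcal F=\emptyset$ case, which the paper silently ignores).
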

\begin{proof}
We apply Proposition~\ref{prop:futurefiniteduals}. 

In the locally bijective case the analysis is easy. If $\drm(G)\neq \drm(H)$, then there is no locally bijective
homomorphism from $G$ to $H$, and
because there are infinitely many degree refinement matrices and thus there
are infinitely many maximal elements in $(\ConnGraph,\LocBiLeq)$.

In the case of locally surjective homomorphism order, the maximal element is a vertex with
loop. Because there are infinitely many gaps below any graph with a cycle
(Lemma~\ref{lem:locbigaps}) as well as below any acyclic graph (Lemma~\ref{lem:locsurgaps}),
there are no duality pairs in both orders.
\end{proof}
%

\subsection*{Locally injective homomorphism orders}

The surjectivity argument can not be applied to locally injective homomorphisms. We will show that in fact the locally injective homomorphism order has some properties that are in sharp contrast with the locally surjective and locally bijective homomorphism orders.

To characterize cores of the locally injective homomorphism order we apply the classical result of Theorem~\ref{thm:locin-auto} (also a special case of Theorem~\ref{thm:degeq1}):
If $G$ is a finite connected graph, then every locally injective homomorphism $f:G\LocInHom G$ is an automorphism of $G$.
\begin{corollary}[Cores]
\label{pro:locin-core}
Every finite connected graph is a LI-core.
\end{corollary}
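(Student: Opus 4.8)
The plan is to read the statement straight off Theorem~\ref{thm:locin-auto}. Recall that $G$ is an LI-core precisely when it has the fewest vertices among the connected graphs in its $\LocInHom$-equivalence class, i.e.\ among all $H\in\ConnGraph$ with $G\LocInHom H$ and $H\LocInHom G$. So it suffices to prove that any such $H$ satisfies $|V_G|=|V_H|$.

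First I would record the elementary fact that a composition of locally injective homomorphisms is again locally injective: if $f$ is a locally injective homomorphism from $G$ to $H$ and $g$ one from $H$ to $K$, then for each $v\in V_G$ the restriction of $f$ to $N_G(v)$ is injective into $N_H(f(v))$ and the restriction of $g$ to $N_H(f(v))$ is injective into $N_K(g(f(v)))$, so $g\circ f$ is injective on $N_G(v)$.

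Now, given locally injective homomorphisms $f\colon G\LocInHom H$ and $g\colon H\LocInHom G$, the composite $g\circ f$ is a locally injective homomorphism of the finite connected graph $G$ to itself, so by Theorem~\ref{thm:locin-auto} it is an automorphism of $G$; in particular it is bijective, which forces $f$ to be vertex-injective, and hence $|V_G|\le|V_H|$. Applying the same reasoning to $f\circ g\colon H\LocInHom H$ shows $g$ is vertex-injective and $|V_H|\le|V_G|$. Therefore $|V_G|=|V_H|$, so $G$ is minimal in its equivalence class and is an LI-core. If one wishes to say more, the argument can be pushed one step: $f$ is then a bijective homomorphism, so $|E_G|\le|E_H|$ and, symmetrically, $|E_H|\le|E_G|$; a bijective homomorphism between undirected graphs with equally many edges reflects edges as well and is thus an isomorphism, so every $\LocInHom$-equivalence class of connected graphs is in fact a single isomorphism type.

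I do not anticipate any genuine obstacle here: the whole weight of the proof rests on Theorem~\ref{thm:locin-auto}, and the remaining ingredients---closure of local injectivity under composition and the counting of vertices (and edges)---are routine. The only point meriting a line of care is the composition observation, since it is what lets us feed $g\circ f$ into Theorem~\ref{thm:locin-auto}.
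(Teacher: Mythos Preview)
Your proof is correct and is essentially the same as the paper's: compose the two locally injective homomorphisms, invoke Theorem~\ref{thm:locin-auto} to see the composite is an automorphism, and deduce the vertex-count inequalities (the paper phrases it via surjectivity of the second factor rather than injectivity of the first, but this is the same argument). Your extra paragraph upgrading to an isomorphism is also present in the paper's version.
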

\begin{proof}
The composition of the locally injective homomorphisms $f:G\LocInHom H$ and $f':H\LocInHom G$ is a locally injective homomorphism $f\circ f':G\LocInHom G$. By Theorem~\ref{thm:locin-auto},
$f\circ f'$ is surjective and thus $f'$ is surjective and it follows that $|V_H|\geq |V_G|$. By the same argument on $f'\circ f$, we have $f$ is surjective and thus $|V_G|\geq |V_H|$. Consequently $|V_G|=|V_H|$ and both $f$ and $f'$ are isomorphisms.
\end{proof}

For the first (and last) time we are able to show a variant of Welzl's density theorem~\cite{Welzl1982}. This shows that the locally injective homomorphism order on undirected connected graphs is not locally finite.

\begin{thm}[Density]
\label{thm:density}
Let $G$ and $H$ be connected graphs such that $\drm(G)\neq \drm(H)$, $G\LocInHom H$ and $H$ has no vertices of degree 1. Then:
\begin{enumerate}

\item[(a)] There exists a connected graph $F$, such that $G\LocInL F\LocInL H$, $\drm(F)\neq \drm(G)$ and $\drm(F)\neq \drm(H)$.

\item[(b)] When $G$ has no vertices of degree 1 and $H$ has at least one cycle with a vertex of degree greater than 2, then $F$ can be constructed to have no vertices of degree 1 and contain a cycle with a vertex of degree greater than 2.

\end{enumerate}
\end{thm}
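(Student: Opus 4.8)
The plan is to produce the intermediate graph $F$ by a single ``surgery'' that attaches to $G$ a finite connected cover of $H$, and then to read off all the required properties from Theorem~\ref{thm:degeq1} and Theorem~\ref{thm:locin-auto}. The first step is to locate where the hypothesis $\drm(G)\neq\drm(H)$ bites. Since $\drm(G)\neq\drm(H)$ there is no locally bijective homomorphism $G\LocBiHom H$ (locally bijective homomorphisms preserve degree refinement matrices), so the given locally injective homomorphism $\phi\colon G\LocInHom H$ is not locally bijective; as $\phi$ is injective on neighbourhoods this means there is a vertex $u_0\in V_G$ with $\deg_G(u_0)<\deg_H(\phi(u_0))$. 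Fix $w^*\in N_H(\phi(u_0))\setminus\phi(N_G(u_0))$. In case (b), where $G$ has no vertex of degree $1$, we automatically have $\deg_G(u_0)\ge 2$ and hence $\deg_H(\phi(u_0))\ge 3$.

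Next the construction. Because $H$ is connected with minimum degree $\ge 2$ it contains a cycle, and one can choose a finite connected cover $p\colon H'\to H$ together with a lift $ab$ of the edge $(\phi(u_0),w^*)$, with $p(a)=\phi(u_0)$ and $p(b)=w^*$, such that $ab$ lies on a cycle of $H'$ (if the edge $(\phi(u_0),w^*)$ already lies on a cycle of $H$, take $H'=H$). Let $F$ be obtained from the disjoint union $G\sqcup H'$ by deleting the edge $ab$ and adding the edge $u_0b$; since $ab$ is not a bridge of $H'$, the graph $F$ is connected. Then $\phi\sqcup p\colon F\to H$ is a homomorphism --- the new edge $u_0b$ maps to $(\phi(u_0),w^*)\in E_H$ --- and it is locally injective: at $u_0$ the image neighbourhood picks up exactly the missing $w^*$; at $b$ the neighbour $a$ is replaced by $u_0$, whose image is $p(a)=\phi(u_0)$, so the image neighbourhood is unchanged and still maps bijectively; at $a$ a neighbour was merely deleted; everywhere else nothing changed. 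Hence $F\LocInHom H$. Also $G$ is an induced subgraph of $F$ with no internal edge added or removed, so the inclusion $G\hookrightarrow F$ is locally injective and $G\LocInHom F$.

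Strictness of $G\LocInL F$ and of $F\LocInL H$ follows from Theorem~\ref{thm:locin-auto}. If there were a locally injective $g\colon F\to G$, then composing with the inclusion $G\hookrightarrow F$ would give a locally injective self-map of $F$, hence --- $F$ being finite and connected --- an automorphism, forcing $g$ to be injective; but $|V_F|=|V_G|+|V_{H'}|>|V_G|$, a contradiction. Symmetrically, a locally injective $h\colon H\to F$ composed with $\phi\sqcup p$ would be an automorphism of $F$, forcing $|V_F|\le|V_H|$, again impossible. For the matrix inequalities I invoke Theorem~\ref{thm:degeq1}: if $\drm(F)=\drm(G)$ the locally injective inclusion $G\hookrightarrow F$ would be locally bijective, impossible since $\deg_F(u_0)=\deg_G(u_0)+1$; and if $\drm(F)=\drm(H)$ the locally injective map $\phi\sqcup p$ would be locally bijective, impossible since $\deg_F(a)=\deg_{H'}(a)-1<\deg_H(p(a))$. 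This proves (a).

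For (b) it remains to audit degrees and cycles in $F$. The surgery changes only the degrees of $u_0$ (increased), of $a$ (decreased by $1$, but $\deg_{H'}(a)=\deg_H(\phi(u_0))\ge 3$ in case (b), so still $\ge 2$), and of $b$ (one neighbour swapped, degree unchanged); every other vertex keeps its degree, which is $\ge 2$ because $G$ and $H'$ have minimum degree $\ge 2$. Hence $F$ has no vertex of degree $1$. Finally $G$, having minimum degree $\ge 2$, contains a cycle through each of its vertices: if $G$ has a vertex of degree $>2$, that cycle (which survives in $F$) already witnesses the claim; otherwise $G=C_n$ and in $F$ the vertex $u_0\in C_n$ has degree $3$, so the cycle $C_n\subseteq F$ has a vertex of degree $>2$. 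The one genuinely delicate point of the argument is justifying the existence of the cover $H'$ in which the chosen lifted edge lies on a cycle (using that every vertex of $H$ lies on a cycle); everything else is degree bookkeeping together with two applications each of Theorems~\ref{thm:degeq1} and~\ref{thm:locin-auto}. Loop edges require a moment's care in defining the surgery but cause no essential difficulty.
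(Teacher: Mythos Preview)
Your approach is genuinely different from the paper's and considerably more elaborate. The paper handles (a) in one line: add a single pendant vertex $u'$ adjacent to $u_0$, call the result $F_1$; then $G\hookrightarrow F_1\LocInHom H$ (extend $\phi$ by sending $u'$ to $w^*$), while $\drm(F_1)\neq\drm(H)$ since $F_1$ has a vertex of degree~$1$ and $H$ does not, and $\drm(F_1)\neq\drm(G)$ by Theorem~\ref{thm:degeq1} and a vertex count. For (b) the paper attaches to $u_0$ an ``$(l+ck,ck')$-lasso'' (a path of controlled length followed by a cycle of controlled length) that maps along a path in $H$ to a vertex on a cycle and then wraps that cycle; by varying $k,k'$ one gets infinitely many distinct degree refinement matrices and picks one avoiding $\drm(G)$ and $\drm(H)$. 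No covers, no surgery.

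Your cover-surgery idea also works in spirit, and your strictness and $\drm$ arguments via Theorems~\ref{thm:locin-auto} and~\ref{thm:degeq1} are correct. But two points are not in order as written:

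\textbf{(i)} The existence of a finite connected cover $H'$ in which the chosen lifted edge lies on a cycle is exactly the load-bearing step, and you do not prove it. It is true (when the edge is a bridge in $H$, both sides of $H\setminus e$ contain cycles since $H$ has minimum degree~$\ge 2$; take a $2$-fold cover with nontrivial monodromy around one cycle on each side, and the two lifts of $e$ then sit on a common cycle), but this argument has to be supplied. Without it, connectedness of $F$ fails precisely when $(\phi(u_0),w^*)$ is a bridge, and then (a) is not proved.

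\textbf{(ii)} In your check of (b) you assert that a connected graph with minimum degree~$\ge 2$ ``contains a cycle through each of its vertices''. This is false (two triangles joined by a path of length $\ge 2$: the internal path vertices lie on no cycle). The conclusion you want is still true for your $F$, but needs a different argument: since $F$ is connected with $\delta(F)\ge 2$ and $\Delta(F)\ge 3$ (at $u_0$), a greedy non-backtracking walk from $u_0$ first repeats at some vertex $v_i$; if $i=0$ you have a cycle through $u_0$, and if $i>0$ then $v_i$ has three distinct neighbours $v_{i-1},v_{i+1},v_{j-1}$, so the cycle through $v_i$ contains a vertex of degree $\ge 3$.

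With these two fixes your proof goes through, but the paper's pendant/lasso construction is shorter and avoids the covering-space detour entirely.
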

\begin{proof}
Fix a locally injective homomorphism $f:G\to H$. Because $\drm(G)\neq \drm(H)$ we know that $f$ is not locally bijective. Denote by $u$ a vertex such that $f(u)$ is strictly injective (that means injective but not bijective).

First construct $F_1$ from a graph $G$ by extending a single vertex $u'$ and a single edge $(u,u')$. It is easy to observe that $F_1$ satisfies the conditions given on $F$ by (a). Obviously there are locally injective homomorphisms $G\LocInHom F_1$ ($G$ is a subgraph of $F_1$) and $F_1\LocInHom H$ (by extending the homomorphism $f$ that is strictly injective at vertex $u$). $\drm(F_1)$ differs from $\drm(H)$ because $H$ has no vertex of degree 1. Suppose $\drm(G) = \drm(H)$, from Theorem~\ref{thm:degeq1} there is a locally bijective homomorphism from $G$ to $F_1$. Because every locally bijective homomorphism is surjective. we have $|G|\geq|F_1|$ that is a contradiction with $|G|<|F_1|$. Thus $\drm(G)\neq \drm(H)$. 

Now we extend our construction to meet the requirements of (b).
Denote by $v$ the vertex of $H$ that lies on a cycle and has degree greater than 2. Because $H$ is connected, there is a path of length $l$ from $f(u)$ to $v$. Finally denote by $c$ the length of the cycle containing $v$.

We call the graph created by connecting a path of length $a$ to a cycle of length $b$ an {\em $(a,b)$-lasso}\index{$(a,b)$-lasso}.

Now construct $F_{k,k'}$ as disjoint union of graph $G$ and $(l+ck,ck')$-lasso with vertex $u$ and the single vertex of degree 1 in the lasso identified.
See Figure~\ref{fig:lasso}.

\begin{figure}
\centerline{\includegraphics{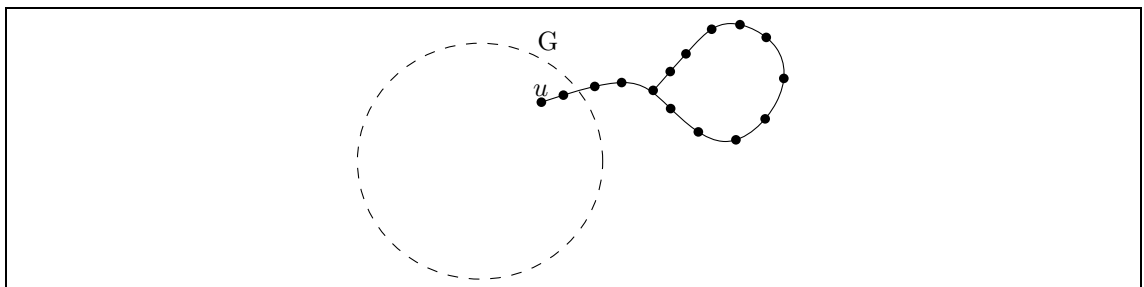}}
\caption{Graph $G$ with vertex $u$ extended by a (5,11)-lasso.}
\label{fig:lasso}
\end{figure}

It is easy to observe that for every $k,k'\geq 1$ there is a locally injective homomorphism $G\to F_{k,k'}$ ($G$ is an induced subgraph of $F_{k,k'}$). Similarly there is a locally injective homomorphism $F_{k,k'}\to H$ that can be constructed by extending $f$. The length of the cycle and the path has been chosen in a way so the first $l$ vertices can be mapped into the path from $f(u)$ to $v$ in $H$ and the remaining vertices are mapped on the cycle containing $v$ in $H$.

For a given vertex of degree $2$ denote by a distance pair $(a,b)$ the distances to nearest vertices of degree different than two, such that $a\leq b$. Two vertices of degree 2 with different distance pairs can not be in the same block of degree refinement matrix. Consequently $\drm(F_{k,k'})\neq \drm(F_{m,m'})$ whenever $k\neq m$ or $k'\neq m'$. It follows that there exists a choice of $k$ and $k'$ such that $\drm(F_{k,k'})\neq \drm(G)$ and $\drm(F_{k,k'})\neq \drm(H)$. We put $F=F_{k,k'}$ for such choice of $k$ and $k'$.
\end{proof}

The constraints given on graph $H$ may seem artificial. However they lead to a full characterization of graphs $H$ such that there is $G$, $\drm(G)\neq \drm(H)$ and $(G,H)$ is a gap.

It remains to explore the cases where $H$ has neither a vertex of degree 1, nor a cycle with vertex of degree greater than 2. We know that $H$ is not a tree and thus it contains a cycle. Because $H$ is connected it follows that all such graphs $H$ are cycles.
Graphs with a locally injective homomorphism to a cycle are either cycles or paths. When $G$ is a path of length $l$, the graphs $F$ can be chosen as a path of length $l+1$. It is easy to observe that $G\LocInHom F\LocInHom G$ and the degree matrices of $G,H$ and $F$ are mutually disjoint.
When $G$ is a cycle, we have $\drm(G)=\drm(H)$ and thus we can not hope for density result in general. As an example, consider $G$ to be a cycle of length $4$ and $H$ to be a cycle of length $8$.

Combining all results together we get:
\begin{thm}[Gaps]
\label{thm:locingap}
Let $H$ be a connected graph.
\begin{itemize}
\item[(a)] There exists a connected graph $G$ such that $\drm(G)=\drm(H)$ and $(G,H)$ is a gap in $(\ConnGraph,\LocInLeq)$ if and only if $H$ contains cycles.
\item[(b)] There exists a connected graph $G$, such that $\drm(G)\neq \drm(H)$ and $(G,H)$ is a gap if and only if $H$ has at least one vertex of degree 1.
\end{itemize}
\end{thm}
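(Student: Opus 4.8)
\textbf{Proof plan for Theorem~\ref{thm:locingap}.} The theorem has two parts, and both assemble earlier results. For part~(a): the ``if'' direction follows directly from Lemma~\ref{lem:locbigaps}, which already produces, for any graph $G$ with a cycle, infinitely many graphs $H$ with $\drm(G)=\drm(H)$ such that $(H,G)$ is an LI-gap; renaming, this says that whenever the \emph{larger} member of the gap --- here $H$ --- contains a cycle there is a suitable $G$ below it with the same DRM. So I would first restate Lemma~\ref{lem:locbigaps} in the orientation matching the theorem and observe it gives exactly what is needed. For the ``only if'' direction of (a): if $H$ is acyclic (a tree with optional loops), then by Theorem~\ref{thm:singleclass}(1) its $\Matrixeq$-class is trivial, so the only graph $G$ with $\drm(G)=\drm(H)$ is $H$ itself (up to isomorphism), and then $(G,H)$ cannot be a gap since $G\LocInHomeq H$. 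Hence a gap $(G,H)$ with $\drm(G)=\drm(H)$ forces $H$ to contain a cycle.

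For part~(b), the ``only if'' direction is the content of Theorem~\ref{thm:density}: if $H$ has no vertex of degree $1$, then for every connected $G$ with $G\LocInHom H$ and $\drm(G)\neq\drm(H)$, part~(a) of that theorem constructs an intermediate graph $F$ with $G\LocInL F\LocInL H$ and $\drm(F)$ distinct from both $\drm(G)$ and $\drm(H)$ --- so no such pair $(G,H)$ can be a gap. Therefore, if a gap $(G,H)$ with $\drm(G)\neq\drm(H)$ exists, $H$ must have a vertex of degree $1$. The ``if'' direction --- constructing, for a given connected $H$ with at least one degree-$1$ vertex, infinitely many $G$ with $\drm(G)\neq\drm(H)$ and $(G,H)$ a gap --- is the part that needs a genuine (though short) argument and is, I expect, the main obstacle. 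The idea is the ``rolling/lasso'' construction dual to Lemma~\ref{lem:locsurgaps}: take a pendant vertex $v$ of degree $1$ in $H$, with neighbour $u$; a locally injective homomorphism from a path attached at a degree-$1$ vertex of $H$ into $H$ is forced to ``unfold'' along $H$ without folding back (local injectivity at interior vertices of the path prevents an immediate reversal, and a degree-$1$ endpoint of $H$ blocks escape), so by taking $p$ copies of an appropriate path/subtree of $H$ glued in a chain --- with $p$ prime and the total length chosen so the only factorizations through intermediate graphs are trivial --- one gets a graph $G_p$ with $G_p\LocInHom H$ and no graph strictly between. One must check $\drm(G_p)\neq\drm(H)$ (immediate from the vertex count or from the presence of a long induced path of degree-$2$ vertices with a distance-pair not realized in $H$, as in the proof of Theorem~\ref{thm:density}) and that the construction yields infinitely many pairwise non-isomorphic $G_p$ (distinct primes $p$ give distinct DRMs by the distance-pair argument).

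So the write-up would be: (i) cite Lemma~\ref{lem:locbigaps} and Theorem~\ref{thm:singleclass}(1) for part~(a); (ii) cite Theorem~\ref{thm:density}(a) for the ``only if'' of part~(b); (iii) give the lasso/path-chain construction for the ``if'' of part~(b), modelled on Lemma~\ref{lem:locsurgaps} but with local injectivity (rather than local surjectivity) as the rigidity mechanism, and verify the DRM-distinctness and infinitude claims. The delicate point in (iii) is arguing that \emph{no} intermediate graph exists: I would phrase this by analysing an arbitrary factorization $G_p\LocInHom K\LocInHom H$, noting that the composite must map the long degree-$2$ segments of $G_p$ essentially isometrically (local injectivity forbids both folding and the contraction that would decrease a segment length), so that $K$ is itself a chain of the same shape with length a multiple of $|H|$-data dividing $p$ times it; primality of $p$ then forces $K\LocInHomeq G_p$ or $K\LocInHomeq H$. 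This mirrors the divisibility phenomenon already exploited in Lemma~\ref{lem:cycles} and Proposition~\ref{pro:lochomo-univ}, so the needed lemmas about homomorphisms between paths/lassos are close at hand.
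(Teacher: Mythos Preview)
Your treatment of part~(a) and the ``only if'' of part~(b) is fine and matches the paper's approach closely (the paper argues (a) ``if'' via the future-finiteness of a single $\Matrixeq$-class rather than citing Lemma~\ref{lem:locbigaps}, but either works).

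The real problem is your plan for the ``if'' direction of (b). First, you have misread the statement: it asks only for the \emph{existence} of one $G$ with $\drm(G)\neq\drm(H)$ such that $(G,H)$ is an LI-gap, not for infinitely many. Second, and more seriously, your chain/lasso construction does not work when $H$ is a tree. A locally injective homomorphism from a path into $H$ is a non-backtracking walk in $H$; if $H$ is acyclic, such a walk has length at most $\mathrm{diam}(H)$, so your ``$p$ copies glued in a chain'' cannot map locally injectively into $H$ once $p$ is large. The divisibility mechanism you want to borrow from Lemma~\ref{lem:locsurgaps} relies on the fact that locally \emph{surjective} maps allow a long path to fold repeatedly over a short one; locally \emph{injective} maps forbid exactly this, so the analogy breaks.

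The paper's argument is much shorter: take $G=H\setminus\{v\}$ where $v$ is a degree-$1$ vertex with neighbour $u$. The inclusion $G\hookrightarrow H$ is locally injective, and it is not locally bijective at $u$, so by Theorem~\ref{thm:degeq1} we have $\drm(G)\neq\drm(H)$. To see that $(G,H)$ is a gap, suppose $G\LocInL F\LocInL H$. If $H$ is acyclic then all locally injective maps between these trees are embeddings, and since $|V_H|=|V_G|+1$ there is no room for a proper $F$ between them. If $H$ contains a cycle, let $G'$ be the maximal induced subgraph of $G$ with no degree-$1$ vertices; the composite $G\LocInHom F\LocInHom H$ restricted to $G'$ lands back in $G'$, and by Theorem~\ref{thm:locin-auto} this restriction is an automorphism. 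The acyclic pieces hanging off $G'$ are then forced to embed, so the whole composite is an embedding of $G$ into $H$, hence so is $G\LocInHom F$; again $|V_H|=|V_G|+1$ forces $F$ to coincide with $G$ or $H$. This is the argument you should supply in place of the chain construction.
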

\begin{proof}
When $H$ is acyclic, then its equivalence class is trivial. Consequently one implication of $(a)$ is true.

We prove the other implication. As a simple consequence of Theorem~\ref{thm:degeq1}, for any choice of $G'$ and $H$ with $\drm(G')=\drm(H)$, graph $G$ such that $G'\LocInHom G\LocInHom H$ must have $\drm(G)=\drm(G')=\drm(H)$. We thus seek the gaps in the order $(\ConnGraph,\LocInLeq)$ restricted to a single equivalence class of $\Matrixeq$. By Theorem~\ref{thm:singleclass}, this order is future-finite. Consequently there are only finitely many choices of $G$ and every maximal choice of $G$ (in $(\ConnGraph,\LocInLeq)$) produces a gap.

One implication of $(b)$ is a consequence of Theorem~\ref{thm:density} and the discussion concerning cycles given below.

In the other direction when $H$ has a vertex $v$ of degree 1, consider graph $G=H\setminus v$ and we show it has the desired properties. Assume, to the contrary that there is graph $F$ such that $G\LocInL F\LocInL H$. 

First assume that $H$ is acyclic. In this case $F$ is also acyclic and the locally injective homomorphisms are embeddings. Consequently $F$ is a proper subgraph of $H$ and $G$ is a proper subgraph of $F$. A contradiction.
 
Assume that $H$ contains a cycle. Denote by $G'$ the largest induced subgraph of $G$ containing no vertices of degree 1. Denote by $g$ the locally injective homomorphism from $G$ to $F$ and by $f$ the locally injective homomorphism from $F$ to $H$. Then the locally injective homomorphisms $g\circ f$ partialized to $G'$ may only use vertices of $G'$ and thus, by Theorem~\ref{thm:locin-auto}, it is an automorphism. It follows that any component of $G\setminus G'$ must be mapped to acyclic component of $H\setminus G'$. Such mappings are embeddings, too. Consequently $g\circ f$ is an embedding and thus $g$ is also an embedding. Again, it follows that $F$ is isomorphic either to $G$ or $H$, which causes a contradiction.
\end{proof}

Further developing these ideas, it is possible to show the universality of locally constrained order on connected graphs.
 
\begin{thm}[Universality]
\label{thm:lochomouniv}
$(\ConnGraph,\LocInLeq)$ is a universal order.
\end{thm}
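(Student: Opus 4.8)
The plan is to reduce to the universality result already established on disjoint unions of cycles. By Theorem~\ref{thm:univ} applied to the future-finite-universal order $(\Cycle,\LocInLeq)$ of Proposition~\ref{pro:lochomo-univ}, the order $(\Cycles,\LocInLeq)$ on disjoint unions of cycles is universal; here one uses that a homomorphism out of a disjoint union is determined componentwise, so $\bigsqcup_i C_{n_i}\LocInHom\bigsqcup_j C_{m_j}$ precisely when every $C_{n_i}$ admits a locally injective homomorphism to some $C_{m_j}$, together with $C_k\LocInHom C_l\iff l\mid k$. It therefore suffices to produce an order embedding of $(\Cycles,\LocInLeq)$ into $(\ConnGraph,\LocInLeq)$, that is, a \emph{connectifying} map $\Gamma$ sending a finite family of cycles $\mathcal S=\{C_{n_1},\dots,C_{n_m}\}$ (identified with its disjoint union) to a connected graph $\Gamma(\mathcal S)$ with $\Gamma(\mathcal S)\LocInHom\Gamma(\mathcal T)$ if and only if $\mathcal S\LocInHom\mathcal T$.

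For the gadget itself, $\Gamma(\mathcal S)$ is assembled from the disjoint cycles $C_{n_i}$ by attaching each of them, through a pendant ``handle'' path, to a common skeleton that itself carries a cycle; the key parameters — the skeleton cycle-length, chosen as a fixed multiple of $\gcd(n_1,\dots,n_m)$ times a large prime that never occurs among the $n_i$, and the positions where the handles meet the skeleton — are to be fixed as explicit functions of $\mathcal S$ alone. The design must reflect two features of locally injective homomorphisms. First, no backtracking is allowed, so a cycle can only be sent to a closed non-backtracking walk, i.e. a wrapping of a cycle of the target; making the skeleton cycle-length divisible by a prime absent from all the $n_i$ forces the cycles $C_{n_i}$ of $\Gamma(\mathcal S)$ onto the cycles of $\Gamma(\mathcal T)$ inherited from $\mathcal T$ and onto nothing else. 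Second, no collapse of degree is allowed, so when several $C_{n_i}$ must be funneled onto the same cycle of $\Gamma(\mathcal T)$, this merging cannot happen at one high-degree vertex and must instead be produced by wrapping the source skeleton cycle several times around the shorter target one; the $\gcd$-based choice is exactly what guarantees, via the implication that $\mathcal S\LocInHom\mathcal T$ forces the gcd of the lengths in $\mathcal T$ to divide the gcd of the lengths in $\mathcal S$, that the target skeleton length divides the source skeleton length, so that such a wrapping exists.

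The converse inclusion is the soft one. Given $f:\Gamma(\mathcal S)\LocInHom\Gamma(\mathcal T)$, each cycle $C_{n_i}$ is sent to a closed non-backtracking walk of length $n_i\ge 3$ in $\Gamma(\mathcal T)$, which must wrap around some cycle of $\Gamma(\mathcal T)$ (a graph with no cycle is a forest, and a forest admits no non-trivial closed non-backtracking walk); the skeleton cycle is ruled out by its large-prime factor, so $f$ restricts to a locally injective homomorphism $C_{n_i}\LocInHom C_{n'_k}$ for some $C_{n'_k}\in\mathcal T$, hence $n'_k\mid n_i$. Letting $i$ range over the handles yields a domination witness, so $\mathcal S\LocInHom\mathcal T$. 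A short degree count at the degree-three attachment vertices, which are forced onto vertices of degree at least three, confirms that this cycle-to-cycle assignment is compatible with the handle and skeleton structure.

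The step I expect to be the main obstacle is the forward direction: for every pair $(\mathcal S,\mathcal T)$ with $\mathcal S\LocInHom\mathcal T$ and every domination witness $j$, one must exhibit an explicit locally injective homomorphism $\Gamma(\mathcal S)\to\Gamma(\mathcal T)$. Since $j$ need not be injective and $|\mathcal S|$ may exceed $|\mathcal T|$, the handles whose cycles share a $j$-image have to be folded together, and — because a tree admits no non-trivial folding under a locally injective map — all of this folding must be routed through the skeleton cycle. Choosing the skeleton lengths and the handle positions so that the required positional alignment is available \emph{simultaneously} for every admissible triple $(\mathcal S,\mathcal T,j)$, while keeping $\Gamma(\mathcal S)$ a function of $\mathcal S$ alone, is the genuinely delicate point and calls for a careful number-theoretic set-up of the defining parameters of $\Gamma$. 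Once $\Gamma$ is verified to be an order embedding, the universality of $(\Cycles,\LocInLeq)$ transfers to $(\ConnGraph,\LocInLeq)$, completing the proof.
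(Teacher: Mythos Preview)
Your high-level strategy---reduce to the universality of $(\Cycles,\LocInLeq)$ and then ``connectify''---is natural, but the gadget you outline has a genuine obstruction in exactly the place you flag, and your sketch does not resolve it.

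Concretely: take $\mathcal S=\{C_3,C_6\}$ and $\mathcal T=\{C_3\}$. Then $\mathcal S\LocInHom\mathcal T$, yet $\gcd(\mathcal S)=\gcd(\mathcal T)=3$, so with your parametrisation the two skeletons have the \emph{same} length. The image of the $\mathcal S$-skeleton in $\Gamma(\mathcal T)$ is a closed non-backtracking walk of that length; a short case analysis shows it must either wrap the $\mathcal T$-skeleton once (a bijection, impossible since $\Gamma(\mathcal S)$ has two degree-$3$ attachment points on its skeleton and $\Gamma(\mathcal T)$ has one) or wrap $C_3$ many times. In the latter case both attachment points of $\mathcal S$ must land on the single degree-$3$ vertex of $C_3$, forcing a congruence between their positions modulo~$3$; and then both handles are forced down the single $\mathcal T$-handle, imposing a further relation $h_{\mathcal S}=2h_{\mathcal T}+(\text{multiple of the skeleton length})+\cdots$ between handle lengths that must hold simultaneously for every admissible $\mathcal T$. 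You correctly sense that this alignment problem is the crux, but no choice of parameters depending on $\mathcal S$ alone is offered, and the $\gcd$-scheme visibly does not suffice. (A side issue: if your ``large prime'' varies with $\mathcal S$, skeleton-divisibility fails outright; if it is global, you must restrict the cycle lengths used, which is fine but should be said.)

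There is also a subtler gap in your backward direction: a closed non-backtracking walk in $\Gamma(\mathcal T)$ need not wrap a single simple cycle. It can traverse the skeleton, detour down a handle, wrap a $C_{m_j}$, return, and continue---this is non-backtracking at every vertex. Ruling out the skeleton by its prime factor therefore does not by itself force the image of $C_{n_i}$ into some $C_{m_j}$.

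The paper avoids both difficulties by taking a different route. It does not factor through $(\Cycles,\LocInLeq)$ at all, but embeds an arbitrary countable order $(P,\leq_P)$ directly via an on-line construction. The building blocks are \emph{sunlets} (cycles with a pendant at every vertex) rather than bare cycles: every internal vertex has degree~$3$, so a locally injective map must send sunlet to sunlet, and the divisibility of lengths is cleanly enforced. Each element $n$ gets a gadget $H(n)$ made of a ``left'' sunlet (length a product of primes $\ge 5$ encoding the backward order) and a ``right'' sunlet (length $2^n\cdot 3$); the disjoint prime supports prevent left--right confusion. Crucially, the connectification is \emph{recursive}: $E(n)$ contains literal copies of $E(k)$ for every $k<n$ with $k\leq_P n$, attached at prescribed external vertices of the right sunlet. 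This makes the forward direction trivial---when $n\leq_P n'$ and $n<n'$, the graph $E(n)$ is already a subgraph of $E(n')$---and pushes all the work into controlling maps between sunlets of known lengths. That is precisely the opposite distribution of difficulty from your plan, and it is what makes the argument go through.
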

Fix an order $(P,\leq_P)$. For brevity we assume that
$P=\{p_1,p_2,\ldots\}$ consists of prime numbers greater than or equal to 5. We construct an on-line
embedding from $(P,\leq_P)$ to $(\ConnGraph,\LocInLeq)$ by representing the
elements by cycles similarly as in Theorem~\ref{thm:cycles}. 
Main complication lies in the fact that we need to connect
cycles representing an element of $(P,\leq_P)$ to connected graph.

We build the representations from gadgets containing the cycles of desired lengths.
{\em Sunlet on vertices $v_0,v_1,\ldots, v_n$ and $v'_1,v'_2,\ldots, v'_n$}\footnote{It is basically the same as $n$-sunlet $S_n$ in Definiton~\ref{def:sunlet}, just the vertices are marked.}\index{sunlet} is a graph on vertices $\{v_0,v_1,\ldots,$ $v_n,v'_1,v'_2,\ldots, v'_n\}$
containing a cycle on vertices $v_0,v_1,\ldots, v_n$ and edges $(v_0,v'_0)$, $(v_1,v'_1)$, \ldots, $(v_n,v'_n)$.
Vertices $v_0,v_1,\ldots, v_n$ are called {\em internal vertices}\index{internal vertices} while vertices $v'_1,v'_2,\ldots, v'_n$ are {\em external}\index{external}.
The length of sunlet corresponds to the length of cycle on its internal vertices.
For every $n\in P$ put: $$l(n)=\{p\mid p\in P,p\leq n \hbox{ and } p\geq_P n\},$$
$$p(n)=\prod_{p\in l(n)}p.$$
Denote by $H(n)$ the graph constructed as a disjoint union of two sunlets:
\begin{enumerate}
\item a left sunlet on vertices $l_{n,0},l_{n,1},l_{n,2},\ldots, l_{n,2^n p(n)-1}$ and \\$l'_{n,0},l'_{n,1},l'_{n,2},\ldots, l'_{n,2^n p(n)-1}$,
\item a right sunlet on vertices $r_{n,0},r_{n,1},r_{n,2},\ldots, r_{n,2^n3-1}$ and \\$r'_{n,0},r'_{n,1},r'_{n,2},\ldots, r'_{n,2^n3-1}$,
\end{enumerate}
with an additional edge $(l'_{n,p(n)},r'_{n,0})$.

Graphs $E(p)$, $p\in P$ for the sample order $(P,\leq_P)$ shown in Figure~\ref{fig:poset2} are schematically
depicted in Figure~\ref{fig:representation}.

\begin{lem}
For $n>n'\in P$ there exists a locally injective homomorphism $H(n)\LocInHom H(n')$ if and only
if $n\leq_P n'$. If such locally injective homomorphism exists, then it
maps vertex $l_{n,0}$ of $H(n)$ to vertex $l_{n,p(n)}$ of $H(n')$.
\end{lem}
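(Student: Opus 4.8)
The plan is to show that any locally injective homomorphism $f\colon H(n)\to H(n')$ is extremely rigid, to read off from this rigidity the divisibility condition $2^{n'}p(n')\mid 2^np(n)$, and finally to check that this condition is equivalent to $n\leq_P n'$. The existence direction is then an explicit construction.

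First I would establish the rigidity of $f$. Since $f$ is locally injective, every vertex maps to a vertex of degree at least its own; as the degree-$3$ vertices of $H(n')$ are exactly its internal (cycle) vertices, each internal vertex of $H(n)$ maps to an internal vertex of $H(n')$. Local injectivity also forbids a non-backtracking walk from reversing direction, so each of the two cycles of $H(n)$ maps to a closed non-backtracking walk through internal vertices of $H(n')$; such a walk runs around exactly one of the two internal cycles of $H(n')$, hence its length must be a multiple of that cycle's length. At an internal vertex the two cycle-neighbours already use two of the three available images, so the pendant there must map to the pendant at the image vertex; thus pendants map to pendants. Finally the two degree-$2$ ``connector'' external vertices $l'_{n,p(n)}$ and $r'_{n,0}$ of $H(n)$ must map to pendants of degree $\geq 2$, of which $H(n')$ has only $l'_{n',p(n')}$ and $r'_{n',0}$; since these images are adjacent and distinct, $\{f(l'_{n,p(n)}),f(r'_{n,0})\}=\{l'_{n',p(n')},r'_{n',0}\}$, the unique external--external edge of $H(n')$.

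Next comes the crucial case analysis: either $f(l'_{n,p(n)})=l'_{n',p(n')}$, in which case the left cycle of $H(n)$ wraps onto the left cycle of $H(n')$ and the right cycle onto the right cycle; or $f(l'_{n,p(n)})=r'_{n',0}$, in which case the left cycle of $H(n)$ maps onto the right cycle of $H(n')$, whose length is $2^{n'}\cdot 3$. The second case is impossible, because it would force $2^{n'}\cdot 3\mid 2^np(n)$, whereas $3\nmid 2^np(n)$ since $p(n)$ is a product of primes of $P$, all at least $5$. (This is exactly what the factor $3$ in the right sunlet is for.) So we are always in the first case. The surviving constraints are $2^{n'}\cdot 3\mid 2^n\cdot 3$, which is automatic as $n>n'$, and $2^{n'}p(n')\mid 2^np(n)$; using $n>n'$ and $\gcd(p(n'),2)=1$ the latter reduces to $p(n')\mid p(n)$, i.e. (both being squarefree) $l(n')\subseteq l(n)$. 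Moreover the connector vertex $l_{n,p(n)}$ is forced to $l_{n',p(n')}$, which pins down $f$ on the left cycle up to the rotation determined by $p(n)$; in particular the image of $l_{n,0}$ is determined, giving the last assertion of the lemma. Conversely, if $l(n')\subseteq l(n)$ (equivalently $2^{n'}p(n')\mid 2^np(n)$) I would build $f$ directly: wrap the left cycle of $H(n)$ around the left cycle of $H(n')$ with the alignment $l_{n,p(n)}\mapsto l_{n',p(n')}$, wrap the right cycle around the right cycle with $r_{n,0}\mapsto r_{n',0}$, send each pendant to the pendant of its image, and route the connecting edge onto the connecting edge; local injectivity is vacuous at the degree-$1$ pendants, clear from the covering structure at internal vertices, and holds at the two degree-$2$ connectors because their two neighbours get two distinct images.

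It remains to match $l(n')\subseteq l(n)$ with $n\leq_P n'$. For one direction, $n'\in l(n')$ by reflexivity of $\leq_P$, so $l(n')\subseteq l(n)$ gives $n'\in l(n)$, i.e. $n'\geq_P n$. For the other, if $n\leq_P n'$ and $p\in l(n')$ then $p\leq n'<n$ and $p\geq_P n'\geq_P n$ by transitivity, so $p\in l(n)$. Combining this with the two paragraphs above yields the lemma. The only genuinely delicate step is the case analysis ruling out that the left cycle maps onto the right cycle, where the choice of $3$ against primes $\geq 5$ is used; the rest is bookkeeping about degrees, non-backtracking walks, and squarefree divisibility, together with the harmless observations that all the cycles involved are honest cycles (since $p(n)\geq n\geq 5$) and that $H(n)$ is loopless so ``adjacent'' excludes equality.
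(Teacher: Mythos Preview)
Your argument is correct and follows essentially the same route as the paper's proof: use degrees to force internal vertices to internal vertices, argue that each sunlet maps to a sunlet, rule out the left-to-right swap via the factor $3$ (against primes $\geq 5$ in $p(n)$), and reduce to $p(n')\mid p(n)$, which you then correctly identify with $l(n')\subseteq l(n)$ and hence with $n\leq_P n'$. Your write-up is in fact considerably more careful than the paper's, which leaves most of these steps implicit; the one loose end---pinning down $f(l_{n,0})$ exactly despite the two possible orientations of the wrap---is not addressed in the paper's proof either, and the lemma's second sentence already has an index typo, so you are not missing anything the paper supplies.
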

\begin{proof}

Both $H(n)$ and $H(n')$ consist of left and right sunlets connected by an edge.
The only vertices of degree 3 are internal vertices of sunlets.
Consequently every locally injective homomorphism $f:H(n)\LocInHom H(n')$ must
map sunlets of $H(n)$ to sunlets of $H(n')$.

The length of the left sunlet is always divisible by at least one prime number greater or equal to 5
and is not divisible by 3.
The length of the right sunlet is always multiple of powers of 2 and 3. 
Consequently every locally injective
homomorphism $f:H(n)\LocInHom H(n')$ must map the left sunlet of $H(n)$ to the left sunlet of
$H(n')$ and the right sunlet of $H(n)$ to the right sunlet of $H(n')$.
Such homomorphism exists if and only if $p(n)$ divides $p(n')$ and that is
true, by definition of $p(n)$, if and only of $n\leq n'$ and $n\leq_P n$.
\end{proof}

\begin{figure}
\centerline{\includegraphics{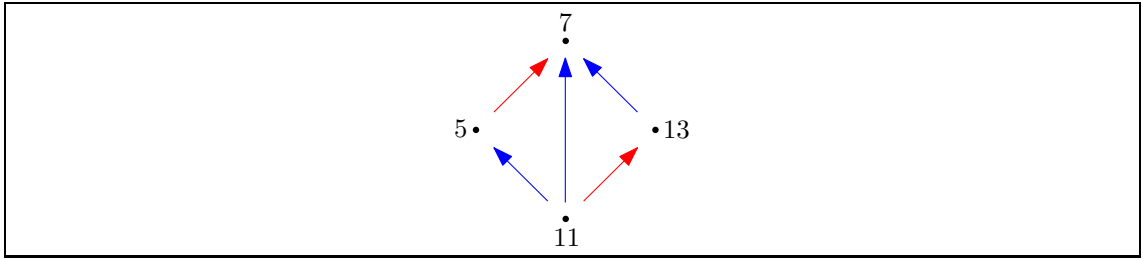}}
\caption{The order $(P,\leq_P)$.}
\label{fig:poset2}
\end{figure}
\begin{figure}
\centerline{\includegraphics{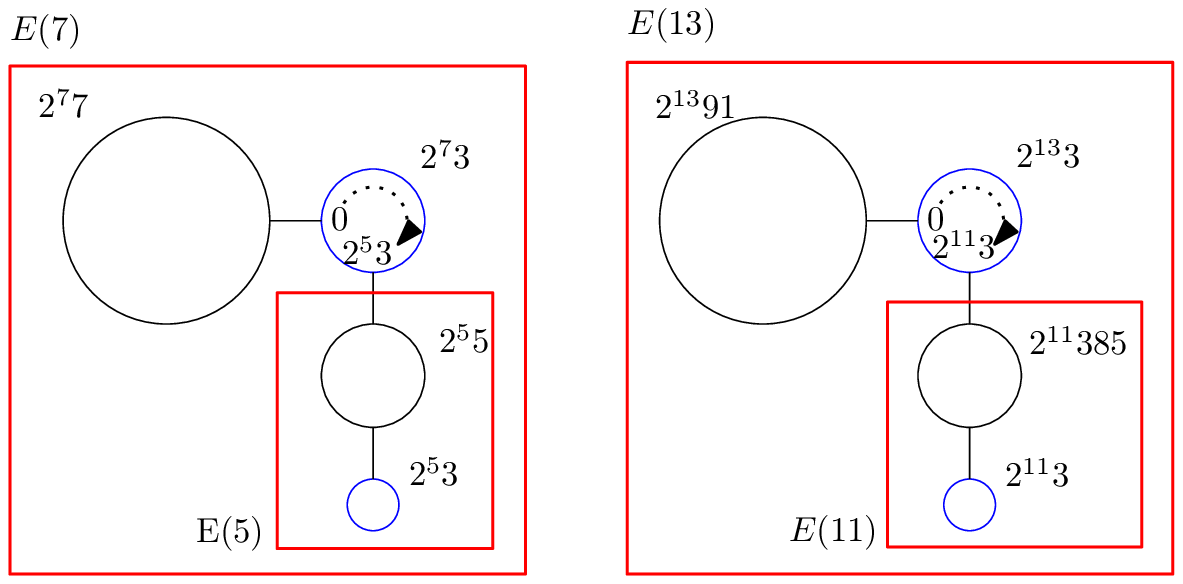}}
\caption{Representation of order $(P,\leq_P)$. Black circles denote left sunlets, blue circles right sunlets.}
\label{fig:representation}
\end{figure}

\begin{proof}[Proof (of Theorem~\ref{thm:lochomouniv})]
Denote by $E(n)$ the graph created by the disjoint union of $H(n)$ and graphs $E(k)$, $k<n, p_k\leq_P p_n$,
with additional edges between $l_{k,0}$ and $r_{n,2^{k-1} 3}$.

We claim that $E$ is an embedding from $(P,\leq_P)$ to $(\ConnGraph,\LocInLeq)$.

We show by induction on $n'$ that for every $n\leq n'$
\begin{enumerate}
\item there is a locally injective homomorphism $f_{n,n'}:E(n)\to E(n')$ if and only if $p_n\leq_P p_{n'}$
and moreover $f_{n,n'}$ maps $l_{n,0}$ to $l_{n',p(n)}$;
\item there is a locally injective homomorphism $f_{n',n}:E(n')\to E(n)$ if and only if $p_{n'}\leq_P p_{n'}$. 
\end{enumerate}
We consider the following individual cases:
\begin{enumerate}
\item $p_n$ is incomparable with $p_n'$ in $(P,\leq_P)$. The left cycle of $H(n)$ have length that is multiple of $p_n$, but the left cycle of any $H(n'')$, $n\leq_P n'$ does not.
Consequently there is no homomorphism $H(n)\LocInHom H(n')$. The opposite direction follows in complete analogy.
\item $n\leq n', n\leq_P n'$. $E(n')$ contains copy of $E(n)$.
\item $n\leq n', n\geq_P n'$. We build homomorphism $f_{n',n}$ as follows:
\begin{enumerate}
\item $f_{n',n}(l_{n',i})=l_{n,(p(n)+i)\mod 2 p(n)}$.
\item $f_{n',n}(r_{n',i})=r_{n,i\mod 2^n3}$.
\item For every $n''<n$, $n''\leq_P n'$, the vertices of copy of $E(n'')$ in $E(n')$ are mapped to copy of $E(n'')$ in $E(n)$.
\item For every $n''>n$, $n''\leq_P n'$, the vertices of copy of $E(n'')$ in $E(n')$ are mapped to $E(n)$ by $f_{n'',n}$.\qedhere
\end{enumerate}
\end{enumerate}
\end{proof}

\begin{prop}[Dualities]
For a finite set of undirected connected graphs $\mathcal{D}$ there is a finite set of undirected connected graphs
$\mathcal{F}$ such that $(\mathcal{F},\mathcal{D})$ is a generalized finite LI-duality pair
if and only if $\mathcal{D}$ consists of acyclic graphs.
\end{prop}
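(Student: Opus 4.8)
The plan is to pass to the down-set $\mathcal D^{\downarrow}:=\{G\in\ConnGraph\mid G\LocInHom D\text{ for some }D\in\mathcal D\}$ and its complement $K:=\ConnGraph\setminus\mathcal D^{\downarrow}$. Unwinding the definition, $(\mathcal F,\mathcal D)$ is a generalized finite LI-duality pair precisely when $\mathcal F$ is a finite subset of $\ConnGraph$ with $\bigcup_{F\in\mathcal F}\uparrow F=K$; note that such an $\mathcal F$ must automatically satisfy $\mathcal F\subseteq K$, since $F\in\uparrow F$. Two elementary facts will be used throughout: every subgraph inclusion is a locally injective homomorphism, and between acyclic graphs every locally injective homomorphism is an embedding (as recorded inside the proof of Theorem~\ref{thm:locingap}); in particular $P_{\ell}\LocInHom C_k$ for every $k\ge 3$ and every $\ell$, simply by wrapping the path around the cycle, and $C_j\LocInHom C_k$ if and only if $k\mid j$.

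For the implication ``$\mathcal D$ acyclic $\Rightarrow$ a finite $\mathcal F$ exists'' I would first note that if every $D\in\mathcal D$ is acyclic then $\mathcal D^{\downarrow}$ consists of connected graphs embeddable into some $D$, so it is finite; set $n=\max_{D\in\mathcal D}|V_D|$. I would then take $\mathcal F$ to be the set of minimal elements of $(K,\LocInLeq)$. The key claim is that every minimal element of $K$ has at most $n+1$ vertices: an acyclic minimal $F\in K$ cannot have more than $n+1$ vertices, because deleting a leaf leaves a strictly smaller connected acyclic graph still too large to embed into any $D$, hence still in $K$; and a minimal $F\in K$ containing a cycle must equal its shortest cycle $C_k$ (any proper subgraph containing $C_k$ would contradict minimality), and $k\le n+1$, since otherwise $P_{n+1}\LocInHom C_k$ is a strictly smaller element of $K$. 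This bounds $\mathcal F$, making it finite. It remains to check that every $G\in K$ lies above some element of $\mathcal F$: if $G$ is acyclic then $\downarrow G$ is finite and a minimal element of $\downarrow G\cap K$ is minimal in all of $K$; if $G$ has a cycle then $P_{n+1}\LocInHom G$ with $P_{n+1}\in K$, and $P_{n+1}$, being acyclic, lies above a minimal element of $K$. Hence $\bigcup_{F\in\mathcal F}\uparrow F=K$, as required.

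For the converse I would argue by contraposition: assume $D_0\in\mathcal D$ contains a cycle, and suppose for contradiction that $(\mathcal F,\mathcal D)$ is a generalized finite LI-duality pair with $N=\max_{F\in\mathcal F}|V_F|$. The crucial observation is that $K$ contains no path whatsoever: wrapping $P_{\ell}$ around a cycle of $D_0$ shows $P_{\ell}\LocInHom D_0$ for every $\ell$, so every path lies in $\mathcal D^{\downarrow}$. Consequently, if $C_m\in K$ for some $m>N$, then any connected graph mapping to $C_m$ is either a path (impossible for members of $K$) or a cycle $C_{m\ell}$ on at least $m>N$ vertices (impossible for members of $\mathcal F$, since every graph in $\ConnGraph$ is an LI-core by Corollary~\ref{pro:locin-core}); thus no $F\in\mathcal F$ satisfies $F\LocInHom C_m$, contradicting $C_m\in K=\bigcup_{F\in\mathcal F}\uparrow F$. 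So it suffices to exhibit, inside $K$, an ``uncoverable'' family of arbitrarily large graphs.

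The main obstacle is exactly this last step. When the cycles occurring in the members of $\mathcal D$ have lengths with a common divisor greater than $1$ (in particular when some $D_0$ is bipartite, forcing all non-backtracking closed walk lengths even), the lengths avoided by all graphs of $\mathcal D$ give infinitely many $m$ with $C_m\in K$ directly, and the previous paragraph closes the argument. In general, however, the set of lengths of non-backtracking closed walks of a finite graph may be cofinite, so bare cycles need not suffice; here I would replace them by ``decorated'' cycles (sunlets $S_m$, or long cycles carrying pendant paths whose degree-$3$ vertices are far apart), whose degree-$3$ vertices obstruct locally injective maps into a fixed finite target, and combine the ``no path in $K$'' observation with the density construction of Theorem~\ref{thm:density} and the gap analysis of Lemma~\ref{lem:locbigaps} and Theorem~\ref{thm:locingap} to force $\mathcal F$ to be infinite. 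The delicate point, and the crux of the whole proof, is to choose the decorating structure so that the resulting family is genuinely uncoverable, i.e.\ so that no member of the family contains a bounded member of $K$; this is where the bulk of the work lies.
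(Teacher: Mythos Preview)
Your ``if'' direction is correct and matches the paper's approach (modulo a visible $\mathcal F$/$\mathcal D$ swap in the paper): build $\mathcal F$ from acyclic graphs on at most $n+1$ vertices that do not LI-map into any $D$, and use $P_{n+1}\in K$ to cover every cyclic $G$. Your remark that minimal elements of $K$ with cycles must be cycles is in fact vacuous (since $P_{n+1}\LocInHom C_k$ for every $k$), but this does not hurt the argument.

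The ``only if'' direction is where you have a genuine gap, and you say so yourself. The difficulty you isolate is real: for a target $D_0$ containing two cycles of coprime lengths, the lengths of non-backtracking closed walks in $D_0$ are cofinite, so you may have $C_m\in\mathcal D^{\downarrow}$ for all large $m$, and bare cycles cannot witness the contradiction. Your proposed fix via sunlets or decorated cycles is plausible but you have not carried it out, and it is not obvious how to choose the decoration so that the family both avoids $\mathcal D^{\downarrow}$ and admits no small member of $K$ below it.

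The paper avoids this obstacle entirely by passing to the degree refinement matrix equivalence class $\K$ of the cyclic $D_0$. Inside $\K$ the three locally constrained orders coincide (Theorems~\ref{thm:degeq1} and~\ref{thm:degeq2}), the order $(\K,\LocInLeq)=(\K,\LocBiLeq)$ is future-finite (Theorem~\ref{thm:singleclass}), and by Lemma~\ref{lem:locbigaps} every element of $\K$ sits above infinitely many LI-gaps within $\K$. The argument of Proposition~\ref{prop:locbidual} / Proposition~\ref{prop:futurefiniteduals} then shows that no finite duality pair can survive the restriction to $\K$. The key point is that working inside a fixed DRM class forces all the relevant maps to be locally \emph{bijective}, so one is really doing covering-space combinatorics rather than chasing non-backtracking walk lengths; this is exactly what circumvents the cofiniteness phenomenon you ran into. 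If you want to complete your own approach, the cleanest route is probably to abandon cycles in $\ConnGraph$ and instead work with LB-covers of $D_0$ itself, which is essentially what the DRM argument is doing.
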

\begin{proof}
Every duality pair must be also a duality pair when restricted to a given
equivalence class of degree refinement matrices. By the same argument as in
Proposition~\ref{prop:locbidual} this is not possible for equivalence classes
consisting of infinitely many elements.

The only equivalence classes consisting of finitely many elements are those corresponding
to acyclic graphs.

Let $\mathcal{F}$ be a given set of acyclic connected undirected graphs.
Denote by $n$ the maximal number of vertices of graph in $\mathcal{F}$.
$\mathcal{D}$ can be formed as the set of all acyclic graphs $T$ 
on at most $n+1$ elements such that there is no locally bijective homomorphism
from $F\in\mathcal{F}$ to $\mathcal{T}$.

Because the locally injective homomorphism order restricted to acyclic graphs
is an embedding order, it remains to show that for every graph $G$ containing
a cycle there is graph $D\in \mathcal {D}$ such that there is an locally
injective homomorphism from $D\LocInHom G$. This follows from fact that $\mathcal{F}$
contains a path on $n+1$ vertices.
\end{proof}


\subsection*{Remarks on the orders of DRMs}
Fiala, Paulusma and Telle~\cite{Fiala2005} have considered the orders implied by locally constrained homomorphisms on  degree refinement matrices.
Denote the class of all degree refinement matrices of graphs in $\ConnGraph$ by $\Matrices$\index{$\Matrices$}. We define three relations $\LocBiHom$, $\LocSurHom$ and $\LocInHom$\index{$(\Matrices,\AnyHom)$} as follows: for two matrices $M,N\in \Matrices$ we have $M\AnyHom N$ if there exist graphs $G\in\Graphs$ with $\drm(G)=M$ and $H\in\Graphs$ with $\drm(G)=H$ such that $G\AnyHom H$ holds for the appropriate local constraint.

As stated earlier $(\Matrices,\LocBiHom)$ is a trivial order where no distinct
elements are comparable. We further show the following two results that shed
more light on the interplay among the individual variants of locally constrained
homomorphisms:

\begin{thm}[Fiala, Paulusma, Telle \cite{Fiala2005}]
\label{thm:matrixorder}
For every constraint $*=LB$, $LI$, $LS$ the relation $(\Matrices,\AnyLeq)$ is an order. It arises as a factor order $(\Graphs,\AnyLeq)$ when we unify graphs that have the same degree refinement matrices.

\end{thm}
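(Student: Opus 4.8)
The relation on $\Matrices$ under consideration is, by its very definition, $M\AnyLeq N$ if and only if there are connected graphs $G,H$ with $\drm(G)=M$, $\drm(H)=N$ admitting a locally $*$-constrained homomorphism $G\AnyHom H$. Reflexivity is immediate: pick $G$ with $\drm(G)=M$ and take the identity, which is locally bijective, hence also locally injective and locally surjective. The case $*=LB$ is also immediate, since a locally bijective homomorphism forces equality of degree refinement matrices, so $(\Matrices,\LocBiLeq)$ is just the discrete order, as already remarked. Thus for $*\in\{LI,LS\}$ the whole content lies in transitivity and antisymmetry, and the plan is to obtain both by \emph{splicing} two witnessing homomorphisms whose ``middle'' graphs carry the same degree refinement matrix; since such middle graphs need not be isomorphic, the splice will be done through a common finite cover, which exists by item~\ref{item:b} of Theorem~\ref{thm:singleclass} (Leighton's theorem~\cite{Leighton1982}).

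For transitivity, suppose $M\AnyLeq N$ is witnessed by $f_1:G_1\AnyHom H_1$ and $N\AnyLeq P$ by $f_2:G_2\AnyHom H_2$, where $\drm(H_1)=\drm(G_2)=N$. If the $\Matrixeq$-class of $N$ consists only of acyclic graphs it is a single isomorphism type (first part of Theorem~\ref{thm:singleclass}), so $H_1\isom G_2$ and $f_2\circ f_1$ witnesses $M\AnyLeq P$. Otherwise pick, by Theorem~\ref{thm:singleclass}, a connected $C$ with $\drm(C)=N$ and locally bijective homomorphisms $c_1:C\to H_1$, $c_2:C\to G_2$; form the pullback of $f_1$ and $c_1$, i.e. the graph on the pairs $(g,c)$ with $f_1(g)=c_1(c)$ where $(g,c)$ is adjacent to $(g',c')$ iff $g\sim g'$ in $G_1$ and $c\sim c'$ in $C$, and let $P'$ be a connected component (it is nonempty since $c_1$ is surjective). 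In the pullback square the projection $\pi:P'\to G_1$ is the pullback of the covering $c_1$, hence a covering, so $\drm(P')=\drm(G_1)=M$; the projection $\rho:P'\to C$ is the pullback of the locally $*$-constrained map $f_1$ along a covering, and I would check by a routine neighbourhood count that it is again locally $*$-constrained. Passing to a connected component preserves all of this, as ``covering'', ``locally injective'' and ``locally surjective'' are conditions on neighbourhoods. Finally $f_2\circ c_2:C\AnyHom H_2$ (locally bijective then locally $*$-constrained), so chaining $\rho$ with $f_2\circ c_2$ gives $P'\AnyHom H_2$ with $\drm(P')=M$, $\drm(H_2)=P$, i.e. $M\AnyLeq P$.

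For antisymmetry, run the same construction with $P:=M$: from $M\AnyLeq N$ and $N\AnyLeq M$ we obtain $P'\AnyHom C\AnyHom H_2$ with $\drm(P')=\drm(H_2)=M$ and $\drm(C)=N$. The composite $P'\AnyHom H_2$ has the same degree refinement matrix at both ends, so by Theorem~\ref{thm:degeq1} (for $*=LI$) or Theorem~\ref{thm:degeq2} (for $*=LS$) it is in fact locally bijective. A one-line diagram chase then upgrades the first factor $\rho:P'\to C$ to a locally bijective homomorphism: $\rho$ is locally injective (resp. surjective) by construction, and the missing half is forced --- in the $LI$ case local surjectivity of $\rho$ follows from local injectivity of the second factor $C\AnyHom H_2$ (locally bijective then locally injective) together with the locally bijective composite; in the $LS$ case local injectivity of $\rho$ is automatic from local injectivity of the composite. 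Hence $\drm(C)=\drm(P')$, so $N=M$. Combined with reflexivity and transitivity this shows $(\Matrices,\AnyLeq)$ is a partial order.

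The factor-order assertion is then just an unwinding of definitions: $\AnyLeq$ on $\ConnGraph$ is reflexive and transitive (identities and compositions of locally $*$-constrained homomorphisms are of the same kind), so $(\ConnGraph,\AnyLeq)$ is a quasi-order, and quotienting it by the equivalence $G\equiv H\iff\drm(G)=\drm(H)$ yields exactly the relation $[G]\le[H]\iff\drm(G)\AnyLeq\drm(H)$ defined above; thus $G\mapsto\drm(G)$ descends to an order-preserving and order-reflecting bijection $\ConnGraph/{\equiv}\to\Matrices$, which (since we have just shown the target is a partial order) exhibits $(\Matrices,\AnyLeq)$ as the claimed factor order. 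The step I expect to be the crux is the pullback construction: checking that pulling a locally $*$-constrained map back along a cover stays locally $*$-constrained, that the resulting graph carries the prescribed degree refinement matrix, and handling the passage to a connected component while relying on Leighton's finite common cover.
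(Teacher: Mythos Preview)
The paper does not supply its own proof of this theorem; it is simply quoted from Fiala, Paulusma and Telle~\cite{Fiala2005}. So there is nothing to compare against in the present paper, and your proposal has to be judged on its own.

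Your argument is correct and is essentially the one given in~\cite{Fiala2005}. The key device---pulling $f_1$ back along a finite common cover $c_1:C\to H_1$ (provided by Leighton's theorem, i.e.\ item~\ref{item:b} of Theorem~\ref{thm:singleclass}) to splice two witnessing maps whose middle graphs share only a degree refinement matrix---is precisely what is needed, and your verification that the two projections of the pullback are respectively locally bijective and locally $*$-constrained is the standard neighbourhood count. Your antisymmetry argument is also right: once the composite $P'\to H_2$ lands between graphs with equal degree refinement matrix, Theorems~\ref{thm:degeq1}/\ref{thm:degeq2} upgrade it to locally bijective, and then the degree inequalities along the factorisation $N_{P'}\to N_C\to N_{H_2}$ (all $\leq$ in the $LI$ case, all $\geq$ in the $LS$ case, with equality at the ends) force both factors to be locally bijective, whence $M=\drm(P')=\drm(C)=N$.

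Two small points worth making explicit in a final write-up: (i) the pullback may be disconnected, and when you pass to a connected component you should note that the restricted $\pi$ is still a locally bijective map onto the connected graph $G_1$ (local surjectivity propagates the image across edges), so that component really has $\drm=M$; (ii) the ``factor order'' here is the quotient of the quasi-order $(\ConnGraph,\AnyLeq)$ by the equivalence $\Matrixeq$, which is \emph{not} the equivalence induced by $\AnyLeq$ itself (e.g.\ $C_3$ and $C_5$ are $\Matrixeq$-equivalent but incomparable under $\LocBiLeq$); your transitivity and antisymmetry proofs are exactly what make this coarser quotient well-defined as a partial order.
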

\begin{corollary}[Fiala, Paulusma, Telle \cite{Fiala2005}]
The intersection of orders $(\Matrices,\LocSurLeq)$ and $(\Matrices,\LocInLeq)$ is a trivial order $(\Matrices,\LocBiLeq)$.
\end{corollary}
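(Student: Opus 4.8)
The statement to prove is an equality of two orders on $\Matrices$, so it suffices to establish mutual inclusion. One direction is immediate: a locally bijective homomorphism is in particular both locally surjective and locally injective, so any witnessing pair $G\LocBiHom H$ for $M\LocBiLeq N$ (with $\drm(G)=M$, $\drm(H)=N$) simultaneously witnesses $M\LocSurLeq N$ and $M\LocInLeq N$. Thus $(\Matrices,\LocBiLeq)\subseteq (\Matrices,\LocSurLeq)\cap(\Matrices,\LocInLeq)$, and since $(\Matrices,\LocBiLeq)$ is the trivial order — by the earlier observation that $G\LocBiHom H$ forces $\drm(G)=\drm(H)$, hence $M\LocBiLeq N$ forces $M=N$ — the real content is the reverse inclusion: if $M\LocSurLeq N$ and $M\LocInLeq N$, then $M=N$.

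The plan is to prove this by passing to universal covers. By Leighton's Theorem, all connected graphs with a fixed degree refinement matrix $M$ have, up to isomorphism, the same universal cover; write $T_M$ for it. Recall from the discussion of universal covers that a locally surjective homomorphism $f\colon G\LocSurHom H$ makes $T_H$ a subtree of $T_G$, while a locally injective homomorphism $f\colon G\LocInHom H$ makes $T_G$ a subtree of $T_H$. Fixing witnesses $G_1\LocSurHom H_1$ for $M\LocSurLeq N$ and $G_2\LocInHom H_2$ for $M\LocInLeq N$, we get that $T_N\cong T_{H_1}$ embeds as a subtree of $T_{G_1}\cong T_M$, and that $T_M\cong T_{G_2}$ embeds as a subtree of $T_{H_2}\cong T_N$; moreover both embeddings are subgraph inclusions, hence injective, hence locally injective. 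By Leighton's Theorem it then suffices to prove $T_M\cong T_N$.

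The crux, and the step I expect to be the main obstacle, is exactly this last point: deducing $T_M\cong T_N$ from the existence of mutual subtree embeddings. For arbitrary trees a Cantor--Bernstein-type conclusion fails, so one must exploit that $T_M$ and $T_N$ are universal covers of finite connected graphs, hence have bounded degree and are of ``finite type'' in the sense that their isomorphism type is pinned down by finite local data. Concretely, I would argue that the composite $T_M\hookrightarrow T_N\hookrightarrow T_M$ is a degree-preserving induced self-embedding: since $T_M$ has finite maximum degree $\Delta$, and (by connectedness of the underlying finite graph) vertices of degree $\Delta$ occur in every ball of $T_M$, a proper induced subtree of $T_M$ isomorphic to $T_M$ would have to realize degree $\Delta$ at some vertex whose $T_M$-degree is then also $\Delta$; propagating this along neighbourhoods shows that the subtree is ``open'' (closed under taking neighbours) and therefore, by connectedness, all of $T_M$. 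Applying the symmetric argument on the $T_N$ side forces both embeddings to be isomorphisms, giving $T_M\cong T_N$ and hence, via Leighton, $M=N$. An alternative route that avoids this infinitary bookkeeping is to invoke the antisymmetry of the orders $(\Matrices,\LocInLeq)$ and $(\Matrices,\LocSurLeq)$ recorded in Theorem~\ref{thm:matrixorder}: it then suffices to show $M\LocSurLeq N$ implies $N\LocInLeq M$ (and symmetrically), which one obtains by turning the induced — and hence locally injective — embedding $T_N\hookrightarrow T_M$ into a locally injective homomorphism between suitable finite graphs with degree refinement matrices $N$ and $M$, e.g. via finite truncations or finite covers of large girth; making this conversion precise is the delicate part of that approach. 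Either way, combining $M=N$ with the trivial inclusion above identifies the intersection order with $(\Matrices,\LocBiLeq)$, whose triviality has already been established.
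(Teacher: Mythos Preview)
The paper does not supply its own proof of this corollary; it is simply quoted from \cite{Fiala2005} immediately after Theorem~\ref{thm:matrixorder}, with no argument given. So there is nothing to compare against directly, and your proposal has to be judged on its own.

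Your reduction is correct and is indeed the natural one: via Leighton's theorem it suffices to show that mutual subtree embeddings $T_M\hookrightarrow T_N\hookrightarrow T_M$ of universal covers force $T_M\cong T_N$. You also correctly flag this Cantor--Bernstein step as the crux. The gap is in how you try to close it. Your ``propagation'' argument only gets off the ground at a vertex $u$ realising the maximum degree $\Delta$: there you can indeed conclude that all $T_M$-neighbours of $u$ lie in the image. But the next step fails. For a neighbour $w$ of $u$ you need $\deg_{\mathrm{image}}(w)=\deg_{T_M}(w)$, and you have no handle on this: the image is abstractly isomorphic to $T_M$, but you do not know that $w$ corresponds under that isomorphism to a vertex of the \emph{same} degree-refinement block, so its image-degree could be strictly smaller than its $T_M$-degree. (Concretely: two distinct blocks of $M$ can have the same row-sum $\Delta$, and once you step off a max-degree vertex the block is not determined by degree alone.) What one actually needs is that the self-embedding is block-preserving with respect to the equitable partition underlying $M$; this requires iterating the full degree-refinement procedure, not just looking at the single invariant~$\Delta$. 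Once block-preservation is established, local injectivity becomes local bijectivity at every vertex and surjectivity follows.

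Your alternative route via the antisymmetry in Theorem~\ref{thm:matrixorder} does not avoid this difficulty: the antisymmetry of $(\Matrices,\LocInLeq)$ is precisely the statement that mutual subtree embeddings of universal covers imply isomorphism, so invoking it is circular unless you independently prove the ``conversion'' step you yourself label as delicate --- and that step (producing finite witnesses from infinite tree embeddings) is exactly the substantive content of \cite{Fiala2005}.
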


This gives an unexpected insight: while all three locally constrained
homomorphisms agree within the equivalence classes of degree refinement
matrices, they fully disagree inbetween; the existence of a locally surjective
homomorphism rules out the existence of locally injective homomorphism.

\section{The intervals of the line graph order} \label{sec:line}
Line graph homomorphisms are another variant of homomorphisms, which defines the mapping on edges.
In this section, we consider the line graph order.
Recently D.~E.~Roberson proposed a systematic study of homomorphism order on the class
of line graphs~\cite{Roberson}.
Here, a \emph{line graph}\index{line graph} of an undirected graph $G$, denoted by $L(G)$\index{$L(G)$}, is graph
$H=(V_H,E_H)$ such that $V_H=E_G$ and two vertices of $L(G)$ are adjacent if
and only if their corresponding edges share a common endpoint in $G$. Because edges of $G$ play the role of vertices of $L(G)$, we will refer vertices of line graphs as \emph{nodes}\index{node}.

The classical Vizing theorem gives an insight into the structure of the homomorphism
order of line graphs in terms of \emph{chromatic index}\index{chromatic index} $\chi'(G)$\index{$\chi'(G)$}
that is the chromatic number of $L(G)$, i.e. the minimum number of colours needed to colour edges of a graph $G$ such that edges with a common vertex receive different colours:

\begin{thm}[\textbf{Vizing theorem}~\cite{Vizing1964}]
For any graph $G$ of maximum degree $d$ it holds that $\chi'(G) \le d+1$.
\end{thm}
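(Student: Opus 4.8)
The plan is to prove Vizing's theorem by induction on the number of edges $|E_G|$, by the classical \emph{fan and Kempe chain} argument; as is standard for edge colouring we may assume $G$ is loopless. The base case $|E_G| = 0$ is trivial. For the inductive step, choose an edge $e = xy \in E_G$; since $G - e$ also has maximum degree at most $d$, the induction hypothesis supplies a proper edge colouring $c$ of $G - e$ using the palette $\{1, \dots, d+1\}$, and it remains to recolour $G - e$ so that $c$ extends to a colouring of all of $G$. The one elementary ingredient is that every vertex $v$ has $d(v) \le d < d+1$, so at least one colour is \emph{free} at $v$, i.e.\ carried by no edge incident to $v$.

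First I would grow a \emph{maximal fan} at $x$: a maximal sequence $y = y_0, y_1, \dots, y_k$ of distinct neighbours of $x$ such that $xy_0 = e$ is uncoloured and, for each $1 \le i \le k$, the colour $c(xy_i)$ is free at $y_{i-1}$. Let $\alpha$ be a colour free at $x$ and $\beta$ a colour free at $y_k$. The easy case is that $\beta$ is also free at $x$ (in particular when $\alpha = \beta$): one then \emph{rotates the fan}, recolouring $xy_i := c(xy_{i+1})$ for $0 \le i \le k-1$ and setting $c(xy_k) := \beta$. Checking that no two edges at $x$ or at any $y_i$ acquire a common colour shows this is a proper $(d+1)$-colouring of $G$, finishing the step.

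The substantial case is $\alpha \ne \beta$ with $\beta$ present at $x$. Maximality of the fan forces the unique $\beta$-coloured edge at $x$ to reach a fan vertex, say $c(xy_j) = \beta$ with $1 \le j \le k$, so by the fan property $\beta$ is free at $y_{j-1}$ as well as at $y_k$. Now consider the Kempe chain $P$, the maximal $\alpha\beta$-alternating path containing $x$: since $\alpha$ is free at $x$, $P$ leaves $x$ along the edge $xy_j$, never returns to $x$, and---because neither $y_{j-1}$ nor $y_k$ is incident to a $\beta$-coloured edge---contains at most one of $y_{j-1}, y_k$, and only as an endpoint. According to which of these two vertices $P$ avoids, I would truncate the fan at that vertex, swap the colours $\alpha$ and $\beta$ along $P$, and finally colour the freshly uncoloured fan edge with whichever of $\alpha, \beta$ has simultaneously become free at $x$ and at the truncation vertex; choosing the truncation point on the side untouched by $P$ is exactly what keeps the truncated sequence a valid fan after the swap.

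The main obstacle is precisely this bookkeeping in the substantial case. Inverting the Kempe chain can change which colours are free at the fan vertices lying on $P$, so the proof must check that the rotation of the truncated fan and the colour swap along $P$ act on disjoint sets of edges, that the truncated sequence is still a fan once the swap has been performed, and that its terminal vertex together with $x$ indeed misses the colour one means to use. Carrying this out carefully---it is in essence the Misra--Gries constructive version of Vizing's proof, which moreover runs in polynomial time---is the only place where real attention is needed; the fan construction, the rotation identities and the basic properties of Kempe chains are all routine.
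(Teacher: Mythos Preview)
Your proposal is the classical fan-and-Kempe-chain argument (essentially the Misra--Gries constructive proof), and it is correct as outlined; the bookkeeping you flag in the substantial case is exactly where the care is needed, and you have identified the right way to handle it.

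However, there is nothing to compare against: the paper does not prove Vizing's theorem at all. It is stated with a citation to~\cite{Vizing1964} as a classical result and then used as a black box (chiefly via Corollary~\ref{cor:Vizing}) to place line graphs into the intervals $[K_d,K_{d+1})_{\mathcal L}$. So your write-up supplies a proof where the paper simply quotes the literature.
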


Since the line graph of a graph with a vertex of degree $d$ contains a $d$-clique, the Vizing theorem splits graphs into two classes. {\em Vizing class 1}\index{Vizing class 1} contains the graphs whose chromatic index is the same as the maximal degree of a vertex, while {\em Vizing class 2}\index{Vizing class 2} contains the remaining graphs.

The approach taken by Roberson~\cite{Roberson} divides the class of line graphs into
intervals. By $[K_{n},K_{n+1})_\mathcal{L}$\index{$[K_{n},K_{n+1})_\mathcal{L}$} we denote the class of all line
graphs $L(G)$ such that $K_{n}\leq L(G)<K_{n+1}$. The line graphs in each interval have a particularly simple characterization:

\begin{corollary}\label{cor:Vizing}
The intervals $[K_{d},K_{d+1})_\mathcal{L}$ consist of line graphs of graphs whose maximum degree is $d$. 
\end{corollary}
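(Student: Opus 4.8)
The plan is to unwind the definition of the interval $[K_d,K_{d+1})_{\mathcal L}$ into three homomorphism conditions, translate each into a condition on $G$, and observe that the resulting conditions say precisely that $G$ has maximum degree $d$, with Vizing's theorem supplying the one non-trivial implication.

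By definition $L(G)\in[K_d,K_{d+1})_{\mathcal L}$ means $K_d\to L(G)$, $L(G)\to K_{d+1}$, and $K_{d+1}\nrightarrow L(G)$. Since line graphs have no loops, a homomorphism $K_m\to L(G)$ exists exactly when $L(G)$ contains $K_m$ as a subgraph, i.e. when the clique number $\omega(L(G))$ is at least $m$; and by Proposition~\ref{pro:colouring} a homomorphism $L(G)\to K_{d+1}$ is the same as a $(d+1)$-colouring of $L(G)$, i.e. $\chi(L(G))=\chi'(G)\le d+1$. Hence $L(G)\in[K_d,K_{d+1})_{\mathcal L}$ if and only if $\omega(L(G))=d$ and $\chi'(G)\le d+1$.

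Next I would express $\omega(L(G))$ through $G$. A vertex $v$ of degree $k$ gives $k$ pairwise-adjacent nodes of $L(G)$, so $\Delta(G)\le\omega(L(G))$; conversely, by the standard Helly-type fact that edges of $G$ which pairwise share an endpoint either all pass through one vertex or form a single triangle, every clique of $L(G)$ lies in the star of a vertex (hence has size $\le\Delta(G)$) or consists of exactly three nodes. Therefore $\omega(L(G))=\Delta(G)$ as soon as $\Delta(G)\ge 3$. Combining this with the previous paragraph: if $\Delta(G)=d\ge 3$ then $\omega(L(G))=d$ and $\chi'(G)\le d+1$ by Vizing's theorem, so $L(G)\in[K_d,K_{d+1})_{\mathcal L}$; and if $L(G)\in[K_d,K_{d+1})_{\mathcal L}$ then $d=\omega(L(G))\ge 3$ forces $d=\omega(L(G))=\Delta(G)$. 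This is exactly the asserted equality of classes.

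The step I expect to carry all the weight is the computation of $\omega(L(G))$: one must rule out cliques of size $\ge 4$ in $L(G)$ that are not stars, and then dispose of the genuinely exceptional small cases, where a triangle of $G$ produces a $K_3$ of $L(G)$ not explained by a high-degree vertex --- in particular the Whitney ambiguity $L(K_3)=L(K_{1,3})=K_3$, which is harmless once one agrees to take the claw $K_{1,3}$ (maximum degree $3$) as the representative preimage. Aside from this, the argument is a pure unwinding of definitions together with a single application of Vizing's theorem, which is why the statement appears as a corollary of it.
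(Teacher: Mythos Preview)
Your proof is correct and follows essentially the same line as the paper's: translate $K_d\le L(G)<K_{d+1}$ into $\omega(L(G))=d$ together with $\chi'(G)\le d+1$, identify cliques of size $\ge 4$ in $L(G)$ with stars in $G$, and invoke Vizing's theorem for the edge-colouring bound. You are more explicit than the paper about the $d\ge 3$ hypothesis and the Whitney ambiguity $L(K_3)=L(K_{1,3})$, but the underlying argument is identical.
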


\begin{proof}
The existence of a $(d+1)$-edge colouring is equivalent with $L(G)\le K_{d+1}$. Note that for the Vizing class 1 we have $L(G)\le K_d \le K_{d+1}$.

As $G$ contains a vertex of degree $d$, we have $K_d\le L(G)$, indeed $K_d\subseteq L(G)$. On the other hand, a clique on $d+1\ge 4$ vertices can be formed only from $d+1$ edges sharing a common vertex, hence $K_{d+1} \not\le L(G)$. The same argument used for $K_d\le L(G)$ implies that $G$ contains a vertex of degree $d$.
\end{proof}

Line graphs can be considered as almost perfect graphs (a {\em perfect graph}\index{perfect graph} is a
graph in which the chromatic number of every induced subgraph equals to the size
of the largest clique of that subgraph). The homomorphism order of the class of
perfect graphs is a trivial chain, since the core of every perfect graph is a
clique. The almost-perfectness of the class of line graphs suggests that the
homomorphism order of this class may be more constrained in its structure
than the homomorphism order of graphs in general, and indeed many of the results about properties of the homomorphism order can not be easily restricted to the line graphs.

Roberson, in~\cite{Roberson}, showed that the homomorphism order of line graphs
contains many gaps. This is the first important difference from the structure of
the homomorphism order of graphs which was shown (up to one exception) to be
dense by Welzl~\cite{Welzl1982}. Roberson also asked whether every interval
$[K_d,K_{d+1})_L$, $d\geq 3$ contains infinitely many incomparable elements.
The answer is trivially negative for graphs with maximal degree 1 and 2. We give
an affirmative answer to this problem. Indeed, we show the following (the proof is at the end of this section):

\begin{thm}
\label{thm:main}
The homomorphism order of line graphs is universal 
on every interval $[K_d,K_{d+1})_{\mathcal L}$ for $d\geq 3$.
\end{thm}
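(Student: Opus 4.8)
The plan is to embed the universal order $(\DiCycles,\leq)$ of disjoint unions of oriented cycles (Theorem~\ref{thm:cycles}) into the homomorphism order of the interval $[K_d,K_{d+1})_{\mathcal L}$. For each fixed $d\geq 3$ I would construct a gadget-replacement map $\Phi$ which assigns to every disjoint union of oriented cycles $\vec G$ a triangle-free graph $\Phi(\vec G)$ of maximum degree exactly $d$ having at least one vertex of degree $d$, and which acts componentwise (it sends each oriented cycle to a connected graph), together with a proof that $\vec G\to\vec H$ if and only if $L(\Phi(\vec G))\to L(\Phi(\vec H))$. By Corollary~\ref{cor:Vizing} every graph $L(\Phi(\vec G))$ then lies in $[K_d,K_{d+1})_{\mathcal L}$, so $\vec G\mapsto L(\Phi(\vec G))$ is an order embedding of the universal order $(\DiCycles,\leq)$ into $([K_d,K_{d+1})_{\mathcal L},\leq)$, and the theorem follows. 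The numeric content of the embedding is inherited from the prime-product embedding of Lemma~\ref{lem:cycles} and the subset-order layering of Theorem~\ref{thm:univ}; no extra counting is needed beyond making the gadgets behave.

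\textbf{The gadget.} I would build $\Phi(\overrightarrow{C}_k)$ as a \emph{necklace}: a long undirected cycle whose length is a controlled multiple of $k$, carrying at equally spaced positions identical small asymmetric pendant gadgets, all attached in the same rotational sense around the backbone. The attachment vertices are given degree exactly $d$ (for $d=3$ this is forced by the two backbone edges plus one pendant edge; for $d>3$ one hangs $d-3$ additional pendant paths), and the whole construction is kept triangle-free. Triangle-freeness is precisely what makes the construction work for $d=3$: a $d$-clique of a line graph consists of $d$ pairwise intersecting edges of the root graph, which either share a common vertex (a claw $K_{1,d}$) or, only when $d=3$, form a triangle of the root graph; so triangle-freeness guarantees that the $K_d$'s of $L(\Phi(\vec G))$ correspond exactly to the degree-$d$ attachment vertices (for $d\geq 4$ this is automatic). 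The relation $L(\Phi(\vec G))<K_{d+1}$ follows from Vizing's theorem together with the absence of $K_{d+1}$ in the line graph, which in turn follows from $\Phi(\vec G)$ having maximum degree $d$. Finally, since $L$ of a disjoint union is the disjoint union of the line graphs and each necklace is connected, the induced map on line graphs is again componentwise, so order-embedding on single oriented cycles propagates to disjoint unions.

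\textbf{Soundness and completeness.} The direction ``$\vec G\to\vec H$ implies $L(\Phi(\vec G))\to L(\Phi(\vec H))$'' is routine: a homomorphism of oriented cycles is a direction-preserving cyclic map, which extends to a structure-preserving map of the corresponding necklaces (rolling the backbone and carrying pendant gadgets to pendant gadgets) and hence passes to line graphs. The substance is the converse. Given $\varphi:L(\Phi(\vec G))\to L(\Phi(\vec H))$ I would argue in three steps: (i) $\varphi$ sends $d$-cliques to $d$-cliques --- using that $K_{d+1}$ has no homomorphism into $L(\Phi(\vec H))$ --- hence sends attachment nodes to attachment nodes, and, tracking which cliques of each line graph come from which vertices of the root graph, sends backbone nodes to backbone nodes and pendant nodes to pendant nodes; (ii) restricted to a single backbone (which is a cycle inside the line graph) $\varphi$ is a closed walk, hence ``rolls'' around respecting the cyclic order; (iii) the consistent orientation of the pendant gadgets forces the rolling to go the correct way around, ruling out the reversed map, so that $\varphi$ descends to a genuine homomorphism $\vec G\to\vec H$. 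The divisibility condition $l\mid k$ that governs $\overrightarrow{C}_k\to\overrightarrow{C}_l$ reappears here exactly as the condition for an orientation-respecting rolling of a $k$-necklace into an $l$-necklace.

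\textbf{Main obstacle.} The delicate point is the design of the asymmetric pendant gadget in step (iii): it must be rigid enough that any line-graph homomorphism between necklaces is forced to preserve orientation, yet small, triangle-free and of degree at most $d$, and it must not interfere with the clique-tracking of step (i). The case $d=3$ is the tightest, since each backbone vertex has essentially no spare degree, and one must check carefully that no unintended triangle of the root graph --- and hence no spurious $K_3$ of the line graph --- appears that would let a homomorphism escape the anchoring argument. Once a gadget with these properties is exhibited, the remaining verifications are the bookkeeping indicated above.
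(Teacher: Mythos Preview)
Your high-level strategy --- embed the divisibility order on cycle lengths via rigid gadgets, then pass to disjoint unions to get full universality --- is exactly the paper's strategy, and your observation that componentwise behaviour reduces everything to single cycles is also how the paper proceeds. But the execution diverges sharply from the paper's, and your version carries an unnecessary burden.

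The paper does \emph{not} keep the construction triangle-free and does \emph{not} encode orientation. Instead it takes the undirected $n$-sunlet $S_n$ and applies an indicator construction, replacing each edge by a gadget $I_d(a,b)$ built from the $d$-\emph{dragon} $D_d$ (that is, $K_{d+1}$ with one edge subdivided). The rigidity lever is that $D_d$ is Vizing class~2, so $L(D_d)$ has chromatic number $d+1$ and is a core; hence in any line-graph homomorphism each dragon must land bijectively on a dragon, which pins down certain ``special'' nodes and then certain ``connecting triangles''. From this it follows directly that $L(G_{n,d})\to L(G_{n',d})$ iff $n'\mid n$, with no orientation argument needed at all: once the gadgets are anchored, the only remaining freedom is a cyclic wrap, and reflections are harmless for divisibility.

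Your proposal, by contrast, aims for a triangle-free construction with an asymmetric pendant gadget to break reflection symmetry. This is doable in principle --- indeed the paper's concluding remarks mention that \v{S}\'amal's earlier argument for $d=3$ goes via snarks --- but it is strictly harder, and you yourself flag the gadget as unconstructed. That is a genuine gap: without exhibiting the gadget you cannot complete step~(iii), and for $d=3$ any triangle-free, max-degree-$3$ gadget rigid under line-graph homomorphism is essentially a snark-type object, which is far from routine. Moreover, the orientation layer is superfluous: since $\overrightarrow{C}_k\to\overrightarrow{C}_l$ is already equivalent to $l\mid k$, there is no extra information in the arrow direction to preserve. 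Dropping the orientation requirement and allowing triangles (via a dragon-type gadget whose line graph is a core of chromatic number $d+1$) collapses your ``main obstacle'' entirely and recovers the paper's argument.
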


This further develops the results on the universality of the homomorphism order of special classes of graphs (see e.g.~\cite{Hubicka2004,Hubicka2005,Nesetril2006,Nesetril2007}), and on universal partially ordered structures in general (see e.g.~\cite{Johnston1956,Hedrlin1969,Hubicka2005a,Lehtonen2008,Lehtonen2010,Hubicka2011,Kwuida2011}).

As a special case, universality of the interval $[K_3,K_4)_L$ follows from the
construction given by \v{S}\'amal~\cite{Samal2013}. This is not an obvious
observation --- one has to carefully check that for the graphs constructed in~\cite{Samal2013}
the existence of a circulation coincides with the existence of a homomorphism of
line graphs. Our proof use a new approach based on a new divisibility argument which we have introduced for a similar occasion in Section~\ref{sec:universality}. This argument leads to 
a simpler construction without the need of 
complex gadgets (Blanu\v sa snarks) used by \v{S}\'amal~\cite{Samal2013}.

Let $(P,\leq_P)$\index{$(P,\leq_P)$} be the order where $P$ consists of all finite sets of
integers and for $A,B\in P$ we put $A\leq_P B$ if and only if for every $a\in
A$ there is $b\in B$ such that $b$ divides $a$. 

\begin{thm}
\label{thm:universal}
The order $(P,\leq_P)$ is a universal order.
\end{thm}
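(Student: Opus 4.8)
The plan is to recognise $(P,\leq_P)$ as a copy of the subset order built over a future-finite-universal base order, so that Theorem~\ref{thm:univ} applies almost directly.

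First I would fix the base order. Let $\mathbb P$ be the set of primes and let $F$ be the set of positive integers equipped with $x\leq_F y$ if and only if $y$ divides $x$. The up-set of any $x\in F$ is its (finite) set of divisors, so $(F,\leq_F)$ is future-finite. To see that it is future-finite-universal I would reuse the divisibility embedding already exploited in Lemma~\ref{lem:cycles}: the map $A\mapsto\prod_{p\in A}p$ is an order embedding of $(\Pfin(\mathbb P),\supseteq)$ into $(F,\leq_F)$, since $\prod_{p\in B}p$ divides $\prod_{p\in A}p$ exactly when $B\subseteq A$. Because $(\Pfin(\mathbb P),\supseteq)$ is future-finite-universal (Corollary~\ref{cor:futurefiniteuniv}) and future-finite-universality is inherited by any order into which a future-finite-universal order embeds, $(F,\leq_F)$ is future-finite-universal.

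Next I would check that $(\Pfin(F),\subsetLeq{F})$ is simply a sub-order of $(P,\leq_P)$, via the identity map on sets. Following the convention of this chapter, the elements of $(\Pfin(F),\subsetLeq{F})$ are the finite antichains of $(F,\leq_F)$, i.e.\ finite sets of positive integers no two of which are related by divisibility --- and these are precisely the antichain representatives used for $(P,\leq_P)$. For two such sets $A,B$, the relation $A\subsetLeq{F}B$ unwinds to ``for every $a\in A$ there is $b\in B$ with $a\leq_F b$'', i.e.\ ``for every $a\in A$ there is $b\in B$ with $b\mid a$'', which is exactly $A\leq_P B$. Hence the inclusion is an order embedding; we simply make no use of the elements of $P$ that are zero or negative, which only enlarge the order and do no harm.

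Finally, Theorem~\ref{thm:univ} applied to the future-finite-universal order $(F,\leq_F)$ gives that $(\Pfin(F),\subsetLeq{F})$ is universal, and composing with the embedding of the previous paragraph shows that every countable order embeds into $(P,\leq_P)$, so $(P,\leq_P)$ is universal. I do not anticipate a real obstacle: all the substance is carried by Corollary~\ref{cor:futurefiniteuniv} and Theorem~\ref{thm:univ}, and the only points requiring care are the orientation of the divisibility order (one must have $b\mid a$ correspond to $a$ lying \emph{below} $b$ in $(F,\leq_F)$, so that $\subsetLeq{F}$ matches $\leq_P$) and the routine bookkeeping identifying antichain representatives in the two settings.
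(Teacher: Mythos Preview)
Your proposal is correct and follows essentially the same route as the paper: the paper introduces the divisibility order $(\mathbb{Z},\leq_d)$ (with $n\leq_d m$ iff $m\mid n$), shows it is future-finite-universal via the same prime-product embedding of $(\Pfin(\mathbb P),\supseteq)$ (Lemma~\ref{lem:futurefiniteuniv}), and then applies Theorem~\ref{thm:univ} after identifying $(P,\leq_P)$ with the subset order over this base. Your write-up is in fact a bit more careful than the paper's in spelling out the orientation of divisibility, the restriction to positive integers, and the matching of antichain representatives.
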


By the {\em divisibility order}\index{divisibility order}, denoted by $(\mathbb{Z},\leq_d)$\index{$(\mathbb{Z},\leq_d)$}, we mean an order where elements are natural numbers and $n$ is smaller than $m$ if $n$ is divisible by $m$.

\begin{lemma}
\label{lem:futurefiniteuniv}
The divisibility order $(\mathbb{Z},\leq_d)$ is future-finite-universal.
\end{lemma}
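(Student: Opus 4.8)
\textbf{Proof proposal for Lemma~\ref{lem:futurefiniteuniv}.}

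The plan is to exhibit an explicit on-line embedding of the future-finite-universal order $(\Pfin(\mathbb{P}),\supseteq)$ from Corollary~\ref{cor:futurefiniteuniv} into the divisibility order $(\mathbb{Z},\leq_d)$, where $\mathbb{P}$ denotes the set of (all) primes. This is almost identical in spirit to the embedding $\PrimesToCycles$ in the proof of Lemma~\ref{lem:cycles}: the point is that divisibility of integers, when we restrict to squarefree integers, is literally the reverse-inclusion order on their prime-factor sets. So first I would recall that $(\Pfin(\mathbb{P}),\supseteq)$ is future-finite-universal, and then reduce the problem to constructing an embedding from $(\Pfin(\mathbb{P}),\supseteq)$ into $(\mathbb{Z},\leq_d)$.

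For the construction, given a finite set $A\subseteq \mathbb{P}$ I would put
$$
p(A)=\prod_{p\in A} p,
$$
the squarefree integer whose prime divisors are exactly the elements of $A$. I claim $p$ is an order embedding from $(\Pfin(\mathbb{P}),\supseteq)$ to $(\mathbb{Z},\leq_d)$. Indeed, for $A,B\in\Pfin(\mathbb{P})$, by unique factorization $p(B)$ divides $p(A)$ if and only if every prime in $B$ divides $p(A)$, which (since $p(A)$ is squarefree) happens if and only if $B\subseteq A$, i.e. $A\supseteq B$. By the convention fixed just before Lemma~\ref{lem:futurefiniteuniv} that $n\leq_d m$ means $n$ is divisible by $m$, this says precisely $p(A)\leq_d p(B)\iff A\supseteq B$, so $p$ is an embedding. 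Hence $(\mathbb{Z},\leq_d)$ contains a copy of a future-finite-universal order.

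It remains to check that $(\mathbb{Z},\leq_d)$ is itself future-finite, so that containing a future-finite-universal suborder actually makes it future-finite-universal: for a fixed $n\in\mathbb{Z}$, the up-set $\{m\mid n\leq_d m\}$ is exactly the set of divisors of $n$, which is finite. Combining the two observations: $(\mathbb{Z},\leq_d)$ is a future-finite order that contains every future-finite order as a suborder (via $p$ composed with the embedding of Corollary~\ref{cor:futurefiniteuniv}), hence it is future-finite-universal. I expect no real obstacle here; the only mild subtlety is to be careful about the direction of the divisibility convention (divisible-by = smaller) so that $\supseteq$ rather than $\subseteq$ is the order being embedded, and to note that the embedding is on-line since $p(A)$ depends only on $A$ itself. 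The main "content" has effectively already been supplied by Corollary~\ref{cor:futurefiniteuniv} and the parallel argument in Lemma~\ref{lem:cycles}.
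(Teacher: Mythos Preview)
Your proposal is correct and follows essentially the same route as the paper: map a finite set of primes $A$ to $\prod_{p\in A}p$ and use that $\prod_{p\in B}p$ divides $\prod_{p\in A}p$ iff $B\subseteq A$, thereby embedding the future-finite-universal order $(\Pfin(\mathbb{P}),\supseteq)$ into $(\mathbb{Z},\leq_d)$. The paper phrases this via Lemma~\ref{lem:pastfiniteuniv} and uses odd primes (for consistency with Lemma~\ref{lem:cycles}), and note that your extra verification that $(\mathbb{Z},\leq_d)$ is itself future-finite is not actually required by the definition of future-finite-universal, though it is of course true.
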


\begin{proof}
Denote by $\mathbb{P}$\index{$\mathbb{P}$} the set of all odd prime numbers\footnote{Here we could use the set of all prime numbers, choose the odd ones is for consistency of the thesis}. Apply Lemma \ref{lem:pastfiniteuniv} for $A=\mathbb P$.
Observe that $A\in \Pfin(\mathbb P)$ is a subset of $B\in \Pfin(\mathbb P)$ if and only if $\prod_{p\in A} p$ divides $\prod_{p\in B} p$.
\end{proof}

Then by Theorem~\ref{thm:univ} we prove Theorem~\ref{thm:universal}.

\subsection*{Dragon graphs}
\label{sec:dragon}
We use a simple gadget called $d$-dragon which is also used in several constructions developed by Roberson~\cite{Roberson}. In our constructions, the parameter $d$ specifies the maximal degree of a vertex:
\begin{defi}
For $d\geq 3$, the \emph{$d$-dragon}\index{dragon}\footnote{The name is derived from the visual similarity of this graph to a kite, which in Czech is `dragon'.}, denoted by $D_d$\index{$D_d$}, is the graph created from $K_{d+1}$ by replacing one of its edges by a path on 3 vertices.
\end{defi}
The 3-dragon is depicted in Figure~\ref{fig:dragon}. 

\begin{figure}[!ht]
	\centering
	\includegraphics[width=0.3\textwidth]{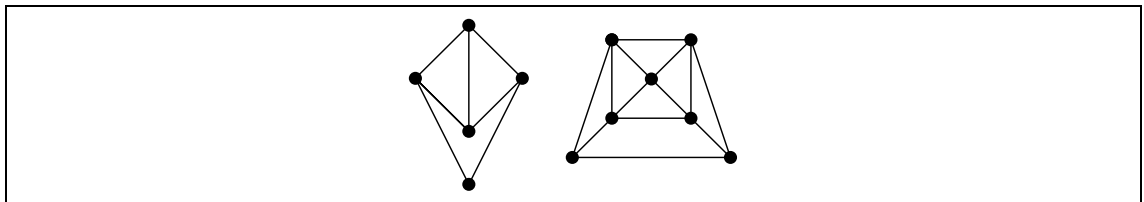}
	\caption{The 3-dragon $D_3$ and its line graph $L(D_3)$.}
	\label{fig:dragon}
\end{figure}



We proceed by a simple lemma about edge-colourings of dragons.

\begin{lemma}\label{lem:chrInd_Dragon}
For all $d\geq 3$ it holds that $D_d$ is Vizing class 2 graph, i.e. its chromatic index is $d+1$.
\end{lemma}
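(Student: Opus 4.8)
The plan is to establish $\chi'(D_d)\geq d+1$ by an elementary edge-counting argument and combine it with the Vizing theorem for the reverse inequality. First I would record that the maximum degree of $D_d$ is $d$: subdividing the edge $uv$ of $K_{d+1}$ leaves the endpoints $u,v$ and all other original vertices with degree $d$, and introduces one new vertex of degree $2$. Hence the Vizing theorem gives $\chi'(D_d)\leq \Delta(D_d)+1=d+1$, and it remains to rule out a proper $d$-edge-colouring. I would also note the two basic parameters $|V(D_d)|=d+2$ and $|E(D_d)|=\binom{d+1}{2}+1$ (one edge of $K_{d+1}$ deleted, two edges of the length-two path added).

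The tool is the standard remark that if a graph $G$ has a subgraph $H$ with $|V(H)|$ odd and $|E(H)|>\Delta(G)\cdot\frac{|V(H)|-1}{2}$, then $G$ has no proper $\Delta(G)$-edge-colouring: such a colouring would split $E(H)$ into at most $\Delta(G)$ matchings of $H$, each of size at most $\lfloor |V(H)|/2\rfloor=\frac{|V(H)|-1}{2}$, contradicting the edge count. I would then produce a suitable $H\subseteq D_d$ according to the parity of $d$. If $d$ is odd, take $H=D_d$ itself: then $|V(H)|=d+2$ is odd and $|E(H)|=\binom{d+1}{2}+1=d\cdot\frac{d+1}{2}+1>d\cdot\frac{d+1}{2}=d\lfloor |V(H)|/2\rfloor$. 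If $d$ is even (so $d\geq 4$), take $H=D_d-w$, i.e.\ $K_{d+1}$ with one edge removed: then $|V(H)|=d+1$ is odd, $|E(H)|=\binom{d+1}{2}-1$, and $\binom{d+1}{2}-1=d\cdot\frac d2+\frac{d-2}{2}>d\cdot\frac d2=d\lfloor |V(H)|/2\rfloor$ since $d>2$. In either case $D_d$ admits no $d$-edge-colouring, so $\chi'(D_d)=d+1$ and $D_d$ is a Vizing class 2 graph.

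There is no genuine obstacle in this argument; the only subtlety is the case split. One cannot use $H=D_d$ uniformly, because when $d$ is even the number of vertices $d+2$ is even and the matching bound $\lfloor(d+2)/2\rfloor=(d+2)/2$ is too weak (indeed $D_d$ is then not overfull); passing to the odd-order subgraph $D_d-w$ restores the strict inequality. I would make sure the floor function is evaluated correctly for each parity and that the edge counts $\binom{d+1}{2}\pm 1$ are tracked accurately.
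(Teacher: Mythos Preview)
Your proof is correct and follows essentially the same line as the paper's: Vizing's theorem for the upper bound and an edge-counting/matching argument, split by the parity of $d$, for the lower bound. The odd case is identical. For even $d$ the paper stays inside $D_d$ and argues that at most two colour classes can be perfect matchings (since each perfect matching must use one of the two edges at the subdivision vertex), bounding the total number of edges coverable by $d$ matchings; your variant of deleting the degree-$2$ vertex and applying the overfull-subgraph criterion to the odd-order graph $K_{d+1}$ minus an edge is a slightly cleaner packaging of the same obstruction.
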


\begin{proof}
By Vizing theorem, $L(D_d)$ is $(d+1)$-colourable.
We prove that $L(D_d)$ is not $d$-colourable. The number of edges of $D_d$ is $\frac{d(d+1)}2+1=\frac{d^2+d+2}2$, the number of vertices is $d+2$. We use the fact that every $k$-edge-colouring yields a decomposition of the graph into $k$ disjoint matchings. We consider two cases:
\begin{enumerate}
\item If $d$ is odd, then the maximum size of a matching in $D_d$ is $\frac{d+1}2$, so the
partition contains at least $d+1$ matchings. Thus the chromatic number of $L(D)$ is $d+1$.
\item If $d$ is even, then the maximum size of a matching in $D_d$ is $\frac{d+2}{2}$. However note that there is a vertex with degree 2, so there are at most 2 maximal matchings. The others matchings have the size at most $\frac{d}2$.
It follows that $d$ matching can cover at most $2(\frac{d+2}2)+(d-2)\frac{d}2=\frac{4+d^2}2$ edges. For $d\geq 4$ we have $\frac{4+d^2}2\leq \frac{d^2+d+2}2$ and thus the partition contains at least $d+1$ matchings, i.e. colour classes. \qedhere
\end{enumerate} 
\end{proof}

In our construction we will use the fact that $d$-dragons are cores
as we show in the following lemma.

\begin{lemma} \label{lem:core_dragon}
The graph $L(D_d)$, $d\geq 3$, is a core.
\end{lemma}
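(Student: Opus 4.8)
The plan is to show that every endomorphism of $L(D_d)$ is an automorphism, which for a finite graph is equivalent to being a core. Write $D_d=K_{d+1}$ on vertices $v_1,\dots,v_{d+1}$ with the edge $v_dv_{d+1}$ replaced by a path $v_d,w,v_{d+1}$, so the nodes of $L(D_d)$ are the edges of $D_d$. By Lemma~\ref{lem:chrInd_Dragon} we have $\chi(L(D_d))=\chi'(D_d)=d+1$, and the clique number of $L(D_d)$ is $d$ since $D_d$ has maximum degree $d$, attained exactly at $v_1,\dots,v_{d+1}$ while $\deg w=2$. For $d\ge 4$ the Krausz--Whitney description of cliques in line graphs says that a family of pairwise-intersecting edges either forms a triangle or is a \emph{star} (all edges through one vertex); hence for $d\ge 4$ the $d$-cliques of $L(D_d)$ are precisely the $d+1$ stars $C_1,\dots,C_{d+1}$ at $v_1,\dots,v_{d+1}$. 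The case $d=3$ is a finite check: $L(D_3)$ is $4$-chromatic and $4$-vertex-critical (each single-node deletion corresponds to $D_3$ minus an edge, which is readily $3$-edge-coloured), and a vertex-critical graph is a core; so from now on assume $d\ge 4$.

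Let $f\colon L(D_d)\to L(D_d)$ be a homomorphism. Since $L(D_d)$ is loopless, $f$ is injective on each $C_i$, so $f(C_i)$ is a $d$-clique, i.e.\ $f(C_i)=C_{\sigma(i)}$ for a well-defined map $\sigma\colon\{1,\dots,d+1\}\to\{1,\dots,d+1\}$. Every node lies in some $C_i$ --- in particular the edges $v_dw$ and $wv_{d+1}$ lie in $C_d$ and $C_{d+1}$ --- so $f(L(D_d))=\bigcup_{i\in\operatorname{im}\sigma}C_i$. If $\sigma$ is onto, then $f$ is onto $\bigcup_iC_i=V(L(D_d))$; a surjective endomorphism of a finite graph is bijective, and a bijective endomorphism of a finite graph is an automorphism (it is then a bijection on edges too, so its inverse preserves edges). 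Thus it suffices to rule out the case that $T:=\operatorname{im}\sigma$ is a proper subset.

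When $T$ is proper, $\bigcup_{i\in T}C_i$ is exactly the set of edges of $D_d$ incident to $W:=\{v_i:i\in T\}$, so $f(L(D_d))\cong L(D_d')$, where $D_d'$ is the spanning subgraph of $D_d$ consisting of the edges meeting $W$; its vertices of degree $d$ are precisely those in $W$, so $L(D_d')$ has only $|T|<d+1$ maximal $d$-cliques. Because $f(L(D_d))$ is homomorphically equivalent to $L(D_d)$, it has chromatic number $d+1$, i.e.\ $\chi'(D_d')=d+1$. Now $D_d'$ differs from $D_d$ only by deleting the edges lying inside $V(D_d)\setminus W$; since $|W|\le d$, this complement contains $w$ and some $v_m$, and $v_mw$ is an edge only for $m\in\{d,d+1\}$. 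If $W$ omits a single $v_m$ with $m\notin\{d,d+1\}$, then $D_d'=D_d$ and $f$ is onto --- already handled. Otherwise at least one edge is deleted, and the goal is to contradict $\chi'(D_d')=d+1$. For $d$ odd this is clean: a matching count as in Lemma~\ref{lem:chrInd_Dragon} together with the fact that no subgraph of $D_d'$ of maximum degree $d$ is overfull (every such subgraph spans at most $d+1$ vertices, and $K_{d+1}$ on an even vertex set is exactly the non-overfull extremal case) gives $\chi'(D_d')=d$, a contradiction; in fact this shows $L(D_d)$ is $(d+1)$-vertex-critical for $d$ odd.

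The main obstacle is the residual sub-case, $d$ even with $W=\{v_1,\dots,v_{d+1}\}\setminus\{v_d\}$ (or $\setminus\{v_{d+1}\}$): then $D_d'=D_d-v_dw$ contains the overfull subgraph $K_{d+1}-v_dv_{d+1}$, so $\chi'(D_d')=d+1$ and the chromatic test fails to discriminate. Here the plan is a direct combinatorial argument with the star structure: $f$ maps the $d+1$ stars $C_1,\dots,C_{d+1}$ of $L(D_d)$ onto the $\le d$ stars of $L(D_d')$, so by pigeonhole two of them, say $C_i$ and $C_j$, collapse onto one star $C'$; since the maximal $d$-cliques of $L(D_d')$ pairwise intersect, the collapsed pair cannot be the unique disjoint pair $\{C_d,C_{d+1}\}$, so $C_i\cap C_j$ is a single node. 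One then tracks the images of the distinguished nodes $v_dw$ and $wv_{d+1}$ --- the only nodes of $L(D_d)$ contained in exactly one $d$-clique --- together with the forced bijections $C_i\setminus C_j\to C'\setminus f(C_i\cap C_j)$ and $C_j\setminus C_i\to C'\setminus f(C_i\cap C_j)$, to exhibit two adjacent nodes with the same $f$-image, which is impossible in a loopless graph. Pinning down this last identification is the step that requires genuine care; everything preceding it is bookkeeping.
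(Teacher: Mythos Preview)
Your proof has a genuine gap. In the even-$d$ case where $\sigma$ misses exactly one of the two special vertices, you only sketch a plan (``tracks the images \ldots\ to exhibit two adjacent nodes with the same $f$-image'') and then explicitly concede that ``pinning down this last identification is the step that requires genuine care''. That step is never carried out, so the argument is incomplete. Your odd-$d$ case is also not rigorous as written: non-overfullness of subgraphs does not by itself give $\chi'(D_d')=d$---that would be the Overfull Conjecture, not a theorem---so you would still need a direct colouring there.

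More importantly, the whole detour through $T$, $D_d'$, chromatic indices and parity is unnecessary. The paper handles all $d\ge 4$ uniformly with a single observation, which is in fact the idea your final paragraph is groping towards. Suppose $\sigma(i)=\sigma(j)$ for distinct $i,j$. Pick any $k\in\{2,\dots,d\}\setminus\{i,j\}$ (nonempty since $d\ge 4$); then both $v_iv_k$ and $v_jv_k$ are edges of $D_d$. These are two distinct nodes of $C_k$, so $f$ sends them to two \emph{distinct} nodes, and both images lie in $C_{\sigma(i)}\cap C_{\sigma(k)}$. But distinct $d$-cliques of $L(D_d)$ share at most one node, so $\sigma(k)=\sigma(i)$. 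Iterating over all $k$ forces $\sigma$ to be constant; then $f$ maps $L(D_d)$ into a single $d$-clique, giving a $d$-edge-colouring of $D_d$ and contradicting Lemma~\ref{lem:chrInd_Dragon}. Thus $\sigma$ is a bijection, and the rest of your reduction (surjective $\Rightarrow$ automorphism) finishes the proof. The point is that this shared-node argument settles every case at once, so the chromatic-index analysis of $D_d'$ never needs to begin.
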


\begin{proof}
For $d=3$ observe that $L(D_3)$, depicted in Figure~\ref{fig:dragon}, is not 3-colourable, 
while each of its induced subgraphs is. Hence the statement holds for $d=3$. 

For $d\ge 4$ denote the vertices of $D_d$ by $1,2,\dots, d,d+1,d+2$, where vertices 
$1,2,\dots,d+1$ have degree $d-1$ and the vertex $d+2$ has degree 2.
The vertices of degree $d$ correspond to $d$-cliques in $L(D_d)$. 
Each pair of those $d$-cliques share at most one node that corresponds to
the edge connecting the original pair of vertices. 
Note that the shared node is unique for each such pair.
Observe also that there are no other $d$-cliques in $L(D_d)$.
This follows from the fact that the only way to create a $4$-clique in a line graph
is by a vertex of degree at least 4. See Figure~\ref{fig:4-dragon}.

\begin{figure}[!ht]
	\centering
	\includegraphics{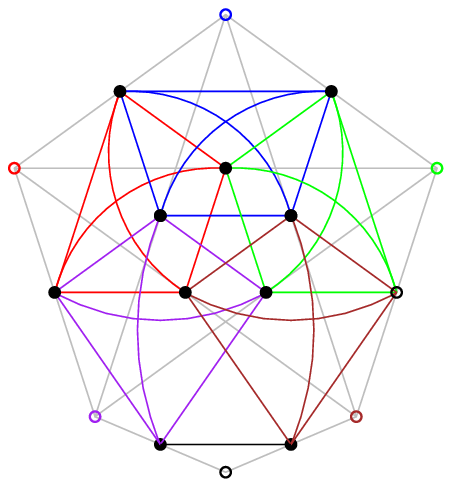}
	\caption{Line graph of 4-dragon with cliques corresponding to the neighbourhoods of two vertices distinguished.}
	\label{fig:4-dragon}
\end{figure}

Consider a homomorphism $f:L(D_d)\to L(D_d)$. Every homomorphism must map a $d$-clique
to a $d$-clique, and thus it defines a vertex mapping $f':\{1,2,\ldots ,d+1\}\to \{1,2,\ldots ,d+1\}$ in $D_d$.

\begin{figure}[ht!]
	\centering
	\includegraphics[width=0.6\textwidth]{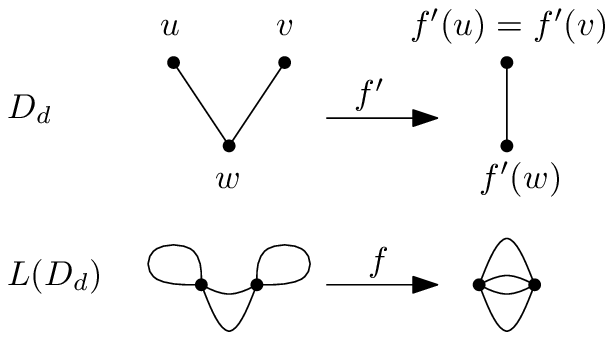}
	\caption{Mapping two $d$-cliques to the same target.}
	\label{fig:d-clique}
\end{figure}

Assume that there are distinct $u,v\in \{1,\ldots,d+1\}$ such that $f'(u)=f'(v)$, see Figure~\ref{fig:d-clique}.
Take any $w \in \{2,\ldots, d\}\setminus\{u,v\}\ne\emptyset$. Because the node shared 
by the cliques corresponding to $u$ and $w$ is unique, it is different from the node shared by the cliques corresponding to $v$ and $w$. Consequently,
the cliques corresponding to $f'(u)=f'(v)$ and $f'(w)$ share at least two nodes. 
Since distinct $d$-cliques of $L(D_d)$ may share at most one node, it follows
that $f'(w)=f'(u)$. Hence $f'$ is either a bijection 
or a constant function. 
On the other hand, $f'$ can not be a constant function by Lemma~\ref{lem:chrInd_Dragon},
as otherwise such mapping would yield an edge colouring of $D_d$ by $d$ colours.

Since $f'$ is a bijection on vertices $\{1,2,\dots,d+1\}$ of $D_d$, the mapping $f'$ must be a bijection on the edges between these vertices. The only way to get a homomorphism $f$ of the whole $D_d$ is to extend the mapping bijectively also on edges $\{1,d+2\}$ and $\{d+1,d+2\}$.
By this argument we have proved that $f$ is an isomorphism.
\end{proof}

\subsection*{Indicator construction}
\label{sec:indicator}

We briefly describe the indicator technique, called often the `arrow construction' in~\cite{Hell2004}.
Informally, this construction means replacing every edge of a given graph $G$ by a copy of graph $I$
(an indicator) with two distinguished vertices identified with the endpoints of the edge.
Figure~\ref{fig:circle} (left) shows result of indicator construction on graph
in Figure~\ref{fig:sunlet} with indicator shown in Figure~\ref{fig:gadget} (left). We give a precise
definition of this standard notion:

\begin{figure}[!ht]
	\centering
	\includegraphics[width=0.3\textwidth]{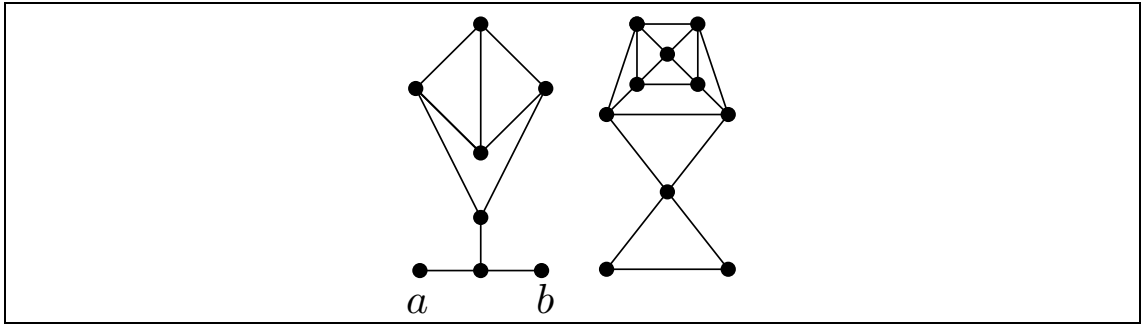}
	\caption{Indicator $I_3(a,b)$ and its line graph.}
	\label{fig:gadget}
\end{figure}

An {\em indicator}\index{indicator} is any graph $I=(V_I,E_I)$ with two distinguished vertices $a$,$b$.

Given a graph $G=(V_G,E_G)$, we denote by $G*I(a,b)$\index{$G*I(a,b)$}\footnote{Here the *-operation is different from the one in relations.} the graph $H=(V_H,E_H)$,
where each edge is replaced by an extra copy of $I(a,b)$, where the vertices $a$ and $b$
are identified with the original vertices.

Formally, to obtain $V_H$ we first take the Cartesian product 
$E_G\times V_I(a,b)$ and factorize this set by the equivalence relation $\sim$
consisting of the following pairs:
$$((x,y),a)\sim((x,y'),a),$$
$$((x,y),b)\sim((x',y),b),$$
$$((x,y),b)\sim((y,z),a).$$

In other words, the vertices of $H$ are equivalence classes of the equivalence $\sim$.
For a pair $(e,x)\in E\times V_I$, the symbol $[e,x]$\index{$[e,x]$} denotes its equivalence class.

Vertices $[e,x]$ and $[e',x']$ are adjacent in $H$ if $e=e'$ and $\{x,x'\}\in E_I$, we add no other edges.

\subsection*{Final construction}
\label{sec:final}

It is a standard technique to use the indicator construction to represent the class
of graphs which is known to be universal (such as oriented paths)
within another class of graphs (such as planar graphs) by using an appropriate
rigid indicator (see~\cite{Hubicka2004}). It is then possible to show that the structure induced
by the homomorphism order is preserved by the embedding via the indicator construction.

While our construction also uses indicators, the application is not so direct.
It is generally impossible to have an indicator that would turn a graph into
a line graph. We use the indicator to make graphs more rigid with respect
of homomorphisms of their line graphs and model the divisibility order
directly.

Our basic building blocks are the following:

\begin{defi}
\label{def:sunlet}
The \emph{$n$-sunlet graph}\index{sunlet graph}, denoted by $S_n$\index{$S_n$}, is the graph on $2n$ vertices obtained by attaching $n$ pendant edges to a cycle $C_n$, see Figure~\ref{fig:sunlet}.
\end{defi}
\begin{figure}[!ht]
	\centering
	\includegraphics{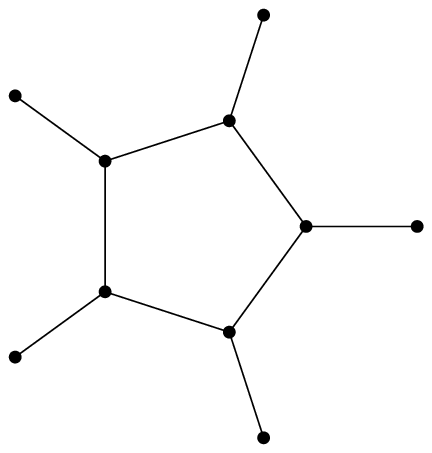}
	\caption{The 5-sunlet $S_5$.}
	\label{fig:sunlet}
\end{figure}

\begin{defi} For $d\geq 3$ the \emph{indicator}\index{indicator} $I_d(a,b)$ is the graph created from the disjoint union of the dragon $D_d$ and a path on vertices $a,c,b$, where the vertex $c$ is connected by an edge to the vertex of degree 2 in $D_d$, see Figure~\ref{fig:gadget}.
\end{defi}


The desired class of graphs to show universality of the interval $[K_{d},K_{d+1})_\mathcal{L}$, $d\ge 3$ consists of graphs $S_n*I_d(a,b)$ for 
$n\ge 3$. We abbreviate $S_n*I_d(a,b)$ by the symbol $G_{n,d}$.
An example, the graph $G_{5,3}$, is shown in Figure~\ref{fig:circle}. By squares are
indicated vertices of degree three of the original sunlet graph. The three incident edges 
are in the line graph drawn as the triplets joined by the dashed triangles.

\begin{figure}[!ht]
	\centering
	\includegraphics[width=0.8\textwidth]{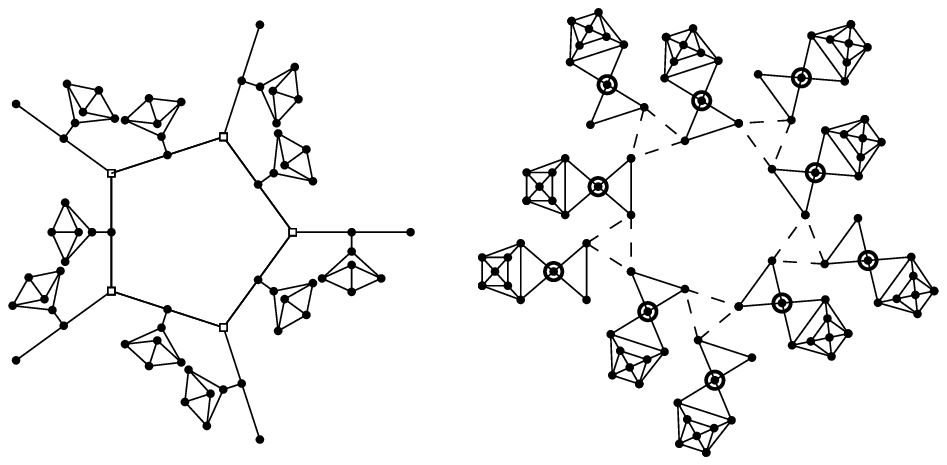}
	\caption{The graph $G_{5,3}=S_5*I_3(a,b)$ and its line graph.}
	\label{fig:circle}
\end{figure}

By Corollary~\ref{cor:Vizing} the graph $L(G_{n,m})$ is in the interval $[K_m,K_{m+1})_{\mathcal L}$ for every $n\geq 3$ and $m\geq 3$.

It remains to show the following property of the graphs $G_{n,m}$.
\begin{prop}
\label{prop:homo}
For every $d\geq 3$, $n\geq 3$, $n'\geq 3$ there is a homomorphism from $L(G_{n,d})$ to $L(G_{n',d})$ if and only if $n$ is divisible $n'$.
\end{prop}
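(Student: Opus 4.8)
The plan is to show both implications separately, relying heavily on the rigidity of the dragon indicator (Lemma~\ref{lem:core_dragon}) together with the structure of the $n$-sunlet. First I would fix some terminology: inside $L(G_{n,d})$ the copies of $L(D_d)$ sitting in each indicator are cores by Lemma~\ref{lem:core_dragon}, and in fact they are the only large cliques around — the only $d$-cliques in $L(G_{n,d})$ are those coming from the $(d+1)$-cliques of the dragons and (for $d=3$) possibly the triangles at the degree-3 vertices of the sunlet, which are distinguishable by the local structure (a sunlet triangle lies in no $4$-clique and is joined to three distinct indicator paths). The key rigidity claim is: every homomorphism $f:L(G_{n,d})\to L(G_{n',d})$ must map each indicator-copy of $L(D_d)$ isomorphically onto an indicator-copy of $L(D_d)$ in the target, and must respect the path-nodes $[e,a],[e,c],[e,b]$ joining an indicator to the rest of the graph. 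This follows by the same clique-tracking argument as in the proof of Lemma~\ref{lem:core_dragon}: a $d$-clique must go to a $d$-clique, a constant map on a dragon is impossible by Lemma~\ref{lem:chrInd_Dragon}, so the restriction to each dragon is an isomorphism, and then the unique node of degree $2$ in the dragon together with its attached path is forced to map to the corresponding configuration in the target.

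For the ``only if'' direction, suppose $f:L(G_{n,d})\to L(G_{n',d})$ exists. By the rigidity claim, $f$ induces a well-defined map on the ``skeleton'': collapsing each indicator back to a single edge, $f$ descends to a graph homomorphism $\bar f:S_n\to S_{n'}$ that moreover sends pendant edges to pendant edges and cycle edges to cycle edges (the pendant vs.\ cycle distinction is visible because a pendant edge's indicator is attached at a degree-$1$ vertex, whereas a cycle edge's indicator is attached at two degree-$3$ vertices). Hence $\bar f$ restricts to a homomorphism $C_n\to C_{n'}$ that is in addition ``locally injective'' at each vertex — because at a degree-$3$ vertex of $S_n$ the three indicator-triangles meeting there must be sent to three distinct indicator-configurations at the image vertex, forcing the two incident cycle edges to have distinct images. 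A locally injective / non-backtracking homomorphism $C_n\to C_{n'}$ exists if and only if $n'$ divides $n$; this is the standard fact already invoked in Lemma~\ref{lem:cycles} and Proposition~\ref{pro:lochomo-univ}. That yields $n'\mid n$.

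For the ``if'' direction, assume $n'\mid n$, say $n=kn'$. There is an obvious non-backtracking covering map $C_n\to C_{n'}$ wrapping the $n$-cycle $k$ times around the $n'$-cycle; extend it to a map $S_n\to S_{n'}$ sending the pendant vertex/edge at cycle-vertex $i$ to the pendant vertex/edge at its image. Now lift this to the line graphs: since every edge of $S_n$ is replaced by an \emph{identical} copy of the indicator $I_d(a,b)$ and $L(I_d(a,b))$ maps isomorphically to itself, we can map each indicator-copy in $G_{n,d}$ identically onto the indicator-copy over the image edge, and this is consistent on the overlap nodes $[e,a]\sim[e',b]$ precisely because the skeleton map is a graph homomorphism respecting endpoints. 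One checks that adjacent nodes of $L(G_{n,d})$ — whether they lie in a common indicator, or are the shared path-nodes at a vertex of $S_n$ — go to adjacent nodes, using that at a vertex of degree $3$ the three indicator attachments map to the three attachments at the image vertex (here is where non-backtracking is needed, so that the two cycle-indicators don't collide). Hence $f$ is a homomorphism $L(G_{n,d})\to L(G_{n',d})$.

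The main obstacle I expect is making the rigidity claim fully airtight, in particular ruling out ``exotic'' homomorphisms of the line graph that do not come from the skeleton: one must argue carefully that $f$ cannot fold part of a dragon onto a triangle of the sunlet or merge two indicators, and that the path nodes $[e,a],[e,c],[e,b]$ cannot be mapped in some degenerate way. This is handled by a careful clique/degree analysis exactly in the spirit of Lemma~\ref{lem:core_dragon} (the degree-$2$ vertex of the dragon and the length-two path to the sunlet form a rigid ``tail'' with no nontrivial endomorphisms), but it requires a short case check for the small case $d=3$ where $K_4$-freeness is the only tool distinguishing dragon-cliques from sunlet-triangles. Once that is in place, both directions reduce cleanly to the divisibility characterization of non-backtracking cycle homomorphisms, and Theorem~\ref{thm:main} then follows by combining Proposition~\ref{prop:homo} with Theorem~\ref{thm:universal} and Corollary~\ref{cor:Vizing}.
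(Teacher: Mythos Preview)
Your plan is correct in outline and lands on the same structural mechanism as the paper, but the paper's execution is cleaner on precisely the point you flag as the obstacle. Rather than tracking $d$-cliques and doing a separate case analysis for $d=3$, the paper observes that in $L(G_{n,d})$ the line graphs of the dragons are the \emph{only 2-connected components of chromatic number at least $4$} (Lemma~\ref{lem:chrInd_Dragon} gives the lower bound, and every other block is a triangle). This single invariant forces dragons to dragons for all $d\geq 3$ at once; then the core property (Lemma~\ref{lem:core_dragon}) upgrades this to an isomorphism on each dragon, and a triangle-completion argument pins down the ``special'' node (the edge from $c$ to the dragon apex). This is packaged as Lemma~\ref{lem:special}. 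The paper then characterizes the connecting triangles intrinsically as those triangles all three of whose nodes are adjacent to a special node (Lemma~\ref{lem:triangles}), so they too are preserved, and the cyclic conclusion follows immediately.

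Your route via a reconstructed skeleton map $\bar f:S_n\to S_{n'}$ plus local injectivity is equivalent in content---the preservation of connecting triangles is exactly what makes $\bar f$ well defined and locally injective on the cycle---but it requires you to first argue that $f$ respects the path nodes $[e,a],[e,c],[e,b]$, and your proposed clique argument genuinely stalls at $d=3$, where triangles abound. Adopting the paper's chromatic-number-plus-biconnectivity invariant removes that case check entirely and makes the ``rigidity claim'' a two-line observation rather than a delicate local analysis.
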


One direction of the proposition is trivial. If $n$ is divisible by $n'$ then the 
homomorphism is given by a homomorphism from $S_n$ to $S_{n'}$ that cyclically wraps 
the bigger cycle around the smaller cycle.
We call this homomorphism \emph{cyclic}\index{cyclic}.

The other implication is a consequence of the following two lemmas.

The nodes of $L(G_{n,d})$ corresponding to the edges connecting the 
dragons with the vertices $c$ are called {\em special}\index{special}.
In Figure~\ref{fig:circle} they are highlighted by circles.

\begin{lemma} 
\label{lem:special}
For $d\geq 3$, $n\geq 3$, $n'\geq 3$ every homomorphism $f:L(G_{n,d})\to L(G_{n',d})$ must map special vertices to special vertices.
\end{lemma}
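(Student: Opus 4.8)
The plan is to reduce everything to understanding how $f$ treats the $n$ disjoint copies of $L(D_d)$ that sit inside $L(G_{n,d})$ (one for each subdivided edge of $S_n$). Concretely, I would show that $f$ carries each such \emph{dragon copy} isomorphically onto a dragon copy of $L(G_{n',d})$, and then pin down the special nodes purely in terms of these copies. Throughout I use that $L(D_d)$ is a core (Lemma~\ref{lem:core_dragon}) and that its chromatic index is $d+1$ (Lemma~\ref{lem:chrInd_Dragon}).

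\textbf{Step 1: dragon copies are rigid.} Since $L(G_{n',d})$ is a simple (loopless) graph, $f$ maps every clique injectively onto a clique of the same size; in particular it sends $d$-cliques of $L(G_{n,d})$ to $d$-cliques of $L(G_{n',d})$. As in the proof of Lemma~\ref{lem:core_dragon}, a $d$-clique in the line graph of a graph of maximum degree $d$ comes from a vertex of degree $d$ when $d\ge 4$, and for $d=3$ from a degree-$3$ vertex or a triangle. In $L(G_{n,d})$ the $d+1$ cliques coming from the degree-$d$ vertices of one dragon have a connected ``intersection pattern'' (each pair shares a node except the pair corresponding to the subdivided edge), while cliques of distinct dragons are node-disjoint. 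Pushing this pattern through $f$ forces the $d+1$ cliques of a fixed source dragon into a single dragon copy of the target: for $d\ge 4$ each target $d$-clique is a degree-$d$ vertex, so it lies in a target dragon, and the connected pattern keeps them in one dragon; for $d=3$ one additionally observes that the skeleton obtained from $G_{n',3}$ by deleting the dragons is obtained by subdividing every edge of $S_{n'}$ once, hence is bipartite, so no four of its degree-$3$ cliques (and none of the non-existent skeleton triangles) can realize the pattern. Every node of a dragon copy lies in at least one of its $d+1$ cliques, so $f$ maps the whole source dragon copy into one target dragon copy; as both are isomorphic to the core $L(D_d)$, composing with an isomorphism back yields an endomorphism of $L(D_d)$, hence an automorphism, so $f$ restricted to each source dragon copy is an isomorphism onto a target dragon copy, inducing a bijection between the two families of dragon copies.

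\textbf{Step 2: locating the special nodes.} Inside a single copy of $L(D_d)$, the two nodes coming from the two edges at the unique degree-$2$ vertex $v$ of $D_d$ form the \emph{only} pair of adjacent nodes with no common neighbour inside that copy (every other adjacent pair shares a vertex of degree $\ge 3$, which supplies a common neighbour), and this is preserved by any isomorphism of $L(D_d)$. In $L(G_{n,d})$ the special node $[\{c,v\}]$ is precisely the unique common neighbour of this pair in the whole graph --- inside the dragon there is none, and outside it only the connector edge qualifies, the putative edge between the two endpoints of the subdivided edge being absent. Since $f$ preserves adjacency, it sends a common neighbour to a common neighbour; combined with Step~1 it sends this distinguished pair of a source dragon to the corresponding pair of its image dragon, and therefore sends the source special node to the unique common neighbour of the image pair, which is the special node of that target dragon. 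This is exactly the assertion of the lemma.

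\textbf{Main obstacle.} The delicate part is Step~1 --- showing the homomorphic image of a dragon copy cannot spread across two target dragons or leak into the skeleton. The $d$-clique bookkeeping, and especially the case $d=3$ where triangle-based $3$-cliques also occur (and where one must invoke the bipartiteness of the skeleton rather than merely counting cliques), is where the real work lies; the rest of the argument is a short deduction from the core property of $L(D_d)$ and an elementary common-neighbour count.
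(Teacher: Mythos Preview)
Your proof is correct, and Step~2 is essentially identical to the paper's endgame: both identify the two dragon nodes $e_1,e_2$ at the subdivided vertex as the unique adjacent pair in $L(D_d)$ with no common neighbour inside the dragon, and conclude that the special node is forced as their unique common neighbour in the ambient line graph.

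Where you diverge is Step~1. The paper does not analyse $d$-cliques or intersection patterns at all; it observes instead that among the $2$-connected pieces of $L(G_{n',d})$, only the line graphs of the dragons fail to be $3$-colourable (they have chromatic number $d+1\ge 4$ by Lemma~\ref{lem:chrInd_Dragon}, while the remaining blocks are built from triangles). Since a homomorphic image of $L(D_d)$ must still have chromatic number $\ge d+1$, it is forced into a target dragon block, and then the core property of $L(D_d)$ gives the isomorphism. This is shorter and avoids the $d=3$ case split you rightly flag as the delicate part. Your clique-pattern argument, on the other hand, is more explicit about \emph{why} the image cannot leak into the skeleton or straddle two dragons --- in particular your observation that the skeleton of $G_{n',3}$ is a subdivision of $S_{n'}$ and hence bipartite is a clean way to kill the extraneous $3$-cliques, whereas the paper's block/chromatic argument is stated rather tersely. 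Both routes land at the same place; yours trades brevity for transparency in the $d=3$ regime.
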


\begin{proof}
In $L(G_{n,d})$ the only 2-connected components of chromatic number $4$ are the
line graphs of dragons. It follows that the image of every line graph of a dragon must be in a line graph of a dragon. By Lemma~\ref{lem:core_dragon}, the line
graphs of dragons are cores, thus for any special node $u$ holds that its two neighbours $v$ and $w$ in the associated dragon $D_d$ in $L(G_{n,d})$ must be mapped into some dragon $D_d$ from $L(G_{n',d})$, but bijectively onto the two neighbours of the attached special node. (See Figure~\ref{fig:gadget}.)

Since $u,v$ and $w$ form a triangle, the only way to complete a triangle containing 
$f(v)$, $f(w)$ is to map $u$ to the adjacent special node, as such triangle cannot be completed inside the dragon.
\end{proof}

A triangle in $L(G_{n,d})$ is called a {\em connecting triangle}\index{connecting triangle} if it originates from an original node of degree three in $S_m$.
In Figure~\ref{fig:circle} the connecting triangles are denoted by dashed lines.

\begin{lemma}
\label{lem:triangles}
For every $d\geq 3$, $n\geq 3$ and $n'\geq 3$, every homomorphism $f:L(G_{n,d})\to L(G_{n',d})$ must map connecting triangles to connecting triangles.
\end{lemma}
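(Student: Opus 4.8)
The plan is to bootstrap from Lemma~\ref{lem:special}. It is convenient to classify the edges of $G_{n,d}=S_n*I_d(a,b)$, equivalently the nodes of $L(G_{n,d})$, into three types: \emph{dragon nodes}, the edges lying inside one of the attached copies of $D_d$; \emph{special nodes}, the edge $\{c,z\}$ of each indicator joining the path vertex $c$ to the unique degree-$2$ vertex $z$ of that indicator's dragon; and \emph{connecting nodes}, the edges $\{v,c\}$ joining an original vertex $v$ of $S_n$ to the path vertex $c$ of an incident indicator. A connecting triangle is exactly the triangle on the three connecting nodes $\{v,c_1\},\{v,c_2\},\{v,c_3\}$ incident to a degree-$3$ vertex $v$ of $S_n$.

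First I would work out the local picture of $L(G_{n',d})$ around a special node $s=\{c,z\}$. It has exactly four neighbours: the two dragon nodes $\{x,z\},\{y,z\}$ incident to $z$, and the two connecting nodes $\{a,c\},\{b,c\}$ incident to $c$; the subgraph induced on these four is a perfect matching, pairing the two dragon nodes and pairing the two connecting nodes (no ``cross'' edge occurs, since a connecting node and a dragon node incident to $z$ share no vertex). Hence $s$ lies on exactly two triangles: a \emph{dragon triangle} $\{s,\{x,z\},\{y,z\}\}$ and an \emph{indicator triangle} $\{s,\{a,c\},\{b,c\}\}$. I would also record two further facts: no two special nodes are adjacent (distinct indicators share no internal vertices), and every neighbour of a dragon node incident to the degree-$2$ vertex $z$ of its dragon is either the special node $\{c,z\}$ or again a dragon node of the same dragon.

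Now take a connecting triangle $T=\{u_1,u_2,u_3\}$ of $L(G_{n,d})$ at a degree-$3$ vertex $v$ of $S_n$, write $u_a=\{v,c_a\}$, and let $s_a=\{c_a,z_a\}$ be the unique special node adjacent to $u_a$. Since $u_a$ and $s_a$ share the vertex $c_a$, they lie on a common triangle of $L(G_{n,d})$; by Lemma~\ref{lem:special} its image under $f$ is a triangle of $L(G_{n',d})$ through the special node $f(s_a)$, hence is the dragon triangle or the indicator triangle at $f(s_a)$. Since $f(u_a)$ is adjacent to the special node $f(s_a)$, it is not itself special, so in either case $f(u_a)$ is one of the two non-special nodes of that triangle. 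Thus, for each $a$, $f(u_a)$ is either a dragon node incident to the degree-$2$ vertex of some dragon (dragon-triangle case) or a connecting node (indicator-triangle case).

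The hard part is to eliminate the dragon-triangle case, and this is where the two facts recorded above are used. Suppose $f(u_1)$ is a dragon node incident to the degree-$2$ vertex $z$ of its dragon $D'$, so that $f(s_1)=\{c,z\}$ for the indicator of $D'$. By the last fact of the second paragraph, every neighbour of $f(u_1)$ is $f(s_1)$ or a dragon node of $D'$; since $u_2,u_3$ are adjacent to $u_1$ and $f(u_2),f(u_3)$ are non-special, both $f(u_2)$ and $f(u_3)$ are dragon nodes of $D'$. Applying the dichotomy of the previous paragraph to $u_2$ and $u_3$, their images cannot be connecting nodes, so $f(u_2)$ and $f(u_3)$ are dragon nodes incident to the degree-$2$ vertex of $D'$; but there are only two such nodes, while $u_1,u_2,u_3$ pairwise adjacent forces $f(u_1),f(u_2),f(u_3)$ to be pairwise distinct --- a contradiction. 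Hence every $f(u_a)$ is a connecting node, and $f(T)$ is a triangle on three pairwise-distinct connecting nodes. A final short case analysis --- two connecting nodes are adjacent only when they share an original vertex of $S_{n'}$ or share a path vertex $c$, and three pairwise-adjacent connecting nodes cannot use the second option more than once without forcing the two endpoints of an edge of $S_{n'}$ to coincide --- shows that $f(u_1),f(u_2),f(u_3)$ all sit at one original vertex of $S_{n'}$, so $f(T)$ is a connecting triangle.
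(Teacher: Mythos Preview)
Your proof is correct, but it takes a considerably more hands-on route than the paper's. The paper's argument is a one-liner: a connecting triangle is characterized among all triangles of $L(G_{n',d})$ as one in which \emph{every node is adjacent to some special node}; since $f$ preserves adjacency and (by Lemma~\ref{lem:special}) sends special nodes to special nodes, this property passes to the image triangle, and hence the image is again a connecting triangle.

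What you do instead is trace each $u_a$ through the explicit triangle it shares with its special neighbour $s_a$, land in one of the two triangles at $f(s_a)$, and then eliminate the dragon-triangle option by a pigeonhole argument before finishing with a direct classification of triangles on connecting nodes. This is perfectly valid, and in fact your Steps~4--5 amount to a detailed verification of the characterization the paper simply asserts: the nodes adjacent to special nodes are precisely the connecting nodes together with the two dragon edges at each $z$, and the only triangles supported on that set are the connecting triangles. So the two arguments agree in substance; the paper's version just packages the case analysis into a single structural observation, while yours unpacks it explicitly. Your approach has the merit of being fully self-contained, at the cost of length.
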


\begin{proof}
Consider an arbitrary connecting triangle of $G_{n,d}$. It has the property that each of its node is adjacent to a special node. By Lemma~\ref{lem:special}, special nodes are preserved by the homomorphism $f$.
The only triangles with this property in $G_{n',d}$ are precisely the connecting triangles.
\end{proof}

\begin{proof}[Proof of Proposition~\ref{prop:homo}]
By Lemma~\ref{lem:triangles}, the connecting triangles map to the connecting triangles. 
The other triangle in the cyclic configuration are those containing a special node. 
By Lemma~\ref{lem:special} these triangles can not map to connecting triangles.
Consequently, two connecting triangles joined by an edge can not map into
one connecting triangle and thus the homomorphism $f$ must be cyclic.
\end{proof}

\begin{proof}[Proof of Theorem~\ref{thm:main}]
We apply Theorem~\ref{thm:universal} and show an embedding of $(P,\leq_P)$ 
to the homomorphism order of the interval $[K_d,K_{d+1})_{\mathcal L}$.
For the chosen $d\geq 3$ we assign to every $A\in P$ a line graph $L(d,A)$ consisting of the disjoint union of graphs $L(G_{a,d}), a\in A$.
Since any homomorphism must map connected components to connected components, 
we know by Proposition~\ref{prop:homo}
that $L(d,A)$ allows a homomorphism to $L(d,B)$ if and only if $A\leq_P B$.
\end{proof}

\subsection*{Concluding remarks}
\label{sec:concluding}

Our results confirm that the homomorphism order of line graphs is rich. It is interesting that our embedding
 differs considerably from the one used in the proof of the universality of
oriented paths by Hubi\v{c}ka and Ne\v{s}et\v{r}il~\cite{Hubicka2005}.

Our construction is based on the retrospect of a homomorphism $f:L(G)\to L(H)$
to a binary relation $f'\subseteq V_G\times V_H$. We put $f'(v)=v'$ if
all edges adjacent to $v$ are mapped by $f$ to the edges 
adjacent $v'$ to $H$. This mapping is not always well defined. In particular:
\begin{itemize}
\item[(a)] The image of an edge adjacent to a vertex of degree $1$ of $G$ is contained
in the set of edges adjacent to two different vertices $u$, $v$ connected by an edge in $H$. In this sense $f'$ is not a function.
\item[(b)] Edges adjacent to a vertex $v$ of degree $3$ in $G$ correspond to a triangle
in $L(G)$. Because the line graph of a triangle is also a triangle, the image of these edges
may thus map to a line graph of a triangle. In this case $f'(v)$ is not defined.
\end{itemize}
The basic idea behind the proof of Lemma~\ref{lem:core} is the fact that $f'$ (if
it is a function) is close to a graph homomorphism $G\to H$ with two
main differences:
\begin{itemize}
\item[(c)] It may happen that $f'(u)=f'(v)$ for two adjacent vertices of $G$.
\item[(d)] For vertices of degree at least $3$ the mapping $f'$ is locally injective with the exception of $(c)$.
\end{itemize}
Recall that a homomorphism $h:G\to H$ is {\em locally injective}\index{locally injective} if the restriction of the
mapping $h$ to the domain consisting of the vertex neighbourhood of $v$ and
range consisting of the vertex neighbourhood of $v$ is injective. In one
direction, every locally constrained homomorphism $h:G\to H$ yields a
homomorphism $h':L(G)\to L(H)$. Our observations above show that this direction can be reversed in special cases.

It is thus not a surprise that our universality proof is based on the ideas
developed for the proof of universality of locally injective homomorphisms. We
get closer to graph homomorphisms by means of the indicator construction. In
the proof of Proposition~\ref{prop:homo} we consider a mapping $f'':V_G\to V_H$
that retrospects a homomorphism $L(G*I_d(a,b))\to L(H*I_d(a,b))$ in a similar way
as $f'$. This mapping is a locally injective homomorphism with the exception of
$(b)$ and vertices of degree 2, where the local injecitivity is not enforced.
For this reason we use sunlets instead of cycles and our
embedding of the divisibility order is based on the fact that there is a
locally injective homomorphism from between sunlet graphs $S_n$ and $S_m$ if and only if
$m$ divides $n$.

With this insight it is not difficult to see that our construction can be
altered to form $3$-regular graphs. This can be done by adding a cycle
connecting pendant vertices to every sunlet. For degrees $d>3$, the edges
connecting both inner and outer can be turned into a multi-edges of a given
degree, but also the indicator needs to be modified to become $d$-regular
except for the two vertices of degree 1. By replacing the edge connecting
dragon with vertex $c$ by a clique of the corresponding degree and by 
adding a separate copy of a dragon to all but one vertex (the one connected to the base).

Several properties of locally injective homomorphism order
are given by degree refinement matrices of the graphs considered~\cite{Fiala2005}. 
As a special case, for $d$-regular graphs $G$ the degree refinement matrix
is trivial consisting of only one value $d$.

Roberson \cite{Roberson} shows that the homomorphism order line graphs is dense
above every line graph of $K_n$, $n\geq 2$. Our construction gives many extra
pairs with infinitely many different graphs strictly in between. Further such pairs can be obtained
by an application of Theorem~\ref{thm:density} in Section~\ref{sec:locally-constrained}.

Assume that $G$ and $H$ have no vertex of degree 2 and the degree of
vertices are either bounded by $d-1$, or bounded by $d$ and moreover that $G$ and $H$ are in Vizing class 1. Then the graph $F$ given by Theorem~\ref{thm:density} can be easily extended to $F'$, where an extra edge is added to every vertex of degree $2$. It is also easy to see that it cannot have vertices of degree greater than $d$. From the proof of Theorem~\ref{thm:density} it follows that $F$ is in Vizing class 1.

Consequently, $L(F'*I_d(a,b))$ is strictly in between $L(G*I_d(a,b))$ and
$L(H*I_d(a,b))$. Afterwards, a graph strictly in between $G*I_d(a,b)$ and $F'*I_d(a,b)$
can be again constructed by applying Theorem~\ref{thm:density} on $G$ and $F$.
This construction provides new examples of dense pairs in the homomorphism order of
line graphs.

There are more results about the locally constrained homomorphism order that seem
to suggest a strategy to attack problems about the homomorphism order of line graphs.
It appears likely that the characterization of gaps in the locally injective homomorphism
order will give new gaps in the homomorphism order of line graphs.
The proof of universality of the locally constrained order of connected graphs (Theorem~\ref{thm:lochomouniv}) in Section~\ref{sec:locally-constrained} can be translated to yet another proof of the universality of the homomorphism order of line graphs,
this time however the graphs used are connected but not $d$-regular --- since locally
constrained homomorphism order is not universal on $d$-regular connected graphs.
This suggests the question of whether the homomorphism order of line graphs is universal on the class of finite connected $d$-regular graphs.

Finally, Leighton's construction of a common covering for graphs~\cite{Leighton1982}
may give an insight into a way of constructing a suitable product for line graphs.

\section{The orders of Relations}
\label{sec:relations}
Relations between graphs have been introduced in Chapter~\ref{ch:relation} and show an interesting correspondence to graph homomorphisms. In this section we consider properties of the order induced by the existence of relations.

Recall that for directed graphs $G$ and $H$ we say that a binary relation is a {\em relation from $G$ to $H$}\index{relation}, or {\em \RelHomo}\index{\RelHomo}, if there is a relation $R\subseteq V_G\times V_H$ such that (1) for every $(u,v)\in E_G, (u,u')\in R, (v,v')\in R$ we also have $(u',v')\in E_H$, (2) for every $(u',v')\in E_H$ we have some $(u,v)\in E_G, (u,u')\in R, (v,v')\in R$ and (3) for every $v'\in V_H$ we have some $v\in V_G$ such that $(u,v)\in H$. 
We say that there is a {\em relation from $G$ to $H$ with full domain}\index{relation with full domain}, or {\em \FRelHomo}\index{\FRelHomo} if there is relation $R$ from $G$ to $H$ and for every $v\in V_G$ there is some $v'$ such that $(v,v')\in R$.

From the decomposition lemma, the order induced by relations is an interesting mix of the surjective homomorphism order and the transposed full homomorphism order:
$\RelLeq$ is the transitive closure of $\FullGeq \cup \SurLeq$.

Moreover, without the full domain condition, we are for the first time allowed to forget parts of graphs $G$ while looking for a relation $G\RelHom H$\footnote{$\RelHom$ is the same as $\backsim_w$ in Chapter~\ref{ch:relation}.}.
For this reason we first show properties of order induced by relations with full domain ($\FRelLeq$).

We know from Section~\ref{subsec:R-core} that R-cores are the unique minimal elements of the equivalence classes of $\FReleq$. Now we need to investigate the cores of $(\DiGraphs,\FRelLeq)$.

\subsection*{PR-cores}
$\RelLeq$ is the transitive closure of $\FRelLeq$ and $\EmbedGeq$.
Consequently $\Releq$ is coarser than $\FReleq$ and we immediately obtain:
\begin{obs}
Every PR-core is an R-core.
\end{obs}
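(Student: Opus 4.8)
The plan is to read this off directly from the relationship between the two quasi-orders $\FRelLeq$ and $\RelLeq$ recorded just above, namely that $\RelLeq$ is the transitive closure of $\FRelLeq$ and $\EmbedGeq$. The first step is to observe that every relation with full domain is in particular a relation, so that $G\FRelLeq H$ implies $G\RelLeq H$; consequently $\FReleq$ refines $\Releq$, i.e.\ for every graph $G$ the $\FReleq$-class of $G$ is contained in its $\Releq$-class (this is the assertion, already made in the excerpt, that $\Releq$ is coarser than $\FReleq$).

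Next I would take a PR-core $G$, i.e.\ a graph of minimum number of vertices among all graphs in its equivalence class under $\Releq$. Writing $[G]_{\FReleq}$ and $[G]_{\Releq}$ for the two classes, we have $G\in[G]_{\FReleq}\subseteq[G]_{\Releq}$, so minimality of $|V_G|$ over the larger class $[G]_{\Releq}$ forces minimality of $|V_G|$ over the smaller class $[G]_{\FReleq}$. By the definition of R-core in Section~\ref{subsec:R-core} (minimal vertex number within the $\backsim_w$-class), this says precisely that $G$ is an R-core, which is the claim.

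There is no real obstacle here; the only point worth a sentence is that ``minimal in vertex number within the class'' is well posed: by Proposition~\ref{pro:iso} the R-core of a $\backsim_w$-class is unique up to isomorphism, so any graph attaining the minimum vertex count in its class is (isomorphic to) that R-core, and the argument above legitimately concludes that $G$ is one. I would also remark in passing that the converse fails --- an R-core need not be a PR-core --- because the additional relations available for $\RelLeq$ (those coming from $\EmbedGeq$) may permit a further reduction of $G$; this asymmetry is exactly why the two core notions genuinely differ.
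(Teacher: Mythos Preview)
Your argument is correct and is exactly the paper's one-line justification: since $\Releq$ is coarser than $\FReleq$, every $\FReleq$-class sits inside a $\Releq$-class, and a vertex-minimum representative of the larger class is automatically a vertex-minimum representative of the smaller one. The appeal to Proposition~\ref{pro:iso} is unnecessary (the definition of R-core only asks for minimum vertex count in the $\backsim_w$-class, not uniqueness), but it does no harm.
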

\begin{lem}
\label{lem:PRcore}
Let graphs $G$ and $H$ such that $G\Releq H$. If $H$ is a PR-core, then $H$ is
isomorphic to induced subgraph of $G$.
\end{lem}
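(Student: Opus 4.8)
The plan is to reduce the lemma to its R-core analogue, Theorem~\ref{thm:Rcore}, and to repeat that theorem's argument at the single point where the reduction is not automatic. First I would peel off the non-full-domain part of the relations. Since $G\Releq H$, there are relations with $G\muls R=H$ and $H\muls S=G$. Applying Corollary~\ref{DeCo} to the first produces an induced subgraph $G'=G[\domain R]$ of $G$ together with a full-domain relation $R_1$ satisfying $G'\muls R_1=H$; applying it to the second produces an induced subgraph $H'=H[\domain S]$ of $H$ and a full-domain relation $S_1$ with $H'\muls S_1=G$. I would then observe that $H'=H$: chaining $H'\muls S_1=G$, the canonical relation carrying $G$ onto its induced subgraph $G'$, and $G'\muls R_1=H$ exhibits a relation from $H'$ to $H$, whereas a relation from $H$ to its induced subgraph $H'$ is immediate, so $H'$ lies in the $\Releq$-class of $H$; as $H$ is a PR-core (the minimal element of that class) and $H'\subseteq H$, we get $H'=H$. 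Henceforth we may assume $H\muls S_1=G$ with $S_1$ of full domain, and we write $R,S$ for $R_1,S_1$.

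Next I would manufacture a full-domain self-relation of $H$. Set $\widehat S=S\cap(V_H\times V_{G'})$. Since $G'$ is an induced subgraph of $G$, one checks directly that $H\muls\widehat S=G'$, whence $T:=\widehat S\circ R$ satisfies $H\muls T=H$ by the composition law (Lemma~\ref{lem:compo}). The crucial claim, which I expect to be the main obstacle, is that $T$ must have full domain: were it not so, Corollary~\ref{DeCo} applied to $H\muls T=H$ would yield a proper induced subgraph $H[\domain T]$ of $H$ admitting a full-domain relation onto $H$, so that $H[\domain T]\Releq H$ with strictly fewer vertices, contradicting that $H$ is a PR-core. This step is delicate because it is the only place where the PR-core hypothesis is used (rather than just that $H$ is an R-core), and because it must be argued within the PR-setting, in which relations are not required to be total.

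Finally, with $T$ a total relation satisfying $H\muls T=H$, I would copy the argument of Theorem~\ref{thm:Rcore}. Every PR-core is an R-core (the Observation above), so by Corollary~\ref{cor:nohall2} the relation $T$ satisfies the Hall condition, and Hall's theorem (Theorem~\ref{thm:hall}) gives a monomorphism $f\colon H\to H$ with $f\subseteq T$; on a finite graph $f$ is a permutation, hence an automorphism, so $f^n=I_H$ for some $n\ge 1$. Put $\widehat S'=T^{n-1}\circ\widehat S$, so that $H\muls\widehat S'=G'$ and $T^n=\widehat S'\circ R\supseteq I_H$; for each $x\in V_H$ choose $I(x)\in V_{G'}$ with $(x,I(x))\in\widehat S'$ and $(I(x),x)\in R$. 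Then, exactly as in Theorem~\ref{thm:Rcore}: $I\subseteq\widehat S'$ together with $H\muls\widehat S'=G'$ shows $I$ is a homomorphism from $H$ to $G'$; if $I(x)=I(y)$ with $x\neq y$, then using $(I(v),v)\in R$ for all $v$ and $G'\muls R=H$ one deduces $N_H(x)=N_H(y)$, impossible because the R-core $H$ is point-determining (Theorem~\ref{thm:equi}); and whenever $(I(x),I(y))\in E_{G'}$, combining this with $(I(x),x),(I(y),y)\in R$ and $G'\muls R=H$ forces $(x,y)\in E_H$, so $I^{-1}$ is a homomorphism as well. Hence $I$ is an isomorphism of $H$ onto the induced subgraph $G'[I(V_H)]$ of $G'$; since $G'$ is an induced subgraph of $G$, so is $G'[I(V_H)]$, and the lemma follows.
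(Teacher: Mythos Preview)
Your proof is correct and follows essentially the same route as the paper: restrict $G$ to the induced subgraph $G'$ on the domain of the relation $G\to H$, use PR-core minimality of $H$ to force the relevant relation $H\to G'$ to have full domain, and then conclude via Theorem~\ref{thm:Rcore} that $H$ embeds as an induced subgraph of $G'\subseteq G$. The paper's version is terser---it directly shows the restricted relation has full domain and then cites Theorem~\ref{thm:Rcore} rather than reproving it---and your preliminary step showing $H'=H$ (i.e.\ that $S$ already has full domain) is subsumed by your later argument that $T$ has full domain, but the substance is identical.
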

\begin{proof}
  Fix $R_1:H\to G$ and $R_2:G\to H$. Denote by $G_1$ the graphs induced
  on domain of $R_2$ by $G$. It is easy to see that $R=R_2\circ R_1$ is a relation
  for $G_1\RelHom G$ and thus also $G_1\Releq G$.

  $R_2$ is also a relation from $G_1$ to $H$ with full domain. Put
  $R'_1=R_1\cup V_H\times V_{G_1}$ and $H_1$ the graph induced by $H$
  on domain of $R'_1$. It is easy to see that $H_1\Releq G_1\Releq G$
  and thus $H_1=H$ from minimality of $H$.

  Consequently $H$ is the R-core of $G_1$ and thus by Theorem~\ref{thm:Rcore}
  $H$ is isomorphic to induced subgraph of $G_1$ that is induced subgraph of $G$.
\end{proof}

From Lemma~\ref{lem:PRcore} and the observation that all relations involved can contain an identity we obtain:

\begin{thm}
Graph $G$ is a PR-core if and only if it is an R-cocore.
\end{thm}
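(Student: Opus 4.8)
The plan is to prove the two implications separately, each by contraposition, and in each case to reduce to facts already established: the characterization of R-cocores by property N* (Proposition~\ref{cocore}), the fact that a PR-core sits inside every $\Releq$-equivalent graph as an induced subgraph (Lemma~\ref{lem:PRcore}), the composition law (Lemma~\ref{lem:compo}), and the observation made earlier that when $G$ is an R-cocore every solution of $G\muls R=G$ has full domain.

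\textbf{If $G$ is a PR-core then $G$ is an R-cocore.} I would argue the contrapositive. Suppose $G$ is not an R-cocore. By Proposition~\ref{cocore}, $G$ fails property N*, so there are a vertex $x\in V_G$ and a set $U_x\subseteq V_G\setminus\{x\}$ with $N_G(x)=\bigcup_{y\in U_x}N_G(y)$. Put $G/x=G[V_G\setminus\{x\}]$. On one hand $G\RelHom G/x$: for the partial identity $S=\{(v,v)\mid v\in V_G\setminus\{x\}\}\subseteq V_G\times V_{G/x}$ we have $G\muls S=G[V_G\setminus\{x\}]=G/x$, and the image of $S$ is full. On the other hand, the relation $R=(I_G\setminus\{(x,x)\})\cup\{(y,x)\mid y\in U_x\}$ exhibited in the proof of Proposition~\ref{cocore} is an R-coretraction of $G/x$ to $G$, so $G/x\muls R=G$ and hence $G/x\RelHom G$. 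Thus $G\Releq G/x$ while $|V_{G/x}|<|V_G|$, so $G$ is not minimal in its $\Releq$-class, i.e.\ not a PR-core.

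\textbf{If $G$ is an R-cocore then $G$ is a PR-core.} Suppose not, and let $H$ be the PR-core of $G$; then $G\Releq H$ and $|V_H|<|V_G|$. By Lemma~\ref{lem:PRcore}, $H$ is isomorphic to an induced subgraph of $G$, so we may assume $H=G[V_H]$ with $V_H\subsetneq V_G$. Since $H$ is induced, $G\muls J=H$ for the partial identity $J=\{(x,x)\mid x\in V_H\}\subseteq V_G\times V_H$ (its image $V_H$ is full). From $G\Releq H$ we also get $H\RelHom G$, so fix $R_1\subseteq V_H\times V_G$ with $H\muls R_1=G$. By the composition law,
\[
G\muls(J\circ R_1)=(G\muls J)\muls R_1=H\muls R_1=G,
\]
so $R:=J\circ R_1$ is a solution of $G\muls R=G$. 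But $\domain R\subseteq\domain J=V_H\subsetneq V_G$, so $R$ does not have full domain, contradicting the fact that every solution of $G\muls R=G$ has full domain when $G$ is an R-cocore. Hence $|V_H|=|V_G|$, forcing $H=G$, so $G$ is a PR-core.

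\textbf{Where the work is.} The mechanical parts — that composing with a partial identity restricts $\muls$ to the induced subgraph on its image, and that each relation written down has full image so that every $\muls$-expression is well defined — are routine. The one genuinely load-bearing point is that the second implication must use that the PR-core occurs as an \emph{induced} subgraph of $G$ (Lemma~\ref{lem:PRcore}): this is exactly what lets us replace one of the two relations witnessing the equivalence by a partial identity and thereby manufacture a self-relation $G\muls R=G$ of restricted domain, which an R-cocore forbids. The first implication dually relies on the explicit gadget from the proof of Proposition~\ref{cocore} to name the smaller equivalent graph $G/x$. Since no tools beyond the cited lemmas are required, I do not anticipate a real obstacle.
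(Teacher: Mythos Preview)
Your proof is correct and rests on the same two pillars as the paper's one-line argument: Proposition~\ref{cocore} (for the first implication) and Lemma~\ref{lem:PRcore} (for the second). The only genuine difference is how you close the second implication. The paper's ``observation that all relations involved can contain an identity'' points back to the proofs of Theorem~\ref{thm:Rcore} and Lemma~\ref{lem:PRcore}: when the PR-core $H$ is realised as an induced subgraph of $G$, the relation $H\to G$ produced there already contains $I_{V_H}$, so it \emph{is} an R-coretraction and exhibits a proper coretract of $G$ directly. You instead manufacture a self-relation $G\muls(J\circ R_1)=G$ with deficient domain and invoke the characterization that an R-cocore admits no such solution. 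Both routes are valid; yours trades inspecting the earlier constructions for quoting a stated equivalence, while the paper's is one step shorter but asks the reader to recall what those constructions actually produced.
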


\subsection*{Properties of \texorpdfstring{$(\DiGraphs,\RelLeq)$ and $(\DiGraphs,\FRelLeq)$}{Lg}}

\begin{prop}[Future-finiteness]
\label{prop:relfuturefinite}
The orders $(\DiGraphs,\RelLeq)$ and \\$(\DiGraphs,\FRelLeq)$ are future-finite.
\end{prop}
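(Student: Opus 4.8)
The plan is to bound the number of vertices of any core (a PR-core, resp.\ an FR-core) occurring in the up-set of a fixed graph $G$, and then to use the fact that there are only finitely many isomorphism types of finite (directed) graphs on a bounded number of vertices. The crucial point is that cores in these orders are point-determining, which lets one read off a vertex of $H$ from its $R$-preimage in $G$.

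I would start with $\FRelLeq$. Suppose $G\FRelLeq H$ with $H$ an FR-core; by the discussion recalled in Section~\ref{subsec:R-core}, $H$ is then an R-core, hence point-determining (Theorem~\ref{thm:equi}). Fix a relation $R$ with full domain such that $G\muls R=H$. The decisive observation is the identity
\[
N_H(b)=R\bigl(N_G(R^{-1}(b))\bigr)\qquad\text{for every }b\in V_H,
\]
which holds because a vertex $c$ is adjacent to $b$ in $H=G\muls R$ exactly when $R^{-1}(c)$ meets $N_G(R^{-1}(b))$; for directed graphs one argues identically with the in- and out-neighbourhoods, replacing $N_G,N_H$ by $N^{\leftarrow},N^{\rightarrow}$. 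In particular $R^{-1}(b)=R^{-1}(b')$ forces $b$ and $b'$ to have the same neighbourhood(s) in $H$, and since $H$ is point-determining this gives $b=b'$. Hence $b\mapsto R^{-1}(b)$ is an injection of $V_H$ into the nonempty subsets of $V_G$, so $|V_H|\le 2^{|V_G|}-1$. As there are only finitely many isomorphism types of graphs on at most $2^{|V_G|}$ vertices, the up-set of $G$ in $(\DiGraphs,\FRelLeq)$ is finite.

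For $\RelLeq$ the only difference is that the witnessing relation $R$ need not have full domain. Writing $G'=G[\domain R]$, an induced subgraph of $G$, the restriction of $R$ has full domain and witnesses $G'\FRelLeq H$ (cf.\ Corollary~\ref{DeCo}), so the bound above gives $|V_H|\le 2^{|V_{G'}|}\le 2^{|V_G|}$; alternatively, since $\RelLeq$ is the transitive closure of $\FRelLeq$ and $\EmbedGeq$, every graph in the up-set of $G$ lies in the up-set (for $\FRelLeq$) of one of the finitely many induced subgraphs of $G$. Either way the up-set of $G$ consists of cores on at most $2^{|V_G|}$ vertices, of which there are finitely many up to isomorphism, so $(\DiGraphs,\RelLeq)$ is future-finite as well. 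I do not anticipate a real obstacle here: the only steps needing care are verifying the neighbourhood identity (in particular that it also determines whether $b$ carries a loop) and noting that "core" already entails point-determinacy — it is precisely this that converts "equal $R$-preimage" into "equal vertex"; the rest is bookkeeping over finitely many subgraphs and finitely many small graphs.
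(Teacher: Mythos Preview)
Your proof is correct and follows essentially the same route as the paper: both arguments use that R-cores and PR-cores are point-determining, then show that the map $b\mapsto R^{-1}(b)$ is injective (you via the explicit neighbourhood identity, the paper via the equivalent Lemma~\ref{lem:aut4}), yielding the bound $|V_H|\le 2^{|V_G|}$. Your treatment is simply more self-contained, spelling out the neighbourhood identity and handling the non-full-domain case by restriction, whereas the paper absorbs both orders into a single sentence.
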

\begin{proof}
We have shown that both R-cores and PR-cores are point-determining graphs. For given graph $G$
it remains to show that there are only finitely many point-determining graphs $G'$ such that
there is relation $R:G\to G'$.

Fix graph $G$. Consider point-determining $G'$ and relation $R:G\to G'$. For two distinct vertices $u,v\in V_G'$,
consider their pre-images $R^{-1}(u)$ and $R^{-1}(v)$. Because $G'$ is point-determining,
we know that $R^{-1}(u)\neq R^{-1}(v)$. Pre-images are subsets of $V_G$ and there are only $2^{|G|})$ such subsets. Consequently, the size of $G'$ is bounded by $2^{|G|}$.
\end{proof}

The relations with non-full domains are quite diffrent from the common notion of graph homomorphism,
so it is not completely trivial to find an easy antichain. We given a simple example.
Let $\overline{G}$\index{$\overline{G}$} denote the complement of graph $G$. In our directed graph settings with loops,
the complement of a graph without loops is graph with loops everywhere.

\begin{lem} \label{lem:complement_cycle}
For any $n,m\geq 4$, $\overline{C_n}$ has a relation to $\overline{C_m}$ if and only if $m=n$.
\end{lem}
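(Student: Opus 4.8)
The plan is this. The ``if'' direction is witnessed by the identity relation, so, since $\overline{C_n}$ runs over all $n\ge 4$, it suffices to prove the ``only if'' direction, and this simultaneously exhibits $\{\overline{C_n}:n\ge 4\}$ as an infinite antichain. (Here, as is needed for the statement to hold, a relation from $\overline{C_n}$ to $\overline{C_m}$ is understood to have full domain.) So let $R$ be a relation with full domain and $\overline{C_n}\muls R=\overline{C_m}$; we must show $n=m$. Identify $V_{C_n}$ with $\mathbb{Z}/n$ and $V_{C_m}$ with $\mathbb{Z}/m$, so that two distinct vertices of $C_k$ are adjacent exactly when they differ by $\pm 1$. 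For $a\in\mathbb{Z}/m$ put $S_a=R^{-1}(a)$; full domain of $R$ says $\bigcup_a S_a=\mathbb{Z}/n$, and $\overline{C_n}\muls R=\overline{C_m}$ forces each $S_a\neq\emptyset$. Since every $\overline{C_k}$ carries a loop at every vertex, loop-edges are automatically produced, and comparing the remaining edges and non-edges of $\overline{C_m}$ with the definition of $\muls$ gives the characterization: (i) for distinct $a,b$, the vertices $a$ and $b$ are adjacent in $C_m$ if and only if every $i\in S_a$ and every $j\in S_b$ are adjacent in $C_n$; and (ii) for distinct non-adjacent $a,b$ of $C_m$ there exist $i\in S_a$ and $j\in S_b$ with $i=j$ or with $i,j$ distinct and non-adjacent in $C_n$.

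First, $|S_a|\le 2$ for all $a$: a vertex of $C_n$ has only two neighbours, so if $|S_a|\ge 3$ then by (i) three distinct vertices of $C_n$ would have a common neighbour, which is impossible, forcing $S_{a+1}=\emptyset$. If $S_a=\{p,q\}$ is a ``fat'' class, then since $S_{a+1}\neq\emptyset$ the vertices $p,q$ have a common neighbour in $C_n$, so $q=p\pm 2$. Assume now $n\ge 5$. Then that common neighbour is unique, so $S_{a-1}$ and $S_{a+1}$ are forced to equal the singleton $\{c\}$ where $c$ is the midpoint of $p,q$, and consequently $S_{a+2}\subseteq N_{C_n}(c)=\{p,q\}$. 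If $m\ge 5$ the pair $(a-1,a+2)$ is non-adjacent in $C_m$, yet $S_{a-1}=\{c\}$ and every element of $S_{a+2}$ is adjacent to $c$, so (ii) fails; and if $m=4$ then $S_{a+2}\subseteq\{p,q\}$ as well, so $\bigcup_b S_b\subseteq\{p,c,q\}$ has at most three elements, contradicting full domain. Hence, for $n\ge 5$, every $S_a$ is a singleton $\{f(a)\}$; conditions (i) and (ii) then say precisely that $f\colon C_m\to C_n$ is a \emph{full} homomorphism (edge-preserving and edge-reflecting), and full domain makes $f$ vertex-surjective. If $m=4$, surjectivity forces $n\le 4$, contradicting $n\ge 5$. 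If $m\ge 5$, then $f$ is locally injective: the two $C_m$-neighbours of a vertex $a$ lie in the two-element set $N_{C_n}(f(a))$ by edge-preservation, and if they coincided at some $v$ then, since $f(a-2)\in N_{C_n}(f(a-1))=N_{C_n}(v)=\{f(a),v''\}$ with both $f(a)$ and $v''$ adjacent to $v=f(a+1)$, edge-reflection applied to the non-adjacent pair $(a-2,a+1)$ would be violated. Thus $f$ is a covering of cycles, so $m=kn$ with $f$ (up to relabelling) the standard covering $t\mapsto t\bmod n$; and $k\ge 2$ is excluded since then $0$ and $n+1$ are non-adjacent in $C_m$ while $f(0)=0$ and $f(n+1)=1$ are adjacent in $C_n$. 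Hence $m=n$.

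It remains to treat $n=4$. If all $S_a$ are singletons, the argument of the previous paragraph (locally injective $\Rightarrow$ covering $\Rightarrow m=4k\Rightarrow k=1$) gives $m=4$. If some $S_a=\{p,p+2\}$ is fat, then in $C_4$ its common neighbourhood is the \emph{whole} opposite pair $\{p-1,p+1\}$, one checks $S_{a\pm 1}\subseteq\{p-1,p+1\}$, and cross-adjacency propagates an alternation of the sequence $(S_b)$ between $\{p,p+2\}$ (on even indices) and $\{p-1,p+1\}$ (on odd indices); since any vertex of one pair is adjacent in $C_4$ to any vertex of the other, $S_0$ and $S_3$ are automatically cross-adjacent, so (i) forces $0$ and $3$ to be adjacent in $C_m$, i.e.\ $m=4$. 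In every case $m=n$, which proves the lemma; in particular no relation of either direction joins $\overline{C_n}$ and $\overline{C_m}$ for $n\neq m$, so $\{\overline{C_n}\}_{n\ge 4}$ is an antichain. The main obstacle is exactly the fat classes: the operation $\muls$ admits these degenerate configurations, which have no analogue for ordinary graph homomorphisms, so condition (ii) (edge-realization) cannot be dispensed with and must be combined with (i), and the value $n=4$ genuinely behaves differently from $n\ge 5$.
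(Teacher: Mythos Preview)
Your proof is correct, and considerably more careful than the paper's own argument. The paper gives only an informal sketch: it notes that in $\overline{C_m}$ every vertex has exactly two non-neighbours and that duplication and identification cannot change that pattern, then remarks that ``removing a vertex will break the cycle and turn it into a path.'' Your argument makes the same underlying idea precise by translating $\overline{C_n}\muls R=\overline{C_m}$ into the two conditions (i) and (ii) on the preimage sets $S_a$, bounding $|S_a|\le 2$, eliminating the ``fat'' classes for $n\ge 5$, and reducing to a full surjective homomorphism $f\colon C_m\to C_n$ which you then show is a covering and hence the identity. The separate treatment of $n=4$ is necessary and handled correctly.

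Your parenthetical that full domain is required is not a mere convention but essential, and in fact goes beyond the paper. The paper's default notion of ``relation'' (the PR-morphism) allows partial domain, and under that reading the lemma is \emph{false}: for instance, with domain $\{0,1,2\}\subset V_{\overline{C_5}}$ the relation
\[
R=\{(0,0),(1,1),(1,3),(2,2)\}
\]
satisfies $\overline{C_5}\muls R=\overline{C_4}$, since the loops at $0,1,2$ produce the loops and the edge $(1,3)$, while the edge $(0,2)$ of $\overline{C_5}$ produces the edge $(0,2)$. Your two uses of full domain---to force $\bigcup_a S_a=\mathbb{Z}/n$ in the $n\ge5,\ m=4$ subcase and to make $f$ surjective---are therefore exactly where the partial-domain version breaks down, and the paper's brief ``removing a vertex'' remark does not rule this out.

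One minor point of phrasing: in the $n=4$ singleton case you invoke ``locally injective $\Rightarrow$ covering $\Rightarrow m=4k\Rightarrow k=1$,'' but your local-injectivity argument used $m\ge 5$. This is harmless since $m=4$ gives $m=n$ directly and $m\ge 5$ then leads to the contradiction $m=4$; you might make that split explicit.
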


\begin{proof}
One direction is trivial; there is always relation from $\overline{C_n}$ to $\overline{C_n}$.

Assume that $m\neq n$ and the existence of relation
$R:\overline{C_n}\to\overline{C_m}$. Every vertex $v$ in a complement of cycle
is not connected precisely to two additional vertices $(u,u')$. Every vertex
$v$ has unique pair $(u,u')$. By duplicating a vertex one obtain vertices that
are not connected to the same pair of vertices. By the unification of vertices
(that was not created by duplication of the same vertex) one gets vertex
connected to everything. It follows that these operations are ``useless'' in an
attempt to change $\overline{C_n}$ to $\overline{C_m}$. Finally observe that
removing a vertex will break the cycle and turn it into a path.
\end{proof}

\begin{thm}[Universality]
The orders $(\DiCycle,\FRelLeq)$ and $(\Graphs,\RelLeq)$ are future-finite-universal.
\end{thm}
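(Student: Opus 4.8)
The plan is to treat the two orders separately. The statement about $(\DiCycle,\FRelLeq)$ reduces immediately to Lemma~\ref{lem:cycles}, while $(\Graphs,\RelLeq)$ requires an explicit embedding whose verification is the real work.

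For $(\DiCycle,\FRelLeq)$ I would first show that on oriented cycles the existence of an R-morphism coincides with the existence of an ordinary homomorphism. If $\overrightarrow{C}_k\FRelHom\overrightarrow{C}_l$, then by Lemma~\ref{reho} the witnessing relation with full domain contains a homomorphism $\overrightarrow{C}_k\to\overrightarrow{C}_l$, so $l\mid k$; conversely, if $l\mid k$ the ``wrap-around'' homomorphism $\overrightarrow{C}_k\to\overrightarrow{C}_l$ is simultaneously vertex- and edge-surjective, hence (identified with its functional relation) is a relation with full domain realizing $\overrightarrow{C}_k\muls R=\overrightarrow{C}_l$. Thus the order $(\DiCycle,\FRelLeq)$ is literally the homomorphism order $(\DiCycle,\leq)$ (the $\FReleq$-class of each $\overrightarrow{C}_k$ is a singleton, so these are its cores), which is future-finite-universal by Lemma~\ref{lem:cycles}; since a future-finite-universal order contains every future-finite order, we are done with this part.

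For $(\Graphs,\RelLeq)$ I would embed the future-finite-universal order $(\Pfin(\mathbb{P}_{\geq 5}),\supseteq)$ of Corollary~\ref{cor:futurefiniteuniv}, where $\mathbb{P}_{\geq 5}$ is the set of primes at least $5$. To a finite set $S\subseteq\mathbb{P}_{\geq 5}$ assign $\Phi(S)=\bigsqcup_{p\in S}\overline{C_p}$. Each $\overline{C_p}$ with $p\geq 5$ is connected, and its neighbourhoods are exactly the complements of the distinct $3$-element sets $\{v,v_-,v_+\}$ (the closed cycle-neighbourhoods in $C_p$), from which one checks directly that $\overline{C_p}$ — and hence $\Phi(S)$ — has property N$^\ast$, i.e.\ is an R-cocore; as shown above, PR-cores coincide with R-cocores, so every $\Phi(S)$ is a PR-core, and the $\Phi(S)$ are pairwise non-isomorphic. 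If $T\subseteq S$ then $\Phi(T)$ is an induced subgraph of $\Phi(S)$, and since there is always a relation from a graph to an induced subgraph, $\Phi(S)\RelHom\Phi(T)$. What remains — and what I expect to be the main obstacle — is the converse ``no-stitching'' statement: $\Phi(S)\RelHom\Phi(T)$ forces $T\subseteq S$, i.e.\ $\overline{C_p}$ cannot be produced from $\bigsqcup_{q\in S}\overline{C_q}$ by a relation unless $p\in S$.

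I would attack this as follows. Given $\Phi(S)\muls R=\Phi(T)$, use Proposition~\ref{ccs} to restrict $R$ to a component $\overline{C_p}$ of the target, getting $\Phi(S)\muls R_p=\overline{C_p}$; restricting to $A=\domain R_p$ and invoking Corollary~\ref{DeCo} yields $\bigsqcup_{q\in S}\overline{C_q}[A_q]\muls R_D\muls R_C=\overline{C_p}$ with $R_D$ an injective full-domain relation (a vertex multiplication, $A_q=A\cap V_{\overline{C_q}}$) and $R_C$ a surjective homomorphism, so $\overline{C_p}$ is covered by the $R_C$-images of the vertex-multiplied graphs $\overline{C_q}[A_q]$. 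Here $C_q[A_q]$ is a disjoint union of paths whenever $A_q\subsetneq V_{\overline{C_q}}$ (a proper vertex subset breaks the cycle), so $\overline{C_q}[A_q]$ is a complement of a union of paths. I would then combine (i) Lemma~\ref{colour} applied to the full-domain part to get $\chi(\overline{C_q}[A_q])\leq\chi(\overline{C_p})=(p+1)/2$, which already forces $q<p$ for every $q\in S$ whose cycle is used in full; (ii) the rigidity underlying Lemma~\ref{lem:complement_cycle} (deleting a vertex of $\overline{C_q}$ turns it into the complement of a path, and identifying non-duplicated vertices creates a universal vertex, so neither operation can build a complement of a cycle); and (iii) the facts that $\alpha(\overline{C_p})=2$, that $\overline{C_p}$ is connected and $(p-3)$-regular, and that by Lemma~\ref{lem:aut4} the preimages $R_p^{-1}(u)$, $u\in V_{\overline{C_p}}$, are pairwise incomparable. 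The goal is to conclude that no such covering of $\overline{C_p}$ by relational images of the strictly smaller (or only partially used) complements of cycles can exist when $p$ is a prime distinct from all $q\in S$, contradicting the assumption. Once this is established, $\Phi$ is an order embedding into the PR-cores of $(\Graphs,\RelLeq)$, so $(\Pfin(\mathbb{P}_{\geq 5}),\supseteq)$, and therefore every future-finite order, embeds into $(\Graphs,\RelLeq)$.
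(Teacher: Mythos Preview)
Your treatment of $(\DiCycle,\FRelLeq)$ is correct and essentially the paper's argument: on directed cycles $\SurLeq$, $\FRelLeq$ and $\leq$ all coincide, so future-finite-universality is inherited from Lemma~\ref{lem:cycles}.

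For $(\Graphs,\RelLeq)$ your construction differs from the paper's in a small but decisive way. You set $\Phi(S)=\bigsqcup_{p\in S}\overline{C_p}$, the \emph{disjoint union of complements}; the paper instead uses $E(S)=\overline{\bigsqcup_{i\in S}C_{i+4}}$, the \emph{complement of the disjoint union}. The paper's graph is connected, and every vertex of $E(S)$ has \emph{exactly two} non-neighbours (its two cycle-neighbours). This is precisely the invariant that drives Lemma~\ref{lem:complement_cycle}: duplication preserves a vertex's non-neighbour pair, identification of two non-duplicate vertices yields a universal vertex, and deletion breaks a cycle into a path. With this invariant intact the argument of Lemma~\ref{lem:complement_cycle} applies verbatim to give $E(S)\RelLeq E(S')\iff S'\subseteq S$, and the proof is a single sentence.

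Your $\Phi(S)$ destroys this invariant: a vertex in one component is non-adjacent to every vertex of every other component, so the ``exactly two non-neighbours'' bookkeeping is gone and Lemma~\ref{lem:complement_cycle} no longer transfers. You correctly identify the resulting ``no-stitching'' statement as the main obstacle, but your sketch does not close it. Point (i) only controls components used in full; it says nothing about proper induced pieces $\overline{C_q}[A_q]$ with $q>p$, which are complements of path-unions and could in principle contribute edges. Points (ii)--(iii) are ingredients rather than an argument: nothing there excludes assembling $\overline{C_p}$ as an edge-union of relational images coming from several distinct components $\overline{C_q}[A_q]$, which is exactly the new phenomenon your disconnected $\Phi(S)$ allows. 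The approach may be completable, but as written there is a genuine gap; replacing $\bigsqcup\overline{C_p}$ by $\overline{\bigsqcup C_p}$ eliminates it at no cost.
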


\begin{proof}

We know that $\SurLeq$ is a sub-relation of $\FRelLeq$ which is in turn a sub-relation of $\leq$.
On the class of oriented cycles we have $\SurLeq=\leq$.
By Proposition \ref{prop:shomo-univ} we know that $(\DiCycle,\FRelLeq)$ is future-finite-universal.

For every $S\in \Pfin(\mathbb{N})$ we denote by $E(S)$ the graph created as the complement of the disjoint union of $E(S)=C_{i+4}, i\in S$. 
By the same argument as in the proof of Lemma~\ref{lem:complement_cycle} we get that $E(S)\RelLeq E(S')$ if and only if $S'\subseteq S$.
by Corollary~\ref{cor:futurefiniteuniv} we know that the class of complements of disjoint unions of cycles is future-finite-universal and thus $(\Graphs,\RelLeq)$ is future-finite universal.
\end{proof}





\begin{prop}[Gaps]
Fix a directed graph $H$. There are only finitely many directed graphs $G$ such that $(G,H)$ is an R-gap or PR-gap.
\end{prop}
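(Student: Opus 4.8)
The plan is to reduce the statement to a bound on the size of the endpoints of a gap. Since whether $(G,H)$ is a gap depends only on the $\FReleq$-class of $G$ and of $H$ (and, in the PR-case, on the $\Releq$-classes), I may replace $H$ by its R-core $H_0$ (resp.\ its PR-core) and restrict attention to cores $G$. So it suffices to produce a function $f$ with the property that every R-core $G$ admitting an R-gap $(G,H_0)$ satisfies $|V_G|\le f(|V_{H_0}|)$, and similarly for PR-gaps. Granting such a bound the proposition follows at once: by Theorem~\ref{thm:equi} (together with the fact that weak relational equivalence is coarser than strong relational equivalence) every R-core is point-determining, and every PR-core, being an R-cocore, satisfies property N${}^{*}$ and is therefore point-determining as well; there are only finitely many isomorphism types of point-determining graphs on at most $f(|V_{H_0}|)$ vertices.

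For R-gaps, the approach is: fix a gap $(G,H_0)$ with $G,H_0$ R-cores and choose an R-morphism $R\colon G\to H_0$. If $R$ satisfies the Hall condition, then by Theorem~\ref{thm:hall} it contains a monomorphism $G\hookrightarrow H_0$, whence $|V_G|\le|V_{H_0}|$ and we are done. If $R$ violates the Hall condition, Lemma~\ref{lem:nohall} produces $G\FRelLeq G'\FRelLeq H_0$ with $|V_{G'}|<|V_G|$; since $G$ is an R-core, $G'$ cannot be $\FReleq$-equivalent to $G$, so the gap property forces $G'$ into the $\FReleq$-class of $H_0$. I would then iterate this, and, more carefully, use the decomposition of Corollary~\ref{DeCo} to split an R-morphism witnessing $G\FRelLeq H_0$ into single identification steps, obtaining a chain of intermediate graphs each of which must lie in the $\FReleq$-class of $G$ or of $H_0$; locating the step where the chain switches classes, combined with the uniqueness of R-cores (Proposition~\ref{pro:iso}) and the Hall criterion, should give the bound on $|V_G|$.

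For PR-gaps, fix a gap $(G,H_0)$ with $G,H_0$ PR-cores and a PR-morphism $R\colon G\to H_0$. By Corollary~\ref{DeCo}, $G[\domain R]$ is an induced subgraph of $G$ with $G[\domain R]\FRelLeq H_0$, so $G\RelLeq G[\domain R]\RelLeq H_0$. The gap property together with $G$ being a PR-core (hence minimal in its $\Releq$-class) leaves two possibilities. If $\domain R=V_G$, then $R$ has full domain, so $G\FRelLeq H_0$ and $(G,H_0)$ is in fact an R-gap between R-cores, and the R-case applies. Otherwise $G[\domain R]\Releq H_0$; deleting the vertices of $V_G\setminus\domain R$ one at a time then forces every induced subgraph strictly between $G[\domain R]$ and $G$ into the $\Releq$-class of $H_0$, and combining this with the R-cocore structure of $G$ (property N${}^{*}$) should again bound $|V_G|$ in terms of $|V_{H_0}|$.

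The crux, and the step I expect to be the main obstacle, is exactly this size bound, where the real difficulty is that relational equivalence is strictly coarser than isomorphism: whenever one inserts a graph $M$ with $G\FRelLeq M\FRelLeq H_0$, it fails to witness that $(G,H_0)$ is not a gap precisely when $M$ is relationally equivalent to $G$ or to $H_0$, and the work is to show that if $|V_G|$ were too large this alternative could not persist through all the natural intermediate constructions. I expect the cleanest route is to combine Corollary~\ref{DeCo}, Lemma~\ref{lem:nohall} and Theorem~\ref{thm:hall}, and the neighbourhood conditions N and N${}^{*}$ of Chapter~\ref{ch:relation}; once $f$ is in hand, the conclusion is the trivial observation that only finitely many point-determining graphs have at most $f(|V_H|)$ vertices.
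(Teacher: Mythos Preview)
Your overall framework is correct: reduce to a bound $|V_G|\le f(|V_{H_0}|)$ for cores $G$, then use that R-cores and PR-cores are point-determining. You also correctly identify that the size bound is the crux. However, the tools you propose for establishing that bound do not close the argument.

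The Hall-condition dichotomy gives nothing once Hall fails: Lemma~\ref{lem:nohall} produces $G'$ with $|V_{G'}|<|V_G|$ and (by the gap) $G'\FReleq H_0$, but this places no constraint on $|V_G|$ itself. Your ``single-step'' chain has the same defect. Decomposing via Corollary~\ref{DeCo} first \emph{duplicates} (so the intermediate graph $G^*$ can have $|V_{G^*}|$ as large as $|R|\le |V_G|\cdot|V_{H_0}|$) and then contracts; at the switch point $G_j\FReleq G$, $G_{j+1}\FReleq H_0$ with $|V_{G_j}|=|V_{G_{j+1}}|+1$, you only obtain $|V_G|\le |V_{G_j}|$, which is useless since $|V_{G_j}|$ is unbounded in terms of $|V_{H_0}|$. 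Nothing in your toolkit converts ``$G'$ lies in the class of $H_0$'' into an upper bound on $|V_G|$.

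The paper's argument supplies exactly the missing idea. Fix $R\colon G\to H$ and group vertices of $G$ by the value of $R(\,\cdot\,)$; there are at most $2^{|H|}$ such classes. If two vertices $u,v$ share a class, contract them to form $G_{u,v}$; then $G\FRelLeq G_{u,v}\FRelLeq H$ (the factoring through $G_{u,v}$ uses $R(u)=R(v)$), and the gap plus minimality of $G$ forces $G_{u,v}\FReleq H$. Now a relation $H\to G_{u,v}$ exists, so by Lemma~\ref{lem:aut4} the number of distinct neighbourhoods in $G_{u,v}$ is at most $2^{|H|}$. Finally, a single vertex identification can merge at most three distinct $G$-neighbourhoods into one $G_{u,v}$-neighbourhood (a vertex adjacent to the merged vertex came from adjacency to $u$, to $v$, or to both), so $G$ has at most $3\cdot 2^{|H|}$ distinct neighbourhoods, hence $|V_G|\le 6^{|H|}$ since $G$ is point-determining. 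The key step you were missing is this neighbourhood-counting across a single contraction, combined with the observation that $R(u)=R(v)$ makes the contraction factor the relation. Your PR-case inherits the same gap, and the paper handles it analogously.
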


\begin{proof}
Assume that $(G,H)$ is an R-gap, then $G$ and $H$ are R-cores, and there exists a relation $R$ from $G$ to $H$. We give an upper bound on the number of vertices of $G$ based on the number of vertices of $H$.

We say that vertices $u$ and $v$ (of $G$) are equivalent if $R(u)=R(v)$.
This equivalence has at most $2^{|H|}$ classes. Assume that there are two different vertices, $u$, and $v$ in the same
equivalence class. Denote by $G_{u,v}$\index{$G_{u,v}$} the graph created by unification of
those two vertices. Since $(G,H)$ is an R-gap, $G_{u,v}$ must be R-equivalent to $H$.
And from the fact that there is a relation from $H$ to $G_{u,v}$, we know that there are at most $2^{|H|}$ vertices in $G_{u,v}$ which have distinct neighbourhoods.
Each neighbourhood of a vertex in $G_{u,v}$ can correspond to at most three different neighbourhoods in $G$ (if a vertex $w$ is connected in $G_{u,v}$ to the vertex
created by unification of $u$ and $v$, in $G$ the vertex $w$ may be connected to one of $u$ and $v$ or to both of them).
It turns out that if $G$ has at least $2^{|H|}+1$ vertices (so at least one equivalence class is non-trivial) than there are at
most $6^{|H|}$ different neighbourhoods. Because $H$ is point determining
we know it has no more than $6^{|H|}$ vertices.

Bounds in the case of partial relations follow in a completely analogous fashion.
\end{proof}

%

\begin{prop}[Dualities]
\label{prop:freldual}
For every finite set of directed graphs $\mathcal{F}$ there is finite set of
directed graphs $\mathcal{D}_{R}$, such that $(\mathcal{F},\mathcal{D}_{R})$
is a generalized finite R-duality pair, and a finite set of directed graphs
$\mathcal{D}_{PR}$, such that $(\mathcal{F},\mathcal{D}_{PR})$ is a generalized
finite PR-duality pair.
\end{prop}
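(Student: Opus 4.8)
The plan is to read off both statements from Corollary~\ref{cor:dualities} (in its future-finite formulation, exactly as in the proofs of Proposition~\ref{prop:surdual} and Theorem~\ref{thm:fulldual}), since almost everything needed is already available. Proposition~\ref{prop:relfuturefinite} gives that $(\DiGraphs,\RelLeq)$ and $(\DiGraphs,\FRelLeq)$ are future-finite, and the Gaps proposition immediately above tells us that for every directed graph $H$ only finitely many $G$ make $(G,H)$ an R-gap, respectively a PR-gap. Thus the key hypothesis of Corollary~\ref{cor:dualities} --- below every element there lie only finitely many gaps --- holds verbatim for both orders, and moreover $\bigcup_{f\in\mathcal F}\uparrow f$ is automatically a finite set because every up-set is finite and $\mathcal F$ is finite.

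The only extra ingredient to supply is the finiteness of the set of maximal elements (the condition made explicit in Proposition~\ref{prop:futurefiniteduals}), and here the two orders behave a little differently. In $(\DiGraphs,\RelLeq)$ the empty graph is the \emph{unique maximum}: for any $G$ the empty relation $R=\emptyset$ (with $B=\emptyset$) satisfies $G\muls R=\emptyset$, so $G\RelLeq\emptyset$. In $(\DiGraphs,\FRelLeq)$ this trick is unavailable once $V_G\neq\emptyset$, and I would check that the maximal elements are exactly the empty graph, $K_1$ and $O$: if $G$ has at least one edge then $R=V_G\times\{o\}$ is a full-domain relation with $G\muls R=O$, so $G\FRelLeq O$; if $G$ is a non-empty edgeless graph then $R=V_G\times V_{K_1}$ is a full-domain relation with $G\muls R=K_1$, so $G\FRelLeq K_1$; and the empty graph relates (with full domain, vacuously) only to itself. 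Hence there are finitely many maximal elements in each order.

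Combining these observations, Corollary~\ref{cor:dualities} applied to $(\DiGraphs,\RelLeq)$ produces, for every finite $\mathcal F$, a finite $\mathcal D_R$ with $(\mathcal F,\mathcal D_R)$ a generalized finite R-duality pair, and applied to $(\DiGraphs,\FRelLeq)$ it produces a finite $\mathcal D_{PR}$ with $(\mathcal F,\mathcal D_{PR})$ a generalized finite PR-duality pair. One can also exhibit them explicitly from the construction in Proposition~\ref{prop:futurefiniteduals}: $\mathcal D_R$ consists of the lower endpoints $b$ of the gaps $(b,a)$ with $a\in\bigcup_{f\in\mathcal F}\uparrow f$, together with those maximal elements not already dominating some $f$, and similarly for $\mathcal D_{PR}$.

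I do not expect a genuine obstacle: the content of the proposition has essentially been discharged by the preceding gap analysis. The one point that needs care is the enumeration of maximal elements --- in particular the mildly surprising asymmetry that $(\DiGraphs,\RelLeq)$ has a single top element (because an arbitrary relation may simply throw the whole graph away) while $(\DiGraphs,\FRelLeq)$ has three --- and the observation, exactly as in Proposition~\ref{prop:surdual}, that loops must be allowed: for loopless graphs the sets $\mathcal D$ would be forced to contain arbitrarily large complete graphs and no finite duality would exist.
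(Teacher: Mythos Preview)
Your approach is essentially the paper's: invoke future-finiteness (Proposition~\ref{prop:relfuturefinite}), the finiteness of gaps below a fixed graph (the preceding Gaps proposition), check that there are only finitely many maximal elements, and conclude via Corollary~\ref{cor:dualities}. Your treatment of the maximal elements is in fact more careful than the paper's, which simply asserts ``a single vertex and a vertex with loop'' for both orders without distinguishing the partial and full-domain cases or mentioning the empty graph; your observation that in $(\DiGraphs,\RelLeq)$ the empty relation makes the empty graph a global maximum, while in $(\DiGraphs,\FRelLeq)$ one gets the three incomparable tops $\emptyset$, $K_1$, $O$, is correct and sharper.

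One small slip: in your concluding paragraph you have swapped the labels. Recall that $\RelLeq$ is $\leq^{PR}$ (partial relations) and $\FRelLeq$ is $\leq^{R}$ (full domain). So applying Corollary~\ref{cor:dualities} to $(\DiGraphs,\RelLeq)$ yields the \emph{PR}-duality set $\mathcal D_{PR}$, and applying it to $(\DiGraphs,\FRelLeq)$ yields the \emph{R}-duality set $\mathcal D_R$ --- the opposite of what you wrote. This is purely a naming error; the mathematics is fine.
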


\begin{proof}
By Proposition~\ref{prop:relfuturefinite} the orders are future-finite,
by Proposition~\ref{prop:freldual} there are only finitely many gaps below every graph.
The maximal elements in both $(\DiGraphs,\FRelLeq)$ and $(\DiGraphs,\RelLeq)$
are a single vertex and a vertex with loop. With these observations, we apply Corollary \ref{cor:dualities}. 
\end{proof}

\section{Concluding Remarks}
Our task of analysing variants of graph homomorphisms and proving their order-theoretic properties is open-ended. We have considered eleven variants of graph
homomorphisms and answered questions related to cores, density, universality and
dualities. We have found numerous references among previous research showing that many of
these problems have been considered in different contexts and terminologies. Naturally our
approach can be extended to new kinds of mappings (for example quantum
homomorphisms studied in \cite{Roberson}), to wider or narrower classes of
structures (such as graphs without loops, undirected graphs, general relational
structures \cite{Ball2010}, partial orders \cite{Kwuida2011} or minor-closed
classes \cite{Nesetril2006}) and to new
properties, such as the existence of lattice operations.

We believe that the framework we have developed shows interesting connections between
individual areas and sheds more light on some earlier results. For example, 
rather simple observations about the structure of full
homomorphisms better explain earlier results on dualities. Our
techniques seems to apply relatively smoothly to new special mappings and suggest 
a general line of attack to follow when analysing these properties. A
systematic approach also led to a greatly simplified proof of the universality of
the homomorphism order with applications to locally constrained homomorphisms
and line graphs. We were able to
show that the question of universality of oriented paths, solved in
\cite{Hubicka2005}, is a lot more complex than universality of oriented cycles.

We have discussed many types of partial order --- future-finite, past-finite,
universal orders, with or without dualities. Among these the one closest to the
homomorphism order is the order of locally injective homomorphisms on
unoriented connected graphs. Only in this case have we been able to show 
universality and the existence of dualities, and partially characterize the gaps (the techniques have been used in the proofs are completely different to the ones used in the analogous results of the homomorphisms orders). The
only property it seems to lack is an interesting notion for cores. In fact we have not been able
to identify any mapping with a notion of core that would be NP-complete to compute just like the graph core is. Our most complex example is given in Section \ref{sec:relations}). 
The full characterization of gaps in the locally constrained homomorphism orders seems to be an interesting open problem.

In some cases we have given only a partial characterizations of gaps,  in particular in
the case of relations. It seems that an approach similar to the one for full
homomorphisms may lead to a better description. Also, we did not fully resolve
questions about realizations that seem to be difficult already in the
case of the full homomorphism order.






\backmatter 

\appendix
\renewcommand{\thesection}{A.\arabic{section}}

\renewcommand{\chaptermark}[1]{\markboth{\thechapter\ #1}{}}
\renewcommand{\sectionmark}[1]{\markright{\thesection\ #1}}
\fancyhf{}
\fancyhead[LE,RO]{\bfseries\thepage}
\fancyhead[LO]{\bfseries\leftmark}
\fancyhead[RE]{\bfseries\leftmark}

\chapter[Appendix A: Computational complexity]{Appendix A:\\ Computational complexity}
\ifpdf
    \graphicspath{{Appendix1/}{Apeendix1}}
\else
    \graphicspath{{Appendix1/}{Appendix1}}
\fi



The aim of this appendix is to give a brief exposition of the necessary background from computational complexity that we assume in the main body of the thesis. This introduction will be limited to the bare essentials, with no intention
of comprehensiveness.  For a thorough treatment of computational complexity theory one may refer to the monographs of Papadimitriou~\cite{Papadimitriou2003} and Arora and Barak~\cite{Arora2009}.

Computational complexity is a branch of theoretical computer science. In contrast to the design and analysis of algorithms,
computational complexity theory focuses on the hardness of computational problems.  That is, it tries to answer the question why some problems
cannot be computed \emph{efficiently}\index{efficiently}. 
In a \emph{computational problem}\index{computational problem}, an input is given and one is asked to return an output satisfying some property.
For example, in the \emph{factorization problem}\index{factorization problem}, the input is a positive integer $a$, and the output is required to be all the prime factors of $a$.
Another example is the \emph{reachability problem}\index{reachability problem}: given a graph $G$ and two vertices $v_1$,$v_n\in V_G$, is there a path from $v_1$ to $v_n$? The input is the pair of vertices ($v_1$, $v_n$), and the output is yes or no. 

We consider two kinds of computational problems. 
If the output of a computational problem is either yes or no, we call it \emph{decision problem}\index{decision problem}. The reachability problem is a decision problem.
If a computational problem has a single output (of a total function) for every input, but the output is more complex than that of a decision problem,
that is, it isn't just yes or no, we call it a \emph{function problem}\index{function problem}. The factorization problem is a function problem. 


A particular solution to a computational problem is called an \emph{algorithm}\index{algorithm}. An algorithm is a detailed step-by-step procedure for solving a problem.
For example, reachability can be solved by the so-called \emph{search algorithm}~\cite{Papadimitriou2003}. This algorithm works as
follows: Throughout the algorithm we maintain a set of nodes, denoted by $S$. Initially, $S=\{v_1\}$. Each node can be either marked or unmarked.
Node $v_i$ is marked means that $v_i$ has been in $S$ at some point in the past or it is presently in $S$. Initially only $v_1$ is marked.
At each iteration of the algorithm, we choose a node $v_i\in S$ and remove it from $S$. We then process one by one all edges $(v_i,v_j)$ out of $v_i$.
If node $v_j$ is unmarked, then we mark it, and add it to $S$. This process continues until $S$ becomes empty. At this point, we answer yes if node $v_n$
is marked, and no if it is not marked.
It is clear that this algorithm solves reachability problem. 
Moreover, it works efficiently. Here we can only give a rough explanation. We claim that we only spend about $n^2$ operations processing
edges out of the chosen nodes, because there can be at most $n^2$ edges in a graph~\footnote{It is usual to also consider space complexity.
However in this thesis we only discuss time complexity.}. 



In order to introduce formally what is the time complexity of an algorithm, we need a model of computation. 
In 1936 Alan Turing invented the Turing machine, which is a mathematical model of a real machine. 
Since Turing machines are easy to analyse mathematically, and are believed to be as powerful as any other model of computation, besides, it is
the most commonly used model in complexity theory and it can express an arbitrary algorithm.


\begin{defi}[\cite{Arora2009}]
A \emph{(deterministic) Turing machine}\index{deterministic Turing machine} is a quadruple $M=(K,\Sigma,\delta,s)$. Here $K$ is a
finite set of \emph{states}\index{states}; $s\in K$ is the \emph{initial state}\index{initial state}. $\Sigma$ is a finite set of \emph{symbols}\index{symbols}
(we say $\Sigma$ is the \emph{alphabet}\index{alphabet} of $M$). We assume that $K$ and $\Sigma$ are disjoint sets. $\Sigma$ always contains the special
symbols: the \emph{blank}\index{blank} and the \emph{first symbol}\index{first symbol}. Finally, $\delta$ is a transition function, which maps $K\times
\Sigma$ to $(K\times \{h,\text{`yes'},\text{`no'}\})\times \Sigma \times\{\leftarrow, \rightarrow$, -- $\}$.
We assume that $h$ (the \emph{halting state}\index{halting state}), `yes' (the \emph{accepting state}\index{accepting state}), `no' (the \emph{rejecting
state}\index{rejecting state}), and the \emph{cursor directions}\index{cursor directions} $\leftarrow$ for `left', $\rightarrow$ for `right', and -- for `stay', are not in $K\cup \Sigma$.
\end{defi}

When the Turing machine starts to work, the initial state is $s$. The string is initialized to a first symbol, followed by a input string $x$.
The cursor is pointing to the first symbol. And then the machine takes a step according to $\delta$, changing its states, printing a symbol, and
moving the cursor, then continue take another step, and another. Only when the machine reaches one of the three halting states: $h$, `yes', `no',
the machine does not continue. If this happens, we say that the machine has \emph{halted}\index{halted}. At this time, if the states `yes' has been
reached, we say the machine \emph{accepts}\index{accept} its input; if `no' has been reached, we say the machine \emph{rejects}\index{reject} its input.
If $h$ has been reached, the output is the string of $M$ at the time of halting.

For decision problems, the machine always reaches either `yes' or `no'.
We define the collection of all the inputs of Turing machine $M$ which reach the states `yes' as the \emph{language}\index{language} decided by $M$.
For the function problem, the machine always reaches $h$, we say this kind of Turing machine computes functions.
We use the terms of languages and decision problems interchangeably, as in~\cite{Arora2009}.


\subsubsection*{Time complexity}

\begin{defi}
 Let $M$ be a (deterministic) Turing machine that halts on all inputs. The \emph{running time}\index{running time} or \emph{time complexity}\index{time complexity} of $M$ is the function $f:\mathbb{N}\rightarrow \mathbb{N}$, where $f(n)$ is the maximum number of steps that $M$ halts on any input of length $n$. 
\end{defi}

In the anaysis of complexity, usually we only need to know the order of $f(n)$, thus
the complexity of an Turing machine is often expressed using \emph{big $\mathcal{O}$ notation}\index{big O notation}: $f(n)$ is $\mathcal{O}(g(n))$ if there exists a $n_0$ and $c>0$ such that $n\leq n_0$ implies
$f(n)\leq cg(n)$. The following proposition explains why this estimation works.

\begin{pro}[\cite{Papadimitriou2003}]
 Suppose that a (deterministic) Turing machine $M$ decides a language $L$ within time $f(n)$, where $f$ is a proper function. Then there is a precise Turing machine $M'$, which decides the same language in time $\mathcal{O}(f(n))$.
\end{pro}

\subsubsection*{Complexity classes}

\begin{defi}
 Let $t:\mathbb{N}\rightarrow \mathbb{R^+}$ be a function. Define the \emph{time complexity class}\index{time complexity class}, $\textsc{TIME}(t(n))$, to be the collection of
all languages that are decidable by an $\mathcal{O}(t(n))$ time Turing machine.
\end{defi}

\begin{defi}
\emph{\textsc{P}}\index{P} is the class of languages that are decidable on a polynomial time deterministic Turing machine. In other words,
$$\textsc{P}=\bigcup_k \textsc{TIME}(n^k).$$
\end{defi}

In a deterministic Turing machine, the set of rules prescribes at most one action to be performed for any given situation. A non-deterministic Turing machine, by contrast, may have a set of rules that prescribes more than one action for a given situation.A \emph{nondeterministic Turing machine}\index{nondeterministic Turing machine} is a quadruple $N=(K,\Sigma,\Delta,s)$, where $K$, $\Sigma$, $s$ are as normal Turing
machine, and the transition function $\Delta$ is a relation
$\Delta\subset (K\times \Sigma) \times [(K\times \{h,\text{`yes'},\text{`no'}\})\times \Sigma \times\{\leftarrow, \rightarrow$, -- $\}]$.

The set of languages decided by nondeterministic Turing machines within time $f$ is denoted $\textsc{NTIME}(f(n))$. 
\emph{\textsc{NP}}\index{NP} is the class of languages that are decidable on a polynomial time nondeterministic Turing machine, In other words,
$$\textsc{NP}=\bigcup_k \textsc{NTIME}(n^k).$$

We would like to point out that \textsc{NP} does not mean non-polynomial.


It is easily seen that $\textsc{P}\subseteq \textsc{NP}$, since any language that is  decidable by a deterministic Turing machine can also be decided
by a nondeterministic Turing machine.

\subsubsection*{Polynomial time Turing reductions} 

Let $\Sigma$ be an alphabet. We define $\Sigma^*:=\{x_1,x_2,\dots,x_k\mid k\geq 0 \text{ and each } x_i\in \Sigma\}$. 

\begin{defi}
 A function $f:\Sigma^*\to \Sigma^*$ is a \emph{polynomial time computation function}\index{polynomial time computation function} if some polynomial time Turing machine $M$ exists that halts with
just $f(w)$ on its tape, when started on any input $w$.
\end{defi}

\begin{defi}
Language $L\subseteq \Sigma^*$ is \emph{polynomial time reducible}\index{polynomial time reducible}, to language $L'\subseteq \Sigma^*$, written $L \leq_\text{P}^\text{Tur} L'$, if
a polynomial time computation function $f:\Sigma^*\to \Sigma^*$ exists, where for every $x$, $x\in L$ if and only if $f(x)\in L'$.
\end{defi}




\subsubsection*{NP-complete problems}

A central aim of the study of computational complexity is to sort out which problems can be solved in polynomial time and which cannot. In this context,
we introduce the class of \textsc{NP}-complete problems, which are those the least likely to be in \textsc{P}. 

\begin{defi}
A problem $H$ is \emph{\textsc{NP}-hard}\index{NP-hard} (nondeterministic polynomial time hard) if and only if every \textsc{NP} problem is polynomial
time Turing reducible to $H$.

\emph{\textsc{NP}-complete problem}\index{NP-complete problem} is a \textsc{NP} problem that is \textsc{NP}-hard.
\end{defi}

When we need to prove that a problem is NP-complete, we need only to prove that (1) it is in \textsc{NP}, and (2) show that it is polynomial time reducible to a problem
already known to be \textsc{NP}-complete. The hard part of that was finding the first example of an \textsc{NP}-complete problem: that was done by Steve Cook in Cook's Theorem~\cite{Cook1971}.

\bibliographystyle{Classes/jmb}
\renewcommand{\bibname}{References} 
\bibliography{References/references} 
\printindex 
\pagestyle{plain}

\begin{flushleft}{\Large Bibliographische Daten} \\\rule{145mm}{0.2mm}\end{flushleft}
\mytitle \\
(\mytitleDE)\\ \vspace{12pt}
\melast, \mefirst\\
Universit{\"a}t Leipzig, Dissertation, \year \\
157 Seiten, 51 Abbildungen, 89 Referenzen\\

\end{document}